  \let\original@@tocwrite=\@tocwrite
  \newif\ifAHVflag
  \def\AHV@uniqtoken{\AHV@uniqtoken}
  \def\AHV@endmark{\AHV@endmark}
  \long\def\AHV@getfirsttoken#1#{\AHV@getfirsttoken@#1\bgroup\AHV@endmark}
  \long\def\AHV@getfirsttoken@#1#2\AHV@endmark#3\AHV@endmark{#1}
  \renewcommand{\@tocwrite}[2]{%
    \begingroup
      \AHVflagfalse %
      \@ifempty{#2}{}{%
        \expandafter\expandafter\expandafter\ifx\AHV@getfirsttoken#2\AHV@uniqtoken{}\AHV@endmark\Sectionformat\expandafter\@firstoftwo\else\expandafter\@secondoftwo\fi
        {%
          \def\Sectionformat##1##2{\@ifempty{##1}{}{\AHVflagtrue}}%
          #2
        }{\AHVflagtrue}%
      }%
      \def\@tempa{}%
      \ifAHVflag\def\@tempa{\original@@tocwrite{#1}{#2}}\fi
    \expandafter\endgroup
    \@tempa
  }%
\let\original@maketitle=\maketitle
\let\origina@@maketitle=\@maketitle
\let\original@email=\email
\let\original@address=\address
\def\expandafter\appendix\expandafter{\expandafter\enddoc@text\appendix
\let\addresses\@empty
\let\email=\original@email
\let\address=\original@address
\bigskip
}
\newcommand{\id}{\operatorname{id}}
\newcommand{\Hom}{\operatorname{Hom}}
\newcommand{\Ind}{\operatorname{Ind}}
\newcommand{\ind}{\operatorname{c-Ind}}
\newcommand{\Ker}{\operatorname{Ker}}
\newcommand{\End}{\operatorname{End}}
\newcommand{\Gal}{\operatorname{Gal}}
\newcommand{\Res}{\operatorname{Res}}
\newcommand{\ord}{\operatorname{ord}}
\newcommand{\aff}{\operatorname{aff}}
\newcommand{\op}{\operatorname{op}}
\newcommand{\SSt}{\operatorname{St}}
\newcommand{\ad}{\operatorname{ad}} 
\DeclareMathOperator{\GL}{GL}
\DeclareMathOperator{\SL}{SL}
\DeclareMathOperator{\PU}{PU}
\DeclareMathOperator{\PGL}{PGL}
\DeclareMathOperator{\SU}{SU}
\DeclareMathOperator{\U}{U}
\DeclareMathOperator{\diag}{diag}
\newcommand{\trivrep}{{1}}
\DeclareMathOperator{\supp}{supp}
\DeclareMathOperator{\der}{{der}}
\DeclareMathOperator{\red}{{red}}
\newcommand{\loc}{\mathrm{loc}}
\newcommand{\abs}{^{\mathrm{abs}}}
\newcommand{\sep}{^{\mathrm{sep}}}
\newcommand{\Z}{\mathbb Z}
\newcommand{\Q}{\mathbb Q}
\newcommand{\R}{\mathbb R}
\newcommand{\into}{\hookrightarrow}
\newcommand{\onto}{\twoheadrightarrow}
\newcommand{\congto}{\xrightarrow{\,\sim\,}}
\renewcommand{\u}{\underline}
\newcommand{\cO}{\mathcal O}
\newcommand{\HH}{\mathcal H}
\renewcommand{\o}[1]{\overline{#1}}
\newcommand{\wt}[1]{\widetilde{#1}}
\renewcommand{\)}{\textup{)}}
\theoremstyle{plain} 
\newtheorem{theorem}{Theorem}[section]
\newtheorem{corollary}[theorem]{Corollary}
\newtheorem{lemma}[theorem]{Lemma}
\newtheorem{proposition}[theorem]{Proposition}
\newtheorem{proposition-definition}[theorem]{Proposition-Definition}
\theoremstyle{definition}
\newtheorem{definition}[theorem]{Definition}
\theoremstyle{remark}
\newtheorem{remark}[theorem]{Remark}
\newtheorem{example}[theorem]{Example}
\newtheorem{notation}[theorem]{Notation}
\numberwithin{equation}{section}
\newcommand{\cn}{\mathcal{N}}
\newcommand{\boldN}{\boldsymbol{\cn}}
\title{Inverse Satake isomorphism and change of weight}
\author{N.\ Abe}
\address[N.\ Abe]{Graduate School of Mathematical Sciences, the University of Tokyo, 3-8-1 Komaba, Meguro-ku, Tokyo 153-8914, Japan}
\thanks{The first-named author was supported by JSPS KAKENHI Grant Number 18H01107.}
\email{abenori@ms.u-tokyo.ac.jp}
\author{F.\ Herzig} 
\address[F.\ Herzig]{Department of Mathematics, University of Toronto,
  40 St.\ George Street, Room 6290, Toronto, ON M5S 2E4, Canada}
\thanks{The second-named author was partially supported by a Sloan Fellowship, a Simons Fellowship, and an NSERC grant.}
\email{herzig@math.toronto.edu}
\author{M.-F.\ Vign\'eras}
\address[M.-F.\ Vign\'eras]{Institut de Math\'ematiques de Jussieu, 4 place Jussieu, 75005 Paris, France}
\email{vigneras@math.jussieu.fr}
\keywords{change of weight, Satake transform, compact induction, parabolic induction, pro-$p$ Iwahori Hecke algebra}
\subjclass[2010]{primary 20C08, secondary  11F70}
\date{\today}
\begin{document} 

\maketitle

\newcommand{\F}{\mathbb F}
\newcommand{\fpb}{\overline \F_p}

\begin{abstract}
  Let $G$ be any connected reductive $p$-adic group. Let $K\subset G$ be any special parahoric
  subgroup and $V,V'$ be any two irreducible smooth $\fpb[K]$-modules. The main goal of
  this article is to compute the image of the Hecke bi-module $\End_{\fpb[K]}(\ind_K^G V,
  \ind_K^G V')$ by the generalized Satake transform and to give an explicit formula for its inverse,
  using the pro-$p$ Iwahori Hecke algebra of $G$.  This immediately implies the ``change of weight theorem''
  in the proof of the classification of mod $p$ irreducible admissible representations of $G$ in
  terms of supersingular ones. A simpler proof of the change of weight theorem, not using the pro-$p$ Iwahori
  Hecke algebra or the Lusztig-Kato formula, is given when $G$ is split (and in the appendix when $G$ is quasi-split,
  for almost all $K$).
\end{abstract}

\setcounter{tocdepth}{2}
\tableofcontents

\section{Introduction}
\label{sec:introduction}

\subsection{}Throughout this paper, $F$ is a local nonarchimedean field  with finite residue field $k$  of characteristic $p$,  $\bf G$ is a connected reductive $F$-group, and  $C$ is    an algebraically closed field of characteristic $p$. In our previous paper \cite{MR3600042}, we gave a classification of irreducible admissible smooth $C$-representations of $G = \mathbf{G}(F)$ in terms of supercuspidal representations of Levi subgroups of $G$.
The most subtle ingredient in our proofs is the so-called ``change of weight theorem'', which we
deduced from the existence of  certain elements in the image of the mod $p$ Satake transform.
The main goal of this paper is to determine its image entirely and give  an explicit formula for the inverse of the mod $p$ Satake transform,  we call it the \emph{inverse Satake theorem}, from which the change of weight  is an immediate consequence.  

To be a bit more precise, the mod $p$ Satake transform can be defined for the Hecke algebra of a single irreducible representation $V$ of a special parahoric subgroup, as well as more generally for the Hecke bimodule of a pair $(V,V')$ of such irreducible representations.
The image of the mod $p$ Satake transform was known in case of a single irreducible representation $V$ of a special parahoric subgroup, cf.\ \cite{MR3331726}, \cite{bib:satake}. However, for the change of weight theorem it is essential to allow pairs $(V,V')$ with $V \not\cong V'$.

In earlier work \cite[Prop.\ 5.1]{MR2845621}, we established the inverse Satake theorem when $\mathbf G$ is split with simply-connected derived subgroup and $V = V'$ by deducing it from the Lusztig-Kato formula, which is an inverse formula for the usual Satake transform in characteristic zero. (See also the related work of Ollivier \cite{MR3366919}.)
In this paper we establish the inverse Satake theorem in characteristic $p$ for arbitrary $\mathbf G$ and pairs $(V,V')$ by using the pro-$p$ Iwahori Hecke algebra.

\subsection{} We now explain our results in more detail.
Let $\mathbf{S}$ be a maximal split torus of $\mathbf{G}$, $\mathbf{Z}$ its centralizer, $\mathbf{B} = \mathbf{Z}\mathbf{U}$ a minimal parabolic subgroup and $\Delta$ the set of simple roots defined by $(\mathbf{G},\mathbf{B},\mathbf{S})$.
Put $Z = \mathbf{Z}(F)$ and $U = \mathbf{U}(F)$.
Let $X_*(\mathbf{S})$ be the group of cocharacters of $\mathbf{S}$ and $v_Z\colon Z\to X_*(\mathbf{S})\otimes\mathbb{R}$ be the usual homomorphism (see Section~\ref{Notation}).
Put $Z^+ = \{z\in Z\mid \text{$\langle \alpha,v_Z(z)\rangle \ge 0$ for any $\alpha\in\Delta$}\}$, so that $Z^+$ contracts
$U$ under conjugation.

Let $K$ be a special parahoric subgroup of $G$ corresponding to a special point of the apartment of $S$ and put $Z^0 = Z \cap K$ (the unique parahoric subgroup of $Z$), $U^0 = U \cap K$.
Let $V$ be an irreducible smooth $C$-representation of $K$.
It is parameterized by a pair $(\psi_V,\Delta(V))$, where $\psi_V : Z^0 \to C^\times$ describes the action of $Z^0$ on the line $V_{U^0}$ and $\Delta(V)\subset \Delta$
is a certain subset (see \S\ref{GS}).
Let $\ind_K^GV$ denote the compact induction of $V$.
If  $V'$ denotes another irreducible smooth $C$-representation of $K$, we define the Hecke bimodule $\mathcal{H}_G(V,V') := \Hom_{CG}(\ind_K^GV,\ind_K^GV')$.
This is non-zero if and only if $\psi_V$ is $Z$-conjugate to $\psi_{V'}$. 
Once we fix a linear isomorphism $\iota\colon V_{U^0}\simeq V'_{U^0}$, $\mathcal{H}_G(V,V')$ has a canonical $C$-basis $\{T_z=T_z^{V',V}\}$, where $z$ runs through a system of representatives of $Z_G^+(V,V')/Z^0$ and $Z_G^+(V,V')$ is a certain union of cosets of $Z^0$ in $Z^+\cap Z_{\psi_V, \psi_{V'}}$, where $Z_{\psi_V, \psi_{V'}}=\{ z \in Z \mid z\cdot\psi_V=\psi_{V'} \}$ (see (\ref{ZVV'})).
The element $T_z^{V',V}$ is determined up to scalar by the condition $\supp T_z^{V',V} = KzK$ and normalized by $\iota$ (see \S\ref{intVV'}).

Similarly, we have the Hecke bimodule $\mathcal{H}_Z(V_{U^0},V'_{U^0})$ with $C$-basis $\{\tau_z=\tau_z^{V'_{U^0},V_{U^0}}\}$, where $z$ runs through a system of representatives of $Z_{\psi_V, \psi_{V'}}/Z^0$.
Then we have the mod $p$ Satake transform $S^G \colon \mathcal{H}_G(V,V')\hookrightarrow \mathcal{H}_Z(V_{U^0},V'_{U^0})$ which is $C$-linear and injective~\cite{MR3331726}:
\[ 
 S^G(f)(z)(\overline{v})= \sum_{u\in  U^0\backslash U} \overline{f(uz)(v)}, \quad \text{for $f\in \mathcal H_G(V,V'),z\in Z$ and $v\in V$,} 
\]
where $v\mapsto \overline v: V\to V_{U^0}$ (resp.\ $V'\to V'_{U^0}$) is the quotient map from $V$ (resp.\ $V'$) onto its  $U^0$-coinvariants, and we realize $\mathcal H_G(V,V')$ as a set of compactly supported functions on $G$ with a certain $K$-bi-equivariance.

\subsection{}For $\alpha\in\Delta$, let $M'_\alpha$ be the subgroup of $G$ generated by the root subgroups $U_{\pm \alpha}$ for the roots $\pm \alpha$. (Note that this need not be the $F$-points of a closed subgroup of $\mathbf{G}$.)
Then $(Z\cap M'_\alpha)/(Z^0\cap M'_\alpha)\simeq \mathbb{Z}$ and we let $a_\alpha\in Z\cap M'_\alpha$ be a lift of a generator such that $\langle \alpha,v_Z(a_\alpha)\rangle < 0$ \cite[III.16~Notation]{MR3600042}.
Let $\Delta'(V)$ be the set of $\alpha\in\Delta(V)$ such that $\psi_V$ is trivial on $Z^0\cap M'_\alpha$.
The element $\tau^{V_{U^0},V_{U^0}}_{a_\alpha}$ is independent of the choice of $a_\alpha$ if $\alpha\in\Delta'(V)$.
For $z\in Z^+_G(V,V')$, note that
$$Z_z^+(V,V'):=Z^+\cap z\prod_{\alpha \in \Delta'(V)\cap \Delta'(V')}a_\alpha^\mathbb{N}$$
 is a finite subset of $Z_G^+(V,V')$ by Lemma~\ref{lm:Zz-contained-in-ZG}.
 
 \begin{theorem}[Inverse Satake theorem, Theorem \ref{EIST}]\label{EIST in intro}
  
A $C$-basis of the   image of  $S^G$ is given by the elements
\begin{equation}\label{image in intro}
\tau_z^{V'_{U^0},V_{U^0}}    \prod _{\alpha \in \Delta'(V') \setminus  \Delta'(V) }(1 -\tau_{a_\alpha}^{V_{U^0},V_{U^0}} )
\end{equation} for  $z$ running through a system of representatives of $Z_G^+(V,V')/Z^0$ in $Z_G^+(V,V')$.
  
 A $C$-basis of $\mathcal H_G(V,V')$  is given by the elements
 $$\varphi_z= \sum _{x\in Z^+_z(V,V')} T_x^{V',V}$$
  for $z$ running through a system of representatives of $Z_G^+ (V,V')/Z^0$.
  
For   $z\in Z_G^+ (V,V')$ we have:
  $$S^G(\varphi_z)= \tau_z^{V'_{U^0},V_{U^0}}\prod_{\alpha \in \Delta'(V')\setminus \Delta'(V)}(1  -\tau_{a_\alpha}^{V_{U^0},V_{U^0}}).$$   \end{theorem}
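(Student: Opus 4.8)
The plan is to isolate the last displayed identity as the crux and to deduce the two basis statements from it together with the injectivity of $S^G$. First I would reduce statements~(i) and~(ii) to the formula $S^G(\varphi_z)=\tau_z^{V'_{U^0},V_{U^0}}\prod_{\alpha\in\Delta'(V')\setminus\Delta'(V)}(1-\tau_{a_\alpha}^{V_{U^0},V_{U^0}})$. By Lemma~\ref{lm:Zz-contained-in-ZG} the set $Z_z^+(V,V')$ is finite and contained in $Z_G^+(V,V')$, so each $\varphi_z$ is a well-defined finite combination of basis elements $T_x^{V',V}$. Since $v_Z(Z^0)=0$, since $v_Z(a_\alpha)$ is a strictly negative multiple of $\alpha^\vee$ for every $\alpha\in\Delta$, and since the coroots $\alpha^\vee$ ($\alpha\in\Delta$) are linearly independent, the elements of $Z_z^+(V,V')=Z^+\cap z\prod_{\alpha\in\Delta'(V)\cap\Delta'(V')}a_\alpha^{\mathbb N}$ lie in pairwise distinct cosets of $Z^0$, the coset of $z$ occurs with multiplicity one, and every other $x$ satisfies $v_Z(x)-v_Z(z)\in(\sum_{\alpha\in\Delta}\mathbb R_{\le 0}\alpha^\vee)\setminus\{0\}$. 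Hence, over a fixed system of representatives of $Z_G^+(V,V')/Z^0$, the transition matrix from the known basis $\{T_z^{V',V}\}$ to $\{\varphi_z\}$ is unitriangular for the dominance order; this gives~(ii). Granting the identity, injectivity of $S^G$~\cite{MR3331726} then sends the basis $\{\varphi_z\}$ to a basis of $\operatorname{im}S^G$, which is precisely the family in~\eqref{image in intro}; this gives~(i).

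For the identity itself I would first expand the right-hand side: using that the $\tau$'s supported on a single $Z^0$-coset multiply by $\tau_x\tau_y=\tau_{xy}$ and that $\tau_{a_\alpha}^{V_{U^0},V_{U^0}}$ is independent of the chosen lift $a_\alpha$ for $\alpha\in\Delta'(V')$, it becomes the explicit finite sum $\sum_{J\subseteq\Delta'(V')\setminus\Delta'(V)}(-1)^{|J|}\tau^{V'_{U^0},V_{U^0}}_{z\prod_{\alpha\in J}a_\alpha}$. The proof then runs through the pro-$p$ Iwahori Hecke algebra $\mathcal H=\mathcal H(G,I(1))$. Because $V^{I(1)}$ is one-dimensional, one can realize $\mathcal H_G(V,V')$, its geometric basis $\{T_z^{V',V}\}$ and the transform $S^G$ in terms of $\mathcal H$, of its geometric basis $\{T_w\}_{w\in W(1)}$ (here $W(1)$ is the pro-$p$ Iwahori Weyl group) specialized at the characters $\psi_V,\psi_{V'}$ of the finite torus $Z^0/Z(1)$, and of the integral Bernstein (Satake) homomorphism of $\mathcal H$ into a localization of $C[Z/Z(1)]$. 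Under this dictionary each $\tau_{a_\alpha}^{V_{U^0},V_{U^0}}$ corresponds to the specialization of a Bernstein element $E(a_\alpha)$, and the factors $1-\tau_{a_\alpha}$ must emerge from the quadratic relation $T_{s_\alpha}^2=c_{s_\alpha}T_{s_\alpha}$ (there is no $q$-term in characteristic $p$) for the affine reflection $s_\alpha$, fed into the known unitriangular expansions of the $E(a_\alpha)$ in the basis $\{T_w\}$. A convenient organization is to reduce to semisimple rank one by transitivity of the Satake transform along the parabolic with Levi $M_\alpha$, so that the product over $\Delta'(V')\setminus\Delta'(V)$ splits into one rank-one contribution per root; for $\mathbf G$ of semisimple rank one, $\mathcal H$ is an affine Hecke algebra of rank one and the two cases $\Delta'(V')\setminus\Delta'(V)=\emptyset$ and $=\{\alpha\}$ reduce, respectively, to the support computation $S^G(\varphi_z)=\tau_z^{V'_{U^0},V_{U^0}}$ and to a computation with the single quadratic relation.

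The main obstacle is in that last paragraph, and it is two-fold. First, making the dictionary precise: one must match the $\iota$-normalized geometric basis $\{T_z^{V',V}\}$ with specialized data in $\mathcal H$ and verify that $S^G$ corresponds to the specialized integral Satake homomorphism; the finiteness of $Z_z^+(V,V')$ and its exact shape $Z^+\cap z\prod_{\alpha\in\Delta'(V)\cap\Delta'(V')}a_\alpha^{\mathbb N}$ should then be forced by the unitriangularity of the Bernstein-to-geometric change of basis together with Lemma~\ref{lm:Zz-contained-in-ZG}. Second, and most delicately, the bookkeeping of $\Delta'(V)$ against $\Delta'(V')$: the element $c_{s_\alpha}$ is a sum over a finite subgroup of $Z^0/Z(1)$, and on a $\psi$-weight line it specializes to a nonzero scalar exactly when $\psi$ is trivial on that subgroup, i.e.\ when $\alpha\in\Delta'$; carrying the source character $\psi_V$ and the target character $\psi_{V'}$ through the computation is what converts this asymmetry into the product over precisely the difference set $\Delta'(V')\setminus\Delta'(V)$.
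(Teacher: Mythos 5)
Your first paragraph is sound and matches the paper's own logic: the finiteness and containment $Z_z^+(V,V')\subset Z_G^+(V,V')$ from Lemma~\ref{lm:Zz-contained-in-ZG}, the unitriangularity of the change of basis from $\{T_x^{V',V}\}$ to $\{\varphi_z\}$, and the injectivity of $S^G$ do reduce both basis statements to the single identity $S^G(\varphi_z)=\tau_z\prod_{\alpha\in\Delta'(V')\setminus\Delta'(V)}(1-\tau_{a_\alpha})$; this is exactly part \textbf{E}) of the proof of Proposition~\ref{prop:3.7}. The expansion of the right-hand side by inclusion–exclusion is also correct.

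The gap is in your proposed proof of the identity itself, specifically the ``reduction to semisimple rank one by transitivity of the Satake transform along the parabolic with Levi $M_\alpha$.'' Transitivity $S^G=S^{M_\alpha}\circ S^G_{M_\alpha}$ does hold, but to exploit it you must first compute $S^G_{M_\alpha}(T_x^{V',V})$ (or $S^G_{M_\alpha}(\varphi_z)$), and this is only accessible in the degenerate situations of Lemma~\ref{first} (namely $\Delta(V')\subset\Delta_P$ or $\Delta(V')\subset\Delta_z$); in general the computation of $S^G_M$ on Hecke operators is Theorem~\ref{thm:satake for Levi}, which the paper \emph{deduces from} the inverse Satake theorem, so your reduction is circular as stated. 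Moreover a single rank-one Levi $M_\alpha$ cannot isolate the product over the whole set $\Delta'(V')\setminus\Delta'(V)$ when it has more than one element, and even in semisimple rank one the case $\Delta'(V')\setminus\Delta'(V)=\varnothing$ is not ``just a support computation'': when $\alpha\in\Delta'(V)\cap\Delta'(V')$ and $\langle\alpha,v_Z(z)\rangle$ is large, $\varphi_z$ is a sum of several $T_x$ and the cancellations producing $S^G(\varphi_z)=\tau_z$ are exactly the content of Theorem~\ref{VcontV'}. The paper instead proceeds by a different reduction — to the case $\Delta(V')\subset\Delta(V)$, via an auxiliary weight $V''$ with $\Delta(V'')=\Delta(V)\cap\Delta(V')$, a central element $z'$, and the convolution identities of Lemmas~\ref{first} and \ref{second} — and then proves Theorem~\ref{VcontV'} by a global (not rank-one) computation in the pro-$p$ Iwahori Hecke algebra: the intertwiner is realized on a generator as $f_{v'}h_z$ with $h_z=E_{o_{\Delta(V')}}(zn(w_{\Delta(V)}w_{\Delta(V')})^{-1})T^*(n(w_{\Delta(V)}w_{\Delta(V')}))$ (Propositions~\ref{begin}, \ref{end2}), and its expansion in the $T_x$-basis is extracted from the mod-$q$ Iwahori–Matsumoto expansion of $T^*_w$ and the character sums $\psi(c_w^x)$ (Theorems~\ref{*} and \ref{psic}, Proposition~\ref{EJ}). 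Your closing remarks about $c_{s_\alpha}$ specializing to a nonzero scalar exactly when $\psi$ is trivial on the relevant subgroup correctly identify where the set $\Delta'$ enters (cf.\ Lemma~\ref{vani}), but the route you propose to organize that computation does not go through.
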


When  $\Delta'(V')\subset \Delta'(V)$, the convention is that $\prod_{\alpha \in \Delta'(V')\setminus \Delta'(V)}(1  -\tau_{a_\alpha}^{V_{U^0},V_{U^0}}) = 1$. 
  
There is  a Satake transform  $  S^G_{M} :\mathcal H_G( V,V') \to \mathcal H_M( V_{N\cap K}, V'_{N\cap K}) $ for  any parabolic subgroup ${\bf P}={\bf MN}$  containing $\bf B$ with   Levi  subgroup $\bf M$ containing $\bf Z$ \cite[Prop.\ 2.2, 2.3]{MR3001801} with $M = \mathbf{M}(F)$ and $N = \mathbf{N}(F)$.
We  compute also  $S^G_{M} (\varphi_z)$ (Theorem \ref{thm:satake for Levi}).

\subsection{}
From the above theorem, we can easily deduce the following result which implies the change of weight theorem (cf.\ Section~\ref{subsec:Change of weight}).
Suppose that $V,V'$ satisfies that $\psi_V = \psi_{V'}$ and $\Delta(V) = \Delta(V')\sqcup\{\alpha\}$ for some $\alpha\in\Delta$.
Let $Z_{\psi_V}^+$ the subset of $Z^+$ consisting of the elements which normalize $\psi_V$.
Define $c_\alpha$ by
\[
c_\alpha = 
\begin{cases}
1 & \text{if $\alpha\in\Delta'(V)$},\\
0 & \text{otherwise}.
\end{cases}
\]

\begin{theorem}[Theorem~\ref{weight2}]\label{thm2 in intro}
Let $z\in Z_{\psi_V}^+$ such that $\langle \alpha, v_Z(z)\rangle >0$.
Then there exist $G$-equivariant homomorphisms $\varphi:\ind_K^GV\to \ind_K^GV'$ and $\varphi':\ind_K^GV'\to \ind_K^GV$ satisfying
 \[
S^G(\varphi \circ \varphi')=\tau_{z^2}^{V'_{U^0},V'_{U^0}}  - c_\alpha \, \tau^{V'_{U^0},V'_{U^0}} _{ z^2a_\alpha },\quad 
 S^G(\varphi' \circ \varphi) =\tau_{z^2}^{V_{U^0},V_{U^0}}  - c_\alpha \, \tau^{V_{U^0},V_{U^0}} _{ z^2a_\alpha }.
\]
\end{theorem}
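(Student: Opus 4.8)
The plan is to deduce Theorem~\ref{thm2 in intro} from the Inverse Satake theorem (Theorem~\ref{EIST in intro}) by a careful bookkeeping of which simple roots lie in $\Delta'(V)$ versus $\Delta'(V')$. Since $\psi_V = \psi_{V'}$, the two weights have the same central character data, so $\mathcal H_G(V,V')$ and $\mathcal H_G(V',V)$ are both non-zero, and one can fix compatible isomorphisms $\iota\colon V_{U^0}\simeq V'_{U^0}$ and its inverse to normalize the bases on both sides. The key structural input is that $\Delta(V) = \Delta(V')\sqcup\{\alpha\}$ forces $\Delta'(V') \subseteq \Delta'(V)$, and moreover $\Delta'(V)\setminus\Delta'(V') \subseteq \{\alpha\}$: indeed $\psi_V = \psi_{V'}$ means $\psi_V$ is trivial on $Z^0\cap M'_\beta$ for the same set of $\beta$, so the only way a simple root can move between $\Delta'(V)$ and $\Delta'(V')$ is through the discrepancy in $\Delta(V)$ versus $\Delta(V')$, namely $\alpha$. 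Thus $\Delta'(V)\setminus\Delta'(V') = \{\alpha\}$ if $\alpha\in\Delta'(V)$ (i.e.\ $c_\alpha = 1$) and is empty otherwise (i.e.\ $c_\alpha = 0$).

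Next I would produce the two maps. Apply Theorem~\ref{EIST in intro} to the pair $(V,V')$: since $\Delta'(V')\setminus\Delta'(V) = \emptyset$, the product over $\Delta'(V')\setminus\Delta'(V)$ is empty, so $S^G(\varphi_z) = \tau_z^{V'_{U^0},V_{U^0}}$ for every $z\in Z_G^+(V,V')$; in particular one gets a map $\varphi_1 := \varphi_z\colon \ind_K^G V\to \ind_K^G V'$ with $S^G(\varphi_1) = \tau_z^{V'_{U^0},V_{U^0}}$ (using that $z$ itself, or rather a suitable power, lies in $Z_G^+(V,V')$; one must check $z\in Z_{\psi_V}^+$ with $\langle\alpha,v_Z(z)\rangle > 0$ indeed lands in $Z_G^+(V,V')$, which should follow from the definition of $Z_G^+(V,V')$ recalled in \S\ref{GS}). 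Symmetrically, apply Theorem~\ref{EIST in intro} to the pair $(V',V)$: now $\Delta'(V)\setminus\Delta'(V')$ equals $\{\alpha\}$ or $\emptyset$ according to $c_\alpha$, so $S^G(\varphi'_z) = \tau_z^{V_{U^0},V'_{U^0}}(1 - \tau_{a_\alpha}^{V'_{U^0},V'_{U^0}})^{c_\alpha}$, giving $\varphi_2\colon\ind_K^G V'\to\ind_K^G V$ with that Satake image. Here one should take $z$ in both applications to be the same element of $Z^+$ in the statement (after checking it represents a class in both $Z_G^+(V,V')/Z^0$ and $Z_G^+(V',V)/Z^0$).

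Then I would set $\varphi = \varphi_1$, $\varphi' = \varphi_2$ and compute the composites via the multiplicativity of the Satake transform on Hecke bimodules. On the $Z$-side the relevant algebra is commutative on the pieces that matter (the $\tau$'s for elements of $Z^+$ commute), so
\[
S^G(\varphi\circ\varphi') = S^G(\varphi_1)\, S^G(\varphi_2) = \tau_z^{V'_{U^0},V_{U^0}}\cdot \tau_z^{V_{U^0},V'_{U^0}}\bigl(1 - \tau_{a_\alpha}^{V'_{U^0},V'_{U^0}}\bigr)^{c_\alpha}.
\]
The product $\tau_z^{V'_{U^0},V_{U^0}}\tau_z^{V_{U^0},V'_{U^0}}$ should equal $\tau_{z^2}^{V'_{U^0},V'_{U^0}}$ up to a normalization that the choice of $\iota$ makes trivial — this is where the explicit description of the $\tau$-basis and its multiplication law in $\mathcal H_Z$ (from \S\ref{intVV'} and the references) is used: on the torus $Z$, $U^0\backslash U$ plays no role (or rather contributes an invertible constant absorbed into $\iota$), and $\supp\tau_z^{V'_{U^0},V_{U^0}} = Z^0 z Z^0$, so the convolution lands on $Z^0 z^2 Z^0$. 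Expanding $(1-\tau_{a_\alpha})^{c_\alpha}$: if $c_\alpha = 0$ it is $1$, giving $\tau_{z^2}^{V'_{U^0},V'_{U^0}}$; if $c_\alpha = 1$ it is $1 - \tau_{a_\alpha}^{V'_{U^0},V'_{U^0}}$, and $\tau_{z^2}^{V'_{U^0},V'_{U^0}}\tau_{a_\alpha}^{V'_{U^0},V'_{U^0}} = \tau_{z^2 a_\alpha}^{V'_{U^0},V'_{U^0}}$ (again $\supp$ multiplies, and $z^2, a_\alpha$ both lie in $Z^+$, at least after noting $\langle\alpha,v_Z(z^2)\rangle$ is large enough to absorb the negative contribution of $a_\alpha$ and keep the product in $Z^+$; one should check $z^2 a_\alpha\in Z_G^+(V',V')$). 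This yields exactly $S^G(\varphi\circ\varphi') = \tau_{z^2}^{V'_{U^0},V'_{U^0}} - c_\alpha\,\tau_{z^2 a_\alpha}^{V'_{U^0},V'_{U^0}}$, and the computation of $S^G(\varphi'\circ\varphi)$ is identical with the roles of $V$ and $V'$ swapped.

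The main obstacle, I expect, is not the Satake-image computation itself but the bookkeeping that makes the normalizations line up: one must choose $\iota$ once and for all so that the $\tau$-basis elements for $(V,V')$ and for $(V',V)$ are genuinely inverse to each other in the relevant sense, so that no stray scalar appears in $\tau_z^{V'_{U^0},V_{U^0}}\tau_z^{V_{U^0},V'_{U^0}} = \tau_{z^2}^{V'_{U^0},V'_{U^0}}$; and one must verify the membership conditions ($z\in Z_G^+(V,V')\cap Z_G^+(V',V)$, $z^2\in Z_G^+(V',V')$, $z^2 a_\alpha\in Z^+$ and in $Z_G^+(V',V')$) so that all the symbols $\tau_{(-)}$ and $\varphi_{(-)}$ that appear are actually defined. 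Both of these are bridged by the definitions recalled in \S\ref{GS} and the product formula for the $\tau$'s on the torus, so the argument is essentially a specialization and repackaging of Theorem~\ref{EIST in intro} rather than new work.
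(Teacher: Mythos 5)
Your proposal is correct and is essentially the paper's own (first) proof of Proposition~\ref{ISimpliesCW}: apply Theorem~\ref{EIST} to the pairs $(V,V')$ and $(V',V)$, observe that $\Delta'(V')\setminus\Delta'(V)=\varnothing$ while $\Delta'(V)\setminus\Delta'(V')$ is $\{\alpha\}$ or $\varnothing$ according to $c_\alpha$, and multiply the Satake images using multiplicativity of $S^G$ and the convolution rule for the $\tau$'s. The one step to phrase more carefully is $\tau_{za_\alpha}\tau_z=\tau_{z^2a_\alpha}$ in the second composite: this is not a general commutativity of the $\tau$-basis (the group $Z$ need not be commutative), but follows from the centrality of $\tau_\alpha$ in $\mathcal Z_Z(V_{U^0})$ when $c_\alpha=1$ (Remark~\ref{not0}), which is exactly what the paper invokes at the end of its computation.
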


In Section~\ref{sec:simple-proof-change} we give a simple proof of Theorem \ref{thm2 in intro} (and hence of the change of weight theorem) when $\mathbf{G}$ is split.
It is more elementary than the other proofs we know in this case.
In particular, we do not use the pro-$p$ Iwahori Hecke algebra or the Lusztig-Kato formula.
In the proof we first reduce to the case where $\mathbf{G}$ has simply-connected derived subgroup and connected center, and $v_Z(z)$ is minuscule.
We construct many parabolically induced representations which contain $V$ but not $V'$.
From this we deduce that if $\varphi = T_z^{V',V}$ and $\varphi' = T_z^{V,V'}$, then $S^G(\varphi'\circ\varphi)$ is so constrained that it is forced to be equal to $\tau_{z^2}^{V_{U^0},V_{U^0}} - \tau_{z^2a_\alpha}^{V_{U^0},V_{U^0}}$.

In the appendix, two of us (N.A.\ and F.H.) show that the simple proof of the change of weight theorem
can be made to work, with some effort, for all quasi-split groups $\mathbf{G}$, at least for most choices
of special parahoric subgroup $K$. We do not know a simple proof for general $\mathbf{G}$ (or for
the remaining choices of $K$ when $\mathbf G$ is quasi-split), partly because
the method seems less powerful in the case where $c_\alpha = 0$.

\subsection{}
We  briefly explain the strategy of the proof of Theorem~\ref{EIST in intro}.
In \cite{MR2845621} when $\mathbf{G}$ is split and the derived subgroup is simply-connected, we assumed $V = V'$ and first made a reduction to the case where $\dim V = 1$.
Since $\mathbf{G}$ is split, the character $V$ of $K$ can be extended to a character of $G$ which allows us to reduce to the case where $V$ is trivial and use the characteristic zero formula of Lusztig-Kato.
This argument cannot work for general $\mathbf{G}$ since a character of $K$ need not extend to $G$.
For example, this can happen when $G = D^\times$ where $D$ is a (non-commutative) division algebra over $F$.

In our proof, we treat arbitrary pairs $(V,V')$.
First we make a reduction to the case where $\Delta(V')\subset\Delta(V)$ using properties of Satake transform and the convolution of Hecke operators (Lemmas~\ref{first}, \ref{second}).
When $\Delta(V')\subset \Delta(V)$, using a calculation in \cite[\S IV]{MR3600042}, we can express the inverse of the Satake transform using an alcove-walk basis of the pro-$p$ Iwahori Hecke algebra (Proposition~\ref{begin}).
Combining this with an explicit calculation of the alcove-walk basis (Proposition~\ref{EJ}), we get Theorem~\ref{EIST in intro}. More details are given below.

\subsection{} \label{S:1.14}
 Let  $\mathcal H_G$ be the Hecke $\mathbb Z$-algebra of the pro-$p$ Iwahori group  $I=K(1)U_{\op}^0$, 
 where $K(1)$ is the pro-$p$ radical of $K$ and   $U_{\op}^0=K\cap U_{\op}$, where $\mathbf{U}_{\op}$ is the opposite to $\bf U$ (with respect to $\mathbf Z$).  We also let $Z(1) = Z\cap K(1)$.
Until the end of this introduction we assume  $\Delta (V')\subset \Delta(V)$ and $z\in Z_G^+(V,V')$.  We now explain how the theory of $\mathcal H_G$ allows us to prove
$$\tau_z^{V'_{U^0},V_{U^0}}=S^G(\varphi_z)$$ 
in Theorem~\ref{EIST in intro}, hence the inverse Satake theorem.

 Once we choose a non-zero element $v\in V_{U^0}$ and let $v'\in V'_{U_0}$ correspond to $v$ under our fixed isomorphism $\iota : V_{U^0}\simeq V'_{U^0}$, we define embeddings 
$$\ind_K^G V \xrightarrow{I_v} \mathfrak X _G, \quad \ind_K^G V'  \xrightarrow{I_{v'}}\mathfrak X_G,\quad  \ind_{Z^0}^Z V_{U^0}\xrightarrow{j_v} \mathfrak X_Z ,\quad  \ind_{Z^0}^Z V'_{U^0}\xrightarrow{j_{v'}} \mathfrak X_Z,  $$
of $\ind_K^G V$ and $ \ind_K^G V'  $ in the  parabolically induced representation  $ \mathfrak X_G= \Ind_B^G( \ind_{Z(1)}^Z C)$ and of 
 $\ind_{Z^0}^Z V_{U^0}$ and $\ind_{Z^0}^Z V'_{U^0}$  in $\mathfrak X_Z=   \ind_{Z(1)}^Z C$. We have
 $$I_v=(\Ind_B^G j_v ) \circ  I_V,\quad  I_{v'}=(\Ind_B^G j_{v'} ) \circ I_{V'}$$
  for   the canonical
$C[G]$-embedding $\ind_K^G V\xrightarrow{I_V}  \Ind_B^G(\ind_{Z^0}^Z V_{U^0})$   \cite{MR3001801}, and similarly for $I_{V'}$. 
  The representation $\ind_K^G V$ is generated by the $I$-invariant element $f_v$, which is supported on $K$ and is such that
$f_v(1)$ lies in $V^{U_{\op}^0}$ and maps to $v \in V_{U^0}$. Similarly for $f_{v'}\in \ind_K^GV'$. 

  Then,  $ I_v(f_v), I_{v'}(f_{v'} )$ lie in the  $(\mathcal{H}_Z,\mathcal{H}_G)$-bimodule $\mathfrak X_G^I =( \Ind_B^G( \ind_{Z(1)}^Z C))^I$.  
Let  $\tau(z)\in \mathcal{H}_Z$ be the characteristic function of $zZ(1)$.  

The first key ingredient is Proposition \ref{end2} (which  generalizes \cite[IV.19 Thm.]{MR3600042}): 
$$\text{\it We give  an explicit element $h_z\in \mathcal{H}_G$ such that $\tau(z)I_v(f_v)= I_{v'}(f_{v'})h_z$}.$$
 We deduce  (Proposition \ref{begin}):   there exists an intertwiner $\phi_z:\ind_K^G V\to \ind_K^G V'$ defined by  
 $$\phi_z(f_v) = f_{v'} h_{z} .$$  
Moreover,  $\tau_z^{V'_{U^0},V_{U^0}}=S^G(\phi_z)$.    
The second key ingredient is the computation of   $f_{v'} h_z \in (\Ind_K^G V')^I$  on $Z^+$: 
$$\text{\it  The function $f_{v'} h_z $ vanishes on $Z^+\setminus Z^0 Z^+_z(V,V')$ and is equal to $v'$ on $Z^+_z(V,V')$}.$$
We prove that it implies $\varphi_z=\phi_z$  (proof of Proposition \ref{expa}).

\subsection{}   We develop in Section~\ref{S:4} the theory of the pro-$p$ Iwahori Hecke algebra $\mathcal{H}_G$ behind the  computation of  $f_{v'} h_z |_{Z^+}$.

 Let    $\mathcal N$ be the $G$-normalizer of $Z$, $W(1)=\mathcal N/ Z(1)$  the   pro-$p$ Iwahori Weyl group, $\lambda_x \in W(1)$ the   image of $x\in Z$ and  $Z_k $ the image on $Z^0$ in $W(1)$.  It is well known that the natural map 
$W(1) \to I\backslash G/I$ is bijective.  
The element $h_z\in \mathcal{H}_G$ is given as a product 
 (Propositions  \ref{begin}, \ref{end2}):  
$$ h_z =E'_{\lambda_z w_{V,V'}^{-1}}T^*_{w_{V,V'}},$$
where   $ (E'_w)_{w \in W(1)}$ is a certain alcove walk basis of $\mathcal{H}_G$ (which depends on $V'$),  $(T^*_w)_{w \in W(1)}$ a non alcove walk basis of $\mathcal{H}_G$,  and  $w_{V,V'}\in W(1)$ is a   lift of the product in $\mathcal N/Z$  of the longest elements of the finite Weyl groups associated to $\Delta(V)$ and $\Delta(V')$. 
  
   The  two bases are related  by   triangular matrices to the   classical Iwahori-Matsumoto basis $(T_w)_{w \in W(1)}$  of $\mathcal{H}_G$,   where $T_w$ is the characteristic function of  $InI$ for $n\in \mathcal N$ lifting $w$.  
 We have $$T_w^*=\sum_{u\in W(1), u\leq w}c^*(w,u)T_u$$ with coefficients $c^*(w,u)\in C$ and $c^*(w,w)=1$, where $\leq$ is the Bruhat (pre)order on $W(1) $ associated to $B$  (see \eqref{T*}). Let $M$ be the Levi subgroup of $G$   containing $Z$ associated to $\Delta(V')$; an  index $M$ indicates an object relative to $M$ instead of $G$.  
  It was a surprise to  discover (partially following an idea of Ollivier~\cite{MR3263136})  that the coefficients of the expansion of the alcove walk element  $E'_{\lambda_z w_{V,V'}^{-1}}$  in the   classical  basis of $\mathcal{H}_G$
 are given by  the coefficients  $c ^{M,*}(\lambda_z,u)$ of  the expansion of the non alcove walk basis element $T^{M, *} _{\lambda_z}\in \mathcal{H}_{M}$  in the classical basis  $(T^{M} _w)_{w\in W_{M(1)}}$ of $\mathcal{H}_{M}$. Recall that  $\mathcal{H}_{M}$ is not a subalgebra of $\mathcal{H}_G$, and that 
 the restriction to $W_{M}(1)$ of the Bruhat order $\leq$ on $W(1)$ is not equal to the Bruhat order   $\leq^{M}$ associated to $B_{M}=M\cap B$.  We show
  (Proposition \ref{EJ}):
$$E'_{\lambda_z w_{V,V'}^{-1}} = \sum_{u\in W_{M}(1), \ u\leq^{M} \lambda_z} c ^{M,*}(\lambda_z,u) T_{u w_{V,V'}^{-1}} .$$
  We carry out a detailed study  of   the sum
 $\sum_{t\in Z_k} c^*(w,tu)T_t$ modulo $q = \# k$ for $w,u\in W(1), u\leq w$. In particular,   we show (Theorems  \ref{*}, \ref{psic}),  for a character $\psi:Z_k\to C^\times$:
 $$\text{\it For} \  x\in Z^+ \ \text{\it and} \  \lambda_x\leq \lambda_z,  \ \text{\it we have} \  \sum_{t\in Z_k} c^*(\lambda_z ,t\lambda_x)\psi(t)= \begin{cases} 1 \ &\text{if } \ x\in Z^0z\prod _{\alpha \in \Delta'_{\psi}}a_\alpha^{\mathbb N},\\
  0 \ &\text{\it otherwise}.
  \end{cases}
  $$
Here $\Delta'_\psi=\{\alpha \in \Delta \ | \ \psi \ \text{is trivial on }\ Z^0\cap M'_\alpha\}$.  With a ``little more'' we deduce that on $Z^+$, 
\begin{equation*}
f_{v'} E'_{\lambda_z w_{V,V'}^{-1}}T^*_{w_{V,V'}}= f_{v'} \sum_{x\in Z_z^+(V,V')} \sum_{t\in Z_k} c ^{M,*}(\lambda_z,t\lambda_x) \psi_{V'}^{-1}(t)T_{\lambda_x}= f_{v'}\sum_{x\in Z_z^+(V,V')}T_{\lambda_x}.
\end{equation*}
 By the  ``little more'', we mean: if $u\in W_{M}(1)$ and $f_{v'}  T_{u w_{V,V'}^{-1}} T^*_{w_{V,V'}}$ does not vanish on $Z^+$ then $u\in Z^+/Z(1)$  (see \eqref{claim1}). The two conditions $u\in Z^+/Z(1)$ and $ u\leq^{M} \lambda_z$ are equivalent to $u=\lambda_x$  for   $x\in Z^0Z_z^+(V,V')$   (Proposition \ref{L+order}). For $x\in Z^0Z_z^+(V,V')$, we have   $f_{v'} T_{\lambda_x w_{V,V'}^{-1}}T^*_{w_{V,V'}}= f_{v'} T_{\lambda_x w_{V,V'}^{-1}}T_{w_{V,V'}}$ on $Z^+$  (see \eqref{claim2}). Then we use the braid relation  $T_{\lambda_x w_{V,V'}^{-1}}T_{w_{V,V'}}= T_{\lambda_x}$, that  $f_{v'} T_{t\lambda_x}= \psi_{V'}^{-1}(t)f_{v'}T_{\lambda_x}$ for $t\in Z_k$, and that $\Delta^{M}_{\psi_{V'}^{-1}}=\Delta' (V')=\Delta'(V)\cap \Delta' (V')$. 

From $f_{v'} h_z =f_{v'}\sum_{x\in Z_z^+(V,V')}T_{\lambda_x}$ on $Z^+$ --
and checking easily
that $ f_{v'} T_{\lambda_x}$ is supported on $KxI $ with value $v'$ at $x$, and $Z^+\cap KxI=Z^0x$,  for all $ x\in Z_z^+(V,V')$ -- we obtain  the desired value of $ f_{v'} h_z$ on $Z^+$ (\S \ref{S:1.14}). 

\section{Change of weight  and  Inverse Satake isomorphism}
\label{sec:change-weight-invers} 
 
\subsection{Notation}\label{Notation}Throughout this paper we follow the notation given in \cite{MR3600042}. As in loc.\ cit., let $F$ be a nonarchimedean field with ring of integers $\mathcal O$ and residue field $k$  of characteristic $p$ and cardinality $q$. Let $\ord_F : F^\times \to \Z$ denote the normalized valuation of $F$. A linear algebraic $F$-group is denoted with a boldface letter like $\bf H$ and the group of its $F$-points with the corresponding ordinary letter  $H={\bf H}(F)$; we use the similar convention for groups over $k$.  
   Let $\bf G$ be a connected reductive $F$-group.
   
   We fix a triple $({\bf S}, {\bf B}, x_0)$  where $\bf S$ is  a maximal torus in $\bf G$,   $\bf B$ a minimal $F$-parabolic subgroup of $\bf G$ containing $\mathbf{S}$ with unipotent radical $\bf U$ and Levi subgroup   the centralizer $\bf Z$ of $\bf S$ in $\bf G$, and $x_0$ 
     a special point in the apartment corresponding to $ S$ in the adjoint Bruhat-Tits building of $G$.

We write  $\boldN$ for the normalizer of $\bf S$ in $\bf G$. If $X^*(\bf S)$ is the group of characters of $\bf S$ and  $X_*(\bf S)$ is the group of cocharacters,
  we write $\langle \ , \ \rangle: X^*(\mathbf{S})\times X_*(\mathbf{S})\to \mathbb Z$ for the natural pairing. We let $\Phi \subset X^*(\mathbf{S})$ be the set of roots of $\bf S$ in $\bf G$ and we write $\Delta$ for the set of simple roots in the set $\Phi^+$ of positive roots with respect to $\bf B$. For $\alpha \in \Phi$, the corresponding coroot in $X_*({\bf S})$ is denoted by $\alpha^\vee$. For $\alpha, \beta\in \Phi$, we say that $\alpha$ is orthogonal to $\beta$ if and only if $\langle \alpha, \beta^\vee \rangle=0$. The Weyl group $W_0:=\cn/Z \simeq \boldN/\bf Z$ is isomorphic to the Weyl group of $\Phi$. 
  
  We say that $P$ is a parabolic subgroup of $G$ to mean that $P={\bf P}(F)$ where $\bf P$ is an $F$-parabolic subgroup of $\bf G$.   If $P$ contains $B$, we write $P=MN$ to mean that $N$ is the unipotent radical of $P$ and $M$ the (unique) Levi component containing $Z$; we   write $P_{\op}=MN_{\op}$ for the parabolic subgroup opposite to $P$ with respect to $M$. The parabolic subgroups containing $B$ are in one-to-one correspondence  with the subsets of $\Delta$; we denote by $P_J=M_JN_J$ the  group corresponding to $J\subset \Delta$  (when $J=\{\alpha\}$ we write simply $P_{\alpha}=M_\alpha N_\alpha$). 
  
     The apartment corresponding to $ S$ in the adjoint Bruhat-Tits building of $G$ is an affine space $x_0+ V_{\ad}$ where $V_{\ad}:= X_*({\bf S}_{\ad})\otimes \mathbb R$ and  $ {\bf S_{\ad}}$ is the torus image of $\bf S$ in the adjoint group  $\bf G_{\ad}$ of $\bf G$.  The group $\cn$ acts by affine automorphisms on the apartment, its subgroup $Z$ acting by translation by  $\nu=-v$ where $v:  Z\to V_{\ad}$ is the composite of  the map $v_Z:Z\to X_*(\bf S)\otimes \mathbb R$  defined in \cite[3.2]{MR3331726} and  of the natural quotient map $X_*({\bf S})\otimes \mathbb R\onto X_*({\bf S}_{\ad})\otimes \mathbb R$.  
(We recall that $v_Z$ is determined by the requirement that $\langle\chi,v_Z\rangle = \ord_F \circ \chi$ for all $F$-rational characters $\chi$ of $\bf Z$.)
The root system of $\bf S_{\ad}$ in $\bf G_{\ad}$ identifies with $\Phi$.  The coroot of $\alpha\in \Phi$ in $V_{\ad}$ is the image of the coroot $\alpha^\vee\in X_*(\bf S)\otimes \mathbb R$ by the quotient map, and is still denoted by $\alpha^\vee$. 
     
   As in \cite[I.5]{MR3600042} we write $K$ for the special parahoric subgroup of $G$ fixing $x_0$ and $K(1)$ for the pro-$p$ radical of $K$. For a subgroup $H$ of $G$, we put $H^0:=H\cap K$ and $\overline H :=(H\cap K)/(H\cap K(1))$. The group $S^0$ is the maximal compact subgroup of $S$,  $Z^0$ is the unique parahoric subgroup of $Z$ and   $Z(1):=Z\cap K(1)$ is the unique pro-$p$ Sylow subgroup of $Z^0$.
  The group $G_k:=\overline G=\overline K$ is naturally the group of $k$-points of a connected reductive $k$-group $\mathbf G_k$, of minimal parabolic subgroup  $B_k:=\overline B$ with Levi decomposition   $B_k=Z_kU_k$ where   $Z_k := \overline Z$ and $U_k:=\overline U$. The set of simple roots of the maximal split torus $S_k=\overline S$ of $G_k$ with respect to $B_k$ is  in natural bijection with $\Delta$  and   will be identified  with $\Delta$. For $J\subset \Delta$, the corresponding parabolic subgroup $P_{J,k}$ of $G_k$ containing $B_k$ is $ \overline P_J $; its Levi decomposition is $P_{J,k}=M_{J,k}N_{J,k} $ where $M_{k ,J}= \overline M_J$ and $N_{J,k}=\overline N_J$. We write $P_{J,k,\op}=M_{J,k}N_{J,k,\op} $ for the parabolic  group  opposite to 
$P_{J,k}$ with respect to    $M_{J,k}$.

We fix an algebraically closed field $C$ of characteristic $p$. In this paper, a representation  means a smooth representation on a $C$-vector space.

  \subsection{The Satake transform \texorpdfstring{$S_M^G$}{S\_M\^{}G}}  \label{GS}
  Let $V$ be an irreducible  representation of the special parahoric subgroup $K$ of $G$; the normal pro-$p$ subgroup $K(1)$ of $K$ acts trivially on $V$ and the action of $K$ on $V$ factors through the finite reductive group $G_k$. Seeing $V$ as an irreducible representation of $G_k$, we attach to $V$  a character $\psi_V$ of $Z_k$ and a subset $\Delta(V)\subset \Delta$ as in  \cite[III.9]{MR3600042}; the space of $U_k$-coinvariants $V_{U_k}$ of $V$  is a line on which $Z_k$ acts by $\psi_V$  and the $G_k$-stabilizer of the kernel of the natural map $V\to V_{U_k}$ is $P_{\Delta(V),k}$. The pair $(\psi_V, \Delta(V))$, called the parameter of $V$, determines $V$. The character  $\psi_V$ can be seen as the character of $Z^0$  acting on the space $U^0$-coinvariants $V_{U^0}$ of $V$.  
    
   Let $P=MN$ be the parabolic subgroup of $G$ containing $B$ corresponding to $J\subset \Delta$. Then $M^0$ is a special parahoric subgroup of $M$ and $V_{N^0}$ is an irreducible representation of $M^0$ with parameter $(\psi_V, J\cap \Delta (V))$ \cite[III.10]{MR3600042}.%
 
  The compact induction   $\ind_{K}^GV$ of $V$ to $G$ is the representation of $G$ by right translation on the space of functions $f:G\to V$ with compact support satisfying $f(kg)=kf(g)$ for all $k\in K, g
  \in G$.  We view the intertwining algebra $\End_{CG}(\ind_K^GV)$ as the convolution algebra $\mathcal H_G(V)$ of compactly supported functions $\varphi:G\to \End_C(V)$ satisfying $\varphi(k_1 g k_2)=k_1 \varphi(g) k_2$ for all $k_1,k_2\in K, g\in G$. The action of $\varphi\in  \mathcal H_G(V)$ on $f\in \ind_{K}^G(V)$ is given by convolution
  \begin{equation}\label{action0}
 ( \varphi * f)(g)= \sum_{x\in G/K} \varphi(x)( f(x^{-1}g)).
  \end{equation}
  We have also the algebra $\End_{CM}(\ind_{M^0}^M(V_{N^0}))\simeq \mathcal H_M(V_{N^0})$. The Satake transform  is a natural injective algebra homomorphism   \cite[III.3]{MR3600042} 
   \begin{equation*}  
    S_M^G: \mathcal H_G(V) \hookrightarrow \mathcal H_M(V_{N^0});
  \end{equation*}
  it induces an homomorphism between the centers $\mathcal Z_G(V) \to \mathcal Z_M(V_{N^0})$; 
  both homomorphisms are localizations at a central element  \cite[I.5]{MR3600042}.    

 For a representation $\sigma$ of $M$, the parabolic induction  $\Ind_P^G \sigma$   of $\sigma$ to $G$  is the representation of $G$ by right translation on the space of functions $f:G\to \sigma $ satisfying $f(mn gk)=mf(g)$ for all $m\in M, n\in N, g
  \in G, k$ in some open compact subgroup of $G$ depending on $f$.   The canonical isomorphism
  \begin{equation*} 
  \Hom_{CG}(\ind_K^G V, \Ind_P^G\sigma ) \xrightarrow{\sim}  \Hom_{CM}(\ind_{M^0}^M V_{N^0},  \sigma)
  \end{equation*} 
  is  $\mathcal H_G(V)$-equivariant via  $S_M^G$ \cite[\S 2]{MR3001801}.

\subsection{The Satake transform \texorpdfstring{$S^G=S^G_Z$}{S\^{}G=S\_Z\^{}G}}\label{sec:satake-transf}  As in \cite[III.4]{MR3600042}, the algebra $\mathcal H_Z(V_{U^0})$ is easily described. The unique parahoric subgroup $Z^0$ of $Z$  being normal, for $z\in Z$ we have the character $z\cdot\psi_V$ of $Z^0$ defined by $(z\cdot\psi_V)(x)= \psi_V (z^{-1}x z) , x\in Z^0$. Let $$Z_{\psi_V}=\{z\in Z \ | z\cdot\psi_V=\psi_V\} $$ be the $Z$-normalizer of $\psi_V$.  For $z\in Z_{\psi_V}$, there is a unique function $\tau_z\in  \mathcal H_Z(V_{U^0})$ of support $zZ^0$ with $\tau_z(z) = \id_{V_{U^0}}$. 
A basis of $\mathcal H_Z(V_{U^0})$  is given by  the functions $\tau_z$
where $z$ runs through a system of representatives of $Z_{\psi_V}/Z^0$ in $Z_{\psi_V}$.
The multiplication satisfies $\tau_{z_1} * \tau _{z_2}=\tau_{z_1 z_2} $.  The function $\tau_z$ belongs to the center $\mathcal Z_Z(V_{U^0})$  if and only if $\psi_V (z^{-1}x z x^{-1})=1$ for all $x\in Z_{\psi_V}$. We write also $\tau_z=\tau_z^{V_{U^0}}$.

 Let 
\begin{equation*} Z^+ =\{z\in Z \  | \ \langle \alpha, v_Z (z)\rangle \geq 0 \ \text{for all} \ \alpha\in \Delta\}.
\end{equation*}
be the dominant submonoid of $Z$. 
For a subset $H$ of $Z$  we write $H^+=H\cap Z^+$. 

When $M=Z$ we put $S^G=S_Z^G$.  The image of  $S^G$ is 
\begin{equation}\label{imS}S^G(\mathcal H_G(V))= \bigoplus_z C \tau_z \end{equation}
for $z$ in a system of representatives of $Z^+_{\psi_V}/Z^0$ in $Z^+_{\psi_V}$ (see \cite{bib:satake} when $\mathbf G$ is unramified and \cite{MR3331726} in general). 
 For another irreducible representation $V'$ of $K$ with $\psi_V=\psi_{V'}$,  we have a canonical $Z^0$-equivariant isomorphism $\End_{C}(V_{U^0}) \simeq \End_{C}(V'_{U^0})$
and hence a canonical isomorphism   $i_Z:  \mathcal H_Z(V_{U^0})\xrightarrow{\simeq} \mathcal H_Z(V'_{U^0})$  (sending  the function  $\tau _z\in \mathcal H_Z(V_{U^0})$ to the function  $\tau_z\in \mathcal H_Z(V'_{U^0})$ for all $z\in Z_{\psi_V}$). It 
induces a  canonical isomorphism \begin{equation}\label{iso}
 i_G: \mathcal H_G(V)\xrightarrow{\simeq} \mathcal H_G(V')
 \end{equation}
satisfying $S^G \circ i_G = i_Z \circ S^G$.

\subsection{The elements \texorpdfstring{$a_\alpha $}{a\_alpha}} 
\label{sec:elem-a-alpha}

 Let $G'$ be the group generated by $U$ and $U_{\op}$ (this is not the group of $F$-points of a linear algebraic group in general). The action of $\cn $ on the apartment $x_0+V_{\ad}$  induces an isomorphism from $(\cn\cap G')/(Z^0\cap G')$ onto the affine Weyl group $W^{\aff}$ of a reduced root system 
  \begin{equation}\label{Phia}
  \Phi_a =\{\alpha_a:=e_\alpha \alpha \ | \ \alpha \in \Phi\} 
   \end{equation} 
on $V_{\ad}$,     where 
 $e_\alpha$ for $\alpha\in \Phi$ are positive integers  \cite[Lemma 3.9]{MR3484112}, \cite[VI.2.1]{MR1890629}. The map $\alpha \to \alpha_a $ gives a bijection from $\Delta$ to a set  $\Delta_a$ of simple roots of $\Phi_a$; the coroot  in  $X_*({\bf S}_{\ad})\otimes \mathbb R$ associated to $\alpha_a$ is $\alpha_a^\vee=e_\alpha ^{-1}\alpha^\vee$; the homomorphism $\nu=-v: Z\to V_{\ad}$ induces a quotient map 
  $ Z\cap G'\onto  \oplus_{\alpha \in \Delta} \mathbb Z  \alpha_a ^\vee $  with kernel $Z^0\cap G'$.
 An element $z\in Z$ belongs to $Z^+$ if and only if $\nu(z)$ lies in the closed antidominant Weyl chamber 
 \begin{equation}\label{D-}
 \mathfrak D^-=\{x\in V_{\ad} \ | \ \langle \alpha_a, x\rangle\leq 0 \  \text{for} \ \alpha \in \Delta\}.
 \end{equation} 
 For $\alpha \in \Delta$  we also have $M'_\alpha$ and  the quotient map 
  $ Z\cap M'_\alpha\onto  \mathbb Z  \alpha_a ^\vee $  with kernel $Z^0\cap M'_\alpha$ induced by $\nu$ \cite[III.16]{MR3600042}.
\begin{definition}\label{delta'psi} For a character $\psi  :Z^0\to C^\times$ and $\alpha \in \Delta$,   let 
\begin{align*}&\Delta'_\psi=\{\alpha \in \Delta \ | \ \psi \ \text{ is trivial on } \ Z^0\cap M'_\alpha\},\\
&\text{$a_\alpha \in  Z\cap M'_\alpha$ such that $\nu(a_\alpha)= \alpha_a^\vee$.}
\end{align*}
\end{definition}
 If $\alpha\in \Delta'_\psi$, then $ Z\cap M'_\alpha$ is contained in the $Z$-normalizer $Z_\psi$  of $\psi$, 
   \begin{equation*} \tau_\alpha :=\tau_{a_\alpha}\in \mathcal H_Z(\psi)
   \end{equation*}
     does not depend on the choice of $a_\alpha$, and belongs to the center   $\mathcal Z_Z(\psi)$ \cite[III.16]{MR3600042}. The set $\Delta'_\psi $ is  included in the subset $ \Delta(\psi)$ of $\Delta$ defined by \eqref{deltapsi} (cf.\ Remark~\ref{a-1}).

 \subsection{Change of weight}\label{subsec:Change of weight}
Let $V'$ and $V$ be two irreducible representations of $K$ with parameters     $\psi_V=\psi_{V'} , \Delta(V)=\Delta(V')\sqcup\{\alpha\}$ where $\alpha \in \Delta-\Delta(V')$,  let $\chi:\mathcal Z_G(V)\to C$ be a character of the center of $\mathcal H_G(V)$,  let $P=MN$ denote the smallest parabolic subgroup of $G$ containing $B$ such that $\chi$ factors   through  $S^G_M$, and let  $\Delta(\chi)$ be the  subset of $\Delta$ corresponding to $P$ (denoted by $\Delta_0(\chi)$ in \cite[III.4 Notation]{MR3600042}). We have the  homomorphism $\chi': \mathcal Z_G(V')\to C$ corresponding to $\chi$ via the isomorphism \eqref{iso}. 
 
 \begin{theorem}[Change of weight]\label{weight}
 Assume $\alpha \not\in \Delta(\chi)$.
 The representations  $\chi \otimes_{\mathcal Z_G(V)} \ind_K^GV$ and $ \chi' \otimes_{\mathcal Z_G(V')} \ind_K^GV' $ of $G$ are isomorphic unless  
 $$\alpha \ \text{is orthogonal to} \ \Delta(\chi),\   \psi_V  \ \text{is  trivial on} \ Z^0\cap M'_\alpha, \ \chi (\tau_\alpha)=1.$$
 \end{theorem}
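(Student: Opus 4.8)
The plan is to derive the change of weight theorem from the inverse Satake computation stated in Theorem~\ref{thm2 in intro} (Theorem~\ref{weight2}). First I would record the explicit homomorphisms produced there: given $z\in Z^+_{\psi_V}$ with $\langle\alpha,v_Z(z)\rangle>0$, there are $G$-maps $\varphi\colon\ind_K^GV\to\ind_K^GV'$ and $\varphi'\colon\ind_K^GV'\to\ind_K^GV$ with
\[
S^G(\varphi'\circ\varphi)=\tau_{z^2}^{V_{U^0}}-c_\alpha\,\tau_{z^2a_\alpha}^{V_{U^0}},\qquad
S^G(\varphi\circ\varphi')=\tau_{z^2}^{V'_{U^0}}-c_\alpha\,\tau_{z^2a_\alpha}^{V'_{U^0}}.
\]
Since $S^G$ is an injective algebra homomorphism $\mathcal H_G(V)\hookrightarrow\mathcal H_Z(V_{U^0})$, and both $\tau_{z^2}$ and $\tau_{z^2a_\alpha}$ (when $c_\alpha=1$, i.e.\ $\alpha\in\Delta'(V)$) are central in $\mathcal H_Z(V_{U^0})$, the elements $\varphi'\circ\varphi\in\mathcal Z_G(V)$ and $\varphi\circ\varphi'\in\mathcal Z_G(V')$ are central; moreover they correspond to each other under the isomorphism $i_G\colon\mathcal H_G(V)\xrightarrow{\sim}\mathcal H_G(V')$ of \eqref{iso}, because $i_Z$ sends $\tau_w^{V_{U^0}}$ to $\tau_w^{V'_{U^0}}$ and $S^G\circ i_G=i_Z\circ S^G$.

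Next I would evaluate $\chi$ on $\varphi'\circ\varphi$. Because $\chi$ factors through $S^G_M$, and because $S^G=S^G_M$ composed with the analogous Satake transform $S^M_Z$, evaluating the central element $\tau_{z^2}^{V_{U^0}}-c_\alpha\tau_{z^2a_\alpha}^{V_{U^0}}$ via $\chi$ amounts to understanding for which $w\in Z^+$ the function $\tau_w$ lies outside the ``support'' of $\chi$ — equivalently, the localization description at the end of \S\ref{GS} shows $\chi(\varphi'\circ\varphi)\ne 0$ precisely when $z^2$ and $z^2a_\alpha$ are not separated by the parabolic $P=M N$ with $\Delta(\chi)$. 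Concretely, $\chi(\tau_{z^2}^{V_{U^0}})\ne 0$ always (as $z^2\in Z^+$ and $\tau_{z^2}$ is a unit after the localization defining $\chi$), so $\chi(\varphi'\circ\varphi)=\chi(\tau_{z^2})\bigl(1-c_\alpha\chi(\tau_{a_\alpha})\bigr)$ whenever $\tau_{a_\alpha}$ makes sense on the $\chi$-side, i.e.\ whenever $\alpha\in\Delta'(V)$ and $\alpha$ is orthogonal to $\Delta(\chi)$ (so that $a_\alpha$ survives to $M$); otherwise $\chi(\varphi'\circ\varphi)=\chi(\tau_{z^2})\ne 0$. Thus $\chi(\varphi'\circ\varphi)=0$ if and only if all three conditions hold: $\alpha$ orthogonal to $\Delta(\chi)$, $\psi_V$ trivial on $Z^0\cap M'_\alpha$ (i.e.\ $\alpha\in\Delta'_{\psi_V}$, equivalently $c_\alpha=1$), and $\chi(\tau_\alpha)=1$. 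Symmetrically $\chi'(\varphi\circ\varphi')=0$ under exactly the same conditions, using that $\chi'$ corresponds to $\chi$ under $i_G$.

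Finally, outside the exceptional locus, $\chi(\varphi'\circ\varphi)$ is a nonzero scalar, hence $\varphi'\circ\varphi$ acts invertibly on $\chi\otimes_{\mathcal Z_G(V)}\ind_K^GV$ and $\varphi\circ\varphi'$ acts invertibly on $\chi'\otimes_{\mathcal Z_G(V')}\ind_K^GV'$. Therefore $\varphi$ and $\varphi'$ induce mutually inverse $G$-maps between $\chi\otimes_{\mathcal Z_G(V)}\ind_K^GV$ and $\chi'\otimes_{\mathcal Z_G(V')}\ind_K^GV'$ after scaling by these units, giving the desired isomorphism. The main obstacle is the bookkeeping in the middle step: one must verify carefully that the condition ``$\chi$ factors through $S^G_M$'' translates, via the two-step factorization of $S^G$ and the localization-at-a-central-element description of $\mathcal Z_G(V)\to\mathcal Z_M(V_{N^0})$, into the precise statement that $\chi(\tau_{z^2a_\alpha})\ne 0$ forces $\alpha\perp\Delta(\chi)$, and that in that case $\chi(\tau_{z^2a_\alpha})/\chi(\tau_{z^2})=\chi(\tau_\alpha)$; this is where one uses the structure of $\Delta(\chi)$ versus $\Delta'_{\psi_V}$ recalled in \S\ref{subsec:Change of weight} and Remark~\ref{a-1}.
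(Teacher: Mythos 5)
Your overall strategy---deducing Theorem~\ref{weight} from Theorem~\ref{weight2} by arguing that $\varphi'\circ\varphi$ and $\varphi\circ\varphi'$ act by nonzero scalars on the quotients whenever the three conditions do not all hold---is the natural one and is the route the paper follows (the paper does not write out this deduction but cites \cite[IV.2]{MR3600042} for it, using the specific $z$ with $\langle\beta,v_Z(z)\rangle=0$ for all $\beta\in\Delta\setminus\{\alpha\}$). However, your middle step contains a genuine error: the claim that $\chi(\tau_{z^2})\ne 0$ always. The localization $\mathcal Z_G(V)\to\mathcal Z_Z(V_{U^0})$ inverts a central element $\tau_{w_0}$ with $w_0$ strictly dominant, so $\chi(\tau_w)\ne 0$ forces $\Delta(\chi)\subset\Delta_w$; since $\langle\alpha,v_Z(z)\rangle>0$ gives $\alpha\notin\Delta_{z^2}$, the value $\chi(\tau_{z^2})$ vanishes as soon as $\alpha\in\Delta(\chi)$. (Concretely, for $G=\GL_2$ and $\chi$ supersingular, $\chi(\tau_{\diag(\varpi,1)})=0$, hence $\chi(\tau_{z^2})=0$.) Consequently, when $\alpha\in\Delta(\chi)$ and $c_\alpha=0$ --- or when $c_\alpha=1$ but $\Delta(\chi)$ contains a root $\beta\ne\alpha$ with $\langle\beta,\alpha^\vee\rangle\ne0$, so that $\chi(\tau_{z^2a_\alpha})=0$ as well --- your computation yields $\chi(\varphi'\circ\varphi)=0$ and the argument cannot produce the isomorphism; yet the theorem asserts the representations \emph{are} isomorphic in exactly these cases, which are the ones that matter most for the classification. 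The deduction in \cite[IV.2]{MR3600042} must handle this situation by a mechanism other than inverting $\varphi'\circ\varphi$.

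Two further points. First, $\chi$ is only a character of the center $\mathcal Z_G(V)$, so $\chi(\varphi'\circ\varphi)$ is undefined unless $\varphi'\circ\varphi$ is central; your justification (that $\tau_{z^2}$ is central in $\mathcal H_Z(V_{U^0})$) fails for non-commutative $Z$ and a general $z\in Z^+_{\psi_V}$, since by \S\ref{sec:satake-transf} the element $\tau_w$ is central only when $\psi_V(w^{-1}xwx^{-1})=1$ for all $x\in Z_{\psi_V}$. Second, without specializing to $z$ with $\langle\beta,v_Z(z)\rangle=0$ for $\beta\in\Delta\setminus\{\alpha\}$, the identification of $\chi(\tau_{z^2a_\alpha})/\chi(\tau_{z^2})$ with $\chi(\tau_\alpha)$, and the emergence of the orthogonality condition $\alpha\perp\Delta(\chi)$, do not come out correctly even in the cases where $\chi(\tau_{z^2})\ne0$.
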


The change of weight theorem was proved in \cite[IV.2 Corollary]{MR3600042} (generalizing \cite{MR2845621} for $\GL_n$ and \cite{MR3143708}
for split groups) and was one of the key tools in establishing a classification result for irreducible representations of $G$ over $C$. 
The change of weight theorem  is a simple consequence of the next theorem. Define 
\begin{equation}\label{dicho}c_\alpha=\begin{cases} 1 & \text{if} \  \psi_V  \ \text{is  trivial on} \ Z^0\cap M'_\alpha,\\
0  & \text{otherwise}.
\end{cases}
\end{equation}

 \begin{theorem}\label{weight2} Let $z\in Z_{\psi_V}^+$ such that $\langle \alpha, v(z)\rangle >0$.
Then there exist $G$-equivariant homomorphisms $\varphi:\ind_K^GV\to \ind_K^GV'$ and $\varphi':\ind_K^GV'\to \ind_K^GV$ satisfying
 $$S^G(\varphi \circ \varphi')=\tau_{z^2}^{V'_{U^0}}  - c_\alpha \, \tau^{V'_{U^0}} _{ z^2a_\alpha },\quad 
 S^G(\varphi' \circ \varphi) =\tau_{z^2}^{V_{U^0}}  - c_\alpha \, \tau^{V_{U^0}} _{ z^2a_\alpha }.$$
  \end{theorem}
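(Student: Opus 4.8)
The plan is to deduce Theorem~\ref{weight2} directly from the Inverse Satake theorem (Theorem~\ref{EIST in intro}) by choosing suitable elements $z$ on both sides and identifying the products of the $\varphi_z$'s through the Satake transform. Recall that $S^G$ is an injective algebra homomorphism, so constructing $\varphi,\varphi'$ with the prescribed images is equivalent to constructing elements of $\mathcal{H}_G(V,V')$ and $\mathcal{H}_G(V',V)$ whose $S^G$-images multiply (in $\mathcal{H}_Z(V'_{U^0})$ resp.\ $\mathcal{H}_Z(V_{U^0})$) to the stated formulas; and composition of Hecke bimodule maps corresponds to convolution, which $S^G$ sends to multiplication in the commutative algebra $\mathcal{H}_Z(\psi_V)$ with $\tau_{w_1}*\tau_{w_2}=\tau_{w_1w_2}$.

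First I would set up the combinatorics. Since $\psi_V=\psi_{V'}$ and $\Delta(V)=\Delta(V')\sqcup\{\alpha\}$, one has $\Delta'(V)\subset\Delta(V)$ and $\Delta'(V')\subset\Delta(V')$, and the two sets $\Delta'(V)$, $\Delta'(V')$ differ only possibly in whether $\alpha$ belongs; in fact $\Delta'(V')=\Delta'(V)\setminus\{\alpha\}$ when $\alpha\in\Delta'(V)$ (i.e.\ $c_\alpha=1$), and $\Delta'(V')=\Delta'(V)$ when $c_\alpha=0$. Therefore $\Delta'(V)\setminus\Delta'(V')$ is $\{\alpha\}$ if $c_\alpha=1$ and $\emptyset$ if $c_\alpha=0$, while $\Delta'(V')\setminus\Delta'(V)=\emptyset$ in all cases. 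Applying Theorem~\ref{EIST in intro} to the pair $(V,V')$ gives elements $\varphi_z^{V',V}\in\mathcal{H}_G(V,V')$ with $S^G(\varphi_z^{V',V})=\tau_z^{V'_{U^0},V_{U^0}}\prod_{\beta\in\Delta'(V')\setminus\Delta'(V)}(1-\tau_{a_\beta})=\tau_z^{V'_{U^0},V_{U^0}}$ (empty product); applying it to the pair $(V',V)$ gives $\varphi_z^{V,V'}\in\mathcal{H}_G(V',V)$ with $S^G(\varphi_z^{V,V'})=\tau_z^{V_{U^0},V'_{U^0}}\prod_{\beta\in\Delta'(V)\setminus\Delta'(V')}(1-\tau_{a_\beta})=\tau_z^{V_{U^0},V'_{U^0}}(1-c_\alpha\tau_{a_\alpha})$, using the convention that the product is $1$ when $c_\alpha=0$ and is $1-\tau_{a_\alpha}$ when $c_\alpha=1$, which I can write uniformly as $1-c_\alpha\tau_{a_\alpha}$.

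Next, given $z\in Z_{\psi_V}^+$ with $\langle\alpha,v(z)\rangle>0$, I need to check that $z$ (and $za_\alpha$) lie in $Z_G^+(V,V')$ so that the above applies; this should follow from the characterization of $Z_G^+(V,V')$ as a union of $Z^0$-cosets in $Z^+$ (reference (\ref{ZVV'})) together with $\psi_V=\psi_{V'}$ and the strict inequality $\langle\alpha,v(z)\rangle>0$, which guarantees that even after multiplying by $a_\alpha$ (whose effect is to decrease $\langle\alpha,v_Z(-)\rangle$ by a positive integer, via $\nu(a_\alpha)=\alpha_a^\vee$) the element stays dominant — this is essentially the finiteness content of Lemma~\ref{lm:Zz-contained-in-ZG}, and is the one place I should be a little careful: I want $z$ itself to still be dominant for $\alpha$ with positive pairing and for all other simple roots nonnegative, and likewise the relevant $Z_z^+(V,V')$ to be honestly inside $Z_G^+(V,V')$. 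Then I set $\varphi=\varphi_z^{V',V}$ and $\varphi'=\varphi_z^{V,V'}$ and compute: $S^G(\varphi\circ\varphi')=i_Z(S^G(\varphi'))* ?$ — more precisely, composition $\ind_K^GV'\xrightarrow{\varphi'}\ind_K^GV\xrightarrow{\varphi}\ind_K^GV'$ has Satake image $\tau_z^{V'_{U^0},V_{U^0}}*\tau_z^{V_{U^0},V'_{U^0}}(1-c_\alpha\tau_{a_\alpha})$, and using $\tau_{w_1}*\tau_{w_2}=\tau_{w_1w_2}$ and $z^2a_\alpha=z\cdot z a_\alpha$ this is $\tau_{z^2}^{V'_{U^0}}-c_\alpha\tau_{z^2a_\alpha}^{V'_{U^0}}$; symmetrically $S^G(\varphi'\circ\varphi)=\tau_{z^2}^{V_{U^0}}-c_\alpha\tau_{z^2a_\alpha}^{V_{U^0}}$, as desired.

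The main obstacle, and the only genuine subtlety, is making rigorous the bookkeeping that identifies $\Delta'(V)\setminus\Delta'(V')$ with $\{\alpha\}$ or $\emptyset$ according to $c_\alpha$, and — tied to this — verifying that the relevant elements $z$, $za_\alpha$, $z^2$, $z^2a_\alpha$ all sit in the appropriate sets $Z_{\psi_V}^+$, $Z_G^+(V,V')$, $Z_G^+(V',V)$, so that the formulas of Theorem~\ref{EIST in intro} literally apply to them; everything else is a formal manipulation in the commutative algebra $\mathcal{H}_Z(\psi_V)$ using the multiplicativity $\tau_{w_1}*\tau_{w_2}=\tau_{w_1w_2}$ and the injectivity of $S^G$. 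I would also remark that the condition $\langle\alpha,v(z)\rangle>0$ (strict) is exactly what is needed for $za_\alpha$ to remain dominant, since $a_\alpha$ shifts the $\alpha$-pairing downward by the positive integer $e_\alpha^{-1}$-normalized amount while leaving the $\beta$-pairings for $\beta\neq\alpha$ unchanged (as $a_\alpha\in Z\cap M'_\alpha$ and $\nu(a_\alpha)=\alpha_a^\vee$ is a multiple of $\alpha^\vee$, orthogonal to the other simple coroots only up to the Cartan matrix — so one checks $\langle\beta,\alpha^\vee\rangle\le 0$ for $\beta\neq\alpha$ keeps those pairings nonnegative). Once that is in hand, Theorem~\ref{weight2} follows immediately from Theorem~\ref{EIST in intro}.
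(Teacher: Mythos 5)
Your proposal is correct and is essentially the paper's own first proof of Proposition~\ref{ISimpliesCW}: apply Theorem~\ref{EIST} to the pairs $(V,V')$ and $(V',V)$, note that $\Delta'(V')\setminus\Delta'(V)=\varnothing$ while $\Delta'(V)\setminus\Delta'(V')$ is $\{\alpha\}$ or $\varnothing$ according to $c_\alpha$, and multiply the Satake images. The only checks needed are that $z\in Z_G^+(V,V')=Z_G^+(V',V)$ (immediate from \eqref{triangle} and Example~\ref{ex}, with no need to verify dominance of $za_\alpha$) and that $\tau_\alpha$ is central so that $\tau_{za_\alpha}\tau_z=\tau_{z^2a_\alpha}$ in the second product, both of which your bookkeeping covers.
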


We will prove in Proposition \ref{ISimpliesCW} that 
Theorem \ref{weight2}  follows from  the   inverse  Satake theorem (Theorem \ref{EIST}) for the   pair $(V,V')$ and for the pair $(V',V)$. 
We now recall why Theorem \ref{weight2} implies Theorem \ref{weight} (compare with the proof of \cite[IV.2 Corollary]{MR3600042}).

\begin{proof}[Proof of Theorem \ref{weight}]
  As in \S\ref{sec:satake-transf} we can canonically identify $\mathcal H_G(V)$ with $\mathcal H_G(V')$ and similarly
  $\mathcal Z_G(V)$ with $\mathcal Z_G(V')$, denoting them $\mathcal H_G$ and $\mathcal Z_G$ for short.  We also identify
  $\chi$ and $\chi'$. Pick any $z\in Z_{\psi_V}^+$ such that $\langle \alpha, v(z)\rangle >0$,
  $\langle \beta, v(z) \rangle= 0$ for all $\beta\in \Delta-\{\alpha\}$, and such that $\tau_{z^2} \in \mathcal Z_Z(\psi_V)$
  (cf.\ \cite[III.4]{MR3600042}).  As $S^G$ is injective and compatible with compositions, the homomorphisms
  $\varphi,\varphi'$ of Theorem~\ref{weight2} for our chosen $z$ are $\mathcal Z_G$-equivariant and induce $G$-equivariant
  homomorphisms between $\chi \otimes_{\mathcal Z_G} \ind_K^GV$ and $ \chi \otimes_{\mathcal Z_G} \ind_K^GV' $ with
  composition in either direction equal to $\chi(\tau_{z^2} - c_\alpha \, \tau _{ z^2a_\alpha }) \in C$. It suffices to show
  that $\chi(\tau_{z^2} - c_\alpha \, \tau _{ z^2a_\alpha }) \ne 0$. First, $\chi(\tau_{z^2}) \ne 0$ by \cite[III.4
  Lemma]{MR3600042} and as $\alpha \not\in \Delta(\chi)$, so we are done if $c_\alpha = 0$. For the same reason, if
  $c_\alpha = 1$ and $\alpha$ is not orthogonal to $\Delta(\chi)$, then $\chi(\tau_{z^2 a_{\alpha}}) = 0$ and we are
  done. Finally, if $c_\alpha = 1$, $\alpha$ is orthogonal to $\Delta(\chi)$, and $\chi(\tau_\alpha) \ne 1$, then
  $\chi(\tau_{z^2} - c_\alpha \, \tau _{ z^2a_\alpha }) = \chi(\tau_{z^2})(1-\chi(\tau_\alpha)) \ne 0$.
\end{proof}
 
\subsection{Intertwiners  from \texorpdfstring{$\ind_K^GV$}{ind\_K\^{}G V} to \texorpdfstring{$ \ind_K^GV'$}{ind\_K\^{}G V'}}\label{intVV'}
Let $V$ and $V'$ be  two irreducible representations  of $K$. We extend  to the  space of intertwiners $\Hom_{CG}(\ind_K^GV, \ind_K^GV')$ our previous discussion  on  $\End_{CG}(\ind_K^G V)$ in \S \ref{GS}. We view  $\Hom_{CG}(\ind_K^GV, \ind_K^GV')$ as the space  $\mathcal H_G(V,V')$ of compactly supported functions $\varphi:G\to \Hom_C(V,V')$ satisfying $\varphi(k_1 g k_2)=k_1 \varphi(g) k_2$ for all $k_1,k_2\in K, g\in G$. 
For $z\in Z$, we write 
\begin{equation}\label{deltaz}\Delta_z=\{\alpha \in \Delta \ | \ \langle 
\alpha, v(z)\rangle =0 \}.
\end{equation}
\begin{remark}\label{dz}  When $z,z'\in Z^+$, we have $\Delta_{z'z}=\Delta_{z'}\cap \Delta_z$.
\end{remark}
The quotient map $p:V\onto V_{U^0}$  induces a  $Z^0$-equivariant isomorphism between the lines  $V^{U_{\op}^0}\congto V_{U^0}$;  similarly for $V'$.  
We fix compatible  linear isomorphisms 
\begin{align}\label{iopi}
\iota^{\op}:V^{U_{\op}^0}\xrightarrow{\sim} (V')^{U_{\op}^0}  \ \text{and} \ \iota:V_{U^0}\xrightarrow{\sim}V'_{U^0}.
\end{align}
When $V=V'$ we  suppose that $\iota^{\op}$ and $\iota $ are the identity maps. 
We now recall  the description of $\mathcal H_G(V,V')$. 
By the Cartan decomposition \cite[6.4 Prop.]{MR3331726}, the map $Z\to K\backslash G/K, z\mapsto KzK$ induces a bijection $Z^+/Z^0\xrightarrow{\sim}K\backslash G/K$.  Recalling from \S \ref{GS} the parameters $(\psi_V, \Delta(V))$ of $V$ and  $(\psi_{V'}, \Delta(V'))$ of $V'$, a double coset $KzK$ with $z\in Z^+$ supports a non zero function of $\mathcal H_G(V,V')$ if and only if $z$ lies in 
\begin{align}\label{ZVV'}
Z_G^+(V,V')&=\{z\in Z^+\ | \ z\cdot\psi_V=\psi_{V'} \ \text{and}\ \Delta_z\cap (\Delta(V)\triangle \Delta(V')) = \varnothing\} \\ \label{triangle} 
&=\{z\in Z^+\ | \ z\cdot\psi_V=\psi_{V'} \ \text{and}\  \langle \alpha, v (z)\rangle >0 \ \text{for all} \ \alpha \in\Delta(V)\triangle\,  \Delta(V')\}. 
 \end{align}
where $\Delta(V)\triangle\Delta(V') = (\Delta(V)\setminus\Delta(V'))\cup(\Delta(V')\setminus\Delta(V))$ is the symmetric difference.

The space of such functions has dimension $1$ and contains a unique function $T_z$  such that the restriction of $T_z(z)$ to $V^{U^0_{\op}}$ is  $\iota^{\op}$. 
The function $T_z$ is also denoted by $T_z= T_z^{V',V}$ or $T_z^{V',V,\iota}$. 

\begin{proposition}[{\cite[7.7]{MR3331726}}]\label{basisVV'}A basis of $\mathcal H_G(V,V')$ consists of the  $T_z$  for $z$ running  through a system of representatives of $Z_G^+(V,V')/Z^0$ in $Z_G^+(V,V')$. 
\end{proposition}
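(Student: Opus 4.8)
The plan is to deduce the proposition from the structural facts recalled just above, so that the argument is essentially an assembly and the choice of basis is forced. First I would use the Cartan decomposition $Z^+/Z^0 \xrightarrow{\sim} K\backslash G/K$ to write $G$ as the disjoint union of the double cosets $KzK$, with $z$ ranging over a fixed set of representatives of $Z^+/Z^0$. Given $\varphi \in \mathcal H_G(V,V')$, for each such $z$ let $\varphi_z$ be the restriction of $\varphi$ to $KzK$, extended by zero on $G$; since $KzK$ is left- and right-$K$-invariant, $\varphi_z$ still satisfies $\varphi_z(k_1 g k_2) = k_1\varphi_z(g)k_2$, hence lies in $\mathcal H_G(V,V')$, and it is supported on $KzK$. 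As $\varphi$ is compactly supported, only finitely many $\varphi_z$ are nonzero and $\varphi = \sum_z \varphi_z$. Writing $\mathcal H_G(V,V')_z$ for the subspace of $\mathcal H_G(V,V')$ of functions supported on $KzK$, this yields $\mathcal H_G(V,V') = \bigoplus_z \mathcal H_G(V,V')_z$.

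Next I would invoke the recalled dichotomy: $\mathcal H_G(V,V')_z$ is zero unless $z \in Z_G^+(V,V')$, in which case it is one-dimensional and spanned by the normalized element $T_z$. Because $v(zz_0)=v(z)$ and $(zz_0)\cdot\psi_V = z\cdot\psi_V$ for $z_0\in Z^0$, the set $Z_G^+(V,V')$ is a union of $Z^0$-cosets in $Z^+$; hence those $z$ in our system of representatives of $Z^+/Z^0$ that lie in $Z_G^+(V,V')$ form exactly a system of representatives of $Z_G^+(V,V')/Z^0$, and combining with the decomposition above the corresponding $T_z$ span $\mathcal H_G(V,V')$. They are linearly independent because they are nonzero and supported on the pairwise distinct double cosets $KzK$. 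Hence they form a $C$-basis. Finally, a different choice of representatives replaces $z$ by $zz_0$ with $z_0\in Z^0$ and rescales $T_z$ by the nonzero scalar $\psi_V(z_0)$ coming from the action of $z_0$ on the line $V^{U_{\op}^0}$, so the statement is independent of all choices.

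I do not expect a real obstacle in this assembly; all the weight is carried by the quoted one-dimensionality statement for $\mathcal H_G(V,V')_z$, which is where the work would lie in a self-contained proof. For that, I would identify $\mathcal H_G(V,V')_z \xrightarrow{\sim} \Hom_{K\cap zKz^{-1}}(V^{(z)}, V')$ by $\varphi \mapsto \varphi(z)$, where $V^{(z)}$ denotes $V$ with $K\cap zKz^{-1}$ acting through $k\mapsto z^{-1}kz \in K$ followed by the action on $V$; then reduce modulo $K(1)$ to a computation over the finite reductive group $G_k$. Since $z\in Z^+$ contracts $U$, the image of $K\cap zKz^{-1}$ in $G_k$ and the twisted and untwisted actions on $V$ and $V'$ are controlled by parabolic subgroups of $G_k$ attached to $\Delta_z$, and the Hom space is then computed by finite-group representation theory together with the parametrization of $V,V'$ by $(\psi_V,\Delta(V))$ and $(\psi_{V'},\Delta(V'))$ — in particular the facts that $V_{U^0}$ is a line on which $Z^0$ acts by $\psi_V$ and that $V^{U_{\op}^0}$ is one-dimensional. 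The delicate point is the bookkeeping showing that non-vanishing is equivalent to $z\cdot\psi_V = \psi_{V'}$ together with $\Delta_z\cap(\Delta(V)\triangle\Delta(V')) = \varnothing$; this is carried out in \cite[\S 7]{MR3331726}.
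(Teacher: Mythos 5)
Your proposal is correct and follows exactly the route the paper intends: the paper itself gives no proof but cites \cite[7.7]{MR3331726} after recalling precisely the ingredients you assemble (the Cartan bijection $Z^+/Z^0\congto K\backslash G/K$, the vanishing/one-dimensionality dichotomy for the subspace of functions supported on a single double coset $KzK$, and the normalization of $T_z$). The decomposition by support, the observation that $Z_G^+(V,V')$ is a union of $Z^0$-cosets, and the linear independence from disjoint supports are the standard assembly, and your deferral of the one-dimensionality computation to the finite-group analysis in the reference is exactly where the real work lies.
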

We will write that $(T_z)_{z\in Z_G^+(V,V')/Z^0}$ is a basis of $\mathcal H_G(V,V')$.  

These considerations  apply  also to the group $Z$ and to the representations $V_{U^0}, V'_{U^0}$ of $Z^0$.
We write $Z_{\psi_V, \psi_{V'}} = \{ z \in Z \mid z\cdot\psi_V=\psi_{V'} \}$.
Then the function $\tau_z\in \mathcal H_Z(V_{U^0}, V'_{U^0})$ of support $Z^0z$ and value $\iota$ at $z$ for $z\in Z_{\psi_V, \psi_{V'}}$ is denoted also  by $\tau_z^{V'_{U^0}, V_{U^0}}$ or $\tau_z^{V'_{U^0}, V_{U^0},\iota}$. 
A basis of $\mathcal H_Z(V_{U^0}, V'_{U^0})$ is $(\tau_z)_{z\in Z_{\psi_V, \psi_{V'}}/Z^0}$. 
\begin{example}\label{ex} If  $V=V'$, then $Z_G^+(V,V)=  Z^+_{\psi_V}$. If  $\psi_V=\psi_{V'}$, then $Z_G^+(V,V')=Z_G^+(V',V)  \subset Z^+_{\psi_V}$. If $\Delta(V)=\Delta(V')$, then $Z_G^+(V,V')=Z^+_{\psi_V, \psi_{V'}}$.
\end{example}

\begin{remark}\label{not0} (i) We have $\mathcal H_G(V,V')\neq 0$ if and only if  
$Z_{\psi_V, \psi_{V'}} $ 
is not empty \cite[7.8 Prop.]{MR3331726}. In this case   $\Delta'_{ \psi_{V}}= \Delta'_{ \psi_{V'}}$ (Definition \ref{delta'psi})  because $Z^0\cap M'_\alpha$ is a normal subgroup of $Z$.

(ii) Let   $z\in Z_{\psi_V, \psi_{V'}} $, 
   $\alpha\in \Delta'_{\psi_V}= \Delta'_{\psi_{V'}}$ and    $a_\alpha\in Z\cap M'_\alpha$ (Definition \ref{delta'psi}). Then   $a_\alpha z a_\alpha ^{-1} z^{-1} \in Z^0\cap M'_\alpha$ ($Z\cap M'_\alpha$ is also a normal subgroup of $Z$) hence $za_\alpha=ta_\alpha z\in Z_{\psi_V, \psi_{V'}}$, some $t \in Z^0 \cap M_\alpha'$. The convolution satisfies
  \begin{align*}  \tau^{V'_{U^0}, V_{U^0}, \iota}_{z }  \tau^{V_{U^0}, V_{U^0}} _{\alpha}=\tau_{z a_\alpha }^{V'_{U^0}, V_{U^0}, \iota }= \tau_{ ta_\alpha z}^{V'_{U^0}, V_{U^0}, \iota }= \tau^{V'_{U^0}, V'_{U^0}} _{\alpha} \tau^{V'_{U^0}, V_{U^0}, \iota}_{z } .
  \end{align*} 
\end{remark}

Let  $V''$ be a third irreducible representation of $K$. The composition of intertwiners corresponds to the convolution.  We fix compatible  linear  $\iota'^{\op}:(V')^{U_{\op}^0}\xrightarrow{\sim} (V'')^{U_{\op}^0}$ and $\iota':V'_{U^0}\xrightarrow{\sim}V''_{U^0}$ and we  define  as above $T_{z}^{V'',V'}=T_{z}^{V'',V', \iota'}$ when $z\in Z_G^+(V',V'')$ and $T_{z}^{V'',V}=T_{z}^{V'',V, \iota' \circ \iota}$ when $z\in Z_G^+(V,V'')$.
 
For $g\in G$ we note that $(T_{z'}^{V'',V'}*T_z^{V',V})(g)$ equals
\begin{align*}
\sum_{x\in Kz'K/K}T_{z'}^{V'',V'}(x) \circ T_z^{V',V}(x^{-1}g)
&=\sum_{x\in K /(K\cap z' K z'^{-1})}T_{z'}^{V'',V'}(xz') \circ T_z^{V',V}(z'^{-1}x^{-1}g).
\end{align*}

\begin{remark}\label{zz'} (i) When $\psi_{V'}=\psi_{V''}$ and $\Delta(V)\cap \Delta(V')\subset \Delta(V'')\subset \Delta(V)\cup \Delta(V')$, we have $Z_G^+(V,V')\subset Z_G^+(V,V'')$.

(ii) For $z\in Z_G^+(V,V'), z'\in Z_G^+(V',V'')$ we have $z'z\in Z_G^+(V,V'')$ because $z'z\cdot\psi_V=z'\cdot\psi_{V'}=\psi_{V''}$, $\Delta_{z'z}=\Delta_z \cap \Delta_{z'}$ (as $z,z'\in Z^+$), and $\Delta(V) \triangle \Delta(V'') \subset (\Delta(V) \triangle \Delta(V')) \cup (\Delta(V') \triangle \Delta(V''))$. %

(iii) For $z\in Z_{\psi_V, \psi_{V'}} , z'\in Z_{\psi_{V'}, \psi_{V''}} $ we have  $\tau^{V''_{U^0}, V'_{U^0},\iota'}_{z '}  \tau^{V'_{U^0}, V_{U^0},\iota} _{z }=\tau_{z' z }^{V''_{U^0}, V_{U^0},\iota'\circ \iota }$. 
\end{remark}

We will later use the following lemma concerning the support of $S^G(T_{z}^{V',V})$.

\begin{lemma}\label{lm:support-satake}
  If $z \in Z_G^+(V,V')$, $z' \in Z$ and $S^G(T_{z}^{V',V})(z')\neq 0$, then $v_Z(z')\in v_Z(z) +
  \mathbb{R}_{\le 0}\Delta^\vee$.
\end{lemma}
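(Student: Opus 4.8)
The plan is to unwind the definition of $S^G$, reduce to a statement about the single double coset $KzK$, and then appeal to the classical ``lower‑triangularity'' of the Satake transform, proved via the Bruhat--Tits building.

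Unwinding comes first. If $S^G(T_z^{V',V})(z')\neq 0$, then by the defining formula for $S^G$ some summand $\overline{T_z^{V',V}(uz')(v)}$ is nonzero, so $T_z^{V',V}(uz')\neq 0$ for some $u\in U$; since $\supp T_z^{V',V}=KzK$ this forces $uz'\in KzK$. (The sum over $U^0\backslash U$ is infinite, but only the non‑vanishing of a single term is used.) So it suffices to prove: \emph{if $u\in U$, $z\in Z^+$ and $uz'\in KzK$, then $v_Z(z')\in v_Z(z)+\mathbb{R}_{\le 0}\Delta^\vee$.}

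Next I would dispose of the ``central direction''. Every $\chi\in X^*(\mathbf G)$ is trivial on $\mathbf G_{\der}\supseteq U$ and maps the compact group $K$ into $\mathcal O^\times$; writing $uz'=k_1zk_2$ gives $\operatorname{val}_F\chi(z')=\operatorname{val}_F\chi(z)$, hence $\langle\chi,v_Z(z)-v_Z(z')\rangle=0$ for all $\chi\in X^*(\mathbf G)$. Since the annihilator of $X^*(\mathbf G)\otimes\mathbb{R}$ inside $X_*(\mathbf S)\otimes\mathbb{R}$ is $\mathbb{R}\Delta^\vee$, this shows $v_Z(z)-v_Z(z')\in\mathbb{R}\Delta^\vee$. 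As the quotient map $X_*(\mathbf S)\otimes\mathbb{R}\to V_{\ad}$ is injective on $\mathbb{R}\Delta^\vee$, it is enough to prove the containment after projecting to $V_{\ad}$, i.e.\ that $\nu(z')\in\nu(z)+\mathbb{R}_{\ge 0}\Delta^\vee$ in $V_{\ad}$ (recall $\nu=-v$). Here I would invoke the building: let $\rho$ be the retraction of the building of $G$ onto the apartment $x_0+V_{\ad}$ centered at the chamber at infinity corresponding to $B$, so that $\rho\circ u=\rho$ for all $u\in U$. Then $\rho(uz'\cdot x_0)=\rho(z'\cdot x_0)=x_0+\nu(z')$, while $uz'\in KzK$ gives $uz'\cdot x_0\in K\cdot(x_0+\nu(z))$; by the convexity theorem for $\rho$ at the special vertex $x_0$ one has $\rho\big(K\cdot(x_0+\nu(z))\big)\subseteq x_0+\operatorname{conv}(W_0\cdot\nu(z))$. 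Hence $\nu(z')\in\operatorname{conv}(W_0\cdot\nu(z))$, and since $z\in Z^+$ the point $\nu(z)$ lies in the antidominant chamber, so $\operatorname{conv}(W_0\cdot\nu(z))\subseteq\nu(z)+\mathbb{R}_{\ge 0}\Delta^\vee$, which is what was wanted.

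The unwinding and the central‑direction step are formal. The one non‑formal ingredient is the convexity theorem for the retraction towards a chamber at infinity at a special vertex, and that is where care is needed: it genuinely uses the specialness of $x_0$, and one must keep in mind that $z'$ need not be dominant, so that the conclusion has to be phrased in terms of the affine cone $v_Z(z)+\mathbb{R}_{\le 0}\Delta^\vee$ rather than any comparison of dominant representatives. This convexity input is well known — it is precisely what makes the Satake transform lower‑triangular and is already implicit in the determination of the image of $S^G$ used in \cite{MR3331726} — and a reader preferring to avoid building theory can obtain the same estimate from a highest‑weight‑vector valuation computation in an irreducible algebraic representation over a finite extension of $F$ splitting $\mathbf G$.
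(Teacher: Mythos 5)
Your proof is correct, but it takes a genuinely different route from the paper's on both halves of the argument. The unwinding step (reducing to $z'\in Z\cap UKzK$) is the same. For the ``span'' condition $v_Z(z')-v_Z(z)\in\mathbb{R}\Delta^\vee$, you pair against $F$-rational characters of $\mathbf G$, whereas the paper uses the Kottwitz homomorphism ($\ker w_G=Z^0G'$) together with the fact that $Z\cap G'$ is generated by the $Z\cap M'_\alpha$; the paper's version is finer, giving $v_Z(z')\in v_Z(z)+\sum_\alpha\mathbb{Z}\,v_Z(a_\alpha)$, and this integral refinement is exactly what is recorded in the Remark immediately following the lemma — your character argument only recovers the real span, which suffices for the lemma as stated but not for that Remark. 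For the sign condition, the paper simply cites \cite[6.10 Prop.]{MR3331726}, while you re-prove it via the retraction of the building centered at the chamber at infinity of $B$ and the convexity theorem at the special vertex $x_0$; that is the classical argument for the lower-triangularity of the Satake transform and is sound (the convexity input $\rho(K\cdot y)\subseteq x_0+\operatorname{conv}(W_0\cdot(y-x_0))$ does require $x_0$ special, as you note, and your reduction to $V_{\ad}$ is legitimate since the quotient map is injective on $\mathbb{R}\Delta^\vee$). So your write-up is a valid, more self-contained proof; what it buys is independence from the cited proposition, and what it loses is the sharper lattice statement that the paper's Kottwitz-homomorphism step provides for free.
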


\begin{proof}
  Letting $w_G$ denote the Kottwitz homomorphism, we have $\ker w_G = Z^0 G'$ \cite[Rk.\
  3.37]{MR3484112}.  If $S^G(T_z)(z') \ne 0$, then $z' \in Z \cap UKzK$, hence $w_G(z') = w_G(z)$,
  so $z' \in z \ker(w_G|_Z) = z Z^0 (Z \cap G')$. By \cite[II.6 Prop.]{MR3600042}\ with $I =
  \varnothing$ it follows that $Z \cap G'$ is generated by all $Z \cap M'_\alpha$ for $\alpha \in
  \Delta$.  As $v_Z(Z \cap M'_\alpha) = \Z v_Z(a_\alpha) \subset \R \alpha^\vee$, we see that
  $v_Z(z') \in v_Z(z) + \R \Delta^\vee$. By \cite[6.10 Prop.]{MR3331726}\ we deduce $v_Z(z') \in
  v_Z(z) + \mathbb R_{\le 0} \Delta^\vee$.
\end{proof}

\begin{remark}
  In fact, we know that $v_Z(a_\alpha) = -e_\alpha^{-1} \alpha^\vee$ \cite[IV.11 Example
  3]{MR3600042}.  So the the proof shows that $v_Z(z') \in v_Z(z) + \sum_{\alpha \in\Delta} \Z_{\le
    0} e_\alpha^{-1} \alpha^\vee$.  This improves on \cite[Lemma 3.6]{bib:satake} when $\mathbf G$ is
  unramified %
  and \cite[6.10 Prop.]{MR3331726}\ when $\mathbf G$ is general.
\end{remark}

 \subsection{The generalized Satake transform}\label{sec:gener-satake-transf}Let $P=MN$ be a parabolic subgroup of $G$ containing $B$. 
 
\begin{definition}[{\cite[Prop.\ 2.2 and 2.3]{MR3001801}, \cite[Prop.\ 7.9]{MR3331726}}] \label{SMGVV'}
The generalized Satake transform  is the  injective  linear   homomorphism  
\[
S_M^G: \mathcal H_G(V,V') \hookrightarrow \mathcal H_M(V_{N^0}, V'_{N^0})
\]
defined as follows. Let $\varphi\in \mathcal{H}_G(V,V'),m\in M$ and let $p : V\onto V_{N^0}$, $p'\colon V'\onto V'_{N^0}$ denote the natural quotient maps.
Then $S_M^G$ is determined by the relation
\[
(S_M^G\varphi)(m) \circ p = p'\circ\sum_{x\in N^0\backslash N}\varphi(xm).
\]
\end{definition}

For $\varphi \in \mathcal H_G(V,V')$ and  $\varphi' \in \mathcal H_G(V',V'')$ we have $S_M^G(\varphi' * \varphi)= S_M^G(\varphi') * S_M^G(\varphi)$ \cite[Formula (6)]{MR3001801}. 

\bigskip When $M=Z$, we write  $S^G=S_Z^G$.

\subsection{Inverse   Satake theorem}\label{sec:inverse-satake-theor}
We now give our main result. Let $V$ and $V'$ be irreducible representations of $K$. Our main theorem determines the  image of the Satake transform
\[S^G:\mathcal H_G(V,V')\hookrightarrow \mathcal H_Z(V_{U^0 },V'_{U^0 })\]
and moreover gives  an explicit formula for the inverse of $S^G$ on a basis of 
the image. (Of course this theorem is only interesting when $\mathcal H_G(V,V') \ne 0$. See Remark \ref{not0} for when this happens.)

We fix compatible isomorphisms $ \iota^{\op}:V^{U^0_{\op}}\to V'^{U^0_{\op}}$ and $\iota:V_{U^0 }\to V'_{U^0 }$ as in \eqref{iopi} and  $a_\alpha\in Z\cap M'_\alpha$  for $\alpha \in \Delta$ (Definition \ref{delta'psi}). 
 Recalling  $\Delta'_{\psi}$ (Definition \ref{delta'psi}), we denote
\begin{equation}\label{delta'V} \Delta'(V)=\Delta(V) \cap \Delta'_{\psi_V} =\{\alpha \in \Delta(V) \ | \ \psi_V \ \text{is trivial on} \ Z^0\cap M'_\alpha\} .
\end{equation}

\begin{theorem}[Inverse Satake theorem]\label{EIST} 

A basis of the   image of  $S^G$ is given by the elements \begin{equation}\label{image}
\tau_z    \prod _{\alpha \in \Delta'(V') \setminus  \Delta'(V) }(1 -\tau_{a_\alpha} )
\end{equation} for  $z$ running through a system of representatives of $Z_G^+(V,V')/Z^0$ in $Z_G^+(V,V')$.
The   inverse of $S^G$ sends \eqref{image} to
\begin{equation*}
\varphi_z^{V',V} :=  \sum _{x\in  Z_z^+(V,V')} T_x^{V',V}, \quad \text{where} \quad  Z_z^+(V,V'):=Z^+\cap z\prod_{\alpha \in \Delta'(V)\cap \Delta'(V')}a_\alpha^\mathbb N.
  \end{equation*}
\end{theorem}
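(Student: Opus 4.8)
The plan is to prove the theorem in two stages — a reduction to the case $\Delta(V')\subset\Delta(V)$, and a treatment of that case via the pro-$p$ Iwahori Hecke algebra $\mathcal H_G$ — followed by a short triangularity argument that promotes the resulting pointwise identities to the basis statement. We may assume $\mathcal H_G(V,V')\neq 0$, i.e.\ $Z_{\psi_V,\psi_{V'}}\neq\varnothing$; then $\Delta'_{\psi_V}=\Delta'_{\psi_{V'}}$ by Remark~\ref{not0}(i), so that $\Delta'(V')\setminus\Delta'(V)=(\Delta(V')\setminus\Delta(V))\cap\Delta'_{\psi_V}$, which is empty when $\Delta(V')\subset\Delta(V)$. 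Also $Z_z^+(V,V')$ is a finite subset of $Z_G^+(V,V')$ by Lemma~\ref{lm:Zz-contained-in-ZG}, so $\varphi_z^{V',V}=\sum_{x\in Z_z^+(V,V')}T_x^{V',V}$ is a well-defined element of $\mathcal H_G(V,V')$. For the reduction I would use the multiplicativity $S_M^G(\varphi'*\varphi)=S_M^G(\varphi')*S_M^G(\varphi)$ of the Satake transform, together with explicit convolution identities among the functions $T_z$ for pairs of representations with nested parameter sets (Lemmas~\ref{first} and~\ref{second}) — passing, e.g., through an auxiliary irreducible representation $V_0$ of $K$ with $\Delta(V_0)=\Delta(V)\cap\Delta(V')$ — to reduce the determination of $S^G(\varphi_z^{V',V})$ for general $(V,V')$ to the case $\Delta(V')\subset\Delta(V)$; the factors $(1-\tau_{a_\alpha})$ for $\alpha\in\Delta'(V')\setminus\Delta'(V)$ in \eqref{image} are produced in this reduction by the simple roots of $\Delta(V')\setminus\Delta(V)$ that lie in $\Delta'_{\psi_V}$. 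It thus suffices to prove $S^G(\varphi_z^{V',V})=\tau_z$ when $\Delta(V')\subset\Delta(V)$ and $z\in Z_G^+(V,V')$.

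For this, fix $0\neq v\in V_{U^0}$, let $v'\in V'_{U^0}$ correspond to it under $\iota$, and pass to $\mathfrak X_G=\Ind_B^G(\ind_{Z(1)}^Z C)$ and $\mathfrak X_Z=\ind_{Z(1)}^Z C$ via the embeddings $I_v,I_{v'},j_v,j_{v'}$. By Proposition~\ref{end2} there is an explicit element $h_z=E'_{\lambda_z w_{V,V'}^{-1}}T^*_{w_{V,V'}}\in\mathcal H_G$ with $\tau(z)\,I_v(f_v)=I_{v'}(f_{v'})\,h_z$ in $\mathfrak X_G^I$, whence by Proposition~\ref{begin} an intertwiner $\phi_z\colon\ind_K^G V\to\ind_K^G V'$ with $\phi_z(f_v)=f_{v'}h_z$ and $S^G(\phi_z)=\tau_z$. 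Since $f_v$ generates $\ind_K^G V$ over $C[G]$, it remains to identify $\phi_z$ with $\varphi_z^{V',V}$, and for this it is enough to compare $\phi_z(f_v)=f_{v'}h_z$ with $\varphi_z^{V',V}(f_v)$ on $Z^+$: each $T_x^{V',V}$ sends $f_v$ to a function supported on $KxK$ and nonzero on $KxK\cap Z^+=Z^0x$, so the restriction to $Z^+$ determines the coordinates of any element of $\mathcal H_G(V,V')$ in the basis $(T_x^{V',V})$.

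Computing $f_{v'}h_z|_{Z^+}$ is the heart of the argument. Expand $E'_{\lambda_z w_{V,V'}^{-1}}=\sum_{u\in W_M(1),\,u\leq^M\lambda_z}c^{M,*}(\lambda_z,u)\,T_{uw_{V,V'}^{-1}}$ by Proposition~\ref{EJ}, with $M$ the Levi subgroup attached to $\Delta(V')$, and then combine: the vanishing statement \eqref{claim1}, the identity \eqref{claim2} with the braid relation $T_{\lambda_xw_{V,V'}^{-1}}T_{w_{V,V'}}=T_{\lambda_x}$, the equivariance $f_{v'}T_{t\lambda_x}=\psi_{V'}^{-1}(t)\,f_{v'}T_{\lambda_x}$ for $t\in Z_k$, the identification $\Delta^M_{\psi_{V'}^{-1}}=\Delta'(V')=\Delta'(V)\cap\Delta'(V')$, the equivalence of the two conditions ``$u\in Z^+/Z(1)$'' and ``$u\leq^M\lambda_z$'' with ``$u=\lambda_x$ for $x\in Z^0Z_z^+(V,V')$'' (Proposition~\ref{L+order}), and the mod-$q$ evaluations of the coefficient sums in Theorems~\ref{*} and~\ref{psic}. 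This yields $f_{v'}h_z=f_{v'}\sum_{x\in Z_z^+(V,V')}T_{\lambda_x}$ on $Z^+$; since $f_{v'}T_{\lambda_x}$ is supported on $KxI$ with value $v'$ at $x$ and $Z^+\cap KxI=Z^0x$, it follows that $f_{v'}h_z$ vanishes on $Z^+\setminus Z^0Z_z^+(V,V')$ and equals $v'$ on $Z_z^+(V,V')$, which matches $\varphi_z^{V',V}(f_v)|_{Z^+}$. Hence $\phi_z=\varphi_z^{V',V}$ (Proposition~\ref{expa}) and $S^G(\varphi_z^{V',V})=\tau_z$. I expect this step to be the main obstacle: it rests on the full combinatorial machinery of $\mathcal H_G$ developed in Section~\ref{S:4}, in particular the delicate mod-$q$ analysis of the structure constants $c^*(w,u)$ and $c^{M,*}(\lambda_z,u)$.

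Finally, to pass from these pointwise identities to the basis statement, order the cosets in $Z_G^+(V,V')/Z^0$ by $x\preceq z\iff v_Z(z)-v_Z(x)\in\R_{\geq 0}\Delta^\vee$. Since $v_Z(a_\alpha)=-e_\alpha^{-1}\alpha^\vee$ (Remark after Lemma~\ref{lm:support-satake}) and the simple coroots are linearly independent, every $x\in Z_z^+(V,V')$ other than $z$ itself has $v_Z(x)<v_Z(z)$, and no two elements of $Z_z^+(V,V')$ lie in the same $Z^0$-coset; hence in any total order refining $\preceq$ the matrix expressing $(\varphi_z^{V',V})_z$ in terms of the basis $(T_z^{V',V})_z$ of $\mathcal H_G(V,V')$ (Proposition~\ref{basisVV'}) is unitriangular, so $(\varphi_z^{V',V})_z$ is itself a basis of $\mathcal H_G(V,V')$. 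Applying the injective map $S^G$ then shows that $\bigl(\tau_z\prod_{\alpha\in\Delta'(V')\setminus\Delta'(V)}(1-\tau_{a_\alpha})\bigr)_z=\bigl(S^G(\varphi_z^{V',V})\bigr)_z$ is a basis of the image of $S^G$, and that the inverse of $S^G$ sends \eqref{image} to $\varphi_z^{V',V}$, as asserted.
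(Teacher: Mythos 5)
Your proposal is correct and follows essentially the same route as the paper: the reduction to $\Delta(V')\subset\Delta(V)$ via an auxiliary representation with parameter set $\Delta(V)\cap\Delta(V')$ and Lemmas~\ref{first}--\ref{second} is the content of Proposition~\ref{prop:3.7}, the core case is Theorem~\ref{VcontV'} proved exactly as you describe via Propositions~\ref{begin}, \ref{end2}, \ref{EJ}, \ref{expa} and Theorems~\ref{*}, \ref{psic}, and your closing triangularity argument is part~E) of the proof of Proposition~\ref{prop:3.7}.
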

The function  $\varphi_z^{V',V}$  is well defined for $z\in Z_G^+(V,V')$ because of the following lemma.
 
\begin{lemma} \label{lm:Zz-contained-in-ZG}
   For $z\in Z_G^+(V,V')$,  the  set $Z_z^+(V,V')$  is finite and  contained in $Z_G^+(V,V')$.
\end{lemma}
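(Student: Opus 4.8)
The plan is to verify both assertions by a direct computation with the homomorphism $v\colon Z\to V_{\ad}$. Recall that $v(a_\alpha)=-\nu(a_\alpha)=-\alpha_a^\vee$ (since $\nu=-v$ and $\nu(a_\alpha)=\alpha_a^\vee$, Definition~\ref{delta'psi}) and that $\alpha_a^\vee=e_\alpha^{-1}\alpha^\vee$ with $e_\alpha>0$ (\S\ref{sec:elem-a-alpha}). Put $J=\Delta'(V)\cap\Delta'(V')$. Any $x\in Z_z^+(V,V')$ has the form $x=z\prod_{\alpha\in J}a_\alpha^{n_\alpha}$ with $n_\alpha\in\mathbb N$, and since $v$ is a homomorphism into an abelian group,
\[
v(x)=v(z)-\sum_{\alpha\in J}n_\alpha\alpha_a^\vee
\]
independently of the order of the factors in the product.

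For finiteness I would introduce the functional $\rho_a=\sum_{\beta\in\Delta}\varpi_\beta$ on $V_{\ad}$, where $\varpi_\beta$ is the fundamental weight of the root system $\Phi_a$ characterized by $\langle\varpi_\beta,\gamma_a^\vee\rangle=\delta_{\beta\gamma}$ for $\gamma\in\Delta$, so that $\langle\rho_a,\gamma_a^\vee\rangle=1$ for every $\gamma\in\Delta$. Since each $\varpi_\beta$ is a non-negative rational combination of the simple roots $\gamma_a=e_\gamma\gamma$ — a standard property of finite root systems — $\rho_a$ is a non-negative combination of $\Delta$, and hence $\langle\rho_a,v(y)\rangle\ge0$ for every $y\in Z^+$. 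Pairing $\rho_a$ with the displayed identity for $x$ gives $\sum_{\alpha\in J}n_\alpha=\langle\rho_a,v(z)\rangle-\langle\rho_a,v(x)\rangle\le\langle\rho_a,v(z)\rangle$. Only finitely many tuples $(n_\alpha)_{\alpha\in J}\in\mathbb N^J$ satisfy this bound, so $Z_z^+(V,V')$ is finite.

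For the inclusion $Z_z^+(V,V')\subseteq Z_G^+(V,V')$, fix $x=z\prod_{\alpha\in J}a_\alpha^{n_\alpha}$ in $Z_z^+(V,V')$; then $x\in Z^+$ by definition, so by~\eqref{ZVV'} it remains to show $x\cdot\psi_V=\psi_{V'}$ and $\langle\beta,v(x)\rangle>0$ for all $\beta\in\Delta(V)\triangle\Delta(V')$. For the first, $z\cdot\psi_V=\psi_{V'}$ since $z\in Z_G^+(V,V')$, and for $\alpha\in J\subseteq\Delta'(V)\subseteq\Delta'_{\psi_V}$ we have $a_\alpha\in Z\cap M'_\alpha\subseteq Z_{\psi_V}$ (as recalled after Definition~\ref{delta'psi}); peeling off the factors $a_\alpha$ from the right one at a time and using $(ya_\alpha)\cdot\psi_V=y\cdot(a_\alpha\cdot\psi_V)=y\cdot\psi_V$ then gives $x\cdot\psi_V=\psi_{V'}$. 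For the second, $J\subseteq\Delta(V)\cap\Delta(V')$ is disjoint from $\Delta(V)\triangle\Delta(V')$, so $\beta\ne\alpha$ for every $\alpha\in J$, whence $\langle\beta,\alpha_a^\vee\rangle=e_\alpha^{-1}\langle\beta,\alpha^\vee\rangle\le0$ and
\[
\langle\beta,v(x)\rangle=\langle\beta,v(z)\rangle-\sum_{\alpha\in J}n_\alpha\langle\beta,\alpha_a^\vee\rangle\ge\langle\beta,v(z)\rangle>0
\]
by~\eqref{triangle}; thus $x\in Z_G^+(V,V')$. I expect no real obstacle: the only input beyond the definitions recalled in the excerpt is the positivity of the coefficients of the fundamental weights of a finite root system in the basis of simple roots, which is exactly what makes $\rho_a$ simultaneously non-negative on $Z^+$ and strictly positive on each $\alpha_a^\vee$; the one thing to watch is the sign bookkeeping relating $v$, $\nu$, $\alpha^\vee$ and $\alpha_a^\vee$.
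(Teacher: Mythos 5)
Your proof is correct. The second half (the inclusion $Z_z^+(V,V')\subseteq Z_G^+(V,V')$) is essentially identical to the paper's argument: $a_\alpha\in Z_{\psi_V}$ for $\alpha\in\Delta'_{\psi_V}$ handles the character condition, and $\langle\beta_a,\alpha_a^\vee\rangle\le 0$ for $\beta\in\Delta(V)\triangle\Delta(V')$, $\alpha\in\Delta'(V)\cap\Delta'(V')$ preserves the strict inequality from \eqref{triangle}. For finiteness you take a mildly different route: the paper writes out the system of inequalities $\langle\beta_a,\nu(z)\rangle+\sum_\alpha n(\alpha)\langle\beta_a,\alpha_a^\vee\rangle\le 0$ and invokes positive definiteness of the rescaled Cartan matrix $(d_\beta\langle\beta_a,\alpha_a^\vee\rangle)$, whereas you pair with $\rho_a=\sum_\beta\varpi_\beta$ and use that the inverse Cartan matrix of a finite root system has non-negative entries, obtaining the clean linear bound $\sum_\alpha n_\alpha\le\langle\rho_a,v(z)\rangle$. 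Both rest on standard facts about finite root systems and are equally valid; your version has the small advantage of producing an explicit bound on $\sum n_\alpha$ rather than an abstract finiteness statement. The only points worth double-checking, which you handle correctly, are that $\{\gamma_a^\vee\}_{\gamma\in\Delta}$ is indeed a basis of $V_{\ad}$ (so the $\varpi_\beta$ exist) and the sign conventions relating $v$, $\nu$ and $\alpha_a^\vee$.
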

\begin{proof}  For $z\in Z$, the set $Z^+\cap z\prod_{\alpha \in \Delta}a_\alpha^\mathbb N$
  is finite. Indeed, 
 $z \prod_{\alpha \in \Delta} a_\alpha^{n(\alpha)}, \ n(\alpha)\in \mathbb N=\{0,1, \ldots \}$ lies in $Z^+$ if and only if 
 $$ \langle \beta_a,\nu(z) \rangle + \sum_{\alpha \in \Delta} n(\alpha)\langle \beta_a, \alpha_a^\vee\rangle \leq 0 \ \text{for all} \  \beta\in \Delta.$$  
 These inequalities admit only finitely solutions $n(\alpha)\in \mathbb N$ for $\alpha \in \Delta$,  because the   matrix $(d_\beta \langle \beta_a, \alpha_a^\vee\rangle)_{\alpha, \beta\in \Delta}$ is positive definite for some $d_\beta > 0$.
 
 For $z\in Z_G^+(V,V')$, an element $x=z \prod_{\alpha \in \Delta'(V)\cap \Delta'(V')} a_\alpha^{n(\alpha)}$ of $ Z_z^+(V,V')$ lies in $Z_{\psi_V, \psi_{V'}} $ 
 as $a_\alpha \in Z_{\psi_V}$ for $\alpha \in \Delta'_{\psi_V}$ (see \S \ref{delta'psi}). For
 $$\alpha \in \Delta'(V)\cap \Delta'(V')= \Delta(V) \cap \Delta(V') \cap \Delta'_{\psi_V}$$
 and  $\beta \in \Delta(V)\triangle\,  \Delta(V')$ we have $\langle \beta_a, \alpha_a^\vee\rangle \leq 0$. By \eqref{triangle},  $z\in Z_G^+(V,V')$ satisfies $\langle \beta_a, \nu(z) \rangle <0$, so the same is true for $x$. Hence $x\in  Z_G^+(V,V')$.
\end{proof}
  
\begin{remark}
  When $V=V'$, and $\mathbf G$ is split with simply-connected derived subgroup, the inverse Satake theorem was obtained by
  \cite[Prop.\ 5.1]{MR2845621} using the Lusztig-Kato formula.
  The proof of the inverse Satake theorem for arbitrary $G$ and an arbitrary pair $(V,V')$ uses the pro-$p$ Iwahori Hecke
  algebra.  It is inspired by the work of Ollivier~\cite{MR3366919}.
\end{remark}

\begin{remark}
  When $V=V'$ the image of $S^G$ was known, see \eqref{imS}.  The description of the image of $S^G$ for a pair $(V,V') $
  with $V\not\simeq V'$ was an open question in \cite[\S 7.9]{MR3331726}.  Theorem \ref{EIST} shows that the image of $S^G$
  for a pair $(V,V')$ with $V\not\simeq V'$ is not always contained in the subspace of functions in $\mathcal H_Z(V_{U^0},
  V'_{U^0})$ supported in $Z^+$. This was noticed for many split groups in \cite[Prop.\ 6.13]{MR2845621}.
\end{remark}

\begin{remark}
  We establish a similar theorem for $S^G_M$ in the next section (Corollary \ref{cor:satake for Levi}), at least when $\Delta'(V')\subset \Delta'(V)\cup\Delta_M$.%
\end{remark}

We mentioned earlier that Theorem \ref{weight2} (hence the change of weight theorem) follows from  the inverse  Satake theorem; it is now the time to justify this assertion.

\begin{proposition}\label{ISimpliesCW} The   inverse  Satake theorem \(Theorem \ref{EIST}\) implies Theorem \ref{weight2} \(and hence the change of weight theorem\).
\end{proposition}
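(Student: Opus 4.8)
The plan is to deduce Theorem~\ref{weight2} from the inverse Satake theorem (Theorem~\ref{EIST}) applied to the two ordered pairs $(V,V')$ and $(V',V)$, together with the multiplicativity of $S^G$ under composition. Recall the hypotheses: $\psi_V=\psi_{V'}$ and $\Delta(V)=\Delta(V')\sqcup\{\alpha\}$ with $\alpha\in\Delta-\Delta(V')$. First I would record what $\Delta'(V)$ and $\Delta'(V')$ are in this situation. Since $\Delta(V')\subset\Delta(V)$ and the characters agree, we get $\Delta'(V')=\Delta(V')\cap\Delta'_{\psi_V}\subset\Delta(V)\cap\Delta'_{\psi_V}=\Delta'(V)$, and moreover $\Delta'(V)\setminus\Delta'(V')\subset\{\alpha\}$, with equality precisely when $\alpha\in\Delta'_{\psi_V}$, i.e.\ exactly when $c_\alpha=1$. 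In particular $\Delta'(V')\setminus\Delta'(V)=\varnothing$, so the correction product in \eqref{image} is empty for the pair $(V,V')$, while for the pair $(V',V)$ the product runs over $\Delta'(V)\setminus\Delta'(V')$, which is $\{\alpha\}$ if $c_\alpha=1$ and empty if $c_\alpha=0$.

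Next I would fix an element $z\in Z^+_{\psi_V}$ with $\langle\alpha,v(z)\rangle>0$, and check that $z\in Z_G^+(V,V')\cap Z_G^+(V',V)$: indeed $z\cdot\psi_V=\psi_V=\psi_{V'}$ and $\Delta(V)\triangle\Delta(V')=\{\alpha\}$, so the condition $\langle\alpha,v(z)\rangle>0$ from \eqref{triangle} is exactly what we assumed. Apply Theorem~\ref{EIST} to $(V,V')$: since the correction product is trivial, $\varphi:=\varphi_z^{V',V}=\sum_{x\in Z_z^+(V,V')}T_x^{V',V}$ has $S^G(\varphi)=\tau_z^{V'_{U^0},V_{U^0}}$. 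Apply it to $(V',V)$ with the inverse isomorphism $\iota^{-1}$: $\varphi':=\varphi_z^{V,V'}$ satisfies $S^G(\varphi')=\tau_z^{V_{U^0},V'_{U^0}}\prod_{\beta\in\Delta'(V)\setminus\Delta'(V')}(1-\tau_{a_\beta})=\tau_z^{V_{U^0},V'_{U^0}}(1-c_\alpha\tau_{a_\alpha})$, using the stated convention that the empty product is $1$ when $c_\alpha=0$ and the single factor $(1-\tau_{a_\alpha})$ when $c_\alpha=1$ (here $\tau_{a_\alpha}=\tau_\alpha$ is well defined and central since $\alpha\in\Delta'_{\psi_V}$ in that case).

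Finally I would compute the two composites using $S^G(\varphi'\circ\varphi)=S^G(\varphi')*S^G(\varphi)$ (Definition~\ref{SMGVV'} and the formula below it) together with the convolution rule $\tau_{z_1}^{V''_{U^0},V'_{U^0}}*\tau_{z_2}^{V'_{U^0},V_{U^0}}=\tau_{z_1z_2}^{V''_{U^0},V_{U^0}}$ from Remark~\ref{zz'}(iii), and the commutation $\tau_z*\tau_\alpha=\tau_\alpha*\tau_z=\tau_{za_\alpha}$ from Remark~\ref{not0}(ii)/\S\ref{sec:elem-a-alpha} (valid as $\alpha\in\Delta'_{\psi_V}$). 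Then
\[
S^G(\varphi'\circ\varphi)=\tau_z^{V_{U^0},V'_{U^0}}(1-c_\alpha\tau_{a_\alpha})*\tau_z^{V'_{U^0},V_{U^0}}=\tau_{z^2}^{V_{U^0},V_{U^0}}-c_\alpha\,\tau_{z^2a_\alpha}^{V_{U^0},V_{U^0}},
\]
and symmetrically $S^G(\varphi\circ\varphi')=\tau_{z^2}^{V'_{U^0},V'_{U^0}}-c_\alpha\,\tau_{z^2a_\alpha}^{V'_{U^0},V'_{U^0}}$, which is exactly the asserted formula. The main obstacle is purely bookkeeping: one must be careful that the factor $(1-\tau_{a_\alpha})$ is well defined and central (this needs $\alpha\in\Delta'_{\psi_V}$, i.e.\ $c_\alpha=1$, and is otherwise vacuous), and that the isomorphisms $\iota$, $\iota^{\op}$ are chosen compatibly for the two pairs so that the composites $\varphi'\circ\varphi$ and $\varphi\circ\varphi'$ land back in $\End_{CG}(\ind_K^GV)$ and $\End_{CG}(\ind_K^GV')$ with the normalizations matching the $\tau$'s on the $Z$-side; no deep input beyond Theorem~\ref{EIST} is required.
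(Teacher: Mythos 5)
Your proposal is correct and follows essentially the same route as the paper's first proof of Proposition~\ref{ISimpliesCW}: identify $\Delta'(V')\setminus\Delta'(V)=\varnothing$ and $\Delta'(V)\setminus\Delta'(V')=\{\alpha\}$ or $\varnothing$ according to $c_\alpha$, apply Theorem~\ref{EIST} to both ordered pairs $(V,V')$ and $(V',V)$, and multiply the resulting Satake transforms using Remarks~\ref{not0} and~\ref{zz'} together with the centrality of $\tau_\alpha$. No gaps.
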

Our first proof only uses the ``image of $S^G$'' part of Theorem~\ref{EIST} (for $V \not\cong V')$, whereas our second proof uses the explicit formula
in Theorem~\ref{EIST} (but only for $V = V'$).

\begin{proof}[First proof] As in  Theorem \ref{weight2}, we suppose that  the parameters of the irreducible representations $V$ and $V'$  of $K$ satisfy $\psi_V=\psi_{V'}$ and $\Delta(V)=\Delta(V')\sqcup\{\alpha\}$. In the proof, we will use only that we know the image of the Satake homomorphisms   for $(V,V')$ and for $(V',V)$. 

As in Theorem \ref{weight2},  let  $z \in Z^+_{\psi_V}$ satisfying $\langle \alpha, v (z)\rangle >0$. This is equivalent to   $z\in Z^+_G(V,V')= Z^+_G(V',V)$ (Example \ref{ex}). By the definition of $c_\alpha$  \eqref{dicho}  and of $\Delta'(V)$ \eqref{delta'V}, 
\[ \Delta'(V) \setminus  \Delta'(V') =\begin{cases} \{\alpha\}\ & \ \text{if}\ c_\alpha=1, \\
\varnothing\ & \ \text{if}\ c_\alpha=0.
\end{cases}
\]

The  inverse Satake  theorem (Theorem \ref{EIST}) gives  two functions $\varphi_z^{V',V} \in \mathcal H _G(V,V')$ and $\varphi_z^{V,V'} \in \mathcal H _G (V',V)$ satisfying
\[ S^G(\varphi_z^{V',V} )=  \tau_z^{V'_{U^0},V_{U^0}} \quad \text{and} \quad  S^G(\varphi_z^{V,V'} )=  \tau_z^{V_{U^0},V'_{U^0}} - c_\alpha \, \tau_{z a_\alpha}^{V_{U^0},V'_{U^0}} .
\]
By Remark~\ref{not0}, the two convolution products are
\begin{align*}S^G(\varphi_z^{V',V} * \varphi_z^{V,V'})&=S^G(\varphi_z^{V',V}) S^G(\varphi_z^{V,V'})= \tau^{V'_{U^0},V_{U^0}} _z (\tau^{V_{U^0},V'_{U^0}} _z - c_\alpha \, \tau^{V_{U^0},V'_{U^0}} _{z a_\alpha}) \\
&=\tau_{z^2}^{V'_{U^0},V'_{U^0}}  - c_\alpha \, \tau_{z ^2a_\alpha}^{V'_{U^0},V'_{U^0}} ,\\
S^G(\varphi_z^{V,V'} * \varphi_z^{V',V})&=S^G(\varphi_z^{V,V'}) S^G(\varphi_z^{V',V})=(\tau_z^{V_{U^0},V'_{U^0}}  - c_\alpha \, \tau^{V_{U^0},V'_{U^0}} _{z a_\alpha}) \tau_z^{V'_{U^0},V_{U^0}}\\&=\tau_{z^2}^{V_{U^0},V_{U^0}}  - c_\alpha \, \tau^{V_{U^0},V_{U^0}} _{z a_\alpha z}=\tau_{z^2}^{V_{U^0},V_{U^0}}  - c_\alpha \, \tau^{V_{U^0},V_{U^0}} _{ z^2a_\alpha }.
\end{align*}
In the second product we used that $\tau_\alpha \in \mathcal Z_Z(V_{U^0})$.
\end{proof}

\begin{proof}[Second proof]
In this proof, we prove Theorem \ref{weight2} for $z\in Z^+_{\psi_V}$ such that $\langle\alpha,v(z)\rangle > 0$ and $\langle\beta,v(z)\rangle = 0$ for any $\beta\in Z^+$.
As we mentioned after Theorem~\ref{weight2}, this implies Theorem~\ref{weight}.

In this proof, we use Theorem~\ref{EIST} only for $V = V'$.
We also use Lemma~\ref{first} and \ref{second} from the next section.
The argument is almost the same as the proof in \cite{MR2845621,MR3143708}.

Set $\varphi = T_z^{V',V}\in \mathcal{H}_G(V,V')$ and $\varphi' = T_z^{V,V'}\in \mathcal{H}_G(V',V)$.
By the assumption on $z$, we have $\Delta_z = \Delta\setminus\{\alpha\}$.
On the other hand, we have $\alpha\notin \Delta(V')$.
Hence $\Delta(V')\subset \Delta_z$.
By Lemma~\ref{second}, we have $\varphi'*\varphi = T_{z^2}^{V,V}$.

We calculate $S^G(T_{z^2}^{V,V})$ using Theorem~\ref{EIST}.
From Lemma~\ref{lem:Z_z^2(V,V') singular case} below and Theorem~\ref{EIST}, we get the following:
\begin{itemize}
\item If $\alpha\in\Delta'(V)$, then 
\begin{align*}
\tau_{z^2}^{V_{U^0},V_{U^0}} & = \sum_{z'\in Z_{z^2}^+(V,V)}S^G(T_{z'}^{V,V})\\
& = S^G(T_{z^2}^{V,V}) + \sum_{z'\in Z_{z^2a_\alpha}^+(V,V)}S^G(T_{z'}^{V,V})\\
& = S^G(T_{z^2}^{V,V}) + \tau_{z^2a_\alpha}^{V_{U^0},V_{U^0}}.
\end{align*}
Hence $S^G(T_{z^2}^{V,V}) = \tau_{z^2}^{V_{U^0},V_{U^0}} - \tau_{z^2a_\alpha}^{V_{U^0},V_{U^0}}$.
\item If $\alpha\not\in\Delta'(V)$, then $\tau_{z^2}^{V_{U^0},V_{U^0}} = S^G(T_{z^2}^{V,V})$.
\end{itemize}
Therefore we get $S^G(\varphi'*\varphi) = \tau_{z^2}^{V_{U^0},V_{U^0}} - c_\alpha \tau_{z^2a_\alpha}^{V_{U^0},V_{U^0}}$.

Since $\Delta(V')\subset \Delta_z$, Lemma~\ref{first} implies $S^G(\varphi) = \tau_{z}^{V'_{U^0},V_{U^0}}$.
Hence $S^G(\varphi')\tau_z^{V'_{U^0},V_{U^0}} = \tau_{z^2}^{V_{U^0},V_{U^0}} - c_\alpha \tau_{z^2a_\alpha}^{V_{U^0},V_{U^0}}$.
Canceling $\tau_z^{V'_{U^0},V_{U^0}}$ and keeping in mind that $\tau_\alpha$ is central, we get $S^G(\varphi') = \tau_{z}^{V_{U^0},V'_{U^0}} - c_\alpha \tau_{za_\alpha}^{V_{U^0},V'_{U^0}}$.
Hence we have
\begin{align*}
S^G(\varphi*\varphi') & = \tau_{z}^{V'_{U^0},V_{U^0}}(\tau_{z}^{V_{U^0},V'_{U^0}} - c_\alpha \tau_{za_\alpha}^{V_{U^0},V'_{U^0}})\\
& = \tau_{z^2}^{V'_{U^0},V'_{U^0}} - c_\alpha \tau_{z^2a_\alpha}^{V'_{U^0},V'_{U^0}}.\qedhere
\end{align*}
\end{proof}

\begin{lemma}\label{lem:Z_z^2(V,V') singular case}
Let $\alpha\in\Delta$, $z\in Z^+$ such that $\langle \alpha,v(z)\rangle > 0$ and $\langle\beta,v(z)\rangle = 0$ for $\beta\in\Delta\setminus\{\alpha\}$.
\begin{enumerate}
\item We have $z^2a_\alpha\in Z^+$.
\item We have $z_1\in Z^+ \cap z^2\prod_{\beta\in\Delta}a_\beta^{\mathbb{N}}$ if and only if $z_1 = z^2$ or $z_1\in Z^+ \cap z^2a_\alpha\prod_{\beta\in\Delta}a_\beta^{\mathbb{N}}$.
In particular, for any irreducible representation $V$ of $K$, we have 
\[
Z_{z^2}^+(V,V) = 
\begin{cases}
\{z^2\}\sqcup Z_{z^2a_\alpha}^+(V,V) & (\alpha\in\Delta'(V)),\\
\{z^2\} & (\alpha\notin\Delta'(V)).
\end{cases}
\]
\end{enumerate}
\end{lemma}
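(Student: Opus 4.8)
The plan is to transport the whole statement into $V_{\ad}$ via the homomorphism $\nu=-v$. The hypotheses on $z$ then read: $\nu(z)\in\mathfrak D^-$, with $\langle\beta_a,\nu(z)\rangle=0$ for every $\beta\in\Delta\setminus\{\alpha\}$ and $\langle\alpha_a,\nu(z)\rangle<0$. Recall also $\nu(a_\beta)=\beta_a^\vee$, that $\langle\beta_a,\gamma_a^\vee\rangle\le 0$ for $\beta\ne\gamma$ while $\langle\alpha_a,\alpha_a^\vee\rangle=2$, and that the simple coroots $\beta_a^\vee$ ($\beta\in\Delta$) are linearly independent; hence for any element written as $z^2\prod_{\beta\in\Delta}a_\beta^{n(\beta)}$ the exponents $n(\beta)\in\mathbb{N}$ are uniquely determined (read off from $\nu$). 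I will use these facts freely, along with the positive-definite matrix $(d_\beta\langle\beta_a,\gamma_a^\vee\rangle)_{\beta,\gamma\in\Delta}$, $d_\beta>0$, produced in the proof of Lemma~\ref{lm:Zz-contained-in-ZG}.

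For (i) I compute $\nu(z^2a_\alpha)=2\nu(z)+\alpha_a^\vee$: for $\beta\ne\alpha$ this pairs with $\beta_a$ to $\langle\beta_a,\alpha_a^\vee\rangle\le 0$, and with $\alpha_a$ to $2\langle\alpha_a,\nu(z)\rangle+2$. The point here — a step easy to overlook — is that $\langle\alpha_a,\nu(z)\rangle$ is an \emph{integer}: the translation lattice $\nu(Z)$ of the Iwahori--Weyl group lies in the coweight lattice of $\Phi_a$ \cite{MR3484112}. Being a negative integer it is $\le -1$, so $\langle\alpha_a,\nu(z^2a_\alpha)\rangle\le 0$, whence $\nu(z^2a_\alpha)\in\mathfrak D^-$, i.e.\ $z^2a_\alpha\in Z^+$.

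For the first assertion of (ii), the implication ``$\Leftarrow$'' is immediate since $Z^+$ is a submonoid and $z^2a_\alpha\prod_\beta a_\beta^{\mathbb{N}}\subset z^2\prod_\beta a_\beta^{\mathbb{N}}$. For ``$\Rightarrow$'', let $z_1=z^2\prod_\beta a_\beta^{n(\beta)}\in Z^+$. If $n(\alpha)\ge 1$ then clearly $z_1\in z^2a_\alpha\prod_\beta a_\beta^{\mathbb{N}}$. Suppose instead $n(\alpha)=0$; I claim $z_1=z^2$. Put $J=\{\beta\in\Delta:n(\beta)>0\}\subseteq\Delta\setminus\{\alpha\}$ and $\mu=\sum_{\beta\in J}n(\beta)\beta_a^\vee$, so $\nu(z_1)=2\nu(z)+\mu$ and, for $\gamma\in J$ (whence $\gamma\ne\alpha$), $\langle\gamma_a,\nu(z_1)\rangle=\langle\gamma_a,\mu\rangle$. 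Then
\[ 0\ \ge\ \sum_{\gamma\in J}d_\gamma\,n(\gamma)\,\langle\gamma_a,\nu(z_1)\rangle\ =\ \sum_{\gamma,\beta\in J}d_\gamma\langle\gamma_a,\beta_a^\vee\rangle\,n(\gamma)n(\beta)\ >\ 0 \]
is a contradiction unless $J=\varnothing$: the first inequality holds termwise ($d_\gamma n(\gamma)>0$, $\langle\gamma_a,\nu(z_1)\rangle\le 0$ as $z_1\in Z^+$), while the last is the positive-definite form of Lemma~\ref{lm:Zz-contained-in-ZG} evaluated at the nonzero vector $(n(\gamma))_{\gamma\in J}$. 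Hence $J=\varnothing$, and combined with $n(\alpha)=0$ this gives $z_1=z^2$. This positivity argument is the crux of the lemma.

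It remains to deduce the ``in particular'' clause, where $Z_{z^2}^+(V,V)=Z^+\cap z^2\prod_{\beta\in\Delta'(V)}a_\beta^{\mathbb{N}}$. If $\alpha\in\Delta'(V)$, this set contains $z^2$ and contains $Z_{z^2a_\alpha}^+(V,V)=Z^+\cap z^2a_\alpha\prod_{\beta\in\Delta'(V)}a_\beta^{\mathbb{N}}$; conversely, any $z_1$ in it with $z_1\ne z^2$ has, by the first assertion of (ii) and uniqueness of exponents, $a_\alpha$-exponent $\ge 1$ and all exponents supported on $\Delta'(V)$, so $z_1\in Z_{z^2a_\alpha}^+(V,V)$; the union is disjoint because $z^2$ has $a_\alpha$-exponent $0$ while every element of $Z_{z^2a_\alpha}^+(V,V)$ has $a_\alpha$-exponent $\ge 1$. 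If $\alpha\notin\Delta'(V)$, every element of $Z_{z^2}^+(V,V)$ has $a_\alpha$-exponent $0$, so the first assertion of (ii) forces it to equal $z^2$. I expect the only delicate points to be the positivity argument above and, in (i), not forgetting the integrality input; everything else is bookkeeping with the exponents.
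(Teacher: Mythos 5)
Your proof is correct and follows essentially the same route as the paper's: part (i) by pairing $\nu(z^2a_\alpha)$ against each simple root, and the ``only if'' direction of (ii) via the positive-definite Cartan-type form on the span of the coroots away from $\alpha$. The only (welcome) refinement is that you make explicit the integrality $\langle\alpha_a,\nu(z)\rangle\in\mathbb Z$ (from $\nu(Z)\subset P(\Phi_a^\vee)$) that the paper uses silently when it writes $2\langle\alpha_a,v(z)\rangle-2\ge 0$, and you spell out the exponent bookkeeping for the ``in particular'' clause, which the paper leaves implicit.
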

\begin{proof}
Let $\beta\in\Delta$.
If $\beta \ne \alpha$, then $\langle \beta_a,v(a_\alpha)\rangle = \langle\beta_a,-\alpha_a^\vee\rangle\ge 0$.
Hence $\langle\beta_a,v(z^2a_\alpha)\rangle \ge \langle \beta_a,v(z^2)\rangle\ge 0$.
For $\beta = \alpha$, we have $\langle \alpha_a,v(a_\alpha)\rangle = \langle \alpha_a,-\alpha_a^\vee\rangle = -2$.
Hence $\langle \alpha_a,v(z^2a_\alpha)\rangle = 2\langle \alpha_a,v(z)\rangle - 2\ge 0$.

For (ii), the ``if'' part is trivial.
We prove the ``only if'' part.
Let $z_1\in z^2\prod_{\beta\in\Delta}a_\beta^{\mathbb{N}}\cap Z^+$ and take $n(\beta)\in\mathbb{N}$ such that $z_1 = z^2\prod_{\beta\in\Delta}a_\beta^{n(\beta)}$.
Assume that $z_1\notin z^2a_\alpha\prod_{\beta\in\Delta}a_\beta^{\mathbb{N}}\cap Z^+$, namely $n(\alpha) = 0$.
Then for $\gamma\in\Delta\setminus\{\alpha\}$, we have
\[
0\le \langle\gamma_a,v(z_1)\rangle = \langle\gamma_a,v(z^2)\rangle - \sum_{\beta\in\Delta\setminus\{\alpha\}}n(\beta)\langle\gamma_a,\beta_a^\vee\rangle
\]
Hence 
\[
\sum_{\beta\in\Delta\setminus\{\alpha\}}n(\beta)\langle\gamma_a,\beta_a^\vee\rangle \le 2\langle\gamma_a,v(z)\rangle = 0
\]
from the assumption on $z$.
Since the matrix $(d_\gamma\langle \gamma_a,\beta_a^\vee\rangle)_{\beta,\gamma\in\Delta\setminus\{\alpha\}}$ is positive definite for some $d_\gamma > 0$, we get $n(\beta) = 0$ for all $\beta\in\Delta\setminus\{\alpha\}$.
Hence $z_1 = z^2$.
\end{proof}

\subsection{Inverse Satake for Levi subgroups}\label{sec:inverse-satake-levi}
Let $P = MN$ be a parabolic subgroup containing $B$.
By the inverse Satake theorem (Theorem \ref{EIST}) for $S^G = S_Z^G$, we can get the following formula for $S_M^G$.
Let $V,V'$ be irreducible $K$-representations.
We denote the function $T_z^{V_{N^0},V'_{N^0}}\in \mathcal{H}_M(V_{N^0},V'_{N^0})$ for $M$ by $T_z^{V'_{N^0},V_{N^0},M}$.
Also, for $X \subset \Delta$ we write $a_X := \prod_{\gamma \in X} a_\gamma$.
\begin{theorem}\label{thm:satake for Levi}
For $z\in Z_G^+(V,V')$, we have
\[
\sum_{x\in Z_z^+(V,V')}S_M^G(T_x^{V',V}) = \sum_{X\subset \Delta'(V')\setminus(\Delta'(V)\cup \Delta_M)}(-1)^{\#X}\sum_{x\in Z_{z a_X}^{+,M}(V_{N^0},V'_{N^0})}T_x^{V'_{N^0},V_{N^0},M}.
\]
\end{theorem}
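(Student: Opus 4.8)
The plan is to deduce Theorem~\ref{thm:satake for Levi} from the inverse Satake theorem (Theorem~\ref{EIST}) applied \emph{both} to $G$ and to the Levi subgroup $M$, using transitivity and injectivity of Satake transforms. The structural input is the factorization $S^G=S^G_Z=S^M_Z\circ S^G_M$: since $U=(U\cap M)N$ with $U^0=(U\cap M)^0N^0$, the sum over $U^0\backslash U$ splits compatibly as a sum over $(U\cap M)^0\backslash(U\cap M)$ followed by a sum over $N^0\backslash N$ (normality of $N$ in $P$ lets one reorder the factors), and the iterated $(U\cap M)^0$-coinvariants of $V_{N^0}$ are canonically $V_{U^0}$; so the composite of the two coinvariant constructions in Definition~\ref{SMGVV'} and \S\ref{sec:satake-transf} agrees with the single one defining $S^G_Z$. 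As $S^M_Z$ is injective, it suffices to prove the asserted identity of elements of $\mathcal H_M(V_{N^0},V'_{N^0})$ after applying $S^M_Z$ to both sides.

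For the left-hand side this is immediate: $\sum_{x\in Z_z^+(V,V')}T^{V',V}_x$ is the element $\varphi^{V',V}_z$ of Theorem~\ref{EIST}, so
\[
S^M_Z\Bigl(\sum_{x\in Z_z^+(V,V')}S^G_M(T^{V',V}_x)\Bigr)=S^G(\varphi^{V',V}_z)=\tau_z\prod_{\alpha\in\Delta'(V')\setminus\Delta'(V)}(1-\tau_{a_\alpha}).
\]
For the right-hand side, fix $X\subset\Delta'(V')\setminus(\Delta'(V)\cup\Delta_M)$. I would first verify that $za_X$ satisfies the conditions \eqref{ZVV'}--\eqref{triangle} relative to $M$ and the pair $(V_{N^0},V'_{N^0})$, so that $\sum_{x\in Z^{+,M}_{za_X}(V_{N^0},V'_{N^0})}T^{V'_{N^0},V_{N^0},M}_x$ is exactly the element $\varphi^{V'_{N^0},V_{N^0},M}_{za_X}$ furnished by Theorem~\ref{EIST} for $M$. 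Here $za_X\cdot\psi_V=z\cdot\psi_V=\psi_{V'}$ because each $\gamma\in X$ lies in $\Delta'_{\psi_V}$, whence $a_\gamma\in Z\cap M'_\gamma\subset Z_{\psi_V}$; and for $\beta\in\Delta_M$ with $\beta\in\Delta(V)\triangle\Delta(V')$ one has $\langle\beta,v(za_X)\rangle\ge\langle\beta,v(z)\rangle>0$ by \eqref{triangle}, using $\langle\beta,v(a_\gamma)\rangle\ge0$ for $\gamma\ne\beta$ since $v(a_\gamma)\in\R_{\le0}\gamma^\vee$. Moreover $V_{N^0}$ and $V'_{N^0}$ have parameters $(\psi_V,\Delta_M\cap\Delta(V))$ and $(\psi_{V'},\Delta_M\cap\Delta(V'))$; since $M'_\gamma$ for $\gamma\in\Delta_M$ is intrinsic to $M$, this gives $\Delta'(V_{N^0})=\Delta_M\cap\Delta'(V)$ and $\Delta'(V'_{N^0})=\Delta_M\cap\Delta'(V')$, so Theorem~\ref{EIST} for $M$ yields
\[
S^M_Z\bigl(\varphi^{V'_{N^0},V_{N^0},M}_{za_X}\bigr)=\tau_{za_X}\prod_{\alpha\in\Delta_M\cap(\Delta'(V')\setminus\Delta'(V))}(1-\tau_{a_\alpha}).
\]

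It then remains a formal manipulation in the commutative ring generated over $C$ by $\tau_z$ and the central elements $\tau_{a_\alpha}$. Writing $A=\Delta'(V')\setminus\Delta'(V)$, using $\tau_{za_X}=\tau_z\prod_{\gamma\in X}\tau_{a_\gamma}$ and the elementary identity $\sum_{X\subset A\setminus\Delta_M}(-1)^{\#X}\prod_{\gamma\in X}\tau_{a_\gamma}=\prod_{\gamma\in A\setminus\Delta_M}(1-\tau_{a_\gamma})$, the $S^M_Z$-image of the right-hand side of Theorem~\ref{thm:satake for Levi} collapses to
\[
\tau_z\prod_{\gamma\in A\setminus\Delta_M}(1-\tau_{a_\gamma})\prod_{\alpha\in A\cap\Delta_M}(1-\tau_{a_\alpha})=\tau_z\prod_{\alpha\in A}(1-\tau_{a_\alpha}),
\]
which agrees with the left-hand side computed above; injectivity of $S^M_Z$ then finishes the proof. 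I expect the only genuinely delicate point to be the bookkeeping in the factorization $S^G=S^M_Z\circ S^G_M$, in particular matching the normalizations $\iota,\iota^{\op}$ under passage to $N^0$-coinvariants so that the functions $\tau$ on the two sides are equal on the nose and not merely up to scalars; everything past that is formal once Theorem~\ref{EIST} is available for $M$.
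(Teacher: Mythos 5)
Your proposal is correct and follows essentially the same route as the paper's proof: apply $S^M$ to both sides, use the transitivity $S^G=S^M\circ S^G_M$ together with Theorem~\ref{EIST} for $G$ on the left and for $M$ on the right, collapse the alternating sum by inclusion--exclusion using $\Delta'(V'_{N^0})\setminus\Delta'(V_{N^0})=\Delta_M\cap(\Delta'(V')\setminus\Delta'(V))$, and conclude by injectivity of $S^M$. The extra verifications you flag (that $za_X\in Z^{+}_M(V_{N^0},V'_{N^0})$, and the matching of parameters for $V_{N^0},V'_{N^0}$) are exactly the points the paper records in the remark preceding its proof.
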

\begin{remark}
In the theorem we have $za_X\in Z^+_M(V_{N^0},V'_{N^0})$ since $z\in Z_G^+(V,V')\subset Z_M^+(V_{N^0},V'_{N^0})$ and $\langle\beta_a,\gamma_a^\vee\rangle\le 0$ for any $\beta\in\Delta_M$ and $\gamma\in X\subset \Delta\setminus\Delta_M$. 
\end{remark}
\begin{proof}[Proof of Theorem~\ref{thm:satake for Levi}]
Apply $S^M$ to both sides of the formula given in the theorem.
For the left-hand side, we have
\begin{align*}
S^M\left(\sum_{x\in Z_z^+(V,V')}S_M^G(T_x^{V',V})\right)
& = \sum_{x\in Z_z^+(V,V')}S^G(T_x^{V',V})\\
& = \tau_z\prod_{\alpha\in\Delta'(V')\setminus\Delta'(V)}(1 - \tau_\alpha)
\end{align*}
by Theorem~\ref{EIST}.
For the right-hand side, applying Theorem~\ref{EIST} to $M$ and using an inclusion-exclusion formula, we have
\begin{align*}
& S^M\left(\sum_{X\subset \Delta'(V')\setminus(\Delta'(V)\cup \Delta_M)}(-1)^{\#X}\sum_{x\in Z_{za_X}^{+,M}(V_{N^0},V'_{N^0})}T_x^{V'_{N^0},V_{N^0},M}\right)\\
& =
\sum_{X\subset \Delta'(V')\setminus(\Delta'(V)\cup \Delta_M)}(-1)^{\#X} \sum_{x\in Z_{za_X}^{+,M}(V_{N^0},V'_{N^0})}S^M(T_x^{V'_{N^0},V_{N^0},M})\\
& = \sum_{X\subset \Delta'(V')\setminus(\Delta'(V)\cup \Delta_M)}(-1)^{\#X}\tau_{za_X}\prod_{\alpha\in\Delta'(V'_{N^0})\setminus\Delta'(V_{N^0})}(1 - \tau_\alpha)\\
& = \tau_z\prod_{\alpha\in \Delta'(V')\setminus(\Delta'(V)\cup \Delta_M)}(1 - \tau_\alpha)\prod_{\alpha\in\Delta'(V'_{N^0})\setminus\Delta'(V_{N^0})}(1 - \tau_\alpha)\\
& = \tau_z\prod_{\alpha\in\Delta'(V')\setminus\Delta'(V)}(1 - \tau_\alpha),
\end{align*}
noting also that $\Delta'(V'_{N^0})\setminus\Delta'(V_{N^0}) = (\Delta_M\cap \Delta'(V'))\setminus\Delta'(V)$
(since $\Delta(V_{N^0}) = \Delta_M\cap \Delta(V)$ by \cite[III.10 Lemma]{MR3600042}).
Since $S^M$ is injective, we get the theorem.
\end{proof}
In a special case the formula is simple. In particular this happens when $V \simeq V'$.
\begin{corollary}\label{cor:satake for Levi}
If $\Delta'(V')\subset \Delta'(V)\cup \Delta_M$, then we have for $z \in Z^+_G(V,V')$,
\[
\sum_{x\in Z_z^+(V,V')}S_M^G(T_x^{V',V}) = \sum_{x\in Z_z^{+,M}(V_{N^0},V'_{N^0})}T_x^{V'_{N^0},V_{N^0},M},
\]
and the image of $S_M^G$ is spanned by $\{T_z^{V'_{N^0},V_{N^0},M}\mid z\in Z^+_G(V,V')\}$.
\end{corollary}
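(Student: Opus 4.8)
The plan is to read off both assertions directly from Theorem~\ref{thm:satake for Levi} together with Theorem~\ref{EIST}; the only ingredient with genuine content is a combinatorial check about the sets $Z_z^{+,M}(V_{N^0},V'_{N^0})$. Throughout I would write $\varphi_z^{V'_{N^0},V_{N^0},M}:=\sum_{x\in Z_z^{+,M}(V_{N^0},V'_{N^0})}T_x^{V'_{N^0},V_{N^0},M}$ for the analogue in $\mathcal H_M(V_{N^0},V'_{N^0})$ of the element $\varphi_z^{V',V}$ of Theorem~\ref{EIST} (it is well defined, e.g.\ because $Z_z^{+,M}(V_{N^0},V'_{N^0})$ is finite, being contained in $Z_z^+(V,V')$; see below).

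First I would deduce the displayed identity. The hypothesis $\Delta'(V')\subseteq\Delta'(V)\cup\Delta_M$ is equivalent to $\Delta'(V')\setminus(\Delta'(V)\cup\Delta_M)=\varnothing$, so in the formula of Theorem~\ref{thm:satake for Levi} the outer sum has only the term $X=\varnothing$, for which $(-1)^{\#X}=1$ and $za_X=z$; hence that formula reads $\sum_{x\in Z_z^+(V,V')}S_M^G(T_x^{V',V})=\sum_{x\in Z_z^{+,M}(V_{N^0},V'_{N^0})}T_x^{V'_{N^0},V_{N^0},M}$, i.e.\ $S_M^G(\varphi_z^{V',V})=\varphi_z^{V'_{N^0},V_{N^0},M}$ for every $z\in Z_G^+(V,V')$. (When $V\simeq V'$ the hypothesis holds automatically, since then $\Delta'(V)=\Delta'(V')$.)

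For the statement about the image, I would first note that by Theorem~\ref{EIST} and the injectivity of $S^G$ the family $(\varphi_z^{V',V})_{z\in Z_G^+(V,V')/Z^0}$ is a $C$-basis of $\mathcal H_G(V,V')$, so by the previous step the image of $S_M^G$ is spanned by $(\varphi_z^{V'_{N^0},V_{N^0},M})_{z\in Z_G^+(V,V')/Z^0}$. It then remains to show that this span equals the span of $(T_z^{V'_{N^0},V_{N^0},M})_{z\in Z_G^+(V,V')/Z^0}$ (these $T_z$ belong to the basis of $\mathcal H_M(V_{N^0},V'_{N^0})$ of Proposition~\ref{basisVV'}, since $Z_G^+(V,V')\subseteq Z^+_M(V_{N^0},V'_{N^0})$ by the remark after Theorem~\ref{thm:satake for Levi}). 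The key point is that for $z\in Z_G^+(V,V')$ one has $Z_z^{+,M}(V_{N^0},V'_{N^0})\subseteq Z_G^+(V,V')$: if $x=z\prod_{\gamma}a_\gamma^{n(\gamma)}$ is $M$-dominant, with $\gamma$ running over $\Delta_M\cap\Delta'(V)\cap\Delta'(V')$, then for $\alpha\in\Delta\setminus\Delta_M$ we have $\langle\alpha,v(a_\gamma)\rangle=-e_\gamma^{-1}\langle\alpha,\gamma^\vee\rangle\ge 0$ (distinct simple roots of $\Phi$), so $\langle\alpha,v(x)\rangle\ge\langle\alpha,v(z)\rangle\ge 0$; thus $x\in Z^+$, and then $x\in Z_z^+(V,V')\subseteq Z_G^+(V,V')$ by Lemma~\ref{lm:Zz-contained-in-ZG}. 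Hence each $\varphi_z^{V'_{N^0},V_{N^0},M}$ lies in the desired span; conversely, using that $Z_x^{+,M}(V_{N^0},V'_{N^0})\subseteq Z_z^{+,M}(V_{N^0},V'_{N^0})$ whenever $x\in Z_z^{+,M}(V_{N^0},V'_{N^0})$, and that $\varphi_x^{V'_{N^0},V_{N^0},M}=T_x^{V'_{N^0},V_{N^0},M}+\sum_{y}T_y^{V'_{N^0},V_{N^0},M}$ where each $y$ has $v(y)$ strictly below $v(x)$ in the dominance order, the transition matrix between the two families restricted to the finite block indexed by $Z_z^{+,M}(V_{N^0},V'_{N^0})/Z^0$ is unitriangular, hence invertible; so $T_z^{V'_{N^0},V_{N^0},M}$ is a $C$-linear combination of the $\varphi_x^{V'_{N^0},V_{N^0},M}$ with $x\in Z_z^{+,M}(V_{N^0},V'_{N^0})$, which lie in the image of $S_M^G$. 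Therefore the two spans agree, which is the second assertion.

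The step needing care is the inclusion $Z_z^{+,M}(V_{N^0},V'_{N^0})\subseteq Z_G^+(V,V')$ — the point being that $\Delta_M$-dominance of the relevant elements $za_X$ upgrades to $\Delta$-dominance because the extra factors $a_\gamma$ have $\gamma\in\Delta_M$ and hence pair non-negatively, via $v$, against the simple roots outside $\Delta_M$. Everything else is bookkeeping and a routine unitriangularity argument on finite blocks, so I do not expect a real obstacle here.
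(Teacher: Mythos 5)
Your proof is correct and follows essentially the same route as the paper's: the identity is read off from Theorem~\ref{thm:satake for Levi} with the outer sum reduced to $X=\varnothing$, and the spanning statement follows from the inclusion $Z_z^{+,M}(V_{N^0},V'_{N^0})\subset Z_z^+(V,V')\subset Z_G^+(V,V')$ together with a unitriangularity argument on the finite set $Z_z^{+,M}(V_{N^0},V'_{N^0})$. You additionally verify the inclusion (via $\langle\alpha,v(a_\gamma)\rangle\ge 0$ for $\alpha\notin\Delta_M$, $\gamma\in\Delta_M$), which the paper merely asserts.
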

\begin{proof}
The first part is immediate. For the last part fix $z \in Z^+_G(V,V')$. We note that $Z_z^{+,M}(V_{N^0},V'_{N^0})\subset Z_z^+(V,V')\subset Z_G^+(V,V')$.
Let $\preceq$ denote the partial order on the finite set $Z_z^{+,M}(V_{N^0},V'_{N^0})$ defined by
$x \preceq y$ if $x \in Z_y^{+,M}(V_{N^0},V'_{N^0})$.
Then the first part applied to $y \in Z_z^{+,M}(V_{N^0},V'_{N^0})$ shows that $\sum_{x \preceq y} T_x^{V'_{N^0},V_{N^0},M}$ is in the image of $S_M^G$.
A triangular argument now shows that $T_y^{V'_{N^0},V_{N^0},M}$ is in the image of $S_M^G$ for any $y \in Z_z^{+,M}(V_{N^0},V'_{N^0})$, in particular
this is true when $y = z$.
\end{proof}

 \section{Reduction    to \texorpdfstring{$\Delta(V')\subset \Delta(V)$}{Delta(V') subset Delta(V)}}\label{sec:reduction-delta}
 
Let $V,V'$ be two irreducible representations of $K$.  We reduce the proof of the  inverse Satake theorem for $(V,V')$  to the particular case where their parameters satisfy $\Delta(V')\subset \Delta(V)$. First, we establish  some lemmas that are of independent interest.

\subsection{First lemma}\label{sec:first-lemma} Let $P=MN$ be a parabolic subgroup of $G$ containing $B$ corresponding to $\Delta_P\subset \Delta$. 
Our first lemma  is the computation in a particular case of the generalized Satake transform $S_M^G:\mathcal H_G(V,V')\to \mathcal H_M(V_{N^0},V'_{N^0})$ (Definition \ref{SMGVV'}); it
 is a generalization of \cite[Cor.\ 2.18]{MR2845621}. 
 
 We fix linear isomorphisms $\iota^{\op}, \iota$ as in \eqref{iopi} for $(V,V')$;  for $z\in Z^+_G(V,V')$ we recall the elements  $T_z^{V',V}\in \mathcal H_G(V,V')$, $T_z^{V'_{N^0},V_{N^0}}\in \mathcal H_M(V_{N^0},V'_{N^0})$  defined  in \S \ref{intVV'}, and the subset $\Delta_z$ of $\Delta $ defined by \eqref{deltaz}.

\begin{lemma}\label{first} Let   $z\in Z_G^+(V,V')$.  We have $S_M^G(T_z^{V',V})=T_z^{V'_{N^0},V_{N^0}}$ if  $\Delta(V')$ is contained in $\Delta_P$ or in $\Delta_z$. 
\end{lemma}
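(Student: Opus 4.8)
The plan is to compute $S_M^G(T_z^{V',V})$ directly from its defining relation
\[
(S_M^G\varphi)(m)\circ p = p'\circ\sum_{x\in N^0\backslash N}\varphi(xm),
\]
evaluated at $m=z$, and show that the right-hand side picks out exactly the value of $T_z^{V'_{N^0},V_{N^0}}$ at $z$ while vanishing off $M^0zM^0$. The key structural input is that $\supp T_z^{V',V}=KzK$ together with the Iwasawa-type decomposition relating $K$, $N^0$ and $M^0$: for $z\in Z^+$ the monoid element $z$ contracts $N$, so one can control the intersection $N^0zM^0\cap KzK$ (equivalently $Nz\cap KzK$ modulo $N^0$ on the left and $M^0$ on the right). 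First I would recall that $S_M^G(T_z^{V',V})$ is supported on $M^0z'M^0$ for $z'\in Z$ with $\langle\alpha,v(z')\rangle$ in the appropriate range (this is the analogue of Lemma~\ref{lm:support-satake} for $M$, or can be seen from the support of $T_z$), and argue that the only relevant double coset is $M^0zM^0$; this uses that $z\in Z^+_G(V,V')$ and the defining inequalities \eqref{triangle}.

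Next I would pin down the value at $z$. By the $K$-bi-equivariance and $\supp T_z^{V',V}=KzK$, the value $T_z^{V',V}(z)$ is the chosen generator of the one-dimensional space $\Hom_{Z^0}(V,V')$-worth of maps (with $T_z(z)|_{V^{U_{\op}^0}}=\iota^{\op}$), and similarly $T_z^{V'_{N^0},V_{N^0}}(z)\in\Hom_{M^0}(V_{N^0},V'_{N^0})$ is normalized compatibly. So the heart of the matter is to show
\[
p'\circ\Bigl(\sum_{x\in N^0\backslash N}T_z^{V',V}(xz)\Bigr) = T_z^{V'_{N^0},V_{N^0}}(z)\circ p
\]
as maps $V\to V'_{N^0}$. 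The sum on the left runs over those cosets $N^0x$ with $xz\in KzK$; I would analyze this set using the hypothesis. When $\Delta(V')\subset\Delta_P$: then $P_{\Delta(V')}\subset P$, and the stabilizer information for $V'$ (its kernel-of-quotient stabilizer is $P_{\Delta(V'),k}$) forces the relevant unipotent part to lie inside $M$, so the $N$-sum collapses to the trivial coset, giving $T_z^{V',V}(z)$ composed with $p'$; chasing through the compatible normalizations $\iota^{\op}$, $\iota$, and using the $Z^0$-equivariant identifications $V^{U_{\op}^0}\cong V_{U^0}$, this equals $T_z^{V'_{N^0},V_{N^0}}(z)\circ p$. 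When $\Delta(V')\subset\Delta_z$: here $z$ is ``deep enough'' in the $\Delta\setminus\Delta_z$ directions that $z$ contracts $N$ strongly, and the point is that the only $x\in N^0\backslash N$ with $xz\in KzK$ again contributes $1$; I would combine the $K$-double-coset constraint with the observation that $\Delta(V')\subset\Delta_z$ means the relevant coinvariants on the $V'$ side are insensitive to the extra unipotent directions.

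I expect the main obstacle to be the bookkeeping in the unipotent sum: precisely identifying the coset representatives $x\in N^0\backslash N$ for which $xz\in KzK$ and verifying that, modulo $N^0$ on the left and after applying $p'$ on the right, they all contribute the same term, so that the sum either collapses to a single term or telescopes to the desired normalized value. This is where one must use the specific hypotheses $\Delta(V')\subset\Delta_P$ or $\Delta(V')\subset\Delta_z$ in an essential way — in the first case to push the unipotent contributions into $M$, in the second to use that $z$ contracts $N\cap M_{\Delta\setminus\Delta_z}$-type directions enough to kill them after passing to coinvariants. I would model the argument closely on the proof of \cite[Cor.\ 2.18]{MR2845621}, adapting it from the case $V=V'$ to a general pair by carefully tracking the isomorphisms $\iota^{\op}$ and $\iota$ and the identification of $T_z^{V'_{N^0},V_{N^0}}$ via the parameter $(\psi_V,\Delta_P\cap\Delta(V))$ of $V_{N^0}$.
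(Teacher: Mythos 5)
Your skeleton is the same as the paper's (evaluate the defining relation at $m\in M$, use $\supp T_z^{V',V}=KzK$, and show that both the support collapses to $M^0zM^0$ and the unipotent sum collapses to the coset $N^0$), but the mechanism you propose for the collapse is not the right one, and it is precisely the obstacle you flag without resolving. You try to control matters through the set of $x\in N^0\backslash N$ (and of $m\in M$) with $xm\in KzK$. That constraint alone cannot give the conclusion: when the hypothesis fails (e.g.\ $V=V'$, $M=Z$, and $\alpha\in\Delta(V)$ with $\langle\alpha,v(z)\rangle>0$) one still has $z\in Z_G^+(V,V')$ and exactly the same double-coset geometry, yet $S^G(T_z)=\tau_z-\tau_{za_\alpha}$ is not a single term. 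So the hypothesis must enter through the representation-theoretic part of each summand, not through the coset combinatorics, and your two heuristics (``push the unipotent contributions into $M$'' in the first case, ``coinvariants insensitive to the extra directions'' in the second) do not pin down how.

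The missing ingredient is the non-vanishing criterion the paper quotes from \cite[Cor.\ 3.20]{MR3001801}: the image of $T_z^{V',V}(k_1zk_2)$ is the line $k_1V'^{N_{z,\op}^0}$, and $p'(k_1V'^{N_{z,\op}^0})\neq 0$ if and only if $k_1\in P^0M_{V'}^0P_{z,\op}^0$, where $P_{V'}=M_{V'}N_{V'}$ is the parabolic attached to $\Delta(V')$. Both hypotheses of the lemma serve exactly one purpose: $\Delta(V')\subset\Delta_P$ gives $M_{V'}^0\subset M^0$, and $\Delta(V')\subset\Delta_z$ gives $M_{V'}^0\subset M_z^0$, so in either case $P^0M_{V'}^0P_{z,\op}^0=P^0P_{z,\op}^0$. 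Combined with $z^{-1}P_{z,\op}^0z\subset P_{z,\op}^0$ (dominance of $z$), any nonzero term forces $xm\in P^0zK\cap P=N^0(M^0zM^0)$, whence $x\in N^0$ and $m\in M^0zM^0$ simultaneously; the value at $z$ is then the single term $p'\circ T_z^{V',V}(z)$, which induces $\iota$. Without this criterion (or an equivalent statement about when $p'$ kills the line $k_1V'^{N_{z,\op}^0}$), the bookkeeping you describe cannot be completed, so as written the proposal has a genuine gap.
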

We will use the lemma only when $P=B, M=Z$.

\begin{proof} Let   $z\in Z_G^+(V,V')$. Suppose $m \in M$. Definition \ref{SMGVV'} shows that
$(S_M^G T_z^{V',V})(m) = \sum_{x\in N^0\backslash N}(p'\circ T_z^{V',V})(xm)$, 
where $p':V'\onto V'_{N^0}$  is the quotient map.
The description of $T_z^{V',V} $ given in  \S \ref{intVV'} shows that  the support of $T_z^{V',V} $ is $KzK$,  and  the image of  $T_z^{V',V}(k_1 z k_2)=k_1T_z^{V',V} (z) k_2$ is $k_1V'^{N_{z,\op}^0}$ for $k_1,k_2\in K$ \cite[\S 7.4]{MR3331726}.  One knows that \cite[Cor.\ 3.20]{MR3001801}
\begin{equation}\label{HV}
p'(k_1V'^{N_{z,\op}^0}) \neq 0  \Leftrightarrow k_1 \in P^0 M_{V'}^0 P_{z, \op}^0,
\end{equation}
where $P_{V'}=M_{V'}N_{V'}$  is the parabolic subgroup of $G$ corresponding to $\Delta(V')$.
 
Observe that $\Delta(V') \subset \Delta_P$ implies $M_{V'}^0 \subset M^0$ and that $\Delta(V') \subset \Delta_z$ implies  $ M_{V'}^0 \subset M_z^0$,
so in either case we know that $P^0 M_{V'}^0 P_{z,\op}^0= P^0  P_{z,\op}^0$.
  If  $k_1 \in P^0 P_{z, \op}^0$ then $k_1zk_2$ lies in  $ P^0 P_{z, \op}^0z K= P^0 zK$  as 
   $z\in Z^+$  and $z^{-1} P_{z,\op}^0 z \subset P_{z,\op}^0$.  

Therefore, if $(p'\circ T_z^{V',V})(xm) \ne 0$ for $x \in N$ we deduce that $xm \in P^0 zK \cap P = P^0 z P^0 = N^0 (M^0 z M^0)$.
It follows that $m \in M^0 z M^0$ and $n \in N^0$. In particular, the support of $S_M^G (T_z^{V',V})$ is contained in $M^0 z M^0$ and
$(S_M^G T_z^{V',V})(z) = p'\circ T_z^{V',V}(z)$, which induces the map $\iota : V_{U^0} \to V'_{U^0}$. The lemma follows.
  \end{proof}
  \subsection{Second lemma}\label{sec:second-lemma}
  Our second lemma  is the computation  of the composite of two particular intertwiners; it is  done in \cite[Prop.\ 6.7]{MR2845621}, \cite[Lemma 4.3]{MR3143708} when $\mathbf G$ is split. Let $V''$ be a third irreducible representation of $K$; we fix linear isomorphisms as in \eqref{iopi} for $(V,V')$ and $(V',V'')$ and by composition  for $(V,V'')$. For $z\in Z^+_G(V,V')$ and $z'\in Z^+_G(V',V'')$,   the product  $z'z$ lies in $Z^+_G(V,V'')$ (Remark \ref{zz'})  and we have the elements  $T_z^{V',V}\in \mathcal H_G(V,V')$, $T_{z'}^{V'',V'}\in \mathcal H_G(V',V'')$ and  $T_{z'z}^{V'',V}\in \mathcal H_G(V,V'')$ (\S \ref{intVV'}). 
  
\begin{lemma} \label{second}Let   $z\in Z_G^+(V,V')$ and $z'\in Z^+_G(V',V'')$.  We have $T_{z'}^{V'',V'}* T_z^{V',V}=T_{z'z}^{V'',V}$ if  $\Delta(V')$ is contained in $\Delta_z$ or in $\Delta_{z'}$. 
\end{lemma}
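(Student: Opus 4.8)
The plan is to compute $(T_{z'}^{V'',V'}* T_z^{V',V})$ as a function on $G$ directly from the convolution formula and match it to $T_{z'z}^{V'',V}$. I would first record that the left-hand side, being an intertwiner $\ind_K^G V \to \ind_K^G V''$, can be expanded in the basis $(T_x^{V'',V})_{x \in Z_G^+(V,V'')/Z^0}$ of $\mathcal{H}_G(V,V'')$ from Proposition~\ref{basisVV'}, so it suffices to control its support and its value at $z'z$. Using the support computation recalled in \S\ref{intVV'} --- $\supp T_z^{V',V}=KzK$, $\supp T_{z'}^{V'',V'}=Kz'K$ --- together with the convolution formula
\[
(T_{z'}^{V'',V'}* T_z^{V',V})(g)=\sum_{x\in K/(K\cap z'Kz'^{-1})} T_{z'}^{V'',V'}(xz') \circ T_z^{V',V}(z'^{-1}x^{-1}g),
\]
the key point is to show that a term is nonzero only when $x \in K \cap z'Kz'^{-1}$ reduces (modulo $z'Kz'^{-1}$) so that the composite lands in the correct space, forcing $g \in Kz'zK$. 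Concretely, the image of $T_z^{V',V}(k_1 z k_2)$ is $k_1 (V')^{N_{z,\op}^0}$ and the map $T_{z'}^{V'',V'}(k'_1 z' k'_2)$ is supported on $k'_1 (V')^{N_{z',\op}^0}$-data; the hypothesis that $\Delta(V') \subset \Delta_z$ (so $M_{V'}^0 \subset M_z^0$) or $\Delta(V') \subset \Delta_{z'}$ is exactly what makes the relevant double-coset product $P_{z,\op}^0 M_{V'}^0 P_{z',\op}^0$ (or the analogous expression) collapse to $P_{z,\op}^0 P_{z',\op}^0$, just as in the proof of Lemma~\ref{first} via \cite[Cor.\ 3.20]{MR3001801}. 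This collapse, combined with $z,z'\in Z^+$ and the contraction properties $z^{-1}P_{z,\op}^0 z \subset P_{z,\op}^0$, pins the support of the convolution to $Kz'zK$.

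Next I would evaluate the convolution at $g=z'z$: only the term $x \in K^0$-type (i.e.\ $x$ in the parahoric of the relevant Levi) survives, and there $T_{z'}^{V'',V'}(z') \circ T_z^{V',V}(z)$ restricted to $V^{U_{\op}^0}$ is the composite $\iota'^{\op}\circ\iota^{\op}$ by our choice of compatible isomorphisms in \eqref{iopi}. Since $T_{z'z}^{V'',V}$ is by definition the unique element of $\mathcal{H}_G(V,V'')$ supported on $Kz'zK$ whose value at $z'z$ restricts to $\iota'^{\op}\circ\iota^{\op}$ on $V^{U_{\op}^0}$, and since $z'z \in Z_G^+(V,V'')$ by Remark~\ref{zz'}(ii), the two functions agree; by the basis property (Proposition~\ref{basisVV'}) a function supported on a single double coset is determined by its value there, so equality on all of $G$ follows.

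The main obstacle I anticipate is the double-coset bookkeeping in the support computation: one must carefully track, for each $x$, in which $P^0$-$P_{z,\op}^0$-$M_{V'}^0$-type double coset $xz'$ and $z'^{-1}x^{-1}g$ lie, and verify that the nonvanishing condition \eqref{HV} (applied on both sides) together with the hypothesis on $\Delta(V')$ forces $g\in Kz'zK$ and, at $g=z'z$, forces $x$ into the parahoric. This is where the two hypotheses $\Delta(V')\subset\Delta_z$ versus $\Delta(V')\subset\Delta_{z'}$ enter asymmetrically and must each be handled --- though by the symmetry $z \leftrightarrow z'$, $V\leftrightarrow V''$ of the convolution one of the two cases reduces to the other after passing to opposite groups. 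The remaining verifications (compatibility of the $\iota^{\op}$'s under composition, and $z'z\in Z_G^+(V,V'')$) are routine given the material already set up in \S\ref{intVV'} and Remark~\ref{zz'}.
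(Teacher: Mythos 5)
Your overall strategy is the same as the paper's: expand the convolution, use the support description of $T_z^{V',V}$ and the non-vanishing criterion \eqref{HV} to collapse a triple coset and pin the support to $Kz'zK$, then evaluate at $z'z$ and invoke the normalization of $T_{z'z}^{V'',V}$. Two points, one cosmetic and one substantive.

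The cosmetic point: the relevant identity is $P_{z'}^0\, M_{V'}^0\, P_{z,\op}^0 = P_{z'}^0\, P_{z,\op}^0$ (not the version you wrote with the $\op$'s swapped), because the target of $T_z^{V',V}(z'^{-1}x^{-1}g)$ is a translate of $(V')^{N_{z,\op}^0}$ while $T_{z'}^{V'',V'}(xz')$ factors through the quotient $V' \onto V'_{N_{z'}^0}$. Both hypotheses are absorbed into this \emph{single} identity ($\Delta(V')\subset\Delta_{z'}$ gives $M_{V'}^0\subset P_{z'}^0$; $\Delta(V')\subset\Delta_z$ gives $M_{V'}^0\subset P_{z,\op}^0$), so no symmetry/opposite-group argument is needed and the two cases are not handled asymmetrically.

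The substantive gap is at the evaluation step. You assert that at $g=z'z$ ``only the term $x$ in the parahoric survives,'' and you suggest that more careful bookkeeping with \eqref{HV} will force this. It will not: what the support analysis actually yields is that the contributing $x$ lie in $(z'zKz^{-1}z'^{-1}\cap K)\,z'P_{z'}^0z'^{-1}$, and to conclude that the sum over $x\in K/(K\cap z'Kz'^{-1})$ reduces to the single identity coset you need the containment
\[
z'zK(z'z)^{-1}\cap K \ \subset\ z'Kz'^{-1}\cap K .
\]
This is not a formal consequence of $z,z'\in Z^+$ and is the genuinely new input (Lemma~\ref{lem:claim-for-second} in the paper): it is proved by identifying $zKz^{-1}\cap K$ with the pointwise fixator, in the kernel of the Kottwitz homomorphism, of the bounded set $\{x_0, zx_0\}$ in the apartment, writing it via Bruhat--Tits theory as generated by filtration subgroups $U_{\alpha+r_{\Omega_z}(\alpha)}$ and certain cosets $s_\beta Z^0$, and checking monotonicity of the filtration parameters $r_{\Omega_{zz'}}(\alpha)\ge r_{\Omega_{z'}}(\alpha)$. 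Without this ingredient the value of the convolution at $z'z$ could a priori be a sum of several composites rather than the single term $T_{z'}^{V'',V'}(z')\circ T_z^{V',V}(z)$, and the identification with $T_{z'z}^{V'',V}$ would fail.
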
 
\begin{proof} By the formula for the convolution product in \S \ref{intVV'}, we are lead to analyse the elements $(x,g)\in K\times G$ such that 
$T_{z'}^{V'',V'}(xz') \circ T_z^{V',V}(z'^{-1}x^{-1}g)\neq 0$.
We follow the arguments of the proof of Lemma \ref{first}. The non-vanishing of $T_z^{V',V}(z'^{-1}x^{-1}g) $ implies  $z'^{-1}x^{-1}g=k_1zk_2$ with $k_1,k_2\in K$; the homomorphism $T_{z'}^{V'',V'}(xz')=xT_{z'}^{V'',V'}(z')$ factors through the quotient map $p_{z'}: V'\onto V'_{N_{z'}^0}$ (see  \S \ref{intVV'}). The image of $T_z^{V',V}(z'^{-1}x^{-1}g)$ 
is  $k_1V'^{N_{z,\op}^0}$  and by 
  \eqref{HV}, 
$p_{z'}(k_1V'^{N_{z,\op}^0})\neq 0$ if and only if $k_1\in P_{z'}^0 M_{V'}^0 P_{z, \op}^0$.

We know that $P_{z'}^0 M_{V'}^0 P_{z, \op}^0=P_{z'}^0  P_{z, \op}^0$, since $\Delta(V')\subset \Delta_z$ or $\Delta(V')\subset \Delta_{z'}$. The non-vanishing of $T_{z'}^{V'',V'}(xz') \circ T_z^{V',V}(z'^{-1}x^{-1}g) $ implies $z'^{-1}x^{-1}g=k_1zk_2\in P_{z'}^0zK$. As $z' P_{z'}^0 z'^{-1} \subset P_{z'}^0$ we deduce $KgK=Kz'zK$.  
We suppose  $g=z'z$  and  we  analyze the  elements $x \in K$ such that $T_{z'}^{V'',V'}(xz') \circ T_z^{V',V}(z'^{-1}x^{-1}z'z)\neq 0$.
 We have  $z'^{-1}x^{-1}z'z\in P_{z'}^0zK$ and $x\in K$, or equivalently $x \in  z' zKz^{-1}z'^{-1}z'P_{z'}^0 z'^{-1} \cap K = (z' zKz^{-1}z'^{-1}\cap K)z' P_{z'}^0 z'^{-1} $. 
 The group $z' K z'^{-1}\cap K$ contains $z' P_{z'}^0 z'^{-1} $ and we claim that it contains also $z' zKz^{-1}z'^{-1}\cap K$. The  formula for the convolution product  given in \S \ref{intVV'} and this claim imply the lemma. The claim is proved in Lemma~\ref{lem:claim-for-second} below.
\end{proof} 
We now check the claim used in the proof of  Lemma \ref{second}.
\begin{lemma}\label{lem:claim-for-second}Let $z,z'\in Z^+$. Then $z' zK(z' z)^{-1}\cap K$ is contained in $ z' K z'^{-1}\cap K$.
\end{lemma}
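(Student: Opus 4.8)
The plan is to reduce the assertion to a comparison of affine root subgroups. Conjugating the desired inclusion $z'zK(z'z)^{-1}\cap K\subseteq z'Kz'^{-1}\cap K$ by $z'^{-1}$ turns it into the equivalent statement
\[
zKz^{-1}\cap z'^{-1}Kz'\subseteq K,
\]
so I would set $H:=zKz^{-1}\cap z'^{-1}Kz'$ and show $H\subseteq K$.

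First I would record two elementary observations. For $y\in Z$ the torus $Z$ normalizes each root subgroup $U_\alpha$ ($\alpha\in\Phi$), so $U_\alpha\cap yKy^{-1}=yU_\alpha^0y^{-1}$; and $Z\cap yKy^{-1}=yZ^0y^{-1}=Z^0$ since $Z^0=Z\cap K$ is normal in $Z$. In particular $Z^0\subseteq H$ and $U_\alpha\cap H=zU_\alpha^0z^{-1}\cap z'^{-1}U_\alpha^0z'$ for every $\alpha$. Next, since $K$ is the parahoric fixing $x_0$, the groups $zKz^{-1}$ and $z'^{-1}Kz'$ are the parahorics fixing the points $zx_0$ and $z'^{-1}x_0$ of the apartment of $S$, so $H$ is the connected fixer of the geodesic segment $\Omega$ joining them (which lies in that apartment). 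By Bruhat--Tits theory the fixer of a bounded subset of an apartment is generated by $Z\cap H=Z^0$ together with the subgroups $U_\alpha\cap H$, $\alpha\in\Phi$; hence it is enough to prove $U_\alpha\cap H\subseteq U_\alpha^0$ for every $\alpha\in\Phi$.

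For this I would work inside the filtered group $U_\alpha$ and use that for any $w\in Z^+$ one has $wU^0w^{-1}\subseteq U^0$ and $w^{-1}U_{\op}^0w\subseteq U_{\op}^0$ (``$Z^+$ contracts $U$''). If $\alpha\in\Phi^+$, then $zU_\alpha^0z^{-1}\subseteq U_\alpha\cap U^0=U_\alpha^0\subseteq U_\alpha\cap z'^{-1}U^0z'=z'^{-1}U_\alpha^0z'$, so $U_\alpha\cap H=zU_\alpha^0z^{-1}\subseteq U_\alpha^0$; if $\alpha\in\Phi^-$, then symmetrically $z'^{-1}U_\alpha^0z'\subseteq U_\alpha^0\subseteq zU_\alpha^0z^{-1}$, so $U_\alpha\cap H=z'^{-1}U_\alpha^0z'\subseteq U_\alpha^0$. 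In either case $U_\alpha\cap H\subseteq U_\alpha^0\subseteq K$, and together with $Z^0\subseteq K$ and the generation statement this gives $H\subseteq K$.

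The hard part is the Bruhat--Tits input used above: one must know that the intersection of the parahorics attached to two points of an apartment is again generated by $Z^0$ and the relevant affine root subgroups (equivalently, is the connected fixer of the segment between them), with conventions matching \cite{MR3600042, MR3484112, MR3331726}; it is essential that $z,z'\in Z^+$, which is what makes both parahorics ``aligned'' with $K$ along every root (for $x=1$ no such inclusion holds, since $K$ has no Iwahori factorization). An alternative that bypasses this abstract generation statement is to use the Iwahori-type factorization $K\cap xKx^{-1}=(xN_x^0x^{-1})\,M_x^0\,N_{x,\op}^0$ for $x\in Z^+$, relative to the parabolic $P_{\Delta_x}=M_xN_x$ with $\Delta_x=\{\alpha\in\Delta\mid\langle\alpha,v(x)\rangle=0\}$: apply it to $x=z'z$ and to $x=z'$, and refactor the result along the nested parabolics $P_{\Delta_{z'z}}\subseteq P_{\Delta_{z'}}$ (using $\Delta_{z'z}=\Delta_{z'}\cap\Delta_z$ from Remark~\ref{dz}, the factorizations $N_{z'z}^0=N_{z'}^0(N_{z'z}\cap M_{z'})^0$ and $N_{z'z,\op}^0=(N_{z'z,\op}\cap M_{z'})^0N_{z',\op}^0$, and that $z'z$ is dominant inside $M_{z'}$) to land in $(z'N_{z'}^0z'^{-1})\,M_{z'}^0\,N_{z',\op}^0=K\cap z'Kz'^{-1}$.
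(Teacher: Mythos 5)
Your argument is correct and is essentially the paper's own proof: both rest on the Bruhat--Tits description of the intersection of the parahorics attached to two points of the apartment as being generated by affine root subgroups (plus a toral/Weyl part), followed by a root-by-root comparison of levels using $z,z'\in Z^+$; your conjugation by $z'^{-1}$ and the sign split on $\alpha$ is just a repackaging of the paper's inequality $r_{\Omega_{zz'}}(\alpha)\ge r_{\Omega_{z'}}(\alpha)$. The one imprecision is that the generation statement you invoke must also include the elements of $\cn$ fixing $\Omega$ pointwise (the cosets $s_\beta Z^0$ with $\langle\beta,\nu(z)\rangle=\langle\beta,\nu(z')\rangle=0$, as the paper records), but since these lie in $\cn^0\subseteq K$ this does not affect your conclusion.
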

\begin{proof}  
  For $z \in Z^+$ consider the bounded subset $\Omega_z = \{x_0, z x_0\}$ of the apartment of $S$, so $z K z^{-1}\cap K$ is the
  pointwise stabilizer of $\Omega_z$ in the kernel of the Kottwitz homomorphism \cite[Def.\ 3.14]{MR3484112}.  For $\alpha \in \Phi$
  let $r_{\Omega_z}(\alpha) = \max\{ 0, -\langle \alpha, \nu(z)\rangle \}$.
  By Bruhat-Tits theory
  (following \cite[\S 3.6]{MR3484112}, noting that the description of the pointwise stabilizer in 
  equation \cite[(42)]{MR3484112} is valid not just for points $x$ but for bounded subsets of the apartment of $S$)
  we then know that $z K z^{-1}\cap K$ is generated by the groups $U_{\alpha + r_{\Omega_z}(\alpha)} \subset U_\alpha$ for $\alpha \in \Phi$ and
  the cosets $s_\beta Z^0 \subset \cn^0$ for $\beta \in \Phi$ such that $\langle \beta, \nu(z)\rangle = 0$.
  The lemma follows by noting that $r_{\Omega_{zz'}}(\alpha) \ge r_{\Omega_{z'}}(\alpha)$ and that
  $\langle \beta, \nu(zz')\rangle = 0$ implies $\langle \beta, \nu(z')\rangle = 0$ for any roots $\alpha,\beta \in \Phi$.
 \end{proof} 
 
 \subsection{Third lemma}\label{sec:third-lemma}
  
 \begin{lemma}\label{plus} Let  $z\in Z^+$ and $x=z \prod_{\alpha \in \Delta} a_\alpha^{n(\alpha)}$  with $n(\alpha)\in \mathbb N$.
  If $\langle \alpha , v(z) \rangle $ is large enough for those $\alpha \in \Delta$ with $n(\alpha)>0 $, then $x\in Z^+$. 
 
 \end{lemma}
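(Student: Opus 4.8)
The plan is to translate $x \in Z^+$ into a finite list of inequalities indexed by $\beta \in \Delta$ and to verify each of them separately. Writing $v(x) = v(z) + \sum_{\alpha \in \Delta} n(\alpha) v(a_\alpha)$ and using $\nu(a_\alpha) = \alpha_a^\vee$ (Definition~\ref{delta'psi}), together with $\nu = -v$, one gets $\nu(x) = \nu(z) + \sum_{\alpha} n(\alpha)\alpha_a^\vee$. Since $x \in Z^+$ is equivalent to $\nu(x) \in \mathfrak D^-$, i.e.\ to $\langle \beta_a, \nu(x)\rangle \le 0$ for all $\beta \in \Delta$ (see \eqref{D-}), the statement reduces to showing $\langle \beta_a, \nu(z)\rangle + \sum_{\alpha} n(\alpha)\langle \beta_a, \alpha_a^\vee\rangle \le 0$ for each $\beta \in \Delta$.

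I would then split on whether $n(\beta) = 0$ or $n(\beta) > 0$. The key elementary input is that, $\Delta_a$ being a set of simple roots of the root system $\Phi_a$ (see \eqref{Phia}), one has $\langle \beta_a, \alpha_a^\vee\rangle \le 0$ whenever $\alpha \ne \beta$, and $\langle\beta_a,\beta_a^\vee\rangle = 2$. If $n(\beta) = 0$, then $\sum_{\alpha} n(\alpha)\langle\beta_a,\alpha_a^\vee\rangle = \sum_{\alpha\ne\beta} n(\alpha)\langle\beta_a,\alpha_a^\vee\rangle \le 0$, each summand being a product of a nonnegative integer and a nonpositive integer; combined with $\langle\beta_a,\nu(z)\rangle \le 0$ (as $z \in Z^+$), the desired inequality holds with no hypothesis on $z$. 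If $n(\beta) > 0$, then $\sum_{\alpha}n(\alpha)\langle\beta_a,\alpha_a^\vee\rangle = 2n(\beta) + \sum_{\alpha\ne\beta}n(\alpha)\langle\beta_a,\alpha_a^\vee\rangle \le 2n(\beta)$, so it suffices to have $\langle\beta_a,\nu(z)\rangle \le -2n(\beta)$, equivalently $\langle\beta,v(z)\rangle \ge 2n(\beta)/e_\beta$; since each $e_\beta$ is a positive integer, requiring $\langle\beta, v(z)\rangle \ge 2n(\beta)$ is enough. This is exactly the meaning of ``$\langle\beta,v(z)\rangle$ large enough'' for the finitely many $\beta$ with $n(\beta) > 0$, and putting the two cases together gives $\nu(x) \in \mathfrak D^-$, i.e.\ $x \in Z^+$.

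There is no serious obstacle here; the only content beyond bookkeeping of signs is the standard fact that the off-diagonal Cartan integers of $\Phi_a$ are nonpositive, together with the identity $\nu(a_\alpha) = \alpha_a^\vee$ and the positivity of the $e_\alpha$. The one point worth stating precisely in the write-up is the quantitative form of ``large enough'': the clean choice is $\langle\beta, v(z)\rangle \ge 2n(\beta)$ for every $\beta$ with $n(\beta) > 0$, a bound depending only on the exponents occurring in $x = z\prod_{\alpha} a_\alpha^{n(\alpha)}$.
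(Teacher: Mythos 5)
Your proof is correct and follows essentially the same route as the paper's: reduce $x\in Z^+$ to the inequalities $\langle\beta_a,\nu(x)\rangle\le 0$, expand via $\nu(a_\alpha)=\alpha_a^\vee$, use nonpositivity of the off-diagonal Cartan integers, and split on whether $n(\beta)=0$ or $n(\beta)>0$, in the latter case requiring $\langle\beta_a,\nu(z)\rangle+2n(\beta)\le 0$. Your explicit quantitative bound $\langle\beta,v(z)\rangle\ge 2n(\beta)$ is a harmless (and correct) refinement of the paper's ``large enough''.
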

\begin{proof} Recall that $v=-\nu$ and that $Z^+$ is the monoid of $z\in Z$ such that the integers 
$ \langle \beta_a,\nu(z) \rangle $ are $\leq 0$ for all $\beta\in \Delta$. We have $\nu(a_\alpha)=\alpha_a^\vee$  (Definition \ref{delta'psi}) and 
 $\langle \beta_a,\nu(x) \rangle= \langle \beta_a,\nu(z) \rangle + \sum_{\alpha \in \Delta} n(\alpha)\langle \beta_a, \alpha_a^\vee\rangle$  for all $\beta\in \Delta.$  
We have  $\langle \beta_a, \alpha_a^\vee\rangle \leq 0$ if $\alpha\neq \beta$ and $\langle \alpha_a, \alpha_a^\vee\rangle=2$.
The integer $ \langle \beta_a,\nu(z) \rangle $  is $\leq 0$ as $z\in Z^+$.
If $n(\beta)=0$  then  $\langle \beta_a,\nu(x) \rangle \leq 0$. 
  If $n(\beta)>0$ and $  \langle \beta_a,\nu (z) \rangle + 2 n(\beta) \leq 0$ then $\langle \beta_a,\nu(x) \rangle \leq 0$. \end{proof}

Later we will  use it in the following form.

 \begin{lemma}\label{plusJ} Suppose  $z\in Z$, $J\subset \Delta$, and $n(\alpha) \in \mathbb N$ for $\alpha \in J$.  
Then there exists $y\in Z^+\cap M'_J$ such that $yz\prod_{\alpha \in J} a_\alpha^{m(\alpha)}$ lies in $Z^+$ for all $m(\alpha)\in \mathbb N, m(\alpha)\leq n(\alpha)$.
\end{lemma}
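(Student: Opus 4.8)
The goal is Lemma~\ref{plusJ}: given $z\in Z$, a subset $J\subset\Delta$ and natural numbers $n(\alpha)$ for $\alpha\in J$, find $y\in Z^+\cap M'_J$ such that $yz\prod_{\alpha\in J}a_\alpha^{m(\alpha)}\in Z^+$ for all choices $0\le m(\alpha)\le n(\alpha)$. The plan is to reduce this to Lemma~\ref{plus}: we want a single element $y$ (coming from the ``$M'_J$-part'') that makes $\langle\beta_a,\nu(\cdot)\rangle\le 0$ for every $\beta\in\Delta$, robustly against the perturbations $\prod_{\alpha\in J}a_\alpha^{m(\alpha)}$. The difficulty is that Lemma~\ref{plus} only handles perturbations that move $z$ \emph{deeper} into $Z^+$ (large $\langle\alpha,v(z)\rangle$ for the $\alpha$ with $n(\alpha)>0$), whereas here $z$ is arbitrary and, worse, the correcting element $y$ must lie in $M'_J$, so it can only adjust the coordinates indexed by $J$.

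First I would fix, for each $\alpha\in J$, an element $a_\alpha\in Z\cap M'_\alpha$ with $\nu(a_\alpha)=\alpha_a^\vee$ (Definition~\ref{delta'psi}); note $a_\alpha\in M'_J$ since $M'_\alpha\subset M'_J$ for $\alpha\in J$, and products of the $a_\alpha$ lie in $Z\cap M'_J$. The candidate for $y$ will be $y=\prod_{\alpha\in J}a_\alpha^{N(\alpha)}$ for suitable large $N(\alpha)\in\mathbb N$ to be chosen; this automatically lies in $Z\cap M'_J$. Then for $\beta\in\Delta$ and $m(\alpha)\le n(\alpha)$ we compute
\[
\langle\beta_a,\nu\bigl(yz\textstyle\prod_{\alpha\in J}a_\alpha^{m(\alpha)}\bigr)\rangle=\langle\beta_a,\nu(z)\rangle+\sum_{\alpha\in J}\bigl(N(\alpha)+m(\alpha)\bigr)\langle\beta_a,\alpha_a^\vee\rangle .
\]
For $\beta\notin J$ all the $\langle\beta_a,\alpha_a^\vee\rangle$ with $\alpha\in J$ are $\le 0$ (distinct simple roots), so \emph{increasing} the $N(\alpha)$ only helps; the only constraint is that $\langle\beta_a,\nu(z)\rangle+\sum_{\alpha\in J}(N(\alpha)+m(\alpha))\langle\beta_a,\alpha_a^\vee\rangle\le 0$, which for $\beta\notin J$ with at least one $\alpha\in J$ having $\langle\beta_a,\alpha_a^\vee\rangle<0$ is arranged by taking $N(\alpha)$ large, and when $\beta$ is orthogonal to all of $J$ we will see (see below) that the term is already $\le 0$ after the $J$-reduction. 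For $\beta\in J$ the diagonal term $\langle\beta_a,\beta_a^\vee\rangle=2$ dominates: we need $\langle\beta_a,\nu(z)\rangle+2(N(\beta)+m(\beta))+\sum_{\alpha\in J\setminus\{\beta\}}(N(\alpha)+m(\alpha))\langle\beta_a,\alpha_a^\vee\rangle\le 0$, and since $m(\beta)\ge 0$ the worst case is $m(\beta)=0$; but then we must control the negative contributions $\langle\beta_a,\alpha_a^\vee\rangle<0$ coming from the other $a_\alpha$, which could be made large by our own choice of $N(\alpha)$. This is the genuine obstacle: naively ``taking everything large'' is circular, because enlarging $N(\alpha)$ to fix the $\beta\notin J$ inequalities can destroy the $\beta\in J$ inequalities.

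The way to resolve it is the standard Cartan-matrix argument already used in the proof of Lemma~\ref{lm:Zz-contained-in-ZG}: restrict attention to the index set $J$ and use that the matrix $(d_\beta\langle\beta_a,\alpha_a^\vee\rangle)_{\alpha,\beta\in J}$ is, up to the positive diagonal rescaling $d_\beta>0$, positive definite (equivalently, the Cartan matrix of the sub-root-system spanned by $J_a$ is invertible with nonnegative inverse). Concretely I would choose the $N(\alpha)$ by solving, over $\mathbb R$, the linear system that makes $\langle\beta_a,\nu(yz)\rangle$ equal to a very negative constant for every $\beta\in J$ simultaneously; because the inverse Cartan matrix has nonnegative entries, the solution has all $N(\alpha)\ge 0$ once the right-hand side is sufficiently negative, and we then round up to integers (adjusting by adding a fixed dominant combination of the $a_\alpha$ absorbed into $y$, which only decreases the pairings with $\beta\in J$ and does not hurt $\beta\notin J$). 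With $N(\alpha)$ so large, for $\beta\in J$ we get $\langle\beta_a,\nu(yz)\rangle\ll 0$, and adding the bounded perturbation $\sum_{\alpha\in J}m(\alpha)\langle\beta_a,\alpha_a^\vee\rangle$ (with $m(\alpha)\le n(\alpha)$, a fixed bound) keeps it $\le 0$; for $\beta\notin J$ the inequality holds as discussed. Finally I would remark that we may replace $y$ by any element of $Z^+\cap M'_J$ above it to land in $Z^+$ itself if desired — in fact one checks directly from the displayed formula with all $m(\alpha)=0$ that $yz\in Z^+$, and then the case $J$-indexed perturbation is the content of the lemma; alternatively invoke Lemma~\ref{plus} applied to $yz$ in place of $z$, since $\langle\alpha,v(yz)\rangle$ is then large for every $\alpha\in J$ and hence certainly for those with $n(\alpha)>0$. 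The bookkeeping of ``how large'' is routine once the positive-definiteness is in hand, so the only real step is setting up the $J$-restricted linear system correctly.
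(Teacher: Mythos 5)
Your reduction contains a sign error that breaks the argument. With $\nu(a_\alpha)=\alpha_a^\vee$ and $Z^+=\{z\in Z \mid \langle\beta_a,\nu(z)\rangle\le 0\ \text{for all}\ \beta\in\Delta\}$, forcing $\langle\beta_a,\nu(yz)\rangle$ to be very negative for every $\beta\in J$ means solving $\sum_{\alpha\in J}N(\alpha)\langle\beta_a,\alpha_a^\vee\rangle=-M-\langle\beta_a,\nu(z)\rangle$ with $M\gg 0$; since the inverse of the Cartan matrix of $J_a$ has \emph{nonnegative} entries, the solution has all coordinates very \emph{negative}, not $\ge 0$ as you claim. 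Indeed, the positive-definiteness argument you yourself invoke shows precisely that $\prod_{\alpha\in J}a_\alpha^{N(\alpha)}$ with all $N(\alpha)\in\mathbb N$ lies in $Z^+$ only when every $N(\alpha)=0$; so your candidate $y$ is never in $Z^+\cap M'_J$, a requirement of the statement that you never verify. The correct ansatz is $y=\prod_{\alpha\in J}a_\alpha^{-k_\alpha}$ with $k_\alpha\in\mathbb N$ large (so that $v(y)=\sum k_\alpha\alpha_a^\vee$), but then for $\beta\notin J$ the contribution $\sum_{\alpha\in J}(m(\alpha)-k_\alpha)\langle\beta_a,\alpha_a^\vee\rangle$ is $\ge 0$ and your ``increasing the exponents only helps'' step reverses. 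Finally, the case of $\beta\notin J$ orthogonal to $J$, which you defer to ``see below'', is never treated: no element of $M'_J$ changes $\langle\beta_a,\nu(\cdot)\rangle$ in that direction, and $z\in Z$ is arbitrary.

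These gaps cannot be filled for general $J$: for $\mathbf G=\SL_3$, $J=\{\alpha_1\}$, $z=1$, $n(\alpha_1)=1$ one computes $\nu(Z^+\cap M'_J)=0$, so $\nu(yza_{\alpha_1})=\alpha_1^\vee$ is never antidominant and no valid $y$ exists; in the paper the lemma is only ever applied with $J=\Delta$ (and $z\in Z^+$). For $J=\Delta$ no linear algebra is needed: since $\nu(Z\cap G')$ is the full coroot lattice of $\Phi_a$, whose intersection with the dominant cone is full-dimensional, one simply picks $y\in Z^+\cap G'$ with $\langle\alpha_a,v(y)\rangle\ge 2n(\alpha)-\langle\alpha_a,v(z)\rangle$ for all $\alpha\in\Delta$; then $\langle\alpha_a,v(yz)\rangle\ge 2m(\alpha)$ for every $\alpha$ and every $m(\alpha)\le n(\alpha)$, and the computation in the proof of Lemma~\ref{plus} shows directly that $yz\prod_\alpha a_\alpha^{m(\alpha)}\in Z^+$. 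This is the paper's one-line argument; your approach, after correcting the sign and restricting to $J=\Delta$, would amount to the same thing.
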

\begin{proof}  
We can find $y\in Z^+\cap M'_J$ with  $ \langle \alpha_a,v(y) \rangle \geq   2 n(\alpha) -  \langle \alpha_a,v(z) \rangle$ for all $\alpha\in J$. Then we have $ \langle \alpha_a,v(yz) \rangle \geq   2 m(\alpha)$ for  $m(\alpha)\leq n(\alpha)$. The proof  of Lemma \ref{plus} implies  $yz \prod_{\alpha \in J} a_\alpha^{m(\alpha)}$ lies in $Z^+$ for all $m(\alpha)\in \mathbb N, m(\alpha)\leq n(\alpha)$.
\end{proof}

\subsection{Reduction to \texorpdfstring{$\Delta(V')\subset \Delta(V)$}{Delta(V') subset Delta(V)}}\label{sec:reduction2}
We are ready to prove that (a special case of) the   inverse Satake theorem for a pair  $(V,V')$
with parameters satisfying  $\Delta(V')\subset \Delta(V)$ implies the  inverse Satake transform for a  general pair.
Note that when $\Delta(V')\subset \Delta(V)$,  then $\Delta'(V') \subset \Delta' (V)$. %

\begin{theorem}\label{VcontV'} 
Assume $\Delta(V')\subset \Delta(V)$.  
For $z\in Z_G^+(V,V')$, %
we have $S^G(\varphi_z) = \tau_z$, where
$$
\varphi_z=  \sum _{x\in  Z_z^+(V,V')} T_x \quad \text{and} \quad  Z_z^+(V,V')=Z^+\cap z\prod_{\alpha \in \Delta'(V')}a_\alpha^\mathbb N.
$$
\end{theorem}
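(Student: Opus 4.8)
The statement is exactly the case $\Delta(V')\subset\Delta(V)$ of the inverse Satake theorem, so it is the technical heart that the reduction in Section~\ref{sec:reduction-delta} leans on, and all the machinery of the pro-$p$ Iwahori Hecke algebra described in \S\ref{S:1.14} is aimed precisely at this case. The plan is to follow that strategy. Fix a nonzero $v\in V_{U^0}$ and let $v'\in V'_{U^0}$ be its image under $\iota$. Using the canonical embeddings $I_V,I_{V'}$ of $\ind_K^GV,\ind_K^GV'$ into $\Ind_B^G(\ind_{Z^0}^ZV_{U^0})$, $\Ind_B^G(\ind_{Z^0}^ZV'_{U^0})$ and the further embeddings $j_v,j_{v'}$ into $\mathfrak X_Z=\ind_{Z(1)}^ZC$, realize $I_v(f_v)$ and $I_{v'}(f_{v'})$ inside the $(\HH_Z,\HH_G)$-bimodule $\mathfrak X_G^I=(\Ind_B^G(\ind_{Z(1)}^ZC))^I$, where $f_v\in\ind_K^GV$ is the standard $I$-invariant generator. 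The first key input is Proposition~\ref{end2} (generalizing \cite[IV.19]{MR3600042}): it produces an explicit $h_z=E'_{\lambda_zw_{V,V'}^{-1}}T^*_{w_{V,V'}}\in\HH_G$ with $\tau(z)I_v(f_v)=I_{v'}(f_{v'})h_z$, where $\tau(z)\in\HH_Z$ is the characteristic function of $zZ(1)$. From this, as in Proposition~\ref{begin}, one deduces that $\phi_z\colon\ind_K^GV\to\ind_K^GV'$, $\phi_z(f_v)=f_{v'}h_z$, is a well-defined $G$-intertwiner and that $S^G(\phi_z)=\tau_z$ (i.e. $\tau_z^{V'_{U^0},V_{U^0}}$).

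It then remains to identify $\phi_z$ with $\varphi_z=\sum_{x\in Z_z^+(V,V')}T_x$. The decisive computation is that of $f_{v'}h_z$ restricted to $Z^+$. First expand $E'_{\lambda_zw_{V,V'}^{-1}}$ in the Iwahori--Matsumoto basis using Proposition~\ref{EJ}: the coefficients are the $c^{M,*}(\lambda_z,u)$ for $u\in W_M(1)$ with $u\le^M\lambda_z$, where $M=M_{\Delta(V')}$. Plugging this in and using that $f_{v'}T_{uw_{V,V'}^{-1}}T^*_{w_{V,V'}}$ vanishes on $Z^+$ unless $u\in Z^+/Z(1)$ (the ``little more'' of \eqref{claim1}), one restricts to $u=\lambda_x$ with $x\in Z^0Z_z^+(V,V')$ by Proposition~\ref{L+order}. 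On such terms \eqref{claim2} replaces $T^*_{w_{V,V'}}$ by $T_{w_{V,V'}}$, the braid relation gives $T_{\lambda_xw_{V,V'}^{-1}}T_{w_{V,V'}}=T_{\lambda_x}$, and the $Z_k$-equivariance $f_{v'}T_{t\lambda_x}=\psi_{V'}^{-1}(t)f_{v'}T_{\lambda_x}$ together with Theorems~\ref{*}, \ref{psic} (with $\psi=\psi_{V'}^{-1}$ and $\Delta'_{\psi_{V'}^{-1}}=\Delta'(V')=\Delta'(V)\cap\Delta'(V')$, using $\Delta(V')\subset\Delta(V)$) collapses the sum $\sum_{t\in Z_k}c^{M,*}(\lambda_z,t\lambda_x)\psi_{V'}^{-1}(t)$ to $1$ for $x\in Z_z^+(V,V')$ and $0$ otherwise. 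Hence $f_{v'}h_z=f_{v'}\sum_{x\in Z_z^+(V,V')}T_{\lambda_x}$ on $Z^+$.

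Finally, one checks the elementary facts that $f_{v'}T_{\lambda_x}$ is supported on $KxI$ with value $v'$ at $x$ and that $Z^+\cap KxI=Z^0x$ for $x\in Z_z^+(V,V')$; this pins down $f_{v'}h_z$ on $Z^+$, hence on $Z^+K=G$ by $G$-equivariance applied to the support description, forcing $\phi_z(f_v)=\varphi_z(f_v)$ and therefore $\phi_z=\varphi_z$ (proof of Proposition~\ref{expa}). Combined with $S^G(\phi_z)=\tau_z$, this gives $S^G(\varphi_z)=\tau_z$.

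\textbf{Main obstacle.} The real work is the character-sum vanishing $\sum_{t\in Z_k}c^{M,*}(\lambda_z,t\lambda_x)\psi_{V'}^{-1}(t)\in\{0,1\}$ of Theorems~\ref{*}, \ref{psic}: it requires a fine analysis of the expansion coefficients of the non-alcove-walk basis $T^{M,*}$ in the Iwahori--Matsumoto basis of $\HH_M$ modulo $q$, together with the Bruhat-order bookkeeping relating $\le^M$ on $W_M(1)$ to $\le$ on $W(1)$ (Proposition~\ref{L+order}) and the translation of ``$u\in Z^+/Z(1)$'' into membership in $Z^0 z\prod_{\alpha\in\Delta'(V')}a_\alpha^{\mathbb N}$. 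Secondarily, verifying the bimodule identity of Proposition~\ref{end2} and the reduction \eqref{claim1}, \eqref{claim2} from $T^*_{w_{V,V'}}$ to $T_{w_{V,V'}}$ on $Z^+$ demands care but is routine once the Hecke-algebra formalism of Section~\ref{S:4} is in place.
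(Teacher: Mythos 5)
Your proposal follows the paper's own proof essentially verbatim: Proposition~\ref{end2} gives the bimodule identity $\tau(z)I_v(f_v)=I_{v'}(f_{v'})h_z$, Proposition~\ref{begin} converts this into an intertwiner $\phi_z$ with $S^G(\phi_z)=\tau_z$, and the computation of $f_{v'}h_z|_{Z^+}$ via Proposition~\ref{EJ}, the claims \eqref{claim1}--\eqref{claim2}, and Theorems~\ref{*} and \ref{psic} identifies $\phi_z$ with $\varphi_z$ exactly as in Proposition~\ref{expa}. The only slight imprecision is the phrase ``hence on $Z^+K=G$'': the correct concluding step is that an element of $\mathcal H_G(V,V')$ is determined by the restriction to $Z^+$ of its value on $f_v$, since the basis elements $T_{z'}$ have disjoint supports $Kz'K$ and $Z^+$ meets each such double coset.
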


 \begin{proposition}\label{prop:3.7} Theorem \ref{VcontV'} implies  the   inverse Satake theorem \(Theorem \ref{EIST}\).
 \end{proposition}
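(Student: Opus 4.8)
The plan is to deduce the general case from Theorem \ref{VcontV'} by \emph{sandwiching} an arbitrary pair $(V,V')$ between pairs for which the containment of $\Delta$'s holds, and then to read off the image of $S^G$ and the formula for $S^G(\varphi_z)$ by a triangular/Möbius computation. We may assume $\mathcal H_G(V,V')\neq 0$, since otherwise $Z_G^+(V,V')=\varnothing$ by Remark \ref{not0} and Theorem \ref{EIST} holds vacuously. Then $\psi_V$ and $\psi_{V'}$ are $Z$-conjugate, $\Delta'_{\psi_V}=\Delta'_{\psi_{V'}}$, and $Z_{\psi_{V'}}$ has finite index in $Z$ (the $Z$-action on the finite group $Z_k$ factors through a finite quotient). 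Let $V''$ be the irreducible $K$-representation with parameter $(\psi_{V'},\Delta(V)\cup\Delta(V'))$, which is legitimate since $\Delta(V)\cup\Delta(V')\subseteq\Delta(\psi_{V'})$. Then $\Delta(V)\subseteq\Delta(V'')$ and $\Delta(V')\subseteq\Delta(V'')$, so Theorem \ref{VcontV'} applies to the pairs $(V'',V)$, $(V'',V')$, $(V'',V'')$; note also $\Delta'(V'')=\Delta'(V)\cup\Delta'(V')$.

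The key step is to choose, using Lemma \ref{plusJ} together with the finiteness of $[Z:Z_{\psi_{V'}}]$, an element $z_1\in Z_G^+(V'',V)$ with $\Delta(V)\subseteq\Delta_{z_1}$, that is, with $\langle\alpha,v(z_1)\rangle=0$ for $\alpha\in\Delta(V)$ and $\langle\alpha,v(z_1)\rangle$ large for $\alpha\in\Delta(V')\setminus\Delta(V)$. Set $g:=T_{z_1}^{V,V''}\in\mathcal H_G(V'',V)$; by Lemma \ref{first} (with $M=Z$, $P=B$, whose hypothesis is precisely $\Delta(V)\subseteq\Delta_{z_1}$) we get $S^G(g)=\tau_{z_1}$, which is invertible in $\mathcal H_Z$. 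For any $f\in\mathcal H_G(V,V')$ the composite $f\ast g$ lies in $\mathcal H_G(V'',V')$, so $S^G(f)=S^G(f\ast g)\,\tau_{z_1}^{-1}$; and Theorem \ref{VcontV'} for $(V'',V')$ determines $S^G$ on $\mathcal H_G(V'',V')$, by inverting the unitriangular relation between the $\varphi_y^{V',V''}$ and the basis $\{T_y^{V',V''}\}$ of Proposition \ref{basisVV'} — legitimate because each $Z_y^+(V'',V')$ is finite (Lemma \ref{lm:Zz-contained-in-ZG}). Thus $S^G$ is determined on all of $\mathcal H_G(V,V')$.

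It remains to extract (i) the image of $S^G$ and (ii) the value $S^G(\varphi_z^{V',V})$. For (ii) one takes $f=\varphi_z^{V',V}=\sum_{x\in Z_z^+(V,V')}T_x^{V',V}$; by Lemma \ref{second} (again via $\Delta(V)\subseteq\Delta_{z_1}$) one has $T_x^{V',V}\ast g=T_{xz_1}^{V',V''}$, so $f\ast g=\sum_{x\in Z_z^+(V,V')}T_{xz_1}^{V',V''}$, and since $v(z_1)$ is large in the directions $\Delta(V')\setminus\Delta(V)$ — disjoint from $\Delta'(V)\cap\Delta'(V')$ — the set $\{xz_1:x\in Z_z^+(V,V')\}$ is exactly the subset of $Z_{zz_1}^+(V'',V')$ carrying no $a_\alpha$-factor with $\alpha\in\Delta'(V')\setminus\Delta'(V)$. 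Substituting the inverted Theorem \ref{VcontV'} (the relevant poset is $\mathbb N^{\Delta'(V')}$, with Möbius coefficients $(-1)^{|\cdot|}$ supported on $\{0,1\}$-increments) and summing over this partial range, the expression telescopes to $\tau_{zz_1}\prod_{\alpha\in\Delta'(V')\setminus\Delta'(V)}(1-\tau_{a_\alpha})$; multiplying by $\tau_{z_1}^{-1}$ and using that each $\tau_\alpha$ is central yields $S^G(\varphi_z^{V',V})=\tau_z\prod_{\alpha\in\Delta'(V')\setminus\Delta'(V)}(1-\tau_{a_\alpha})$. For (i), the finiteness in Lemma \ref{lm:Zz-contained-in-ZG} shows $\{\varphi_z^{V',V}\}$ is obtained from the basis $\{T_z^{V',V}\}$ by a locally finite unitriangular change of coordinates for the order $x\preceq z\iff x\in Z_z^+(V,V')$, hence is itself a basis of $\mathcal H_G(V,V')$; as $S^G$ is injective, its image is spanned by the elements in \eqref{image}.

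I expect the main obstacle to be the construction of $z_1$: one must produce an element of the single coset $Z_{\psi_{V'},\psi_V}$ whose image under $v$ vanishes exactly on $\Delta(V)$, is strictly positive on $\Delta(V')\setminus\Delta(V)$, and is dominant, and more generally one must keep track that every invocation of Lemmas \ref{first} and \ref{second} genuinely meets its hypothesis on the sets $\Delta_z$. If the single left sandwich does not suffice — for instance because $Z_{zz_1}^+(V'',V')$ need not be down-closed, which complicates the telescoping in (ii) — one can symmetrise by also introducing the meet representation with parameter $(\psi_{V'},\Delta(V)\cap\Delta(V'))$ and sandwiching on the right, or first reduce to the case $\Delta(V)\cup\Delta(V')=\Delta$ by applying Lemma \ref{first} with $P=P_{\Delta(V)\cup\Delta(V')}$ together with the transitivity of the Satake transform.
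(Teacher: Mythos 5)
Your overall architecture (sandwiching $(V,V')$ via an auxiliary $V''$, transporting the problem with Lemmas \ref{first} and \ref{second}, and inverting the unitriangular system furnished by Lemma \ref{lm:Zz-contained-in-ZG}) is sound, and your inclusion--exclusion in step (ii) does telescope correctly once all hypotheses are met. The genuine gap is exactly where you flag it: the existence of $z_1$. Since $\psi_{V''}=\psi_{V'}$, membership $z_1\in Z_G^+(V'',V)$ forces $z_1\cdot\psi_{V'}=\psi_V$, i.e.\ $z_1$ must lie in the \emph{coset} $Z_{\psi_{V'},\psi_V}=Z_{\psi_V}\,z^{-1}$ of the (possibly proper, finite-index) subgroup $Z_{\psi_V}$; at the same time $v_Z(z_1)$ must lie on the face of the dominant cone where all of $\Delta(V)$ vanishes, because both Lemma \ref{first} (target $V$, $M=Z$) and Lemma \ref{second} (middle representation $V$) require $\Delta(V)\subset\Delta_{z_1}$, and the alternative hypothesis $\Delta(V)\subset\Delta_x$ fails for general $x\in Z_z^+(V,V')$. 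A translate of a proper sublattice of $v_Z(Z)$ need not meet a proper rational subspace --- compare the coset $(1,0)+(2\mathbb{Z}\times\mathbb{Z})$ with the line $\{x=0\}$ --- and you give no argument that $Z_{\psi_{V'},\psi_V}$ meets $\{\langle\alpha,\cdot\rangle=0,\ \alpha\in\Delta(V)\}$. (Taking $\psi_{V''}=\psi_V$ instead would place $z_1$ in the \emph{group} $Z_{\psi_V}$, whose image under $v_Z$ is a finite-index sublattice of the full-rank lattice $v_Z(Z)$ and hence does meet the required face; but then you would also need $\Delta(V')\subset\Delta(\psi_V)$ for $V''$ to exist, and none of this is carried out.)

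The paper avoids the issue by making the opposite choice: $\Delta(V'')=\Delta(V)\cap\Delta(V')$ with $\psi_{V''}=\psi_{V'}$. Theorem \ref{VcontV'} then applies directly to the pair $(V,V'')$ with the \emph{given} element $z\in Z_G^+(V,V')\subset Z_G^+(V,V'')$, so the character-twisting leg costs nothing; the auxiliary element $z'$ is needed only for the pairs $(V'',V')$, $(V',V')$, where the characters coincide, and it is taken \emph{central in $Z$} --- such an element normalizes every character of $Z^0$, and since $v_Z$ of the centre of $Z$ contains the full lattice $X_*(\mathbf S)$ it can be placed on any prescribed rational face of the dominant cone. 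The price of that choice is the extra computation in parts B and C of the paper's proof (reducing $S^G(T_{z'}^{V',V''})$ to $S^G(T_{z'^2}^{V',V'})$ and then a telescoping argument of the same kind as yours). To salvage your ``join'' variant you must either prove that the coset $Z_{\psi_{V'},\psi_V}$ meets the face $\{\langle\alpha,\cdot\rangle=0,\ \alpha\in\Delta(V)\}$, or decouple the character twist from the face condition as the paper does.
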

 
 \begin{proof} The proof is  divided into several  parts.  
  
 {\bf A}) Let $(V,V')$ be an arbitrary pair of irreducible representations of $K$. We introduce: 
 
 \begin{enumerate}
\item The  irreducible representation $V''$  of $K$ with parameters $\psi_{V''}=\psi_{V'}$ and $\Delta(V'')=\Delta(V)\cap \Delta(V')$. Such a representation exists \cite[Thm.\ 3.8]{MR3001801},    $Z_G^+(V,V')\subset Z_G^+(V,V'')$ (Remark \ref{zz'}) and $Z_G^+(V',V'')=Z_G^+(V'',V')$ (Example \ref{ex}).
\item A central element $z'$   of $Z$ (hence normalizing any character $\psi$ of $Z^0$) lying in $Z^+$ (hence   in $Z^+_\psi$ for any $\psi$) and such that $\Delta_{z'}\cap (\Delta(V)\cup \Delta(V'))=\Delta(V)$. Hence  $z'\in   Z^+_G(V'',V')$  by \eqref{ZVV'}.
\end{enumerate}

Let $z\in Z^+_G(V,V')$ and let  $\varphi_z^{V',V}=
 \sum _{x\in  Z_z^+(V,V')} T_x^{V',V}$ as in Theorem \ref{EIST}. 
 We reduce  the computation of $S^G(\varphi_z^{V',V})$   to the  single computation of $ S^G(T_{z'}^{V',V''})$ using Theorem \ref{VcontV'} for $(V,V'')$. As $z\in Z^+_G(V,V'')$
and   $\Delta(V'')\subset \Delta(V)$,  Theorem \ref{VcontV'} implies 
\begin{equation}\label{a}S^G(\varphi_z^{V'',V})=\tau_z^{V''_{U^0},V_{U^0}},
\end{equation}
where $\varphi_z^{V'',V}=\sum_xT_x^{V'',V}$  for $x\in Z^+\cap z \prod_{\alpha \in J} a_\alpha^{\mathbb N}$ with
  \[J:=\Delta(V)\cap \Delta(V')\cap \Delta'_{\psi_{V'}}=\Delta(V'')\cap \Delta'_{\psi_{V''}}.\]
Such an $x$ is contained in $Z^+_G(V,V')$ by Lemma~\ref{lm:Zz-contained-in-ZG} and hence in $Z^+_G(V,V'')$. Also,
the sets  $\Delta(V'')$ and $\Delta(V)$ are contained in  $\Delta_{z'}$, and  $z' \in Z_G^+(V'',V') \cap Z_{\psi_V}^+ $. Lemma \ref{second} applied  twice gives
\begin{align*} 
T_{z'}^{V',V''}* T_{x}^{V'',V}=T_{z'x}^{V',V}, \quad T_{x}^{V',V}* T_{z'}^{V,V}=T_{x z'}^{V',V},
\end{align*}
and   Lemma \ref{first} applied to $M=Z$, $V=V'$ and  $z'\in  Z_{\psi_V}^+$  gives
 \begin{align*} S^G(T_{z'}^{V,V})=\tau_{z'}^{V_{U^0},V_{U^0}}.\end{align*} Since $z'$ is central in $Z$, we can permute $z'$ and $x$ on the right-hand side, hence  $T_{z'x}^{V',V}=T_{xz'}^{V',V}$. We deduce
 \begin{align}\label{d} S^G(T_{z'}^{V',V''}) S^G(T_{x}^{V'',V}) =S^G( T_{x}^{V',V}) \tau_{z'}^{V_{U^0},V_{U^0}}.
\end{align}
  Taking the sum of \eqref{d} for $x\in Z^+\cap z \prod_{\alpha \in J} a_\alpha^{\mathbb N}$,  we get
 \begin{equation}\label{e}S^G(T_{z'}^{V',V''}) S^G(\varphi_z^{V'',V}) =S^G(\varphi_z^{V',V}) \tau_{z'}^{V_{U^0},V_{U^0}}.
\end{equation}
We used only Lemmas \ref{first} and \ref{second} to get \eqref{e}.
Using \eqref{a} in \eqref{e} and taking the right convolution  by  $ \tau_{(z')^{-1}}^{V_{U^0},V_{U^0}}$, we obtain
\begin{align}\label{redu}S^G(\varphi_z^{V',V})=S^G(T_{z'}^{V',V''}) \tau_z^{V''_{U^0},V_{U^0}} \tau_{(z')^{-1}}^{V_{U^0},V_{U^0}}= S^G(T_{z'}^{V',V''}) \tau_{ z(z')^{-1}}^{V''_{U^0},V_{U^0}}.
\end{align}
The computation of $S^G(\varphi_z^{V',V})$  is reduced to the computation of $ S^G(T_{z'}^{V',V''})$. 

{\bf B}) We cannot directly apply Theorem \ref{VcontV'}  to compute 
 $ S^G(T_{z'}^{V',V''})$  because $\Delta(V') $ is not contained in $\Delta(V'')$. But we show that the computation of $ S^G(T_{z'}^{V',V''})$ reduces to the computation of $S^G(T_{{z'}^2}^{V',V'})$ using Lemmas \ref{first} and \ref{second}.
  
As $\Delta(V'')\subset \Delta_{z'}$,  Lemma \ref{first} applied to  $M=Z$, $V',V''$ and $z'\in Z^+_G(V', V'')$   gives
\begin{align}\label{eq:1} S^G(T_{z'}^{V'', V'})=\tau_{z'}^{V''_{U^0}, V'_{U^0}},
\end{align}
and Lemma \ref{second} applied to $z'\in Z_G^+(V',V'')$ and $z'\in Z_G^+(V'',V')$  gives 
\begin{align*} T_{z'}^{V',V''}* T_{z'}^{V'',V'}= T_{{z'}^2}^{V',V'}.
\end{align*}
Applying the Satake transform, using~\eqref{eq:1} and taking a right convolution by $\tau_{(z')^{-1}}^{V'_{U^0}, V''_{U^0}}$ we get
\begin{align*} 
 S^G(T_{z'}^{V',V''}) = S^G(T_{{z'}^2}^{V',V'})
\tau_{(z')^{-1}}^{V'_{U^0}, V''_{U^0}}.
 \end{align*}
Plugging this value of $S^G(T_{z'}^{V',V''})$ into \eqref{redu} and using that $z'$ is central in $Z$ we get  \begin{align}\label{redu2}S^G(\varphi_z^{V',V})= S^G(T_{{z'}^2}^{V',V'})
\tau_{(z')^{-1}}^{V'_{U^0}, V''_{U^0}} \tau_{z (z')^{-1}}^{V''_{U^0},V_{U^0}}= S^G(T_{{z'}^2}^{V',V'})\tau_{(z')^{-2}z}^{V'_{U^0}, V_{U^0}}.
\end{align}

{\bf C}) We  now compute $S^G(T_{{z'}^2}^{V',V'})$. Applying Theorem \ref{VcontV'} to $V=V'$ and to $z'^2\in Z^+_G(V',V')$ gives
\begin{align*}
S^G(\varphi_{z'^2}^{V',V'})= \tau_{{z'}^2}^{V'_{U^0}, V'_{U^0}} 
\end{align*}
for $\varphi_{z'^2}^{V',V'}=  \sum _{x\in  Z_{z'^2}^+(V',V')} T_x^{V',V'}$ where $Z_{z'^2}^+(V',V')=Z^+\cap {z'^2}\prod_{\alpha \in \Delta'(V')}a_\alpha^\mathbb N.$

 But we want to compute $S^G(T_{z'^2}^{V',V'})$. We can choose any element $z'$ that satisfies {\bf A}) (ii). 
We choose such a  $z'$ with the property that   ${z'}^2 \prod_{\alpha \in \Delta'(V')\setminus  \Delta'(V)}a_\alpha ^{\epsilon (\alpha)}$ lies in $Z^+$ for all $\epsilon (\alpha)\in \{0,1\}$ (this is possible by Lemma \ref{plus}). For such a $z'$  and $\alpha \in \Delta'(V')\setminus  \Delta'(V)$, we have ${z'}^2 a_\alpha\in Z_{\psi_{V'}}^+  $ (recall   from Definition \ref{delta'psi} that $a_\alpha\in Z_{\psi_{V'}}$ as $\psi_{V'}$ is trivial on $Z^0\cap M'_\alpha$). Theorem \ref{VcontV'} applied  to $V=V'$ and  ${z'}^2 a_\alpha\in Z^+_{\psi_{V'}}$ gives
\begin{align*}
S^G(\varphi_{z'^2 a_\alpha}^{V',V'})= \tau_{{z'}^2 a_\alpha}^{V'_{U^0}, V'_{U^0}}= \tau_{{z'}^2}^{V'_{U^0}, V'_{U^0}} \tau_{  \alpha}^{V'_{U^0}, V'_{U^0}}.
\end{align*}
We see that $\varphi^{V',V'}_{z'^2}-\varphi_{z'^2 a_\alpha}^{V',V'}$ is the sum of $T^{V',V'}_x$ for $x\in Z^+\cap z'^2 \prod_{\beta \in \Delta'(V')-\{\alpha\}}a_{\beta}^{\mathbb N}$ and 
\begin{align*}
S^G(\varphi_{z'^2}^{V',V'}-\varphi_{z'^2 a_\alpha}^{V',V'})= \tau_{{z'}^2}^{V'_{U^0},V'_{U^0}}(1- \tau_{  \alpha}^{V'_{U^0}, V'_{U^0}}).
\end{align*}
By iteration we obtain that \[\tau_{{z'}^2}^{V'_{U^0}, V'_{U^0}}  \prod_{\alpha \in \Delta'(V')\setminus  \Delta'(V)} (1-\tau_{\alpha}^{V'_{U^0}, V'_{U^0}})\] is the sum of $S^G(T^{V',V'}_x)$ for $x\in Z^+\cap  z'^2\prod_{\beta\in \Delta'(V')\cap \Delta'(V) }a_{\beta}^{\mathbb N}$. But $z'^2$ is the only  element $ z'^2\prod_{\beta\in \Delta'(V')\cap \Delta'(V) }a_{\beta}^{n(\beta)}$ with $n(\beta)\in \mathbb N$  such that 

\[\langle \alpha_a, \nu (z'^2)\rangle + \sum_{\beta\in \Delta'(V')\cap \Delta'(V) }n(\beta)\langle \alpha_a,  \beta_a^\vee \rangle \leq 0 \quad  \forall\alpha \in \Delta.\] 

\noindent The reason is that all the  $\beta\in \Delta'(V')\cap \Delta'(V)$ are contained in $\Delta(V)$ hence in $\Delta_{z'}$, and that the matrix $(d_\alpha \langle \alpha_a,  \beta_a^\vee \rangle)_{\alpha, \beta\in \Delta'(V')\cap \Delta'(V)}$ is positive definite for some $d_\alpha > 0$. We deduce:
 \begin{align}\label{z2V'V'}
S^G(T_{z'^2}^{V',V'})=\tau_{{z'}^2}^{V'_{U^0}, V'_{U^0}}  \prod_{\alpha \in \Delta'(V')\setminus  \Delta'(V)} (1-\tau_{\alpha}^{V'_{U^0}, V'_{U^0}}).
\end{align}
{\bf D}) Plugging the value of  $S^G(T_{z'^2}^{V',V'})$ given by \eqref{z2V'V'} into \eqref{redu2} we get 
\begin{align}\label{redu3}S^G(\varphi_z^{V',V})= \tau_{{z'}^2}^{V'_{U^0}, V'_{U^0}}  \prod_{\alpha \in \Delta'(V')\setminus  \Delta'(V)} (1-\tau_{\alpha}^{V'_{U^0}, V'_{U^0}})\tau_{(z')^{-2}z}^{V'_{U^0}, V_{U^0}}.
 \end{align}
 As $z'$ is central in $Z$, the first term on the right-hand side commutes with the product and  using $\tau_{{z'}^2}^{V'_{U^0}, V'_{U^0}} \tau_{(z')^{-2}z}^{V'_{U^0}, V_{U^0}}=
 \tau_{z}^{V'_{U^0}, V_{U^0}}$,  the element $z'^2$ disappears from the formula \eqref{redu3}.  As $\tau_{\alpha}^{V'_{U^0}, V'_{U^0}} \tau_{z}^{V'_{U^0}, V_{U^0}}=\tau_{z}^{V'_{U^0}, V_{U^0}}\tau_{\alpha}^{V_{U^0}, V_{U^0}} $ for $\alpha \in \Delta'_{\psi_V} = \Delta'_{\psi_{V'}}$ (Remark \ref{not0}), we obtain the formula of Theorem \ref{EIST}:
 \begin{align}\label{eq:2} S^G(\varphi_z^{V',V})=\tau_{z}^{V'_{U^0}, V_{U^0}}\prod_{\alpha \in \Delta'(V')\setminus  \Delta'(V)} (1-\tau_{\alpha}^{V_{U^0}, V_{U^0}}).
 \end{align}
 
 {\bf E}) Choose a system of representatives $X$ for $Z_G^+(V,V')/Z^0$ in $Z_G^+(V,V')$ such that $x \in X$, $x a_\alpha \in Z_G^+(V,V')$ implies that
$x a_\alpha \in X$. In particular, the $T_x^{V',V}$ for $x \in X$ form a basis of $\mathcal H_G(V,V')$.
Recalling that $\varphi_z^{V',V}= \sum _{x\in  Z_z^+(V,V')} T_x^{V',V}$ and that $Z_z^+(V,V') = Z^+\cap z\prod_{\alpha \in \Delta'(V)\cap \Delta'(V')}a_\alpha^\mathbb N$,
Lemma~\ref{lm:Zz-contained-in-ZG} implies that the expansion of the $\varphi_z^{V',V}$ in terms of the basis $T_x^{V',V}$ ($z, x \in X$) is triangular.
Therefore 
  the $\varphi_z^{V',V}\in \mathcal H_G(V,V')$  for  $z \in X$ form a basis  of $\mathcal H_G(V,V')$.  
  As $S^G$ is injective, this implies that the elements  on the right-hand side of 
 the formula \eqref{eq:2} form a basis of the image of $S^G$.
 \end{proof}

\section{Pro-\texorpdfstring{$p$}{p} Iwahori Hecke ring}\label{S:4}
The  inverse  Satake  theorem for a pair $(V,V')$ of irreducible representations of $K$ with parameters satisfying  $\Delta(V')\subset \Delta(V)$  (Theorem \ref{VcontV'})  relies on the theory of the pro-$p$ Iwahori Hecke ring of $G$  \cite{MR3484112} and on the results presented in this  chapter.

\subsection{Bruhat order on the Iwahori Weyl group} \label{BO}
The Iwahori subgroups of $G$ are the conjugates of the Iwahori subgroup  $K(1)B_{\op}^0$; their pro-$p$ Sylow subgroups are   the pro-$p$ Iwahori subgroups of $G$, and are the conjugates of the pro-$p$ Iwahori subgroup 
$$I=K(1)U_{\op}^0.$$ 
We have  $K(1)B_{\op}^0=IZ^0$ and $I= U_{\op}^0Z(1) (U\cap I)$ (in any order) with the notation of \S  \ref{Notation}. The map $n\mapsto IZ^0 n IZ^0$ induces a bijection from the Iwahori Weyl group $W=\cn/IZ^0$  onto the set  $IZ^0 \backslash G /IZ^0$ of double cosets of $G$ modulo the Iwahori group $IZ^0$, and  the map  $n\mapsto I n I $ induces a bijection from the pro-$p$ Iwahori Weyl group $W(1)=\cn/Z(1)$ onto  the set $I\backslash G/I$ of  double cosets of $G$ modulo the pro-$p$ Iwahori group $I$; the group $W(1)$  is an extension of $W$ by  $Z_k=Z^0/Z(1)$. The action of $\cn$ on  the apartment $x_0+V_{\ad}$ factors through $W$.   We identify $x_0+V_{\ad}$ with $V_{\ad}$ by sending $x_0$ to $0\in V_{\ad}$. 
The  Iwahori Weyl group $W$  contains the group $W^{\aff}=(\cn\cap G')/(Z^0\cap G')$ identified with the affine Weyl group of $\Phi_a$ via the action of $\cn$ on $V_{\ad}$.  The quotient map $W\onto W_0=\cn/Z$  splits as it induces an isomorphism from $\cn^0/Z^0$ onto $W_0$, and the  kernel $\Lambda=Z/Z^0 $ of  $W\to W_0$ is commutative and finitely generated.  
 The  homomorphism $\nu:Z \to V_{\ad}$ factors through $\Lambda $  and  induces an isomorphism  from  $  \Lambda \cap W^{\aff}$ onto  the coroot lattice $\nu (Z\cap G')=\oplus_{\alpha \in \Delta} \mathbb Z \alpha^\vee_a$ of $\Phi_a$ (defined in \eqref{Phia}).  The   lattice $\nu(Z)$ contains the  coroot lattice  and  is contained in
   the lattice of coweights 
   $$P(\Phi_a^\vee)= \{ x\in V_{\ad} \ | \ \langle \alpha_a, x\rangle \in \mathbb Z \ \text{for all} \ \alpha \in \Delta\}.$$

 The Iwahori  group  $K(1)P_{\op}^0=IZ^0$ is the fixator of the fundamental antidominant alcove   $\mathfrak C^-$ of vertex $0$ contained in the antidominant closed Weyl chamber $\mathfrak D^-$ (defined in \eqref{D-}). For $\alpha \in \Phi, n\in \mathbb Z,$
the  reflection  $s_{\alpha_a -n}:x\mapsto x - (\langle \alpha_a, x\rangle - n) \alpha_a^\vee$ of $V_{\ad}$ with respect to a wall $\langle \alpha_a, x\rangle=n$ of $V_{\ad}$  is  conjugate in $W^{\aff}$ to a reflection with respect to a wall of  $\mathfrak C^-$; let $\mathfrak S$ (resp.\ $S^{\aff}$) denote the set of  reflections  with respect to the walls $\Ker (\alpha_a -n)$ of $V_{\ad}$ (resp.\ of  $\mathfrak C^-$).
Let $\Omega$ be the $W$-normalizer of $S^{\aff}$.
The Iwahori Weyl group admits two semidirect product decompositions 
\begin{equation*} W= \Lambda \rtimes W_0 = W^{\aff}\rtimes \Omega.
\end{equation*}
  The image $ {}_1W^{\aff}$   of $\cn\cap G'$ in $W(1)$ is a normal subgroup and is  an extension of $W^{\aff}$ by a subgroup $Z_k^{\aff}$ of $Z_k$.  The inverse image $W^{\aff}(1)$ of $W^{\aff}$ in $W(1)$ is   $ {}_1W^{\aff}Z_k$. Denoting by $\mathfrak S(1)$ (resp.\ $S^{\aff}(1)$, resp.\ $\Omega(1)$) the inverse image of $\mathfrak S$ (resp.\ $S^{\aff}$, resp.\ $\Omega$) in $W(1)$, we have \begin{equation}\label{Waf} W(1)= {}_1W^{\aff} \Omega(1), \quad {}_1W^{\aff} \cap\Omega(1)= Z_k^{\aff}, \end{equation}
  $\mathfrak S(1)={}_1 \mathfrak S Z_k, \ S^{\aff}(1)={}_1 S^{\aff} Z_k$ where $ {}_1W^{\aff} \cap \mathfrak S(1)=  {}_1 \mathfrak S, \  {}_1W^{\aff} \cap  S^{\aff}(1)=  {}_1 S^{\aff}$.
   
\begin{definition}\label{lambdaalpha} Let $\lambda_\alpha\in \Lambda $ be the image of $a_\alpha\in Z\cap M'_\alpha$  (Definition \ref{delta'psi}).
\end{definition}
Note that $\lambda_\alpha$ is independent of any choices.
By  Definition \ref{delta'psi},  $\nu(\lambda_\alpha)=\nu(a_\alpha)=\alpha_a^\vee,$ and 
 \begin{equation}\label{Laf}   \Lambda \cap W^{\aff}=  \prod _{\alpha \in \Delta}\lambda _\alpha^{\mathbb Z} .
\end{equation}

The length $\ell$ of the Coxeter system $(W^{\aff},S^{\aff})$ extends to a length on $W$ (by $\ell(wu) = \ell(w)$ for $w \in W^{\aff}$, $u \in \Omega$)
and further inflates to a length on $W(1)$, still denoted by $\ell$. 
For $\tilde w, \tilde u\in W(1)$ lifting $w\in W^{\aff}, u\in \Omega$, we have $\ell(\tilde{ w} \tilde u)=\ell(wu) =\ell(w)$. There is a useful formula for the length of 
$\lambda w$ where $\lambda\in \Lambda, w\in W_0$ \cite[Cor.\ 5.10]{MR3484112} (the  signs are different because $S^{\aff}$ is the set of reflections with respect to the walls of the dominant alcove $\mathfrak C^+= - \mathfrak C^-$ in loc.\ cit.):
\begin{align}\label{length} \ell(\lambda w)&=\sum_{\alpha_a\in \Phi_a^+ \cap w( \Phi_a^+)}|\langle \alpha_a, \nu(\lambda)\rangle|+\sum_{\alpha_a\in\Phi_a^+ \cap w( \Phi_a^-)}|\langle \alpha_a, \nu(\lambda)\rangle +1|\\
\label{length2} &= \ell (\lambda) - \ell(w) +2 |\{\alpha \in \Phi_a^+ \cap w( \Phi_a^-), \  \langle \alpha_a, \nu(\lambda)\rangle \geq 0\}|.\end{align}
In particular, for $\lambda \in \Lambda^+ = Z^+/Z^0$ we have $\ell(\lambda )=-\langle  2 \rho, \nu (\lambda)\rangle$, where    $ 2\rho$ is the sum of positive roots of $\Phi_a$, and  $\ell(w\lambda )=\ell(\lambda )+\ell(w)$.

\begin{definition} \label{Bru}The Bruhat partial order $\leq $ of $(W^{\aff},S^{\aff})$ inflates  to a partial order $\leq $ on $W$ and to a preorder $\leq $ %
on $W(1)$.  

  \begin{itemize}
\item $w_1u_1 \leq  w_2u_2\Leftrightarrow w_1 \leq w_2, u_1=u_2$  for  $w_1,w_2\in W^{\aff},u_1,u_2\in \Omega$ {\rm \cite[Appendix]{MR2192484}}.

\item   $\tilde w_1\leq  \tilde w_2 \Leftrightarrow w_1 \leq w_2$ for $\tilde w_1, \tilde w_2\in W(1)$ with images $w_1,w_2\in W$ {\rm \cite[Appendix]{MR2192484}}. 
 \end{itemize} 
\end{definition}

  There is the  partial order $\preceq$ on $V_{\ad}$ determined by $- \Delta_a^\vee$ (the basis of $\Phi_a$ corresponding to the anti-dominant  closed Weyl chamber $\mathfrak D^-$ \eqref{D-}): $x_1\preceq x_2$ if and only if 
$x_1-x_2 \in \sum_{\alpha \in \Delta} \mathbb N \alpha^\vee_a$.    The next proposition compares  the ``Bruhat order'' $\leq $ on $\Lambda^+=Z^+/Z^0$  and the  partial order $\preceq$ on $\nu(\Lambda^+)$.
 
\begin{proposition}\label{L+order} Let $\lambda_1,\lambda_2 \in \Lambda^+ $. Then 
$$\lambda_1\leq \lambda_2 \ \Leftrightarrow \ \lambda_1\in  \lambda_2  \prod _{\alpha \in \Delta}\lambda _\alpha^{\mathbb N}\ \Leftrightarrow\ \big(\nu(\lambda_1) \preceq \nu(\lambda_2),\  \lambda_1\in  \lambda_2  W^{\aff}\big).$$ \end{proposition}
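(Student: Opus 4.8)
The plan is to prove the two biconditionals, using the structure of $W$ recalled in \S\ref{BO}: the decompositions $W=\Lambda\rtimes W_0=W^{\aff}\rtimes\Omega$, the length formulas \eqref{length} and \eqref{length2}, the identity \eqref{Laf}, and the fact that $\nu$ restricts to an isomorphism $\Lambda\cap W^{\aff}\congto\bigoplus_{\alpha\in\Delta}\mathbb Z\alpha_a^\vee$ with $\nu(\lambda_\alpha)=\alpha_a^\vee$. The equivalence of the last two conditions is then pure linear algebra: by \eqref{Laf}, $\lambda_1\in\lambda_2 W^{\aff}$ if and only if $\lambda_1\lambda_2^{-1}\in\Lambda\cap W^{\aff}$, in which case $\lambda_1=\lambda_2\prod_\alpha\lambda_\alpha^{n(\alpha)}$ for unique $n(\alpha)\in\mathbb Z$; applying $\nu$ gives $\nu(\lambda_1)-\nu(\lambda_2)=\sum_\alpha n(\alpha)\alpha_a^\vee$, and since $\{\alpha_a^\vee\}_{\alpha\in\Delta}$ is a basis, $\nu(\lambda_1)\preceq\nu(\lambda_2)$ is equivalent to $n(\alpha)\ge0$ for all $\alpha$, i.e.\ to $\lambda_1\in\lambda_2\prod_\alpha\lambda_\alpha^{\mathbb N}$.

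Suppose now $\lambda_1\le\lambda_2$. Writing $\lambda_i=w_iu_i$ with $w_i\in W^{\aff}$ and $u_i\in\Omega$, the first bullet of Definition~\ref{Bru} forces $u_1=u_2$ and $w_1\le w_2$, so $\lambda_1\lambda_2^{-1}=w_1w_2^{-1}\in\Lambda\cap W^{\aff}$ and hence $\lambda_1\in\lambda_2 W^{\aff}$. By the first paragraph it then remains to prove $\nu(\lambda_1)\preceq\nu(\lambda_2)$, equivalently $\langle\varpi,\nu(\lambda_1)\rangle\ge\langle\varpi,\nu(\lambda_2)\rangle$ for every fundamental weight $\varpi$ of $\Phi_a$ dual to the basis $\{\alpha_a^\vee\}$. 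This monotonicity of the pairing with dominant weights under the Bruhat order, restricted to the antidominant submonoid $\Lambda^+$, I would obtain either by pushing the closure relation $\overline{I\lambda_1 I/I}\subseteq\overline{I\lambda_2 I/I}$ in the affine flag variety forward to the affine Grassmannian $G/K$ and invoking the standard description of parahoric-orbit closures there (after replacing $\nu(\lambda_i)$ by its dominant $W_0$-conjugate), or by inducting on $\ell(\lambda_2)-\ell(\lambda_1)$: a Bruhat cover $\lambda\lessdot\lambda'$ with $\lambda,\lambda'\in\Lambda^+$ comes from a single affine reflection, for which \eqref{length2} controls how $\nu$ changes.

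Conversely, assume $\lambda_1=\lambda_2\prod_\alpha\lambda_\alpha^{n(\alpha)}$ with all $n(\alpha)\in\mathbb N$ and both $\lambda_i\in\Lambda^+$, so that $\nu(\lambda_1)$ and $\nu(\lambda_2)=\nu(\lambda_1)-\sum_\alpha n(\alpha)\alpha_a^\vee$ lie in $\mathfrak D^-$. We must show $\lambda_1\le\lambda_2$; this is again an instance of the classical identification, on the antidominant monoid $\Lambda^+$, of the Bruhat order with the dominance order $\preceq$ on translations. I would package it as an induction on $\langle2\rho,\nu(\lambda_1)-\nu(\lambda_2)\rangle\in\mathbb N$: when this is positive one produces $\lambda_3\in\Lambda^+$ with $\lambda_3\in\lambda_2 W^{\aff}$, $\nu(\lambda_2)\preceq\nu(\lambda_3)\preceq\nu(\lambda_1)$, $\nu(\lambda_3)\ne\nu(\lambda_1)$, and $\lambda_1\le\lambda_3$; the inductive hypothesis then gives $\lambda_3\le\lambda_2$, and transitivity of $\le$ yields $\lambda_1\le\lambda_2$. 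The inequality $\lambda_1\le\lambda_3$ — between two elements of $\Lambda^+$ in a common $W^{\aff}$-coset with $\nu(\lambda_3)\preceq\nu(\lambda_1)$ — is checked directly from the subword characterisation of $\le$ together with the length formula \eqref{length}.

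The main obstacle, present in both directions, is precisely this comparison of the Bruhat order on $\Lambda^+$ with $\preceq$. The subtle point is that one cannot in general move by a single simple coroot $\alpha_a^\vee$ while remaining in $\mathfrak D^-$: for instance when $\nu(\lambda_1)$ lies on several walls of $\mathfrak D^-$, the $\preceq$-interval between $\nu(\lambda_2)$ and $\nu(\lambda_1)$ may contain no other antidominant point, so the inductive step above must be allowed to jump by a sum of several coroots at once, and verifying the corresponding Bruhat inequality then requires the length formula rather than a naive reflection argument. Everything else in the proof is bookkeeping with the two decompositions of $W$ and with the isomorphism $\Lambda\cap W^{\aff}\congto\bigoplus_\alpha\mathbb Z\alpha_a^\vee$.
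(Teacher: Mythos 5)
Your handling of the second equivalence and of the implication $\lambda_1\leq\lambda_2\Rightarrow\lambda_1\in\lambda_2W^{\aff}$ matches the paper. But both of the hard implications are left with genuine gaps. For the forward direction, your fallback induction "on Bruhat covers $\lambda\lessdot\lambda'$ with $\lambda,\lambda'\in\Lambda^+$" cannot be set up: a cover in $(W^{\aff},\leq)$ drops the length by exactly $1$, whereas two distinct comparable elements of $\Lambda^+$ differ in length by $\langle 2\rho,\nu(\lambda_1)-\nu(\lambda_2)\rangle\geq 2$, and more to the point the intermediate terms of any saturated chain from $\lambda_1$ to $\lambda_2$ are not translations, so there are no covers inside $\Lambda^+$ to induct over. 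One is forced to prove a statement about arbitrary $x\in W$ with $x\leq\lambda_2$; this is exactly what the paper's Lemma~\ref{poly} does (for a $W_0$-invariant convex $\mathcal P$, $x_1\leq x_2$ and $x_2(0)\in\mathcal P$ imply $x_1(0)\in\mathcal P$, proved by a single-reflection reduction that is allowed to leave $\Lambda$), applied to the convex hull $\mathcal P(\lambda_2)$ of $W_0\cdot\nu(\lambda_2)$, which lies in $\nu(\lambda_2)+\sum_\alpha\mathbb R_{\geq 0}\alpha_a^\vee$. Your alternative via orbit closures in an affine Grassmannian is morally the same statement, but it is not carried out and would need justification in this generality (arbitrary special parahoric, possibly non-reduced $\Phi$).

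For the converse you explicitly name the obstacle --- that one cannot move by a single $\alpha_a^\vee$ while staying in $\mathfrak D^-$, so the step "$\lambda_1\leq\lambda_3$, checked directly from the subword characterisation together with the length formula" must jump by several coroots --- and then you do not resolve it; that deferred verification is the entire content of this direction. The paper's device is different and makes the obstacle disappear: by Lemma~\ref{plusJ} one first left-translates by a sufficiently deep $\lambda\in\Lambda^+$ so that the whole box $\lambda\lambda_2\prod_\alpha\lambda_\alpha^{m(\alpha)}$, $0\leq m(\alpha)\leq n(\alpha)$, lies in $\Lambda^+$; then the single-coroot step $\mu\lambda_\alpha<\mu s_\alpha<\mu$ (valid whenever $\mu,\mu\lambda_\alpha\in\Lambda^+$, Lemma~\ref{firststep}, proved by a two-line length computation and the factorization $\mu\lambda_\alpha=(\mu s_\alpha)(s_\alpha\lambda_\alpha)$ through the affine reflection $s_{\alpha_a+1}$) chains up to give $\lambda\lambda_1\leq\lambda\lambda_2$; finally one cancels $\lambda$ using $\ell(\lambda\lambda_i)=\ell(\lambda)+\ell(\lambda_i)$ from \eqref{length}. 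If you want to complete your argument, you should either import this translation trick or supply an actual proof of the multi-coroot Bruhat inequality you invoke.
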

 The latter equivalence is clear because $\nu (\lambda_\alpha)=\alpha_a^\vee$ and by~\eqref{Laf}.
 The first one follows from  the next two  lemmas  \cite{MR2141705} (we thank Xuhua He  for drawing our attention to them).

  \begin{lemma}\label{firststep}Let $\alpha \in \Delta$  and $ \lambda \in \Lambda^+$  such that $  \lambda  \lambda_\alpha\in \Lambda^+$. Then  \[ \lambda  \lambda_\alpha  <    \lambda s_{\alpha} <  \lambda .\]
\end{lemma}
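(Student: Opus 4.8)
The plan is to deduce both strict inequalities from the length formula for the Iwahori Weyl group together with the inflation of the Bruhat order of the Coxeter system $(W^{\aff},S^{\aff})$ to $W$ recorded in Definition~\ref{Bru}. First I would record the two consequences of the dominance hypotheses: since $\lambda\in\Lambda^+$ we have $\nu(\lambda)\in\mathfrak D^-$, so $\langle\gamma_a,\nu(\lambda)\rangle\le 0$ for every $\gamma_a\in\Phi_a^+$; and since $\lambda\lambda_\alpha\in\Lambda^+$ we have $\nu(\lambda)+\alpha_a^\vee\in\mathfrak D^-$, which upon evaluating on $\alpha_a$ and using $\langle\alpha_a,\alpha_a^\vee\rangle=2$ forces $\langle\alpha_a,\nu(\lambda)\rangle\le -2$.

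For $\lambda s_\alpha<\lambda$: applying \eqref{length2} with $w=s_\alpha$, and observing that $\Phi_a^+\cap s_\alpha(\Phi_a^-)=\{\alpha_a\}$ while $\langle\alpha_a,\nu(\lambda)\rangle\le -2<0$, I get $\ell(\lambda s_\alpha)=\ell(\lambda)-\ell(s_\alpha)=\ell(\lambda)-1$. The reflection $s_\alpha$ is the reflection in the wall $\Ker\alpha_a$ of the fundamental antidominant alcove $\mathfrak C^-$ (a facet of $\mathfrak D^-$ through $0$), hence $s_\alpha\in S^{\aff}\subset W^{\aff}$; writing $\lambda=vu$ with $v\in W^{\aff}$ and $u\in\Omega$, the product $\lambda s_\alpha=v\,(us_\alpha u^{-1})\,u$ has the same $\Omega$-component $u$ and $W^{\aff}$-component $v\cdot(us_\alpha u^{-1})$, with $us_\alpha u^{-1}\in S^{\aff}$. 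Since its length has strictly decreased, the right-descent property of $(W^{\aff},S^{\aff})$ gives $v\cdot(us_\alpha u^{-1})<v$, hence $\lambda s_\alpha<\lambda$ by Definition~\ref{Bru}.

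For $\lambda\lambda_\alpha<\lambda s_\alpha$: using $\ell(\mu)=-\langle 2\rho,\nu(\mu)\rangle$ for $\mu\in\Lambda^+$, together with $\nu(\lambda_\alpha)=\alpha_a^\vee$ (Definitions~\ref{delta'psi} and~\ref{lambdaalpha}) and $\langle 2\rho,\alpha_a^\vee\rangle=2$, I get $\ell(\lambda\lambda_\alpha)=\ell(\lambda)-2=\ell(\lambda s_\alpha)-1$. Next I would use the identity $\lambda\lambda_\alpha=(\lambda s_\alpha)\,(s_\alpha\lambda_\alpha)$ and compute that $s_\alpha\lambda_\alpha$ acts on $V_{\ad}$ by $x\mapsto s_\alpha(x)-\alpha_a^\vee$, i.e.\ it is the affine reflection in the wall $\langle\alpha_a,x\rangle=-1$; it lies in $W^{\aff}$ because $\lambda_\alpha\in\Lambda\cap W^{\aff}$ by \eqref{Laf}. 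Thus $\lambda\lambda_\alpha$ is obtained from $\lambda s_\alpha$ by right multiplication by a reflection of $W^{\aff}$ and has strictly smaller length; splitting off the common $\Omega$-component exactly as before and invoking the classical fact that $wt<w$ for a reflection $t$ whenever $\ell(wt)<\ell(w)$, I conclude $\lambda\lambda_\alpha<\lambda s_\alpha$. Chaining the two relations gives $\lambda\lambda_\alpha<\lambda s_\alpha<\lambda$.

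I do not expect a serious obstacle here: everything reduces to the length formula \eqref{length}--\eqref{length2} and to standard Coxeter-theoretic facts about the Bruhat order. The two points that need care are the sign conventions --- all chambers and alcoves are normalized antidominantly, which is why $\ell(\lambda)=-\langle 2\rho,\nu(\lambda)\rangle$ holds on $\Lambda^+$ and why $s_\alpha\in S^{\aff}$ --- and the fact that the Bruhat order on $W$ is only \emph{inflated} from $(W^{\aff},S^{\aff})$, so each Coxeter-group statement must be applied to the $W^{\aff}$-components after peeling off the common $\Omega$-factor via Definition~\ref{Bru}; both are routine bookkeeping rather than genuine difficulties.
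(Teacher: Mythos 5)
Your proof is correct and follows essentially the same route as the paper's: both arguments compute $\ell(\lambda s_\alpha)=\ell(\lambda)-1$ from $\langle\alpha_a,\nu(\lambda)\rangle\le -2$, compute $\ell(\lambda\lambda_\alpha)=\ell(\lambda)-2$ via $\ell=-\langle 2\rho,\nu(\cdot)\rangle$ on $\Lambda^+$, and use the factorization $\lambda\lambda_\alpha=(\lambda s_\alpha)(s_\alpha\lambda_\alpha)$ with $s_\alpha\lambda_\alpha=s_{\alpha_a+1}\in\mathfrak S$ together with the reflection criterion for the Bruhat order. Your version merely spells out in more detail the peeling-off of the $\Omega$-component implicit in Definition~\ref{Bru}.
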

\begin{proof}   \cite[Remark 3.9]{MR2141705}. Recall $\nu (\lambda_\alpha) =\alpha_a^\vee$ (Definition \ref{lambdaalpha}).  We have $\langle 2 \rho, \alpha_a^\vee\rangle =2$ where $2\rho$ is the sum of positive roots $\alpha_a\in\Phi_a^+$  \cite[VI.1.11, Prop.\ 29 (iii)]{MR1890629}. We deduce
 $$\ell(\lambda )= \langle 2 \rho, v(\lambda)\rangle = \langle 2 \rho, v(\lambda \lambda_\alpha)\rangle - \langle  2\rho, v(\lambda_\alpha)\rangle= \langle 2 \rho, v(\lambda \lambda_\alpha)\rangle 
 +\langle 2 \rho,   \alpha_a^\vee\rangle=  \ell(\lambda \lambda_\alpha)+2 .$$
Also, $\ell(\lambda s_\alpha) = \ell(\lambda)-1$, as $\langle \alpha_a, \nu(\lambda) \rangle \le -2$, since $\lambda\lambda_\alpha \in \Lambda^+$.
We have that $  s_\alpha \lambda_\alpha =   s_{\alpha_a +1}$ is an affine reflection in  $\mathfrak S$.  Also, $\lambda \lambda_{\alpha}=(\lambda s_{\alpha}) (s_\alpha \lambda_\alpha)$,  
 $\ell (\lambda s_\alpha)=\ell(\lambda)-1$  and 
 $\ell(\lambda \lambda_\alpha)=\ell (\lambda s_\alpha)-1$.
 Recalling the Definition \ref{Bru} of the Bruhat order, we get the lemma.  \end{proof}

Half of the  first equivalence of Proposition \ref{L+order} follows from this lemma  (proof of \cite[Prop.\ 3.5]{MR2141705}). Indeed, let $\lambda_1,\lambda_2\in \Lambda^+$ such that $\lambda_1\in  \lambda_2  \prod _{\alpha \in \Delta}\lambda _\alpha^{n(\alpha)}$ with $n(\alpha)\in \mathbb N$.  
 By Lemma \ref{plusJ}, there exists $\lambda \in \Lambda^+$ such that $ \lambda \lambda_2  \prod _{\alpha \in \Delta}\lambda _\alpha^{m(\alpha)}$ lies in  $\Lambda^+$ for all integers $m(\alpha)\in \mathbb N, m(\alpha)\leq n(\alpha)$. There is a chain $(x_i)_{1\leq i \leq n}$ from $x_1 = \lambda \lambda_2$ to $x_n = \lambda \lambda_1 $ in $\Lambda^+$ such that $x_{i+1}=x_i \lambda_\alpha$ for some $\alpha \in \Delta$.  Lemma \ref{firststep} implies $x_{i+1}< x_i$. Hence $\lambda \lambda_1 \leq\lambda \lambda_2$.
  We have 
 $\ell(\lambda \lambda_i)=\ell(\lambda)+\ell(\lambda_i)$ by the length formula \eqref{length} and  $\lambda \lambda_1\leq \lambda \lambda_2$ is equivalent to $\lambda_1\leq  \lambda_2$. Therefore  if $\lambda_1,\lambda_2\in \Lambda^+$ are such that $\lambda_1\in  \lambda_2  \prod _{\alpha \in \Delta}\lambda _\alpha^{\mathbb N}$  we have  $\lambda_1\leq  \lambda_2$. 
\begin{lemma}\label{poly} Let  $\mathcal P$ be a $W_0$-invariant  convex subset of $V_{\ad}$ and let $x_1,x_2 \in W$ such that $x_1\leq x_2$. If $x_2(0)\in \mathcal P$ then  $x_1(0)\in \mathcal P$.
\end{lemma}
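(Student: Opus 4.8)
My plan is to reduce to a single step of the Bruhat order and then use convexity together with the geometry of the fundamental alcove. First I would pass from $W$ to the Coxeter group $(W^{\aff},S^{\aff})$. Writing $x_1=w_1u$ and $x_2=w_2u$ with $u\in\Omega$ and $w_1,w_2\in W^{\aff}$, the hypothesis $x_1\leq x_2$ is, by Definition~\ref{Bru}, equivalent to $w_1\leq w_2$ in $W^{\aff}$. Since $\Omega$ stabilizes $\mathfrak C^-$, the point $p:=u(0)$ is a vertex of $\mathfrak C^-$; in particular $p\in\overline{\mathfrak C^-}$, and $x_i(0)=w_i(p)$. So it suffices to prove: if $w_1\leq w_2$ in $W^{\aff}$, $p\in\overline{\mathfrak C^-}$ and $w_2(p)\in\mathcal P$, then $w_1(p)\in\mathcal P$.

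Next I would invoke the standard chain characterization of the Bruhat order on the Coxeter system $(W^{\aff},S^{\aff})$: there is a chain $w_1=v_0<v_1<\dots<v_m=w_2$ with $v_{i-1}=s_{H_i}v_i$ for reflections $s_{H_i}\in\mathfrak S$ and $\ell(v_{i-1})<\ell(v_i)$. Descending this chain, it is enough to treat one step, i.e.\ to show that if $w'=s_Hw$ with $\ell(w')<\ell(w)$ and $w(p)\in\mathcal P$, then $w'(p)\in\mathcal P$.

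For this one-step claim, the key point is that $\ell(s_Hw)<\ell(w)$ holds exactly when the hyperplane $H$ separates $\mathfrak C^-$ from the alcove $w(\mathfrak C^-)$. Write $H=\{x\mid\langle\gamma,x\rangle=n\}$ with $\gamma\in\Phi_a$ and $n\in\mathbb Z$, so that $s_H(x)=x-(\langle\gamma,x\rangle-n)\gamma^\vee$. The vertex $0$ of $\mathfrak C^-$ lies on the closed $\mathfrak C^-$-side of $H$, while $q:=w(p)\in\overline{w(\mathfrak C^-)}$ lies on the closed opposite side; hence $n$ lies weakly between $\langle\gamma,0\rangle=0$ and $\langle\gamma,q\rangle$. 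If $\langle\gamma,q\rangle=0$ this forces $n=0$ and $s_H(q)=q$; otherwise $n/\langle\gamma,q\rangle\in[0,1]$, and the identity
\[
 s_H(q)=\frac{n}{\langle\gamma,q\rangle}\,q+\Bigl(1-\frac{n}{\langle\gamma,q\rangle}\Bigr)\,s_\gamma(q),
\]
with $s_\gamma\in W_0$ the linear reflection in $\gamma$, exhibits $w'(p)=s_H(q)$ as a convex combination of $q=w(p)\in\mathcal P$ and $s_\gamma(q)\in W_0\cdot w(p)\subseteq\mathcal P$. Since $\mathcal P$ is convex and $W_0$-invariant, $w'(p)\in\mathcal P$, which completes the step and hence the lemma.

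The main obstacle I anticipate is the geometric input in the previous paragraph: one must justify carefully the equivalence between the combinatorial inequality $\ell(s_Hw)<\ell(w)$ and the statement that $H$ separates $\mathfrak C^-$ from $w(\mathfrak C^-)$ (standard for Coxeter groups, but worth making precise in this affine setting), and then keep track of signs so that the affine combination above is genuinely a convex one. The remaining bookkeeping, including the degenerate cases where $q$ lies on $H$, is routine.
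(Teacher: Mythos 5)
Your proof is correct and follows essentially the same route as the paper's: reduce to a single Bruhat covering step $x_1 = s_H x_2$, use that the wall $H$ separates $\mathfrak C^-$ from $x_2(\mathfrak C^-)$ to control the affine parameter, and observe that $s_H(x_2(0))$ is a convex combination of $x_2(0)$ and its image under the linear reflection $s_\gamma\in W_0$. The paper likewise notes that the argument works for any point of $\overline{\mathfrak C^-}$ in place of $0$, which is exactly your reduction handling the $\Omega$-component.
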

\begin{proof} \cite[Lemma 3.3]{MR2141705}. We can reduce to $x_1= s_{\alpha_a+m} x_2$ for a simple affine reflection  $s_{\alpha_a+m} $  with $s_{\alpha_a+m} x_2<x_2$  and $\alpha_a\in \Phi_a, m\in \mathbb Z$. In particular $\alpha_a+m$ is positive on the alcove $\mathfrak C^-$. Then  $\alpha_a +m$ is negative  on the alcove $x_2(\mathfrak C^-)$. Hence $m\geq 0$ and $\langle \alpha_a,  x_2(0)\rangle +m\leq 0$. This implies that $x_1(0)=x_2(0)- (\langle \alpha_a,  x_2(0)\rangle +m)\alpha_a^\vee$ lies between 
$x_2(0)$ and $ s_\alpha (x_2(0))=x_2(0)- \langle \alpha_a,  x_2(0)\rangle \alpha_a^\vee$. 
The lemma is now clear.  The lemma is true (with the same argument) for any element in the closure of  $\mathfrak C^-$ instead of the origin $0$.
\end{proof}

The second half of the first equivalence in Proposition \ref{L+order} follows from this lemma.  
For  $w\in W_0$ and $\lambda\in \Lambda^+$,  $w(v(\lambda))\in  v(\lambda) - \sum _{\alpha\in \Delta} \mathbb N \alpha_a^\vee$ because 
$v(\lambda)$ lies in the cone  $\mathfrak D^+ \cap P(\Phi_a^\vee)$ of  dominant coweights \cite[VI.1.6, Prop.\ 18]{MR1890629}.  
The convex envelope  in $V_{\ad}$ of the $W_0$-conjugate of $\nu(\lambda)$  is a convex $W_0$-invariant polygon $\mathcal P(\lambda)$ contained in $\nu(\lambda)+\sum_{\alpha\in \Delta} \mathbb R_{\geq 0} \alpha_a^\vee.$ 
Let   $\lambda_1, \lambda_2 \in \Lambda^+$ such that $\lambda_1 \le \lambda_2$, hence $\lambda_1 \in \lambda_2 \prod _{\alpha \in \Delta}\lambda _\alpha^{\mathbb Z}$ by \eqref{Laf}. By Lemma \ref{poly},
  $\nu(\lambda_1)\in  \mathcal P(\lambda_2)$  hence $\lambda_1\in \lambda_2 \prod_{\alpha \in \Delta} \lambda_\alpha^{ \mathbb N}$. 
 This ends the proof of Proposition \ref{L+order}.

 \subsection{Bases of the pro-\texorpdfstring{$p$}{p} Iwahori Hecke ring}\label{B3}
The  pro-$p$ Iwahori  Hecke ring  of $G$ is a ring isomorphic to $\End_G (\ind_I^G \mathbb Z)$, where $I$ acts trivially on $ \mathbb Z$. We see the  pro-$p$ Iwahori ring  of $G$  as the convolution algebra $\mathcal H_{ \mathbb Z}$ of functions  $\varphi:G\to  \mathbb Z$ which are compactly supported and constant on the double cosets of $G$ modulo $I$. 
  The $ \mathbb Z$-module  $\mathcal H_{ \mathbb Z}$ has several  important bases  indexed by $w\in W(1)$.
  
 \bigskip  {\bf I})  A double coset $IxI$ for $x\in \cn$ depends only on the image $w\in W(1)$  of $x$ in the pro-$p$ Iwahori Weyl group $W(1)=\cn/Z(1)$ and is also denoted by $IwI$. The characteristic functions $T_w\in \mathcal H_{ \mathbb Z}$ of $IwI$ for $w\in W(1)$ form a natural basis of the $ \mathbb Z$-module $\mathcal H_{ \mathbb Z}$, called the Iwahori-Matsumoto basis.  Let $R$ be a commutative ring. We still denote by  $T_w$  the element $1 \otimes T_w$ in the $R$-algebra $\mathcal H_R=R\otimes_{\mathbb Z} \mathcal H_{\mathbb Z}$. 
 The definition of the other bases of  $\mathcal H_{ \mathbb Z}$ is more elaborate.

The relations verified by the basis elements $T_w\in \mathcal H_{ \mathbb Z}$  for $w \in W(1)$  are: 
\begin{itemize}
\item The braid relations $T_{w_1}T_{w_2}=T_{w_1w_2}$ if  $\ell(w_1)+\ell(w_2)=\ell(w_1w_2)$; hence $t\mapsto T_t$ gives an embedding $\mathbb Z[Z_k]\hookrightarrow  \mathcal H_{\mathbb Z}$.
\item The quadratic relations $T_{\tilde s}^2 =  q(s) T_{\tilde s^2} + c(\tilde s) T_{\tilde s}$ for $\tilde s \in   S^{\aff} (1)$ lifting a simple reflection $s\in S^{\aff}$. We have $\tilde s ^2\in Z_k,  q: \mathfrak S\to q^{\mathbb N}-\{1\}$ is a $W$-invariant function (for conjugation),  $c: \mathfrak S(1)\to  \mathbb Z[Z_k]$ is a $W (1)$-invariant function (for the conjugation action on   $Z_k$ and on  $\mathfrak S(1)$)  satisfying $c(wt)= c(tw)=tc(w)$ for $w\in \mathfrak S(1), t\in Z_k$. \end{itemize}
\begin{remark}[{\cite[\S 3.8, \S 4.2]{MR3484112}}]\label{cs} Let $s\in  S^{\aff}$. We denote by $H_s$  the affine hyperplane of $V_{\ad}$ fixed by $s$, $\alpha+r \in \Phi^{\aff}$ an affine root of $G$ \cite[3.5]{MR3484112} such that 
$H_s=\Ker (\alpha+r)$. Let $u\in (U_\alpha \cap \mathfrak K_s) \setminus \mathfrak K_s(1)$, $m(u)$ the only element in $\cn \cap U_{-\alpha}u U_{-\alpha}$ where $\mathfrak K_s$ is the parahoric subgroup of $G$ fixing the face of  $\mathfrak C^-$ contained in $H_s$. We have $q(s)=|Im(u)I/I|$ and the  image of $m(u)$ in $W(1)$ is a lift $\tilde s$ of $s$ contained in ${}_1W^{\aff}$. A lift  $\tilde s$ obtained in this way is called \emph{admissible}. 

The quotient of $\mathfrak K_s$ by its pro-$p$ radical $\mathfrak K_s(1)$ is  the group $G_{k,s}$ of rational points of a finite connected reductive $k$-group with maximal torus $Z_k$ and of semisimple rank $1$. Let $G'_{k,s}$ the subgroup of $G_{k,s}$ generated by the unipotent elements, $Z_{k,s}=Z_k\cap G'_{k,s}$. We have $Z_{k,s}\subset Z_k^{\aff}$ and   $c(\tilde s)\in \mathbb Z[Z_{k,s}]$.  This implies $c(w)\in  \mathbb Z[Z_k^{\aff}]$ for 
  $w\in {}_1\mathfrak S$.
   \end{remark}
 
{\bf II}) We now give the second basis \cite[Lemma 4.12, Prop.\ 4.13]{MR3484112}. There exist unique elements $T^*_w\in  \mathcal H_{ \mathbb Z}$ for $w\in W(1)$ such that 
\begin{itemize}
\item $T^*_{w_1}T^*_{w_2}=T^*_{w_1w_2}$ if $\ell(w_1)+\ell(w_2)=\ell(w_1w_2)$,
\item $T^*_u=T_u$ if $u\in \Omega(1)$ (i.e.\ $\ell(u)=0$),
\item $T^*_{\tilde s}=T_{\tilde s}- c(\tilde s)$ if $\tilde s\in S ^{\aff} (1)$.
\end{itemize}
They  form a  basis of  $\mathcal H_{ \mathbb Z}$, as the Iwahori-Matsumoto expansion of $T_w^*$ is triangular: \begin{equation}\label{T*}T^*_{\tilde w} =  \sum_{x\in W, x\leq w } h^*_x, \quad h^*_x=c^*(\tilde w,\tilde x) T_{\tilde x},
\end{equation}
where $\tilde w, \tilde x\in W(1)$  lift    $w,x\in W $, $c^*(\tilde w,\tilde x) \in  \mathbb Z[Z_k]$ 
($h^*_x$ does not depend on the choice of  $\tilde x$ lifting $x$) and $c^*(\tilde w,\tilde w)=1$.

\begin{remark} When the characteristic of $R$ is $p$ (in particular when $R=C$), we have  $q(s)=0$ in $R$ and $T_{\tilde s}^2 =  c(\tilde s) T_{\tilde s}, \ T_{\tilde s}^* T_{\tilde s}=T_{\tilde s} T_{\tilde s}^*=0$  for $\tilde s\in S^{\aff}(1)$; for an admissible lift  $\tilde s\in {}_1S^{\aff}$, 
\begin{equation}\label{cp}c(\tilde s )= -  | Z_{k,s}|^{-1}\sum _{t\in Z_{k,s}} T_t.
\end{equation}
\end{remark}

The $ \mathbb Z$-submodule $\mathcal H_ \mathbb Z^{\aff}$   with basis $T_w$ for $w\in {}_1W^{\aff}$ is a subalgebra,  $T^*_w$ for $w\in {}_1W^{\aff}$   is also a basis of  $\mathcal H_ \mathbb Z^{\aff}$, and  $c^*(\tilde w,\tilde x)\in \mathbb Z[Z_k^{\aff}]$ for $\tilde w, \tilde x\in {}_1W^{\aff}$. 

 For $\tilde w\in W(1)$ lifting $w\in W$, we have  \cite[Prop.\ 4.13]{MR3484112} 
\[T_w T^*_{w^{-1}}=q_w,\]
where 
 $w\mapsto q_w:W\to q^{\mathbb N}$  is the function defined by  \cite[Def.\ 4.14]{MR3484112} with properties
\begin{itemize}
\item $q_{w_1}q_{w_2}=q_{w_1w_2}$ if $\ell(w_1)+\ell(w_2)=\ell(w_1w_2)$,
\item $q_u=1$ if $u\in \Omega$ (i.e.\ $\ell(u)=0$),
\item $q_s=q(s)$ for $  s\in S^{\aff}$ as in the quadratic relation of $T(\tilde s)$.  
\end{itemize}
For $w_1,w_2\in W$, the positive square root
\[q_{w_1,w_2}= (q_{w_1} q_{w_2} q_{w_1 w_2}^{-1})^{1/2}\] 
belongs to $q^{\mathbb N}$ \cite[Lemma 4.19]{MR3484112} and   
$q_{w_1,w_2}= 1 \ \text {if and only if } \ \ell(w_1)+\ell(w_2)=\ell(w_1w_2) $ \cite[Lemma 4.16]{MR3484112}.
We inflate $q_w$ and $q_{w_1,w_2}$ to $W(1)$,  we put $q_{\tilde w}=q_w$ and $ q_{\tilde w_1,\tilde w_2}=q_{w_1,w_2}$ for $\tilde w, \tilde w_1, \tilde w_2   \in W(1)$ lifting $w, w_1,w_2$.

  \begin{remark}\label{c(s)}
 \cite[Prop.\ 4.13(6)]{MR3484112}. There is also a unique function $w\mapsto c_w: W^{\aff}(1)\to  \mathbb Z[Z_k]$ satisfying
$c_{w_1}c_{w_2}=c_{w_1w_2}$ if $\ell(w_1)+\ell(w_2)=\ell(w_1w_2)$,  
$c_{\tilde s}=c(\tilde s)$  for $\tilde s\in S^{\aff}(1)$, and $c_t = t$ for $t \in Z_k$.
\end{remark}

\begin{remark}\label{braid} Some properties of  $c^*(w, x) $ for $x,w\in W(1), x\leq w$, follow easily from the braid relations for $T^*_{w}$ and $T_{x}$:  
\begin{enumerate} 
\item  For $t\in Z_k$, we have $c^*(t w, x) = t c^*( w, x) $ and 
$c^*( w, x t)  x t  x^{-1} = c^*( w,t x) t=c^*( w, x)$ because $T^*_{t  w}=T_t T^*_{ w}$ and 
$c^*( w, x) T_{ x}=c^*( w, x t)T_{ x t}=c^*( w, x t)T_{ x t  x^{-1}}T_{ x}=c^*( w,t x)T_{t x }=c^*( w, t x)T_{t}T_{ x }$.

\item  For $ v\in \Omega(1)$ %
we have $c^*( w  v, x  v)=c^*( w, x)$ because  $T^*_{ w}T_{ v}=T^*_{ w  v}$ and $T_{ x}T_{ v}=T_{ x v}$. 
 \end{enumerate}\end{remark}

{\bf III}) The other bases   of $ \mathcal H_{\mathbb Z}$  are associated to spherical orientations   of $V_{\ad}$; they generalize the Bernstein basis of an affine Hecke algebra. The spherical orientations are in one-to-one correspondence  with the Weyl chambers of $V_{\ad}$ (cf.\ \cite[Def.\ 5.16]{MR3484112}). 
If $\mathfrak D_o$ is the Weyl chamber of a spherical orientation $o$  and $w\in W(1)=\cn/Z(1)$ an element  of image $w_0\in W_0=\cn/Z$, we denote by $o\cdot w$ the orientation of Weyl chamber $w_0^{-1}(\mathfrak D_o)$. In particular $o\cdot \lambda=o$ when $\lambda \in \Lambda(1)=Z/Z(1)$. There is a  basis $E_o(w)$ for $w\in W(1)$ of $ \mathcal H_{\mathbb Z}$  associated to each spherical orientation $o$ \cite[\S 5.3]{MR3484112}.

The main properties of the elements $E_o(w)$ are: \begin{itemize}
\item Multiplication formula $E_o(w_1)E_{o\cdot w_1}(w_2)=q_{w_1,w_2} E_o(w_1 w_2)$  for $w_1,w_2\in W(1)$.
\item Triangular Iwahori-Matsumoto expansion  \cite[Cor.\ 5.26]{MR3484112}
\begin{equation}\label{Et}E_o (\tilde w) =  \sum_{x\in W, x\leq w } h_o(x), \quad h_o(x)=c_o (\tilde w,\tilde x) T_{\tilde x},
\end{equation}
where $\tilde w, \tilde x\in W(1)$  lift    $w,x\in W $,  $c_o (\tilde w,\tilde x)\in  \mathbb Z[Z_k]$ ($ h_o(x) $ does not depend on the choice of  $\tilde x$ lifting $x$)  and $c_o (\tilde w,\tilde w)=1$.
 \item $E_o(\lambda)=\begin{cases} T_\lambda  &\ \text{if} \  \nu(\lambda)\in \mathfrak D_o\\
T^*_\lambda  &\ \text{if} \   \nu(\lambda)\in  - \mathfrak D_o
\end{cases}$ for $\lambda\in \Lambda(1)$.
\end{itemize}
 When $R$ is a ring of characteristic $p$ (in particular $R=C$),
 in $\mathcal H_{R} $ we have

$E_o(w_1)E_{o\cdot w_1}(w_2)=\begin{cases} E_o(w_1 w_2)  &\ \text{if}  \ \ell(w_1)+\ell(w_2)=\ell(w_1w_2), \\ 
0   &\ \text{otherwise}.
\end{cases}$

\begin{remark}\label{bernstein}
The integral Bernstein basis $(E(w)=E_{o^-}(w))_{w\in W(1)}$ is the basis associated to the spherical orientation $o^{-}$ corresponding to the antidominant Weyl chamber $\mathfrak D^-$ \eqref{D-}. %
\end{remark}

 For $x\in \cn$ of image $w\in W(1)$ we write also   $T(x)=T_w, T^*(x)=T_w^*, E_o(x )=E_o(w)$. 

\subsection{Representations of \texorpdfstring{$K$}{K} and Hecke modules}\label{sec:repr-of-K-hecke}
The  submodule $\mathcal H_{\mathbb Z}(K,I)$ of  functions with support in $K$ in the pro-$p$ Iwahori Hecke algebra  $\mathcal H_{\mathbb Z}$ is the submodule of  basis $T_{ w}$ for $w\in W_0(1)$; it  is a   subalgebra of $\mathcal H_{\mathbb Z}$  canonically isomorphic to the algebra of intertwiners $\End _K (\ind_I^K \mathbb Z)$. 

We may view  $\mathcal H_{\mathbb Z}(K,I)$ as the   convolution algebra $\mathcal H_{\mathbb Z}(G_k,U_{k,\op} )$ of functions $G_k\to  \mathbb Z$ which are constant on the double cosets modulo $U_{k,\op} $.
The irreducible representations $V$ of $G_k$ are in one-to-one correspondence with the characters 
      of $\mathcal H_C (G_k,U_{k,\op} )$  \cite[Cor.\ 7.5]{MR0396731}, \cite[Thm.\ 6.10]{MR2057756}. The representation $V$ corresponds to the character $\chi $  giving the action of $\mathcal H_C (G_k,U_{k,\op} )$ on the line $V^{U_{k,\op} }$. We consider $V$ as an irreducible  representation   of $K$ and $\chi $ as  a character of $\mathcal H_C (K,I)$ giving the action of $\mathcal H_C (K,I)$ on $V^I= V^{U_{k,\op} }$.

      A character $\chi$ of $\mathcal H_C (K,I )$ is determined by a $C$-character $\psi_{\chi}$ of $Z^0$ 
     such that  $\psi_\chi (t)= \chi (T(t))$ for $t\in Z^0$  and by the subset $\Delta(\chi)$ of $\Delta_{\psi_\chi }$ \eqref {deltapsi} defined by
     \begin{equation}\label{zero}\chi (T_{\tilde s_\alpha})=\begin{cases} -1\ & \ \text{if} \  \alpha \in  \Delta_{\psi_\chi  }\setminus \Delta(\chi) \\
      0\ & \ \text{if} \   \alpha \in  \Delta (\chi) \ \text{or} \ \alpha\not\in  \Delta_{\psi_\chi } 
      \end{cases},
  \end{equation} 
  where $\tilde s_\alpha$ is an admissible lift of $s_\alpha$ (Remark \ref{cs}).  The pair $(\psi_\chi, \Delta(\chi))$ is called the \emph{parameter of $\chi$}.    \begin{itemize}
   \item
 $V= V(U_k)\oplus V^{U_{k,\op}}$ where $V(U_k)$ is the kernel of the quotient map $V\onto V_{U_k}$  \cite[Thm.\ 6.12]{MR2057756}.
 In particular,  $Z_k$ acts on the lines $V^{U_{k,\op}}$ and $V_{U_k}$ by the same character $\psi_V$.
 \item  The stabilizer of $V^{U_{k,\op}}$ in $G_k$ is the parabolic subgroup  $P_{\Delta(\chi),k,\op}$  \cite[Prop.\ 6.6, Thm.\ 7.1]{MR0396731}.  
 \item The stabilizer of $V(U_k)$ in $G_k$ is the parabolic subgroup $P_{\Delta(V),k}$ (see \S \ref{GS}).
 \end{itemize}
    \begin{lemma}\label{repKmodH}    The parameter  $(\psi_V, \Delta (V))$ of $V$ and the parameter $(\psi_\chi, \Delta (\chi) )$ of $\chi$ satisfy $ \psi_V=\psi_\chi^{-1}, \ \Delta (V) =\Delta (\chi) $.
    \end{lemma}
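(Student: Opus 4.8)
The plan is to compare the two parametrizations coordinate by coordinate, using the three structural facts recalled just before the lemma: the direct sum decomposition \(V=V(U_k)\oplus V^{U_{k,\op}}\), the fact that \(Z_k\) acts on both lines \(V^{U_{k,\op}}\) and \(V_{U_k}\) by \(\psi_V\), and the stabilizer descriptions \(\mathrm{Stab}_{G_k}(V^{U_{k,\op}})=P_{\Delta(\chi),k,\op}\) and \(\mathrm{Stab}_{G_k}(V(U_k))=P_{\Delta(V),k}\).

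For the characters: by definition \(\psi_\chi(t)=\chi(T(t))\) for \(t\in Z^0\), where \(T(t)\) is the element of \(\mathcal H_C(K,I)\) attached to \(ItI=tI\); under the identification of \(\mathcal H_C(K,I)\) with \(\mathcal H_C(G_k,U_{k,\op})\) it is the characteristic function of \(\bar t\,U_{k,\op}\), with \(\bar t\in Z_k\) the image of \(t\). Unwinding the isomorphism \(\mathcal H_C(K,I)\simeq\End_K(\ind_I^K C)\) and the resulting right module structure on \(V^I=\Hom_K(\ind_I^K C,V)\simeq V^{U_{k,\op}}\) (the structure being by precomposition), a direct computation — as in \cite{MR0396731,MR2057756} — shows that \(T(t)\) acts on the line \(V^{U_{k,\op}}\) by the scalar \(\psi_V(t)^{-1}\); hence \(\psi_\chi=\psi_V^{-1}\).

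For the subsets: fix \(\alpha\in\Delta\). From the two stabilizer descriptions, together with the standard fact that a standard parabolic \(P_{J,k}\) (resp.\ its opposite \(P_{J,k,\op}\)) contains the rank-one Levi \(M_{\alpha,k}\) exactly when \(\alpha\in J\), we obtain the equivalences \(\alpha\in\Delta(\chi)\iff M_{\alpha,k}\text{ stabilizes the line }V^{U_{k,\op}}\) and \(\alpha\in\Delta(V)\iff M_{\alpha,k}\text{ stabilizes the hyperplane }V(U_k)\). Thus \(\Delta(V)=\Delta(\chi)\) is reduced to showing that \(M_{\alpha,k}\) stabilizes \(V^{U_{k,\op}}\) if and only if it stabilizes \(V(U_k)\). (This is also the content behind \eqref{zero}, where one checks that when \(M_{\alpha,k}\) stabilizes \(V^{U_{k,\op}}\) it acts there through a character, so that \(T_{\tilde s_\alpha}\) acts by a sum of \(q(s_\alpha)\equiv 0\) equal terms, forcing \(\chi(T_{\tilde s_\alpha})=0\).)

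The main obstacle is precisely this last equivalence: it fails for an arbitrary complementary pair of subspaces, so it must use the representation theory of \(G_k\). The route I would take is to pass to the contragredient \(V^*\). Using only that \(V\), and hence \(V^*\), has one-dimensional \(U_k\)- and \(U_{k,\op}\)-invariants and coinvariants, one identifies the annihilators \(V(U_k)^\perp=(V^*)^{U_k}\) (the \(B_k\)-highest weight line of \(V^*\)) and \((V^{U_{k,\op}})^\perp=(V^*)(U_{k,\op})=\ker\bigl(V^*\to (V^*)_{U_{k,\op}}\bigr)\); therefore \(M_{\alpha,k}\) stabilizes \(V(U_k)\) (resp.\ \(V^{U_{k,\op}}\)) if and only if it stabilizes \((V^*)^{U_k}\) (resp.\ \((V^*)(U_{k,\op})\)) inside \(V^*\). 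One then invokes the classical description of irreducible representations of \(G_k\) (\cite{MR0396731,MR2057756}, the same references underlying the two stabilizer statements above): for the irreducible \(G_k\)-representation \(V^*\), the stabilizer of its highest weight line and the stabilizer of the kernel of its \(U_{k,\op}\)-coinvariant map are opposite parabolic subgroups with the same Levi. Reading off the simple roots yields the desired equivalence, hence \(\Delta(V)=\Delta(\chi)\), which together with the first part completes the proof.
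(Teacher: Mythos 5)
Your proof is correct, and in substance it follows the paper's own argument. The character computation is the same: the paper writes $fh=\sum_{x\in I\backslash K}h(x)x^{-1}f$ for $f\in V^I$, deduces $fT(t^{-1})=tf$, and concludes $\psi_\chi=\psi_V^{-1}$, which is exactly your computation that $T(t)$ acts on $V^{U_{k,\op}}$ by $\psi_V(t)^{-1}$. For the equality of subsets the paper is slightly more direct: since $U_{k,\op}=w_\Delta U_k w_\Delta^{-1}$, the stabilizer $P_{\Delta(\chi),k,\op}$ of $V^{U_{k,\op}}$ is the $w_\Delta$-conjugate of the stabilizer of $V^{U_k}$, which is $P_{-w_\Delta(\Delta(V)),k}$ by \cite[III.9 Remark 1]{MR3600042}, and conjugating back gives $P_{\Delta(V),k,\op}$. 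You instead pass to the contragredient and invoke that for an irreducible $G_k$-representation the stabilizer of the highest weight line and the stabilizer of $\ker(V^*\to (V^*)_{U_{k,\op}})$ are opposite parabolics with the same Levi; this is a correct statement, but it is itself proved by the same $w_\Delta$-conjugation plus the same cited relation between $\mathrm{Stab}(V^{U_k})$ and $\Delta(V)$, so your route is an equivalent repackaging (via duality) of the paper's one-line conjugation argument rather than a genuinely independent proof of the needed classification input.
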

  \begin{proof}  
We have $fT(t^{-1})=t f$ for $t\in Z_k$ hence $\psi_{\chi}=\psi _V^{-1}$, because
\begin{equation*}fh = \sum_{x\in I \backslash K} h(x) x^{-1} f \quad \text{for} \ h\in \mathcal H_C (K,I), \ f\in V^{I}.
\end{equation*}
  Let $w_{\Delta}$ be the longest element of $W_0$. The group $U_{k,\op}$ is conjugate to $U_k$ by $w_{\Delta}$, 
  the stabilizer $P_{\Delta(\chi),k,\op}$ of $V^{U_{k,\op}}$ is the conjugate by $w_{\Delta}$  of the stabilizer of the line $V^{U_k}$, which  is $P_{-w_{\Delta}(\Delta(V)),k}$ \cite[III.9 Remark 1]{MR3600042}. Hence $\Delta (V) =\Delta (\chi)$. 
   \end{proof}

 \subsection{The elements \texorpdfstring{$c_w^x\in \mathbb Z[Z_k]$}{c\_w\^{}x in Z[Z\_k]}}\label{sec:elements-cwx}

Our motivation is to explicitly  compute  the expansion of  $T^*_{ w}$ in the Iwahori-Matsumoto basis  in  $\mathcal H_{\mathbb Z}$  modulo $q$ (Theorem \ref{*}). We  associate to the function $c :\mathfrak S(1) \to \mathbb Z[Z_k]$ defining the quadratic relation  of $T_s$ for $s\in S^{\aff}(1)$, 
 elements $$c_w^x\in\mathbb Z[Z_k] \quad \text{ for } x,w\in W(1), \ x\leq w,$$
 and we study their properties.

\begin{notation}\label{notationc} The  action of $W(1)$ by conjugation on $Z_k$   factors through $W$ and we write $w\cdot c=\tilde wc \tilde w^{-1}$ for $c\in \mathbb Z[Z_k]$ and  $\tilde w\in W(1) $ lifting $w\in W $. We write also $  w_1\cdot w_2= w_1 w_2 w_1^{-1}$ for $ w_1 ,  w_2$ in $W(1)$ (or $ w_1 ,  w_2$ in $W$).

For a sequence $\underline {\tilde w}=(\tilde s_1, \ldots, \tilde s_n)$ in $S^{\aff}(1)$ lifting a sequence $\underline w= (s_1, \ldots, s_n)$ in $S^{\aff}$, write
$\tilde w :=\tilde s_1 \cdots  \tilde s_n, w:=s_1\cdots s_n$  %
for the  products of the terms of the  sequences. %
We take $1$  for  the   ``product of the terms'' of  the empty sequence $( \ )$. 
The   lifts   of the sequence $\underline  w$ in $S^{\aff}$ are the sequences $(t_1\tilde s_1, \dots, t_n\tilde s_n)$ in $S^{\aff}(1)$, where $t_i \in Z_k$.
 \end{notation}

\begin{definition} \label{c_^} Let   $\underline {\tilde w}=(\tilde s_1, \ldots, \tilde s_n)$ be  a sequence in  $S ^{\aff} (1)$ and $\underline {\tilde x}=(\tilde s_{i_1} , \ldots,  \tilde s_{i_r}) $ with $1 \le i_1 < \cdots < i_r \le n$
a subsequence of $\underline {\tilde w}$.  We  define
$c_{\underline {\tilde w}}^{\underline{ \tilde x}} $ as the product  of the following elements of $\mathbb Z[Z_k]$:

$c(\tilde s_1)\cdots  c(\tilde s_{i_1-1})$

$ s_{i_1}\cdot (c(\tilde s_{i_1+1})\cdots  c(\tilde s_{i_2-1}))$

$ s_{i_1} s_{i_2}\cdot (c(\tilde s_{i_2+1})\cdots  c(\tilde s_{i_3-1}))
$

$\cdots$

$ s_{i_1} \cdots s_{i_r}\cdot (c(\tilde s_{i_{r}+1})\cdots c(\tilde s_{i_n})).$
\end{definition}

\begin{remark}
  Strictly speaking, for the subsequence $\underline {\tilde x}$ we need to remember the sequence of integers $i_1 < \cdots < i_r$.
\end{remark}

\begin{example}\label{exc}
We have 
 $c_{\underline {\tilde w}}^{\underline {\tilde w}} =1$.
 
 When  $\tilde w=\tilde s_1 \cdots  \tilde s_n$ is a reduced decomposition, we have 
 $c_{\underline {\tilde w}}^{( \ )} =c_{ {\tilde w}} $ (Remark \ref{c(s)}).
\end{example}

Take $1\leq m \leq n$ and cut the sequences $\underline {\tilde w}$ and  $\underline {\tilde x}$ in two:
 $\underline {\tilde w}=  \underline {\tilde w}_1   \underline {\tilde w}_2 $  and  $\underline {\tilde x}=  \underline {\tilde x}_1   \underline {\tilde x}_2 $ with $ \underline {\tilde w}_1 =(\tilde s_1, \ldots, \tilde s_m),  \underline {\tilde w}_2 =(\tilde s_{m+1}, \ldots, \tilde s_n)$, $ \underline {\tilde x}_1 =(\tilde s_{i_1} , \ldots,  \tilde s_{i_t}),  \underline {\tilde x}_2 =(\tilde s_{i_{t+1}} , \ldots,  \tilde s_{i_r})$ where $i_t\leq m <i_{t+1}$.   The sequence decompositions $\underline {\tilde w}=  \underline {\tilde w}_1   \underline {\tilde w}_2 $  and  $\underline {\tilde x}=  \underline {\tilde x}_1   \underline {\tilde x}_2 $  are called \emph{compatible}.  For $i=1,2,$
the sequence  $ \underline {\tilde x}_i $ is a subsequence of  $ \underline {\tilde w}_i $ and we have   $c_{ \underline {\tilde w}_i }^{\underline{ \tilde x}_i} $. The  terms in the product defining  $c_{ \underline {\tilde w}_1 }^{ \underline {\tilde x}_1 } $
or $x_1\cdot c_{ \underline {\tilde w}_2 }^{ \underline {\tilde x}_2 } $ appear in  the product defining $c_{\underline {\tilde w}}^{\underline{ \tilde x}} $ except
the last term $x_1\cdot (c(\tilde s_{i_{t}+1})\cdots c(\tilde s_m))$ of  $c_{ \underline {\tilde w}_1 }^{ \underline {\tilde x}_1 } $ 
and the first term
$x_1\cdot (c(\tilde s_{m+1})\cdots     c(\tilde s_{i_{t+1}-1}))$  of $x_1\cdot c_{ \underline {\tilde w}_2 }^{ \underline {\tilde x}_2 } $;  their  product  $x_1\cdot (c(\tilde s_{i_{t}+1})\cdots   c(\tilde s_{i_{t+1}-1}))$ appears in $c_{\underline {\tilde w}}^{\underline{ \tilde x}} $.  Then, we get a one-to-one correspondence with the terms appearing in  the product defining $c_{\underline {\tilde w}}^{\underline{ \tilde x}} $:
 \begin{equation}\label{cwx}
 c_{\underline {\tilde w}}^{\underline{ \tilde x}} = c_{ \underline {\tilde w}_1 }^{ \underline {\tilde x}_1 } \, (x_1\cdot c_{ \underline {\tilde w}_2 }^{ \underline {\tilde x}_2 }). 
 \end{equation}
   This  useful formula  allows us to study $c_{\underline {\tilde w}}^{\underline{ \tilde x}}$ by  induction on the length $n$ of $\underline {\tilde w}$.
  
\begin{example}\label{induction}  When $ \underline {\tilde x}_2 = \underline {\tilde w}_2 $ we have 
  $c_{\underline {\tilde w}}^{\underline{ \tilde x}} = c_{ \underline {\tilde w}_1 }^{ \underline {\tilde x}_1 }$.  
  
    When $ m=n-1$ and $i_r<n$, we have  
 $c_{\underline {\tilde w}}^{\underline{ \tilde x}} = 
c_{ \underline {\tilde w}_1 }^{\underline{ \tilde x}} (x\cdot c(\tilde s_n))
   $.
   \end{example}
By iteration of \eqref{cwx} we deduce:

\begin{lemma}\label{inductions} Let   $\underline {\tilde w} $   and $\underline {\tilde x}$ be two sequences in $ S^{\aff}(1)$   such that  $\underline {\tilde x}$ is a subsequence of $\underline {\tilde w} $ and  consider compatible sequences decompositions  $\underline {\tilde w}= \underline {\tilde w}_1\cdots  \underline {\tilde w}_k $ and   $\underline {\tilde x}= \underline {\tilde x}_1\cdots   \underline {\tilde x}_k $. Then    $$c_{\underline {\tilde w}}^{\underline{ \tilde x}}   \ = \ c_{ \underline {\tilde w}_1 }^{ \underline {\tilde x}_1 } \ 
( x_1\cdot c_{ \underline {\tilde w}_2 }^{ \underline {\tilde x}_2 }) \ 
 (x_1x_2\cdot c_{ \underline {\tilde w}_3 }^{ \underline {\tilde x}_3 }) \ 
 \cdots \ 
(x_1\cdots x_{k-1} \cdot c_{ \underline {\tilde w}_k }^{ \underline {\tilde x}_k }) .$$
\end{lemma}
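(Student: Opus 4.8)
The plan is to prove the identity by induction on the number $k$ of blocks, using formula \eqref{cwx} as the case $k=2$; the case $k=1$ is the tautology $c_{\underline{\tilde w}}^{\underline{\tilde x}} = c_{\underline{\tilde w}_1}^{\underline{\tilde x}_1}$. Before starting the induction, I would record the two elementary facts about the conjugation action that make the bookkeeping work: since $w\cdot c = \tilde w c \tilde w^{-1}$ for any lift $\tilde w$ of $w$ (Notation \ref{notationc}), each map $c\mapsto w\cdot c$ is a ring automorphism of $\mathbb Z[Z_k]$, and $(w_1 w_2)\cdot c = w_1\cdot(w_2\cdot c)$; moreover the action factors through $W$. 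In particular $x_1\cdot(-)$ distributes over products in $\mathbb Z[Z_k]$ and satisfies $x_1\cdot(x_2\cdots x_j\cdot c) = (x_1 x_2\cdots x_j)\cdot c$.

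For the inductive step I would first regroup the given compatible decompositions into the coarser pair of compatible decompositions $\underline{\tilde w} = \underline{\tilde w}_1\cdot(\underline{\tilde w}_2\cdots\underline{\tilde w}_k)$ and $\underline{\tilde x} = \underline{\tilde x}_1\cdot(\underline{\tilde x}_2\cdots\underline{\tilde x}_k)$; these remain \emph{compatible}, since any coarsening of a compatible pair of decompositions is again compatible (the part of $\underline{\tilde x}$ lying in the block $\underline{\tilde w}_2\cdots\underline{\tilde w}_k$ is exactly the concatenation $\underline{\tilde x}_2\cdots\underline{\tilde x}_k$). Applying \eqref{cwx} to this two-block decomposition yields
\[
c_{\underline{\tilde w}}^{\underline{\tilde x}} = c_{\underline{\tilde w}_1}^{\underline{\tilde x}_1}\,\bigl(x_1\cdot c_{\underline{\tilde w}_2\cdots\underline{\tilde w}_k}^{\underline{\tilde x}_2\cdots\underline{\tilde x}_k}\bigr),
\]
and the induction hypothesis, applied to the $(k-1)$-block compatible decompositions $\underline{\tilde w}_2\cdots\underline{\tilde w}_k$ and $\underline{\tilde x}_2\cdots\underline{\tilde x}_k$ (whose running products are $x_2,\, x_2 x_3,\, \dots,\, x_2\cdots x_{k-1}$), gives
\[
c_{\underline{\tilde w}_2\cdots\underline{\tilde w}_k}^{\underline{\tilde x}_2\cdots\underline{\tilde x}_k} = c_{\underline{\tilde w}_2}^{\underline{\tilde x}_2}\,(x_2\cdot c_{\underline{\tilde w}_3}^{\underline{\tilde x}_3})\cdots(x_2\cdots x_{k-1}\cdot c_{\underline{\tilde w}_k}^{\underline{\tilde x}_k}).
\]
Substituting this into the previous display and pushing $x_1\cdot(-)$ through the product, using that it is a ring homomorphism with $x_1\cdot(x_2\cdots x_j\cdot c) = (x_1 x_2\cdots x_j)\cdot c$, turns each running product $x_2\cdots x_j$ into $x_1 x_2\cdots x_j$, which is exactly the asserted formula.

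I do not anticipate any real obstacle: the entire content already sits in \eqref{cwx}, and the only point requiring a little care is the associativity of the conjugation action as a left action together with its compatibility with multiplication in $\mathbb Z[Z_k]$ — this is precisely what guarantees that iterating \eqref{cwx} produces the running products $x_1\cdots x_{j-1}$ rather than the individual factors $x_i$ or the same factors in a different arrangement.
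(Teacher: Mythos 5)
Your proof is correct and is exactly the paper's argument: the paper derives this lemma simply "by iteration of \eqref{cwx}", and your induction on $k$ (coarsening to a two-block compatible decomposition, applying \eqref{cwx}, then pushing $x_1\cdot(-)$ through the product as a ring automorphism compatible with the left action) is precisely that iteration written out in full.
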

The function $c:S^{\aff}(1)\to \mathbb Z[Z_k]$ satisfies: \begin{lemma}\label{s.s} For $\tilde s\in  S^{\aff}(1)$ lifting $s\in S^{\aff}$ and $c\in \mathbb Z[Z_k]$, we have 
 $s\cdot c(\tilde s)=c(\tilde s)$ and $c(\tilde s)\, c=c(\tilde s)\, (s\cdot c).$ 
 \end{lemma}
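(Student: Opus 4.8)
The plan is to deduce both identities formally, using only the two properties of $c$ recorded above: its invariance under the conjugation action of $W(1)$ (on $\mathfrak S(1)$ and on $Z_k$), and the relation $c(\tilde s t)=c(t\tilde s)=t\,c(\tilde s)$ for $t\in Z_k$. Throughout I use that the conjugation action of $W(1)$ on $Z_k$ factors through $W$, so that $\mathbb Z[Z_k]$ is commutative and an element of $Z_k$, having trivial image in $W$, acts trivially by conjugation. The first identity is then immediate: applying the $W(1)$-invariance of $c$ to $\tilde s\in\mathfrak S(1)$ conjugated by $\tilde s$ itself, and noting $\tilde s\tilde s\tilde s^{-1}=\tilde s$, gives $c(\tilde s)=\tilde s\cdot c(\tilde s)=s\cdot c(\tilde s)$, the last equality because the action factors through $W$ and $\tilde s$ lifts $s$.

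For the second identity I would conjugate $\tilde s$ instead by a torus element $t\in Z_k$. The key computation is $t\tilde s t^{-1}=\bigl(t\,(s\cdot t)^{-1}\bigr)\tilde s$: indeed $t\tilde s t^{-1}\tilde s^{-1}=t\,(\tilde s t^{-1}\tilde s^{-1})=t\,(s\cdot t)^{-1}$ has trivial image in $W$, hence lies in $Z_k$. Since $t$ fixes $V_{\ad}$ pointwise, $t\tilde s t^{-1}$ is again a reflection in a wall of $\mathfrak C^-$ and lies in $\mathfrak S(1)$, so $W(1)$-invariance applies: $c(t\tilde s t^{-1})=t\cdot c(\tilde s)=c(\tilde s)$ (the last step because $t$ has trivial image in $W$). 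On the other hand, writing $t\tilde s t^{-1}=\bigl(t\,(s\cdot t)^{-1}\bigr)\tilde s$ and using $c(t'w)=t'\,c(w)$ with $t'=t\,(s\cdot t)^{-1}\in Z_k$ and $w=\tilde s$, we get $c(t\tilde s t^{-1})=\bigl(t\,(s\cdot t)^{-1}\bigr)c(\tilde s)$. Comparing the two expressions yields $\bigl(t\,(s\cdot t)^{-1}\bigr)c(\tilde s)=c(\tilde s)$ in $\mathbb Z[Z_k]$; multiplying on the right by $s\cdot t$ and using commutativity of $\mathbb Z[Z_k]$ this becomes $c(\tilde s)\,t=c(\tilde s)\,(s\cdot t)$, and the statement $c(\tilde s)\,c=c(\tilde s)\,(s\cdot c)$ follows by linearity in $t$.

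The only point requiring care is the bookkeeping between $W(1)$ and $W$: one must check that $t\tilde s t^{-1}$ remains in $\mathfrak S(1)$ (so that the $W(1)$-invariance of $c$ may be invoked on it) and that conjugation by $t\in Z_k$ is trivial on $\mathbb Z[Z_k]$; both are immediate since $Z_k$ acts trivially on $V_{\ad}$ and the conjugation action of $W(1)$ on $Z_k$ factors through $W$. As an alternative derivation of the second identity, one can start from the quadratic relation $T_{\tilde s}^2=q(s)\,T_{\tilde s^2}+c(\tilde s)\,T_{\tilde s}$: the braid relations give $c\,T_{\tilde s}=T_{\tilde s}\,(s\cdot c)$ and hence $c\,T_{\tilde s}^2=T_{\tilde s}^2\,c$; substituting the quadratic relation and comparing the parts supported on double cosets $IuI$ with $u$ mapping to $1$, respectively to $s$, in $W$, then cancelling the non-zero-divisor $T_{\tilde s}$ (here $T_{\tilde s}T^*_{\tilde s^{-1}}=q(s)$), one obtains $c\,c(\tilde s)=c(\tilde s)\,(s\cdot c)$, which equals $c(\tilde s)\,c$ by commutativity of $\mathbb Z[Z_k]$.
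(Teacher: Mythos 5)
Your proof is correct, and it takes a genuinely different route from the paper's. The paper reduces to the case of an \emph{admissible} lift $\tilde s$, where it invokes \cite[Prop.\ 4.4]{MR3484112}, and then propagates both identities to an arbitrary lift $\tilde s t$ via $c(\tilde s t)=c(\tilde s)t$. You instead derive everything formally from the two axioms recorded in \S\ref{B3}(II): the $W(1)$-equivariance of $c$ for the conjugation actions on $\mathfrak S(1)$ and on $\mathbb Z[Z_k]$, and the relation $c(tw)=t\,c(w)$. Conjugating $\tilde s$ by itself gives $s\cdot c(\tilde s)=c(\tilde s)$ at once, and conjugating by $t\in Z_k$ --- computing $t\tilde s t^{-1}=\bigl(t\,(s\cdot t)^{-1}\bigr)\tilde s$ and evaluating $c$ on it in two ways --- gives the second identity. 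The bookkeeping you flag is handled correctly: $\mathfrak S(1)$ is by definition the full preimage of $\mathfrak S$ in $W(1)$, so $t\tilde s t^{-1}\in\mathfrak S(1)$, and Notation~\ref{notationc} guarantees that $Z_k$ acts trivially on $\mathbb Z[Z_k]$ by conjugation (which also yields the commutativity you use). What your approach buys is a self-contained proof that needs neither the notion of admissible lift nor the external reference; what the paper's approach buys is brevity, since the hard content is simply delegated to the pro-$p$ Iwahori Hecke algebra literature. Your alternative argument via the quadratic relation is also sound (for the second identity only), though cancelling $T_{\tilde s}$ is most cleanly justified by observing that the $T_{t\tilde s}$, $t\in Z_k$, are distinct basis elements of $\mathcal H_{\mathbb Z}$, rather than by invoking $T_{\tilde s}T^*_{\tilde s^{-1}}=q(s)$.
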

\begin{proof} The equalities $c(\tilde s)\, t=c(\tilde s)\, (s\cdot t)$ for $t\in Z_k$ and  $c(\tilde s)\, c=c(\tilde s)\, (s\cdot c)$  for $c\in \mathbb Z[Z_k]$ are equivalent. 
Suppose that  $\tilde s$ is an admissible lift of $s$ (Remark \ref{cs}). Then, the  lemma  is proved in  \cite[Prop.\ 4.4]{MR3484112}. The other lifts of $s$ are $\tilde s t$ for   $t\in Z_k$ and 
$s\cdot c(\tilde s t)=s\cdot (c(\tilde s )t)= (s\cdot c(\tilde s)) \, (s\cdot t)=c(\tilde s) t =c(\tilde s t)$.  For $t,t'\in Z_k$, we have  $c(\tilde s t)\, t'=c(\tilde s) tt' = c(\tilde s) \, (s\cdot tt')=c(\tilde s)t \, (s\cdot t') = c(\tilde s t) \, (s\cdot t') $.
\end{proof}

\begin{lemma}\label{xw} Let   $\underline {\tilde w} $   and $\underline {\tilde x}$ be two sequences in $ S^{\aff}(1)$   such that  $\underline {\tilde x}$ is a subsequence of $\underline {\tilde w} $  and let $c\in \mathbb Z[Z_k]$. Then,  
$c_{\underline {\tilde w}}^{\underline{ \tilde x}} \, (x\cdot c) = c_{\underline {\tilde w}}^{\underline{ \tilde x}} \, (w\cdot c)$.
\end{lemma}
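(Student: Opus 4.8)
The plan is to induct on the length $n$ of $\underline{\tilde w}=(\tilde s_1,\ldots,\tilde s_n)$, peeling off the last entry $\tilde s_n$ and distinguishing whether or not $\tilde s_n$ occurs in the subsequence $\underline{\tilde x}$; throughout, $w$ and $x$ denote the products of $\underline{\tilde w}$ and $\underline{\tilde x}$, and the induction hypothesis is understood to hold for all subsequences and all elements of $\mathbb Z[Z_k]$. The case $n=0$ is trivial, since then $\underline{\tilde w}=\underline{\tilde x}=(\ )$, $w=x=1$ and $c_{\underline{\tilde w}}^{\underline{\tilde x}}=1$. For the inductive step set $\underline{\tilde w}'=(\tilde s_1,\ldots,\tilde s_{n-1})$ with product $w'$, so $w=w's_n$, and recall from Example~\ref{exc} that $c_{(\tilde s_n)}^{(\tilde s_n)}=1$.

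If $\tilde s_n$ does not occur in $\underline{\tilde x}$, then $\underline{\tilde x}$ is a subsequence of $\underline{\tilde w}'$ with the same product $x$, and \eqref{cwx} (cf.\ Example~\ref{induction}) gives $c_{\underline{\tilde w}}^{\underline{\tilde x}}=c_{\underline{\tilde w}'}^{\underline{\tilde x}}\,(x\cdot c(\tilde s_n))$. I would then evaluate $c_{\underline{\tilde w}}^{\underline{\tilde x}}\,(x\cdot c)$ in four moves: first absorb $x\cdot c$ into the trailing factor as $(x\cdot c(\tilde s_n))(x\cdot c)=x\cdot(c(\tilde s_n)c)$, since conjugation is a ring map on $\mathbb Z[Z_k]$; second apply the induction hypothesis to $(\underline{\tilde w}',\underline{\tilde x})$ with $c(\tilde s_n)c$ in place of $c$, turning $x\cdot$ into $w'\cdot$; third use Lemma~\ref{s.s} in the form $c(\tilde s_n)c=c(\tilde s_n)(s_n\cdot c)$ to rewrite $w'\cdot(c(\tilde s_n)c)=(w'\cdot c(\tilde s_n))(w\cdot c)$; fourth apply the induction hypothesis to $(\underline{\tilde w}',\underline{\tilde x})$ once more, backwards, to replace $w'\cdot c(\tilde s_n)$ by $x\cdot c(\tilde s_n)$ and so recover $c_{\underline{\tilde w}}^{\underline{\tilde x}}(w\cdot c)$.

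If $\tilde s_n$ occurs in $\underline{\tilde x}$, it must be its last entry; removing it leaves a subsequence $\underline{\tilde x}'$ of $\underline{\tilde w}'$ with product $x'$ and $x=x's_n$. Then \eqref{cwx} with $m=n-1$, together with $c_{(\tilde s_n)}^{(\tilde s_n)}=1$, gives $c_{\underline{\tilde w}}^{\underline{\tilde x}}=c_{\underline{\tilde w}'}^{\underline{\tilde x}'}$; since $x\cdot c=x'\cdot(s_n\cdot c)$ and $w\cdot c=w'\cdot(s_n\cdot c)$, the induction hypothesis for $(\underline{\tilde w}',\underline{\tilde x}')$ applied to $s_n\cdot c$ finishes this case immediately.

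The only slightly delicate point is the first case: one cannot simply replace $x$ by $w$ in the conjugation because $x\neq w'$ in general, and the remedy is exactly the ``sandwich'' of two applications of the induction hypothesis around a single application of Lemma~\ref{s.s}. Everything else is bookkeeping with the defining product of $c_{\underline{\tilde w}}^{\underline{\tilde x}}$ and with the $W(1)$-action on $\mathbb Z[Z_k]$.
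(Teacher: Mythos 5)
Your proof is correct and follows essentially the same route as the paper's: induct on the length of $\underline{\tilde w}$, split according to whether the last index $n$ is selected in $\underline{\tilde x}$ (the paper's cases $i_r=n$ and $i_r\neq n$), and in the latter case combine Example~\ref{induction}, Lemma~\ref{s.s} and the induction hypothesis. Your explicit ``sandwich'' of two applications of the induction hypothesis around Lemma~\ref{s.s} is exactly the justification the paper leaves implicit in its step $c_{\underline{\tilde w}_1}^{\underline{\tilde x}}(x\cdot c(\tilde s_n))(xs_n\cdot c)=c_{\underline{\tilde w}_1}^{\underline{\tilde x}}(x\cdot c(\tilde s_n))(w_1s_n\cdot c)$.
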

\begin{proof} We cut  the sequences $ \underline{\tilde w}$ and  $ \underline{\tilde x}$ in two (as above with $m=n-1$). Let   $ \underline{\tilde w}_1=(\tilde s_1, \ldots, \tilde s_{n-1}), \underline{\tilde w}_2=(\tilde s_n)$. 

When $i_r=n$, applying  Example \ref{induction}  we have
$c_{\underline {\tilde w}}^{\underline{ \tilde x}}= c_{ \underline {\tilde w}_1 }^{ \underline {\tilde x}_1 }  $ where    $ \underline {\tilde x}_1 =(\tilde s_{i_1}, \ldots, \tilde s_{i_{r-1}})$. By induction on $n$, $c_{ \underline {\tilde w}_1 }^{ \underline {\tilde x}_1 } 
\, (x_1\cdot c) =c_{ \underline {\tilde w}_1 }^{ \underline {\tilde x}_1 }  \, (w_1\cdot c)$. Hence   $c_{\underline {\tilde w}}^{\underline{ \tilde x}} \, (x\cdot c) = c_{ \underline {\tilde w}_1 }^{ \underline {\tilde x}_1 } 
\, (x_1s_n\cdot c) =c_{ \underline {\tilde w}_1 }^{ \underline {\tilde x}_1 }  \, (w_1s_n\cdot c) =  c_{\underline {\tilde w}}^{\underline{ \tilde x}} \, (w\cdot c)$.

 When $i_r\neq n$, applying Example \ref{induction} (twice),  Lemma~\ref{s.s}, as well as induction on $n$  we have 
    $c_{\underline {\tilde w}}^{\underline{ \tilde x}} \, (x\cdot c) = 
c_{ \underline {\tilde w}_1 }^{\underline{ \tilde x}} (x\cdot c(\tilde s_n)c)  =    c_{ \underline {\tilde w}_1 }^{\underline{ \tilde x}} (x\cdot c(\tilde s_n)(s_n \cdot c))
= c_{ \underline {\tilde w}_1 }^{\underline{ \tilde x}} (x\cdot c(\tilde s_n))(x s_n \cdot c) = c_{ \underline {\tilde w}_1 }^{\underline{ \tilde x}} 
(x\cdot c(\tilde s_n))(w_1 s_n \cdot c) = c_{\underline {\tilde w}}^{\underline{ \tilde x}}  \, (w\cdot c)$.
\end{proof}

\begin{proposition} \label{Waf1} Let   $\underline {\tilde w} $ be a   sequence  in $S^{\aff}(1)$ and   $\underline {\tilde x}$ a subsequence of $\underline {\tilde w} $ such that  $\tilde w=\tilde s_1 \cdots  \tilde s_n$ and $\tilde x= \tilde s_{i_1}  \cdots \tilde s_{i_r}$  are reduced decompositions \(i.e.\ $n=\ell(w), r=\ell(x)$\), and $t, u\in Z_k$. Then the product $t u^{-1}c_{\underline {\tilde w}}^{\underline{ \tilde x}}$ depends only on $t \tilde w,u \tilde x \in W(1)$.
\end{proposition}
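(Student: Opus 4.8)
The plan is to show that the element $h^*_x := c^*(\tilde w,\tilde x)\,T_{\tilde x}$ associated to a reduced subsequence is well-defined, i.e.\ independent of how one writes it, by relating $c_{\underline{\tilde w}}^{\underline{\tilde x}}$ to a genuine structure constant of $\mathcal H_{\mathbb Z}$. Concretely, I would prove the following two assertions, from which the proposition follows: first, that when $\underline{\tilde w}$ is a reduced word, $\sum_{\underline{\tilde x}} c_{\underline{\tilde w}}^{\underline{\tilde x}}\,T_{\tilde x}$, summed over all subsequences $\underline{\tilde x}$ of $\underline{\tilde w}$ (with multiplicity in the sense of the Remark after Definition~\ref{c_^}, i.e.\ over choices of index sets $i_1<\dots<i_r$), equals the product $T^*_{\tilde s_1}\cdots T^*_{\tilde s_n} = T^*_{\tilde w}$; and second, that for a \emph{fixed} target $x\le w$ with $\ell(x) = r$, the sub-sum $\sum c_{\underline{\tilde w}}^{\underline{\tilde x}}$ over subsequences $\underline{\tilde x}$ whose product (in $W(1)$, after the prescribed scaling by $t,u^{-1}$) equals a fixed element of $W(1)$ is exactly what multiplies $T_{\tilde x}$ in the Iwahori–Matsumoto expansion \eqref{T*}. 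Since that expansion is intrinsic to $\mathcal H_{\mathbb Z}$ and the single coefficient $c^*(\tilde w,\tilde x)\in\mathbb Z[Z_k]$ depends only on $\tilde w,\tilde x\in W(1)$ (not on the reduced word), the claimed dependence follows.

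The engine is the quadratic/braid relation unwound via the multiplication formula: expanding $T^*_{\tilde s_1}\cdots T^*_{\tilde s_n}$ one step at a time, using $T^*_{\tilde s} = T_{\tilde s} - c(\tilde s)$ together with $T_{\tilde s}T_{\tilde w} = T_{\tilde s\tilde w}$ when $\ell(\tilde s\tilde w) = \ell(\tilde w)+1$ and $T_{\tilde s}T_{\tilde w} = q(s)T_{\tilde s^2\tilde w} + c(\tilde s)T_{\tilde s\tilde w}$ otherwise, one sees by induction on $n$ that each monomial produced is indexed by a choice, at each position $i$, of either ``keep'' (contributing a reduced step $\tilde s_i$) or ``drop'' (contributing a conjugated copy of $c(\tilde s_i)$); the precise bookkeeping of which conjugate of $c(\tilde s_i)$ appears is exactly Definition~\ref{c_^}, and the inductive step is precisely the cut-and-paste identity \eqref{cwx}. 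Because $\underline{\tilde w}$ is reduced, only the ``$\ell$ increases'' branch of the quadratic relation ever fires on a kept letter (no $q(s)$ terms and no $\tilde s^2$ terms appear), so the resulting word $\tilde s_{i_1}\cdots\tilde s_{i_r}$ is automatically reduced and the monomial is $c_{\underline{\tilde w}}^{\underline{\tilde x}}T_{\tilde s_{i_1}\cdots\tilde s_{i_r}}$. Grouping these monomials by the value of $\tilde s_{i_1}\cdots\tilde s_{i_r}\in W(1)$ and comparing with \eqref{T*} identifies $\sum_{\underline{\tilde x}} c_{\underline{\tilde w}}^{\underline{\tilde x}}$ (over the relevant index sets) with $c^*(\tilde w,\tilde x)$.

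For the scaling by $t,u\in Z_k$: using $T^*_{t\tilde w} = T_t T^*_{\tilde w}$ (Remark~\ref{braid}(i)) and $T_{\tilde x}T_u = T_{\tilde x u}$ together with Remark~\ref{braid}(i) again, the passage from $\tilde x$ to $u\tilde x$ and from $\tilde w$ to $t\tilde w$ multiplies the coefficient by $t$ on the left and conjugates/moves $u$ appropriately through $T_{\tilde x}$, which is precisely the operation $c^*(\tilde w,\tilde x)\mapsto t\,c^*(\tilde w,\tilde x)$ after accounting for $x u x^{-1}$; this matches $t u^{-1}c_{\underline{\tilde w}}^{\underline{\tilde x}}$ in the statement once one tracks that $\underline{\tilde x}$ is a subsequence of $\underline{\tilde w}$ and the prescribed lifts are $(t_i\tilde s_i)$. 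The main obstacle I anticipate is the purely combinatorial one of showing that no nonreduced words are created during the expansion and that the conjugation factors assemble exactly into Definition~\ref{c_^}: this requires care because dropping a letter $\tilde s_i$ shifts all the conjugation prefixes of later kept letters, and one must check the induction \eqref{cwx} closes correctly when the cut falls between a kept and a dropped letter versus between two dropped letters. Once \eqref{cwx} (equivalently Lemma~\ref{inductions}) is in hand, the bookkeeping is routine, and the identification with \eqref{T*} gives the well-definedness, hence the proposition.
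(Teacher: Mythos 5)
Your strategy has two genuine gaps, either of which is fatal on its own.

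First, the engine you propose --- that expanding $T^*_{\tilde s_1}\cdots T^*_{\tilde s_n}=\prod_i(T_{\tilde s_i}-c(\tilde s_i))$ produces exactly one monomial $(-1)^{n-r}c_{\underline{\tilde w}}^{\underline{\tilde x}}T_{\tilde x}$ per subsequence --- fails, because a subsequence of a reduced word need not be reduced. Take $\tilde w=\tilde s_1\tilde s_2\tilde s_1$ reduced; the subsequence keeping positions $1$ and $3$ has underlying word $s_1s_1$, which is not reduced, so on that branch the quadratic relation fires: $T_{\tilde s_1}c(\tilde s_2)T_{\tilde s_1}=(s_1\cdot c(\tilde s_2))\bigl(q(s_1)T_{\tilde s_1^2}+c(\tilde s_1)T_{\tilde s_1}\bigr)$ contributes a term $(s_1\cdot c(\tilde s_2))\,c(\tilde s_1)T_{\tilde s_1}$ that is \emph{not} divisible by $q$ and pollutes the coefficient of $T_{\tilde s_1}$. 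So your claim that only the length-increasing branch ever fires on a kept letter is false, and the coefficient $c^*(\tilde w,\tilde x)$ of \eqref{T*} is not the sum of the $c_{\underline{\tilde w}}^{\underline{\tilde x}}$ over reduced subsequences with product $\tilde x$. This is precisely why the paper's comparison between $c^*(w,x)$ and $c_w^x$ (Theorem~\ref{*}, i.e.\ \eqref{congr}) holds only modulo $q$; note also that the proof of that theorem already uses Proposition~\ref{Waf1} to define $c_w^x$, so invoking it here would be circular.

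Second, and independently: even if the sum over all reduced subsequences with a fixed product were intrinsic, that would not prove the Proposition, which asserts that each \emph{individual} $c_{\underline{\tilde w}}^{\underline{\tilde x}}$ depends only on $(t\tilde w,u\tilde x)$. Distinct subsequences of the same reduced word can have the same product (in $s_1s_2s_1$, positions $1$ and $3$ each give a reduced subword equal to $s_1$), and well-definedness of a sum says nothing about its summands. This single-subsequence statement is exactly the content of the paper's Case~A, proved via the strong exchange condition; the remaining cases handle changes of lifts, a single braid move, and then the general case via Matsumoto's theorem that any two reduced expressions are connected by braid moves. None of this is reachable from the intrinsic expansion \eqref{T*}; the direct combinatorial induction (via \eqref{cwx} and Lemmas~\ref{inductions}--\ref{xw}) appears unavoidable.
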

\begin{proof}  We have to prove 
 $t u^{-1}c_{\underline {\tilde w}}^{\underline{ \tilde x}}=t' u'^{-1}c_{  \underline {\tilde w}' }^{ \underline {\tilde x}' }$, when 
 $   \underline {\tilde w}' =(\tilde s'_1, \ldots  ,\tilde s'_n)$ is a sequence in $S^{\aff}(1)$, 
 $\underline {\tilde x}' = (\tilde s'_{j_1},  \ldots, \tilde s'_{j_r})$ is a subsequence of $\underline {\tilde w}' $ and $t', u'$ are elements in $ Z_k$, satisfying  
  $t\tilde w=t'\tilde w' $ and $u\tilde x= u'\tilde x'$.  Then $w,w'$ have the same length $n$,  and  $x,x'$ have the same length $r$.  The proof  is divided into several  steps and uses   induction on $n$. 
 
 {\bf A}) Assume    $ \underline {\tilde w} = \underline {\tilde w}'$. Then $t =t'$ and we will prove  $u^{-1}c_{\underline {\tilde w}}^{\underline{ \tilde x}}=u'^{-1}c_{  \underline {\tilde w}}^{ \underline {\tilde x}' }$. By symmetry, we have three cases:
 $$\text{(1)  $i_r=j_r=n$, \ (2) $i_r<n$ and $j_r<n$, \ (3)  $i_r=n$ and $j_r<n$}.$$
We denote by  $\underline {\tilde w}^\flat, \underline w^\flat$ the sequences obtained by erasing the last term of in the sequences $\underline {\tilde w}, \underline w$;  the products of the terms in $\underline {\tilde w}^\flat$ and of $\underline w^\flat$ are denoted by $ {\tilde w}^\flat$ and $w^\flat$.  We examine each case separately, using 
 Example \ref{induction}. We have:
 
(1)    $ c_{\underline {\tilde w}}^{\underline{ \tilde x}}=  c_{\underline {\tilde w}^\flat}^{\underline{ \tilde x}^\flat},  c_{  \underline {\tilde w}^\flat}^{\underline {\tilde x}'^\flat}= c_{  \underline {\tilde w}}^{\underline {\tilde x}'}$. By induction on $n$,  $u^{-1}c_{\underline {\tilde w}}^{\underline{ \tilde x}}=u'^{-1}c_{  \underline {\tilde w}}^{ \underline {\tilde x}' }$.

(2) $ c_{\underline {\tilde w}}^{\underline{ \tilde x}}= c_{\underline {\tilde w}^\flat }^{\underline{ \tilde x}} (x\cdot c(\tilde s_n)) $  and   $c_{\underline {\tilde w}}^{\underline {\tilde x}'}=c_{\underline {\tilde w}^\flat}^{\underline {\tilde x}'} (x'\cdot c(\tilde s_n)) $.  By induction on $n$, and noting that $x = x'$, $u^{-1}c_{\underline {\tilde w}}^{\underline{ \tilde x}}=u'^{-1}c_{  \underline {\tilde w}}^{ \underline {\tilde x}' }$.

(3)   $c_{\underline {\tilde w}}^{ \underline {\tilde x}}= c_{\underline {\tilde w}^\flat}^{ \underline {\tilde x}^\flat}$  and $c_{\underline {\tilde w}}^{ \underline {\tilde x}' }=c_{\underline {\tilde w}^\flat}^{ \underline {\tilde x}' } (x'\cdot c(\tilde s_{i_r}))$.
 Since $s_{i_1}\cdots s_{i_r}= s_{j_1}\cdots s_{j_r}$ are reduced decompositions, by the exchange condition there exists $1\leq k \leq r$  such that $s_{j_{k+1}}\cdots s_{j_r}s_{i_r}=s_{j_k}\cdots s_{j_r}$ and $x^\flat= s_{i_1}\cdots s_{i_{r-1}}=s_{j_1}   \cdots s_{j_{k-1}} s_{j_{k+1}} \cdots  s_{j_r}$.   Suppressing the $k$-th term   of the sequence   $ \underline {\tilde x}'  $  we get  $\underline {\tilde x}'^\star = (\tilde s_{j_1},  \ldots,\tilde s_{j_{k-1}},\tilde s_{j_{k+1}},\ldots, \tilde s_{j_r})$ and  ${\tilde x}'^\star=\tilde s_{j_1}   \cdots\tilde s_{j_{k-1}} \tilde s_{j_{k+1}} \cdots \tilde s_{j_r}$ lifting  $x^\flat$. 
  Let $u''\in Z_k$ such that 
 $u \tilde  x^\flat= u'' \tilde x'^\star $. By induction on $n$,  $u^{-1}c_{\underline {\tilde w}^\flat}^{\underline {\tilde x}^\flat}=
u''^{-1}c_{\underline {\tilde w}^\flat}^{ \underline {\tilde x}'^\star}$;   hence 
 \begin{equation} \label{s1} u''^{-1}c_{\underline {\tilde w}^\flat}^{ \underline {\tilde x}'^\star}= u'^{-1}c_{\underline {\tilde w}^\flat}^{ \underline {\tilde x}' } (x'\cdot c(\tilde s_{i_r})) 
\end{equation}
implies $u^{-1}c_{\underline {\tilde w}}^{\underline{ \tilde x}}=u'^{-1}c_{  \underline {\tilde w}}^{ \underline {\tilde x}' }$. 
We now prove \eqref{s1}.
Applying  Lemma \ref{inductions} to the compatible decompositions   $\underline {\tilde w}^\flat= \underline {\tilde w}_1  ( \tilde s_{j_k}) \underline{ \tilde w}_3$,   $\underline {\tilde x}'^\star=\underline{ \tilde x}'_1  ( \ ) \underline {\tilde x}'_3$, and $ \underline {\tilde x}' =\underline {\tilde x}'_1( \tilde s_{j_k}) \underline {\tilde x}'_3$ we get 
 $c_{\underline {\tilde w}^\flat}^{ \underline {\tilde x}'^\star}= c_{\underline{\tilde w }_1}^{ \underline{\tilde x}'_1 }\, (x'_1\cdot c( \tilde s_{j_k}) c_{\underline{\tilde w }_3}^{ \underline{\tilde x}'_3})$ and $c_{\underline {\tilde w}^\flat}^{ \underline {\tilde x}' }=c_{\underline{\tilde w }_1}^{ \underline{\tilde x}'_1 }\,  ( x'_1  s_{j_k}\cdot c_{\underline{\tilde w }_3}^{ \underline{\tilde x}'_3} )$. We have $c(  \tilde s_{j_k}) c_{\underline{\tilde w }_3}^{ \underline{\tilde x}'_3} = c(  \tilde s_{j_k})\, (s_{j_k}\cdot c_{\underline{\tilde w }_3}^{ \underline{\tilde x}'_3})$ by Lemma \ref{s.s} so that $c_{\underline {\tilde w}^\flat}^{ \underline {\tilde x}'^\star}= c_{\underline {\tilde w}^\flat}^{ \underline {\tilde x}' }\, (x'_1\cdot c( \tilde s_{j_k})) $. Hence
\begin{equation} \label{s2} 
u''^{-1} (x'_1\cdot c(  \tilde s_{j_k}))= u'^{-1}  (x'\cdot c(\tilde s_{i_r}))
\end{equation}
implies \eqref{s1}. We now prove \eqref{s2}.
We have  $u' \tilde s_{j_1} \cdots \tilde s_{j_r}= u'' \tilde s_{j_1}\cdots \tilde s_{j_{k-1}}  \tilde s_{j_{k+1}} \cdots  \tilde s_{j_r}  \tilde s_{i_r}$. 
Therefore $u'((\tilde s_{j_1} \cdots \tilde s_{j_r})\cdot \tilde s_{i_r}^{-1}) =u''((\tilde s_{j_1} \cdots \tilde s_{j_{k-1}})\cdot \tilde s_{j_k}^{-1}) $. Taking the inverse shows $(\tilde x'\cdot \tilde s_{i_r}) {u' }^{-1}=(\tilde  x'_1\cdot \tilde s_{j_k})u''^{-1}$ and $ u'^{-1}  (x'\cdot c(\tilde s_{i_r}))  = (x'\cdot c(\tilde s_{i_r}))  u'^{-1} = c((\tilde x'\cdot \tilde s_{i_r}){u'}^{-1})=c((\tilde  x'_1\cdot \tilde s_{j_k})u''^{-1})   =c(\tilde  x'_1\cdot \tilde s_{j_k})u''^{-1} $
$=u''^{-1} (x'_1\cdot c(  \tilde s_{j_k})).$ This ends the proof of case  {\bf A}).
 
\bigskip {\bf B}) Assume $ \underline {w} = \underline {w}'$. We will prove that $t u^{-1}c_{\underline {\tilde w}}^{\underline{ \tilde x}}=t' u'^{-1}c_{  \underline {\tilde w}' }^{ \underline {\tilde x}' }$ by induction on $n$. When $n = 1$ this follows from the following identities for $a \in Z_k$:
$c_{(a \tilde s_1)}^{(\ )} = c(a \tilde s_1) = a c(\tilde s_1) = a c_{(\tilde s_1)}^{(\ )}$ and 
$c_{(a \tilde s_1)}^{(a \tilde s_1)} = 1 = c_{(\tilde s_1)}^{(\tilde s_1)}$. For $n > 1$ we will reduce to case \textbf{A}) as follows.
Let $\underline x'' = (\tilde s'_{i_1} , \ldots,  \tilde s'_{i_r})$. Choose non-trivial decompositions
$\underline {\tilde w}=\underline {\tilde  w }_1\underline {\tilde  w}_2$, $\underline {\tilde w}'=\underline {\tilde  w }'_1\underline {\tilde  w}'_2$
with $\ell(w_i) = \ell(w_i') > 0$ for $i = 1,2$. Then we have compatible decompositions $\underline {\tilde x}=\underline {\tilde  x }_1\underline {\tilde  x}_2$
and $\underline {\tilde x}''=\underline {\tilde  x}''_1\underline {\tilde  x}''_2$. In particular, $w_i = w'_i$, $x_i = x''_i$,
and we can choose $t_i, u_i \in Z_k$ such that
$\tilde w_i = t_i \tilde w'_i$, $u_i \tilde x_i = \tilde x''_i$ for $i = 1,2$.
By induction we have that $u_i^{-1} c_{\underline {\tilde w}_i}^{\underline{ \tilde x}_i} = t_i c_{\underline {\tilde w}'_i}^{\underline{ \tilde x}''_i}$.
Hence from \eqref{cwx} and Lemma~\ref{xw} we get
\begin{multline*}
  c_{\underline {\tilde w}}^{\underline{ \tilde x}} = c_{ \underline {\tilde w}_1 }^{ \underline {\tilde x}_1 } \, (x_1\cdot c_{ \underline {\tilde w}_2 }^{ \underline {\tilde x}_2 }) = 
t_1 u_1 c_{ \underline {\tilde w}'_1 }^{ \underline {\tilde x}''_1 } \, (x_1''\cdot t_2 u_2 c_{ \underline {\tilde w}'_2 }^{ \underline {\tilde x}''_2 })
= t_1 (w_1' \cdot t_2) u_1 (x_1 \cdot u_2) c_{ \underline {\tilde w}'_1 }^{ \underline {\tilde x}''_1 } (x_1'' \cdot c_{ \underline {\tilde w}'_2 }^{ \underline {\tilde x}''_2 }) \\
= t_1 (w_1' \cdot t_2) u_1 (x_1 \cdot u_2) c_{ \underline {\tilde w}' }^{ \underline {\tilde x}''}.
\end{multline*}
Hence $t u^{-1} c_{\underline {\tilde w}}^{\underline{ \tilde x}} = t' u^{-1} u_1 (x_1 \cdot u_2) c_{ \underline {\tilde w}' }^{ \underline {\tilde x}''}$.
This equals $t' u'^{-1}c_{  \underline {\tilde w}' }^{ \underline {\tilde x}' }$ by case \textbf{A}), since
$u u_1^{-1} (x_1 \cdot u_2)^{-1} \tilde x'' = u' \tilde x'$.

\bigskip {\bf C}) Assume  that $\u w = (s, s', s,\dots)$, $\u w' = (s', s, s',\dots)$, where $w = s s' s \cdots = s' s s' \cdots = w'$ is a braid relation in $W^{\aff}$.
Choose lifts $\tilde s, \tilde s' \in S^{\aff}(1)$ of $s, s' \in S^{\aff}$. Then by part \textbf{B}) we may assume without loss of generality that
$\u {\tilde w} = (\tilde s, \tilde s', \tilde s, \dots)$, $\u {\tilde w}' = (\tilde s', \tilde s, \tilde s', \dots)$. 
(Use the same integers $i_1 < \cdots < i_r$ for the old and the new $\u {\tilde w}$, and similarly for $\u {\tilde w}'$.)
Then the case  $r=n$ is obvious because  $\underline {\tilde w}=\underline {\tilde x}, \ \underline {\tilde w}' = \underline {\tilde x}' , \ tu^{-1}=t'{u'}^{-1}$ and $c_{\underline {\tilde w}}^{\underline{ \tilde x}}=c_{ \underline {\tilde w}' }^{ \underline {\tilde x}' }=1$, so we  assume $r<n$.
We prove $tu^{-1}c_{\underline {\tilde w}}^{\underline{ \tilde x}}=t' u'^{-1}c_{\underline {\tilde w}'}^{\underline {\tilde x}'}$. 

As $r < n$ the sequence  $ \underline x'=   \underline x$ is unique. By symmetry we suppose that the last terms of $\underline{ w}$ and  $ \underline { x}$ are equal.

(1) We reduce   to the case where $i_k=n-r+k$ and $j_k=n-1-r+k$ for all $1 \le k \le r$. 
For  $\underline{ \tilde y}=(\tilde s_{n-r+1},\ldots ,\tilde s_n)$ and $\tilde y=\tilde s_{n-r+1}\cdots \tilde s_n$, we have $\tilde x= \tilde y$. By {\bf A}),   $ c_{\underline {\tilde w}}^{\underline{ \tilde x}}=c_{\underline {\tilde w}}^{\underline{ \tilde y}}$. As  $s'_{j_r}=s_{i_r}=s_n= s'_{n-1}$,  we have similarly for $\underline{ \tilde y}'=(\tilde s'_{n-r},\ldots ,\tilde s'_{n-1})$, $\tilde x'=  \tilde y'$ and $ c_{\underline {\tilde w}'}^{\underline {\tilde x}'}= c_{\underline {\tilde w}'}^{\underline{ \tilde y}'}$.  We have $u' \tilde y'=   u\tilde y$ and the equalities $tu^{-1}c_{\underline {\tilde w}}^{\underline{ \tilde x}}=t' u'^{-1}c_{\underline {\tilde w}'}^{\underline {\tilde x}'}$ and 
$tu^{-1}c_{\underline {\tilde w}}^{\underline{ \tilde y}}=t' u'^{-1}c_{\underline {\tilde w}'}^{\underline {\tilde y}'}$ are equivalent.

(2) We assume $i_k=n-r+k$ and $j_k=n-1-r+k$ for $1 \le k \le r$. %
Then  $\underline {\tilde x}= \underline {\tilde x}'$ and $u = u'$ as $\underline x=\underline x'$. We prove $tc_{\underline {\tilde w}}^{\underline{ \tilde x}}=t'c_{\underline {\tilde w}'}^{\underline{ \tilde x}'} $ where   $t\tilde w= t' \tilde w'$. We consider the  sequence decompositions $\underline {\tilde w}= \underline{ \tilde w}_1 \underline{\tilde x}, \ \underline {\tilde w}'= \underline{ \tilde w}'_1 \underline{\tilde x}' (\tilde s'_n)$. Applying    Lemma \ref{inductions},  Example \ref{exc}, and Lemma \ref{xw}, we have
  $c_{\underline {\tilde w}}^{\underline{ \tilde x}}=c_{\underline {\tilde w}_1}^{( \ )} c_{\underline {\tilde x}}^{\underline{ \tilde x}}= c_{\tilde w_1} , \ c_{\underline {\tilde w}'}^{\underline{ \tilde x}}
 =c_{\underline {\tilde w}'_1}^{( \ )} c_{\underline {\tilde x}}^{\underline{ \tilde x}}(x\cdot c(\tilde s'_n))= c_{\tilde w'_1}(x\cdot c(\tilde s'_n)) =c_{\tilde w'_1 } (w'_1x\cdot c(\tilde s'_{n}))$. We have $w'_1x\cdot c(\tilde s'_{n})=c(\tilde w'_1\tilde x\cdot \tilde s'_{n})= t t'^{-1} c(\tilde s_1)$  because  $\tilde w'_1\tilde x \tilde s'_{n}=  \tilde w'  =t t'^{-1} \tilde w= t t'^{-1} \tilde s_1 \tilde w'_1\tilde x$. Therefore 
 $t'c_{\underline {\tilde w}'}^{\underline{ \tilde x}}=t   c( \tilde s_1)c_{\tilde w'_1 }=t  c_{\tilde w_1}= tc_{\underline {\tilde w}}^{\underline{ \tilde x}}$.
  
\bigskip {\bf D}) To end the proof we  reduce to case  {\bf A})  using  {\bf B}) and {\bf C}).  
Since the change of reduced expressions in $W$ is given by iteration of the braid relations, we may assume that there are   sequence decompositions $\underline {\tilde w}=\underline {\tilde  w }_1\underline {\tilde  w}_2\underline {\tilde  w}_3, \ \underline w'=\underline  {\tilde w}_1'\underline  {\tilde w}'_2\underline  {\tilde  w}_3'$ where  $\underline  {w}_2, \underline {w}'_2$ correspond to a braid relation $w_2=w'_2$ as in {\bf C}) and $\underline w_1=\underline w'_1,\underline  w_3=\underline w'_3$. 
Again by {\bf B}) we may assume without loss of generality that $\underline {\tilde  w }_1 = \underline {\tilde  w }'_1$, $\underline {\tilde  w }_3 = \underline {\tilde  w }'_3$, and that $\u {\tilde w}_2 = (\tilde s, \tilde s', \tilde s, \dots)$, $\u {\tilde w}'_2 = (\tilde s', \tilde s, \tilde s', \dots)$ for some $\tilde s, \tilde s' \in S^{\aff}(1)$.
We will reduce to case  {\bf A}) by extracting a subsequence $\underline{\tilde x}'' $ from $\underline {\tilde w}'$ such that  $b'\tilde x=  \tilde x''$ 
(for some $b' \in Z_k$) and $tb'^{-1} c_{\underline {\tilde w}}^{\underline{ \tilde x}}= t' c_{\underline {\tilde w}'}^{\underline{ \tilde x}''}$. 

From $t\tilde w=t'\tilde w'$ we deduce that $t=w_1\cdot a, t'= w_1\cdot a'$ for some  $a,a'\in Z_k$ such that $a \tilde w_2= a' \tilde w'_2$.
We have the compatible
 decomposition 
 $\underline {\tilde x}= \underline {\tilde  x}_1\, \underline {\tilde  x}_2\,  \underline {\tilde  x}_3  $.
Choose a subsequence $\underline {\tilde x}''_2$ of $\underline {\tilde w}'_2$ such that $b\tilde x_2={\tilde x}''_2$ (for some $b \in Z_k$), hence
$(x_1 \cdot b) \tilde x = \tilde x''$.
Then by  {\bf C}) we have $ab^{-1} c_{\underline {\tilde w}_2}^{\underline{ \tilde x}_2}=a' 
c_{\underline {\tilde w}'_2}^{\underline{ \tilde x}''_2} $. 
The sequence    $\underline{\tilde x}''=\underline{\tilde x}_1 \underline{ \tilde x}''_2\underline{\tilde x}_3 $ is a subsequence of $\underline {\tilde w}'$.
Applying Lemmas \ref{inductions} and \ref{xw}:
 $$c_{\underline {\tilde w}}^{\underline{ \tilde x}}=   c_{\underline {\tilde w}_1}^{ \underline{ \tilde x}_1} (x_1  \cdot c_{\underline {\tilde w}_2}^{\underline{ \tilde x}_2})\,
(x_1 x_2\cdot c_{\underline {\tilde w}_3}^{ \underline{ \tilde x}_3}) =  c_{\underline {\tilde w}_1}^{ \underline{ \tilde x}_1} (w_1  \cdot c_{\underline {\tilde w}_2}^{\underline{ \tilde x}_2}) \,( x_1 x_2\cdot c_{\underline {\tilde w}_3}^{ \underline{ \tilde x}_3}).  $$ 
We deduce that
$t(x_1 \cdot b)^{-1} c_{\underline {\tilde w}}^{\underline{ \tilde x}}= c_{\underline {\tilde w}_1}^{ \underline{ \tilde x}_1}( x_1  \cdot ab^{-1} 
c_{\underline {\tilde w}_2}^{\underline{ \tilde x}_2}) (x_1 x_2\cdot c_{\underline {\tilde w}_3}^{ \underline{ \tilde x}_3})=  c_{\underline {\tilde w}_1}^{ \underline{ \tilde x}_1}( x_1  \cdot a' 
c_{\underline {\tilde w}'_2}^{\underline{ \tilde x}''_2}) (x_1 x_2''\cdot c_{\underline {\tilde w}_3}^{ \underline{ \tilde x}_3})= t'c_{\underline {\tilde w}}^{\underline{ \tilde x}}$.
   \end{proof}

We denote $t u^{-1}c_{\underline {\tilde w}}^{\underline{ \tilde x}}=c_{t \tilde w}^{u \tilde x}$ in Proposition \ref{Waf1}. This defines  $c_{ w}^{  x}\in \mathbb Z[Z_k]$ for $  x, w\in   W^{\aff}(1) $ and  $  x \leq    w$. 

When $  x,\   w\in   W(1) $ satisfy $  x \leq    w$ there exists  $v \in \Omega(1)$ unique modulo $Z_k$ such that $xv , wv \in W^{\aff}(1)$ with  $xv\leq wv$ by definition of the Bruhat order (Definition \ref{Bru}). 
By Lemma~\ref{xw} the element 
$c_{ wv}^{  xv}$ does not depend on the choice of $v$ and we can define  $c_{ w}^{  x}=c_{ wv}^{  xv}$.

To summarize:
 
\begin{definition}\label{cxw}  
Let $  x,\   w\in   W(1) $ such that $  x \leq   w$.  
We define $c_{ w}^{  x}$ as 
$$c_{ w}^{  x}=    c_{  wv}^{ xv} = t\,  c_{\underline {wv}}^{\underline {txv}}\in \mathbb Z[Z_k] $$
where   $v\in \Omega(1)$,  $t \in Z_k$,  
$\underline {txv}  =(s_{i_1},\ldots, s_{i_r})$ is a subsequence of  $ \underline {wv}= (s_1,\ldots ,s_n)$ in $S^{\aff}(1)$ such that  $wv=s_1\cdots s_n$ and $txv=s_{i_1}\cdots s_{i_r}$ are reduced decompositions.  \end{definition}

\begin{proposition}\label{prop} %
The elements $c_{ w}^{  x}\in \mathbb Z[Z_k]$ for $x,w\in W(1), x\leq w$ satisfy the following properties:
\begin{enumerate}
\item\label{item:5} $c_{ w}^{w}=1$.%
\item $c_{twv}^{uxv} = tu^{-1} c_w^x$ for $t,u\in Z_k$, $v \in \Omega(1)$.
\item\label{item:2} $c_{v\cdot w}^{v\cdot x} = v \cdot c_w^x$ for $v \in \Omega(1)$.
\item\label{item:3} $c_w^x (x\cdot c) = c_w^x (w\cdot c)$ for $c \in \mathbb Z[Z_k]$.
\item\label{item:4} $c_{w_1 w_2}^{x_1 x_2}=c^{x_1}_{w_1} (x_1 \cdot c^{x_2}_{w_2})$ if $x_i, w_i\in W(1)$, $x_i \le w_i$, $\ell(x_1x_2)=\ell(x_1)+\ell(x_2), \ell(w_1w_2)=\ell(w_1)+\ell(w_2)$.
\item\label{item:1} $c_w^x=c^{xv}_{wv} $ if $ v\in W(1)$, $\ell(xv)=\ell(x)+\ell(v), \ell(wv)=\ell(w)+\ell(v).$
\item $c_w^1=c_w$ for $w \in W^{\aff}(1)$.
\item $c_{ w}^{ x}\in c_{  v}^{ x} \, \mathbb Z[Z_k] \quad \text{  for
$ x,v,w\in W(1)$ such that} \ x\leq   v \leq   w $.
\end{enumerate} 
 \end{proposition}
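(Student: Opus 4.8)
Proof plan for Proposition~\ref{prop}:

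\textbf{Overall approach.} The definition of $c_w^x$ (Definition~\ref{cxw}) reduces everything to the combinatorial elements $c_{\underline{\tilde w}}^{\underline{\tilde x}}$ studied in Lemmas~\ref{inductions}, \ref{s.s}, \ref{xw} and Proposition~\ref{Waf1}. Most items are essentially restatements of facts already proved there, so the plan is to check them one at a time, translating from the ``sequence'' language to the ``$W(1)$'' language via Definition~\ref{cxw}. The main subtlety throughout is well-definedness: whenever we pick reduced decompositions and subsequences we must invoke Proposition~\ref{Waf1} to know the answer is independent of the choices.

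\textbf{The routine items.} Item (i) is Example~\ref{exc} ($c_{\underline{\tilde w}}^{\underline{\tilde w}}=1$), transported through Definition~\ref{cxw}. Item (ii) is literally built into Definition~\ref{cxw} (the factor $tu^{-1}$, and the insensitivity to $v\in\Omega(1)$), combined with Proposition~\ref{Waf1}. Item (iii): conjugating a reduced decomposition $wv = s_1\cdots s_n$ and a subsequence by $v'\in\Omega(1)$ (using that $\Omega(1)$ normalizes $S^{\aff}(1)$ and preserves length) gives a reduced decomposition of $v'\cdot(wv)$ and a subsequence for $v'\cdot(xv)$; comparing the products of the $c(\tilde s_i)$ shows directly that $c_{v'\cdot w}^{v'\cdot x}=v'\cdot c_w^x$, using $c(v'\cdot\tilde s)=v'\cdot c(\tilde s)$ and the $W(1)$-invariance of $c$ recalled in the quadratic relations. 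Item (iv) is precisely formula \eqref{cwx} / Lemma~\ref{inductions}: concatenating reduced decompositions of $w_1$ and $w_2$ (legitimate because $\ell(w_1w_2)=\ell(w_1)+\ell(w_2)$) gives a reduced decomposition of $w_1w_2$, and the matching concatenation of subsequences (legitimate because $\ell(x_1x_2)=\ell(x_1)+\ell(x_2)$) yields $c_{\underline{\tilde w}}^{\underline{\tilde x}} = c_{\underline{\tilde w}_1}^{\underline{\tilde x}_1}(x_1\cdot c_{\underline{\tilde w}_2}^{\underline{\tilde x}_2})$; one then tracks the $Z_k$-prefactors. Item (vi) is the special case $x_2=w_2=v$, $x_1=x$, $w_1=w$ of item (iv) together with item (i) ($c_v^v=1$). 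Item (v) follows from item (vi) with $x=1$, $v=w_2$, using Example~\ref{exc} (for a reduced $\tilde w = \tilde s_1\cdots\tilde s_n$, $c_{\underline{\tilde w}}^{(\ )}=c_{\tilde w}$) and the fact that $c_w$ is multiplicative for length-additive products (Remark~\ref{c(s)}); alternatively it is immediate from Example~\ref{exc} and Definition~\ref{cxw}. Item (vii) is again Example~\ref{exc} plus Definition~\ref{cxw} directly. Item (iii)' — I mean the displayed identity $c_w^x(x\cdot c)=c_w^x(w\cdot c)$ — is exactly Lemma~\ref{xw} rephrased: given reduced decompositions $\underline{wv}$, $\underline{txv}$ with products $wv$, $txv$, Lemma~\ref{xw} gives $c_{\underline{wv}}^{\underline{txv}}((txv)\cdot c)=c_{\underline{wv}}^{\underline{txv}}((wv)\cdot c)$; since conjugation by $Z_k$ and by $\Omega(1)$-elements is harmless on the relevant $Z_k$-valued quantities (and $v$, $t$ cancel), this descends to $c_w^x(x\cdot c)=c_w^x(w\cdot c)$.

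\textbf{The main obstacle: item (viii).} This is the only genuinely new statement: if $x\le v\le w$ in $W(1)$, then $c_w^x \in c_v^x\,\mathbb Z[Z_k]$, i.e.\ $c_v^x$ divides $c_w^x$ on the left in the group ring $\mathbb Z[Z_k]$. The plan is to reduce, via item (ii) and Definition~\ref{cxw}, to the case $x,v,w\in W^{\aff}(1)$, and then to prove it by induction on $\ell(w)-\ell(v)$. For the inductive step, pick a simple reflection $s\in S^{\aff}$ with $sw<w$ (or $ws<w$) and $v\le sw$; such an $s$ exists by standard properties of the Bruhat order on the Coxeter group $W^{\aff}$ together with the subword characterization of $\le$. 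Choosing a reduced decomposition of $w$ ending (say) in $\tilde s$, and writing $w=w'\tilde s$ with $\ell(w')=\ell(w)-1$, item (iv) gives $c_w^x = c_{w'}^{x}$ if a fixed reduced subword for $x$ lies in $w'$, or $c_w^x=c_{w'}^{x'}(x'\cdot c(\tilde s))$ with $x=x'\tilde s$ otherwise; in the first case apply the inductive hypothesis to $v\le w'$, in the second reduce $v$ as well by the exchange condition and argue as in case~\textbf{C} of the proof of Proposition~\ref{Waf1}. The bookkeeping of $Z_k$-prefactors and of the two cases (whether the chosen reduced word for $x$ uses the last letter) is where the real work lies; the key technical input making the left-divisibility survive the induction is precisely item (iv) — each shortening step multiplies $c_w^x$ on the right by an element of $\mathbb Z[Z_k]$ (possibly after conjugation, handled by item (iii)'), so the left factor $c_v^x$ persists. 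I expect this induction, and in particular correctly matching up the subword for $x$ inside the varying reduced words for $v$ and $w$, to be the only place requiring care.
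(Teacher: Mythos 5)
Your handling of items (i)--(vii) is correct and follows the same route the paper takes: everything reduces to Example~\ref{exc}, formula \eqref{cwx}/Lemma~\ref{inductions}, Lemma~\ref{xw}, and the well-definedness statement Proposition~\ref{Waf1}, with the passage from $W^{\aff}(1)$ to $W(1)$ absorbed by the $\Omega(1)$- and $Z_k$-insensitivity built into Definition~\ref{cxw}. The problem is item (viii). Your inductive step requires a \emph{simple} reflection $s$ with $ws<w$ (or $sw<w$) and $v\le ws$ (resp.\ $v\le sw$), and you assert this exists ``by standard properties of the Bruhat order''. It does not: take $w=s_1s_2s_3$ and $v=s_1s_3$ in a parabolic subgroup of type $A_3$ of $W^{\aff}$. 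The only right descent of $w$ is $s_3$ and the only left descent is $s_1$, yet $v\not\le ws_3=s_1s_2$ and $v\not\le s_1w=s_2s_3$ (all three elements have length $2$). The chain property only provides a covering $w'$ with $v\le w'\lessdot w$, and such a $w'$ is in general obtained by deleting an \emph{interior} letter of a reduced word of $w$ (i.e.\ $w'=wt$ with $t$ a non-simple reflection), which your scheme of peeling off the last letter and invoking item (iv) cannot reach. So the induction does not get started.

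The statement is nonetheless immediate from the definition once one fixes \emph{nested} subwords, with no induction on $\ell(w)-\ell(v)$ at all. By the subword property applied twice, choose a reduced word $\underline{\tilde w}$ for (a lift of) $w$, then a reduced subword $\underline{\tilde v}$ of it representing $v$, then a reduced subword $\underline{\tilde x}$ of $\underline{\tilde v}$ representing $x$; then $\underline{\tilde x}$ is also a subword of $\underline{\tilde w}$. By Definition~\ref{c_^}, $c_{\underline{\tilde w}}^{\underline{\tilde x}}$ is the product, over the positions $j$ of $\underline{\tilde w}$ not occupied by $\underline{\tilde x}$, of the factors $x_{(<j)}\cdot c(\tilde s_j)$, where $x_{(<j)}$ is the partial product of $\underline{\tilde x}$ preceding position $j$; the factors of $c_{\underline{\tilde v}}^{\underline{\tilde x}}$ are exactly those indexed by positions lying in $\underline{\tilde v}$, with the \emph{same} conjugators $x_{(<j)}$. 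Since $Z_k$ is commutative, $c_{\underline{\tilde w}}^{\underline{\tilde x}}=c_{\underline{\tilde v}}^{\underline{\tilde x}}\cdot\prod_{j}\bigl(x_{(<j)}\cdot c(\tilde s_j)\bigr)$, the product running over the positions of $\underline{\tilde w}$ in neither $\underline{\tilde v}$ nor $\underline{\tilde x}$; hence $c_{\underline{\tilde w}}^{\underline{\tilde x}}\in c_{\underline{\tilde v}}^{\underline{\tilde x}}\,\mathbb Z[Z_k]$. Proposition~\ref{Waf1} and Definition~\ref{cxw} then convert this into $c_w^x\in c_v^x\,\mathbb Z[Z_k]$, the $Z_k$-prefactors and the common right $\Omega(1)$-adjustment being units. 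You should replace your induction for (viii) by this direct comparison.
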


These properties come from the definition of $c_w^x$ and properties of the $c(s)$ ($s \in S^{\aff}(1)$), as well as Example~\ref{exc} and Lemma \ref{xw}.
Items~\ref{item:2}--\ref{item:4} are first proved for $x$, $w$, $x_i$, $w_i$ in $W^{\aff}(1)$ and then extended to $W(1)$.
Item~\ref{item:1} is a consequence of \ref{item:4} and \ref{item:5}.

\subsection{The Iwahori-Matsumoto expansion of  \texorpdfstring{$T^*_w$}{T*\_w} modulo \texorpdfstring{$q$}{q}}
\label{sec:iwah-mats-expans-1}
We compute  the triangular decomposition  of $T_{w}^*$ modulo $q$;
with the notation of   \eqref{T*}, we will prove  the congruence in $\mathbb Z[Z_k]$:   for $ x, w\in  W(1)$ and $  x\leq w$,
\begin{equation} \label{congr} c^*( w,x) \equiv 
 (-1)^{\ell(w)-\ell(x) }c_{ w}^{x} \ \mod q .\end{equation}
For $h,h'\in \mathcal H_{\mathbb Z}$, we write $h\equiv h' \mod q$ if   $h-h'\in q \mathcal H_{\mathbb Z} $. An equivalent formulation of the congruence is:
  
\begin{theorem}\label{*} Suppose that $\tilde w\in W (1)$ lifts $w\in W $. We have 
$$T_{\tilde w}^*\equiv  \sum_{x\in W, x\leq w} (-1)^{\ell(w)-\ell(x) } k^*_x  \ \mod q,\quad k^*_x= c_{\tilde w}^{\tilde x} T_{\tilde x} \ \text{ for any $\tilde x\in W (1)$ lifting $x $.}$$

\end{theorem}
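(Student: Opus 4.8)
**Proof proposal for Theorem \ref{*}.**

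The plan is to prove the congruence \eqref{congr} by induction on $\ell(w)$, using the defining recursion for the dual basis $T^*_w$ together with the inductive formula \eqref{cwx} for $c^{\underline{\tilde x}}_{\underline{\tilde w}}$. First I would reduce to the case $w\in W^{\aff}(1)$: by Remark~\ref{braid}(ii), both $c^*(wv,xv)=c^*(w,x)$ and $c^x_w = c^{xv}_{wv}$ for $v\in\Omega(1)$ (the latter by Proposition~\ref{prop}\ref{item:1}/Definition~\ref{cxw}), and the lengths are unchanged, so it suffices to treat $w\in W^{\aff}(1)$. The base case $\ell(w)=0$ is trivial: then $x=w$, $c^*(w,w)=1=c^w_w$, and $(-1)^0=1$.

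For the inductive step, write $w = w' s$ with $s\in S^{\aff}(1)$ admissible and $\ell(w)=\ell(w')+1$, so that by the braid relation $T^*_{\tilde w} = T^*_{\tilde w'}T^*_{\tilde s} = T^*_{\tilde w'}(T_{\tilde s} - c(\tilde s))$. Expanding $T^*_{\tilde w'}=\sum_{y\le w'} c^*(\tilde w',\tilde y)T_{\tilde y}$ by the inductive hypothesis (mod $q$), I would multiply out, using the quadratic relation $T_{\tilde y}T_{\tilde s} = T_{\tilde y\tilde s}$ when $\ell(ys)>\ell(y)$ and $T_{\tilde y}T_{\tilde s}= q(s)T_{\tilde y s'} + c(\tilde s')T_{\tilde y}$ (for an appropriate lift) when $\ell(ys)<\ell(y)$; modulo $q$ the term $q(s)T_{\tilde y s'}$ vanishes. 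Collecting the coefficient of a fixed $T_{\tilde x}$ with $x\le w$, one gets a contribution from $y=xs$ (when $\ell(xs)<\ell(x)$, i.e.\ $x=x' s$ with $x'\le w'$) through the $T_{\tilde y}T_{\tilde s}=T_{\tilde x}$ term, a contribution from $y=x$ through $-c^*(\tilde w',\tilde x)c(\tilde s)$, and possibly a contribution from $y=xs$ through the $c(\tilde s')$ piece of the quadratic relation. The combinatorics here is exactly the Deodhar-type recursion; the point is that after reindexing these match, term by term modulo $q$, the two-block splitting \eqref{cwx} applied to the decomposition $\underline{\tilde w}=\underline{\tilde w'}\,(\tilde s)$, namely $c^{\underline{\tilde x}}_{\underline{\tilde w}} = c^{\underline{\tilde x}^\flat}_{\underline{\tilde w'}}$ if the last letter is used and $c^{\underline{\tilde x}}_{\underline{\tilde w}}= c^{\underline{\tilde x}}_{\underline{\tilde w'}}(x\cdot c(\tilde s))$ otherwise (Example~\ref{induction}), once one absorbs the sign $(-1)^{\ell(w)-\ell(x)}$ and uses Lemma~\ref{s.s} to move the conjugation action of $s$ past $c(\tilde s)$. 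Throughout I would track carefully that the $\mathbb Z[Z_k]$-valued coefficients are well defined independently of lifts, invoking Proposition~\ref{Waf1} and Remark~\ref{braid}(i).

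The main obstacle I anticipate is the bookkeeping in the case $\ell(xs)<\ell(x)$: there the coefficient of $T_{\tilde x}$ in $T^*_{\tilde w}$ receives two contributions (from $y=x$ via $-c(\tilde s)$ and from $y=xs$ via the linear term of the quadratic relation, plus the braid term), and one must check that modulo $q$ these combine to give precisely $(-1)^{\ell(w)-\ell(x)}c^x_w$ rather than an extra term — i.e.\ that the ``diagonal'' quadratic contribution cancels correctly. This is where the precise normalization $c(\tilde s)\, c = c(\tilde s)\,(s\cdot c)$ from Lemma~\ref{s.s}, together with the compatibility of $c^x_w$ under the conjugation action (Proposition~\ref{prop}\ref{item:3}), does the work; I would isolate this as a short lemma about the two-term recursion and then feed it into the induction. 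Once the recursion is verified, the statement of Theorem~\ref{*} is just the repackaging of \eqref{congr} with $k^*_x = c^{\tilde x}_{\tilde w}T_{\tilde x}$ and the observation (as in \eqref{T*}) that $c^{\tilde x}_{\tilde w}T_{\tilde x}$ is independent of the chosen lift $\tilde x$ of $x$, which follows from Proposition~\ref{prop}\ref{item:1} and Remark~\ref{braid}(i).
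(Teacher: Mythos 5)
Your proposal is correct and follows essentially the same route as the paper: reduction to $W^{\aff}(1)$, induction on $\ell(w)$ via $T^*_{\tilde w}=T^*_{\tilde w_1}T^*_{\tilde s}$, and the splitting formula \eqref{cwx} for the coefficients. The cancellation you defer to a ``short lemma'' (the case $\ell(xs)<\ell(x)$, where the diagonal quadratic term and the braid term from $y=xs$ must cancel) is exactly what the paper verifies by partitioning $\{x\le w_1\}$ into $X\sqcup Y\sqcup Ys$ and showing $c_{\tilde w_1}^{\tilde x}T_{\tilde x}-c_{\tilde w_1}^{\tilde x\tilde s}T_{\tilde x\tilde s}=c_{\tilde w_1}^{\tilde x}T_{\tilde x\tilde s}T^*_{\tilde s}$, so that multiplying by $T_{\tilde s}$ produces a factor $q(s)$; your identification of Lemma~\ref{s.s} and Proposition~\ref{prop} as the needed inputs is accurate.
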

 
 \begin{proof}  We assume $w\in W^{\aff}$. We can reduce to this case   because $c^*(wv,xv)=c^*(w,x),
  c_{wv}^{xv}=c_{w}^{x}$ for $x,w\in W^{\aff}(1), x\leq w, v \in \Omega(1)$ (Remark \ref{braid}, Proposition~\ref{prop}).  
  
One easily checks the theorem when $\ell(w)=0$ or $\ell(w)=1$.
   For $t\in Z_k$, $T_{t}^*=T_t$ and $c_{t}^{t}=1$.
For $s\in  S^{\aff}(1)$, $ T_{ s}^*=T_{ s}- c(s)$ and $c_s^s=1, c_s^1=c(s)$. 

In general we prove the theorem  by induction on $\ell(w)$.  Assume that $\ell(w)\geq 1$ and apply the braid relation to $\tilde w= \tilde w_1\tilde  s$  in 
 $  W^{\aff}(1)$ lifting $w=w_1s$ with $\ell(w)=\ell(w_1)+\ell(s)=\ell(w_1)+1$. By induction $T_{\tilde w}^*=T_{\tilde w_1}^* T_{\tilde  s}^* $ is congruent modulo $q$ to 
 $$  \sum_{x\leq w_1} (-1)^{\ell(w_1) -\ell(x) }  c_{\tilde w_1}^{\tilde x} T_{\tilde x} T_{\tilde  s}^*= \sum_{x\leq w_1} (-1)^{\ell(w) -\ell(x) }  c_{\tilde w_1}^{\tilde x} T_{\tilde x}c(\tilde s)+\sum_{x\leq w_1} (-1)^{\ell(w_1) -\ell(x) }  c_{\tilde w_1}^{\tilde x} T_{\tilde x}   T_{\tilde  s} .$$ 

The first sum  on the right-hand side equals
$$S_1=\sum_{x\leq w_1} (-1)^{\ell(w) -\ell(x) } c_{\tilde w}^{\tilde x} T_{\tilde x} $$
because  $T_{\tilde x}c(\tilde s)=(x\cdot c(\tilde s)) T_{\tilde x}$ and $c_{\tilde w_1}^{\tilde x} (x\cdot c(\tilde s)) =c_{\tilde w}^{\tilde x} $ by Proposition \ref{prop}.  
To analyze the  second sum $S_2$  on the right-hand side, as in \cite[IV.9]{MR3600042} we divide the set $\{ x\in W \ | \ x\leq w_1\} $ into the disjoint union $X \sqcup Y \sqcup Ys$ where
$$X=\{x\in W \ | \ x\leq w_1, xs \not\leq w_1\}, \ Y=\{x\in W \ | \ xs <x \leq w_1 \}.$$
We examine separately the contribution of $X$   and of $Y \sqcup Ys$. For $x\in X$ we have $x<xs$. The   contribution of $X$  in $S_2$ is 
\begin{align*}S_2(X)=\sum_{x\in X} (-1)^{\ell(w_1) -\ell(x) }  c_{\tilde w_1}^{\tilde x} T_{\tilde x}   T_{\tilde  s}=\sum_{x\in X} (-1)^{\ell(w_1) -\ell(x) }  c_{\tilde w_1}^{\tilde x} T_{\tilde x\tilde  s}=\sum_{x\in Xs} (-1)^{\ell(w) -\ell(x) }  c_{\tilde w \tilde s^{-1}   }^{\tilde x \tilde s^{-1}  } T_{\tilde x  } .\end{align*}
 For $x\in Xs$ we have  $xs<x$ hence $ c_{\tilde w \tilde s^{-1}   }^{\tilde x \tilde s^{-1}  }=c_{\tilde w    }^{\tilde x   }$ (Proposition   \ref{prop}). We have $Xs=\{ x\in W \ | \ x\leq w , x \not\leq w_1\}$ 
 \cite[IV.9 Lemma 2]{MR3600042}. Hence,   
 $$S_1+S_2(X)=\sum_{x\leq w} (-1)^{\ell(w)-\ell(x) }   c_{\tilde w}^{\tilde x} T_{\tilde x} .$$
  We now show that 
 the   contribution  of $Y \sqcup Ys$ in $S_2$ lies in $q \mathcal H_{\mathbb Z}$ (hence the theorem).  The contribution of  $Y \sqcup Ys$ is  
  $$S_2(Y \sqcup Ys)= \sum_{x\in Y} (-1)^{\ell(w_1) -\ell(x) }  (c_{\tilde w_1}^{\tilde x} T_{\tilde x}-  c_{\tilde w_1}^{\tilde x \tilde s} T_{\tilde x \tilde s}  )T_{\tilde  s} . $$
  We have  $c_{\tilde w_1}^{\tilde x \tilde s} =c_{\tilde w}^{\tilde x}= c_{\tilde w_1}^{\tilde x } (x\cdot c(\tilde s))=  c_{\tilde w_1}^{\tilde x } (xs\cdot c(\tilde s))$ by Proposition \ref{prop} and Lemma~\ref{s.s}, as $xs<x<w_1<w=w_1s$. Therefore $c_{\tilde w_1}^{\tilde x \tilde s} T_{\tilde x \tilde s} = c_{\tilde w_1}^{\tilde x } (xs\cdot c(\tilde s))T_{\tilde x \tilde s}= c_{\tilde w_1}^{\tilde x } T_{\tilde x \tilde s} c(\tilde s)$, and
 $$c_{\tilde w_1}^{\tilde x} T_{\tilde x}-  c_{\tilde w_1}^{\tilde x \tilde s} T_{\tilde x \tilde s} =c_{\tilde w_1}^{\tilde x} T_{\tilde x \tilde s}T_{ \tilde s} -   c_{\tilde w_1}^{\tilde x } T_{\tilde x \tilde s} c(\tilde s)=  c_{\tilde w_1}^{\tilde x} T_{\tilde x \tilde s} (T_{ \tilde s} - c(\tilde s))=c_{\tilde w_1}^{\tilde x} T_{\tilde x \tilde s} T_{ \tilde s}^* .$$
 As $T_{ \tilde s}^*  T_{\tilde  s}= q(s)  \tilde s^2$ and $q$ divides $q(s)$ we have $S_2(Y \sqcup Ys)\in q \mathcal H_{\mathbb Z}$.
 \end{proof}

  \subsection{The Iwahori-Matsumoto expansion of \texorpdfstring{$E_{o_J}(w)$}{E\_oJ(w)}}\label{sec:iwah-mats-expans}
 Let $J\subset \Delta$ and $P_J=M_JN_J$ the corresponding parabolic subgroup of $G$ containing $B$. The group $I\cap M_J$ is a pro-$p$-Iwahori subgroup of $M_J$ and we can apply to $M_J$ and  $I\cap M_J$ the theory of the pro-$p$ Iwahori Hecke algebra  given in the preceding sections for $G$ and $I$. We indicate with an index $J$ the objects associated to $M_J$ instead of $G$. 
 
 On the positive side: the root system $\Phi_J$ of $M_J$ is generated by $J$, the Weyl group $W_{J,0}=( {\cn}\cap M_J)/ Z$ of $M_J$ is   generated by the $s_\alpha$ for $\alpha \in J$,  the Iwahori Weyl group $W_J= ({\cn}\cap M_J)/ Z^0$ of $M_J$ is  a semidirect product
 $W_J=\Lambda \rtimes W_{J,0} $, 
  the  sets $\mathfrak S_J$ and $W_J^{\aff}$ are contained in $\mathfrak S$ and $W^{\aff}$, and we have the semidirect product $W_J=W_J^{\aff}\rtimes \Omega_J$ where $\Omega_J$ is the normalizer of $S_J^{\aff}$ in $W_J$. The pro-$p$ Iwahori Weyl group $W_J(1)=({\cn}\cap M_J)/ Z(1)$ of $M_J$ is the inverse image of $W_J$ in $W(1)$, ${}_1W^{\aff}_J$ is the inverse image of $W_J^{\aff}$ in $W(1)$ and 
 $W_J(1)=  {}_1W^{\aff}_J \,\Omega_J(1)$, where $\Omega_J(1)$ is the inverse image of $\Omega_J$ in $W(1)$. The  pro-$p$ Iwahori Hecke ring $\mathcal H_{J,\mathbb Z}$ of $M_J$ admits the bases $(T^J_w)_{w\in W_J(1)}$, $(T^{J,*}_w)_{w\in W_J(1)}$, $(E^J_{o}(w))_{w\in W_J(1)}$ for spherical orientations $o$ of $V_{J,\ad}$, and the integral Bernstein basis $(E^J(w))_{w\in W_J(1)}$.
 We have  $q^J(w)=q(w)$ for $w\in \mathfrak S_J$ and $ c^J(w)=c(w)$ for $w \in \mathfrak S_J(1) $ \cite[Thm.\ 2.21]{Vigneras-prop-IV}.

   On the negative side: the set $S_J^{\aff}$ of simple reflections     is not contained in $S^{\aff}$, the  length $\ell_J$ of $W_J$ is not the restriction of $\ell$, $\Omega_J$ is not contained in $\Omega$, the Bruhat order $\leq _J$ of $W_J^{\aff}$ is not the restriction of the Bruhat order $\leq $ of $W^{\aff}$,  the functions $w \mapsto q^J_{w}:W_J\to q^{\mathbb N},  (w_1,w_2) \mapsto q^J_{w_1,w_2}: W_J\times W_J\to q^{\mathbb N}, w \mapsto c_w^J: W_J(1)\to \mathbb Z[Z_k]$ are not the restrictions of the functions $w \mapsto q_w, (w_1,w_2) \mapsto q_{w_1, w_2}, w \mapsto c_w$ for $W$ and $W(1)$. 
   The linear injective map respecting the Iwahori-Matsumoto bases $$\iota_J:\mathcal H_{J,\mathbb Z}\to\mathcal H_\mathbb Z \quad T_w^J\to T_w$$  does not respect products. 
   
\begin{definition} \label{positive}    An element  $z\in Z$ is called \emph{$J$-positive} if $\langle \alpha, v(z)\rangle \geq 0$ for all $\alpha \in \Phi^+\setminus \Phi_J^+$. When  $z\in Z$ of  image $\lambda \in \Lambda$ is $J$-positive,   $ \lambda w \in W_J$ is  called \emph{$J$-positive} for all $w\in W_{J,0}$, and lifts of $ \lambda w$ in $W_J(1)$ are also called $J$-positive.   \end{definition}
 \begin{remark}\label{pos} 
$Z^+$ is the set of $z\in Z$ which are $J$-positive for all $J\subset \Delta$.

For $w_1, w_2 \in W_J(1), w_1\leq_J w_2$, if $w_2$ is $J$-positive the same is true for $w_1$ \cite[Lemma 4.1]{arXiv:1406.1003}.
\end{remark} 

\begin{notation} 
For $w\in W(1)$ or $W$, let $ n(w) \in \cn$ denote an element with image $w$; when $w\in W$ the image of $n(w)$ in $W(1)$ is a  lift  $\tilde n(w) $ of $w$. In particular, when $w\in W_0 = \cn^0/Z^0 \subset W$ we have $n(w) \in \cn^0$.  We do not require the lifts $n(w)\in \cn^0$ for $w\in W_0$ to satisfy the relations of \cite[IV.6 Proposition]{MR3600042}. The advantage is that this allows us to check compatibilities and to avoid some silly mistakes.
\end{notation}

 We have \cite[Thm.\ 1.4]{MR3437789}:
\begin{itemize}
\item The $\mathbb Z$-submodule of  $ \mathcal H_{J,\mathbb Z}$ with basis $T_w^J$ for the $J$-positive elements $w\in W_J(1)$  is a subalgebra  $ \mathcal H_{J,\mathbb Z}^+$ of $ \mathcal H_{J,\mathbb Z}$, called the \emph{$J$-positive subalgebra}.

\item $ \mathcal H_{J,\mathbb Z}$ is a localization of $ \mathcal H_{J,\mathbb Z}^+$. 

\item The  restriction of $\iota_J$ to  $\mathcal H_{J,\mathbb Z}^+$  respects products.  

\item Another basis of $ \mathcal H_{J,\mathbb Z}^+$  is $T^{J,*}_{w}$ for the $J$-positive elements $w\in W_J(1)$ (by the triangular decomposition \eqref{T*} and  Remark \ref{pos}).   

\item Similarly, for any spherical orientation $o$ of $V_{J,\ad}$, the elements $E_o^J(w)$ for the $J$-positive elements $w\in W_J(1)$ form a basis of $ \mathcal H_{J,\mathbb Z}^+$  (by the triangular decomposition \eqref{Et} and  Remark \ref{pos}).   
\end{itemize}          
 
Let $w_J$ denote the longest element of $W_{J,0}$.
For $z\in Z$, the integral Bernstein elements  
$E_{o^+}^J(z)=E_{o_J^+}^J(z) \in  \mathcal H_{J,\mathbb Z}$  associated to the  orientation $o_J^+$ of $V_{J,\ad}$ of  dominant Weyl chamber $\mathfrak D^+_J$ 
and $E_{o_J}(z)\in  \mathcal H_{\mathbb Z}$ associated to the orientation  $o_J$ of $V_{\ad}$ of Weyl chamber  
$\mathfrak D_{o_J}=w_J(\mathfrak D^-)  $  satisfy:

   \begin{lemma}  When $z\in Z$ is $J$-positive,  $\iota_J(E_{o^+}^J(z))= E_{o_J}(z)$.\end{lemma}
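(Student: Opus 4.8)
The statement compares two Bernstein-type elements attached to the same $z \in Z$ that is $J$-positive: the element $E_{o_J^+}^J(z)$ living in the Hecke ring $\mathcal H_{J,\mathbb Z}$ of the Levi $M_J$, and the element $E_{o_J}(z)$ living in $\mathcal H_{\mathbb Z}$, related via the linear embedding $\iota_J \colon T_w^J \mapsto T_w$. The heart of the matter is that the orientation $o_J$ on $V_{\ad}$ of Weyl chamber $w_J(\mathfrak D^-)$ restricts, on $V_{J,\ad}$, precisely to the dominant orientation $o_J^+$ of $M_J$: indeed $w_J$ is the longest element of $W_{J,0}$, so $w_J(\mathfrak D^-)$ meets the $J$-chamber decomposition in the $J$-dominant chamber (on the $J$-part, $w_J$ sends the antidominant $J$-chamber to the dominant $J$-chamber, while the $\Phi^+ \setminus \Phi_J^+$ directions are irrelevant to the $M_J$-orientation).

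\textbf{Key steps.}
First I would recall from \cite[Thm.\ 1.4]{MR3437789} (already quoted in the excerpt) that the restriction of $\iota_J$ to the $J$-positive subalgebra $\mathcal H_{J,\mathbb Z}^+$ is a ring homomorphism, and that the $E_{o^+}^J(w)$ for $J$-positive $w$ form a basis of $\mathcal H_{J,\mathbb Z}^+$. So $\iota_J(E_{o_J^+}^J(z))$ makes sense as an element of $\mathcal H_{\mathbb Z}$ and behaves well under multiplication. Second, I would reduce to the case where $z$ is sufficiently positive — say $z = z_0 z_1$ with $z_0$ central and very $J$-positive and $z_1 \in Z$ — using the multiplicativity of both $E_{o_J^+}^J$ and $E_{o_J}$ (their multiplication formulas $E_o(w_1)E_{o\cdot w_1}(w_2)=q_{w_1,w_2}E_o(w_1w_2)$, together with the fact that $o_J \cdot \lambda = o_J$ and $o_J^+ \cdot \lambda = o_J^+$ for $\lambda \in \Lambda$) and the homomorphism property of $\iota_J$ on $\mathcal H_{J,\mathbb Z}^+$, so that it suffices to prove the identity for one highly $J$-positive representative, after which the general case follows by a ``clearing denominators'' argument inside the localization. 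Third, for $z$ sufficiently $J$-positive I would use the known formula $E_o(\lambda) = T_\lambda$ when $\nu(\lambda) \in \mathfrak D_o$ (and its $M_J$-analogue): when $z$ is deep enough in the appropriate chamber, $\nu(\lambda_z)$ lies in $\mathfrak D_{o_J}$ on the nose for $G$ and in $\mathfrak D_{o_J^+}^J$ for $M_J$, so both Bernstein elements equal the corresponding Iwahori–Matsumoto basis element $T_{\lambda_z}$ (resp.\ $T_{\lambda_z}^J$), and then $\iota_J(T_{\lambda_z}^J) = T_{\lambda_z}$ is exactly the definition of $\iota_J$. Alternatively, and perhaps more cleanly, I would verify the identity directly by comparing the triangular Iwahori–Matsumoto expansions \eqref{Et}: one checks that $\lambda_z$ being $J$-positive forces the expansion of $E_{o_J^+}^J(z)$ over $W_J(1)$ to match, term by term under $\iota_J$, the expansion of $E_{o_J}(z)$ restricted to the sub-poset $\{x \le \lambda_z : x \in W_J(1)\}$ — using that for $J$-positive $w$ the elements appearing below $w$ in the $G$-Bruhat order that contribute nonzero coefficients already lie in $W_J(1)$ and are $J$-positive (Remark \ref{pos}), and that the orientation coefficients $c_{o_J}(\lambda_z, x)$ coincide with $c_{o_J^+}^J(\lambda_z, x)$ because both orientations compute the same local signs along any minimal gallery staying in the $J$-positive region.

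\textbf{Main obstacle.}
The delicate point is the precise comparison of orientation coefficients between $G$ and $M_J$: the length functions $\ell$ and $\ell_J$ differ, the Bruhat orders $\le$ and $\le_J$ differ, and the functions $q_w$, $q_{w_1,w_2}$ likewise differ, so one cannot naively restrict the $G$-Bernstein element and get the $M_J$-Bernstein element. What saves the day is $J$-positivity, which makes the relevant alcove walks ``straight'' in the $\Phi^+\setminus\Phi_J^+$ directions so that the extra data is trivial ($q_{w_1,w_2}=1$ on the relevant products, by \cite[Lemma 4.16]{MR3484112}), and the identity then becomes an honest equality of $T$-expansions. I expect the cleanest route is the reduction-to-sufficiently-positive $z$ via the multiplication formulas and $\iota_J|_{\mathcal H_{J,\mathbb Z}^+}$ being a homomorphism, followed by the elementary observation $E_o(\lambda) = T_\lambda$ for $\nu(\lambda)$ deep in $\mathfrak D_o$; this localizes all the subtlety into a single transparent computation.
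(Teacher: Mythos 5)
Your proposal follows essentially the same route as the paper: reduce to $z$ with $v(z)\in w_J(\mathfrak D^+)$, where both Bernstein elements are Iwahori--Matsumoto basis elements, by writing a general $J$-positive $z$ as a quotient of two such elements, applying the multiplication formula and the fact that $\iota_J$ is multiplicative on $\mathcal H_{J,\mathbb Z}^+$, and clearing denominators in $\mathcal H_{\mathbb Q}$. The one imprecision is your claim that the relevant $q_{w_1,w_2}$ equal $1$ — they need not; the paper instead shows the ratio $q_{\lambda_1,\lambda_2^{-1}}q_{\lambda_2}^{-1}$ agrees with its $M_J$-analogue by comparing the coefficient of $T(z)$ (which is $1$) in the two triangular Iwahori--Matsumoto expansions.
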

\begin{proof} The proof follows the arguments of 
\cite[Lemma 3.8]{MR3263136}, \cite[Lemma 4.6]{arXiv:1406.1003}, \cite[Prop.\ 2.19]{MR3437789}. 
 Let $z\in Z$.
 The element  $v(z)$ lies in the image  by $w_J$ of the dominant Weyl chamber $\mathfrak D^+$ of $V_{\ad}$  if and only if
 \begin{equation}\label{eq} \langle \alpha, v(z)\rangle \geq 0 \ \text{ for } \alpha \in w_J( \Phi^+)=(\Phi^+\setminus \Phi_J^+ )\cup \Phi_J^-. 
 \end{equation}
When $v(z)\in w_J(\mathfrak D^+) \Leftrightarrow \nu(z)=-v(z)\in  w_J(\mathfrak D^-)$ we have $\nu_J(z)\in \mathfrak D_J^+$ because 
 \[ \langle \alpha, v(z)\rangle \geq 0 \ \text{ for } \alpha \in   \Phi_J^- \Leftrightarrow \ \langle \alpha, \nu_J(z)\rangle \geq 0 \ \text{ for } \alpha \in   \Phi_J^+.  \]
Thus when $v(z) \in w_J(\mathfrak D^+)$ the integral Bernstein elements  
$E_{o^+}^J(z)=E_{o_J^+}^J(z) \in  \mathcal H_{J,\mathbb Z}$  and $E_{o_J}(z)\in  \mathcal H_{\mathbb Z}$   satisfy 
 \begin{equation}\label{eq2}  E_{o^+}^J(z) = T^{J}(z), \quad  E_{o_J}(z)= T(z), \quad \iota_J(E_{o^+}^J(z))= E_{o_J}(z).  
 \end{equation}
On the other hand, let $z, z_1, z_2 \in Z$ such that  $z=z_1 z_2^{-1}$ and  $\lambda_1, \lambda_2\in \Lambda$  the images of $z_1,z_2$.  For any orientation $o$ of $V_{\ad}$ (resp.\ $V_{J,\ad}$), we have in  $\mathcal H_{\mathbb Z}$  (resp.\ $\mathcal H_{J,\mathbb Z})$
 \begin{equation}\label{eq3}   E_o(z_1) q_{\lambda_2} =  q_{\lambda_1, \lambda_2^{-1}} E_o(z)E_o(z_2) \quad \text{(resp.\ } \ E^J_o(z_1) q^J_{\lambda_2} =  q^J_{\lambda_1, \lambda_2^{-1}} E^J_o(z)E^J_o(z_2)).
 \end{equation}
This follows from  the multiplication formula  in \S \ref{B3}  which gives  in  $\mathcal H_{\mathbb Z}$ 
\[ E_o(z_1) E_o(z_2^{-1}) =  q_{\lambda_1, \lambda_2^{-1}} E_o(z),\quad E_o(z_2)E_o(z_2^{-1})=q_{\lambda_2, \lambda_2^{-1}} = q_{\lambda_2} \]
and the analogous formula in $\mathcal H_{J,\mathbb Z}$.
    For $z\in Z$ general, we can find  $ z_1,z_2 $ as above such that $v(z_1), v(z_2)$ lie in $ w_J(\mathfrak D^+)$. For such elements we obtain from 
\eqref{eq2} and \eqref{eq3} that
 \begin{equation}\label{eq4} 
q_{\lambda_1, \lambda_2^{-1}} E_{o_J}(z)T(z_2)= q_{\lambda_2}T(z_1) ,\quad 
q_{\lambda_1, \lambda_2^{-1}}^J E_{o^+}^J(z)T^{J}(z_2)= q_{\lambda_2}^J T^{J}(z_1).
 \end{equation}

We now suppose that $z\in Z$ is $J$-positive.  
We choose $ z_1,z_2 \in Z$ such that $z = z_1 z_2^{-1}$ and $v(z_1), v(z_2)\in  w_J(\mathfrak D^+)$, in particular $z_1, z_2$ are $J$-positive.
 As $ E_{o+}^J(z)$ and $T^J(z_i)$ lie in $\mathcal H_{J,\mathbb Z}^+$, the algebra homomorphism $\iota_J: \mathcal H_{J,\mathbb Z}^+\to \mathcal H_{\mathbb Z}$ applied to the second  formula in \eqref{eq4}  gives
 \[ q^J_{\lambda_1, \lambda_2^{-1}} \iota_J(E_{o+}^J(z)) T(z_2)= q^J_{\lambda_2} T(z_1).\]
In  $\mathcal H_{\mathbb Q}$ where $T(z)$ is invertible   we have, using again~\eqref{eq4},
$$\iota_J(E_{o+}^J(z)) = (q^J_{\lambda_1, \lambda_2^{-1}})^{-1}q^J_{\lambda_2} T(z_1) T(z_2)^{-1}=  (q^J_{\lambda_1, \lambda_2^{-1}})^{-1}q^J_{\lambda_2} q _{\lambda_1, \lambda_2^{-1}} q_{\lambda_2}^{-1}E_{o_J}(z).$$
The coefficient of $T(z)$ in the   Iwahori-Matsumoto expansion of $\iota_J(E_{o+}^J(z))$ and of $E_{o_J}(z)$ being $1$, we deduce $ q^J_{\lambda_1, \lambda_2^{-1}}(q^J_{\lambda_2})^{-1}= q _{\lambda_1, \lambda_2^{-1}}q_{\lambda_2}^{-1}$ and $\iota_J(E_{o+}^J(z))= E_{o_J}(z)$ in $\mathcal H_{\mathbb Q}$ hence also in 
$\mathcal H_{\mathbb Z}$.  
 \end{proof}  

Suppose  $z\in Z^+$ with images $\tilde \lambda\in \Lambda^+(1), \lambda\in \Lambda^+$. We have $E^J_{o^+}(z)= T^{J,*}(z)$  and $z$ is $J$-positive hence $E_{o_J}(z)= \iota_J(T^{J,*}(z))$.  By the  triangular Iwahori-Matsumoto expansion of $T^{J,*}(z)$  \eqref{T*},  %
 \begin{equation}\label{Jz}  
E_{o_J}(z)= \sum_{x \in W_J, x \le_J w} c^{J,*}(\tilde \lambda,\tilde x) T(\tilde x).
\end{equation}
(In particular, by \eqref{Et}, $c_{o_J}(\tilde \lambda,\tilde x)=c^{J,*}(\tilde \lambda,\tilde x)$ for $\tilde x\in W_J(1)$  with $\tilde x\leq _J \tilde \lambda$.)
 For later use we need  the value of $E_{o_{J'}}(z n(w_J w_{J'})^{-1} )$ for $ J'\subset J \subset \Delta$. The computation  will use  \eqref{Jz} and the  following Lemma \ref{leJ} (whose proof uses Lemma  \ref{OJ}).    Recall the surjective map $\Phi\to \Phi_a$  \eqref{Phia} respecting positive roots.
 
  \begin{lemma}[{\cite[Lemma 2.9 ii]{MR3366919}}]\label{OJ} 
  Let $w\in W, \lambda\in \Lambda^+$  such that  $w\leq \lambda$. 
  Then there exists $\lambda_1\in  \Lambda^+$ such that  $\lambda_1 \le \lambda$ and $w\in W_0 \lambda_1 W_0$. 
  In particular, $\nu(\lambda_1)-\nu(\lambda) \in \sum_{\alpha \in \Delta} \mathbb Q_{\geq 0} \alpha^\vee$.
  \end{lemma}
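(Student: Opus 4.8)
The statement is that if $w \in W$, $\lambda \in \Lambda^+$ and $w \le \lambda$ in the Bruhat order, then there is some $\lambda_1 \in \Lambda^+$ with $\lambda_1 \le \lambda$ and $w \in W_0\lambda_1 W_0$. Since the reference \cite[Lemma~2.9~ii]{MR3366919} is cited, the cleanest approach is to deduce it from the tools already developed in this section, chiefly Lemma~\ref{poly} and Proposition~\ref{L+order}. The plan is as follows. First I would pass to the affine Weyl group picture: write $w = \mu w_0$ with $\mu \in \Lambda$, $w_0 \in W_0$, and recall that every $W$-double coset $W_0 x W_0$ contains a unique dominant element. Concretely, choose $\lambda_1 \in \Lambda^+$ to be the unique dominant element in the orbit of $\nu(\mu)$ under $W_0$ (i.e. the dominant representative of the coset $\Lambda^+ \cap W_0 \mu W_0$, using that $\Lambda$ is $W_0$-stable and $\Lambda^+$ meets each $W_0$-orbit exactly once modulo $Z^0$). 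By construction $w \in W_0 \lambda_1 W_0$, so only $\lambda_1 \le \lambda$ remains to be proved.

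For the inequality $\lambda_1 \le \lambda$, the key is Lemma~\ref{poly}: it says that if $x_1 \le x_2$ in $W$ and $x_2(0)$ lies in a $W_0$-invariant convex subset $\mathcal P$ of $V_{\ad}$, then $x_1(0) \in \mathcal P$. Apply this with $x_2 = \lambda$ (so $x_2(0) = \nu(\lambda)$, up to the sign convention, lying in $\mathfrak D^+ \cap P(\Phi_a^\vee)$) and $x_1 = w$: since $w \le \lambda$, we get that $w(0) = \nu(\mu)$ lies in the convex $W_0$-invariant polytope $\mathcal P(\lambda)$ spanned by the $W_0$-conjugates of $\nu(\lambda)$ — this is exactly the polytope already used at the end of the proof of Proposition~\ref{L+order}. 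Since $\lambda_1 \in \Lambda^+$ is the dominant point of the $W_0$-orbit of $\nu(\mu)$, and $\mathcal P(\lambda)$ is $W_0$-invariant, we conclude $\nu(\lambda_1) \in \mathcal P(\lambda)$, hence $\nu(\lambda_1) \in \nu(\lambda) - \sum_{\alpha\in\Delta}\mathbb{R}_{\ge 0}\alpha_a^\vee$, i.e. $\nu(\lambda_1) \preceq \nu(\lambda)$. Moreover $\lambda_1 \in \mu W^{\aff} \subset \lambda W^{\aff}$ because the relation $w \le \lambda$ forces $w$ and $\lambda$ to lie in the same coset of $W^{\aff}$ (Definition~\ref{Bru}), and $W_0$-conjugation preserves $W^{\aff}$-cosets. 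Now Proposition~\ref{L+order} gives $\lambda_1 \le \lambda$. The final assertion $\nu(\lambda_1) - \nu(\lambda) \in \sum_{\alpha\in\Delta}\mathbb{Q}_{\ge 0}\alpha^\vee$ then follows from $\nu(\lambda_1) \preceq \nu(\lambda)$ together with the fact that $\alpha_a^\vee$ is a positive rational multiple of $\alpha^\vee$ (namely $\alpha_a^\vee = e_\alpha^{-1}\alpha^\vee$, as recorded in~\eqref{Phia}).

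The main obstacle I anticipate is purely bookkeeping: making sure that $\lambda_1$ can genuinely be taken in $\Lambda^+ = Z^+/Z^0$ rather than merely in $\Lambda$, i.e. that each $W_0$-orbit on $\nu(\Lambda)$ has a representative lying over $\Lambda^+$, and checking the sign conventions ($\nu = -v$, antidominant vs dominant chamber) consistently when invoking Lemma~\ref{poly} — that lemma is phrased for a general $W_0$-invariant convex set, and one must apply it to $\mathcal P(\lambda)$, verifying this set is indeed convex and $W_0$-invariant, which was already observed in the discussion preceding the end of the proof of Proposition~\ref{L+order}. Everything else is a direct citation of results proved above, so the proof should be short.
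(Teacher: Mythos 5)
Your proof is correct, but it takes a genuinely different route from the paper's. Both arguments choose $\lambda_1$ in the same way --- as a representative in $\Lambda^+$ of the $W_0$-conjugacy orbit of the translation part $\mu$ of $w$, so that $w\in W_0\lambda_1 W_0$ is automatic --- and the entire content is the inequality $\lambda_1\le\lambda$. The paper establishes this purely combinatorially: from $w\le\lambda\le w_\Delta\lambda$, the longest element of $W_0\lambda W_0$, and the fact that $w_\Delta\lambda_1$, $w_\Delta\lambda$ are the longest elements of their double cosets, the lifting property of Coxeter groups gives $w_\Delta\lambda_1\le w_\Delta\lambda$ inductively and then descends to $\lambda_1\le\lambda$, using $\ell(u\lambda)=\ell(u)+\ell(\lambda)$ for $u\in W_0$, $\lambda\in\Lambda^+$. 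You instead route everything through the geometric criterion of Proposition~\ref{L+order}: Lemma~\ref{poly} applied to the $W_0$-invariant convex polytope $\mathcal P(\lambda)$ places $w(0)=\nu(\mu)$, hence also $\nu(\lambda_1)$, inside $\mathcal P(\lambda)\subset \nu(\lambda)+\sum_{\alpha\in\Delta}\mathbb R_{\ge 0}\alpha_a^\vee$, and combined with $\lambda_1\in\lambda W^{\aff}$ (which you correctly extract from Definition~\ref{Bru} together with $W_0\subset W^{\aff}$) this yields $\lambda_1\le\lambda$. Your version has the merit of staying entirely inside the machinery of \S\ref{BO} --- it is essentially the same convexity argument the paper uses for the harder implication of Proposition~\ref{L+order} itself --- whereas the paper's proof imports the lifting property from the general theory of Coxeter groups but is shorter and avoids the polytope. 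Two small points to tidy up: the containment should read $\nu(\lambda_1)\in\nu(\lambda)+\sum\mathbb R_{\ge 0}\alpha_a^\vee$ (plus sign) under the paper's conventions $\nu=-v$ and $x_1\preceq x_2\Leftrightarrow x_1-x_2\in\sum\mathbb N\alpha_a^\vee$; and the passage from real to integral coefficients deserves the one-line remark that $\lambda_1\lambda^{-1}\in\Lambda\cap W^{\aff}=\prod_{\alpha\in\Delta}\lambda_\alpha^{\mathbb Z}$ forces the coefficients into $\mathbb Z\cap\mathbb R_{\ge 0}=\mathbb N$, exactly as at the end of the paper's proof of Proposition~\ref{L+order}. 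Also, existence (not uniqueness) of a representative of $W_0\cdot\mu$ in $\Lambda^+$ is all you need, since $\nu$ need not be injective on $\Lambda$.
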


  \begin{proof}
    Since our assumptions on $W$ are more general than in \cite{MR3366919} we give a brief sketch of the proof. 
    We have that $w \le w_\Delta \lambda$, the longest element of $W_0 \lambda W_0$.
    Choose $\lambda_1 \in \Lambda^+$ such that $w\in W_0 \lambda_1 W_0$. 
    Since $w_\Delta \lambda$, $w_\Delta \lambda_1$ are the longest elements of their double cosets, the lifting property of Coxeter groups \cite[Prop.\ 2.2.7]{bjorner-brenti} shows inductively that
    $w_\Delta \lambda_1 \le w_\Delta \lambda$, so $\lambda_1 \le w_\Delta \lambda$. By using the lifting property again we deduce that $\lambda_1 \le \lambda$.
    (We repeatedly use that $\ell(w \lambda) = \ell(w) + \ell(\lambda)$ for $w \in W_0$, $\lambda\in \Lambda^+$. This is a consequence of \eqref{length}.)
  \end{proof}
 
   \begin{lemma} \label{leJ} Let  $J'\subset J \subset \Delta$  and $ \lambda \in \Lambda$ such that  $\langle \alpha, v(\lambda) \rangle > 0$ for all $\alpha \in J\setminus J'$. 
     \begin{enumerate}\item
       For $\lambda_1 \in \Lambda^+$ such that $v(\lambda)- v(\lambda_1)\in \sum_{\beta\in J'}
       \mathbb Q_{\geq 0} \beta^\vee$, we have $\ \langle \gamma,v(\lambda_1 )\rangle >0$ for all
       $\gamma \in \Phi_{J}^+\setminus \Phi_{J'}^+$.
     \item 
       Suppose $\lambda \in \Lambda^+$ and $x\in W_{J'}$ with $x\leq_{J'} \lambda$.  Then
       $\ell(x)=\ell(x w_{J'} w_J)+\ell(w_{J'} w_J)$.
     \end{enumerate}
    \end{lemma}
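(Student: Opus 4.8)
The two statements are proved one after the other, (i) by a direct root‑system computation which is then fed into the proof of (ii). For (i), write $\gamma\in\Phi_J^+\setminus\Phi_{J'}^+$ as $\gamma=\sum_{\alpha\in J}n_\alpha\alpha$ with $n_\alpha\in\mathbb N$; since $\gamma\notin\Phi_{J'}$ we have $n_{\alpha_0}>0$ for some $\alpha_0\in J\setminus J'$. Then $\langle\gamma,v(\lambda_1)\rangle=\sum_{\alpha\in J}n_\alpha\langle\alpha,v(\lambda_1)\rangle$. The terms with $\alpha\in J'$ are $\geq 0$ because $\lambda_1\in\Lambda^+$; and for $\alpha\in J\setminus J'$ the hypothesis $v(\lambda)-v(\lambda_1)\in\sum_{\beta\in J'}\mathbb Q_{\geq 0}\beta^\vee$, together with $\langle\alpha,\beta^\vee\rangle\leq 0$ for distinct simple roots, gives $\langle\alpha,v(\lambda_1)\rangle\geq\langle\alpha,v(\lambda)\rangle>0$ by the standing hypothesis on $\lambda$. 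Hence $\langle\gamma,v(\lambda_1)\rangle\geq n_{\alpha_0}\langle\alpha_0,v(\lambda_1)\rangle>0$.

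For (ii), I would first apply Lemma~\ref{OJ} to $M_{J'}$ (using that $\lambda\in\Lambda^+$ is a fortiori $J'$‑dominant) to obtain $\lambda_1\in\Lambda$ which is $J'$‑dominant, satisfies $\lambda_1\leq_{J'}\lambda$ and $x\in W_{J',0}\,\lambda_1\,W_{J',0}$, and with $v(\lambda)-v(\lambda_1)\in\sum_{\beta\in J'}\mathbb Q_{\geq 0}\beta^\vee$. Writing $x=y_1\lambda_1 y_2$ with $y_i\in W_{J',0}$, the $\Lambda$‑part of $x$ is $\mu:=y_1\lambda_1 y_1^{-1}$ and $v(\mu)=y_1\bigl(v(\lambda_1)\bigr)$. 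Since $\lambda\in\Lambda^+$ is in particular $J'$‑positive and $x\leq_{J'}\lambda$, the element $x$ — hence its $\Lambda$‑part $\mu$, hence $\lambda_1$ — is $J'$‑positive by Remark~\ref{pos}; being at once $J'$‑dominant and $J'$‑positive, $\lambda_1$ lies in $\Lambda^+$. Now part (i) applies to $\lambda_1$ and gives $\langle\gamma,v(\lambda_1)\rangle>0$ for all $\gamma\in\Phi_J^+\setminus\Phi_{J'}^+$, and since $y_1^{-1}\in W_{J',0}$ permutes this set of roots, conjugation yields $\langle\gamma,v(\mu)\rangle>0$ for all such $\gamma$.

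It remains to turn this positivity into the length identity. Put $c=w_{J'}w_J$, so that $\ell(c)=\lvert\Phi_J^+\setminus\Phi_{J'}^+\rvert$ and $x=(xc)c^{-1}$; the identity $\ell(x)=\ell(xc)+\ell(c)$ is equivalent to the right multiplication by $c^{-1}$ carrying $xc$ to $x$ being length‑additive, i.e.\ to $xc$ sending the inversion set of $c$ into the positive affine roots. A direct computation identifies that inversion set with the affine roots $w_J\delta+0$, $\delta\in\Phi_J^+\setminus\Phi_{J'}^+$, on which $c$ acts by $\delta\mapsto w_{J'}\delta$; so the condition becomes that $x$ sends each affine root $\gamma+0$, $\gamma\in\Phi_J^+\setminus\Phi_{J'}^+$, to a positive affine root. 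Writing $x=\mu y$ with $y\in W_{J',0}$ permuting $\Phi_J^+\setminus\Phi_{J'}^+$, and recalling — since the fundamental alcove $\mathfrak C^-$ lies in the antidominant chamber — that a translation $\mu$ carries $\gamma+0$ (for $\gamma\in\Phi^+$) to a positive affine root exactly when $\langle\gamma,v(\mu)\rangle>0$, the requirement $x(\gamma+0)=\mu\bigl((y\gamma)+0\bigr)$ positive for all $\gamma\in\Phi_J^+\setminus\Phi_{J'}^+$ is precisely the positivity established in (ii).

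The main obstacle I anticipate is the bookkeeping in this last step: one must handle the paper's sign conventions for $\Phi^{\aff}$ with care (under them the positive affine root at the wall of $\mathfrak C^-$ through $0$ attached to $\alpha\in\Delta$ is $-\alpha_a$, not $\alpha_a$), and track the linear $W_0$‑action and the translation action on affine roots consistently; the length formula~\eqref{length} is a convenient cross‑check throughout, and one should double‑check the identification of the cocharacter lattices implicit in transferring the conclusion of Lemma~\ref{OJ} for $M_{J'}$ back to $V_{\ad}$.
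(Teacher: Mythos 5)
Your proof is correct, and it follows the paper's skeleton for most of part (ii) — apply Lemma~\ref{OJ} inside $M_{J'}$ to produce $\lambda_1$, check $\lambda_1\in\Lambda^+$, feed part (i) into the $W_{J',0}$-conjugate $\mu$ of $\lambda_1$ to get $\langle\gamma,v(\mu)\rangle>0$ on $\Phi_J^+\setminus\Phi_{J'}^+$ — but it diverges in two places. First, you deduce $\lambda_1\in\Lambda^+$ from $J'$-positivity of $x$ via Remark~\ref{pos}; the paper instead gets it directly from $v(\lambda)-v(\lambda_1)\in\sum_{\beta\in J'}\mathbb Q_{\ge 0}\beta^\vee$ together with $\langle\alpha,\beta^\vee\rangle\le 0$ for $\alpha\in\Delta\setminus J'$, which is shorter and avoids invoking the positivity inheritance under $\leq_{J'}$. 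Second, and more substantively, for the length identity the paper does not pass through affine-root inversion sets at all: it applies the formula \eqref{length2} to $\ell(\lambda_x v_x w_{J'}w_J)$ and computes $\Phi_a^+\cap v_xw_{J'}w_J(\Phi_a^-)=(\Phi_{a,J}^+\setminus\Phi_{a,J'}^+)\cup(\Phi_a^+\cap v_x(\Phi_a^-))$, on the first piece of which $v(\lambda_x)$ pairs strictly positively, so the correction term in \eqref{length2} reduces to the one for $\ell(v_x)$ alone. Your route via the criterion ``$\ell(xc)+\ell(c)=\ell(x)$ iff $xc$ maps the inversion set $N(c)=\{w_J\delta+0:\delta\in\Phi_{a,J}^+\setminus\Phi_{a,J'}^+\}$ to positive affine roots'' is equivalent (I checked the identification of $N(w_{J'}w_J)$ and the translation criterion under the $\mathfrak C^-$ sign convention, and both come out as you say), but it forces you to develop the affine-root bookkeeping that \eqref{length2} already encapsulates, and one must work throughout with $\Phi_a$ rather than $\Phi$ since the affine Weyl group is attached to the reduced system $\Phi_a$ — a point you flag but should make explicit, replacing $\gamma+0$ by $\gamma_a+0$ and using that $\langle\gamma_a,v(\mu)\rangle\in\mathbb Z$ so that ``positive'' means ``$\ge 1$''. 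The paper's version is therefore the more economical given the tools it has already set up, while yours is the more conceptual restatement of the same inversion count.
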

    \begin{proof}  (i) For $\alpha \in J \setminus J'$ and $\beta \in J'$,  we have $\langle \alpha, \beta^\vee \rangle \leq 0$  hence 
$ \langle \alpha,   v(\lambda )\rangle \leq  \langle \alpha,   v(\lambda_1 )\rangle$. Let 
$\gamma  \in \Phi^+_{J} \setminus \Phi _{J'}^+$. There exists $\alpha \in J\setminus J'$ such that $\gamma-\alpha $ is a sum of roots in $\Phi^+$. Since $\lambda_1\in \Lambda^+$, 
$\langle \gamma-\alpha ,v(\lambda_1 )\rangle \geq 0$ hence $\langle  \alpha,v(\lambda_1 )\rangle \leq  \langle \gamma,v(\lambda_1 )\rangle  $ and $\langle  \alpha,v(\lambda )\rangle \leq  \langle \gamma,v(\lambda_1 )\rangle  $. 
Hence $\langle \gamma, v(\lambda_1)\rangle >0$ for $\gamma \in \Phi_{J}^+\setminus  \Phi_{J'}^+$. 

(ii) There exists  $\lambda_1\in  \Lambda^{+,J'}$ such that $x\in W_{J',0} \lambda_1 W_{J',0}$ and  $v(\lambda)- v(\lambda_1)\in \bigoplus_{\beta\in J'} \mathbb Q_{\geq 0} \beta^\vee$ (Lemma \ref{OJ},  $v=-\nu$). In particular, $0 \le \langle \alpha,v(\lambda)\rangle \le \langle \alpha,v(\lambda_1)\rangle$ for $\alpha \in J \setminus J'$, hence
$\lambda_1 \in \Lambda^+$.
We write $x=\lambda_x v_x$ with $\lambda_x= v_1\cdot \lambda_1\in \Lambda$ and $ v_1,v_x\in W_{J',0}$. 

As $\Phi_{J}^+\setminus  \Phi_{J'}^+$ is stable by $W_{J',0}$ and  $\langle \gamma, v(\lambda_1)\rangle >0$ for $\gamma \in \Phi_{J}^+\setminus  \Phi_{J'}^+$ by (i) we have  
$$
\langle \gamma, v(\lambda_x)\rangle >0 \quad \text{for} \ \gamma \in \Phi_{J}^+\setminus  \Phi_{J'}^+.
$$
 By the length formula \eqref{length2},  $\ell(x w_{J'} w_J)=\ell (\lambda_x v_xw_{J'} w_J)$  is equal to
$$\ell(x w_{J'} w_J)=\ell(\lambda_x)-\ell(v_xw_{J'} w_J)+2 |\{\alpha \in \Phi^+_a  \cap v_xw_{J'}w_J(\Phi_a^-),  \langle \alpha_a, v(\lambda_x)\rangle \leq 0\}| .$$
As $v_x\in W_{J',0}$ we have $\ell(v_x w_{J'} w_J)=\ell(w_J)-\ell(v_x w_{J'})=\ell(w_J)-\ell( w_{J'})+\ell(v_x)=\ell(v_x)+\ell( w_{J'} w_J)$. 
Hence 
$\ell(\lambda_x)-\ell(v_xw_{J'} w_J)=\ell(\lambda_x)-\ell(v_x ) -\ell( w_{J'} w_J)$.  We have   

  \noindent $\Phi_a^+ \cap v_xw_{J'}w_J(\Phi_a^-)=\Phi_a^+ \cap[(\Phi_a^-\setminus \Phi_{a,J}^-)\cup ( \Phi_{a,J}^+\setminus \Phi_{a,J'}^+)\cup v_x(\Phi^-_{a,J'} )]=( \Phi_{a,J}^+\setminus \Phi^+_{a,J'})\cup (\Phi_a^+  \cap v_x (\Phi_a^-))$,
and $\langle \alpha_a , v(\lambda_x)\rangle >0$ for $\alpha_a \in \Phi_{a,J}^+\setminus \Phi^+_{a,J'}$. Hence

 \noindent $\ell(x w_{J'} w_J)+\ell(w_{J'} w_J)=\ell(\lambda_x)-\ell(v_x) + 2 |\{\alpha_a \in \Phi^+_a  \cap v_x (\Phi_a^-),  \langle \alpha_a, v(\lambda_x)\rangle \leq 0\}|=\ell(x)$.
  \end{proof} 
 
 \begin{proposition}\label{EJ} For $J'\subset J \subset \Delta$ and $z\in Z^+$ of image $\tilde \lambda\in \Lambda^+(1)$ and $\lambda\in \Lambda^+$ such that $\langle \alpha, v(\lambda) \rangle > 0$ for all $\alpha \in J\setminus J'$,
 $$E_{o_{J'}}(z n(w_J w_{J'})^{-1})=   \sum_{x\in W_{J'}, x\leq _{J'}\lambda } c ^{J',*}(\tilde \lambda,\tilde x) T (\tilde x n(w_J w_{J'})^{-1}) $$
 for any lifts $\tilde x\in W_{J'}(1)$ of $x \in W_{J'}$.
  \end{proposition}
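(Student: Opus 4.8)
The plan is to compute $E_{o_{J'}}(z\, n(w_J w_{J'})^{-1})$ by factoring the alcove-walk element through the known expansion \eqref{Jz} of $E_{o_J}(z)$ and then using a braid relation to shift by $n(w_J w_{J'})^{-1}$. First I would record the multiplication formula for Bernstein elements: since the orientation $o_{J'}$ has Weyl chamber $\mathfrak D_{o_{J'}} = w_{J'}(\mathfrak D^-)$ and $o_J$ has chamber $\mathfrak D_{o_J} = w_J(\mathfrak D^-)$, and since $w_J w_{J'}$ sends $w_{J'}(\mathfrak D^-)$-type data to $w_J(\mathfrak D^-)$-type data, one has $o_{J'} \cdot \lambda_z = o_{J'}$ (translations fix spherical orientations) and, crucially, $o_{J'} \cdot (\lambda_z n(w_Jw_{J'})^{-1})$ relates $o_{J'}$ to $o_J$ in the right way. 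Concretely I want the factorization $E_{o_{J'}}(\lambda_z n(w_Jw_{J'})^{-1}) = E_{o_{J'}}(\lambda_z)\, E_{o_{J'}\cdot \lambda_z}(n(w_Jw_{J'})^{-1})$ up to a factor $q_{\lambda_z,\, n(w_Jw_{J'})^{-1}}$, which vanishes mod $q$ unless lengths add; by Lemma \ref{leJ}(ii) (applied with $x$ ranging over $W_{J'}$ with $x \le_{J'} \lambda$) the relevant lengths do add, so the factor is $1$ in $C$. Moreover $E_{o_{J'}}(\lambda_z) = \iota_{J'}(E^{J'}_{o^+}(z))$ by the Lemma preceding \eqref{Jz} (as $z \in Z^+$ is in particular $J'$-positive), and this equals $\iota_{J'}(T^{J',*}(z))$, whose expansion is exactly $\sum_{x \le_{J'} \lambda} c^{J',*}(\tilde\lambda,\tilde x) T(\tilde x)$ by \eqref{T*}.

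Next I would identify $E_{o_{J'}\cdot \lambda_z}(n(w_Jw_{J'})^{-1})$. Since $o_{J'}\cdot \lambda_z = o_{J'}$, this is $E_{o_{J'}}(n(w_Jw_{J'})^{-1})$, an element of the finite part $\mathcal H_C(K,I)$; I claim it equals $T(n(w_Jw_{J'})^{-1}) = T_{w_Jw_{J'}}^{-1}$-type element, i.e. simply $T(n(w_Jw_{J'})^{-1})$, because for elements $w \in W_0$ lying in the "negative" region relative to the orientation $o_{J'}$ the Bernstein element coincides with the Iwahori--Matsumoto $T$. One checks that $w_J w_{J'}$ (equivalently its inverse) is $o_{J'}$-negative in the required sense: the relevant roots made negative by $w_{J'}w_J$ are precisely those computed in the proof of Lemma \ref{leJ}(ii), namely $\Phi^+_{a,J}\setminus\Phi^+_{a,J'}$ together with the part coming from $v_x$, and the length additivity $\ell(x) = \ell(xw_{J'}w_J) + \ell(w_{J'}w_J)$ from Lemma \ref{leJ}(ii) is exactly what guarantees that multiplying $T(\tilde x)$ by $T(n(w_Jw_{J'})^{-1})$ on the right stays on an Iwahori--Matsumoto basis element, i.e. $T(\tilde x)\,T(n(w_Jw_{J'})^{-1}) = T(\tilde x\, n(w_Jw_{J'})^{-1})$ by the braid relation.

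Putting these together: $E_{o_{J'}}(z\,n(w_Jw_{J'})^{-1}) = \bigl(\sum_{x \le_{J'} \lambda} c^{J',*}(\tilde\lambda,\tilde x) T(\tilde x)\bigr) T(n(w_Jw_{J'})^{-1}) = \sum_{x \le_{J'}\lambda} c^{J',*}(\tilde\lambda,\tilde x) T(\tilde x\, n(w_Jw_{J'})^{-1})$, where in the last step the braid relation applies term by term by Lemma \ref{leJ}(ii). Since $c^{J',*}(\tilde\lambda,\tilde x) T(\tilde x)$ is independent of the choice of lift $\tilde x$, so is each term on the right, giving the stated formula for any lifts. I expect the main obstacle to be the bookkeeping with spherical orientations and the $q$-factors: one must verify carefully that $o_{J'}\cdot \lambda_z = o_{J'}$, that $E_{o_{J'}}(n(w_Jw_{J'})^{-1}) = T(n(w_Jw_{J'})^{-1})$ (this requires knowing the sign region of $n(w_Jw_{J'})^{-1}$ relative to $\mathfrak D_{o_{J'}}$, which should follow from the root computation $\Phi_a^+ \cap v_xw_{J'}w_J(\Phi_a^-)$ in the proof of Lemma \ref{leJ}(ii) specialized to $v_x = 1$), and that all the square-root factors $q_{w_1,w_2}$ are trivial in characteristic $p$ precisely because the lengths add. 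The hypothesis $\langle\alpha, v(\lambda)\rangle > 0$ for $\alpha\in J\setminus J'$ is used to invoke Lemma \ref{leJ}, ensuring $\lambda_1 \in \Lambda^+$ and hence the length-additivity; I would make sure this is the only place it enters.
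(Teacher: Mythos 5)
Your plan has a directional error in the length bookkeeping that breaks the two key steps. Lemma~\ref{leJ}(ii) says $\ell(x)=\ell(xw_{J'}w_J)+\ell(w_{J'}w_J)$, i.e.\ right multiplication by $w_{J'}w_J$ (the image of $n(w_Jw_{J'})^{-1}$) \emph{decreases} length by $\ell(w_{J'}w_J)$; it does not make lengths add. Consequently $q_{\lambda,\,w_{J'}w_J}=(q_\lambda q_{w_{J'}w_J}q_{\lambda w_{J'}w_J}^{-1})^{1/2}=q_{w_{J'}w_J}\neq 1$ (for $J\neq J'$), so your factorization $E_{o_{J'}}(z\,n(w_Jw_{J'})^{-1})=E_{o_{J'}}(z)\,E_{o_{J'}}(n(w_Jw_{J'})^{-1})$ is off by this nontrivial $q$-power; in characteristic $p$ the product on the right is simply $0$, and over $\mathbb Z$ (where the proposition actually lives, since $E_{o_{J'}}$, $T$ and $c^{J',*}$ are all defined in $\mathcal H_{\mathbb Z}$) you cannot divide by $q_{w_{J'}w_J}$. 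The same issue kills your final step: $T(\tilde x)\,T(n(w_Jw_{J'})^{-1})=T(\tilde x\, n(w_Jw_{J'})^{-1})$ would require $\ell(xw_{J'}w_J)=\ell(x)+\ell(w_{J'}w_J)$, which is the opposite of what Lemma~\ref{leJ}(ii) gives.

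The argument must be run in the other direction, which is what the paper does: start from the \emph{unknown} element and multiply up in length. One has
$E_{o_{J'}}(z)=E_{o_{J'}}(z\,n(w_Jw_{J'})^{-1})\,E_{o_{J'}\cdot w_{J'}w_J}(n(w_Jw_{J'}))$ with $q$-factor equal to $1$ because now $\ell(\lambda w_{J'}w_J)+\ell(w_Jw_{J'})=\ell(\lambda)$; the orientation $o_{J'}\cdot w_{J'}w_J$ has Weyl chamber $w_Jw_{J'}(\mathfrak D_{o_{J'}})=w_J(\mathfrak D^-)$, hence equals $o_J$, and $E_{o_J}(n(w_Jw_{J'}))=T(n(w_Jw_{J'}))$. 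Expanding the left side by \eqref{Jz} and writing each $T(\tilde x)=T(\tilde x\,n(w_Jw_{J'})^{-1})\,T(n(w_Jw_{J'}))$ (the braid relation now valid by Lemma~\ref{leJ}(ii)), one cancels $T(n(w_Jw_{J'}))$, which is invertible in $\mathcal H_{\mathbb Q}$, and the resulting identity descends to $\mathcal H_{\mathbb Z}$. Your identification of $E_{o_{J'}}(z)$ with $\iota_{J'}(T^{J',*}(z))$ and its expansion is correct and is indeed the other ingredient; it is only the order of the factorization and the direction of the braid relation that need to be reversed.
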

   \begin{proof} We have $\ell(\lambda)=\ell(\lambda  w_{J'} w_J)+\ell(w_{J'} w_J)$ by  Lemma \ref{leJ}, and the multiplication formula in \S \ref{B3} gives 
 $$E_{o_{J'}}(z)= E_{o_{J'}}(z n(w_J w_{J'})^{-1}) E_{o_{J'}\cdot w_{J'}w_{J}}( n(w_J w_{J'})).$$
The orientation  $o_{J'}\cdot w_{J'}w_{J}$ of Weyl chamber $w_Jw_{J'} (\mathcal D_{o_{J'}})=w_J( \mathcal D^-)= \mathcal D_{o_{J}}$ is $o_J$ and $E_{o_J}( n(w_J w_{J'}))= T(n(w_J w_{J'}))$ \cite[Example 5.32]{MR3484112}, so
$$E_{o_{J'}}(z)= E_{o_{J'}}(z n(w_J w_{J'})^{-1})T(n(w_J w_{J'})).$$ Applying \eqref{Jz} and  Lemma \ref{leJ}
$$E_{o_{J'}}(z n(w_J w_{J'})^{-1})T(n(w_J w_{J'}))=   \sum_{x\in W_{J'}, x\leq _{J'}\lambda } c ^{J',*}(\tilde \lambda,\tilde x) T(\tilde x n(w_J w_{J'})^{-1})T(n(w_J w_{J'})).
$$
In $\mathcal H_{\mathbb Q}$, the basis element $T(n(w_J w_{J'}))$ is invertible and we deduce
$$E_{o_{J'}}(z n(w_J w_{J'})^{-1})=   \sum_{x\in W_{J'}, x\leq _{J'}\lambda } c ^{J',*}(\tilde \lambda,\tilde x) T(\tilde x n(w_J w_{J'})^{-1}).$$
This remains true in $\mathcal H_{\mathbb Z}$.
  \end{proof}
 \begin{remark} Comparing with  \eqref{T*}, \eqref{Et},   Proposition \ref{EJ} implies
 $$c_{o_{J'}}(\tilde \lambda n(w_J w_{J'})^{-1}, \tilde x n(w_J w_{J'})^{-1}
 )=c^{J',*}(\tilde \lambda,\tilde x) $$
for $J'\subset J\subset \Delta$  and $\tilde \lambda, \tilde x\in W(1)$  lifting $\lambda\in \Lambda^+,x\in W_{J'}, x\leq _{J'} \lambda$.
\end{remark} 

 \subsection{\texorpdfstring{$\psi(c(s))$}{psi(c(s))} for a simple affine reflection }\label{sec:psi-cs}

Let $\psi: Z^0\to C^\times$ be a character. It is trivial on $Z^0\cap M'_ {\Delta'_{\psi}}$  (Definition  \ref{delta'psi}) by the following lemma.
 
\begin{lemma} For  $J\subset \Delta$, the group $Z^0\cap M'_J$ is generated by  
$Z^0\cap M'_\alpha$ for $\alpha \in J$.
\end{lemma}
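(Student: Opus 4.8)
The plan is to prove that $Z^0 \cap M'_J$ is generated by the subgroups $Z^0 \cap M'_\alpha$ for $\alpha \in J$ by exploiting the Bruhat--Tits structure theory already invoked elsewhere in the paper (for instance in the proof of Lemma~\ref{lem:claim-for-second}). Recall that $M'_J$ is the group generated by the root subgroups $U_{\pm\beta}$ for $\beta \in \Phi_J$; it is the analogue of $G'$ for the ``Levi'' $M_J$, and $Z \cap M'_J$ maps onto the coroot lattice $\bigoplus_{\alpha \in J}\mathbb{Z}\alpha_a^\vee$ of $\Phi_{a,J}$ with kernel $Z^0 \cap M'_J$, by the description recalled in \S\ref{sec:elem-a-alpha} applied to $M_J$ in place of $G$ (see also \cite[III.16]{MR3600042}). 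So $Z^0 \cap M'_J$ is exactly the ``$\ell = 0$'' part of $\mathcal{N} \cap M'_J$ modulo the translation lattice, i.e.\ the intersection of $Z^0$ with the subgroup $\mathcal{N} \cap M'_J \cap (Z^0 \cdot \text{(finite part)})$; more concretely, $Z^0 \cap M'_J = (\mathcal{N}^0 \cap M'_J) \cap Z$.

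First I would reduce to a statement about the finite reductive group. Since $M'_\alpha \subset M'_J$ for $\alpha \in J$ and each $M'_\alpha \cap Z^0$ is contained in $Z^0 \cap M'_J$, the subgroup $H := \langle Z^0 \cap M'_\alpha : \alpha \in J\rangle$ is contained in $Z^0 \cap M'_J$; the content is the reverse inclusion. By Bruhat--Tits theory applied to $M_J$ (the parahoric subgroup $M_J^0 = M_J \cap K$ is special, with pro-$p$ radical $M_J \cap K(1)$ and reductive quotient the $k$-points of $\mathbf{M}_{J,k}$, the Levi of $\mathbf G_k$ with root system $\Phi_{J}$ identified inside $\Phi$), the group $\mathcal{N}^0 \cap M'_J$ surjects onto the subgroup of the finite Weyl group generated by the $s_\alpha$, $\alpha \in J$, namely $W_{J,0}$. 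An element $n \in Z^0 \cap M'_J$ has trivial image in $W_{J,0}$, hence trivial image in $W_0$, so it lies in $Z^0$; I want to show it lies in $H$.

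The key step is then a computation inside the reductive quotient $M_{J,k} = \overline{M_J}$. The image $\overline H$ of $H$ contains the images of all $Z^0 \cap M'_\alpha$, $\alpha \in J$; each of these surjects onto $Z_{k,s_\alpha} = Z_k \cap G'_{k,s_\alpha}$ in the notation of Remark~\ref{cs} (the $\mu$-part coming from the SL$_2$ or PGL$_2$-type subgroup attached to $\alpha$), or more simply onto the image in $Z_k$ of the coroot $\alpha^\vee$ evaluated on units. Now $Z^0 \cap M'_J$, being the preimage in $Z^0$ of $Z_k \cap (M'_{J,k})$ where $M'_{J,k}$ is the subgroup of $M_{J,k}$ generated by unipotents, is generated modulo the pro-$p$ radical $Z(1)$ by these root-coroot contributions: indeed $Z_k \cap M'_{J,k}$ is generated by the subgroups $Z_{k,s_\alpha}$ for $\alpha \in J$, which is the standard fact that in a connected reductive group over a finite field the torus-part of the derived group is generated by the images of the simple coroots (apply \cite[II.6 Prop.]{MR3600042} with $I = \varnothing$ to $M_J$, exactly as in the proof of Lemma~\ref{lm:support-satake}). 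This handles the image in $Z_k$; one then lifts to $Z^0$ using that the pro-$p$ part $Z(1) \cap M'_J$ is likewise generated by the $Z(1) \cap M'_\alpha$ — again a Bruhat--Tits generation statement for the pro-$p$ Iwahori-type filtration on $M'_J$, or one simply notes $Z(1) = Z^0 \cap M'_{\Delta'}$-type arguments are not needed since $Z(1) \cap M'_J$ decomposes along root subgroups by the Iwahori factorization.

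The main obstacle I anticipate is the careful lifting from $Z_k$ to $Z^0$: the surjection $Z^0 \cap M'_J \twoheadrightarrow Z_k \cap M'_{J,k}$ has pro-$p$ kernel $Z(1) \cap M'_J$, and I must check that this kernel is generated by the $Z(1) \cap M'_\alpha$, $\alpha \in J$. This follows from the structure of $M'_J$ via the valued root datum: $Z(1) \cap M'_J$ is the group of commutators / the ``$\mathcal O$-points of the simply connected cover's torus reduced mod the maximal ideal'' and Bruhat--Tits theory gives it as generated by the $m(u)m(u')^{-1}$ for $u, u'$ in root subgroups of ranks-one subgroups attached to simple roots. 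A clean alternative that avoids this is to observe that $\bf G$ may be replaced by the (possibly non-algebraic) group $M'_J$ and apply \cite[II.6 Prop.]{MR3600042} directly to get $Z \cap M'_J$ generated by the $Z \cap M'_\alpha$, then intersect with $K$; but one must check that intersection with $K$ is compatible with this generation, which reduces again to the finite group computation above. I would present the finite-group argument as the core and cite \cite[II.6 Prop.]{MR3600042} and the Bruhat--Tits references already in play for the lifting.
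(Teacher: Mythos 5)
Your proposal has two genuine problems, and the paper's own argument avoids both by never leaving the $p$-adic group.

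First, the reduction to the finite reductive quotient rests on the identification of $Z^0\cap M'_J$ with the preimage in $Z^0$ of $Z_k\cap M'_{J,k}$, i.e.\ on the equality $\red(Z^0\cap M'_J)=Z_k\cap M'_{J,k}$. This is \emph{false} in general: the appendix of this paper (Proposition~\ref{prop:choosing-K}, Corollary~\ref{cor:choosing-K} and the remark following Theorem~\ref{thm:change-of-weight}) shows that this equality holds only for suitable choices of the special parahoric $K$, and gives explicit counterexamples (e.g.\ $\SU(2,1)$ for a ramified quadratic extension with $\#k$ odd, where $\red(Z^0\cap M'_\alpha)=Z_k$ strictly contains $Z_k\cap M'_{\alpha,k}$). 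Relatedly, $\overline{Z^0\cap M'_\alpha}$ is in general strictly larger than $Z_{k,s_\alpha}$ (Remark~\ref{a-1}), so ``each $Z^0\cap M'_\alpha$ surjects onto $Z_{k,s_\alpha}$'' does not capture the full image. Second, even granting a correct finite-group statement, your lifting step --- that $Z(1)\cap M'_J$ is generated by the $Z(1)\cap M'_\alpha$ --- is asserted but not proved, and it does not follow from the Iwahori factorization (which decomposes $I\cap M'_J$ along root subgroups and a $Z$-part, but says nothing about generating the $Z(1)$-part of $M'_J$ by rank-one contributions). So the core of your argument is not sound as written.

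The fix is essentially the ``clean alternative'' you mention at the end, completed differently: one does not need any finite-group computation. By \cite[II.6 Prop.]{MR3600042}, $Z\cap M'_J$ is generated by the $Z\cap M'_\alpha$ for $\alpha\in J$, and each $Z\cap M'_\alpha$ is generated by $Z^0\cap M'_\alpha$ and $a_\alpha$. Since $Z$ normalizes each $M'_\alpha$ and $Z^0$, one gets
\[
Z\cap M'_J=\bigl\langle\,\textstyle\bigcup_{\alpha\in J}Z^0\cap M'_\alpha\,\bigr\rangle\prod_{\alpha\in J}a_\alpha^{\mathbb Z}.
\]
Now apply $\nu$: it kills $Z^0$, hence kills the first factor, and sends $a_\alpha$ to $\alpha_a^\vee$; the linear independence of the $\alpha_a^\vee$ for $\alpha\in J$ forces every element of $Z\cap M'_J\cap\Ker\nu$ --- in particular every element of $Z^0\cap M'_J$ --- to lie in $\langle\bigcup_{\alpha\in J}Z^0\cap M'_\alpha\rangle$. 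This replaces both your finite-group step and your lifting step by a one-line argument with the valuation homomorphism.
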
  
 \begin{proof}  Let   $\langle \cup_{\alpha \in J}Z^0\cap M'_\alpha \rangle$ denote the group generated by the  $Z^0\cap M'_\alpha$ for $\alpha \in J$. This group is contained in $Z^0\cap M'_J$ and  $Z^0\cap M'_J$ is contained in the kernel of $\nu$.   The group $Z\cap M'_J$ is generated by  
$Z\cap M'_\alpha$ for $\alpha \in J$ \cite[II.6 Prop.]{MR3600042}\ and the group $Z\cap M'_\alpha$ is generated by $Z^0\cap M'_\alpha$ and $a_\alpha$ (Definition \ref{delta'psi}) \cite[\S III.16]{MR3600042}. The group $Z$ normalizes $M'_\alpha$ and $Z^0$ hence 
$$Z\cap M'_J =  \langle \cup_{\alpha \in J}Z^0\cap M'_\alpha \rangle \prod _{\alpha\in J}a_\alpha^{\mathbb Z}.$$
The group $Z^0 $ is contained in the kernel of $\nu$ and  $\nu(a_\alpha)=\alpha_a^\vee$. The $\alpha_a^\vee$ for $\alpha \in J$ are linearly independent, hence an identity $\sum_{\alpha \in J} n(\alpha)\alpha_a^\vee=0$ with $n(\alpha)\in \mathbb Z$ implies $n(\alpha)=0$ for all $\alpha\in J$. We get
$  Z\cap M'_J \cap \Ker \nu =  \langle \cup_{\alpha \in J}Z^0\cap M'_\alpha \rangle,$ hence $Z^0\cap M'_J$ is contained in $\langle \cup_{\alpha \in J}Z^0\cap M'_\alpha \rangle$.  \end{proof} 
 
  As in \S \ref{Notation},  $\overline {Z^0\cap M'_J}$ denotes the image of $Z^0\cap M'_J$ in $Z_k^{\aff}$.

\begin{remark}\label{a-1} For $\alpha \in \Delta$, the group  $\,\overline {Z^0\cap M'_\alpha}\,$  is different from the group   $Z_{k,s_{\alpha}}$ defined in Remark \ref{cs}. 
The group $\,\overline {Z^0\cap M'_\alpha}\,$   is generated by $Z_{k,s_\alpha}$ and another  group $Z_{k,s_{\alpha_a-1}} $ such that for an admissible lift $\tilde s_{\alpha_a -1}$ of $s_{\alpha_a -1}$ the value  $c( \tilde s_{\alpha_a -1}) \in \mathcal H_C$ is given by a formula like  \eqref{cp}  for $ c(\tilde s_\alpha)$ with $Z_{k,s_{\alpha_a-1}}$ instead of $Z_{k,s_\alpha}$  \cite[IV.24 Claim, IV.25--28]{MR3600042}. The group  $\,\overline {Z^0\cap M'_\alpha}\,$   is also generated by $Z_{k,s_{\alpha}}$ and  $s_\alpha(Z_{k,s_{\alpha_a-1}}) $ because  $\,\overline {Z^0\cap M'_\alpha}\,$   and $Z_{k,s_{\alpha}}$ are  normalized by $s_\alpha$. The set $\Delta'_\psi $ (Definition \ref{delta'psi}) is therefore contained in the set
 \begin{equation}\label{deltapsi}
 \Delta(\psi):=\{ \alpha \in \Delta \ | \ \psi \ \text{is trivial on } Z_{k,s_\alpha}\}.
 \end{equation}
 \end{remark}

\begin{lemma} \label{vani}\

  \begin{enumerate}\item 
    Let $J\subset \Delta$ and $\tilde \tau \in {}_1\mathfrak
    S_J$. %
    Then $c(\tilde \tau)\in \mathbb Z\,[ \,\overline {Z^0\cap M'_J}\,]$. When $J\subset
    \Delta'_\psi$, we have $\psi(c(\tilde \tau))=-1$.
  \item Let $\alpha \in \Delta \setminus \Delta'_\psi$. Then $\psi (c(\tilde s_\alpha) \, c( \tilde
    s_{\alpha_a-1} ))=\psi (c(\tilde s_\alpha)\, (s_\alpha \cdot c( \tilde s_{\alpha_a-1} )))= 0$.
  \end{enumerate}
\end{lemma}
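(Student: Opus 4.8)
The idea is to reduce everything to the semisimple rank one situation and to the known formula \eqref{cp} for $c(\tilde s)$ in terms of $Z_{k,s}$, together with the description in Remark~\ref{a-1} of $\overline{Z^0\cap M'_\alpha}$ as the group generated by $Z_{k,s_\alpha}$ and $s_\alpha(Z_{k,s_{\alpha_a-1}})$ (equivalently by $Z_{k,s_\alpha}$ and $Z_{k,s_{\alpha_a-1}}$).

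\textbf{Part (i).} First I would treat the case $J=\{\alpha\}$ with $\tilde\tau$ a lift of $s_\alpha$. Up to an element of $Z_k$, which only multiplies $c(\tilde\tau)$ by a group element, I may take $\tilde\tau$ to be the admissible lift $\tilde s_\alpha$ (or, more relevantly, any of the lifts $\tilde s_{\alpha_a-n}$ attached to the walls through the special vertex). By \eqref{cp}, $c(\tilde s_\alpha) = -|Z_{k,s_\alpha}|^{-1}\sum_{t\in Z_{k,s_\alpha}}T_t$, which lies in $\mathbb Z[Z_{k,s_\alpha}]\subset \mathbb Z[\overline{Z^0\cap M'_\alpha}]$ by Remark~\ref{a-1}; the analogous statement for $c(\tilde s_{\alpha_a-1})$ and $s_\alpha(Z_{k,s_{\alpha_a-1}})$ follows by $W(1)$-equivariance of $c$. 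For a general lift $\tilde\tau\in{}_1\mathfrak S_J$ with $J\subset\Delta$, I would use the conjugacy in $W^{\aff}_J$ of every reflection in $\mathfrak S_J$ to a reflection with respect to a wall of $\mathfrak C^-$ lying in a hyperplane for a simple root $\alpha\in J$, together with the fact that the conjugating element $w\in W_J$ has image in $W_{J,0}$ preserving $\Phi_J$ (so $w\cdot c(\tilde s)$ lives in $\mathbb Z[w\cdot(\overline{Z^0\cap M'_\alpha})]\subset \mathbb Z[\overline{Z^0\cap M'_J}]$ since $Z^0\cap M'_J$ is normal in $Z$ and contains all $Z^0\cap M'_\beta$, $\beta\in J$, by the preceding lemma). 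This gives $c(\tilde\tau)\in\mathbb Z[\overline{Z^0\cap M'_J}]$. When $J\subset\Delta'_\psi$, the character $\psi$ is trivial on $\overline{Z^0\cap M'_J}$ (this is exactly the sentence preceding the lemma, using the preceding lemma), so applying $\psi$ to the explicit formula $c(\tilde\tau) = \pm|Z_{k,s}|^{-1}\sum_{t\in Z_{k,s}}T_t$ (valid after replacing $\tilde\tau$ by a suitable admissible lift and using $W(1)$-equivariance) gives $\psi(c(\tilde\tau)) = -|Z_{k,s}|^{-1}\cdot|Z_{k,s}| = -1$, since $Z_{k,s}\subset\overline{Z^0\cap M'_J}$.

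\textbf{Part (ii).} Here $\alpha\in\Delta\setminus\Delta'_\psi$, so $\psi$ is nontrivial on $Z^0\cap M'_\alpha$, hence nontrivial on at least one of the two generating subgroups $Z_{k,s_\alpha}$ and $Z_{k,s_{\alpha_a-1}}$ (equivalently $s_\alpha(Z_{k,s_{\alpha_a-1}})$), by Remark~\ref{a-1}. By Lemma~\ref{s.s}, $\psi(c(\tilde s_\alpha)\,c(\tilde s_{\alpha_a-1})) = \psi(c(\tilde s_\alpha)\,(s_\alpha\cdot c(\tilde s_{\alpha_a-1})))$, so the two asserted quantities agree and it suffices to show one of them vanishes. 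Using \eqref{cp} for both factors, $\psi(c(\tilde s_\alpha))$ and $\psi(c(\tilde s_{\alpha_a-1}))$ each equal either $-1$ (if $\psi$ is trivial on the relevant $Z_{k,s}$) or $0$ (if $\psi$ is nontrivial on it, since then $\sum_{t\in Z_{k,s}}\psi(t)=0$ by orthogonality of characters). Since $\psi$ is nontrivial on at least one of the two subgroups, at least one factor maps to $0$ under $\psi$; as $\psi$ is multiplicative on $\mathbb Z[Z_k]$, the product vanishes.

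\textbf{Main obstacle.} The one point requiring care is the reduction of a general lift $\tilde\tau\in{}_1\mathfrak S_J$ to an admissible lift of a \emph{simple} reflection inside $M_J$, so that \eqref{cp} applies: one must track that the $W_J$-conjugation used lands everything inside $\mathbb Z[\overline{Z^0\cap M'_J}]$ and that the admissibility (and hence the exact shape of \eqref{cp}) is preserved, invoking the compatibility $c^J(w)=c(w)$ for $w\in\mathfrak S_J(1)$ stated earlier. The character-orthogonality computations are then routine.
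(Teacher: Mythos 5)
Your proof is correct and follows essentially the same route as the paper: part (i) is Remark~\ref{cs} applied to the Levi $M_J$ (using $c^J=c$ and formula \eqref{cp}), and part (ii) rests on Remark~\ref{a-1} plus the observation that $\psi(c(\tilde s))=0$ exactly when $\psi$ is nontrivial on the relevant $Z_{k,s}$. The only (harmless) variation is in (ii), where you invoke Lemma~\ref{s.s} to identify the two products before showing one factor vanishes, whereas the paper checks directly that if $\psi$ is trivial on $Z_{k,s_\alpha}$ it must be nontrivial on both $Z_{k,s_{\alpha_a-1}}$ and $s_\alpha(Z_{k,s_{\alpha_a-1}})$.
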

 
\begin{proof} (i) This  follows  from Remark \ref{cs} applied to the Levi subgroup $M_J$ of $G$. (Recall that $c^J(w) = c(w)$.)

  (ii)   By hypothesis $\psi$ is not trivial on  the  image of $Z^0\cap M'_\alpha$ in $Z_k^{\aff}$, hence if $\psi$ is trivial on  $Z_{k,s_\alpha}$, then  $\psi$  is not  trivial on  $Z_{k,s_{\alpha_a-1}} $ and on $s_\alpha(Z_{k,s_{\alpha_a-1}}) $. By formula \eqref{cp} and Remark \ref{a-1},   $\psi (c(\tilde s_\alpha))=0$ (resp.\   $\psi  (c( \tilde s_{\alpha_a -1}))=0 $,  resp.\   $\psi  (s_\alpha\cdot c( \tilde s_{\alpha_a -1}))=0 $) if and only if $\psi$ is not trivial on $Z_{k,s_\alpha}$ (resp.\ $Z_{k,s_{\alpha_a-1}}$, resp.\ $s_\alpha(Z_{k,s_{\alpha_a-1}}) $).
 \end{proof}

 \subsection{\texorpdfstring{$\psi(c_{w}^{x})$}{psi(c\_w\^{}x)} for dominant translations}\label{sec:psi-cwx}

Let $\psi: Z^0\to C^\times$ be a character and  $\tilde x, \tilde w\in W (1)$ lifting 
 $ x,    w \in \Lambda^+$ such that $\tilde x \leq \tilde w $. To compute $ \psi(c_{\tilde  w}^{\tilde  x})$  we need some knowledge of the 
 reduced expressions of the elements of $\Lambda^+$.  This is obtained in the following lemmas.

 \begin{lemma}\label{lalpha} Let $\alpha \in \Delta$, $\lambda\in \Lambda^+$ such that $\lambda_\alpha \lambda\in \Lambda^+$ and let $\lambda=s_1\cdots s_{n} u$ with $s_i\in S^{\aff},u\in \Omega$  be  a reduced expression. 
  Then there exist $k_1<k_2$ such that 
  \begin{itemize}\item 
    $\lambda_\alpha\lambda=s_1\cdots s_{k_1-1} s_{k_1+1} \cdots s_{k_2-1} s_{k_2+1} \cdots s_{n} u$ is a reduced expression,
    and
  \item  
    $\{ (s_1\cdots s_{k_1-1})\cdot s_{k_1}, \ (s_1\cdots s_{k_1-1} s_{k_1+1} \cdots s_{k_2-1})\cdot s_{k_2}\} = \{ s_\alpha,
    s_\alpha \lambda_\alpha\} \ \text{or} \ \{ s_\alpha, \lambda_\alpha s_\alpha \}.$
  \end{itemize}
 \end{lemma}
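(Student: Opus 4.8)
The idea is to reduce the claim about $\lambda_\alpha\lambda$ to the basic step $\lambda_\alpha\nu < \nu s_\alpha < \nu$ of Lemma~\ref{firststep}, together with the standard exchange/deletion combinatorics in the Coxeter system $(W^{\aff},S^{\aff})$ (extended to $W$ by the $\Omega$-factor, which plays no role here since $u$ is the same on both sides). First I would record the length count: since $\lambda,\lambda_\alpha\lambda\in\Lambda^+$ and $\nu(\lambda_\alpha)=\alpha_a^\vee$ with $\langle 2\rho,\alpha_a^\vee\rangle=2$, the length formula after \eqref{length2} gives $\ell(\lambda_\alpha\lambda)=\ell(\lambda)-2=n-2$. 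So passing from the reduced word $s_1\cdots s_n u$ for $\lambda$ to $\lambda_\alpha\lambda$ drops the length by exactly $2$, which is why we expect to delete exactly two letters $s_{k_1},s_{k_2}$ and get a reduced expression for $\lambda_\alpha\lambda$; I would get this by writing $\lambda_\alpha\lambda = \lambda_\alpha\lambda^{-1}\cdot(\lambda\,(s_1\cdots s_n))\,u$ is awkward, so instead I would argue directly via the strong exchange property: $\lambda_\alpha\lambda$ is obtained from $\lambda$ by left-multiplication, but it is cleaner to work on the right.

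Concretely, I would proceed as follows. By Lemma~\ref{firststep}, $\lambda_\alpha\lambda = (\lambda_\alpha\lambda)\in\Lambda^+$, and I want to factor the transition through $\lambda s_\alpha$. Note $\lambda_\alpha\lambda = \lambda\cdot(\lambda^{-1}\lambda_\alpha\lambda)$; since $\lambda\in\Lambda$ acts by translations and $\Lambda$ normalizes nothing useful here, I would rather use that $\lambda_\alpha\lambda=\lambda\lambda'_\alpha$ where $\lambda'_\alpha=\lambda^{-1}\lambda_\alpha\lambda$ lies in $\Lambda\cap W^{\aff}$ with $\nu(\lambda'_\alpha)=\alpha_a^\vee$ as well (since $\Lambda$ is commutative, in fact $\lambda'_\alpha=\lambda_\alpha$). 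So $\lambda_\alpha\lambda=\lambda\lambda_\alpha$, and now I can apply Lemma~\ref{firststep} with the roles arranged so that $\lambda\lambda_\alpha<\lambda s_\alpha<\lambda$ and moreover, examining its proof, $\lambda\lambda_\alpha=(\lambda s_\alpha)(s_\alpha\lambda_\alpha)$ with $s_\alpha\lambda_\alpha=s_{\alpha_a+1}\in\mathfrak S$ an affine reflection and $\ell(\lambda\lambda_\alpha)=\ell(\lambda s_\alpha)-1=\ell(\lambda)-2$. Thus $\lambda_\alpha\lambda$ is obtained from the reduced word $s_1\cdots s_n u$ of $\lambda$ by two successive applications of the strong exchange condition: first right-multiplying by $s_\alpha$ deletes one letter $s_{k_2}$ (so $\lambda s_\alpha=s_1\cdots\widehat{s_{k_2}}\cdots s_n u$ reduced), with $(s_1\cdots s_{k_2-1})\cdot s_{k_2}=s_\alpha$; then right-multiplying $\lambda s_\alpha$ by $s_\alpha\lambda_\alpha=s_{\alpha_a+1}$ deletes a further letter $s_{k_1}$, with the reflection deleted being $(s_1\cdots\widehat{s_{k_2}}\cdots s_{j-1})\cdot s_j=s_\alpha\lambda_\alpha$ for the appropriate index. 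A small bookkeeping point: $k_1$ could be on either side of $k_2$; after relabeling so $k_1<k_2$, the two deleted conjugated reflections are $s_\alpha$ and $s_\alpha\lambda_\alpha$, and which of $s_{k_1},s_{k_2}$ carries which label is exactly the dichotomy $\{s_\alpha,s_\alpha\lambda_\alpha\}$ versus $\{s_\alpha,\lambda_\alpha s_\alpha\}$ in the statement (note $\lambda_\alpha s_\alpha=s_{\alpha_a-1}$ is the other affine reflection in the same hyperplane class, arising if instead we delete via $\lambda s_\alpha$ on the left, i.e.\ if $k_2$ is the $s_\alpha$-letter and $k_1<k_2$ is reached by conjugating past it).

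For the second bullet I would make the conjugation explicit: the strong exchange condition says that if $w'=s_1\cdots s_n$ and $w's_\alpha<w'$ then there is $j$ with $w's_\alpha=s_1\cdots\widehat{s_j}\cdots s_n$ and $s_\alpha=(s_1\cdots s_{j-1})\cdot s_j$, i.e.\ the $j$-th "inversion" of the word equals $s_\alpha$; set $k_2:=j$ (in the case where the $s_\alpha$-deletion comes second, otherwise $k_1:=j$). Applying this a second time to the shorter word and the reflection $t:=s_\alpha\lambda_\alpha$ (here one checks $(\lambda s_\alpha)t<\lambda s_\alpha$, which holds because $\ell(\lambda\lambda_\alpha)=\ell(\lambda s_\alpha)-1$), we obtain the second deleted index and that its associated conjugated reflection, computed with respect to the \emph{original} word $s_1\cdots s_n$ (reinserting the already-deleted letter changes the conjugating prefix only by that letter, which is $s_\alpha=s_\alpha^{-1}$ and commutes appropriately), equals $s_\alpha t s_\alpha = \lambda_\alpha s_\alpha\cdot s_\alpha=$ — this is the step where I must be careful, and it is the \textbf{main obstacle}: correctly tracking how the conjugating prefix for the second deleted letter transforms when expressed relative to the original (undeleted) reduced word, and verifying that the resulting pair of reflections is precisely $\{s_\alpha,s_\alpha\lambda_\alpha\}$ or $\{s_\alpha,\lambda_\alpha s_\alpha\}$ rather than some other conjugate. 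I expect this to come out by a short direct computation using $s_\alpha\lambda_\alpha=s_{\alpha_a+1}$, $\lambda_\alpha s_\alpha=s_{\alpha_a-1}$, $s_\alpha s_{\alpha_a+1}s_\alpha=s_{\alpha_a-1}$, and the fact that conjugating $s_{\alpha_a+1}$ by the prefix up to $k_2$ (which sends $\alpha_a$ to $\pm\alpha_a$) yields one of $s_{\alpha_a\pm1}$; combined with the $s_\alpha$ from the first deletion this gives exactly the stated two-element set. Everything else — reducedness of the displayed word for $\lambda_\alpha\lambda$ — then follows from the length count $\ell(\lambda_\alpha\lambda)=n-2$ and the fact that we deleted exactly two letters.
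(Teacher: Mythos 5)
Your overall plan (length drop of $2$, two applications of the strong exchange condition, reducedness of the deleted word from the length count) is the right one, and the first bullet does follow from it. The gap is in the second bullet, and it comes from working on the wrong side. You factor $\lambda_\alpha\lambda=\lambda\lambda_\alpha=(\lambda s_\alpha)(s_\alpha\lambda_\alpha)$ and delete letters by \emph{right} multiplication, quoting the exchange condition as: $w's_\alpha<w'$ gives $w's_\alpha=s_1\cdots\widehat{s_j}\cdots s_n$ with $s_\alpha=(s_1\cdots s_{j-1})\cdot s_j$. That identification is the left-handed one and is false for a right deletion: from $w't=s_1\cdots\widehat{s_j}\cdots s_n$ one gets $(s_1\cdots s_{j-1})\cdot s_j=w'\cdot t$, equivalently $t=(s_n\cdots s_{j+1})\cdot s_j$. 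Since the lemma's second bullet is precisely about the left-prefix conjugates $(s_1\cdots s_{k-1})\cdot s_k$, your first deletion actually produces $\lambda\cdot s_\alpha=s_{\alpha_a+m}$ where $m=-\langle\alpha_a,\nu(\lambda)\rangle\ge 2$, and the second produces $s_{\alpha_a+m-1}$ (or $s_{\alpha_a+m-2}$, depending on which deleted position is smaller). For $m\ge 3$ the resulting set does not even contain $s_\alpha=s_{\alpha_a}$, so it cannot equal $\{s_\alpha,s_\alpha\lambda_\alpha\}$ or $\{s_\alpha,\lambda_\alpha s_\alpha\}$. This does not contradict the lemma: it asserts the existence of \emph{some} good pair $(k_1,k_2)$, and different valid deletion pairs carry different reflection sets, constrained only by the relation $t_2t_1=\lambda_\alpha$; your pair is simply not one of the good ones. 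A secondary issue you wave away is that right multiplication by $s_\alpha$ must be pushed past $u\in\Omega$, replacing $s_\alpha$ by $u\cdot s_\alpha$, which introduces yet another conjugation.

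The fix, which is the paper's route, is to multiply on the \emph{left}, where the left-prefix conjugate of the deleted letter is the multiplying reflection itself. Write $\lambda_\alpha\lambda=s_\alpha\cdot(s_\alpha\lambda_\alpha)\cdot\lambda$ with $s_\alpha\lambda_\alpha=s_{\alpha_a+1}\in\mathfrak S$, and check $\lambda_\alpha\lambda<s_\alpha\lambda_\alpha\lambda<\lambda$ by the length formula. The first left deletion (by $s_{\alpha_a+1}$) gives a position $i$ with $(s_1\cdots s_{i-1})\cdot s_i=s_\alpha\lambda_\alpha$; the second (by $s_\alpha$, applied to the once-deleted word) gives a position $j$ whose left-prefix conjugate in that word is $s_\alpha$. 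If $j>i$ this is already the statement, with $k_1=i$, $k_2=j$ and set $\{s_\alpha\lambda_\alpha,s_\alpha\}$. If $j<i$, the prefix at position $i$ in the once-deleted word equals $s_\alpha(s_1\cdots s_{i-1})$, so the reflection there becomes $s_\alpha\cdot(s_\alpha\lambda_\alpha)=\lambda_\alpha s_\alpha$, giving $k_1=j$, $k_2=i$ and the set $\{s_\alpha,\lambda_\alpha s_\alpha\}$. With this orientation the $u$-factor and the reinsertion bookkeeping you flagged as the main obstacle disappear entirely.
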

 \begin{proof} As in Lemma \ref{firststep} we have 
 \[\lambda_\alpha \lambda  < s_{\alpha}\lambda_\alpha \lambda < \lambda \] 
because $\ell (s_{\alpha}\lambda_\alpha \lambda)=\ell(\lambda^{-1} \lambda_\alpha^{-1} s_\alpha) = \ell(\lambda_\alpha \lambda)+1
 = \ell( \lambda)-1$ (using \eqref{length2}), and we have
  $s_{\alpha} \lambda_\alpha =  s_{\alpha_a +1}\in \mathfrak S$.
 By the strong exchange condition %
 there exists $i$  such that 
$s_{\alpha}  \lambda_\alpha s_1\cdots s_i =   s_1\cdots s_{i-1} $ and there exists $j$ such that either of the following hold:
 
(1) $j<i$,   $ s_{\alpha} s_1\cdots s_j=s_1\cdots s_{j-1}$: hence $(s_1\cdots s_{j-1}) \cdot s_j = s_\alpha$ and $(s_1\cdots s_{j-1} s_{j+1}\cdots s_{i-1})\cdot s_i= (s_\alpha s_1\cdots s_{i-1})\cdot s_i= s_\alpha\cdot s_{\alpha}  \lambda_\alpha = \lambda_\alpha s_\alpha$; we take $k_1=j,k_2=i$.
  
(2) $j>i$,   $s_{\alpha}s_1\cdots s_{i-1} s_{i+1}\cdots s_j= s_1\cdots s_{i-1} s_{i+1}\cdots s_{j-1}$:  hence $(s_1\cdots s_{i-1}) \cdot s_i = s_\alpha \lambda_\alpha$  and 
$(s_1\cdots s_{i-1} s_{i+1}\cdots s_{j-1})\cdot s_j=s_\alpha$; we take
$k_1=i, k_2=j$.
 \end{proof}
 \begin{remark} \label{lalphal} We will apply Lemma \ref{lalpha} as follows.  For a choice of lifts in $W(1)$, we have  $c_{\tilde \lambda }^{ \tilde \lambda \tilde \lambda_\alpha}= t(s_1\cdots s_{k_1-1}\cdot c(\tilde s_{k_1})) \,  (s_1\cdots s_{k_1-1}s_{k_1+1} \cdots s_{k_2-1}\cdot c(\tilde s_{k_2}))$ for some $t\in Z_k$, by definition of $c^x_w$. Hence, as $ \lambda_\alpha   s_{\alpha}=  s_{\alpha_a -1}, 
  s_{\alpha} \lambda_\alpha =  s_\alpha s_{\alpha_a -1}s_\alpha$, we have
 $$c_{\tilde \lambda }^{ \tilde \lambda \tilde \lambda_\alpha} \in
       c(\tilde s_\alpha)\, (s_\alpha\cdot c( \tilde s_{\alpha_a -1}))\mathbb Z[Z_k] \ \text{or} \ c(\tilde s_\alpha)c( \tilde s_{\alpha_a -1})\mathbb Z[Z_k].$$
    \end{remark}
 
By iteration of the lemma, we get:
 \begin{lemma} \label{alphan} Let  $ \lambda\in \Lambda^+, J\subset \Delta,n(\alpha)\in \mathbb N$ for $\alpha \in J$ such that  $\lambda \prod_{\alpha \in J}\lambda_\alpha  ^{m(\alpha)}\in \Lambda^+$ for all $m(\alpha)\in \mathbb N, m(\alpha)\leq n(\alpha)$, and  let $\lambda=s_1\cdots s_{n} u$ with $s_i\in S^{\aff},u\in \Omega$  be a reduced expression. 
    Then there exist $1\leq i_1<i_2 <\cdots <i_r\leq n$  such that 
    \begin{itemize}\item 
      $\lambda \prod_{\alpha \in \Delta}\lambda_\alpha ^{n(\alpha)}=s_{i_1}\cdots s_{i_r}u$ is a reduced expression, and
    \item 
      $(s_{i_1}\cdots s_{i_j})\cdot s_k $ lies in $W_J^{\aff} \subset W^{\aff}$ for any $0\leq j \leq r$ and $i_j<k <
      i_{j+1}$.
    \end{itemize}
 \end{lemma}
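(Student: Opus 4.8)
The plan is to prove Lemma~\ref{alphan} by iterating Lemma~\ref{lalpha}, treating the exponents $n(\alpha)$ one decrement at a time. First I would set $N=\sum_{\alpha\in J}n(\alpha)$ and argue by induction on $N$. If $N=0$ the statement is trivial: take $r=n$ and all $i_j=j$, and there are no indices $k$ strictly between consecutive $i_j$, so the second bullet is vacuous. For the inductive step, pick $\alpha\in J$ with $n(\alpha)>0$ and write $\mu=\lambda\prod_{\beta\in J}\lambda_\beta^{n'(\beta)}$, where $n'(\beta)=n(\beta)$ for $\beta\ne\alpha$ and $n'(\alpha)=n(\alpha)-1$. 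By hypothesis $\mu\in\Lambda^+$ (take $m(\beta)=n'(\beta)$), and moreover $\mu\prod_{\beta\in J}\lambda_\beta^{m'(\beta)}\in\Lambda^+$ for all $m'(\beta)\le n'(\beta)$, so the inductive hypothesis applies to $\mu$: there is a subsequence $1\le j_1<\dots<j_s\le n$ with $\mu=s_{j_1}\cdots s_{j_s}u$ reduced and $(s_{j_1}\cdots s_{j_t})\cdot s_k\in W_J^{\aff}$ for any $0\le t\le s$ and $j_t<k<j_{t+1}$.

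Next I would apply Lemma~\ref{lalpha} to the pair $(\alpha,\mu)$: note $\lambda_\alpha\mu=\lambda\prod_{\beta\in J}\lambda_\beta^{n(\beta)}\in\Lambda^+$ (the case $m=n$), so the hypotheses of Lemma~\ref{lalpha} hold, and the reduced expression $\mu=s_{j_1}\cdots s_{j_s}u$ may be used there. The lemma produces $k_1<k_2$ among $j_1,\dots,j_s$ such that deleting the two corresponding letters yields a reduced expression for $\lambda_\alpha\mu=\lambda\prod_{\beta\in J}\lambda_\beta^{n(\beta)}$, which is the desired expression $s_{i_1}\cdots s_{i_r}u$ with $\{i_1,\dots,i_r\}=\{j_1,\dots,j_s\}\setminus\{k_1,k_2\}$ and $r=s-2$. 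It remains to check the second bullet for this new subsequence. Here the key point is that deleting two letters from the $\mu$-expression only \emph{enlarges} the gaps between consecutive retained indices, so for $i_t<k<i_{t+1}$ the element $(s_{i_1}\cdots s_{i_t})\cdot s_k$ equals some $(s_{j_1}\cdots s_{j_{t'}})\cdot s_k$ \emph{up to conjugation by the product of the (at most two) deleted simple reflections lying among $s_{j_1},\dots,s_{j_{t'}}$}. By the ``moreover'' clause of Lemma~\ref{lalpha}, those deleted reflections are conjugates of $s_\alpha$, $\lambda_\alpha s_\alpha$, or $s_\alpha\lambda_\alpha$, all of which lie in $W_J^{\aff}$ since $\alpha\in J$ (using $\nu(\lambda_\alpha)=\alpha_a^\vee$ and $\lambda_\alpha s_\alpha=s_{\alpha_a-1}$, $s_\alpha\lambda_\alpha=s_{\alpha_a+1}$, which are affine reflections along walls $\Ker(\alpha_a\mp1)$ and hence in $W_J^{\aff}$). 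Since $W_J^{\aff}$ is a subgroup and, by the inductive hypothesis, $(s_{j_1}\cdots s_{j_{t'}})\cdot s_k\in W_J^{\aff}$ for each relevant $k$, conjugating by an element of $W_J^{\aff}$ keeps us inside $W_J^{\aff}$. This establishes the second bullet and completes the induction.

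The main obstacle I anticipate is the bookkeeping in the last step: after deleting the two letters at positions $k_1<k_2$, one must carefully track how the prefix products $s_{i_1}\cdots s_{i_t}$ relate to the prefix products $s_{j_1}\cdots s_{j_{t'}}$ of the old expression, and verify that the ``correction term'' by which they differ is precisely a product of the deleted letters (reindexed), which is the content one extracts from the exchange-condition proof of Lemma~\ref{lalpha}. A clean way to organize this is to observe that for each retained position $i_t$, if $t'$ is the number of old positions $j\le i_t$, then $s_{j_1}\cdots s_{j_{t'}} = (s_{i_1}\cdots s_{i_t})\cdot d$ where $d$ is the product (in order) of the deleted reflections among $s_{k_1},s_{k_2}$ that precede $i_t$ in the old expression — and one checks from Lemma~\ref{lalpha} that $d\in W_J^{\aff}$. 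Then $(s_{i_1}\cdots s_{i_t})\cdot s_k = d^{-1}\cdot\bigl((s_{j_1}\cdots s_{j_{t'}})\cdot s_k\bigr)\cdot d$-type manipulation shows membership in $W_J^{\aff}$. Everything else is a routine induction, so no serious difficulty beyond this indexing.
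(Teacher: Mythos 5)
Your proof is correct and follows essentially the same route as the paper's: induction on $\sum_{\alpha\in J}n(\alpha)$, an application of the inductive hypothesis to the decremented exponent vector to produce a good subsequence for $\mu=\lambda\prod_{\beta\in J}\lambda_\beta^{n'(\beta)}$, then Lemma~\ref{lalpha} to delete two further letters, with the gap condition preserved because the prefix-conjugates of the two deleted letters are among $s_\alpha$, $s_\alpha\lambda_\alpha$, $\lambda_\alpha s_\alpha\in W_J^{\aff}$ and the new prefix products differ from the old ones by these elements. One caveat: your justification of the inductive step via the claim that $\mu\prod_{\beta\in J}\lambda_\beta^{m'(\beta)}\in\Lambda^+$ for all $m'(\beta)\le n'(\beta)$ is not valid — this amounts to $\lambda\prod\lambda_\beta^{n'(\beta)+m'(\beta)}\in\Lambda^+$ with $n'(\beta)+m'(\beta)$ possibly exceeding $n(\beta)$, which is false in general. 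It is also unnecessary: the inductive hypothesis should be applied to the pair $(\lambda,(n'(\beta)))$ with the \emph{original} reduced expression of $\lambda$ (its hypothesis, $\lambda\prod\lambda_\beta^{m(\beta)}\in\Lambda^+$ for $m\le n'$, is immediate from the original one since $n'\le n$), and its conclusion is exactly the subsequence for $\mu$ inside $s_1\cdots s_n u$ that you then feed into Lemma~\ref{lalpha}.
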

Here we let $i_0 = 0$, $i_{r+1} = n+1$.
  \begin{proof} We proceed by induction on $\sum_{\beta \in J} n(\beta)$.
  Let $\alpha \in J$ such that $n(\alpha)>0$.  Then $\lambda_1=   \lambda \prod_{\beta \in J}\lambda_\beta  ^{n(\beta)}=\lambda_2 \lambda_\alpha$ and $\lambda_2 \in \Lambda^+$. By the inductive hypothesis, there exist $i_1<i_2 <\cdots <i_r$  such that    $\lambda_2  =s_{i_1}\cdots s_{i_r}u$ is a reduced expression and 
    $(s_{i_1}\cdots s_{i_j})\cdot s_k $ lies in $W_J^{\aff}$ for any $0\leq j \leq r$ and $i_j<k < i_{j+1}$. From Lemma \ref{lalpha} there exist $a<b$  such that 
 $\lambda_1=  s_{i_1}\cdots s_{i_{a-1}}s_{i_{a+1}} \cdots  s_{i_{b-1}}s_{i_{b+1}} \cdots s_{i_r}u$ is a reduced expression   and  $\tau_1= (s_{i_1}\cdots s_{i_{a-1}})\cdot s_{i_a}, \ \tau_2= (s_{i_1}\cdots s_{i_{a-1}}s_{i_{a+1}} \cdots  s_{i_{b-1}})\cdot s_{i_b}$ are in $W_J^{\aff}$.  
 We prove that $(i'_1, \ldots, i'_{r-2})=(i_1,\ldots  , i_{a-1},i_{a+1}, \ldots ,  i_{b-1},i_{b+1}, \ldots ,i_r)$ satisfies the conditions of the lemma. Take  $0\leq j \leq r-2$ and $i'_j<k<i'_{j+1}$. Then  $(s_{i'_1}\cdots s_{i'_j})\cdot s_k $ lies in $W_J^{\aff}$. Indeed, if $k=i_a$ or $i_b$ this is the condition on $a$ and $b$.  Otherwise, take $j'$ such that $i_{j'}< k < i_{j'+1}$. Then
 $$
 (s_{i'_1}\cdots s_{i'_j})\cdot s_k = \begin{cases}
 (s_{i_1}\cdots s_{i_{j'}})\cdot s_k & \ \text{if} \ j'<a \ \text{(hence $j = j'$)},\\
  (\tau_1s_{i_1}\cdots s_{i_{j'}})\cdot s_k & \ \text{if} \ a\le j'<b \ \text{(hence $j = j'-1$)},\\
  (\tau_2 \tau_1s_{i_1}\cdots s_{i_{j'}})\cdot s_k  & \ \text{if} \ b\le j'  \ \text{(hence $j = j'-2$)}.
 \end{cases}
 $$
 In any case, this is in $W_J^{\aff}$ by the inductive hypothesis and because $\tau_1,\tau_2$ are in $W_J^{\aff}$.
 \end{proof}

\begin{remark}\label{nalphan}We will apply Lemma \ref{alphan} as follows. 
Keep the notation of the lemma, so $i_j < k < i_{j+1}$.
Let  $\alpha_{k}\in \Phi$ be a reduced root such that $s_k$ is the reflection in an affine hyperplane of the form $\alpha_k + r = 0$ ($r\in \mathbb R$).
We have $s_{i_1}\cdots s_{i_j} (\alpha_k)\in \Phi_J$, where $\Phi_J\subset \Phi$ denotes the root subsystem generated by $J$.  
Choose lifts  $\tilde s_{i_1},\ldots, \tilde s_{i_j}, \tilde s_k\in {}_1W^{\aff}$ of $s_{i_1},\ldots, s_{i_j}, s_k$ with $\tilde s_k$ admissible.
Writing $M_\beta' = \langle U_\beta, U_{-\beta} \rangle$ for any reduced root $\beta \in \Phi$, we have that $\tilde s_k$ lies in the image
of $\cn \cap M_{\alpha_k}'$ in $W(1)$. It follows that $\tilde s_{i_1}\cdots \tilde s_{i_j}\cdot \tilde s_k$ lies in the image of
$\cn \cap M_{s_{i_1}\cdots s_{i_j}(\alpha_k)}'$ in $W(1)$, so $\tilde s_{i_1}\cdots \tilde s_{i_j}\cdot \tilde s_k \in {}_1W_J^{\aff} \cap \mathfrak S(1) = {}_1 \mathfrak S_J$.
Hence by Lemma \ref{vani} we see that $s_{i_1}\cdots s_{i_j}\cdot c(\tilde s_k)=c(\tilde s_{i_1}\cdots \tilde s_{i_j}\cdot \tilde s_k) $ lies in $\mathbb Z[\, \overline{Z^0\cap M'_J}\,]$. Therefore $\psi(s_{i_1}\cdots s_{i_j}\cdot c(\tilde s_k))=-1$ if  $\psi $ is trivial on $Z^0\cap M'_J$. %
 \end{remark}

      We are now ready to compute $ \psi(c_{\tilde  w}^{\tilde  x})$ when $\tilde x, \tilde w$ are elements of the inverse image $\Lambda^+(1)$ of $\Lambda^+$ in $W(1)$.       
  \begin{theorem} \label{psic}   Let $\tilde x, \tilde w\in  \Lambda^+(1)$ lifting 
 $ x,    w \in \Lambda^+$ such that $ x \leq   w  $. Then 
  $$ \psi(c_{\tilde  w}^{\tilde  x})=\begin{cases} (-1)^{\ell( w)- \ell( x)} \ &\text{if $\tilde x\in \tilde w\prod _{\alpha \in \Delta'_{\psi}}a_\alpha^{\mathbb N}$,}\\
  0 \ &\text{if $x\not\in w\prod _{\alpha \in \Delta'_{\psi}}\lambda_\alpha^{\mathbb N}$}.
  \end{cases}$$
  \end{theorem}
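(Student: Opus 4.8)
The strategy is an induction on $\ell(w) - \ell(x)$, reducing everything to the two basic cases controlled by Lemma~\ref{vani} and Lemma~\ref{alphan}. Fix reduced decompositions: by Proposition~\ref{prop}~\ref{item:1} we may absorb any $\Omega(1)$-part and assume $w, x \in W^{\aff}(1)$. Write $\tilde w = \tilde s_1 \cdots \tilde s_n$ as a reduced expression of $w \in \Lambda^+$. The key structural input is Lemma~\ref{alphan} applied with $J = \Delta'_\psi$: if $x \in w \prod_{\alpha \in \Delta'_\psi} \lambda_\alpha^{\mathbb N}$, then $x$ admits a reduced expression obtained as a subword $\tilde s_{i_1}\cdots\tilde s_{i_r}$ of $\tilde w$ such that every omitted simple reflection $s_k$ satisfies $(s_{i_1}\cdots s_{i_j})\cdot s_k \in W^{\aff}_{\Delta'_\psi}$. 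By the definition of $c^x_w$ (Definition~\ref{cxw}) and the expansion in Definition~\ref{c_^}, $c^{\tilde x}_{\tilde w}$ is, up to an element of $Z_k$ that gets killed by the normalization $c^w_w = 1$, a product of terms of the form $(s_{i_1}\cdots s_{i_j})\cdot c(\tilde s_k) = c\bigl((s_{i_1}\cdots s_{i_j})\cdot \tilde s_k\bigr)$ with $(s_{i_1}\cdots s_{i_j})\cdot\tilde s_k \in {}_1\mathfrak S_{\Delta'_\psi}$ (as in Remark~\ref{nalphan}). By Lemma~\ref{vani}~(i), $\psi$ applied to each such factor is $-1$; there are $n - r = \ell(w) - \ell(x)$ of them, giving $\psi(c^{\tilde x}_{\tilde w}) = (-1)^{\ell(w)-\ell(x)}$. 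One must also check the normalizing element of $Z_k$ lies in $Z^0 \cap M'_{\Delta'_\psi}$ so that $\psi$ sends it to $1$: this follows because $x^{-1}w \in \prod_{\alpha\in\Delta'_\psi}\lambda_\alpha^{\mathbb Z}$ lifts compatibly within $\cn \cap M'_{\Delta'_\psi}$, using Remark~\ref{not0}~(i) and the fact that $Z \cap M'_{\Delta'_\psi}$ is normal in $Z$.

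For the vanishing statement, suppose $x \not\in w\prod_{\alpha\in\Delta'_\psi}\lambda_\alpha^{\mathbb N}$ but still $x \le w$. Here I would induct on $\ell(w)-\ell(x)$, peeling off one simple reflection at a time. Using the cut formula \eqref{cwx} (or Lemma~\ref{inductions}) write $\tilde w = \tilde w_1 \tilde s$ with $\ell(w) = \ell(w_1)+1$, and split according to whether $\tilde s$ appears in the chosen subword for $\tilde x$ or not. If it does not appear, then $c^{\tilde x}_{\tilde w} = c^{\tilde x}_{\tilde w_1}\,(x \cdot c(\tilde s))$, and we must show $\psi(c^{\tilde x}_{\tilde w_1}(x\cdot c(\tilde s))) = 0$. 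If $x\cdot\tilde s \notin {}_1\mathfrak S_{\Delta'_\psi}$, i.e.\ the relevant root is not in $\Phi_{\Delta'_\psi}$, then the admissible-lift formula \eqref{cp} together with Lemma~\ref{vani}~(ii) and Remark~\ref{a-1} forces $\psi(x\cdot c(\tilde s)) = 0$ (the point being that $\psi$ nontrivial on $Z^0\cap M'_\beta$ for the root $\beta$ of $s$ kills $c(\tilde s_\beta)$). If it is in $\Phi_{\Delta'_\psi}$, then the failure of $x \in w\prod\lambda_\alpha^{\mathbb N}$ must be witnessed elsewhere in the subword, and we conclude by the inductive hypothesis applied to $c^{\tilde x}_{\tilde w_1}$ (noting $x \not\le w_1$ with $x\cdot\tilde s$ dominant would be absorbed, or $x \le w_1$ with $x$ still not in $w_1\prod\lambda_\alpha^{\mathbb N}$). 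The case where $\tilde s$ does appear in the subword is handled by Proposition~\ref{prop}~\ref{item:1}, which removes $\tilde s$ from both $w$ and $x$, preserving the hypothesis.

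I would organize the vanishing argument using the combinatorial set-up already deployed in the proof of Theorem~\ref{*}: divide $\{y : y \le w_1\}$ into $X \sqcup Y \sqcup Ys$, but now refine it by whether the omitted reflections stay inside $W^{\aff}_{\Delta'_\psi}$. The cleanest route may be to first establish, via Lemma~\ref{L+order} and Lemma~\ref{firststep}, that for $x, w \in \Lambda^+$ with $x \le w$ one has $x \in w\prod_{\alpha\in\Delta}\lambda_\alpha^{\mathbb N}$, and that $x \in w\prod_{\alpha\in\Delta'_\psi}\lambda_\alpha^{\mathbb N}$ is equivalent to a condition on which walls are crossed by a minimal gallery from $w$ to $x$ — precisely the walls whose roots lie in $\Phi_{\Delta'_\psi}$. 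Then the two cases of the theorem are: (a) all crossed walls have roots in $\Phi_{\Delta'_\psi}$, in which case every factor contributes $-1$; (b) at least one crossed wall has a root $\beta \notin \Phi_{\Delta'_\psi}$, in which case Lemma~\ref{vani}~(ii) inserts a zero.

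\textbf{Main obstacle.} The delicate point is the vanishing case: one needs to be sure that when $x \not\in w\prod_{\alpha\in\Delta'_\psi}\lambda_\alpha^{\mathbb N}$, \emph{every} choice of reduced subword of $\tilde w$ representing $\tilde x$ must omit at least one reflection whose conjugated root escapes $\Phi_{\Delta'_\psi}$ — this is where Lemma~\ref{alphan} (which produces a \emph{good} subword when $x$ \emph{is} in the semigroup) must be complemented by a converse, and the braid-move invariance established in Proposition~\ref{Waf1} is what makes the answer independent of the chosen reduced expression. Controlling the $Z_k$-normalization $t$ in $c^{\tilde x}_{\tilde w} = t\,c^{\underline{\tilde w}}_{\underline{t\tilde x \, \ast}}$ and checking $\psi(t) = 1$ in case (a) — equivalently that $t \in \overline{Z^0 \cap M'_{\Delta'_\psi}}$ — will also require care, and is best done by tracking lifts inside $\cn \cap M'_{\Delta'_\psi}$ throughout, using that this group is normal in $Z$ and that $a_\alpha \in Z_{\psi}$ for $\alpha \in \Delta'_\psi$.
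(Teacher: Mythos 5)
Your treatment of the non-vanishing case is essentially the paper's argument: after arranging (via translation by a dominant element and Proposition~\ref{prop}) that all intermediate elements are dominant, apply Lemma~\ref{alphan} with $J=\Delta'_\psi$ and Remark~\ref{nalphan} to get a reduced subword all of whose omitted factors conjugate into ${}_1\mathfrak S_{\Delta'_\psi}$, evaluate each factor to $-1$ by Lemma~\ref{vani}(i), and check $\psi(t)=1$ for the normalizing element $t\in Z_k$ by showing $t\in Z_k\cap {}_1W^{\aff}_{\Delta'_\psi}=Z_k^{\aff,\Delta'_\psi}$ using normality. That part is fine.

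The vanishing case has a genuine gap. You assert that if the (conjugated) reflection $x\cdot\tilde s$ has direction $\beta\notin\Delta'_\psi$ then $\psi(x\cdot c(\tilde s))=0$. This is false in general: by Remark~\ref{a-1} the group $\overline{Z^0\cap M'_\beta}$ is generated by \emph{two} subgroups $Z_{k,s_\beta}$ and $Z_{k,s_{\beta_a-1}}$, which differ for non-split groups, and $\beta\notin\Delta'_\psi$ only means $\psi$ is nontrivial on their product. For $\beta\in\Delta(\psi)\setminus\Delta'_\psi$ one has $\psi(c(\tilde s_\beta))=-1\neq 0$ by \eqref{cp}. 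Lemma~\ref{vani}(ii) gives vanishing only for the \emph{product} $c(\tilde s_\beta)\,c(\tilde s_{\beta_a-1})$ (or its $s_\beta$-conjugate variant), i.e.\ one needs the gallery from $w$ to $x$ to cross \emph{both} affine walls in the $\beta$-direction, not just one. Your single-reflection peeling, and likewise the ``at least one crossed wall has root outside $\Phi_{\Delta'_\psi}$'' formulation at the end, cannot see this and would wrongly conclude nonvanishing is impossible only when it actually is.

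The paper closes exactly this gap without any induction: writing $x=w\prod_\alpha\lambda_\alpha^{n(\alpha)}$ (Proposition~\ref{L+order}) and assuming $n(\alpha)>0$ for some $\alpha\notin\Delta'_\psi$, it sets $\tilde w'=\tilde x\tilde\lambda_\alpha^{-1}$, so $\tilde x\le\tilde w'\le\tilde w$, and uses the divisibility $c_{\tilde w}^{\tilde x}\in c_{\tilde w'}^{\tilde w'\tilde\lambda_\alpha}\,\mathbb Z[Z_k]$ from Proposition~\ref{prop}. Lemma~\ref{lalpha} and Remark~\ref{lalphal} then show $c_{\tilde w'}^{\tilde w'\tilde\lambda_\alpha}$ is, up to $\mathbb Z[Z_k]$, the product $c(\tilde s_\alpha)\,c(\tilde s_{\alpha_a-1})$ or $c(\tilde s_\alpha)\,(s_\alpha\cdot c(\tilde s_{\alpha_a-1}))$ of the two wall contributions, which $\psi$ kills by Lemma~\ref{vani}(ii). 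You would need to incorporate this two-wall mechanism (Lemma~\ref{lalpha} is precisely the statement that passing from $\lambda$ to $\lambda\lambda_\alpha$ deletes two letters conjugating to $\{s_\alpha,s_\alpha\lambda_\alpha\}$ or $\{s_\alpha,\lambda_\alpha s_\alpha\}$) for your argument to go through.
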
 
   
  \begin{proof}  
  We have   $ x=   w \prod_{\alpha \in \Delta}\lambda_\alpha^{n(\alpha)}$ with $n(\alpha)\in \mathbb N$ (Proposition \ref{L+order}).  For   $\tilde \lambda\in \Lambda^+(1)$, $c_{\tilde w \tilde \lambda}^{\tilde x \tilde \lambda}=c_{ \tilde w}^{\tilde x}$  (Proposition \ref{prop}), so
by Lemma 3.5 we may assume without loss of generality that $w \prod_{\alpha \in \Delta}\lambda_\alpha^{m(\alpha)} \in \Lambda^+$ for any $0 \le m(\alpha) \le n(\alpha)$.

Assume  $n(\alpha)>0$ for  some $\alpha \in \Delta \setminus \Delta'_\psi$. Let $\tilde w' = \tilde x \tilde\lambda_\alpha^{-1}$ for some lift $\tilde\lambda_\alpha$ of $\lambda_\alpha$, so $\tilde x = \tilde  w' \tilde\lambda_\alpha\le \tilde w' \le \tilde w$.
Then $c_{\tilde  w}^{\tilde  x} \in c_{\tilde  w'}^{\tilde  w' \tilde\lambda_\alpha}\mathbb Z[Z_k]$ by Proposition \ref{prop},
so $c_{\tilde  w}^{\tilde  x} \in c(\tilde s_\alpha)\, (s_\alpha\cdot c( \tilde s_{\alpha_a -1})) \mathbb Z[Z_k]$ or  $ c(\tilde s_\alpha)c( \tilde s_{\alpha_a -1})\mathbb Z[Z_k]$ by Remark~\ref{lalphal}.
Therefore $\psi(c_{\tilde  w}^{\tilde  x})=0$ by Lemma \ref{vani}.

Assume now $ n(\alpha)=0$ for all $\alpha \in \Delta \setminus \Delta'_\psi$ and that $\tilde x\in \tilde w\prod _{\alpha \in \Delta'_{\psi}}a_\alpha^{n(\alpha)}$. Take a reduced expression $\tilde w=\tilde s_1\cdots  \tilde s_n \tilde u$ where  $ \tilde s_1,\ldots,  \tilde s_n\in {}_1S^{\aff}$ are admissible and $\tilde  u\in \Omega(1)$.   
Let $J = \Delta'_\psi$.
By Lemma \ref{alphan} and Remark \ref{nalphan}, there exist $i_1<i_2 <\cdots <i_r$  such that 
\begin{itemize}\item 
  $x=w \prod_{\alpha \in \Delta}\lambda_\alpha ^{n(\alpha)}=s_{i_1}\cdots s_{i_r}u$ is a reduced expression,
\item $\tilde s_{i_1}\cdots \tilde s_{i_j}\cdot \tilde s_k \in {}_1 \mathfrak S_J$ for any $0\leq j \leq r$ and $i_j<k < i_{j+1}$, and
\item 
  $s_{i_1}\cdots s_{i_j}\cdot c(\tilde s_k)=c(\tilde s_{i_1}\cdots \tilde s_{i_j}\cdot \tilde s_k)\in \mathbb Z[\,
  \overline{Z^0\cap M'_J}\,]$ and $\psi (s_{i_1}\cdots s_{i_j}\cdot c(\tilde s_k))=-1$ for any $0\leq j \leq r$ and $i_j<k < i_{j+1}$.
\end{itemize}
We have $\tilde x = t \tilde s_{i_1}\cdots \tilde s_{i_r}u$ for some $t \in Z_k$. 
Taking the product of all $\tilde s_{i_1}\cdots \tilde s_{i_j}\cdot \tilde s_k \in {}_1 \mathfrak S_J$ we deduce that
$(\tilde w u^{-1})(t^{-1} \tilde x u^{-1})^{-1} = \tilde w\tilde x^{-1} t \in {}_1 W^{\aff}_J$. Since
$\tilde x^{-1} \tilde w = \prod _{\alpha \in J}a_\alpha^{-n(\alpha)} \in {}_1 W^{\aff}_J$, it follows by normality that $\tilde w \tilde x^{-1} \in {}_1 W^{\aff}_J$.
Thus $t \in Z_k \cap {}_1 W^{\aff}_J = Z_k^{\aff,J}$, so $\psi(t) = 1$. Therefore, from the definition of $c_{\tilde w}^{\tilde x}$ we get that
$\psi (c_{\tilde w}^{\tilde x})=(-1)^{n-r}$. \end{proof}

\section{Inverse Satake theorem when \texorpdfstring{$\Delta(V')\subset \Delta(V)$}{Delta(V') subset Delta(V)}}\label{ProofS}
\subsection{Value of \texorpdfstring{$\varphi_z$}{phi\_z} on a generator}\label{sec:value-phi-z} Let $V,V'$ be two irreducible representations of $K$ with parameters 
$(\psi_V, \Delta(V)), (\psi_{V'}, \Delta(V'))$ such that $\Delta(V')\subset \Delta(V)$, let 
 $\iota^{\op}:V^{U^0_{\op}}\congto V'^{U_{\op}^0}, \iota: V_{U^0}\congto V'_{U^0}$ be compatible linear isomorphisms  \eqref{iopi}, and let  \eqref{triangle}
$$z\in Z_G^+(V,V')=\{z\in Z^+ \ | \ z\cdot\psi_V=\psi_{V'}, \langle \alpha, v(z) \rangle > 0 \ \text{for all } \ \alpha \in \Delta(V)\setminus \Delta(V')\}.$$
The Satake transform
$S^G:\mathcal H_G(V,V')\to \mathcal H_Z(V_{U^0},V'_{U^0})$ 
is injective (cf.\ Definition \ref{SMGVV'}). 
After showing   that $\tau_z^{V_{U^0},V'_{U^0},\iota}$ belongs to the image of $S^G$ we will compute the value of  the unique  antecedent $\varphi_z$ on a generator of the representation $\ind_K^GV$ of $G$ (Proposition \ref{begin}). 
As a generator we take the function $f_v\in \ind_K^GV$ of support $K$ and value   at $1$  a non-zero element $v\in V^{U_{\op}^0}$. This generator $f_v$ is fixed by the pro-$p$ Iwahori group $I=K(1)U_{\op}^0$ and its image by a $G$-intertwiner $\ind_K^GV \to \ind_K^GV'$ is also fixed by $I$. The space $(\ind_K^GV')^I$ of $I$-invariants of $\ind_K^GV'$ is a right module  for the pro-$p$ Iwahori Hecke $C$-algebra $\mathcal H_C$. We will show that $\varphi_z(f_v)=f_{v'} h_{z}$ where  $f_{v'}\in \ind_K^GV'$ has support $K$ and value $v'=\iota^{\op}(v)$ at $1$, and $h_{z}\in \mathcal H_C$; then, we will describe $h_{z}$ using the elements $T^*_w$ and $E_{o_{\Delta (V')}}(w)$ of $\mathcal H_C$ for $w\in W(1)$. 

\begin{proposition}\label{begin} 
Suppose $z \in Z_G^+(V,V')$. There exists  $\varphi_z\in \mathcal H_G(V,V')$ such that $S^G(\varphi_z)=\tau_z^{V_{U^0},V'_{U^0},\iota}$. The value of  $\varphi_z$ on $f_v$ is $f_{v'} h_{z}$ where $$h_{z}=E_{o_{\Delta (V')}}(z n(w_{\Delta (V)}w_{\Delta (V')})^{-1})  T^*(n (w_{\Delta (V)}w_{\Delta (V')})).$$
\end{proposition}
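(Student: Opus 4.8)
The plan is to realize both compact inductions inside the parabolically induced representation $\mathfrak X_G = \Ind_B^G(\ind_{Z(1)}^Z C)$, use the adjunction of \S\ref{GS} together with the known $\mathcal H_C$-module structure on $\mathfrak X_G^I$, and then transport the element $\tau(z) \in \mathcal H_Z$ (the characteristic function of $zZ(1)$) across. Concretely, I would first fix the generator $f_v \in (\ind_K^GV)^I$ with support $K$ and value $v \in V^{U_{\op}^0}$, and likewise $f_{v'} \in (\ind_K^GV')^I$, and consider their images $I_v(f_v), I_{v'}(f_{v'}) \in \mathfrak X_G^I$ under the $C[G]$-embeddings $I_v = (\Ind_B^G j_v)\circ I_V$, $I_{v'} = (\Ind_B^G j_{v'})\circ I_{V'}$ described in \S\ref{S:1.14}. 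Since $\mathfrak X_G^I = (\Ind_B^G(\ind_{Z(1)}^Z C))^I$ is an $(\mathcal H_Z, \mathcal H_G)$-bimodule, the key point to establish is the identity $\tau(z)I_v(f_v) = I_{v'}(f_{v'})h_z$ in $\mathfrak X_G^I$, for the explicit element $h_z = E_{o_{\Delta(V')}}(z n(w_{\Delta(V)}w_{\Delta(V')})^{-1})\,T^*(n(w_{\Delta(V)}w_{\Delta(V')}))$. This is the generalization of \cite[IV.19~Thm.]{MR3600042} alluded to after the statement; I would prove it by reducing to a computation with alcove-walk basis elements, using the relation between $\ind_{Z^0}^Z V_{U^0}$ (resp.\ $V'_{U^0}$) and the building blocks $\tau(z)$, $E_{o_{\Delta(V')}}$ inside $\mathfrak X_Z = \ind_{Z(1)}^Z C$, and the fact that the longest-element factors $w_{\Delta(V)}, w_{\Delta(V')}$ exactly account for the difference between the kernels of $V \onto V_{U^0}$ and $V' \onto V'_{U^0}$ (equivalently between $P_{\Delta(V)}$ and $P_{\Delta(V')}$).

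Granting that identity, the definition of $\varphi_z$ is forced: since $\ind_K^GV$ is generated by $f_v$ over $C[G]$ and $f_v$ is $I$-fixed, any assignment $f_v \mapsto f_{v'}h_z$ in $(\ind_K^GV')^I$ extends (uniquely) to a $G$-equivariant map $\varphi_z \colon \ind_K^GV \to \ind_K^GV'$ provided it is well defined, i.e.\ provided the $C[G]$-annihilator of $f_v$ kills $f_{v'}h_z$; but this is exactly what the identity $\tau(z)I_v(f_v) = I_{v'}(f_{v'})h_z$ guarantees, because $I_v$ and $I_{v'}$ are injective $C[G]$-maps and $\tau(z)I_v(f_v)$ visibly lies in $I_v$ applied to a well-defined element. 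Then to compute $S^G(\varphi_z)$ I would use the compatibility of the adjunction isomorphism $\Hom_{CG}(\ind_K^GV,\Ind_B^G(\ind_{Z^0}^Z V_{U^0})) \cong \Hom_{CZ}(\ind_{Z^0}^Z V_{U^0}, \ind_{Z^0}^Z V_{U^0})$ with $S^G$ (recalled at the end of \S\ref{GS}), together with $I_v = (\Ind_B^G j_v)\circ I_V$: chasing the identity $\tau(z)I_v(f_v) = I_{v'}(f_{v'})h_z$ through $j_v, j_{v'}$ back down to $\mathfrak X_Z$ shows that the Satake transform of $\varphi_z$ acts as multiplication by $\tau(z)$ on the relevant line, i.e.\ $S^G(\varphi_z) = \tau_z^{V_{U^0},V'_{U^0},\iota}$. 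In particular this also shows $\tau_z^{V_{U^0},V'_{U^0},\iota}$ lies in the image of $S^G$.

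I expect the main obstacle to be the proof of the bimodule identity $\tau(z)I_v(f_v) = I_{v'}(f_{v'})h_z$: this requires controlling the alcove-walk basis element $E_{o_{\Delta(V')}}(z n(w_{\Delta(V)}w_{\Delta(V')})^{-1})$ and the factor $T^*(n(w_{\Delta(V)}w_{\Delta(V')}))$ carefully enough to see that their combined action on $f_{v'}$ inside $\mathfrak X_G^I$ matches left-translation by $\tau(z)$ on $f_v$. The delicate points are (i) that $z \in Z_G^+(V,V')$ — in particular $\langle\alpha,v(z)\rangle > 0$ for $\alpha \in \Delta(V)\setminus\Delta(V')$ — is precisely what makes $z n(w_{\Delta(V)}w_{\Delta(V')})^{-1}$ behave well with respect to lengths (so the multiplication formula $E_o(w_1)E_{o\cdot w_1}(w_2) = E_o(w_1w_2)$ applies without a $q$-factor, using that we are in characteristic $p$), and (ii) that the orientation $o_{\Delta(V')}$, attached to the Weyl chamber $w_{\Delta(V')}(\mathfrak D^-)$, is the correct one to detect $f_{v'}$, whose stabilizer data involves $P_{\Delta(V'),k,\op}$. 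Both (i) and (ii) are where the calculation of \cite[\S IV]{MR3600042} gets generalized; I would isolate the length bookkeeping as a separate lemma (of the type of Lemma~\ref{leJ}) and then assemble the identity by a direct, if somewhat involved, computation of $f_{v'}$ acted on by $h_z$ on double cosets, reducing everything to the already-established structure of $\mathcal H_Z(V_{U^0}, V'_{U^0})$ and the braid relations.
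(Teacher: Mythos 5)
Your proposal follows essentially the same route as the paper: the bimodule identity $\tau(z)I_v(f_v)=I_{v'}(f_{v'})h_z$ you isolate is exactly Proposition~\ref{end2}, and your deduction of the existence of $\varphi_z$ (via injectivity of $I_{v'}$ and the fact that $f_v$ generates) and of $S^G(\varphi_z)=\tau_z^{V_{U^0},V'_{U^0},\iota}$ via the adjunction of \cite[\S 2]{MR3001801} matches the argument the paper gives immediately after stating Proposition~\ref{end2}. The only caveat is that the substantive work is the proof of that identity itself, which the paper carries out in \S\ref{sec:proof-in-X-I} by adapting \cite[IV.19 Theorem]{MR3600042} through Lemmas~\ref{red}, \ref{recall} and \ref{w1}, and which your sketch outlines correctly but does not execute.
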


Note that $E_{o_J}( z n(w)^{-1})T^*(n(w))$ does not depend on the choice of the lift $n(w)\in \cn$ of $w\in W$ because another choice differs only by multiplication by 
  $t\in Z^0$ and for $n,n'\in \cn$,  
$E_{o_J}( nt^{-1}) T^*(tn')=E_{o_J}( n)T(t^{-1}) T(t)T^*(n')= E_{o_J}( n) T^*(n')$.

\subsection{Embedding in \texorpdfstring{$\mathfrak X= \Ind_B^G (\ind_{Z(1)}^Z 1_C)$}{X = ind\_B\^{}G (ind\_\{Z(1)\}\^{}Z 1\_C)}}\label{S:5.2}
Proposition \ref{begin} is essentially the same as Theorem  \cite[IV.19 Thm.]{MR3600042}\ which implies the easier part of the change of weight theorem \cite[IV.I Thm.\ (i)]{MR3600042}. 
(See the end of \S\ref{S:5.2} for an explanation why it is essentially the same.)
The first step of the proof is to embed the two representations $\ind_K^GV$ and  $\ind_K^GV'$  of $G$ in the same representation 
\[\mathfrak X= \Ind_B^G (\ind_{Z(1)}^Z 1_C).\]
For a $C$-character $\psi$ of $Z^0$ let $e_\psi\in  \ind_{Z(1)}^Z 1_C$ denote the function of support $Z^0$ and equal to $\psi$ on $Z^0$. For $v\in V^{U_{\op}^0}\setminus\{0\}$ of image $\overline v\in V_{U^0}$,  let  $f_v\in \ind_K^GV$ (resp.\  $e_{\overline v}\in \ind_{Z^0}^ZV_{U^0}$) denote the function of support $K$ with $f_v(1)=v $  (resp.\ of support $Z^0$ with $e_{\overline v}(1)=\overline v $). We  recall the  injective intertwiner  \cite[Def.\ 2.1]{MR3001801}
$$I_V:\ind_K^GV\into \Ind_B^G (\ind_{Z^0}^ZV_{U^0})$$ such that  $I_V(f_v)(1)=e_{\overline v}$. We have the injective $Z$-intertwiner  
$$j_{\overline v}:\ind_{Z^0}^ZV_{U^0}\into \ind_{Z(1)}^Z 1_C$$ sending 
$e_{\overline v}$ to $e_{\psi_V}$.
 
\begin{definition} For $v\in V^{U_{\op}^0}\setminus\{0\}$, let 
$I_v:\ind_K^GV\into \mathfrak X$  be the injective $G$-equivariant map
such that  $I_v(f_v)(1)=e_{\psi_V}$.
 \end{definition}

The intertwiner $I_v$ is the composite of $I_V$ and    the injective  $G$-intertwiner $$\Ind_B^G(j_{\overline v}) :\Ind_B^G (\ind_{Z^0}^ZV_{U^0})\into \mathfrak X$$ induced by $ j_{\overline v}$. 
For $\varphi\in \mathcal H_G(V,V')$, the diagram
$$\xymatrix{\ind_K^GV\ar[r]^-{I_V}  \ar[d]_\varphi & \Ind_B^G (\ind_{Z^0}^ZV_{U^0}) \ar[d]^{S^G(\varphi)}\\
\ind_K^GV' \ar[r]_-{I_{V'}} &\Ind_B^G (\ind_{Z^0}^ZV'_{U^0})
}$$
is commutative \cite[\S 2]{MR3001801}.  For $z\in Z$, let $\tau(z)$ be the characteristic function of $z Z(1)$ seen as a $Z$-intertwiner $\ind_{Z(1)}^Z 1_C\to\ind_{Z(1)}^Z 1_C$. 
This makes $\ind_{Z(1)}^Z 1_C$ into a left $C[Z/Z(1)]$-module.
Let  $\overline v'=\iota(\overline v)$.
The diagram
$$\xymatrix{\ind_{Z^0}^ZV_{U^0} \ar[r]^{j_{\overline v}  }\ar[d]_{ \tau_z^{V_{U^0},V'_{U^0},\iota}}& \ind_{Z(1)}^Z 1_C\ar[d]^{\tau(z)}\\
\ind_{Z^0}^ZV'_{U^0}  \ar[r]_{j_{\overline v'}} &\ind_{Z(1)}^Z 1_C
}$$
is commutative. By functoriality, the diagram
$$\xymatrix@C=1.5cm{\Ind_B^G (\ind_{Z^0}^ZV_{U^0}) \ar[r]^-{\Ind_B^G(j_{\overline v} )}\ar[d]_{ \tau_z^{V_{U^0},V'_{U^0},\iota} }&  \mathfrak X\ar[d]^{\tau(z)}\\
\Ind_B^G(\ind_{Z^0}^ZV'_{U^0}) \ar[r]_-{\Ind_B^G (j_{\overline v'})} &   \mathfrak X}$$
is also commutative.  

\begin{proposition}\label{end2} 
Suppose $z \in Z_G^+(V,V')$. In the $(C[Z/Z(1)],\mathcal{H}_C)$-bimodule $\mathfrak X^I$ we have
$$\tau(z) I_v(f_v)= I_{v'}(f_{v'}) h_{z}, \  h_{z}=E_{o_{\Delta (V')}}(z n(w_{\Delta (V)}w_{\Delta (V')})^{-1})  T^*(n (w_{\Delta (V)}w_{\Delta (V')})).$$
\end{proposition}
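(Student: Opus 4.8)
## Proof plan for Proposition \ref{end2}

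The plan is to reduce the assertion to the computation carried out in \cite[IV.19 Thm.]{MR3600042}, generalizing it from the equal-weight case $V=V'$ (and from split groups) to the present setting $\Delta(V')\subset\Delta(V)$. The key point is that both $\ind_K^GV$ and $\ind_K^GV'$ have been embedded via $I_v,I_{v'}$ into the common space $\mathfrak X=\Ind_B^G(\ind_{Z(1)}^Z1_C)$, and that the $I$-invariants $\mathfrak X^I$ form a $(C[Z/Z(1)],\mathcal H_C)$-bimodule. So the statement is really an identity of two explicit $I$-fixed vectors in $\mathfrak X^I$, and the natural strategy is to evaluate both sides on a set of coset representatives for $B\backslash G$ that is adapted to the Iwahori decomposition, namely on the elements $n(w)z'$ for $w\in W_0$ and $z'\in Z$, using the Bruhat–Iwahori combinatorics.

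First I would record, following \cite[\S2]{MR3001801} and the two commutative diagrams established just above, that $\tau(z)I_v(f_v) = I_{v'}\bigl(S^G(\tau_z^{V_{U^0},V'_{U^0},\iota})^{-1}\text{-preimage}\bigr)$ is not quite what we want — rather, by the commutativity of the first diagram applied to the intertwiner $\varphi_z$ of Proposition \ref{begin}, we have $I_{v'}(\varphi_z(f_v)) = S^G(\varphi_z)(I_v(f_v)) = \tau_z^{V_{U^0},V'_{U^0},\iota}\cdot I_v(f_v)$, and then by the functoriality diagram relating $\tau_z^{V_{U^0},V'_{U^0},\iota}$ on $\Ind_B^G(\ind_{Z^0}^ZV_{U^0})$ to $\tau(z)$ on $\mathfrak X$, we get $I_{v'}(\varphi_z(f_v)) = \tau(z)I_v(f_v)$ in $\mathfrak X$. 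But this presupposes Proposition \ref{begin}, whereas in the logical structure of the paper Proposition \ref{end2} comes \emph{first} and feeds into Proposition \ref{begin}. So instead I would argue directly: the element $I_v(f_v)\in\mathfrak X^I$ generates the $\mathcal H_C$-submodule corresponding to the image of $\ind_K^GV$, and one computes the right action of the explicit Hecke element $h_z = E_{o_{\Delta(V')}}(z n(w_{\Delta(V)}w_{\Delta(V')})^{-1})\,T^*(n(w_{\Delta(V)}w_{\Delta(V')}))$ on the \emph{other} generator $I_{v'}(f_{v'})$ by expanding $h_z$ in the Iwahori–Matsumoto basis (using the triangular expansions \eqref{Et}, \eqref{T*}) and applying the known right action of each $T_w$ on $I$-fixed vectors in a parabolically induced module.

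The core computation is then: expand $E_{o_{\Delta(V')}}(z n(w_{\Delta(V)}w_{\Delta(V')})^{-1})$ via the Iwahori–Matsumoto triangular formula, multiply by $T^*(n(w_{\Delta(V)}w_{\Delta(V')}))$ using the braid/quadratic relations (here one needs $\ell(z n(w_{\Delta(V)}w_{\Delta(V')})^{-1}) + \ell(n(w_{\Delta(V)}w_{\Delta(V')})) = \ell(z)$, which follows from the length formula \eqref{length2} together with $z\in Z^+$ so that $\ell(w\lambda)=\ell(\lambda)+\ell(w)$ for $w\in W_0$, $\lambda=\lambda_z\in\Lambda^+$), and then evaluate the resulting function in $\mathfrak X$ at the relevant double-coset representatives. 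On the left-hand side, $\tau(z)I_v(f_v)$ is, by definition of the $C[Z/Z(1)]$-action via $\tau(z)$ and of $I_v$, the function in $\mathfrak X$ whose value encodes translation by $z$ of the $Z(1)$-fixed line; this is exactly the content of \cite[IV.19]{MR3600042} once one matches up the parameters. The essential new input beyond \emph{loc.\ cit.} is that $\psi_V\ne\psi_{V'}$ is allowed (only $\Delta(V')\subset\Delta(V)$ is assumed), so one must track carefully how $j_{\overline v}$ and $j_{\overline v'}$ send $e_{\overline v}\mapsto e_{\psi_V}$ and $e_{\overline v'}\mapsto e_{\psi_{V'}}$ with \emph{different} characters — but since $z\cdot\psi_V=\psi_{V'}$ for $z\in Z_G^+(V,V')$, the operator $\tau(z)$ intertwines $e_{\psi_V}$ and $e_{\psi_{V'}}$ correctly, and the Iwahori-level relations (quadratic relations with $q(s)=0$ in characteristic $p$, and the formula \eqref{cp} for $c(\tilde s)$) are insensitive to the choice of central character.

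The main obstacle I expect is the bookkeeping of the two Weyl-group longest elements $w_{\Delta(V)}$ and $w_{\Delta(V')}$ and the orientation $o_{\Delta(V')}$: one must verify that $E_{o_{\Delta(V')}}(z n(w_{\Delta(V)}w_{\Delta(V')})^{-1})$ is the \emph{$\Delta(V')$-positive} Bernstein-type element whose $\iota_{\Delta(V')}$-image behaves well (cf.\ the Lemma preceding \eqref{Jz} and Proposition \ref{EJ}), and that multiplication by $T^*(n(w_{\Delta(V)}w_{\Delta(V')}))$ on the right produces precisely the function supported where $\tau(z)I_v(f_v)$ is supported, with the right value $e_{\psi_{V'}}$ there. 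Concretely this amounts to checking an identity of the form $f_{v'}\cdot T_w$ is supported on $KwI$ with value determined by $\iota^{\op}$ and the $U_{\op}^0$-action — a direct but delicate verification using $I = U_{\op}^0 Z(1)(U\cap I)$ and the fact that $f_{v'}(1)\in V'^{U_{\op}^0}$ maps to $v'$. Once these support-and-value computations are in place, comparing with the explicit description of $\tau(z)I_v(f_v)$ finishes the proof; the argument is, modulo these identifications, the same as in \cite[IV.19]{MR3600042}, and I would present it as a reduction to that theorem plus the new Iwahori-combinatorial lemmas (Propositions \ref{EJ}, \ref{prop} and Theorems \ref{*}, \ref{psic}) developed in Section \ref{S:4}.
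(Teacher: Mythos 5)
Your plan matches the paper's proof: both sides are rewritten in terms of the generator $f_{\psi,n(w_\Delta)}$ of $\mathfrak X_\psi^I$ acted on by explicit Hecke elements (Lemmas \ref{red} and \ref{recall}), and the resulting identity in $\mathfrak X^I$ is reduced to the computation of \cite[IV.19 Thm.]{MR3600042}, the two new checks being exactly the ones you flag --- the interaction of $\tau(z)$ with the distinct characters $\psi_V,\psi_{V'}$ (handled by Lemma \ref{w1} in place of \cite[IV.14 Thm.]{MR3600042}) and the validity of \cite[IV.19 Lemma]{MR3600042} for arbitrary $\Delta(V')\subset J\subset\Delta$. One small correction: Proposition \ref{EJ} and Theorems \ref{*}, \ref{psic} are not inputs to Proposition \ref{end2} --- they enter only in the later Proposition \ref{expa} --- so your closing sentence overstates what this particular step requires.
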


This proposition implies Proposition \ref{begin}, as we now explain: we see in particular that $\tau(z) I_v(f_v) \in I_{v'}(\ind_K^G V')$,
so $\tau(z) I_v(\ind_K^G V) \in I_{v'}(\ind_K^G V')$. Thus there exists a unique $\varphi_z \in \mathcal H_G(V,V')$ such that the following diagram commutes:
$$\xymatrix{\ind_K^GV \ar[r]^-{I_v}  \ar@{-->}[d]_{\varphi_z} & \mathfrak X \ar[d]^{\tau(z)}\\
\ind_K^GV' \ar[r]_-{I_{v'}} &\mathfrak X.
}$$
By the above discussion and injectivity of $\Ind_B^G (j_{\overline v'})$ we deduce that $\tau_z^{V_{U^0},V'_{U^0},\iota} \circ I_V = I_{V'} \circ \varphi_z$.
We also have $S^G(\varphi_z) \circ I_V = I_{V'} \circ \varphi_z$. From the discussion of \cite[\S2]{MR3001801} it follows that $S^G(\varphi_z) = \tau_z^{V_{U^0},V'_{U^0},\iota}$
(both correspond to the map $I_{V'} \circ \varphi_z$ under the adjunction \cite[(2)]{MR3001801}, where we take $P = B$ and $W = \ind_{Z^0}^ZV'_{U^0}$).

Proposition~\ref{end2} is a variant of \cite[IV.19 Theorem]{MR3600042}. In loc.\ cit.\   one assumes  $\psi_V=\psi_{V'}=\psi$,  $\Delta(V)=\Delta(V')\sqcup\{\alpha\}$ and the representation $\mathfrak X$ of $G$  is replaced by $\mathfrak X_\psi=\Ind_B^G (\ind_{Z^0}^Z\psi)$. 
Identifying $V_{U^0} \simeq \psi_V$, $V'_{U^0} \simeq \psi_{V'}$ via our bases $\overline v, \overline v'$ we have the embeddings
$\Ind_B^G(j_{\overline v})  : \mathfrak X_{\psi_V} \into \mathfrak X$, $\Ind_B^G(j_{\overline v'}) : \mathfrak X_{\psi_{V'}} \into \mathfrak X$.
We need to   explain why certain arguments of \cite{MR3600042}  remain valid  or can be adapted to our more general setting.

\subsection{Proof in \texorpdfstring{$\mathfrak X^I$}{X\^{}I}}\label{sec:proof-in-X-I}
We  start the proof of  Proposition \ref{end2}. 
For $n(w)\in \cn^0$ lifting $w\in W_0$, the double coset $Bn(w)I$ does not depend on the choice of $n(w)$; we write $BwI=Bn(w)I$.

\begin{definition}For a $C$-character $\psi$ of $Z^0$, the function $f_{\psi,n(w_\Delta)}\in \mathfrak X^I$ has support $Bw_\Delta I$ and its value at $n(w_\Delta)^{-1}$ is $e_\psi$.
\end{definition}

The function $f_{\psi,n(w_\Delta)}$ is the image of the function $f_0 \in \mathfrak X^I_\psi$ of  \cite[IV.7 Definition]{MR3600042}  for a fixed choice of $n(w_\Delta)$.  As announced earlier, we first show $I_v(f_v)\in f_{\psi_V,n(w_\Delta)} \mathcal H_C$. 

\begin{lemma}\label{red} We have  $I_v(f_v)=f_{\psi_V,n(w_\Delta)} T(n(w_\Delta)n(w_{\Delta(V)})^{-1})T^*(n(w_{\Delta(V)}))$.
\end{lemma}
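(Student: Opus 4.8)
The statement computes $I_v(f_v)$ inside $\mathfrak X^I$ in terms of the distinguished element $f_{\psi_V,n(w_\Delta)}$. The plan is to reduce everything to the parabolically induced module $\mathfrak X_{\psi_V} = \Ind_B^G(\ind_{Z^0}^Z\psi_V)$ and invoke the corresponding computation from \cite[\S IV]{MR3600042}. Concretely, recall from \S\ref{S:5.2} that $I_v$ factors as $\Ind_B^G(j_{\overline v}) \circ I_V$, and that $\Ind_B^G(j_{\overline v})$ is an $(\mathcal H_C)$-equivariant embedding $\mathfrak X_{\psi_V} \into \mathfrak X$ (after identifying $V_{U^0}\simeq\psi_V$ via $\overline v$). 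It also carries $f_{\psi_V,n(w_\Delta)} \in \mathfrak X_{\psi_V}^I$ (the image of $f_0$ in loc.\ cit.) to $f_{\psi_V,n(w_\Delta)}\in\mathfrak X^I$ by construction. So it suffices to prove the identity
\[
I_V(f_{\overline v}) = f_{\psi_V,n(w_\Delta)}\, T(n(w_\Delta)n(w_{\Delta(V)})^{-1})\, T^*(n(w_{\Delta(V)}))
\]
inside $\mathfrak X_{\psi_V}^I$, where $f_{\overline v}\in \ind_K^G V$ is the $I$-fixed generator and we abusively reuse $I_V$ for the embedding into $\mathfrak X_{\psi_V}$. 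This is exactly (a mild reformulation of) the content of a lemma in \cite[\S IV]{MR3600042}; what remains is to check the formula transports correctly.

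\textbf{Key steps.} First I would recall that $\ind_K^G V$ is generated over $\mathcal H_C$ (acting on $I$-invariants) by $f_v$, and $I_V(f_v)\in\mathfrak X_{\psi_V}^I$ is supported on $BK = Bw_\Delta I \sqcup (\text{lower cells})$; the leading term is at the big cell $Bw_\Delta I$, matching $f_{\psi_V,n(w_\Delta)}$ up to a Hecke operator supported on $W_0(1)$, i.e.\ in $\mathcal H_C(K,I)$. Second, using that the stabilizer of the line $V^{U_{\op}^0}$ in $G_k$ is $P_{\Delta(V),k,\op}$ (Lemma~\ref{repKmodH}) and the action of $\mathcal H_C(K,I)$ on $V^I$ is via the character $\chi$ with parameter $(\psi_V^{-1},\Delta(V))$, one identifies the precise Hecke element: by the quadratic/braid relations it is $T(n(w_\Delta)n(w_{\Delta(V)})^{-1})T^*(n(w_{\Delta(V)}))$ (the $T^*$-part kills the unwanted directions inside $M_{\Delta(V)}$, the $T$-part rotates from $w_{\Delta(V)}$ out to $w_\Delta$). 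This is where one quotes \cite[IV.7--IV.8]{MR3600042} essentially verbatim, noting that those arguments only use the parameter $(\psi_V,\Delta(V))$ of $V$ and the structure of $\mathfrak X_{\psi_V}$, not the extra hypothesis $\Delta(V)=\Delta(V')\sqcup\{\alpha\}$ present there. Third, apply the equivariant embedding $\Ind_B^G(j_{\overline v})$ to push the identity from $\mathfrak X_{\psi_V}^I$ into $\mathfrak X^I$, using that it intertwines the right $\mathcal H_C$-actions and sends $I_V(f_v)\mapsto I_v(f_v)$, $f_{\psi_V,n(w_\Delta)}\mapsto f_{\psi_V,n(w_\Delta)}$.

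\textbf{Main obstacle.} The genuine work is verifying that the computation of \cite[\S IV]{MR3600042} goes through for a general irreducible $V$ with parameter $(\psi_V,\Delta(V))$, $\Delta(V)$ arbitrary (not necessarily $\Delta$ minus a single root). This is really a bookkeeping check: the proof in loc.\ cit.\ proceeds by decomposing $\mathfrak X_{\psi_V}^I$ into Hecke-submodules along Bruhat cells, and the only input about $V$ is via the character $\chi$ of $\mathcal H_C(K,I)$ and the fact that $V^I$ is one-dimensional with $\mathcal H_C(K,I)$ acting through $\chi$; none of that changes. I expect the argument to be: state the identity, observe both sides are $I$-fixed and lie in the cyclic $\mathcal H_C$-submodule generated by $f_{\psi_V,n(w_\Delta)}$, then match them by evaluating at $n(w_\Delta)^{-1}$ (or on a convenient set of double cosets) and using the explicit quadratic relation $T_{\tilde s}^* = T_{\tilde s} - c(\tilde s)$ together with $\chi(T_{\tilde s_\alpha}) = 0$ for $\alpha\in\Delta(V)$ and $\chi(T_{\tilde s_\alpha})=-1$ for $\alpha\in\Delta_{\psi_V^{-1}}\setminus\Delta(V)$ (from \eqref{zero} and Lemma~\ref{repKmodH}). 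The routine verification that the product of Hecke operators on the right is well-defined independently of lifts follows, as noted after Proposition~\ref{begin}, from $E$ and $T^*$ absorbing $Z^0$-translations via $T(t^{-1})T(t)=1$ in $\mathcal H_C$.
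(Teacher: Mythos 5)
Your proposal is correct and follows essentially the same route as the paper: the paper's proof simply transports \cite[IV.9 Proposition]{MR3600042} through the embedding $\mathfrak X_{\psi_V}\hookrightarrow\mathfrak X$ and then checks that the identity is independent of the choices of lifts $n(w_\Delta)$, $n(w_{\Delta(V)})$ (via $T(nt^{-1})T^*(tn')$ and $f_{\psi_V,tn(w_\Delta)}T(tn)$ being independent of $t\in Z^0$), exactly as you outline. Your extra sketch of how the cited computation works and your concern about its validity for arbitrary $\Delta(V)$ are reasonable but not needed here, since that proposition already concerns a single irreducible $V$ with arbitrary parameter.
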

\begin{proof} This is obtained from \cite[IV.9 Proposition]{MR3600042} by applying the embedding $\mathfrak X_\psi \into \mathfrak X$, for a certain choice of $n(w_\Delta)$ and $n(w_{\Delta(V)})$. This is valid for any choice because for $t\in Z^0$,  the  product  $T(n t^{-1})T^*(tn')$ for $n,n'\in \cn$ does not depend on $t$, and neither does $f_{\psi_V,tn(w_\Delta)} T(tn )  = t f_{\psi_V,n(w_\Delta)} T(t)T(n)$,
recalling  
\begin{equation}\label{action}fh = \sum_{x\in I \backslash G} h(x) x^{-1} f \quad \text{for} \ h\in \mathcal H_C, \ f\in \mathfrak X^I,
\end{equation}
hence $fT(t)=t^{-1} f$.
\end{proof}

\begin{lemma}\label{recall} For a $C$-character $\psi$ of $Z^0$  and $z\in Z^+$ we have
$$\tau(z) f_{\psi,n(w_\Delta)}= f_{z\cdot\psi,n(w_\Delta)}T(n(w_\Delta)\cdot z).$$
\end{lemma}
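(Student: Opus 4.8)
The plan is to verify the identity $\tau(z) f_{\psi,n(w_\Delta)}= f_{z\cdot\psi,n(w_\Delta)}T(n(w_\Delta)\cdot z)$ by unwinding both sides as explicit functions in $\mathfrak X^I$ and comparing supports and values. First I would recall that $\tau(z)$ acts on $\mathfrak X = \Ind_B^G(\ind_{Z(1)}^Z 1_C)$ through its action on the inducing data $\ind_{Z(1)}^Z 1_C$, namely as the left $C[Z/Z(1)]$-module structure; concretely, for $f \in \mathfrak X$ and $g \in G$ we have $(\tau(z) f)(g) = \tau(z)\cdot(f(g))$, where on the right $\tau(z)$ is the characteristic function of $zZ(1)$ acting by left convolution on $\ind_{Z(1)}^Z 1_C$. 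Since $f_{\psi,n(w_\Delta)}$ is supported on $B n(w_\Delta) I$ with value $e_\psi$ at $n(w_\Delta)^{-1}$, applying $\tau(z)$ does not move the support (it is a map of left $Z$-modules post-composed into $\Ind$), so $\tau(z) f_{\psi,n(w_\Delta)}$ is again supported on $B n(w_\Delta) I$, and its value at $n(w_\Delta)^{-1}$ is $\tau(z)\cdot e_\psi = e_{z\cdot\psi'}$ for the appropriate twist; here one checks that left convolution of $e_\psi$ (supported on $Z^0$, equal to $\psi$ there) by the characteristic function of $zZ(1)$ yields the function supported on $zZ^0$ with value $\psi(z^{-1}\cdot)$ at the point, which after the identification with $e_{z\cdot\psi}$ accounts for the character change $\psi \mapsto z\cdot\psi$.

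Second, I would compute the right-hand side. Using \eqref{action}, the formula $f h = \sum_{x\in I\backslash G} h(x)\, x^{-1} f$, with $h = T(n(w_\Delta)\cdot z)$ the characteristic function of $I\,(n(w_\Delta)\cdot z)\,I$, we get $f_{z\cdot\psi,n(w_\Delta)}\, T(n(w_\Delta)\cdot z) = \sum_{x} x^{-1} f_{z\cdot\psi,n(w_\Delta)}$ where $x$ runs over $I$-cosets in $I(n(w_\Delta)\cdot z)I$. The key point is that $n(w_\Delta)\cdot z = n(w_\Delta) z n(w_\Delta)^{-1}$ is a dominant-type element (since $z\in Z^+$, conjugation by the longest element $w_\Delta$ sends it into the antidominant cone for the relevant ordering), so the double coset $I(n(w_\Delta)\cdot z)I$ has a single right $I$-coset modulo the stabilizer in the appropriate sense — more precisely, one uses that for such elements the length is maximal in its class and the Iwahori factorization shows $I(n(w_\Delta)\cdot z)I/I$ is a single orbit under the relevant unipotent part, so the sum telescopes to translating the function $f_{z\cdot\psi,n(w_\Delta)}$ by $(n(w_\Delta)\cdot z)^{-1}$ on the left, which shifts its support appropriately.

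Third, I would match the two computations: the left translate $(n(w_\Delta) z^{-1} n(w_\Delta)^{-1}) f_{z\cdot\psi, n(w_\Delta)}$ has support $n(w_\Delta) z^{-1} n(w_\Delta)^{-1} B n(w_\Delta) I = B\, n(w_\Delta) z^{-1} n(w_\Delta)^{-1} n(w_\Delta) I$ — using that $n(w_\Delta) z^{-1} n(w_\Delta)^{-1} \in Z$ normalizes $B$'s torus and the unipotent radical is absorbed appropriately; after simplification this is $B n(w_\Delta) I$ again, and the value at $n(w_\Delta)^{-1}$ is computed to be $e_\psi$ (the twist $z\cdot\psi$ on the function cancels against the left translation by $z^{-1}$-conjugate), matching the left-hand side. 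I expect the main obstacle to be the bookkeeping in the middle step: carefully justifying that $I(n(w_\Delta)\cdot z)I$ decomposes into $I$-cosets in a way that makes the convolution sum collapse to a single left translation, which requires the length-additivity $\ell(w_\Delta \lambda) = \ell(w_\Delta) + \ell(\lambda)$ for $\lambda \in \Lambda^+$ (stated after \eqref{length2}) together with the Iwahori factorization $I = U_{\op}^0 Z(1)(U\cap I)$, and tracking how the character $\psi$ transforms under the various conjugations. This lemma is the direct analogue of \cite[IV.10 Lemma]{MR3600042} transported along the embedding $\Ind_B^G(j_{\overline v}) : \mathfrak X_\psi \into \mathfrak X$, so I would also simply cite loc.\ cit.\ for the computation in $\mathfrak X_\psi$ and note that it is compatible with the embedding since all the maps involved ($\tau(z)$, $T(n(w_\Delta)\cdot z)$, and the formation of $f_{\psi,n(w_\Delta)}$) commute with $\Ind_B^G(j_{\overline v})$.
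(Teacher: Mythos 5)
Your skeleton is the paper's: embed everything in $\mathfrak X$, lean on \cite[IV.10 Proposition]{MR3600042}, and compare values at $n(w_\Delta)^{-1}$. But two points do not survive scrutiny as written. First, the fallback of ``simply citing loc.\ cit.\ and noting compatibility with the embedding'' is insufficient: \cite[IV.10 Proposition]{MR3600042} is a statement internal to $\mathfrak X_\psi$ and therefore only makes sense (and is only proved) when $z\cdot\psi=\psi$ --- otherwise $\tau(z)$ does not act on $\mathfrak X_\psi$ and the functions $f_{\psi,n(w_\Delta)}$, $f_{z\cdot\psi,n(w_\Delta)}$ live in different spaces. Extending to general $z$ is precisely the content of this lemma. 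The paper's proof extracts from loc.\ cit.\ only the $\psi$-independent part --- that $f_{z\cdot\psi,n(w_\Delta)}T(n(w_\Delta)\cdot z)$ is supported on $Bw_\Delta I$ with value $f_{z\cdot\psi,n(w_\Delta)}\bigl(n(w_\Delta)^{-1}(n(w_\Delta)\cdot z^{-1})\bigr)$ at $n(w_\Delta)^{-1}$ --- and then checks that this value equals $z^{-1}e_{z\cdot\psi}=\tau(z)e_\psi$, which your step 1 correctly identifies as the value of the left-hand side. So you have the ingredients, but you must actually carry out this value computation rather than outsource the whole identity.

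Second, in step 2 the claim that $I(n(w_\Delta)\cdot z)I$ ``has a single right $I$-coset'' so that ``the sum telescopes to translating $f_{z\cdot\psi,n(w_\Delta)}$ by $(n(w_\Delta)\cdot z)^{-1}$'' is false as a statement about functions on $G$: writing $n'=n(w_\Delta)\cdot z$, the coset space $In'I/I$ has $q_{n'}$ elements, and $f\,T(n')$ is right $I$-invariant whereas the single translate $(n')^{-1}f$ is not. What is true (and is what loc.\ cit.\ proves) is that the product is supported on $Bw_\Delta I$ and that at $n(w_\Delta)^{-1}$ only the trivial coset contributes, the other translates vanishing there by the Iwahori decomposition of $I$ and the fact that $z\in Z^+$ contracts $U^0$. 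Relatedly, your step-3 support manipulation mixes translation conventions: with the convention implicit in \eqref{action}, $(gf)(x)=f(xg)$, the support of $(n')^{-1}f$ is $(\supp f)\,n'$, and in either convention the naive simplification yields $Bn(w_\Delta)z^{\pm1}I$, not $Bn(w_\Delta)I$; the correct support statement must be proved for the full convolution sum, not for one translate. These defects are repairable, but as written the direct verification does not close.
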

\begin{proof} %
When $z\cdot\psi = \psi$ this is obtained from \cite[IV.10 Proposition]{MR3600042} by applying the embedding $\mathfrak X_\psi \into \mathfrak X$.
By loc.\ cit.,  the support of $f_{z\cdot\psi,n(w_\Delta)}T(n(w_\Delta)\cdot z)$ is $Bw_\Delta I$  and its value at $n(w_\Delta)^{-1}$ is
$f_{z\cdot\psi,n(w_\Delta)}( n(w_\Delta)^{-1} (n(w_\Delta)\cdot z^{-1}))=f_{z\cdot\psi, n(w_\Delta)}(z^{-1}n(w_\Delta)^{-1})= z^{-1}f_{z\cdot\psi, n(w_\Delta)}(n(w_\Delta)^{-1})= z^{-1} e_{z \cdot \psi} = \tau(z) e_\psi$.  Therefore $\tau(z) f_{\psi,n(w_\Delta)}= f_{z\cdot\psi,n(w_\Delta)}T(n(w_\Delta)\cdot z)$.
\end{proof}
Lemmas \ref{red} and \ref{recall} imply   
$$\tau(z) I_v(f_v)=f_{z\cdot\psi_V,n(w_\Delta)}T(n(w_\Delta)\cdot z)T(n(w_\Delta)n(w_{\Delta(V)})^{-1})T^*(n(w_{\Delta(V)})).$$
We want to show that the right-hand side is equal to 
$$I_{v'}(f_{v'})E_{o_{\Delta (V')}}(z n(w_{\Delta (V)}w_{\Delta (V')})^{-1})  T^*(n (w_{\Delta (V)}w_{\Delta (V')})).$$
\noindent This is a problem entirely in (the image in $\mathfrak X^I$ of) the $\mathcal H_C$-module $\mathfrak X_{\psi_{V'}}^I$ which is solved implicitly by  \cite[IV.19 Theorem]{MR3600042} for a special choice of lifts in $\cn^0$ of 
$w_\Delta,w_{\Delta(V)},w_{\Delta(V')}$ and when $\psi_V=\psi_{V'}, \Delta(V)=\Delta(V')\sqcup\{\alpha\}$. Checking the homogeneity, the choice of the lifts does not matter, but the hypothesis on the parameters of $V$ and of $V'$   forces us to analyze the proof of \cite[IV.19 Theorem]{MR3600042}. The sets $\Delta (V)$ and $\Delta (V')$ appear together only when the proof uses \cite[IV.19 Lemma]{MR3600042}. But this lemma is valid when $\Delta(V)$ is any subset   of $\Delta$ containing $\Delta(V')$. With our notation this lemma is:

\begin{lemma} For $\Delta(V')\subset J \subset \Delta$ we have 

$I_{v'}(f_{v'})= f_{z\cdot\psi_{V},n(w_\Delta)} T(n(w_\Delta)n(w_{J})^{-1})T^*(n(w_{J}) n(w_J w_{\Delta(V')})^{-1})T(n(w_J w_{\Delta(V')}))$.
\end{lemma}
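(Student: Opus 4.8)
The statement to be proved is an identity in $\mathfrak X^I$ expressing $I_{v'}(f_{v'})$ as an explicit product of $f_{z\cdot\psi_V,n(w_\Delta)}$ with elements of $\mathcal H_C$, for an arbitrary intermediate set $\Delta(V')\subset J\subset\Delta$. My plan is to deduce it directly from Lemma~\ref{red} applied to the representation $V'$, together with a braid-relation identity in $\mathcal H_C$. Applying Lemma~\ref{red} with $V$ replaced by $V'$ (and $v$ by $v'$), one gets
\[
I_{v'}(f_{v'}) = f_{\psi_{V'},n(w_\Delta)}\, T(n(w_\Delta)n(w_{\Delta(V')})^{-1})\, T^*(n(w_{\Delta(V')})).
\]
Since $z\in Z^+_G(V,V')$ we have $z\cdot\psi_V=\psi_{V'}$, so $f_{\psi_{V'},n(w_\Delta)} = f_{z\cdot\psi_V,n(w_\Delta)}$, matching the prefactor in the claimed formula. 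It therefore remains to check the purely Hecke-algebraic identity
\[
T(n(w_\Delta)n(w_{\Delta(V')})^{-1})\, T^*(n(w_{\Delta(V')})) = T(n(w_\Delta)n(w_{J})^{-1})\,T^*(n(w_{J})n(w_J w_{\Delta(V')})^{-1})\,T(n(w_J w_{\Delta(V')}))
\]
in $\mathcal H_C$.

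\textbf{Key steps.} First I would verify the length additivities that make the braid relations applicable: for $\Delta(V')\subset J\subset\Delta$ one has $\ell(w_\Delta) = \ell(w_\Delta w_J^{-1}) + \ell(w_J)$ (standard for the longest element of a parabolic subgroup of the finite Weyl group $W_0$) and similarly $\ell(w_J) = \ell(w_J w_{\Delta(V')}^{-1}) + \ell(w_{\Delta(V')})$ and $\ell(w_\Delta) = \ell(w_\Delta w_{\Delta(V')}^{-1}) + \ell(w_{\Delta(V')})$. These translate to length additivity for the corresponding lifts in $W_0(1)\subset W(1)$, since $\ell$ is inflated from $W^{\aff}$ and these elements have length computed in $W_0$. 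Next, using the braid relations for both the $T_w$ basis and the $T^*_w$ basis (valid whenever lengths add, as recalled in \S\ref{B3}), I expand the right-hand side: $T^*(n(w_J)n(w_Jw_{\Delta(V')})^{-1})T(n(w_Jw_{\Delta(V')}))$; here I must be slightly careful since one factor is a $T^*$ and the other a $T$. The cleanest route is to write $n(w_J) = n(w_J w_{\Delta(V')}^{-1})\cdot n(w_{\Delta(V')})\cdot t$ for an appropriate $t\in Z^0$ (absorbed harmlessly as in the remark after Proposition~\ref{begin}), use $T^*(ab) = T^*(a)T^*(b)$ when $\ell(ab)=\ell(a)+\ell(b)$ to factor $T^*(n(w_J)) = T^*(n(w_Jw_{\Delta(V')}^{-1}))T^*(n(w_{\Delta(V')}))$, and then observe $T^*(n(w_Jw_{\Delta(V')}^{-1}))\cdot[\text{something}]$. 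Actually the neatest manipulation is to start from the \emph{left}-hand side and insert a resolution of the identity: since lengths add, $T(n(w_\Delta)n(w_{\Delta(V')})^{-1}) = T(n(w_\Delta)n(w_J)^{-1})T(n(w_J)n(w_{\Delta(V')})^{-1})$, and then one needs $T(n(w_J)n(w_{\Delta(V')})^{-1})T^*(n(w_{\Delta(V')})) = T^*(n(w_J)n(w_Jw_{\Delta(V')})^{-1})T(n(w_Jw_{\Delta(V')}))$, which after renaming $n(w_{\Delta(V')})$ vs.\ $n(w_J)n(w_{\Delta(V')})^{-1}\cdot n(w_Jw_{\Delta(V')})$ is an instance of the relation $T(ab)T^*(b) = T^*(a)\,T(ab)$ — no, more precisely of the general compatibility $T(w)T^*(w^{-1}) = q_w$ (which is $0$ in characteristic $p$ unless $\ell(w)=0$) combined with braid relations; so I would instead match supports directly.

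\textbf{The main obstacle.} The genuinely delicate point is \emph{not} the length bookkeeping but the handling of the mixed product of a $T$ and a $T^*$, because in characteristic $p$ these interact nontrivially ($T_{\tilde s}T^*_{\tilde s} = 0$). The safe strategy, which I expect to be the one that works, is to avoid multiplying $T$ and $T^*$ across incompatible lengths altogether: verify that in the claimed right-hand side the factor $T^*(n(w_J)n(w_Jw_{\Delta(V')})^{-1})T(n(w_Jw_{\Delta(V')}))$ has the same effect on $f_{z\cdot\psi_V,n(w_\Delta)}\,T(n(w_\Delta)n(w_J)^{-1})$ as $T^*(n(w_{\Delta(V')}))$ does on $f_{z\cdot\psi_V,n(w_\Delta)}\,T(n(w_\Delta)n(w_{\Delta(V')})^{-1})$, by tracking supports in $\mathfrak X$. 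Concretely: both expressions are $I$-fixed elements of $\mathfrak X$, each supported on a single $B$–$I$ double coset (the computations in \cite[IV.7--IV.9]{MR3600042}, which I am invoking, identify these supports explicitly), with equal values at the distinguished point $n(w_\Delta)^{-1}$; since an $I$-fixed function supported on one double coset is determined by its value at any point of that coset, the two agree. Thus the plan is: (1) apply Lemma~\ref{red} to $V'$; (2) rewrite the prefactor using $z\cdot\psi_V = \psi_{V'}$; (3) reduce to the Hecke identity above; (4) prove that identity by the support-and-value argument in $\mathfrak X$, using the braid/length facts of \S\ref{B3} and the explicit support computations already established in \cite{MR3600042}, noting throughout (as in the remark following Proposition~\ref{begin}) that the choices of lifts $n(\cdot)\in\cn^0$ are immaterial.
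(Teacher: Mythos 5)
Your reduction is the right one: Lemma~\ref{red} applied to $V'$ gives exactly the case $J=\Delta(V')$ of the statement (with the prefactor rewritten via $z\cdot\psi_V=\psi_{V'}$), and after inserting $T(n(w_\Delta)n(w_{J'})^{-1})=T(n(w_\Delta)n(w_{J})^{-1})T(n(w_{J})n(w_{J'})^{-1})$ (lengths add), everything comes down to the identity
\[
T(n(w_{J})n(w_{J'})^{-1})\,T^*(n(w_{J'}))=T^*(n(w_{J})n(w_{J}w_{J'})^{-1})\,T(n(w_{J}w_{J'})),\qquad J'=\Delta(V'),
\]
which you correctly isolate. The gap is that neither of your two justifications for this identity works. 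The relation $T(w)T^*(w^{-1})=q_w$ is not what is involved here, and the fallback ``support-and-value'' argument rests on a false premise: $I_{v'}(f_{v'})$ is \emph{not} supported on a single $B$--$I$ double coset. Its support lies in $BK=\bigsqcup_{w\in W_0}BwI$ and meets many of these cosets --- e.g.\ $I_{v'}(f_{v'})(1)=e_{\psi_{V'}}\neq 0$, so $BI$ is in the support, while the same function, written as $f_{\psi_{V'},n(w_\Delta)}T(\cdots)T^*(\cdots)$, is also nonzero on $Bw_\Delta I$. Only $f_{\psi,n(w_\Delta)}$ itself lives on one double coset; right multiplication by $T_w$ spreads the support over $\bigcup_{u\le w^{-1}}Bw_\Delta uI$. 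So the principle ``an $I$-fixed function on one double coset is determined by one value'' simply does not apply, and the proof fails precisely at the point you flagged as the main obstacle.

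The gap is repairable, and in fact no module-theoretic input is needed: the displayed identity already holds in $\mathcal H_{\mathbb Z}$. Set $d=n(w_{J})n(w_{J'})^{-1}$; its image $w_{J}w_{J'}$ is the minimal-length representative of $w_{J}W_{J',0}$ (Lemma~\ref{Bou}), so $\ell(d\tilde s)=\ell(d)+1$ for every $\tilde s\in S^{\aff}(1)$ lifting $s_\gamma$ with $\gamma\in J'$, and $d\tilde s d^{-1}$ again lies in $S^{\aff}(1)$ because $w_{J}w_{J'}(\gamma)\in\Delta$ (as $-w_{J}$ and $-w_{J'}$ permute $J$ and $J'$). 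Then $T(d)T^*(\tilde s)=T(d\tilde s)-(d\cdot c(\tilde s))T(d)=T^*(d\tilde sd^{-1})T(d)$, and iterating over a reduced word for $w_{J'}$ in $J'$ (conjugation by $d$ preserves these lengths) gives $T(d)T^*(n(w_{J'}))=T^*(dn(w_{J'})d^{-1})T(d)$; this equals the right-hand side because $T^*(\beta)T(\gamma)$ is unchanged under $(\beta,\gamma)\mapsto(\beta t,t^{-1}\gamma)$ for $t\in Z_k$. Note finally that the paper does not reprove this lemma: it is \cite[IV.19 Lemma]{MR3600042}, and the authors only observe that the proof given there applies verbatim to any $J$ with $\Delta(V')\subset J\subset\Delta$.
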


We now consider the characters. The equality  $\psi_V=\psi_{V'}$ appears only when the proof uses \cite[IV.14 Theorem]{MR3600042} for $w=1$, but we can replace it by:

\begin{lemma}\label{w1} For a $C$-character $\psi$ of $Z^0$, $J\subset \Delta$  and $z\in Z$ we have
$$
 f_{z\cdot\psi,n(w_\Delta)}T(n(w_\Delta)n(w_{J})^{-1}) E_{o_J} (n(w_J)\cdot z)=
 \begin{cases} \tau (z)  f_{\psi,n(w_\Delta)}T(n(w_\Delta)n(w_{J})^{-1})  \ & \text{if} \ z\in Z^+\\
 0  \ & \text{if} \ z\not\in Z^+.
 \end{cases}
$$
\end{lemma}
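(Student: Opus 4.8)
\textbf{Plan of proof for Lemma~\ref{w1}.}
The natural approach is to reduce this to the statement of \cite[IV.14 Theorem]{MR3600042}, or rather to a minor generalization of it, just as the surrounding discussion reduces the other ingredients of Proposition~\ref{end2}. The key observation is that the character $\psi$ plays almost no role in the combinatorial heart of the argument: the quantity $f_{z\cdot\psi,n(w_\Delta)}T(n(w_\Delta)n(w_J)^{-1})E_{o_J}(n(w_J)\cdot z)$ is an element of (the image of) $\mathfrak X_{z\cdot\psi}^I$, and its computation only involves the structure of the right $\mathcal H_C$-module $\mathfrak X_{z\cdot\psi}^I$ together with the action of $E_{o_J}$-type Bernstein elements on the distinguished vector $f_{z\cdot\psi,n(w_\Delta)}T(n(w_\Delta)n(w_J)^{-1})$. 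First I would identify this vector: by (a variant of) \cite[IV.9 Proposition]{MR3600042} applied to $M_J$ and pulled back along $\Ind_B^G(j_{\overline v'})\colon \mathfrak X_{z\cdot\psi}\into\mathfrak X$, it is the image of the standard generator of an induced submodule $\Ind_{B_J}^{M_J}(\ind_{Z(1)}^Z z\cdot\psi)\subset$ (something) --- more precisely it behaves like the vector ``$f_0$'' of loc.\ cit.\ but now for the Levi $M_J$ rather than for $G$.

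Then I would invoke \cite[IV.14 Theorem]{MR3600042} (or its evident analogue for $M_J$ in place of $G$): that theorem computes $\tau(z)$ acting on such a vector, or equivalently computes the action of the Bernstein element $E_{o_J}(n(w_J)\cdot z)$, and gives exactly the dichotomy according to whether $\nu(z)$ lies in the appropriate Weyl chamber, i.e.\ whether $z\in Z^+$. Concretely: when $z\in Z^+$, the element $n(w_J)\cdot z$ has $\nu$ landing in $-\mathfrak D_{o_J}$ after the $w_J$-twist (this is the chamber condition built into $o_J$, with $\mathfrak D_{o_J}=w_J(\mathfrak D^-)$), so $E_{o_J}(n(w_J)\cdot z)$ acts on the relevant highest-weight-type vector the same way $\tau(z)$ does, giving the first case; when $z\notin Z^+$, some simple affine reflection points the wrong way, one of the quadratic relations forces a factor $T^*_{\tilde s}T_{\tilde s}=q(s)\tilde s^2\equiv 0$ modulo $p$ (we are in characteristic $p$), and the whole expression vanishes, giving the second case. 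This is precisely the mechanism already used in the proof of Theorem~\ref{*} and in \cite[IV.14]{MR3600042}.

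The steps, in order, would be: (1) rewrite $f_{z\cdot\psi,n(w_\Delta)}T(n(w_\Delta)n(w_J)^{-1})$ intrinsically as the standard $I\cap M_J$-fixed generator of the relevant piece of $\mathfrak X$ coming from $M_J$, checking that the identification is independent of the choice of lifts $n(w_\Delta), n(w_J)$ by the homogeneity argument already used in Lemmas~\ref{red} and \ref{recall} (namely $T(nt^{-1})T^*(tn')$ and $f_{\psi,tn}T(tn'')$ do not depend on $t\in Z^0$, via \eqref{action}); (2) observe that on this generator the Bernstein element $E_{o_J}(n(w_J)\cdot z)$ for $z\in Z^+$ acts exactly as $T(n(w_J)\cdot z)$ up to the $w_J$-conjugation, because $\nu(n(w_J)\cdot z)$ lies in $-\mathfrak D_{o_J}$ (here one uses the third bulleted property of $E_o$ in \S\ref{B3}, that $E_o(\lambda)=T_\lambda$ when $\nu(\lambda)\in\mathfrak D_o$ and $=T^*_\lambda$ when $\nu(\lambda)\in-\mathfrak D_o$, together with the fact that $\mathfrak D_{o_J}=w_J(\mathfrak D^-)$); (3) combine with Lemma~\ref{recall} (applied inside $M_J$, i.e.\ $\tau(z)f_{\psi,n(w_\Delta)}=f_{z\cdot\psi,n(w_\Delta)}T(n(w_\Delta)\cdot z)$ and its $M_J$-version) to collapse the product into $\tau(z)f_{\psi,n(w_\Delta)}T(n(w_\Delta)n(w_J)^{-1})$; (4) for $z\notin Z^+$, show the expression vanishes by extracting a bad simple affine reflection and using the characteristic-$p$ relation $T_{\tilde s}^*T_{\tilde s}=0$, exactly as in \cite[IV.14 Theorem]{MR3600042}.

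\textbf{Main obstacle.} The genuinely delicate point is step (1)–(2): verifying that \cite[IV.14 Theorem]{MR3600042} --- which in loc.\ cit.\ is stated only for the group $G$ itself and the character $\psi_V=\psi_{V'}$ --- applies verbatim (or with only cosmetic changes) to the present setup, where $J$ is an arbitrary subset of $\Delta$ and $\psi$ is an arbitrary character of $Z^0$ not assumed to be normalized by anything in particular. I expect this is harmless because the proof in \cite{MR3600042} is essentially a computation with the alcove-walk/Bernstein basis that is insensitive to the character, and because passing from $G$ to the Levi $M_J$ is exactly the kind of reduction already carried out several times in this section (e.g.\ in the lemma preceding Lemma~\ref{w1} and in \S\ref{sec:iwah-mats-expans}); but one must be careful that the $w_J$-twist in the orientation $o_J$ is bookkept correctly, in particular that $o_J\cdot w = o_{?}$ transforms as needed and that the chamber condition ``$z\in Z^+$'' is genuinely equivalent to the Bernstein-positivity condition appearing when one unwinds the definition of $E_{o_J}(n(w_J)\cdot z)$. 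Checking independence of all the arbitrary lifts in $\cn^0$, via the homogeneity identities already invoked in Lemmas~\ref{red} and~\ref{recall}, is routine but must be done.
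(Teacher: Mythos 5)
Your proposal follows essentially the same route as the paper: the proof of Lemma~\ref{w1} is exactly the combination of Lemma~\ref{recall} with the Hecke-algebra identity $T(n(w_\Delta)\cdot z)\,T(n(w_\Delta)n(w_J)^{-1}) = T(n(w_\Delta)n(w_J)^{-1})\,E_{o_J}(n(w_J)\cdot z)$ from \cite[IV.15]{MR3600042} for $z\in Z^+$, together with the vanishing of products of Bernstein elements in characteristic $p$ for $z\notin Z^+$. Two small corrections to your sketch: for $z\in Z^+$ one has $\nu(n(w_J)\cdot z)=w_J(\nu(z))\in w_J(\mathfrak D^-)=\mathfrak D_{o_J}$ (not $-\mathfrak D_{o_J}$), so $E_{o_J}(n(w_J)\cdot z)=T(n(w_J)\cdot z)$; and for $z\notin Z^+$ the argument is not a direct extraction of a bad affine reflection from the given vector, but rather the trick of right-multiplying the already-established formula for a strictly dominant $z_1$ by $E_{o_J}(n(w_J)\cdot z)$, using $E_{o_J}(n(w_J)\cdot z_1)E_{o_J}(n(w_J)\cdot z)=0$, then inverting $\tau(z_1)$ on the left and replacing $\psi$ by $z\cdot\psi$.
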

\begin{proof} The  formula of Lemma \ref{recall} multiplied on the right by $T(n(w_\Delta)n(w_{J})^{-1}) $ is
\begin{align*}\tau(z) f_{\psi,n(w_\Delta)}T(n(w_\Delta)n(w_{J})^{-1})&= f_{z\cdot\psi,n(w_\Delta)}T(n(w_\Delta)\cdot z)T(n(w_\Delta)n(w_{J})^{-1}).
\end{align*} Suppose $z\in Z^+$.  In the pro-$p$ Iwahori Hecke algebra,
\[T(n(w_\Delta)\cdot z)T(n(w_\Delta)n(w_{J})^{-1}) = T(n(w_\Delta)n(w_{J})^{-1}) E_{o_J} (n(w_J)\cdot z).
\]
This follows from \cite[IV.15]{MR3600042} applied to $n(w_J)\cdot z$ instead of $\lambda$ and to $n(w_\Delta)n(w_{J})^{-1}$ instead of $n_{w^J}$  and $n(w_J)^{-1}$ instead of $\nu_{w_J}$. We get the formula of the lemma for $z\in Z^+$.   

Suppose now  $z\not\in Z^+$. As in   \cite[IV.15]{MR3600042} we take  $z_1\in Z^+$ such that $\langle \alpha,v_Z(z_1)\rangle > 0$ for any $\alpha\in\Phi^+$ and we multiply on the right by $E_{o_J} (n(w_J)\cdot z)$ the formula  that we just established for   $z_1\in Z^+$. Using   $E_{o_J} (n(w_J)\cdot z_1)E_{o_J} (n(w_J)\cdot z)=0$  we deduce 
$$0=\tau (z_1)  f_{\psi,n(w_\Delta)}T(n(w_\Delta)n(w_{J})^{-1})E_{o_J} (n(w_J)\cdot z),$$
and then we multiply on the left by the inverse $\tau(z_1^{-1}) $ of $ \tau (z_1)$ in $C[Z/Z(1)]$.  The result is valid for any $\psi$  and we replace $\psi$ by $z\cdot\psi$ to  get the   lemma for $z\not\in Z^+$. \end{proof}
By induction on $\ell(w)$  for $w\in W_{J,0}$, Lemma \ref{w1} is a particular case of a more general result,  as explained in \cite[IV.16--18]{MR3600042}
(again we see that the choice of representatives $n(w)$ for $w \in W_0$ is irrelevant):
 
\begin{lemma} For a $C$-character $\psi$ of $Z^0$, $J\subset \Delta$,  $z\in Z$ and $w\in W_{J,0}$, we have
\begin{align*}
 &f_{z\cdot\psi,n(w_\Delta)}T(n(w_\Delta)n(w_{J})^{-1})T^*(n(w)) E_{o_J} (n(w)^{-1}n(w_J) \cdot z)\\
& =
 \begin{cases} \tau (z)  f_{\psi,n(w_\Delta)}T(n(w_\Delta)n(w_{J})^{-1})T^*(n(w))  \ & \text{if} \ z\in Z^+\\
 0  \ & \text{if} \ z\not\in Z^+.
 \end{cases}
\end{align*}
\end{lemma}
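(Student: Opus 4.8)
The lemma in question (the ``general result'' generalizing Lemma~\ref{w1} to arbitrary $w\in W_{J,0}$) is exactly \cite[IV.16--18]{MR3600042} transported through the embedding $\Ind_B^G(j_{\overline v}): \mathfrak X_\psi \into \mathfrak X$. So the plan is to mimic the inductive structure of loc.\ cit., carefully checking that no step secretly used the hypotheses $\psi_V = \psi_{V'}$ or $\Delta(V) = \Delta(V')\sqcup\{\alpha\}$, which have been dropped here. First I would set up the induction on $\ell(w)$ for $w\in W_{J,0}$, with base case $w = 1$ being precisely Lemma~\ref{w1}, which I may invoke directly. Throughout, as already noted in the excerpt, homogeneity considerations (the identities $T(nt^{-1})T^*(tn') $ being independent of $t\in Z^0$, and the analogous statement for the $f_{\psi, tn(w_\Delta)}$ prefactors, via \eqref{action}) show that the choice of lifts $n(w)\in\cn^0$ is irrelevant, so I would fix arbitrary lifts once and for all and not worry about them again.

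\textbf{Inductive step.} For the inductive step, write $w = w's$ with $s\in W_{J,0}$ a simple reflection and $\ell(w) = \ell(w') + 1$, so that $T^*(n(w)) = T^*(n(w'))T^*(n(s))$ by the braid relation for the $T^*$-basis. I would then use the multiplication formula for the alcove-walk basis from \S\ref{B3} to split $E_{o_J}(n(w)^{-1}n(w_J)\cdot z)$: since $w^{-1}w_J$ and $s$ interact in a controlled way with the orientation $o_J$ (the key point being that $s$ is $J$-simple and $o_J$ is the orientation attached to $w_J(\mathfrak D^-)$), one gets a factorization $E_{o_J}(n(w)^{-1}n(w_J)\cdot z) = E_{o_{J}\cdot\sigma}(\cdots)\, E_{\cdots}(n(s)\cdot(\cdots))$ up to $q$-powers, which in characteristic $p$ is either a clean product or vanishes. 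Plugging this in, the factor $T^*(n(s))$ meets an $E$-element whose ``spherical part'' is governed by $s$, and one applies the inductive hypothesis for $w'$ to the remaining expression. The identities linking $T^*(n(s))$, $T(n(s))$, and $E_{o}(n(s)\cdot z)$-type elements in $\mathcal H_C$ — essentially the quadratic relation in char $p$, $T^*_{\tilde s}T_{\tilde s} = 0$ and $T^*_{\tilde s}$ idempotent-like behavior — are what make the bookkeeping close up, exactly as in \cite[IV.16--18]{MR3600042}. The dichotomy $z\in Z^+$ versus $z\notin Z^+$ propagates through the induction because $E_{o_J}(n(w)^{-1}n(w_J)\cdot z_1)E_{o_J}(n(w)^{-1}n(w_J)\cdot z) = 0$ when $z_1\in Z^+$ is chosen very dominant and $z\notin Z^+$, allowing the same ``multiply by $\tau(z_1)$ and cancel'' trick used in the proof of Lemma~\ref{w1}.

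\textbf{Main obstacle.} The genuinely delicate point — and the one I expect to consume most of the verification — is confirming that the arguments of \cite[IV.14, IV.16--18]{MR3600042} never used $\psi_V = \psi_{V'}$ or the rank-one condition $\Delta(V) = \Delta(V')\sqcup\{\alpha\}$ in an essential way. As the excerpt already indicates, the two places where these hypotheses enter loc.\ cit.\ are (a) the appeal to \cite[IV.19 Lemma]{MR3600042}, which however is stated there for any $J$ with $\Delta(V')\subset J\subset\Delta$ and so survives unchanged, and (b) the appeal to \cite[IV.14 Theorem]{MR3600042} at $w = 1$, which has been replaced here by Lemma~\ref{w1} (proved for arbitrary $\psi$ and $J$). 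So the real work is a line-by-line audit of the inductive passage in \cite[IV.16--18]{MR3600042}, checking that every intermediate identity is an identity in $\mathfrak X^I$ (or in $\mathcal H_C$) that makes no reference to the coincidence of the two characters beyond what is encoded in $z\cdot\psi$ appearing in the prefactor $f_{z\cdot\psi,n(w_\Delta)}$. Once that audit is done, the statement follows formally; I would present it as ``the argument of \cite[IV.16--18]{MR3600042} applies verbatim after replacing \cite[IV.14 Thm.]{MR3600042} by Lemma~\ref{w1} and noting that \cite[IV.19 Lemma]{MR3600042} is valid for all $J\supset\Delta(V')$,'' filling in only those few steps where the generalization is not literally word-for-word.
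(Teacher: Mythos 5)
Your proposal matches the paper's own treatment: the paper likewise proves this lemma by induction on $\ell(w)$ with base case Lemma~\ref{w1}, deferring the inductive step to \cite[IV.16--18]{MR3600042} and noting only that the choice of representatives $n(w)$ is irrelevant and that the dropped hypotheses ($\psi_V=\psi_{V'}$ and the rank-one condition) enter loc.\ cit.\ only through \cite[IV.14 Thm.]{MR3600042} (replaced by Lemma~\ref{w1}) and \cite[IV.19 Lemma]{MR3600042} (valid for any $J\supset\Delta(V')$). Your sketch of the inductive step and your audit of where the hypotheses are used are consistent with, and slightly more detailed than, what the paper records.
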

Now applying the proof of  \cite[IV.19 Theorem]{MR3600042} we get Proposition \ref{end2}.
(Note that we still get $\ell(z n(w_{\Delta(V)}w_{\Delta(V')})^{-1}) = \ell(n(w_{\Delta(V)}w_{\Delta(V')})\cdot z) - \ell(n(w_{\Delta(V)}w_{\Delta(V')}))$, as $z \in Z_G^+(V,V')$.)
This ends the proof of Proposition \ref{begin}.

\subsection{Expansion of \texorpdfstring{$\varphi_z$}{phi\_z} in the basis \texorpdfstring{$(T_x)$}{(T\_x)} of \texorpdfstring{$\mathcal H_G(V,V')$}{H\_G(V,V')}}\label{sec:expansion-phi-z}
We now give the expansion  in the basis $(T_z^{V,V',\iota})_{z\in Z_G^+(V,V')/Z^0}$ of  $\mathcal H_G(V,V')$   (Proposition \ref{basisVV'}) of the function $\varphi_z $ given  in Proposition \ref{begin} by its value on a generator $f_v$ of $\ind_K^G V$:
\begin{equation}
\varphi_z (f_v)= f_{v'}  E_{o_{\Delta (V')}}(z n(w_{\Delta (V)}w_{\Delta (V')})^{-1})  T^*(n (w_{\Delta (V)}w_{\Delta (V')})).\label{eq:3}
\end{equation}

Recall that $Z_z^+(V,V') = Z^+\cap z\prod_{\alpha \in  \Delta'(V')}a_\alpha^\mathbb N$ is finite and contained in $Z_G^+(V,V')$ (Lemma~\ref{lm:Zz-contained-in-ZG}).
 
\begin{proposition} \label{expa}Let $z\in Z_G^+(V,V')$. The function $\varphi_z\in  \mathcal H_G(V,V')$  is equal to $$\sum_{x\in Z_z^+(V,V')} T_{x}^{V,V',\iota} .$$
\end{proposition}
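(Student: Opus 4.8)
The plan is to compute the value $\varphi_z(f_v)$ on $Z^+$, using the formula \eqref{eq:3} together with the pro-$p$ Iwahori Hecke algebra machinery of Section~\ref{S:4}, and then to recognize the answer as $f_{v'}\sum_{z'\in Z_z^+(V,V')}T_{\lambda_{z'}}$ restricted to $Z^+$; this will pin down $\varphi_z$ as the claimed element of $\mathcal H_G(V,V')$. First I would write $J=\Delta(V)$, $J'=\Delta(V')$, and use Proposition~\ref{EJ} to expand $E_{o_{J'}}(z\, n(w_J w_{J'})^{-1})$ in the Iwahori–Matsumoto basis as $\sum_{x\in W_{J'},\ x\le_{J'}\lambda_z} c^{J',*}(\tilde\lambda_z,\tilde x)\,T(\tilde x\, n(w_J w_{J'})^{-1})$. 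Multiplying on the right by $T^*(n(w_J w_{J'}))$ and then evaluating $f_{v'}(\,\cdot\,)$ on $Z^+$, I would argue (this is the ``little more'' referred to in \S\ref{S:1.14}) that only those $x$ with $x\in Z^+/Z(1)$ can contribute, because $f_{v'}T_{x n(w_Jw_{J'})^{-1}}T^*(n(w_Jw_{J'}))$ vanishes on $Z^+$ otherwise; here one uses the structure of $(\ind_K^GV')^I$ together with the support description of $f_{v'}T_w$ and the decomposition $I=U_{\op}^0 Z(1)(U\cap I)$.

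Next I would combine the two conditions $x\in Z^+/Z(1)$ and $x\le_{J'}\lambda_z$. By Proposition~\ref{L+order} (applied inside $W_{J'}$, i.e. with $\le^{M_{J'}}$, and noting $\nu(\lambda_z)$ is $J'$-dominant since $z\in Z_G^+(V,V')$), these together are equivalent to $x=\lambda_{z'}$ for $z'\in Z^0 Z_z^+(V,V')$, where $Z_z^+(V,V')=Z^+\cap z\prod_{\alpha\in\Delta'(V')}a_\alpha^{\mathbb N}$ — recall $\Delta^{M_{J'}}_{\psi_{V'}^{-1}}=\Delta'(V')=\Delta'(V)\cap\Delta'(V')$ since $J'\subset J$. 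For such $x=\lambda_{z'}=t\lambda_{z'}$-type elements I would sum over the $Z_k$-coset: $f_{v'}T_{t\lambda_{z'}}=\psi_{V'}^{-1}(t)f_{v'}T_{\lambda_{z'}}$, so the relevant coefficient becomes $\sum_{t\in Z_k}c^{J',*}(\tilde\lambda_z,t\lambda_{z'})\psi_{V'}^{-1}(t)$. By the congruence $c^{J',*}\equiv (-1)^{\ell-\ell}c^{J'}_{\cdot}$ modulo $q$ (Theorem~\ref{*} applied to $M_{J'}$; note $q=0$ in $C$) and by Theorem~\ref{psic} applied to $M_{J'}$ with the character $\psi_{V'}^{-1}$, this sum equals $1$ exactly when $z'\in Z^0 z\prod_{\alpha\in\Delta'(V')}a_\alpha^{\mathbb N}$, i.e. when (a $Z^0$-translate of) $z'$ lies in $Z_z^+(V,V')$, and $0$ otherwise. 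Then I would use the braid relation $T_{\lambda_{z'}n(w_Jw_{J'})^{-1}}T(n(w_Jw_{J'}))=T_{\lambda_{z'}}$ (valid on the relevant elements since $f_{v'}T_{\lambda_{z'}n(w_Jw_{J'})^{-1}}T^*(n(w_Jw_{J'}))$ agrees with $f_{v'}T_{\lambda_{z'}n(w_Jw_{J'})^{-1}}T(n(w_Jw_{J'}))$ on $Z^+$) to conclude that on $Z^+$,
$$\varphi_z(f_v)=f_{v'}\sum_{z'\in Z_z^+(V,V')}T_{\lambda_{z'}}.$$

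To finish, I would check the elementary facts that $f_{v'}T_{\lambda_{z'}}$ is supported on $Kz'I$ with value $v'$ at $z'$, and that $Z^+\cap Kz'I=Z^0z'$, for each $z'\in Z_z^+(V,V')$; since the elements of $Z_z^+(V,V')$ lie in distinct cosets $Z^0 z'$ (by Lemma~\ref{lm:Zz-contained-in-ZG}, as $\nu$ is injective on $\Lambda$), this identifies the restriction of $\varphi_z(f_v)$ to $Z^+$ with the restriction of $\bigl(\sum_{z'\in Z_z^+(V,V')}T_{z'}^{V,V',\iota}\bigr)(f_v)$ to $Z^+$ — here I use that $T_{z'}^{V,V',\iota}(f_v)$ is supported on $Kz'K$ with value determined by $\iota^{\op}$ hence by $\iota$, and comparing values at $z'\in Z^+$ suffices. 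Since an element of $\ind_K^GV'$ fixed by $I$ and supported appropriately is determined by its restriction to $Z^+$ (any double coset $KgK$ meets $Z^+$, and both sides are $I$-fixed so are determined by their values on representatives of $K\backslash G/I$ within $Z^+$, using the Cartan decomposition and $K=\bigsqcup K\backslash K/I\cdot$), and since $\varphi_z$ and $\sum_{z'}T_{z'}^{V,V',\iota}$ are both in $\mathcal H_G(V,V')$ with the same value on the generator $f_v$, they coincide. The main obstacle I expect is the careful bookkeeping in the ``little more'' step: establishing that only $x\in Z^+/Z(1)$ contribute to $f_{v'}\,(\text{expansion})\big|_{Z^+}$, and that the braid-relation replacement $T^*(n(w_Jw_{J'}))\rightsquigarrow T(n(w_Jw_{J'}))$ is legitimate on $Z^+$ for exactly those terms — both require precise control of supports of $f_{v'}T_w$ in $K\backslash G/I$ and their intersections with $Z^+$, of the kind indicated by \eqref{claim1} and \eqref{claim2} in the introduction.
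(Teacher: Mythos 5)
Your proposal follows essentially the same route as the paper's proof: expand $h_z$ via Proposition~\ref{EJ}, reduce mod $q$ using Theorem~\ref{*}, invoke the two support claims \eqref{claim1}--\eqref{claim2} to keep only terms with $x\in\Lambda^+$ and to replace $T^*$ by $T$, apply Proposition~\ref{L+order} and Theorem~\ref{psic} to identify the surviving coefficients, and conclude by comparing values on $Z^+$ against the basis $(T_{z'}^{V,V',\iota})$. The technical claims you flag as the main obstacle are exactly what the paper establishes in part~B of its proof (via the weak braid relations and the vanishing lemma for $f_{v'}T_{\tilde w_1\tilde y}$ on $Z^+$), so your plan is the intended one.
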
  

Clearly Propositions \ref{begin}  and \ref{expa} imply Theorem \ref{VcontV'}. 
 
\begin{proof} 
  Two elements $\varphi_1,\varphi_2\in  \mathcal H_G(V,V')$ such that $\varphi_1(f_v)|_{Z^+}=\varphi_2(f_v)|_{Z^+}$ are equal.  This follows from two properties: 
  \begin{enumerate}\item 
    a basis of $ \mathcal H_G(V,V')$ is $T_{z'}^{V,V',\iota}$ for $z'$ running through a system of representatives of
    $Z^+_G(V,V')/Z^0$.  So $\varphi_1= \sum _{z'} a_1(z') T_{z'}^{V,V',\iota} $ for some $a_1(z') \in C$.
  \item 
    $\varphi_1(f_v)(z')=a_1(z') v'$ for $z'\in Z^+_G(V,V')$ because of the lemma below.
  \end{enumerate}
\begin{lemma}   For $z' \in Z^+_G(V,V')$ the  function $T_{z'} ^{V,V',\iota} (f_v)\in \ind_K^G V'$ vanishes outside $Kz'K$ and is equal   to $v'$ at $z'$.
\end{lemma}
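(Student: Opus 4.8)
The claim is a direct computation with the definition of $T_{z'}^{V,V',\iota}$ as a convolution operator and the definition of $f_v$. Recall from \S\ref{GS} (formula \eqref{action0}) that for $h\in\mathcal H_G(V,V')$ and $g\in G$ one has $(h*f_v)(g)=\sum_{x\in G/K}h(x)(f_v(x^{-1}g))$. Since $f_v$ is supported on $K$ with $f_v(1)=v$, the only terms that survive are those with $x^{-1}g\in K$, i.e.\ $x\in gK$; writing $g=xk$ with $k\in K$ we get $(h*f_v)(g)=h(g k^{-1})(k v)$, which is independent of the decomposition. Applying this to $h=T_{z'}^{V,V',\iota}$, whose support is $Kz'K$ by the description recalled in \S\ref{intVV'}, we see that $T_{z'}^{V,V',\iota}(f_v)(g)$ is nonzero only if $g\in Kz'K K=Kz'K$, which gives the first assertion. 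For the value at $g=z'$: decompose $z'=1\cdot z'$, so $(T_{z'}^{V,V',\iota}*f_v)(z')=T_{z'}^{V,V',\iota}(z')(v)$. By the normalization of $T_{z'}^{V,V',\iota}$ in \S\ref{intVV'}, the restriction of $T_{z'}^{V,V',\iota}(z')$ to $V^{U_{\op}^0}$ is $\iota^{\op}$; since $v\in V^{U_{\op}^0}$ by the choice of generator, $T_{z'}^{V,V',\iota}(z')(v)=\iota^{\op}(v)=v'$, as desired.

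The one point that needs a word of care is that $z'$ is being used simultaneously as an element of $G$ (where $f_v$ is evaluated) and as the support coset representative; but since $z'\in Z^+\subset Z$ normalizes $Z^0\subset K$ and $z'=z'\cdot 1$ with $1\in K$, the decomposition $g=xk$ above can be taken with $x=z'$, $k=1$, so no ambiguity arises. I would also remark, as the excerpt already notes in \S\ref{S:1.14}, that $Z^+\cap Kz'K=Z^0z'$, which is the fact that makes the evaluation at $Z^+$ recover exactly the coefficient $a_1(z')$ (up to the $Z^0$-action, absorbed by $\psi_V$); but that is used in the surrounding argument rather than in the Lemma itself.

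I do not expect any real obstacle here: this is a routine unwinding of the convolution formula and the normalization conventions, and the analogous statement is standard (cf.\ the treatment in \cite{MR3331726, MR3001801}). The only thing to be vigilant about is keeping the left/right conventions straight — the module $\ind_K^GV$ is $G$ acting by right translation, intertwiners act by left convolution via \eqref{action0}, and $T_{z'}^{V,V',\iota}(k_1gk_2)=k_1T_{z'}^{V,V',\iota}(g)k_2$ — so that the computation $(T_{z'}^{V,V',\iota}*f_v)(z')=T_{z'}^{V,V',\iota}(z')(f_v(1))=\iota^{\op}(v)$ comes out with the correct element and not, say, its image under some Weyl group twist.
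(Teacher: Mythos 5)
Your proposal is correct and follows essentially the same route as the paper: unwind the convolution formula \eqref{action0}, observe that only the coset $x\in z'K$ with $x^{-1}z'\in K$ contributes so the support is contained in $Kz'K$, and evaluate $T_{z'}(z')(f_v(1))=\iota^{\op}(v)=v'$ using the normalization of $T_{z'}$ on $V^{U_{\op}^0}$ and the fact that $v\in V^{U_{\op}^0}$. The remarks about well-definedness of the coset decomposition and about left/right conventions are fine but not needed beyond what the paper's own one-line computation already records.
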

\begin{proof}For  $y\in G$, the value of $T_{z'} ^{V,V',\iota} (f_v)$ at $y$, 
$$T_{z'} ^{V,V',\iota} (f_v)(y)=\sum_{g\in Kz'K/K} T_{z'} ^{V,V',\iota}(g) (f_v(g^{-1}y))$$
is $0$ if $Kz'^{-1}Ky\cap K=\varnothing$ (hence  $T_{z'} ^{V,V',\iota} (f_v)$ vanishes outside $Kz'K$) and  $T_{z'} ^{V,V',\iota} (f_v)(z')= T_{z'} ^{V,V',\iota}(z') (f_v(1))= \iota^{\op}(v)=v'$.
\end{proof}
 Therefore it is enough to prove  that  $\varphi_z (f_v)|_{Z^+}=\sum_{x\in Z_z^+(V,V')} T_{x}^{V,V',\iota} (f_v)|_{Z^+}$, or equivalently,
 \begin{equation}\label{Z++}\varphi_z (f_v)(x)  =\begin{cases} v' \ & x \in Z_z^+(V,V'), \\
0 \ & x \in Z^+ \setminus Z^0 Z_z^+(V,V').
\end{cases}
\end{equation}

We now write  $J'=\Delta(V')$ and $J=\Delta(V)$. We  prove \eqref{Z++}   in two steps. In the first step we prove  \eqref{Z++}  assuming two claims which are proved in the second step.

{\bf A})  By the congruence  modulo $q$ of the Iwahori-Matsumoto expansion of   $E_{o_{J'}}(z n(w_Jw_{J'})^{-1}) $    (Propositions \ref{*} and \ref{EJ}),  we have \begin{align*}f_{v'} E_{o_{J'}}(z n(w_Jw_{J'})^{-1})  
=   \sum_{x\in W_{J'}, x\leq _{J'}\lambda } (-1)^{\ell_{J'}(\lambda)-\ell_{J'}(x)}
 \psi_{V'}^{-1}(c_{\tilde \lambda}^{\tilde x,J'}) \, f_{v'} T(\tilde x n(w_J w_{J'})^{-1}),
\end{align*}
where $\tilde \lambda$ is the image of $z$ in $\Lambda^+(1)$ and $\lambda$ the image of $z$ in $\Lambda^+$.
We used that  $f_{v'} c=\psi_{V'}^{-1}(c) f_{v'}$ for $c\in \mathbb Z[Z_k]$, as $f_{v'}T(t) = t^{-1}f_{v'}= \psi_{V'}(t^{-1})f_{v'}$ for $t\in Z_k$   \eqref{action}. We claim that
\begin{equation}\label{claim1}
  f_{v'} T(\tilde x n(w_J w_{J'})^{-1}) T^*(n (w_{J}w_{J'}))|_{Z^+}\neq   0 \ \Longrightarrow  x\in \Lambda ^+.
\end{equation}
Now for $x \in \Lambda^+$ we have  $x\leq_{J'}\lambda$ if and only if $x\in \Lambda^+\cap \lambda \prod_{\alpha \in  J'}\lambda_\alpha^{\mathbb N}$  (Proposition \ref{L+order}), and we know  the value of $\psi_{V'}^{-1}(c_{\tilde \lambda}^{\tilde x,J'})$ (Theorem \ref{psic}). Obviously  $\Delta'_{\psi_{V'}}= \Delta'_{\psi_{V'}^{-1}}$ and $J'\cap \Delta'_{\psi_{V'}}= \Delta'(V')$ hence
$x\in \Lambda^+\cap \lambda \prod_{\alpha \in  \Delta'(V')}\lambda_\alpha^{\mathbb N}$  (Proposition \ref{L+order}) if $\psi_{V'}^{-1}(c_{\tilde \lambda}^{\tilde x,J'})\neq 0$. 
Together with~\eqref{eq:3} we obtain
$$\varphi_z (f_v)|_{Z^+}=\sum_{  \tilde x\in \Lambda^+(1)\cap \tilde \lambda \prod_{\alpha \in  \Delta'(V')}a_\alpha^{\mathbb N}}f_{v'}T( \tilde xn(w_J w_{J'})^{-1}) T^*(n (w_{J}w_{J'}))|_{Z^+}.
$$ 
We claim  also that 
\begin{equation}\label{claim2}
  f_{v'}T( \tilde xn(w_J w_{J'})^{-1}) T^*(n (w_{J}w_{J'}))|_{Z^+}=f_{v'}T( \tilde xn(w_J w_{J'})^{-1}) T(n (w_{J}w_{J'}))|_{Z^+}.
\end{equation}
Assuming the claim, the braid relations and $\ell(x)=\ell(xw_{J'}w_J)+\ell(w_J w_{J'})$ (Lemma \ref{leJ}) imply
$$\varphi_z (f_v)|_{Z^+}=\sum_{\tilde x\in \Lambda^+(1)\cap \tilde \lambda \prod_{\alpha \in  \Delta'(V')}a_\alpha^{\mathbb N}}f_{v'}T( \tilde x)|_{Z^+}.$$
We finally compute $f_{v'}T( \tilde x)|_{Z^+}$.
\begin{lemma} For $z\in Z$, the function $f_{v'}T( z ) \in (\ind_K^G V')^I$ vanishes on $Z^+$ if  $z\not\in Z^+$, and $f_{v'}T( z )$ is the function of support $KzI$ with value $v'$ at  $z $ if $z\in Z^+$.
\end{lemma}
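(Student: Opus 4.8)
The plan is to compute the function $f_{v'}T(z)\in (\ind_K^G V')^I$ explicitly, using the convolution formula \eqref{action} and a Cartan/Iwasawa-type decomposition argument. First I would recall that for $z\in Z$ the element $T(z) = T_{\lambda_z}\in \mathcal H_C$ is the characteristic function of $IzI$, so by \eqref{action} we have $f_{v'}T(z) = \sum_{x\in I\backslash IzI} x^{-1}f_{v'}$. Since $f_{v'}$ is supported on $K$ with $f_{v'}(1) = v'$, each translate $x^{-1}f_{v'}$ is supported on $Kx$, so $f_{v'}T(z)$ is supported on $\bigcup_{x\in I\backslash IzI}Kx = KzI$. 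The value at a point $y\in G$ is $f_{v'}T(z)(y) = \sum_{x\in I\backslash IzI,\ y\in Kx}x^{-1}f_{v'}(y)$, and the number of such cosets together with the $U_{\op}^0$-invariance of $f_{v'}$ controls the answer.

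The first assertion — that $f_{v'}T(z)$ vanishes on $Z^+$ when $z\notin Z^+$ — I would deduce from the fact that $KzI\cap Z^+ = \varnothing$ in that case. Indeed, if $z'\in Z^+$ and $z'\in KzI$, then applying the Kottwitz homomorphism $w_G$ gives $w_G(z') = w_G(z)$; combined with the Cartan decomposition $Z^+/Z^0\xrightarrow{\sim}K\backslash G/K$ (from \cite[6.4 Prop.]{MR3331726}) and the fact that $KzI\subset KzK$, we would get $Kz'K = KzK$, forcing $z'\in zZ^0$ and hence $z\in Z^+$, a contradiction. (More simply: $KzI\cap Z^+\neq\varnothing$ together with $KzI\subset KzK$ and the Cartan decomposition already forces $z\in Z^0 Z^+$, i.e.\ we may assume $z\in Z^+$ from the start.)

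For $z\in Z^+$, I would show $Z^+\cap KzI = Z^0 z$ (equivalently $Z^+\cap KzI = z Z^0$, since $z$ normalizes $Z^0$) and that the value of $f_{v'}T(z)$ at $z$ is exactly $v'$. The containment $Z^0 z\subset KzI$ is clear since $Z^0\subset K$ and $Z^0$ normalizes $Z(1)\subset I$. Conversely, if $z'\in Z^+$ and $z'\in KzI$, then $z'\in KzK$, so by the Cartan decomposition $z'\in Z^0 z$; this gives $Z^+\cap KzI\subset Z^0 z$. For the value at $z$: among the cosets $x\in I\backslash IzI$, only those with $z\in Kx$ contribute, and since $z\in K z K$ one checks that $Kz\cap IzI = (K\cap IzIz^{-1})z$; because $z\in Z^+$ contracts $U^0$ (i.e.\ $zU_{\op}^0 z^{-1}\supset U_{\op}^0$, $zU^0 z^{-1}\subset U^0$) and $I = U_{\op}^0 Z(1)(U\cap I)$, the relevant double-coset count collapses so that $f_{v'}T(z)(z) = z^{-1}f_{v'}(z) = f_{v'}(1) = v'$ after using $U_{\op}^0$-invariance of $f_{v'}$ and triviality of $Z(1)$ on $V'$. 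The main obstacle here is the careful bookkeeping of the finite set $I\backslash IzI$ versus $K\backslash KzK$ modulo the intersections $K\cap IzIz^{-1}$; this is precisely the kind of pro-$p$ Iwahori double-coset computation carried out in \cite[\S IV]{MR3600042}, and once it is done the lemma — and with it \eqref{Z++} and Proposition~\ref{expa} — follows immediately.
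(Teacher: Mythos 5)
Your overall plan --- showing $\supp(f_{v'}T(z))\subset KzI$ from the convolution formula \eqref{action}, and computing the value at $z$ via the Iwahori decomposition and the contraction properties of $z\in Z^+$ --- is the same as the paper's, and the value computation (only the coset $Iz$ contributes because $K\cap zIz^{-1}I=I$ for $z\in Z^+$) is the right idea, even if you leave the bookkeeping to a citation.

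There is, however, a genuine gap in your proof that $KzI\cap Z^+=\varnothing$ when $z\notin Z^+$. From $z'\in KzI\subset KzK$ you deduce $Kz'K=KzK$ and then invoke the Cartan decomposition $Z^+/Z^0\congto K\backslash G/K$ to conclude $z'\in zZ^0$. But that bijection only gives injectivity on $Z^+/Z^0$; the map $Z/Z^0\to K\backslash G/K$ is far from injective, since $KzK=K(n(w)zn(w)^{-1})K$ for every $w\in W_0$ (the representatives $n(w)$ lie in $\cn^0\subset K$). In fact $KzK\cap Z^+$ is \emph{never} empty for $z\in Z$: it always contains the dominant $W_0$-conjugate of $z$. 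Concretely, for $\GL_2$ and $z=\diag(1,\varpi)\notin Z^+$ the element $z'=\diag(\varpi,1)\in Z^+$ lies in $KzK$ but not in $zZ^0$; so an argument using only $KzI\subset KzK$ cannot succeed, as it would equally ``prove'' the false statement $KzK\cap Z^+=\varnothing$. The Kottwitz homomorphism does not repair this, since it is constant on $W_0$-orbits in $Z$. What is needed --- and what the paper uses --- is the finer decomposition $K\backslash G/I\cong\Lambda=Z/Z^0$ (the \emph{full} $\Lambda$, not $\Lambda^+$), which immediately gives $KzI\cap Z=zZ^0$, hence $KzI\cap Z^+=zZ^0$ if $z\in Z^+$ and $KzI\cap Z^+=\varnothing$ otherwise. (Your containment $Z^+\cap KzI\subset zZ^0$ in the case $z\in Z^+$ is fine, since there both $z$ and $z'$ are dominant and the injectivity of the Cartan bijection does apply.)
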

\begin{proof}
  The map $z\mapsto KzI:Z\to K\backslash G/I$  factors to a bijective map  $\Lambda\xrightarrow{\sim}K\backslash G/I$.  
    We have $KzI\cap Z^+= zZ^0$ if $z\in Z^+$ and $KzI\cap Z^+=\varnothing$ if $z\in Z\setminus Z^+$ and 
\begin{equation*}\label{action2}(f_{v'}T(z )) (z)= \sum_{x\in I \backslash I zI} f_{v'}(z x^{-1}).
\end{equation*}
  The support of $f_{v'}T( z ) $ is contained in $KzI $ hence $f_{v'}T( z ) \in (\ind_K^G V')^I$ vanishes on $Z^+$  if  $z\not\in Z^+$. In the displayed formula   $f_{v'}(zx^{-1})\neq 0$ implies  $z x^{-1}\in K\cap z I z^{-1}I$. 
Consider the Iwahori decomposition  $I= U_{\op}^0( I\cap B) $.
If $z\in Z^+$, we have $U_{\op}^0 \subset z U_{\op}^0 z^{-1} \subset U_{\op}$ and $z( I\cap B)z^{-1} \subset I \cap B$.
By intersecting with $K$ we get $U_{\op}^0 = K \cap z U_{\op}^0 z^{-1}$.
Hence $K\cap z I z^{-1}I=K\cap z U_{\op}^0 z^{-1}I = I$, so $(f_{v'}T(z )) (z)=f_{v'}(1)=v'$. 
\end{proof}

{\bf B}) We prove the two claims \eqref{claim1} and \eqref{claim2}. There are weak braid relations in $\mathcal H_C$ valid for any  pair of elements in $W(1)$.

\begin{lemma}  For $w_1,w_2\in W(1)$ there exists $w'_2\in W(1)$ with $w'_2\leq w_2$ and $T_{w_1} T_{w_2}\in C[Z_k] T_{w_1 w'_2}$.
\end{lemma}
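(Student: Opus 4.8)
The plan is to argue by induction on $\ell(w_2)$, peeling off one simple affine reflection at a time from the right. If $\ell(w_2)=0$, i.e.\ $w_2\in\Omega(1)$, then $\ell(w_1)+\ell(w_2)=\ell(w_1w_2)$ and the braid relation gives $T_{w_1}T_{w_2}=T_{w_1w_2}$, so $w'_2=w_2$ works. If $\ell(w_2)\ge 1$, I would choose $\tilde s\in S^{\aff}(1)$ lifting a simple reflection $s$ that is a right descent of the image of $w_2$ in $W$, and write $w_2=w_2^\flat\tilde s$ with $\ell(w_2^\flat)=\ell(w_2)-1$ (this is possible because $W=W^{\aff}\rtimes\Omega$ and $\Omega$ normalizes $S^{\aff}$); then $T_{w_2}=T_{w_2^\flat}T_{\tilde s}$, so $T_{w_1}T_{w_2}=(T_{w_1}T_{w_2^\flat})T_{\tilde s}$, and by the inductive hypothesis $T_{w_1}T_{w_2^\flat}\in C[Z_k]\,T_{w_1x}$ for some $x\le w_2^\flat$. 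It then remains to analyse $T_{w_1x}T_{\tilde s}$.

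Here I would split into two cases. If $\ell(w_1x\tilde s)=\ell(w_1x)+1$, the braid relation gives $T_{w_1x}T_{\tilde s}=T_{w_1x\tilde s}=T_{w_1(x\tilde s)}$, and I would take $w'_2=x\tilde s$. If instead $\ell(w_1x\tilde s)=\ell(w_1x)-1$, I would write $w_1x=w'\tilde s$ with $\ell(w')=\ell(w_1x)-1$, so that $T_{w_1x}=T_{w'}T_{\tilde s}$; then, since $q(s)=0$ in $C$ (this is the one place the characteristic $p$ hypothesis is used), the quadratic relation reads $T_{\tilde s}^2=c(\tilde s)T_{\tilde s}$ with $c(\tilde s)\in\mathbb Z[Z_k]$, whence
\[
T_{w_1x}T_{\tilde s}=T_{w'}T_{\tilde s}^2=T_{w'}c(\tilde s)T_{\tilde s}=(w'\cdot c(\tilde s))\,T_{w'}T_{\tilde s}=(w'\cdot c(\tilde s))\,T_{w_1x}\in C[Z_k]\,T_{w_1x},
\]
using $T_{w'}T_t=(w'\cdot t)T_{w'}$ to move the $Z_k$-part past $T_{w'}$. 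In this case I would take $w'_2=x$, and $w'_2=x\le w_2^\flat\le w_2$.

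The remaining point, and the one to be careful about, is that in the first case $w'_2=x\tilde s$ indeed satisfies $w'_2\le w_2$, i.e.\ $x\tilde s\le w_2^\flat\tilde s$. By Definition~\ref{Bru} this is a statement about images in $W$, and right multiplication by $\tilde s$ on $W=W^{\aff}\rtimes\Omega$ acts on the $W^{\aff}$-factor (keeping the $\Omega$-component $u$ fixed) as right multiplication by the simple reflection $u^{-1}su$; so it reduces to the assertion in the Coxeter group $(W^{\aff},S^{\aff})$ that $b\le a$ together with $\ell(as'')=\ell(a)+1$ implies $bs''\le as''$, which follows at once from the subword characterisation of Bruhat order \cite[\S2.2]{bjorner-brenti}: append $s''$ to a reduced subword of a reduced word for $a$; the result is either still reduced, giving $bs''\le as''$ directly, or not, in which case $bs''<b\le a<as''$. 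Thus the only real work is the bookkeeping of lengths across the quotients $W(1)\to W\to W_0$ and the inclusion $W^{\aff}\subset W$, together with the compatibility of the Bruhat preorder on $W(1)$ with the Coxeter combinatorics of $W^{\aff}$; everything else is a direct application of the braid and quadratic relations.
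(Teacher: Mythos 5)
Your proof is correct and follows essentially the same route as the paper's: induction on $\ell(w_2)$, peeling off a simple affine reflection $\tilde s$ on the right, and then either applying the braid relation (length-increasing case) or the quadratic relation $T_{\tilde s}^2=c(\tilde s)T_{\tilde s}$ in characteristic $p$ (length-decreasing case). The paper leaves the inductive step and the verification that $x\tilde s\le w_2^\flat\tilde s$ implicit; your filling-in of these points, including the reduction to the standard subword/lifting property of the Bruhat order on $(W^{\aff},S^{\aff})$, is accurate.
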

\begin{proof} This is done by induction on $\ell(w_2)$. When $ \tilde s\in S ^{\aff} (1)$ we have $T_{w_1} T_{\tilde s}=  T_{w_1 \tilde s}$ if $w_1< w_1\tilde s$  and  $T_{w_1} T_{\tilde s}= T_{w_1 \tilde s^{-1}} T_{\tilde s}^2 = T_{w_1 \tilde s^{-1}} c(\tilde s) T_{\tilde s} = (w_1\cdot c(\tilde s)) T_{w_1} $  if  $ w_1\tilde s<  w_1$.
\end{proof}
As an application,  for $\tilde w_1, \tilde w_2\in W(1)$ lifting $w_1, w_2 \in W$, the triangular Iwahori-Matsumoto expansion of $T_{\tilde  w_2}^*$ and the weak braid relations imply
\begin{equation*} T_{\tilde w_1}(T_{\tilde w_2}^*-T_{\tilde w_2})\in \sum_{y\in W, y<w_2}C[Z_k] T_{\tilde w_1}T_{\tilde y}  \subset \sum_{y\in W, y<w_2}C[Z_k] T_{\tilde w_1 \tilde y  },
\end{equation*}
where $\tilde y\in W(1)$ lifts $y$. 
We use this result as follows:
$f_{v'}T_{\tilde w_1}T_{\tilde w_2}^*|_{Z^+} =f_{v'} T_{\tilde w_1}T_{\tilde w_2}|_{Z^+}$ if $f_{v'}T_{\tilde w_1 \tilde y  }|_{Z^+}=0$ for all 
 $y\in W$ with $y<w_2$.
The two claims \eqref{claim1} and \eqref{claim2}  follow from:

\begin{lemma}    Suppose  $\tilde w_1 \in W(1)$ lifts $w_1=x w_{J'} w_J$ with $x\in W_{J'}, x\leq _{J'}\lambda$, $ \lambda\in \Lambda^+,$ and $\tilde y\in W(1)$ lifts $y\in W_{J,0}$ with $y\leq w_{J}w_{ J'} $. Then $f_{v'}T_{\tilde w_1 \tilde y }$  vanishes on $Z^+$ except if $x\in \Lambda^+$ and $y=w_Jw_{J'}$. 
\end{lemma}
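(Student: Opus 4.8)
The plan is to prove the vanishing of $f_{v'}T_{\tilde w_1 \tilde y}$ on $Z^+$ by carefully computing the support of $T_{\tilde w_1 \tilde y}(f_{v'})$ as a function in $\ind_K^G V'$ and intersecting it with $Z^+$. First I would note that $f_{v'}$ is supported on $K$, so $f_{v'}T_{\tilde w_1 \tilde y}$ is supported on $K\cdot I n(w_1 y) I = K n(w_1 y) I$ (using that $K\supseteq I$ absorbs the left $I$), and hence on $K\cdot(I w_1 y I)$. Thus the key point is to control which double cosets $K n I$ with $n \in \cn$ lifting an element $\leq w_1 y$ can meet $Z^+$; by the bijection $\Lambda \xrightarrow{\sim} K\backslash G/I$ recalled in the last displayed lemma of \S\ref{sec:expansion-phi-z}, a double coset $KnI$ meets $Z^+$ only if $n$ has image in $W(1)$ lying in $\Lambda^+(1)$ modulo $Z^0$, i.e.\ $KnI = K\lambda' I$ for some $\lambda' \in \Lambda^+$.

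Next I would bring in the weak braid relations: applying the lemma ``$T_{w_1}T_{w_2}\in C[Z_k]T_{w_1 w_2'}$ for some $w_2'\leq w_2$'' together with the fact (used just above in part \textbf{B}) that $f_{v'}T_{\tilde w}|_{Z^+}=0$ unless the image of $\tilde w$ in $W$ lies in $\Lambda^+$, I reduce to showing: if $w_1 y' \in \Lambda^+ \pmod{W_0}$ — more precisely if $K n(w_1 y')I$ meets $Z^+$ — for some $y'\leq y \leq w_J w_{J'}$, then necessarily $x \in \Lambda^+$ and $y = w_J w_{J'}$ (and $y' = y$). Here I would write $w_1 = x w_{J'} w_J$ with $x = \lambda_x v_x$, $\lambda_x \in \Lambda$, $v_x \in W_{J',0}$, exactly as in the proof of Lemma~\ref{leJ}(ii), and use that $x \leq_{J'} \lambda$ forces via Lemma~\ref{OJ} that $x$ lies in a double coset $W_{J',0}\lambda_1 W_{J',0}$ with $\lambda_1 \in \Lambda^+$. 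Then $w_1 y' = \lambda_x v_x w_{J'} w_J y'$, and its class in $W_0 = \cn/Z$ is $v_x w_{J'} w_J y'$; for $K n(w_1 y')I$ to meet $Z^+$ one needs $w_1 y' \in \Lambda W_0$ to have the form $\lambda' w_0$ with $\lambda'\in\Lambda^+$, but more to the point one needs the $\Lambda$-part to be dominant and the reduced-length bookkeeping of \eqref{length2} to be consistent.

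The main technical work — and the step I expect to be the main obstacle — is the length/positivity computation: I would show that if $y < w_J w_{J'}$ (strictly) then the element $w_1 y'$, for any $y'\leq y$, cannot have its $KnI$-double coset meet $Z^+$, because one can detect a strictly positive affine root that is sent to a negative one, contradicting dominance of the $\Lambda$-part. Concretely, $\ell(x) = \ell(x w_{J'} w_J) + \ell(w_{J'} w_J)$ by Lemma~\ref{leJ}(ii), so $\ell(w_1) = \ell(x w_{J'} w_J) = \ell(x) - \ell(w_J w_{J'})$; multiplying $w_1$ by $y'$ with $\ell(y') \leq \ell(y) < \ell(w_J w_{J'})$ keeps the total length strictly below $\ell(x)$, and since a double coset $K\lambda'I$ with $\lambda'\in\Lambda^+$ corresponds to the element $v_0 \lambda'$ of maximal length $\ell(\lambda') + \ell(v_0)$ in its class, the element $w_1 y'$ of too-small length cannot be of this form unless $y' = y = w_J w_{J'}$, in which case $w_1 y = x w_{J'} w_J w_J w_{J'} = x$, forcing $x\in\Lambda^+$ as desired. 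I would also need to handle the residual $C[Z_k]$-coefficients and the $W(1)$-versus-$W$ lift issues, but these follow from the braid-relation lemmas and the $Z_k$-equivariance $f_{v'}T(t) = \psi_{V'}(t^{-1})f_{v'}$ exactly as elsewhere in \S\ref{sec:expansion-phi-z}, so the real content is the Bruhat-order/length argument in $W$.
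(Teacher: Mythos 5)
Your support computation is the right starting point: the support of $f_{v'}T_{\tilde w_1 \tilde y}$ is contained in $Kn(w_1y)I$, and writing $w_1y=\lambda_x\,(v_xw_{J'}w_Jy)$ with $x=\lambda_xv_x$, $v_x\in W_{J',0}$, this double coset meets $Z^+$ if and only if the conjugate $(v_xw_{J'}w_Jy)^{-1}\cdot\lambda_x$ lies in $\Lambda^+$. But your mechanism for ruling out $y\ne w_Jw_{J'}$ --- the length count $\ell(w_1y')<\ell(x)$ --- does not work. Whether $KnI$ meets $Z^+$ depends only on whether the $\Lambda$-component $u^{-1}\cdot\lambda_x$ (for $u=v_xw_{J'}w_Jy\in W_0$) is dominant, and a short element can perfectly well have dominant $\Lambda$-part (any element of $W_0$ itself does); nothing forces $\ell(w_1y)\ge\ell(x)$ when the double coset meets $Z^+$. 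The argument the paper uses instead is a root-positivity one: from $x\le_{J'}\lambda$ and $\langle\alpha,v(\lambda)\rangle>0$ for $\alpha\in J\setminus J'$ (which holds in the application, $\lambda$ being the image of $z\in Z_G^+(V,V')$, and is exactly what Lemma~\ref{leJ}(ii) supplies) one gets $\langle\gamma,v(\lambda_x)\rangle>0$ for all $\gamma\in\Phi_J^+\setminus\Phi_{J'}^+$; if $u\notin W_{J',0}$ then $u^{-1}(\gamma)<0$ for some such $\gamma$, and $\langle u^{-1}(\gamma),v(u^{-1}\cdot\lambda_x)\rangle=\langle\gamma,v(\lambda_x)\rangle>0$ shows $u^{-1}\cdot\lambda_x\notin\Lambda^+$. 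Only then does Lemma~\ref{Bou} (minimality of $w_Jw_{J'}$ in $w_JW_{J',0}$), combined with $y\le w_Jw_{J'}$, force $y=w_Jw_{J'}$. A sanity check that your length argument cannot be the right mechanism: it never uses the strict positivity on $J\setminus J'$, yet for $\lambda_x$ central every $W_0$-conjugate is dominant and the support does meet $Z^+$ for all $y$.

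The second gap is at the end. Granting $y=w_Jw_{J'}$, you conclude from $w_1y=x$ that $x\in\Lambda^+$ is ``forced'', but the support condition only gives $v_x^{-1}\cdot\lambda_x\in\Lambda^+$, not $v_x=1$. The case $1\ne v_x\in W_{J',0}$ with $v_x^{-1}\cdot\lambda_x$ dominant cannot be excluded by any support consideration, since $T_{\tilde x}$ then genuinely lives on a double coset meeting $Z^+$. Here the paper uses the Hecke-module structure: $\ell(x)=\ell(v_x)+\ell(v_x^{-1}\cdot\lambda_x)$ gives $f_{v'}T_{\tilde x}=f_{v'}T_{\tilde v_x}T_{v_x^{-1}\cdot\lambda_x}$ by the braid relations, and $f_{v'}T_{\tilde v_x}=0$ for $v_x\ne1$ because the character of $\mathcal H_C(K,I)$ on the line $Cf_{v'}$ has parameter $(\psi_{V'}^{-1},\Delta(V'))$ (Lemma~\ref{repKmodH}) and kills $T_{\tilde s_\alpha}$ for $\alpha\in J'=\Delta(V')$ by \eqref{zero}. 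This ingredient is absent from your proposal. (A smaller point: $T_{\tilde w_1\tilde y}$ is a single Iwahori--Matsumoto basis element indexed by the product $\tilde w_1\tilde y\in W(1)$, not the product $T_{\tilde w_1}T_{\tilde y}$ of two operators, so the detour through weak braid relations and elements $y'\le y$ does not belong in this lemma; that reduction is how the paper deduces \eqref{claim1} and \eqref{claim2} from the lemma.)
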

 \begin{proof}   Let $\lambda_x\in \Lambda$ and $v_x\in W_{J',0}$ such that $x=\lambda_x v_x$.  We have $\langle \gamma, v(\lambda_x )\rangle >0$ for $\gamma \in \Phi_J^+\setminus \Phi_{J'}^+$ by the proof of Lemma \ref{leJ}(ii).
 
We have $w_1y = \lambda_x v_x w_{J'} w_J y$ where     $v_xw_{J'}w_J y\in W_{J,0}$,  the support of $ f_{v'}T_{\tilde w_1 \tilde y } $ is contained in $K n(\lambda_x) n( v_x w_{J'}w_J y) I= K (n(v_xw_{J'}w_J y)^{-1}\cdot n(\lambda_x)) I$
and recalling the bijection  $\Lambda\to K\backslash G/I$, we have
$Z \cap  K (n(v_xw_{J'}w_J y)^{-1}\cdot n(\lambda_x)) I=
  Z^0 (n(v_xw_{J'}w_J y)^{-1}\cdot n(\lambda_x))$.
We have  $ \langle (v_xw_{J'}w_J y)^{-1}(\gamma), v((v_xw_{J'}w_J y)^{-1}\cdot \lambda_x)\rangle=\langle \gamma , v( \lambda_x)\rangle $. 
  If $v_xw_{J'}w_J y\not\in W_{J',0}$ there exists $\gamma \in \Phi_J^+\setminus  \Phi_{J'}^+$  with $(v_xw_{J'}w_J y)^{-1}(\gamma)<0$, hence  $ f_{v'}T_{\tilde w_1 \tilde y } $ vanishes on $Z^+$.
Hence we may assume that $v_x w_{J'}w_J y\in W_{J',0}$.
  
   We recall:
    
   \begin{lemma}[{\cite[IV.1, Exercise 3]{MR1890629}}]\label{Bou}Let $J\subset \Delta$. Every coset $wW_{J,0}$ in $W_0$ has a unique representative $d$ of minimal length. We have $\ell(du )=\ell(d)+\ell(u)$ for all $u\in W_{J,0}$. 
An element   $d\in W_{0}$ is the representative of minimal length in $dW_{J,0}$ if and only if $d(J)\subset \Phi^+$.
\end{lemma}

 The element $w_J w_{J'}$ is the  representative of minimal length of the coset $w_JW_{J',0}$.   
Since $v_xw_{J'}w_Jy  \in W_{J',0}$, we have $y  \in  w_JW_{J',0}$, so $y = w_J w_{J'}$, as $y\leq w_J w_{J'}$ by assumption.

We deduce that  $f_{v'}T_{\tilde w_1 \tilde y }$  vanishes on $Z^+$ if $y\neq  w_J w_{J'}$.
  
  Assume $y=w_J w_{J'}$. Then $\tilde x=\tilde w_1 \tilde y$ lifts $x=\lambda_x v_x$.  If $ f_{v'}T_{\tilde x} $ does not vanish on $Z^+$, then by above we have  $  v_x^{-1} \cdot \lambda_x \in \Lambda^+$. If $  v_x^{-1} \cdot \lambda_x \in \Lambda^+$
  then $\ell(x)=\ell(  v_x(v_x^{-1} \cdot \lambda_x))=\ell(v_x)+\ell(v_x^{-1} \cdot \lambda_x)$, and 
    by the braid relations $ f_{v'}T_{\tilde x}= f_{v'}T_{\tilde v_x}T_{ v_x^{-1} \cdot \lambda_x }$.  
        
The element $f_{v'}\in (\ind_K^G V')^I$ generates a subrepresentation of $K$ isomorphic to  $V'$.  The parameter  of 
  the character  of  $\mathcal H_C(K,I)$ acting on $C f_{v'}$ is $(\psi_{V'}^{-1}, J')$ (Lemma \ref{repKmodH}).   By \eqref{zero},   
 $ f_{v'}T_{\tilde v_x}  =0$  for  $v_x\in W_{J',0}-\{1\}$. We deduce that $ f_{v'}T_{\tilde x}=0$, except if $x\in \Lambda^+$ and $y=w_J w_{J'}$.
   \end{proof}
   This ends the proof of  \eqref{Z++}  hence of  
Proposition \ref{expa}.
     \end{proof}

\section{A simple proof of  the change of weight theorem for certain \texorpdfstring{$G$}{G}}\label{sec:simple-proof-change}
In this section, we give a simple proof of the change of weight theorem (Theorem~\ref{weight}) when $\mathbf G$ is split.
For $\GL_n$ (and more generally for any split group, see \S\ref{sec:more-general-case}) this gives a more elementary proof than the one in \cite{MR2845621} and \cite{MR3143708}, avoiding the Lusztig-Kato theorem.

Since $\mathbf G$ is split, $\mathbf Z$ is equal to $\mathbf S$ and $v_Z$ gives an isomorphism $X_*(\mathbf{S})\simeq S/S^0 = \Lambda$, and Bruhat-Tits theory gives a Chevalley group scheme $\mathcal G $ with generic fiber $\mathbf G$ and
  such that $\mathcal G (\cO)=K $ is the  special maximal compact open subgroup   of $G$ fixing $x_0$ \cite[3.4.2]{MR546588}. We have $\mathcal G (k) = G_k$, the root system $\Phi$ of $(G,S)$ identifies canonically with the root system of $(G_k,S_k)$. 

\begin{lemma}\label{lm:Z-cap-Malpha} Assume that $\mathbf G$ is $F$-split.
  For $\alpha\in\Delta$, we have $Z\cap M'_\alpha = \alpha^\vee(F^\times), \ Z^0\cap M'_\alpha =
  \alpha^\vee(\mathcal{O}^\times)$, and $Z_k\cap M'_{\alpha,k} = \alpha^\vee(k^\times)$.
\end{lemma}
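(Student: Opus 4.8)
The claim is a concrete computation in the split group $\mathbf G$, so the plan is to reduce everything to the rank-one subgroup attached to $\alpha$ and identify it explicitly. First I would recall that for $\alpha \in \Delta$ the subgroup $M'_\alpha = \langle U_\alpha, U_{-\alpha}\rangle$ is the image of $\SL_2$ under the root homomorphism $x_\alpha, x_{-\alpha}$ associated to $\alpha$; since $\mathbf G$ is $F$-split this homomorphism is defined over $\cO$ by the Chevalley group scheme $\mathcal G$, and the standard $\SL_2$-identities show that $M'_\alpha \cap Z$ is exactly the image of the diagonal torus of $\SL_2$, namely $\alpha^\vee(F^\times)$ (use $\alpha^\vee(t) = h_\alpha(t)$ in Chevalley notation). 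This gives $Z\cap M'_\alpha = \alpha^\vee(F^\times)$.

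Next I would pin down the parahoric and pro-$p$ parts. The subgroup $Z^0 = Z\cap K = S(\cO)$ because $K = \mathcal G(\cO)$ is hyperspecial in the split case, so $Z^0\cap M'_\alpha = \alpha^\vee(F^\times)\cap S(\cO) = \alpha^\vee(\cO^\times)$, using that $\alpha^\vee$ is a cocharacter (a closed immersion of tori over $\cO$) so $\alpha^\vee(t)\in S(\cO)$ iff $t\in\cO^\times$. Similarly, reduction mod the maximal ideal sends $K$ onto $G_k = \mathcal G(k)$ and $M'_\alpha$ onto $M'_{\alpha,k} = \langle U_{\alpha,k},U_{-\alpha,k}\rangle$, compatibly with $\alpha^\vee$; hence $Z_k\cap M'_{\alpha,k} = \alpha^\vee(k^\times)$ by the same $\SL_2$ argument applied over the field $k$, or by reducing the previous identity modulo $\mathfrak p$ and noting $\cO^\times \onto k^\times$.

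I do not expect a serious obstacle here; the only point that needs a little care is making sure that ``$Z\cap M'_\alpha$ is the diagonal torus of the $\SL_2$'' really holds, i.e.\ that no extra central elements of $\mathbf G$ sneak in. This follows because an element of $M'_\alpha$ lies in $Z = S$ iff it is fixed (under conjugation) by all of $S$, iff its image in $\SL_2$ is diagonal, using that $\alpha$ is a nontrivial character of $S$ and $S$ acts on $M'_\alpha\cong\SL_2$ through $\alpha$ composed with the adjoint action. The computation $\ZZ \cap M'_\alpha \cong \ZZ$ of \cite[III.16]{MR3600042} then forces $\nu(\alpha^\vee(t)) = -\ord_F(t)\,\alpha^\vee$, consistent with $v_Z(a_\alpha) = -e_\alpha^{-1}\alpha^\vee$ specialising to $e_\alpha = 1$ in the split case; I would record this compatibility as a remark but it is not needed for the statement itself.
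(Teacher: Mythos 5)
Your proof is correct and follows essentially the same route as the paper's: both reduce to the rank-one group attached to $\alpha$ (the paper via $\mathbf M_\alpha^{\der}$, which is $\SL_2$ or $\PGL_2$ since $\mathbf G$ is split; you via the Chevalley homomorphism $\SL_2 \to \mathbf G$ with image $M'_\alpha$), verify the identities by hand there, and treat the residue-field statement by running the same argument over $k$. One small inaccuracy worth noting: $\alpha^\vee$ need not be a closed immersion (in the $\PGL_2$-type case it factors through the degree-two isogeny $t\mapsto t^2$), but the fact you actually use --- that $\alpha^\vee(t)\in S(\mathcal{O})$ if and only if $t\in\mathcal{O}^\times$ --- still holds, e.g.\ by applying $\ord_F\circ\alpha$.
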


\begin{proof}
  Note that $\mathbf M_\alpha^{\der}$ is a semisimple group of rank~1 and that $M_\alpha' \subset
  M_\alpha^{\der}$.  Hence the first two equalities are reduced to the case where $\mathbf G$ is
  semisimple of rank~1 and hence isomorphic to $\SL_2$ or $\PGL_2$ \cite[Thm.\ 7.2.4]{MR2458469}.
  In either case the first two equalities are easily verified by hand, noting that $\mathbf Z \cong
  \mathbb G_m$ and so the parahoric $Z^0$ is the maximal compact $\cO^\times \subset F^\times$.
  For the third equality, the same proof as for the first one works, but now one works over $k$ instead of $F$.
\end{proof}

By the lemma, for a character $\psi\colon Z_k\to C^\times$, which is also regarded as a character of $Z^0$ by the quotient map $Z^0\onto Z_k$, $\psi$ is trivial on $Z_k\cap M'_{\alpha,k}$ if and only if $\psi$ is trivial on $Z^0\cap M'_\alpha$.
Hence $\Delta(V) = \Delta'(V)$ for any irreducible representation $V$ of $K$.

In this section we prove Theorem~\ref{weight2}. We will first focus on the case when 
the center of $\mathbf G$ is a torus (i.e.\ smooth and connected) and the derived subgroup of $\mathbf G$ is simply connected.
In fact, just as in the first proof of Proposition~\ref{ISimpliesCW} we prove a stronger version
which we now state. Fix $\alpha, V,V'$ as in Theorem~\ref{weight2}.

\begin{theorem}\label{thm:weight2,simple}
Suppose that $\mathbf G$ is a split group whose center is a torus and whose derived subgroup is simply-connected.
Let $z\in Z^+$ such that $\langle \alpha, v_Z(z)\rangle >0$, i.e.\ $z \in Z_G^+(V,V')$.
Then there exist $G$-equivariant homomorphisms $\varphi:\ind_K^GV\to \ind_K^GV'$ and $\varphi':\ind_K^GV'\to \ind_K^GV$ satisfying
 $$S^G(\varphi)=\tau_{z}^{V'_{U^0},V_{U^0}},\quad 
 S^G(\varphi') =\tau_{z}^{V_{U^0},V'_{U^0}}  - \tau^{V_{U^0},V'_{U^0}} _{ z a_\alpha }.$$
If moreover $\langle \beta,v_Z(z) \rangle = 0$ for $\beta \in \Delta(V')$, then
$\varphi = T_z^{V',V}$ and $\varphi' = T_z^{V,V'}$.
\end{theorem}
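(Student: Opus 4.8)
The plan is to reduce the theorem, by a sequence of standard manipulations, to the case where $\mathbf{G}$ has simply-connected derived subgroup and connected center, and where $v_Z(z)$ is minuscule (i.e.\ $\langle\beta,v_Z(z)\rangle\le 1$ for all $\beta\in\Phi^+$); this is precisely the case covered by Theorem~\ref{thm:weight2,simple}. The reduction to simply-connected derived subgroup and connected center is the familiar $z$-extension argument: one chooses a central extension $\widetilde{\mathbf G}\onto \mathbf G$ with $\widetilde{\mathbf G}^{\der}$ simply connected and $Z(\widetilde{\mathbf G})$ a torus, pulls back $V,V'$ and the data $\alpha,z$, checks that the relevant Hecke bimodules and Satake transforms are compatible with the projection $\widetilde G\onto G$ (since $\ker$ is contained in the center and acts through a character), and transports the conclusion back down. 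The reduction to the minuscule (or at least $\langle\alpha,v_Z(z)\rangle=1$, $\langle\beta,v_Z(z)\rangle=0$ for $\beta\ne\alpha$) situation proceeds as in the second proof of Proposition~\ref{ISimpliesCW}: given a general $z\in Z_G^+(V,V')$, write $z = z_0 z_1$ with $z_1$ central in $Z$ satisfying $\langle\beta,v_Z(z_1)\rangle=0$ for $\beta\in\Delta(V')$ and $\langle\alpha,v_Z(z_1)\rangle=1$, $\langle\beta,v_Z(z_1)\rangle=0$ for $\beta\neq\alpha$ (possible after enlarging $z$ by a further central twist using Lemmas~\ref{first} and~\ref{second}), prove the theorem for $z_1$, and then multiply by $T_{z_0}^{V',V'}$ resp.\ $T_{z_0}^{V,V}$ and use $S^G(T_{z_0}^{V,V})=\tau_{z_0}$ (Lemma~\ref{first} with $M=Z$, as $\Delta(V),\Delta(V')\subset\Delta_{z_0}$ once $z_0$ is sufficiently deep in the chamber) together with the fact that $\tau_{a_\alpha}$ is central to propagate the formula.

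\textbf{The core case.} For the core case I would establish Theorem~\ref{thm:weight2,simple} directly. Set $\varphi = T_z^{V',V}$ and $\varphi' = T_z^{V,V'}$. Since $\langle\beta,v_Z(z)\rangle=0$ for $\beta\in\Delta(V')$ we have $\Delta(V')\subset\Delta_z$, so Lemma~\ref{first} immediately gives $S^G(\varphi)=\tau_z^{V'_{U^0},V_{U^0}}$. The content is the computation of $S^G(\varphi'\circ\varphi)$: by Lemma~\ref{second} (applicable because $\Delta(V')\subset\Delta_z$) we get $\varphi'\circ\varphi = T_{z^2}^{V,V}$, so it suffices to show $S^G(T_{z^2}^{V,V}) = \tau_{z^2}^{V_{U^0},V_{U^0}} - \tau_{z^2 a_\alpha}^{V_{U^0},V_{U^0}}$ (using $\Delta(V)=\Delta'(V)$ in the split case, so $c_\alpha=1$ always here). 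The plan for this is to produce enough parabolically induced representations of $G$ that contain $\ind_K^G V$ in their image of $S^G_M$ but not $\ind_K^G V'$, so as to pin down $S^G(T_{z^2}^{V,V})$ up to the single degree of freedom already present. Concretely: $S^G(T_{z^2}^{V,V})$ lies in $\bigoplus_{z'}C\tau_{z'}$ over $z'\in Z_{\psi_V}^+/Z^0$ and by Lemma~\ref{lm:support-satake} (with the improved version in the Remark after it) is supported in $v_Z(z^2)+\sum_\beta\mathbb Z_{\le 0}e_\beta^{-1}\beta^\vee$; in the minuscule situation the only contributing cosets are $z^2$ itself and $z^2 a_\alpha$, with the coefficient of $\tau_{z^2}$ equal to $1$ (leading term). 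It remains to show the coefficient of $\tau_{z^2a_\alpha}$ is $-1$, and this is exactly where one constructs an explicit intertwiner: realizing $\ind_K^G V$ and $\ind_K^G V'$ inside suitable $\Ind_{P_\alpha}^G(\cdots)$ and using the $\SL_2$- or $\PGL_2$-rank-one computation in $M_\alpha$ (together with Lemma~\ref{lm:Z-cap-Malpha}) to see that the degree-one relation forces the coefficient.

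\textbf{Passing back.} Once $S^G(T_{z^2}^{V,V})$ is known, the rest is formal: cancel $\tau_z^{V'_{U^0},V_{U^0}}$ from $S^G(\varphi')\tau_z^{V'_{U^0},V_{U^0}} = S^G(\varphi'\circ\varphi) = \tau_{z^2}^{V_{U^0},V_{U^0}}-\tau_{z^2a_\alpha}^{V_{U^0},V_{U^0}}$ (legitimate since the $\tau$'s are invertible as functions on $Z$, and $\tau_{a_\alpha}$ is central) to read off $S^G(\varphi')=\tau_z^{V_{U^0},V'_{U^0}}-\tau_{za_\alpha}^{V_{U^0},V'_{U^0}}$, and then compute $S^G(\varphi\circ\varphi')=\tau_z^{V'_{U^0},V_{U^0}}(\tau_z^{V_{U^0},V'_{U^0}}-\tau_{za_\alpha}^{V_{U^0},V'_{U^0}}) = \tau_{z^2}^{V'_{U^0},V'_{U^0}}-\tau_{z^2a_\alpha}^{V'_{U^0},V'_{U^0}}$ exactly as in the first proof of Proposition~\ref{ISimpliesCW}. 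The last sentence of Theorem~\ref{thm:weight2,simple} (that $\varphi=T_z^{V',V}$, $\varphi'=T_z^{V,V'}$ under the extra singularity hypothesis) is then what the argument literally produces.

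\textbf{Main obstacle.} The hard part will be the construction of enough parabolically induced representations to force the coefficient of $\tau_{z^2a_\alpha}$ to be $-1$: one needs a clean supply of $\Ind_P^G\sigma$ for various $P$ and $\sigma$ such that $\Hom_{CG}(\ind_K^GV,\Ind_P^G\sigma)\ne 0$ while $\Hom_{CG}(\ind_K^GV',\Ind_P^G\sigma)= 0$ (or vice versa), which is where the hypotheses that $\mathbf G^{\der}$ is simply connected and $Z(\mathbf G)$ is a torus really get used (to have enough characters of $M$ extending $\psi_V$ and to control the combinatorics of $\Delta(V)$ versus $\Delta(V')$ under $V_{N^0}$). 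The appendix's remark that the method "seems less powerful in the case $c_\alpha=0$" indicates that in the split setting, where $c_\alpha$ is automatically $1$, this obstruction is manageable, but it is still the crux of the argument and requires the explicit rank-one computation in $M_\alpha$.
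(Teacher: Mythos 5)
Your overall strategy---distinguishing $V$ from $V'$ via parabolic inductions $\Ind_{P_\alpha}^G(\cdots)$ and combining this with support constraints on the Satake transform---is indeed the paper's strategy, but there is a genuine gap at the heart of your core case. You claim that for $v_Z(z)=\mu_\alpha$ the support constraint of Lemma~\ref{lm:support-satake} together with dominance forces $S^G(T_{z^2}^{V,V})$ to be supported on the two cosets $z^2Z^0$ and $z^2a_\alpha Z^0$, leaving only ``the single degree of freedom already present''. This is false in general: the set $Z^+\cap z^2\prod_{\beta\in\Delta}a_\beta^{\mathbb N}$ can be strictly larger. For example in type $G_2$ with $\alpha=\alpha_2$ the short simple root, writing $v_Z(z')=2\mu_{\alpha_2}-n_1\alpha_1^\vee-n_2\alpha_2^\vee$, the dominance inequalities read $3n_2\ge 2n_1$ and $2+n_1\ge 2n_2$, which admit $(n_1,n_2)=(1,1)$ and $(2,2)$ in addition to $(0,0)$ and $(0,1)$. (Relatedly, fundamental coweights are not minuscule outside special types, so your proposed ``reduction to the minuscule case'' is not available; the relevant condition is $v_Z(z)=\mu_\alpha$, nothing more.) The missing idea is the paper's first step: show that $f=S^G(T_z^{V,V'}*T_z^{V',V})$ vanishes at \emph{every} character $\chi$ with $\chi(\tau_\alpha)=1$ (this is what Lemmas~\ref{va} and \ref{vb} deliver), then invoke the Nullstellensatz together with the irreducibility of $1-\tau_\alpha$ in the UFD $\mathcal H_Z(V_{U^0})\cong C[\Lambda]$ to factor $f=f'(1-\tau_\alpha)$, and only \emph{then} run the support argument---on $f'$, not on $f$. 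The gain is that every $z'\in\supp f'$ has both $z'$ and $z'a_\alpha$ dominant (by convexity applied to the extremes of $\{i: z'a_\alpha^i\in\supp f'\}$), which upgrades the inequality $\sum n_\beta\langle\alpha,\beta^\vee\rangle\le 2$ to $\le 0$ and lets positive-definiteness force $f'\in C^\times\tau_{z^2}$. Your vaguer ``rank-one computation in $M_\alpha$ forcing the coefficient'' does not substitute for this factorization.

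There is a second, smaller gap in the passage from $v_Z(z)=\mu_\alpha$ to general $z$ with $\langle\alpha,v_Z(z)\rangle>0$. You propose to multiply by $T_{z_0}^{V,V}$ and apply Lemma~\ref{first}, ``as $\Delta(V),\Delta(V')\subset\Delta_{z_0}$ once $z_0$ is sufficiently deep in the chamber''---but this is backwards: the deeper $z_0$ is in the chamber, the \emph{smaller} $\Delta_{z_0}$ is, and for arbitrary $z\in Z_G^+(V,V')$ there is no reason that $\Delta(V)\subset\Delta_{zz_0^{-1}}$. Since the theorem only asserts existence of $\varphi,\varphi'$ in general, the correct move is to use the known image \eqref{imS} of $S^G$ on $\mathcal H_G(V')$ to produce some $\theta$ with $S^G(\theta)=\tau_{zz_0^{-1}}$ and set $\varphi=\theta*T_{z_0}^{V',V}$, $\varphi'=T_{z_0}^{V,V'}*\theta$; Lemmas~\ref{first} and~\ref{second} are needed only to identify $\varphi=T_z^{V',V}$, $\varphi'=T_z^{V,V'}$ under the extra hypothesis of the last sentence. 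Finally, your opening $z$-extension reduction is superfluous here: the theorem already hypothesizes that the center is a torus and the derived subgroup is simply connected; that reduction belongs to the later extension to general split groups.
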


\begin{remark}\label{rmk:weight2,simple}
Recall that we fixed an isomorphism of vector spaces $\iota\colon V_{U^0}\simeq V'_{U^0}$ \eqref{iopi}. This is also an isomorphism of representations of $Z^0$ because $\psi_V=\psi_{V'}$. 
We have isomorphisms $\mathcal{H}_Z(V_{U^0},V'_{U^0})\simeq \mathcal{H}_Z(V'_{U^0},V_{U^0})\simeq \mathcal{H}_Z(V'_{U^0},V'_{U^0})= \mathcal{H}_Z(V'_{U^0}) \simeq \mathcal{H}_Z(V_{U^0},V_{U^0})= \mathcal{H}_Z(V_{U^0})$ and    for $x\in Z$, $ \tau_x^{V_{U^0}} , \tau_x^{V'_{U^0}} $ correspond to each other  under the isomorphism $\mathcal{H}_Z(V_{U^0} ) \simeq \mathcal{H}_Z(V'_{U^0} )$, and we will just denote them by $\tau_x$.
We remark that since $Z = S$ is commutative, $\mathcal{H}_G(V_{U^0})$ is commutative.
\end{remark}

The basic idea of the proof is the following.
We construct many $G$-representations $\pi$ that contain the weight $V$ but not the weight $V'$. This implies that 
$\chi\otimes \ind_K^G V \not\simeq\chi\otimes \ind_K^G V'$ for any homomorphism $\chi : \mathcal{H}_G(V) \simeq \mathcal{H}_G(V') \to C$
that occurs in $\Hom_K(V,\pi)$. This in turn implies that $\chi(T_z^{V,V'}*T_z^{V',V}) = 0$ for such $\chi$.
When $z$ is as in Theorem~\ref{thm:weight2,simple} and chosen minimally, i.e.\ $\langle \alpha,v_Z(z) \rangle = 1$
and $\langle \beta,v_Z(z) \rangle = 0$ for $\beta \in \Delta\setminus\{\alpha\}$, then
it turns out that $S^G(T_z^{V,V'}*T_z^{V',V})$ is so constrained that it is forced to be equal to
$\tau_{z^2}  - \tau_{ z^2a_\alpha }$. %
By Lemma~\ref{first} we have $S^G(T_z^{V',V}) = \tau_{z}^{V'_{U^0},V_{U^0}}$, and we deduce that $S^G(T_z^{V,V'}) = \tau_{z}^{V_{U^0},V'_{U^0}}  - \tau_{ z a_\alpha }^{V_{U^0},V'_{U^0}}$. Using properties of $S^G$ it is then not difficult to deduce the theorem.

\subsection{The case of \texorpdfstring{$\GL_2$}{GL\_2}}\label{sec:case-of-gl2}

To warm up, in this section we illustrate the proof strategy by showing that $S^G(T_z^{V,V'}*T_z^{V',V}) = \tau_{z^2}  - \tau_{ z^2a_\alpha }$
when $\mathbf{G} = \GL_2$, $V$ is the trivial representation $ \trivrep_K$ of $K$, $V'$ is the Steinberg representation $\SSt_K$ of $K$, and $z = \diag(\varpi,1)$ where $\varpi$ is a uniformizer. 
We note that $\tau_\alpha = \tau_{\diag(\varpi^{-1},\varpi)}$, so $\tau_{z^2a_\alpha} = \tau_{\diag(\varpi,\varpi)}$.
The Satake homomorphism  $S^G$ satisfies (see \cite[proof of Prop.\ 6.3]{MR2845621} or Lemma~\ref{lm:support-satake}):
\begin{itemize}
\item  $ S^G(T_{z}^{V',V})(z')\neq 0$ implies    $v_Z(z')\in v_Z(z) + \mathbb{R}_{\le 0}\Delta^\vee$.
\item  The coefficient of $\tau_z^{V'_{U^0}, V_{U^0}}$ in $S^G(T_{z}^{V',V}) $ is $1$.
\end{itemize}
This also holds after switching $V$ and $V'$. This means that $S^G(T_{z}^{V',V}) \in  \tau^{V'_{U^0}, V_{U^0}}_z + \sum_{n < 0}C\tau^{V'_{U^0}, V_{U^0}}_{\diag(\varpi^{n + 1},\varpi^{-n})}$, similarly after switching $V$ and $V'$, and
   $ S^G(T_z^{V,V'})\circ S^G(T_z^{V',V}) \in \tau_{z^2} + \sum_{n< 0}C\tau_{\diag(\varpi^{n + 2},\varpi^{-n})}$.  The support of $S^G(f) \in \mathcal{H}_Z(\trivrep_{Z^0})$ is contained in $Z^+$ for any $f \in\mathcal{H}_G(\trivrep_K)$. For $n<0$,  if $ \diag(\varpi^{n + 2},\varpi^{-n}) \in Z^+$ then $n = -1$, so
$$ S^G(T_z^{V,V'}\circ T_z^{V',V}) = \tau_z^2 + c\tau_{\diag(\varpi,\varpi)}$$
for some $c\in C$.
  Let  $\chi_1: \mathcal{H}_Z(\trivrep_{Z^0})  \to C$ be the character such that $\chi_1(\tau_z)=\chi_1  (\tau_{\diag(\varpi,\varpi)})=1$.    We also denote by $\chi_1$ the character $\chi_1 \circ S^G$ of  $\mathcal{H}_G(\trivrep_K) \simeq \mathcal{H}_G(\SSt_K)$.
   The algebra   $\mathcal{H}_G(\trivrep_K)$ acts on the line $ \Hom_G(\ind_K^G\trivrep_K,\trivrep_G)$  by the 
 character $\chi_1 $ because the embedding $\trivrep_G \hookrightarrow \Ind_B^G \trivrep_Z$ implies  \[
\Hom_K(\trivrep_K,\trivrep_G)\hookrightarrow \Hom_K(\trivrep_K ,\Ind_B^G\trivrep_Z)=\Hom_K(\trivrep_K,\Ind_{B^0}^K\trivrep_Z) \simeq \Hom_{Z^0}(\trivrep_K|_{Z^0},\trivrep_Z|_{Z^0}),
\]
and the isomorphism  $\Hom_K(\trivrep_K,\trivrep_G) \to \Hom_{Z^0}(\trivrep_K|_{Z^0},\trivrep_Z|_{Z^0})$  is  $\mathcal{H}_G(\trivrep_K)$-equivariant via $S^G$ \cite[Lemma 2.14]{MR2845621}.  Hence $\trivrep_G$ is a quotient of $\chi_1\otimes\ind_K^G\trivrep_K $ and 
$$\chi_1\otimes\ind_K^G\trivrep_K\not\simeq \chi_1\otimes\ind_K^G\SSt_K.$$
(If these are isomorphic to each other, then we have a non-zero homomorphism $\ind_K^G\SSt_K\to \chi_1\otimes\ind_K^G\SSt_K\simeq \chi_1\otimes\ind_K^G\trivrep_K\to \trivrep_G$ which gives $\SSt_K\to \trivrep_G|_K$ by Frobenius reciprocity. This is a contradiction.)
    For a character $\chi: \mathcal{H}_Z(\trivrep_{Z^0})  \to C$ such that $\chi(\tau_z^2 + c\tau_{\diag(\varpi,\varpi)})\ne 0$, we have  $\chi\otimes\ind_K^GV\simeq \chi\otimes\ind_K^GV'$.   
Therefore $\chi_1(\tau_z^2 + c\tau_{\diag(\varpi,\varpi)}) = 0$,
hence $c = -1$ as desired.

\subsection{Reducibility and change of weight}\label{subsec:Reducibility and change of weight}
Until the end of \S\ref{sec:corollary}, fix $\mathbf G, \alpha, V,V'$ as in Theorem~\ref{thm:weight2,simple}.

Let $\chi\colon \mathcal{H}_Z(V_{U^0})\to C$ be a character. Since $Z^0\subset Z$ is normal, $\ind_{Z^0}^ZV_{U^0}$ is  a free $\mathcal{H}_Z(V_{U^0})$-module of rank $1$.  
 The character $\chi\otimes_{\mathcal{H}_Z(V_{U^0})}\ind_{Z^0}^ZV_{U^0}$ of $Z$ is $z\mapsto \chi (\tau_{z^{-1}})$  because $\tau_{z^{-1}} = z$ as endomorphisms
of $\ind_{Z^0}^ZV_{U^0}$; its restriction to $Z^0$ is $\psi_V$ because $\tau_{z^{-1}} = \psi_V(z)\tau_1 = \psi_V(z)$ in $\mathcal{H}_Z(V_{U^0})$
for $z\in Z^0$.  Since $\psi_V$ is trivial on $Z^0 \cap M'_\alpha$, $\tau_\alpha$ is well-defined. 
 
 Assume that $\chi(\tau_\alpha) = 1$.  The character $z\mapsto \chi (\tau_{z^{-1}})$ of $Z$ is trivial on  $Z \cap M'_\alpha = \alpha^\vee(F^\times)$, hence we can extend it to a  character of  $M_\alpha$ that is trivial on $U \cap M_\alpha$~(\cite[Proposition~3.3]{MR3143708}, \cite[II.7~Corollary 1]{MR3600042}).
We denote this extended character by $\sigma_\chi$.

\begin{lemma}[{\cite[III.18~Proposition]{MR3600042}}] \label{va} Assume that $\chi\colon \mathcal{H}_Z(V_{U^0})\to C$ satisfies $\chi(\tau_\alpha) = 1$.
Then $\Hom_K(V,\Ind_{P_\alpha}^G\sigma_\chi) \ne 0$ and $\Hom_K(V',\Ind_{P_\alpha}^G\sigma_\chi) = 0$.
\end{lemma}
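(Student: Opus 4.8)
The plan is to analyze the restriction to $K$ of the parabolically induced representation $\Ind_{P_\alpha}^G\sigma_\chi$ using the Iwahori decomposition $G = P_\alpha K$, which holds because $P_\alpha$ contains $B$ and $K$ is special (so $G = BK$). Concretely, restriction along $P_\alpha \backslash G \cong P_\alpha^0 \backslash K$ identifies $\Ind_{P_\alpha}^G\sigma_\chi|_K$ with $\Ind_{P_\alpha^0}^K(\sigma_\chi|_{P_\alpha^0})$, where $P_\alpha^0 = P_\alpha \cap K$. Since the action of $K$ factors through $G_k = \mathcal G(k)$ and $P_\alpha^0$ surjects onto the parabolic $P_{\alpha,k} = M_{\alpha,k}N_{\alpha,k}$ of $G_k$, and $\sigma_\chi|_{P_\alpha^0}$ is inflated from a character $\overline\sigma_\chi$ of $M_{\alpha,k}$ (trivial on $N_{\alpha,k}$, as $\sigma_\chi$ is trivial on $U \cap M_\alpha \supset N_\alpha^0$ and on the pro-$p$ radical), we get $\Ind_{P_\alpha}^G\sigma_\chi|_K \cong \Ind_{P_{\alpha,k}}^{G_k}\overline\sigma_\chi$, a principal-series-type representation of the finite group $G_k$ parabolically induced from a character of the rank-one Levi $M_{\alpha,k}$.

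First I would pin down $\overline\sigma_\chi|_{Z_k}$: by the computation recalled just before the lemma, $\chi\otimes_{\mathcal H_Z(V_{U^0})}\ind_{Z^0}^Z V_{U^0}$ restricts to $\psi_V$ on $Z^0$, and $\sigma_\chi$ extends this, so $\overline\sigma_\chi|_{Z_k} = \psi_V$. Now I would apply the representation theory of $\Hom_{G_k}$ and $\Hom_K$ into a parabolically induced module over the finite reductive group $G_k$: by Frobenius reciprocity (Mackey/adjunction for $\Ind_{P_{\alpha,k}}^{G_k}$ together with the standard decomposition of $\mathrm{Res}\,\Ind$ over a finite group with a $BN$-pair), $\Hom_{G_k}(V, \Ind_{P_{\alpha,k}}^{G_k}\overline\sigma_\chi)$ and $\Hom_{G_k}(V', \Ind_{P_{\alpha,k}}^{G_k}\overline\sigma_\chi)$ are controlled by the $U_k$-coinvariants $V_{U_k}$, $V'_{U_k}$ as $M_{\alpha,k}$-modules, since the only double coset contributing a $P_{\alpha,k}$-invariant (up to the action) vector on the target side after pushing to $N_{\alpha,k}$-coinvariants survives for the "big cell". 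The upshot is that the relevant Hom spaces are detected by whether the character $\psi_V$ of $Z_k$ and the subset $\Delta(V)$ (resp.\ $\Delta(V')$) are compatible with extension across $\alpha$: precisely, $\Hom_K(V,\Ind_{P_\alpha}^G\sigma_\chi)\ne 0$ iff $\alpha$-positivity of the weight holds (which is $\alpha\in\Delta(V)$, satisfied since $\Delta(V)=\Delta(V')\sqcup\{\alpha\}$) and $\psi_V$ matches $\overline\sigma_\chi|_{Z_k}$ (automatic), while $\Hom_K(V',\Ind_{P_\alpha}^G\sigma_\chi)=0$ because $\alpha\notin\Delta(V')$ forces a mismatch on the $M_{\alpha,k}$-socle of $V'_{U_k}$ (the parameter subset of $V'$ does not contain $\alpha$, so $V'$ cannot occur in something induced from a Levi containing the $\alpha$-direction in this way). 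I would make the "does not occur" direction rigorous by the classification of \cite[III.9--10]{MR3600042}: the $G_k$-socle of $\Ind_{P_{\alpha,k}}^{G_k}\overline\sigma_\chi$, and more to the point the constituents containing a given weight, are governed by parameters $(\psi,\Delta')$ with $\alpha\in\Delta'$ (when $\overline\sigma_\chi$ is "regular enough", which here it is since $\chi(\tau_\alpha)=1$ is exactly the condition ensuring the extension $\sigma_\chi$ exists and is compatible), ruling out $V'$ whose parameter subset omits $\alpha$.

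Alternatively, and perhaps more cleanly, I would cite directly \cite[III.18 Proposition]{MR3600042}: the statement of the lemma is literally that proposition transported to our notation, once one observes that our $\sigma_\chi$ is the character there (the character of $M_\alpha$ trivial on $U\cap M_\alpha$ extending $z\mapsto\chi(\tau_{z^{-1}})$), our $V$ has parameter $(\psi_V,\Delta(V))$ with $\alpha\in\Delta(V)$, and our $V'$ has parameter $(\psi_{V'},\Delta(V'))=(\psi_V,\Delta(V)\setminus\{\alpha\})$ with $\alpha\notin\Delta(V')$. Thus the proof reduces to verifying the hypotheses of loc.\ cit.: that $\psi_V$ is trivial on $Z^0\cap M'_\alpha$ (given, it is why $\tau_\alpha$ makes sense), that $\chi(\tau_\alpha)=1$ (the standing assumption), and the bookkeeping that $\sigma_\chi$ as constructed is indeed the relevant extension. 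The main obstacle I anticipate is not any deep input but the careful matching of conventions: the parameter $\Delta(V)$ is defined via the $G_k$-stabilizer of $\ker(V\to V_{U_k})$ being $P_{\Delta(V),k}$, and one must make sure that "$\alpha\in\Delta(V)$" lines up with "$V$ occurs in $\Ind_{P_\alpha}^G\sigma_\chi$" rather than its opposite, which is a place where sign/opposition errors (as the authors themselves note elsewhere in the paper) can creep in; I would double-check this against \cite[III.10 Lemma]{MR3600042} on how parabolic induction/restriction affects the parameter subset, specifically $\Delta(V_{N^0}) = \Delta_M \cap \Delta(V)$ applied with $M = M_\alpha$.
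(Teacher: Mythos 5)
Your proposal follows essentially the same route as the paper: Iwasawa decomposition $G=P_\alpha K$, Frobenius reciprocity reducing $\Hom_K(V_1,\Ind_{P_\alpha}^G\sigma_\chi)$ to $\Hom_{M_\alpha^0}((V_1)_{N_\alpha^0},\sigma_\chi)$, and then comparison of parameters via \cite[III.10]{MR3600042}, which gives $(\psi_V,\{\alpha\})$ for $V_{N_\alpha^0}$ and $\sigma_\chi|_{M_\alpha^0}$ but $(\psi_V,\varnothing)$ for $V'_{N_\alpha^0}$. The only caveat is that the middle of your first argument invokes Mackey theory and ``$U_k$-coinvariants'' where plain adjunction with $N_{\alpha,k}$-coinvariants suffices, but you correct this at the end by appealing to $\Delta(V_{N^0})=\Delta_M\cap\Delta(V)$ with $M=M_\alpha$, which is exactly the paper's key input.
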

\begin{proof}
By Frobenius reciprocity, the Iwasawa decomposition $G = P_\alpha K$ and using $P_\alpha^0=M_\alpha^0 N_\alpha^0$ we have 
$$\Hom_K(V_1,\Ind_{P_\alpha}^G\sigma_\chi) =\Hom_K(V_1,\Ind_{P_\alpha^0}^K\sigma_\chi)\simeq  \Hom_{M_\alpha^0}((V_1)_{N_\alpha^0},\sigma_\chi)$$
for any irreducible representation $V_1$ of $K$.
The parameter of $V_{N_\alpha^0}$ is  $(\psi_V,\{\alpha\})$, the parameter of $V'_{N_\alpha^0}$ is   $(\psi_V,\varnothing)$  \cite[III.10 Lemma]{MR3600042}.
On the other hand, the parameter of the character $\sigma_\chi|_{M_\alpha^0}$ is $(\psi_V,\{\alpha\})$ \cite[III.10 Remark]{MR3600042}.
\end{proof}
\begin{lemma}\label{vb}
Assume that $\chi\colon \mathcal{H}_Z(V_{U^0})\to C$ satisfies $\chi(\tau_\alpha) = 1$.
Then $$\chi\otimes_{\mathcal{H}_G(V)}\ind_K^GV\not\simeq\chi\otimes_{\mathcal{H}_G(V)}\ind_K^GV'.$$
\end{lemma}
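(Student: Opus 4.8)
The statement to prove is Lemma~\ref{vb}: under the hypothesis $\chi(\tau_\alpha)=1$, the $G$-representations $\chi\otimes_{\mathcal{H}_G(V)}\ind_K^GV$ and $\chi\otimes_{\mathcal{H}_G(V)}\ind_K^GV'$ are not isomorphic. (Here $\mathcal{H}_G(V)\simeq\mathcal{H}_G(V')$ via $i_G$, and $\chi$ acts on both sides through $S^G$ and the identification of Remark~\ref{rmk:weight2,simple}.) The plan is to argue by contradiction using the parabolically induced representation $\Ind_{P_\alpha}^G\sigma_\chi$ as a ``separating object'', exactly as in the $\GL_2$ warm-up of \S\ref{sec:case-of-gl2}. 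The key input is Lemma~\ref{va}, which tells us that $V$ is a $K$-subrepresentation of $\Ind_{P_\alpha}^G\sigma_\chi$ but $V'$ is not.

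\textbf{Key steps.} First I would use Lemma~\ref{va}: since $\chi(\tau_\alpha)=1$, we have $\Hom_K(V,\Ind_{P_\alpha}^G\sigma_\chi)\neq 0$. Next I would identify the $\mathcal{H}_G(V)$-action on this Hom-space. By Frobenius reciprocity and the adjunction between $\ind_K^G$ and restriction, $\Hom_K(V,\Ind_{P_\alpha}^G\sigma_\chi)=\Hom_G(\ind_K^GV,\Ind_{P_\alpha}^G\sigma_\chi)$, and via the canonical isomorphism of \S\ref{GS} (the $S_{M_\alpha}^G$-equivariant adjunction $\Hom_{CG}(\ind_K^GV,\Ind_{P_\alpha}^G\sigma_\chi)\xrightarrow{\sim}\Hom_{CM_\alpha}(\ind_{M_\alpha^0}^{M_\alpha}V_{N_\alpha^0},\sigma_\chi)$) this becomes a module over $\mathcal{H}_{M_\alpha}(V_{N_\alpha^0})$ via $S_{M_\alpha}^G$. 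Since $\sigma_\chi$ is a character of $M_\alpha$ extending $z\mapsto\chi(\tau_{z^{-1}})$ and trivial on $U\cap M_\alpha$, the algebra $\mathcal{H}_{M_\alpha}(V_{N_\alpha^0})$ acts on the (one-dimensional) space $\Hom_{M_\alpha^0}(V_{N_\alpha^0},\sigma_\chi)$ by a character, and tracing through definitions this character, composed with $S_{M_\alpha}^G$ and then with the further Satake transform to $\mathcal{H}_Z$, recovers $\chi\colon\mathcal{H}_Z(V_{U^0})\to C$ (both $S^G$ and $S_{M_\alpha}^G$ being compatible, $S^G=S^{M_\alpha}\circ S_{M_\alpha}^G$). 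Hence $\mathcal{H}_G(V)$ acts on $\Hom_K(V,\Ind_{P_\alpha}^G\sigma_\chi)$ by $\chi$, so there is a non-zero $G$-homomorphism $\chi\otimes_{\mathcal{H}_G(V)}\ind_K^GV\to\Ind_{P_\alpha}^G\sigma_\chi$, i.e.\ $\Ind_{P_\alpha}^G\sigma_\chi$ receives a non-trivial map from the first representation.

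\textbf{Conclusion.} Now suppose for contradiction that $\chi\otimes_{\mathcal{H}_G(V)}\ind_K^GV\simeq\chi\otimes_{\mathcal{H}_G(V)}\ind_K^GV'$ (as $G$-representations, using $i_G$ to match the Hecke actions). Composing the isomorphism with the non-zero map of the previous step, we obtain a non-zero $G$-homomorphism $\chi\otimes_{\mathcal{H}_G(V)}\ind_K^GV'\to\Ind_{P_\alpha}^G\sigma_\chi$, hence a non-zero $G$-homomorphism $\ind_K^GV'\to\Ind_{P_\alpha}^G\sigma_\chi$, which by Frobenius reciprocity gives a non-zero element of $\Hom_K(V',\Ind_{P_\alpha}^G\sigma_\chi)$. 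But Lemma~\ref{va} says this space is zero, a contradiction. Therefore the two representations are not isomorphic, proving Lemma~\ref{vb}.

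\textbf{Main obstacle.} The delicate point is Step~2: verifying that the $\mathcal{H}_G(V)$-action on $\Hom_K(V,\Ind_{P_\alpha}^G\sigma_\chi)$ is \emph{precisely} the character $\chi$ and not merely some character. This requires carefully chasing the $S_{M_\alpha}^G$-equivariance of the adjunction isomorphism and checking that the character by which $\mathcal{H}_{M_\alpha}(V_{N_\alpha^0})$ acts on $\Hom_{M_\alpha^0}(V_{N_\alpha^0},\sigma_\chi)$ transports, under the further Satake transform $S^{M_\alpha}$, to the given $\chi$ on $\mathcal{H}_Z(V_{U^0})$; this hinges on the fact that $\sigma_\chi|_Z$ is the character $z\mapsto\chi(\tau_{z^{-1}})$ by construction and on the transitivity $S^G=S^{M_\alpha}\circ S_{M_\alpha}^G$. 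Everything else is a formal Frobenius-reciprocity argument. (The cases where $\chi(\tau_\alpha)\neq 1$ are not needed here, since the hypothesis of Lemma~\ref{vb} is $\chi(\tau_\alpha)=1$.)
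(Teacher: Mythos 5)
Your proposal is correct and follows the same overall strategy as the paper: use $\Ind_{P_\alpha}^G\sigma_\chi$ as a separating object via Lemma~\ref{va}, show the Hecke action on $\Hom_K(V,\Ind_{P_\alpha}^G\sigma_\chi)$ is $\chi$, and derive a contradiction from Frobenius reciprocity for $V'$. The one place where you diverge --- and which you rightly flag as the main obstacle --- is the verification that the $\mathcal H_G(V)$-action is precisely $\chi$: you route this through the adjunction for $P = P_\alpha$, the character of $\mathcal H_{M_\alpha}(V_{N_\alpha^0})$ on $\Hom_{M_\alpha^0}(V_{N_\alpha^0},\sigma_\chi)$, and the transitivity $S^G = S^{M_\alpha}\circ S_{M_\alpha}^G$, but you leave that computation unexecuted. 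The paper avoids it entirely with a small device: since $\sigma_\chi|_Z$ is by construction the character $z\mapsto\chi(\tau_{z^{-1}})$ of $\chi\otimes_{\mathcal H_Z(V_{U^0})}\ind_{Z^0}^Z V_{U^0}$, there is an $M_\alpha$-equivariant embedding $\sigma_\chi\into\Ind_{B\cap M_\alpha}^{M_\alpha}(\chi\otimes_{\mathcal H_Z(V_{U^0})}\ind_{Z^0}^Z V_{U^0})$; applying the exact functor $\Ind_{P_\alpha}^G$ and transitivity of parabolic induction gives an injection $\Hom_K(V,\Ind_{P_\alpha}^G\sigma_\chi)\into\Hom_K(V,\Ind_B^G(\chi\otimes\ind_{Z^0}^Z V_{U^0}))\simeq\Hom_{Z^0}(V_{U^0},\chi\otimes\ind_{Z^0}^Z V_{U^0})$, which is $S^G$-equivariant by the adjunction of \S\ref{GS} for $P=B$ alone; on the target line $\mathcal H_Z(V_{U^0})$ visibly acts by $\chi$, since it is the rank-one free module specialized at $\chi$. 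If you fill in your Step~2 this way (or carry out the $S^{M_\alpha}$-transport you sketch, which also works), the argument is complete.
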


\begin{proof} 
By definition of $\sigma_\chi$ we have an $M_\alpha$-equivariant map $\sigma_\chi \into \Ind_{B \cap M_\alpha}^{M_\alpha} (\chi \otimes _{\mathcal{H}_Z(V_{U^0})}\ind_{Z^0}^ZV_{U^0})$. By exactness of parabolic induction we get
\begin{align*}
\Hom_K(V,\Ind_{P_\alpha}^G\sigma_\chi)
&\hookrightarrow
\Hom_K(V,\Ind_B^G(\chi\otimes_{\mathcal{H}_Z(V_{U^0})}\ind_{Z^0}^ZV_{U^0}))\\
&\simeq
\Hom_{Z^0}(V_{U^0},\chi\otimes_{\mathcal{H}_Z(V_{U^0})}\ind_{Z^0}^ZV_{U^0}),
\end{align*}
and this map is $\mathcal{H}_G(V)$-linear with respect to $S^G$. 
The latter space is one-dimensional and  the Hecke algebra $\mathcal{H}_Z(V_{U^0})$ acts on this line by the character  $\chi$.
Hence a non-trivial homomorphism $\ind_K^GV\to \Ind_{P_\alpha}^G\sigma_\chi$ (which exists by Lemma~\ref{va}) factors through $\ind_K^GV\onto \chi\otimes_{\mathcal{H}_G(V)}\ind_K^GV$. If $\chi\otimes_{\mathcal{H}_G(V)}\ind_K^GV $ were isomorphic to $\chi\otimes_{\mathcal{H}_G(V)}\ind_K^GV'$,  we would have  a  non-zero homomorphism $\ind_K^GV' \onto \chi\otimes_{\mathcal{H}_G(V)}\ind_K^GV' \to \Ind_{P_\alpha}^G\sigma_\chi$  contradicting $\Hom_K(V',\Ind_{P_\alpha}^G\sigma_\chi) = 0$ (Lemma \ref{va}).
 \end{proof}

\subsection{Proof of Theorem~\ref{thm:weight2,simple} (minuscule case)}\label{sec:simple-proof-of-change-of-wt}
The hypothesis that the center of $\mathbf G$ is a torus is equivalent to $\mathbb Z \Phi$ being a direct summand of $X^*(\mathbf S)$, for example by \cite[(154)]{milne-iAG}.
Hence, for each $\alpha\in\Delta$ we have a fundamental coweight $\mu_\alpha\in X_*(\mathbf{S})$.
Namely we have $\langle \alpha,\mu_\alpha\rangle = 1$ and $\langle \beta,\mu_\alpha\rangle = 0$ for any $\beta\in\Delta\setminus\{\alpha\}$.
In this section we consider $z\in Z$ such that $v_Z(z)=\mu_\alpha$. %

The element $\tau_\alpha - 1\in \mathcal{H}_Z(V_{U^0} )$ is irreducible, since the derived subgroup of $\mathbf G$ is simply connected \cite[Remark~2.5 and Lemma~4.17]{MR3143708} (alternatively, one can argue as in Lemma~\ref{lm:irred}). Put  $f = S^G(T_z^{V,V'}* T_z^{V',V})$ in $ \mathcal{H}_Z(V_{U^0} )$.  Lemma \ref{vb} implies that  $\chi(f) = 0$ for any character $\chi\colon \mathcal{H}_Z(V_{U^0} )\to C$ such that  $\chi(\tau_\alpha) = 1$.  By the Nullstellensatz, we see that $f$ is contained in the radical of the ideal $(\tau_\alpha-1)$,
hence as $\tau_\alpha - 1$ is irreducible and $\mathcal{H}_Z(V_{U^0} )$ is a UFD, we deduce that
$f= f'(1 - \tau_\alpha)$ for some $f'\in \mathcal{H}_Z(V_{U^0} )$.
We will prove that $f' = \tau_{z^2}$.

Consider any $z' \in \supp f'$.
We claim that both $z'$ and $z' a_\alpha$ lie in $Z^+$ and that $v_Z(z' )\in 2v_Z(z) + \mathbb{R}_{\le 0}\Delta^\vee$. 
To see this, pick
$r, s \ge 0$ maximal such that $z' a_\alpha^i \in \supp f'$ for $-r \le i \le s$. Then $z' a_\alpha^{-r}$, $z' a_\alpha^{s+1} \in \supp f$,
so they both lie in $Z^+$. By convexity of the dominant region we deduce that $z'$, $z' a_\alpha \in Z^+$.
Similarly, as recalled in \S\ref{sec:case-of-gl2}, we know that  $v_Z(z' a_\alpha^i)\in 2v_Z(z) + \mathbb{R}_{\le 0}\Delta^\vee$ for $i \in \{-r, s+1\}$, hence by convexity we have $v_Z(z' )\in 2v_Z(z) + \mathbb{R}_{\le 0}\Delta^\vee$.

There exist $n_\beta\in\mathbb{R}_{\ge 0}$ for $\beta\in\Delta$ such that 
$v_Z(z') = 2\mu_\alpha  - \sum_{\beta\in \Delta} n_\beta\beta^\vee$. Recalling $v_Z(a_\alpha)=-\alpha^\vee$, we have 
$v_Z(z' a_\alpha) = 2\mu_\alpha - \alpha^\vee - \sum_{\beta\in \Delta} n_\beta\beta^\vee$.
Let $\gamma\in\Delta$. 
If $\gamma\ne \alpha$, then $\sum_{\beta\in\Delta}n_\beta\langle \gamma,\beta^\vee\rangle = -\langle \gamma,v_Z(z')\rangle \le 0$.
If $\gamma = \alpha$, then $\sum_{\beta\in\Delta}n_\beta\langle \gamma,\beta^\vee\rangle =  2 - \langle \alpha,\alpha^\vee\rangle - \langle \alpha,v_Z(z'a_\alpha)\rangle = - \langle \alpha,v_Z(z'a_\alpha)\rangle\leq 0$.
Hence $\sum_{\beta\in\Delta}n_\beta\langle \gamma,\beta^\vee\rangle \le 0$ for any $\gamma\in\Delta$.
Since $(d_\gamma\langle \gamma,\beta^\vee\rangle)_{\beta,\gamma\in\Delta}$ is positive definite for some $d_\gamma > 0$, we have $n_\beta = 0$ for any $\beta\in\Delta$.
We deduce that $z'\in z^2Z^0$  (as $Z^0$ is the kernel of $v_Z$).
So $f' \in C^\times \tau_{z^2}$.
Since the coefficient of $\tau_{z^2}$ in $f$ is $1$, we get $f = S^G(T_z^{V,V'}* T_z^{V',V}) = \tau_{z^2}  - \tau_{ z^2a_\alpha }$.

By Lemma~\ref{first} we have $S^G(T_z^{V',V}) = \tau_{z}^{V'_{U^0},V_{U^0}}$, hence we deduce that $S^G(T_z^{V,V'}) = \tau_{z}^{V_{U^0},V'_{U^0}}  - \tau_{ z a_\alpha }^{V_{U^0},V'_{U^0}}$.
This completes the proof of Theorem~\ref{thm:weight2,simple} when $v_Z(z)=\mu_\alpha$.

\subsection{Proof of Theorem~\ref{thm:weight2,simple} (general case)}\label{sec:simple-proof-of-change-of-wt2}

We consider now  $z\in Z^+$ such that $\langle \alpha, v_Z(z)\rangle >0$.
Take $z_0\in Z$ such that $v_Z(z_0) = \mu_\alpha$.
Then $z z_0^{-1} \in Z^+$ and 
from \eqref{imS} we deduce the existence of $\theta \in \mathcal{H}_G(V')$ such that $S^G(\theta) = \tau_{zz_0^{-1}}$.
Letting $\varphi = \theta * T_{z_0}^{V',V}$ and $\varphi' = T_{z_0}^{V,V'} * \theta$, we see from \S\ref{sec:simple-proof-of-change-of-wt} that
$S^G(\varphi) = \tau_z$ and $S^G(\varphi') = \tau_z- \tau_{ z a_\alpha }$.

In the special case that $\langle \beta,v_Z(z) \rangle = 0$ for $\beta \in \Delta(V')$, we have 
$\Delta(V') \subset \Delta_z \subset \Delta_{zz_0^{-1}}$, so
Lemma~\ref{first} shows that $\theta = T_{zz_0^{-1}}^{V',V'}$.
From Lemma~\ref{second} we then deduce that $\varphi = T_{z}^{V',V}$ and $\varphi' = T_{z}^{V,V'}$.

\subsection{A corollary}
\label{sec:corollary}

\begin{corollary}\label{cor:explicit-satake-simple}
  Suppose that $V$ is an irreducible  representation  of $K$ and that $z \in Z^+$ satisfies $\langle \alpha, v_Z(z)\rangle \ne 1$ for all $\alpha \in \Delta(V)$.
Then the image of $T_z\in \mathcal{H}_G(V)$ under the Satake transform $S^G$ is given by
  \[ S^G(T_z) = \tau_z \prod_{\alpha\in \Delta(V)\setminus \Delta_z} (1-\tau_\alpha). \]
\end{corollary}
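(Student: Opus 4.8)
The corollary should follow from Theorem~\ref{EIST} (specialized to $V = V'$, where $\Delta'(V) = \Delta(V)$ by the discussion at the start of Section~\ref{sec:simple-proof-change}) once one checks that the hypothesis $\langle\alpha,v_Z(z)\rangle\ne 1$ for $\alpha\in\Delta(V)$ forces the ``singular sum'' $\varphi_z = \sum_{x\in Z_z^+(V,V)}T_x$ to collapse to the single term $T_z$. So the plan is: first unwind Theorem~\ref{EIST} in the case $V=V'$, which gives
\[
S^G\Bigl(\sum_{x\in Z_z^+(V,V)}T_x\Bigr) = \tau_z,\qquad Z_z^+(V,V) = Z^+\cap z\prod_{\alpha\in\Delta(V)}a_\alpha^{\mathbb N},
\]
and also, for each $\beta\in\Delta(V)\setminus\Delta_z$, apply it to $z a_\beta$ (which lies in $Z^+_{\psi_V} = Z_G^+(V,V)$ since $a_\beta\in Z_{\psi_V}$ as $\beta\in\Delta(V) = \Delta'(V)$, and since $\langle\gamma_a,v(za_\beta)\rangle$ stays $\le 0$ — here one uses $\langle\alpha_a,v(z)\rangle\ne 1$, hence $\ge 2$, to keep $za_\beta$ dominant). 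Then an inclusion–exclusion over subsets $X\subset \Delta(V)\setminus\Delta_z$, exactly as in part \textbf{C} of the proof of Proposition~\ref{prop:3.7} and in the proof of Theorem~\ref{thm:satake for Levi}, shows that $\tau_z\prod_{\alpha\in\Delta(V)\setminus\Delta_z}(1-\tau_\alpha)$ is $S^G$ applied to the sum of $T_x$ over $x\in Z^+\cap z\prod_{\beta\in\Delta_z\cap\Delta(V)}a_\beta^{\mathbb N}$.

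The second step is to show that $z$ is the \emph{only} element of $Z^+\cap z\prod_{\beta\in\Delta_z\cap\Delta(V)}a_\beta^{\mathbb N}$. This is the combinatorial heart of the argument, and is essentially the same positivity computation used at the end of part \textbf{C} of the proof of Proposition~\ref{prop:3.7}: an element $x = z\prod_{\beta}a_\beta^{n(\beta)}$ with $n(\beta)\in\mathbb N$ and $\beta$ ranging over $\Delta_z\cap\Delta(V)$ lies in $Z^+$ iff $\langle\gamma_a,\nu(z)\rangle + \sum_\beta n(\beta)\langle\gamma_a,\beta_a^\vee\rangle\le 0$ for all $\gamma\in\Delta$. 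For $\gamma\in\Delta_z$ one has $\langle\gamma_a,\nu(z)\rangle = 0$ (equivalently $\langle\gamma,v_Z(z)\rangle=0$), so the inequality for $\gamma$ ranging over $\Delta_z\cap\Delta(V)$ reads $\sum_\beta n(\beta)\langle\gamma_a,\beta_a^\vee\rangle\le 0$; since $(d_\gamma\langle\gamma_a,\beta_a^\vee\rangle)_{\gamma,\beta\in\Delta_z\cap\Delta(V)}$ is positive definite for suitable $d_\gamma>0$, this forces $n(\beta)=0$ for all such $\beta$. Hence $x=z$, and the sum of $T_x$ is just $T_z$, giving $S^G(T_z) = \tau_z\prod_{\alpha\in\Delta(V)\setminus\Delta_z}(1-\tau_\alpha)$.

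The main point requiring care is the verification that every $z a_\beta$ with $\beta\in\Delta(V)\setminus\Delta_z$ actually lies in $Z_G^+(V,V) = Z^+_{\psi_V}$ — that is, stays dominant — which is where the hypothesis $\langle\alpha,v_Z(z)\rangle\ne 1$ (hence $\ge 2$) for $\alpha\in\Delta(V)$ is genuinely used, via a computation like that in Lemma~\ref{lem:Z_z^2(V,V') singular case}(i): for $\gamma\ne\beta$ one has $\langle\gamma_a,\beta_a^\vee\rangle\le 0$ so $\langle\gamma_a,\nu(za_\beta)\rangle\le\langle\gamma_a,\nu(z)\rangle\le 0$, while for $\gamma=\beta$ one has $\langle\beta_a,\nu(za_\beta)\rangle = \langle\beta_a,\nu(z)\rangle + 2$, which is $\le 0$ precisely because $\langle\beta,v_Z(z)\rangle = -\langle\beta_a,\nu(z)\rangle\ge 2$. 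I also need the inclusion–exclusion identity $\prod_{\alpha\in\Delta(V)\setminus\Delta_z}(1-\tau_\alpha) = \sum_{X\subset\Delta(V)\setminus\Delta_z}(-1)^{\#X}\prod_{\alpha\in X}\tau_\alpha$ together with $\tau_z\prod_{\alpha\in X}\tau_\alpha = \tau_{z a_X}$ (the $\tau_\alpha$ are central in $\mathcal H_Z(V_{U^0})$), and the observation that $Z_{za_X}^+(V,V) = Z^+\cap z a_X\prod_{\beta\in\Delta(V)}a_\beta^{\mathbb N}$, so that telescoping over $X$ leaves exactly the $T_x$ with $x\in Z^+\cap z\prod_{\beta\in\Delta_z\cap\Delta(V)}a_\beta^{\mathbb N}$; these are all routine given the earlier material. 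Since $S^G$ is injective, identifying $S^G(T_z)$ suffices.
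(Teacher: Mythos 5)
Your argument is correct, but it is not the route the paper takes; in fact the paper explicitly flags your route in the remark immediately following the corollary (``It is not hard to deduce the corollary from Theorem~\ref{EIST}, noting that $z \prod_{\beta \in X} a_\beta \in Z^+$ for any subset $X \subset \Delta(V)\setminus \Delta_z$''). Your proof rests on the full inverse Satake theorem (Theorem~\ref{EIST}) with $V=V'$, an inclusion--exclusion over subsets $X\subset\Delta(V)\setminus\Delta_z$, and the positivity/positive-definiteness computations; all of these steps check out, including the key use of $\langle\alpha,v_Z(z)\rangle\ge 2$ to keep $za_X$ dominant and the collapse of $Z^+\cap z\prod_{\beta\in\Delta_z\cap\Delta(V)}a_\beta^{\mathbb N}$ to $\{z\}$. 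The paper instead argues by induction on $\#(\Delta(V)\setminus\Delta_z)$: choosing $\alpha\in\Delta(V)\setminus\Delta_z$ and $z_0$ with $v_Z(z_0)=\mu_\alpha$, it factors $T_z^{V,V}=T_{z_0}^{V,V'}*T_{zz_0^{-2}}^{V',V'}*T_{z_0}^{V',V}$ via Lemma~\ref{second} (with $V'$ of parameter $(\psi_V,\Delta(V)\setminus\{\alpha\})$), and then invokes only Lemma~\ref{first} and Theorem~\ref{thm:weight2,simple}. The point of that choice is that Corollary~\ref{cor:explicit-satake-simple} sits inside Section~\ref{sec:simple-proof-change}, whose purpose is to give proofs avoiding the pro-$p$ Iwahori Hecke algebra; your derivation imports Theorem~\ref{EIST}, whose proof uses exactly that machinery, so it is logically sound but forfeits the ``elementary'' character of the section. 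What your approach buys in exchange is directness: it applies at once to any split $\mathbf G$ (and, with $\Delta'(V)$ in place of $\Delta(V)$, to general $\mathbf G$) without the reduction steps of \S\ref{sec:more-general-case}, whereas the paper's induction is first carried out under the extra hypotheses of Theorem~\ref{thm:weight2,simple} and then transported.
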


\begin{proof}
  We induct on $\#(\Delta(V)\setminus \Delta_z)$. 
  If $\Delta(V)\subset \Delta_z$, then $S^G(T_z) = \tau_z$ by Lemma~\ref{first} and we are done.
  Otherwise we choose $\alpha \in \Delta(V)\setminus \Delta_z$ and take $z_0$ such that $v_Z(z_0) = \mu_\alpha$.
  Then $z z_0^{-2} \in Z^+$, as $\langle \alpha, v_Z(z)\rangle \ge 2$ by assumption. Define $V'$ by the parameter $(\psi_V, \Delta(V)\setminus \{\alpha\})$.
  Applying Lemma~\ref{second} twice (using that $\Delta(V') \subset \Delta_{z_0}$) we get that
  $T_z^{V,V} = T_{z_0}^{V,V'} * T_{z z_0^{-2}}^{V',V'} * T_{z_0}^{V',V}$. As $\Delta(V')\setminus \Delta_{z z_0^{-2}}$ is a proper
  subset of $\Delta(V)\setminus \Delta_z$ we get by induction that $S^G(T_{z z_0^{-2}}^{V',V'}) = \tau_{z z_0^{-2}} \prod_{\Delta(V')\setminus \Delta_z} (1-\tau_\beta)$.
  On the other hand, by Theorem~\ref{thm:weight2,simple} we have $S^G(T_{z_0}^{V',V}) = \tau_{z_0}$ and $S^G(T_{z_0}^{V,V'}) = \tau_{z_0}(1-\tau_\alpha)$.
  By combining these formulas we get the corollary.
\end{proof}

\begin{remark}
  It is not hard to deduce the corollary from Theorem~\ref{EIST}, noting that $z \prod_{\beta \in X} a_\beta \in Z^+$ for any subset
  $X \subset \Delta(V)\setminus \Delta_z$.
\end{remark}

\subsection{The general split case}\label{sec:more-general-case}

We now use two reduction steps to extend the above proof of Theorem~\ref{thm:weight2,simple} to the case
of general split groups $\mathbf G$.

\medskip
\noindent (1)
We remove  first the assumption on the center. Suppose that $\mathbf{G}$ is split with simply-connected derived subgroup.

Let $\mathbf G_1$ be the quotient of $\mathbf G \times \mathbf Z$ by the normal subgroup $\{ (z,z^{-1}) : z
\in \mathbf {Z_G} \}$, where $\mathbf{Z}_{\mathbf{G}}$ is the center of $\mathbf{G}$, as in \cite[5.18]{MR0393266}. Then the natural map $\mathbf G \to \mathbf G_1$
is a closed embedding that induces an isomorphism on derived subgroups. The natural map $\mathbf Z \to
\mathbf G_1$ to the second coordinate induces an isomorphism $\mathbf Z \congto \mathbf{Z_{G_1}}$.
In particular, $\mathbf G_1$ is as in Theorem~\ref{thm:weight2,simple}.
It follows that $\mathbf Z_1 := \mathbf Z \cdot \mathbf {Z_{G_1}} = (\mathbf Z \times \mathbf
Z)/\{ (z,z^{-1}) : z \in \mathbf {Z_G} \}$ is a minimal Levi (i.e.\ maximal $F$-torus) of $\mathbf G_1$.
Let $K_1$ be the hyperspecial parahoric subgroup of $G_1$ fixing the special point $x_0$.
Then we have $K = K_1 \cap G$, see Lemma~\ref{lm:parahoric-and-dual-z-ext}. We have (as in \cite[\S 3.2]{MR3143708}):

\begin{lemma}\label{lm:restrict-irreps}%
The following hold:
\begin{enumerate}
\item %
The restriction  to $K$ of any irreducible representation of $K_1$ is irreducible. Conversely, any irreducible  representation $V$ of $K$  extends to $K_1$. 
\item %
Let $V_1,V'_1$ be irreducible  representations of $K_1$ 
 and $V,V'$ their restrictions to $K$.
Then the restriction map $\varphi_1 \mapsto \varphi_1|_G$ gives an isomorphism between $\{\varphi_1\in \mathcal{H}_{G_1}(V_1,V'_1)\mid \supp \varphi_1\subset K_1ZK_1\}$ and $\mathcal{H}_{G}(V,V')$.
We have $S^G(\varphi_1|_G) = S^{G_1}(\varphi_1)|_Z$ for any $\varphi_1 \in \mathcal{H}_{G_1}(V_1,V'_1)$ with $\supp \varphi_1\subset K_1ZK_1$.
Moreover, we have $T_z^{V_1',V_1}|_G = T_z^{V',V}$ for any $z \in Z_G^+(V,V')$.
\end{enumerate}
\end{lemma}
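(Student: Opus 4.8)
\textbf{Proof plan for Lemma~\ref{lm:restrict-irreps}.}

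The plan is to reduce everything to the elementary representation theory of the finite reductive groups $G_k$ and $G_{1,k}$ together with a direct analysis of the double coset spaces $K\backslash G/K$ and $K_1\backslash G_1/K_1$. For part (i), I would first use Lemma~\ref{lm:parahoric-and-dual-z-ext} to identify $K = K_1\cap G$ and hence to identify $G_k = \overline K$ with a subgroup of $G_{1,k} = \overline{K_1}$ whose quotient is a quotient of $Z_{1,k}/Z_k$; since $\mathbf G_1 = (\mathbf G\times\mathbf Z)/\{(z,z^{-1})\}$ the groups $\mathbf G$ and $\mathbf G_1$ have the same derived group, so $G_k'=G_{1,k}'$ (the subgroups generated by unipotents) and $G_{1,k} = Z_{1,k}G_k$. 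An irreducible representation of $G_{1,k}$ restricted to the subgroup $G_k$ containing $G_{1,k}' = G_k'$ stays irreducible because the parameter $(\psi,\Delta(V))$ is governed by the action on $U_k$-coinvariants and by the stabilizer of a line, both of which only see $G_k'$ and the torus; concretely the line $V^{U_{k,\op}}$ is still one-dimensional and $G_k'$-cyclic. Conversely, given an irreducible $V$ of $K$ with parameter $(\psi_V,\Delta(V))$, one extends $\psi_V$ from $Z_k$ to $Z_{1,k}$ (possible since $Z_k\hookrightarrow Z_{1,k}$ with finite cyclic cokernel and $C$ is algebraically closed, hence the restriction map on characters is surjective) and takes the irreducible representation of $G_{1,k}$ with parameter $(\tilde\psi_V,\Delta(V))$; its restriction to $G_k$ has the right parameter, hence is $V$.

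For part (ii), the key structural input is the Cartan decomposition: $Z^+/Z^0\xrightarrow{\sim} K\backslash G/K$ (used throughout \S\ref{intVV'}) and the analogous statement $Z_1^+/Z_1^0 \xrightarrow{\sim} K_1\backslash G_1/K_1$, together with the fact that $Z = \mathbf Z(F)$ maps isomorphically onto a cofinite-index subgroup of $Z_1 = \mathbf Z_1(F)$ compatibly with the valuation maps (this is where one checks that $K_1 z K_1 \cap G = K z K$ for $z\in Z$, using $K=K_1\cap G$ and the Iwahori/Bruhat-Tits description of these double cosets, as in the proof of Lemma~\ref{lem:claim-for-second}). Granting this, a function $\varphi_1\in\mathcal H_{G_1}(V_1,V_1')$ with $\supp\varphi_1\subset K_1ZK_1$ restricts to a compactly supported function on $G$ with the required $K$-bi-equivariance, i.e.\ an element of $\mathcal H_G(V,V')$ once we identify $\Hom_C(V_1,V_1') = \Hom_C(V,V')$ via part (i). Injectivity and surjectivity of $\varphi_1\mapsto\varphi_1|_G$ follow from matching the bases: $\mathcal H_{G_1}(V_1,V_1')$ restricted to functions supported on $K_1ZK_1$ has basis $\{T_z^{V_1',V_1}\}$ for $z\in Z_{G_1}^+(V_1,V_1')\cap Z$, and one checks $Z_{G_1}^+(V_1,V_1')\cap Z = Z_G^+(V,V')$ directly from \eqref{ZVV'}, since the conjugation condition $z\cdot\psi_V=\psi_{V'}$ and the $\Delta_z$-condition are insensitive to passing between $Z$ and $Z_1$ (the simple roots and the $v_Z$-values are the same). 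The normalization statement $T_z^{V_1',V_1}|_G = T_z^{V',V}$ then follows because both are characterized by support $KzK$ and by the restriction of the value at $z$ to the $U_{\op}^0$-invariants being $\iota^{\op}$, and $\iota^{\op}$ is the same map under the identification $V_1|_K = V$, $V_1'|_K = V'$ (here one uses $V_1^{U_{1,\op}^0} = V^{U_{\op}^0}$, which holds since $U_{1,\op}^0 = U_{\op}^0$ as $\mathbf G$ and $\mathbf G_1$ share unipotent radicals).

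The compatibility with Satake transforms, $S^G(\varphi_1|_G) = S^{G_1}(\varphi_1)|_Z$, is then a formal consequence of the defining formula for $S^G$ in \S\ref{sec:satake-transf}: both sides are sums over $U^0\backslash U$ (note $U_1^0 = U^0$ and $U_1 = U$ since $\mathbf Z$ acts trivially on $\mathbf U$ so the unipotent part of $\mathbf G_1$ equals that of $\mathbf G$) of the appropriate coinvariant-valued terms, and $\varphi_1$ supported on $K_1ZK_1$ means only $z\in Z$ contribute, where $\varphi_1(uz) = \varphi_1|_G(uz)$. The main obstacle I anticipate is not any single deep point but the bookkeeping in part (ii): one must carefully verify the double-coset identity $K_1zK_1\cap G = KzK$ and, relatedly, that $\supp\varphi_1\subset K_1ZK_1$ exactly cuts out the image of the restriction map — i.e.\ that an arbitrary element of $\mathcal H_G(V,V')$ extends to $G_1$ with support in $K_1ZK_1$ and no more. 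This is handled by the Cartan decomposition plus the observation that $Z_1 = Z\cdot Z_{G_1}$ with $Z_{G_1}$ acting as scalars, so an extension is forced and unique up to the (known) freedom over the finite group $Z_1/Z\cdot Z^0$. Once this is in place, everything else is routine matching of bases and unwinding definitions.
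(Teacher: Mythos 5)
The paper itself gives no proof of this lemma (it is stated ``as in \cite[\S 3.2]{MR3143708}''), so I am judging your argument on its own merits; for part (i) your route via the one-dimensional $U_{k,\op}$-invariant line and $G_{1,k}=Z_{1,k}G_k$ is close in spirit to the paper's Lemma~\ref{lm:extend} in the appendix, and your treatment of the double cosets ($K_1zK_1\cap G=KzK$ via the two Cartan decompositions and $Z_1^0\cap Z=Z^0$) and of the Satake compatibility (via $U_1=U$, $U_1^0=U^0$) is sound. However, there is a genuine gap in part (ii), precisely at the step where you assert $Z_{G_1}^+(V_1,V_1')\cap Z = Z_G^+(V,V')$ on the grounds that the condition $z\cdot\psi_{V_1}=\psi_{V_1'}$ is ``insensitive to passing between $Z$ and $Z_1$''. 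It is not: that condition is an identity of characters of $Z_1^0$, which strictly contains $Z^0$, and since $Z_1$ is a torus here it amounts to $\psi_{V_1}=\psi_{V_1'}$ on all of $Z_1^0$ --- strictly stronger than $\psi_V=\psi_{V'}$ on $Z^0$. Concretely, take $\mathbf G=\SL_2$, so $\mathbf G_1\cong \GL_2$; let $V=V'$ be the trivial representation of $K$, $V_1$ the trivial representation of $K_1$, and $V_1'=\chi\circ\det$ for a nontrivial character $\chi$ of $k^\times$ (assume $q>2$). Both $V_1$ and $V_1'$ restrict to $V$, yet $\psi_{V_1}\ne\psi_{V_1'}$ on $Z_1^0$, so $\mathcal H_{G_1}(V_1,V_1')=0$ by Remark~\ref{not0}, while $\mathcal H_G(V,V')$ is the whole spherical Hecke algebra of $\SL_2$. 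The restriction map then starts from the zero space and cannot be surjective; your base-matching argument would wrongly conclude that it is.

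What is true --- and what the paper actually needs --- is the statement under the additional hypothesis that the extensions are compatible, i.e.\ $\psi_{V_1}=\psi_{V_1'}$ as characters of $Z_1^0$. When $\psi_V=\psi_{V'}$ this can always be arranged: the ratio $\psi_{V_1}\psi_{V_1'}^{-1}$ is a character of $Z_{1,k}/Z_k\cong G_{1,k}/G_k$, and twisting $V_1'$ by the corresponding character of $K_1$ trivial on $K$ produces a compatible extension of $V'$. Under that hypothesis your argument does go through: one inclusion $Z_{G_1}^+(V_1,V_1')\cap Z\subseteq Z_G^+(V,V')$ always holds from \eqref{ZVV'} (restricting characters to $Z^0$ and using $\Delta(V_1)=\Delta(V)$, $\Delta(V_1')=\Delta(V')$ and the compatibility of $v_{Z_1}|_Z$ with $v_Z$), and the compatibility hypothesis gives the reverse inclusion; the remaining steps (support analysis, normalization of $T_z$, and $S^G(\varphi_1|_G)=S^{G_1}(\varphi_1)|_Z$) are correct as you describe them. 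You should either add this hypothesis explicitly or build the twist into the choice of $V_1'$ in part (i).
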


Given $\alpha$, $V$, $V'$, $z \in Z^+_G(V,V')$ as in Theorem~\ref{thm:weight2,simple} we choose 
extensions $V_1, V'_1$ of $V, V'$ to $K_1$-representations and let $\varphi_1$, $\varphi'_1$ denote the
Hecke operators provided by Theorem~\ref{thm:weight2,simple}
for $G_1$, $V_1$, $V'_1$, $z$. Then, as the supports of $\tau_z$, $\tau_z - \tau_{z a_\alpha}$ are contained
in $Z(Z_1 \cap K_1)$, we deduce from the lemma that the supports of $\varphi_1$, $\varphi'_1$ are contained in $K_1 Z K_1$.
Hence we can take $\varphi = \varphi_1|_G$, $\varphi' = \varphi'_1|_G$. Similarly, Corollary~\ref{cor:explicit-satake-simple} continues to hold for $\mathbf{G}$.

\medskip
\noindent (2)
To remove the assumption on the derived subgroup, we use a $z$-extension.
(See \cite[\S3]{ct08} for more on $z$-extensions.)
Suppose that $\mathbf G$ is any split reductive group.
Choose a split $z$-extension $r\colon \widetilde{\mathbf{G}}\to \mathbf{G}$, i.e.\ an $F$-split group $\widetilde{\mathbf{G}}$ with simply connected derived subgroup which is a central extension of $\mathbf{G}$ and the kernel of $r$ is an ($F$-split) torus. In particular, part (1) above applies to $\wt{\mathbf G}$.
Set $\widetilde{\mathbf{Z}} = r^{-1}(\mathbf{Z})$; it is a maximal torus of $\widetilde{\mathbf{G}}$.
Let $\widetilde{K}\subset \widetilde{G}$ be the  special (maximal compact open) parahoric subgroup fixing $x_0$;
  the map $\widetilde{K}\to K$ is surjective \cite[Lemma 2.1]{MR3143708}, \cite[\S 3.5]{MR3331726}.

\begin{lemma}

Let $V_1,V_2$ be irreducible representations of $K$ and denote by $\widetilde{V}_1,\widetilde{V}_2$ their inflations to $\widetilde{K}$.
Then there exist algebra homomorphisms $\Theta_G : \mathcal{H}_{\widetilde{G}}(\widetilde{V}_1,\widetilde{V}_2)\to \mathcal{H}_G(V_1,V_2)$ and $\Theta_Z : \mathcal{H}_{\widetilde{Z}}((\widetilde{V}_1)_{U^0},(\widetilde{V}_2)_{U^0})\to \mathcal{H}_Z((V_1)_{U^0},(V_2)_{U^0})$ such that
\begin{enumerate}
\item $S^G \circ \Theta_G = \Theta_Z \circ S^{\wt G}$;
\item for $\widetilde{z}\in \widetilde{Z}^+$, $\Theta_G(T_{\widetilde{z}}^{\widetilde{V}_2,\widetilde{V}_1}) = T_z^{V_2,V_1}$ and $\Theta_Z(\tau_{\widetilde{z}}^{(\widetilde{V}_2)_{U^0},(\widetilde{V}_1)_{U^0}}) = \tau_z^{(V_2)_{U^0},(V_1)_{U^0}}$, where $z = r(\widetilde{z})$.
\end{enumerate}
\end{lemma}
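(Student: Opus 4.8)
The statement is a compatibility lemma comparing the Hecke theory for the $z$-extension $\widetilde{\mathbf G}$ and for $\mathbf G$. The kernel $\mathbf T := \ker(r)$ is an $F$-split central torus, and $\widetilde G \to G$ is surjective with kernel $T = \mathbf T(F)$ (which need not be compact, so some care is needed compared to the situation of \cite[\S3.2]{MR3143708}). The key point is that inflation along $r$ identifies representations of $G$ (resp.\ $K$, $Z$, $Z^0$) with representations of $\widetilde G$ (resp.\ $\widetilde K$, $\widetilde Z$, $\widetilde Z^0$) that are trivial on $T$ (resp.\ on $T \cap \widetilde K$, etc.), and that compact induction and parabolic induction behave well under this identification. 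First I would record the structural facts: $\widetilde Z^0 := \widetilde Z \cap \widetilde K$ surjects onto $Z^0 = Z \cap K$ with kernel $T \cap \widetilde K$, the map $\widetilde Z \to Z$ is surjective with kernel $T$, and the inflations $\widetilde V_i$ of the $V_i$ satisfy $(\widetilde V_i)_{\widetilde U^0} = \inf (V_i)_{U^0}$ as $\widetilde Z^0$-representations (the unipotent radicals match up under $r$ since $r$ is an isomorphism on derived groups, hence on root subgroups).

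Next I would construct $\Theta_G$ and $\Theta_Z$ concretely on the level of functions. Realizing $\mathcal H_{\widetilde G}(\widetilde V_1, \widetilde V_2)$ as compactly supported functions $\varphi\colon \widetilde G \to \Hom_C(\widetilde V_1, \widetilde V_2)$ with the appropriate $\widetilde K$-bi-equivariance, I would like to define $\Theta_G(\varphi)$ by a fibre integral / averaging over $T$: since $\widetilde V_i$ is trivial on $T$, for $g \in G$ the sum $\sum_{t \in T/(T\cap \widetilde K)} \varphi(\widetilde g\, t)$ (for any lift $\widetilde g$ of $g$) is a well-defined element of $\Hom_C(V_1,V_2) = \Hom_C(\widetilde V_1,\widetilde V_2)$, compactly supported in $g$, and $K$-bi-equivariant; this is $\Theta_G(\varphi)$, and it is an algebra homomorphism because push-forward along the group surjection $\widetilde G \to G$ is compatible with convolution (one checks $\Theta_G(\varphi * \varphi') = \Theta_G(\varphi)*\Theta_G(\varphi')$ by reindexing the convolution sums using $\widetilde G / \widetilde K \onto G/K$, whose fibres are $T$-torsors modulo $T \cap \widetilde K$). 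The same recipe with $\widetilde Z$, $\widetilde Z^0$ in place of $\widetilde G$, $\widetilde K$ defines $\Theta_Z$, again an algebra homomorphism. For part~(i), I would plug these definitions into the explicit formula for the Satake transform in \S\ref{sec:satake-transf}: both $S^G\circ\Theta_G$ and $\Theta_Z\circ S^{\wt G}$ applied to $\varphi$ at $z \in Z$ come out as a double sum over $\widetilde U^0\backslash\widetilde U$ and over $T/(T\cap\widetilde K)$, and since $r$ identifies $\widetilde U^0\backslash \widetilde U$ with $U^0\backslash U$ these two sums agree termwise after a routine interchange-of-summation. For part~(ii), the function $T_{\widetilde z}^{\widetilde V_2,\widetilde V_1}$ is supported on $\widetilde K \widetilde z \widetilde K$ and normalized by $\iota^{\op}$; its image under $\Theta_G$ is supported on $K z K$ (image of $\widetilde K \widetilde z \widetilde K$) and one checks its value at $z$ is again normalized by $\iota^{\op}$ (here one uses that $\widetilde K \widetilde z \widetilde K \cap \widetilde z T = \widetilde z(\widetilde K^{\,\widetilde z}\cap \widetilde K)\cdot(\text{stuff})$, so the $T$-average collapses to a single term up to the unit from $T\cap\widetilde K$), hence it equals $T_z^{V_2,V_1}$ by the uniqueness in \S\ref{intVV'}; similarly for $\Theta_Z$ and $\tau_{\widetilde z}$, which is even easier since $\tau_{\widetilde z}$ is supported on $\widetilde z\widetilde Z^0$.

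The main obstacle I anticipate is purely bookkeeping rather than conceptual: making the $T$-averaging well-defined and $\Hom$-valued requires checking that the relevant sums are \emph{finite} (i.e.\ $\supp\varphi$ meets only finitely many cosets $\widetilde g\, t$, which holds because $\supp\varphi$ is compact and $T(T\cap\widetilde K)$ is closed of finite covolume in each fibre), and verifying the convolution-compatibility and the Satake-compatibility demands careful tracking of the indexing sets $\widetilde G/\widetilde K$ vs.\ $G/K$ and $\widetilde U^0\backslash\widetilde U$ vs.\ $U^0\backslash U$ through $r$. One should also confirm that $\Theta_G$ really lands in $\mathcal H_G(V_1,V_2)$ and not merely in functions on $G$ with the wrong equivariance — this is where triviality of $\widetilde V_i$ on $T\cap\widetilde K$ is used to see that the average is independent of the chosen lift $\widetilde g$. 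None of these points is deep; the lemma is exactly the sort of "transfer along a central isogeny with torus kernel" statement whose proof is a diagram chase once the averaging map is set up, so I would present it as such, citing \cite[\S3.2]{MR3143708} and \cite[\S3.5]{MR3331726} for the analogous facts in the compact-kernel case and indicating only the modifications needed when $T$ is noncompact.
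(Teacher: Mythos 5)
Your construction is correct, and it arrives at the same map as the paper by a more explicit route. The paper's proof is categorical and very terse: identifying representations of $G$ with representations of $\wt G$ trivial on $\ker r$, two applications of Frobenius reciprocity give a natural isomorphism $\Hom_G(\ind_K^G V,\sigma)\simeq\Hom_{\wt G}(\ind_{\wt K}^{\wt G}\wt V,\sigma)$, hence by Yoneda a $\wt G$-map $j_V\colon\ind_{\wt K}^{\wt G}\wt V\to\ind_K^GV$, and $\Theta_G(\varphi)$ is defined as the unique operator satisfying $j_{V_2}\circ\varphi=\Theta_G(\varphi)\circ j_{V_1}$; the paper explicitly leaves the verification of (i) and (ii) to the reader. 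Your $\Theta_G$ is this same map made concrete: the adjunction map $j_V$ is precisely summation over the fibres of $\wt G/\wt K\onto G/K$, i.e.\ over $T/(T\cap\wt K)$ where $T$ denotes the $F$-points of $\ker r$ (one checks $r^{-1}(K)=T\wt K$ using surjectivity of $\wt K\to K$), and your well-definedness, finiteness, bi-equivariance and convolution checks unpack what the abstract adjunction gives for free. What your route buys is that the omitted parts (i) and (ii) become direct computations: (i) by interchanging the sums over $U^0\backslash U\cong\wt U^0\backslash\wt U$ and over $T/(T\cap\wt K)$, and (ii) because for central $s\in T$ the element $\wt z s$ is still dominant, so the Cartan decomposition for $\wt G$ gives $\wt z T\cap\wt K\wt z\wt K=\wt z(T\cap\wt K)$ and the average collapses to the single term normalized by $\iota^{\op}$. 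The only points worth tightening in a written-up version are the identifications $r(\wt U^0)=U^0$ and $r(\wt Z^0)=Z^0$ (both follow from $r^{-1}(K)=T\wt K$ together with the description of parahorics as fixers of $x_0$ inside the kernel of the Kottwitz homomorphism), which you correctly flag as bookkeeping.
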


To construct the algebra homomorphism $\Theta_G$,   we identify the category of representations of $G$ with the category of representations of $\wt G$ trivial on the kernel of the surjective homomorphism $r:\wt G \to G$, and we note 
 that Frobenius reciprocity (applied twice) induces a natural isomorphism
$\Hom_G(\ind_K^G V, \sigma) \simeq \Hom_{\wt G}(\ind_{\wt K}^{\wt G} \wt V,   \sigma)$ for representations $\sigma$ of $G$ (for any irreducible $K$-representation $V$ with inflation $\wt V$). 
In particular we get a $\wt G$-linear map $j_{V} : \ind_{\wt K}^{\wt G} \wt V \to \ind_K^G V$ corresponding to the identity map.
By Yoneda's lemma the above adjunction gives for any $\varphi \in \mathcal{H}_{\widetilde{G}}(\widetilde{V}_1,\widetilde{V}_2)$
a unique $\Theta_G(\varphi) \in \mathcal{H}_G(V_1,V_2)$ such that $j_{V_2} \circ \varphi = \Theta_G(\varphi) \circ j_{V_1}$. We  leave the details of the end of the proof of the lemma  to the reader. 

The lemma shows that Theorem~\ref{thm:weight2,simple} holds even for $G$ since it holds for $\wt G$: as $r : \widetilde Z \to Z$ is surjective, 
we can choose $\wt z$ with $r(\wt z) = z$. Suppose $\wt\varphi,\wt\varphi'$ are the Hecke operators provided by Theorem~\ref{thm:weight2,simple}
for $\wt G$, $\wt V$, $\wt V'$, $\wt z$. Then we can take $\varphi = \Theta_G(\wt\varphi)$, $\varphi' = \Theta_G(\wt\varphi')$.
Similarly, Corollary~\ref{cor:explicit-satake-simple} continues to hold for $\mathbf{G}$.

\appendix
\address[N.\ Abe]{Graduate School of Mathematical Sciences, the University of Tokyo, 3-8-1 Komaba, Meguro-ku, Tokyo 153-8914, Japan}
\email{abenori@ms.u-tokyo.ac.jp}
\address[F.\ Herzig]{Department of Mathematics, University of Toronto,
  40 St.\ George Street, Room 6290, Toronto, ON M5S 2E4, Canada}
\email{herzig@math.toronto.edu}
\makeatletter
\tracingmacros=1
\def\@tocwrite#1#2{}%
\section{A simple proof of the change of weight theorem for quasi-split groups}\label{sec:simple-proof-change-1}
\let\@tocwrite=\original@@tocwrite %
\@tocwrite{section}{A simple proof of the change of weight theorem for quasi-split groups (by N.\ Abe and F.\ Herzig)}
\makeatother

\begin{center}
N.\ Abe and F.\ Herzig
\end{center}
\markboth{N.\ ABE AND F.\ HERZIG}{SIMPLE PROOF FOR QUASI-SPLIT GROUPS}
\medskip

\renewcommand{\O}{{\mathcal O}}
\newcommand{\tor}{_{\mathrm{tor}}}
\newcommand{\s}{^\times}
\newcommand{\dual}{^\vee}
\newcommand{\ang}[1]{\langle #1 \rangle}
\newcommand{\kind}{\ind_K^G}
\newcommand{\vp}{\varphi}

The purpose of the appendix is to show that the simple proof of \S\ref{sec:simple-proof-change} extends to quasi-split groups.

Suppose that $\mathbf G$ is a quasi-split connected reductive group over $F$.
As in \S\ref{sec:elem-a-alpha}, recall that if $\mathbf H$ is any connected reductive $F$-group, then $H'$ denotes the subgroup of $H$ generated by the unipotent
radicals of all minimal parabolics.  By Kneser--Tits (see e.g.\ \cite[II.3 Prop.]{MR3600042}) we know that $H' = H^{\der}$ if $\mathbf H^{\der}$ is simply connected with no anisotropic
factors. (Note that the second condition is automatic if $\mathbf H$ is quasi-split.)  Similarly we define $H'$ for $\mathbf H$
connected reductive over $k$ and know that $H' = H^{\der}$ if $\mathbf H^{\der}$ is simply connected.

We also recall that all special parahoric subgroups $K$ in this paper are associated to special
points in the apartment of $S$. We let $\red : K \onto G_k$ denote the natural reduction map whose kernel is the
pro-$p$ radical (i.e.\ largest normal pro-$p$ subgroup) of $K$.

\begin{theorem}\label{thm:change-of-weight}
  There exists a special parahoric subgroup $K$ of $G$ such that the following holds.

  Suppose that $V$, $V'$ are irreducible representations of $K$ and $\alpha \in \Delta$ such that $\psi_V = \psi_{V'}$ and
  $\Delta(V) = \Delta(V') \sqcup \{\alpha\}$, and let $z\in Z^+$ such that $\langle \alpha, v_Z(z)\rangle >0$.  Then
  there exist $G$-equivariant homomorphisms $\varphi:\ind_K^GV\to \ind_K^GV'$ and
  $\varphi':\ind_K^GV'\to \ind_K^GV$ satisfying
  $$S^G(\varphi)=\tau_{z},\quad 
  S^G(\varphi') =\tau_{z} - \tau_{ z a_\alpha }.$$ If moreover $\langle \beta,v_Z(z)
  \rangle = 0$ for $\beta \in \Delta(V')$, then $\varphi = T_z^{V',V}$ and $\varphi' = T_z^{V,V'}$.

  Any choice of $K$ works, provided the adjoint quotient $\mathbf G_{\ad}$ of $\mathbf G$ does not have a simple factor isomorphic to $\Res_{E/F}
  \PU(m+1,m)$ for some $E/F$ finite separable and $m \ge 1$.
\end{theorem}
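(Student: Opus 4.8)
The plan is to adapt the split-case proof of \S\ref{sec:simple-proof-change} to the quasi-split setting, following the two-step reduction at the end of \S\ref{sec:more-general-case} (reduce to the case where $\mathbf{G}^{\der}$ is simply connected via a $z$-extension, then pass to a group with connected center and toral $\mathbf{Z}_{\mathbf{G}}$ via the construction of \cite[5.18]{MR0393266}), and then to run the core argument of \S\ref{sec:simple-proof-of-change-of-wt} in the minuscule case. The three places where splitness was used and which must now be re-examined are: (i) the identification $Z \cap M'_\alpha = \alpha^\vee(F^\times)$ and the resulting equality $\Delta(V) = \Delta'(V)$ (Lemma~\ref{lm:Z-cap-Malpha}); (ii) the extension of the character $z \mapsto \chi(\tau_{z^{-1}})$ of $Z$ to a character $\sigma_\chi$ of $M_\alpha$ trivial on $U \cap M_\alpha$, used in Lemma~\ref{va}; and (iii) the fundamental-coweight argument producing a $z$ with $v_Z(z)$ minuscule, which underlies both the irreducibility of $\tau_\alpha - 1$ and the combinatorial pinning-down of $f' = \tau_{z^2}$ in \S\ref{sec:simple-proof-of-change-of-wt}.

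Here is the order I would carry things out. \textbf{First}, I would set up the two reduction steps, checking that a split $z$-extension and the \cite{MR0393266}-construction exist for quasi-split $\mathbf{G}$ and that the Hecke-algebra transfer maps ($\Theta_G,\Theta_Z$ of the last lemma in \S\ref{sec:more-general-case}, and the restriction isomorphism of Lemma~\ref{lm:restrict-irreps}) go through verbatim once one knows the relevant special parahorics are compatible; this requires the analogue of Lemma~\ref{lm:parahoric-and-dual-z-ext} and of the surjectivity $\widetilde K \onto K$, for which one must be careful that a special point of the apartment of $S$ in $\mathbf{G}$ lifts to a special point for $\widetilde{\mathbf{G}}$ — here is where the choice of $K$ starts to matter. \textbf{Second}, reduced to $\mathbf{G}$ with connected center and simply connected derived subgroup, I would re-prove the analogue of Lemma~\ref{va}: the parameters of $V_{N_\alpha^0}$, $V'_{N_\alpha^0}$ and of $\sigma_\chi|_{M_\alpha^0}$ are computed exactly as before using \cite[III.10]{MR3600042}, but the existence of $\sigma_\chi$ now needs $Z \cap M'_\alpha$ to be contained in the kernel of the relevant character of $Z$, i.e.\ needs $\alpha \in \Delta'_{\psi_V}$ (equivalently $c_\alpha = 1$); when $c_\alpha = 0$ the method gives less, which is exactly the caveat flagged in the introduction, so I would only push for the full statement when $\alpha \in \Delta'(V)$ and handle the general $c_\alpha$ via the inverse-Satake formula of Theorem~\ref{EIST}, or else note that the minuscule-$z$ combinatorics still forces $f$ into the span of $\tau_{z^2}$ and $\tau_{z^2 a_\alpha}$ and pin down the coefficient by a separate argument. \textbf{Third}, I would run the UFD/Nullstellensatz argument of \S\ref{sec:simple-proof-of-change-of-wt} unchanged (it only uses that $\mathbf{G}^{\der}$ is simply connected and that $v_Z(z) = \mu_\alpha$ is a fundamental coweight, both of which survive in the reduced situation), obtain $S^G(T_z^{V,V'} * T_z^{V',V}) = \tau_{z^2} - \tau_{z^2 a_\alpha}$ in the minuscule case, then bootstrap to general $z$ by the argument of \S\ref{sec:simple-proof-of-change-of-wt2} using \eqref{imS} and Lemmas~\ref{first}, \ref{second}.

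\textbf{The main obstacle} is the existence of the special parahoric $K$ with the right properties, which is why the theorem is stated with an exception for factors of $\mathbf{G}_{\ad}$ isomorphic to $\Res_{E/F}\PU(m+1,m)$. Concretely: the reduction steps and the structure-theory inputs (the analogues of Lemma~\ref{lm:Z-cap-Malpha}, of the compatibility $K = K_1 \cap G$, and of $\widetilde K \onto K$) require that $K$ reduces mod its pro-$p$ radical to a group $G_k$ whose derived subgroup behaves well — e.g.\ one wants $Z_k \cap M'_{\alpha,k}$ to control triviality of $\psi_V$ on $Z^0 \cap M'_\alpha$, and one wants the special point to remain special after the $z$-extension and after adjoining the center. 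For most quasi-split groups one can take any special $K$ (the special and hyperspecial points coincide in the relevant apartment, or the residual group is split of the expected type), but for the odd unitary groups $\PU(m+1,m)$ the two conjugacy classes of special parahorics have residual groups of different types, and only one class is compatible with the reductions; verifying case-by-case (via the local indices / Tits' classification of quasi-split forms) which special $K$ works, and that the excluded $\PU(m+1,m)$-factors are the only genuine obstruction, is the technical heart of the appendix. I would organize this as a lemma isolating the needed property of $K$ (``$K$ is hyperspecial, or its residual group has the same Dynkin diagram as the absolute root system localized at $\Delta$'') and then check it holds away from the stated exception by inspecting the quasi-split classification.
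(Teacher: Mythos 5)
Your overall architecture — choose $K$ carefully via Tits' tables, reduce by a $z$-extension and by the construction of \cite[5.18]{MR0393266} to a group with simply connected derived subgroup and good center, then run the Nullstellensatz/UFD argument of \S\ref{sec:simple-proof-of-change-of-wt} for minuscule $v_Z(z)$ and bootstrap — matches the appendix. But there are two genuine gaps. First, you treat the $z$-extension as only needing to make $\mathbf G^{\der}$ simply connected; in the quasi-split case that is not enough. One needs the $z$-extension of Proposition~\ref{prop:coflasque-resolution}, for which $\mathbf G/\mathbf G^{\der}$ is \emph{coflasque}: this is what forces $\Lambda = Z/Z^0 \cong X_*(\mathbf Z)_{I_F}^\sigma$ to be torsion-free, makes the image of $Z \cap M_\alpha'$ in $\Lambda$ a \emph{direct summand} generated by $\tfrac 1{\varepsilon_\alpha}\alpha_0^\vee$ (Lemma~\ref{lm:image-in-Lambda}), and yields the decomposition $\Lambda = \Z\tfrac 1{\varepsilon_\alpha}\lambda_\alpha \oplus \ker\alpha$. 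Without these facts the irreducibility of $1-\tau_\alpha$ in $\HH_Z(\psi_V)$ (Lemma~\ref{lm:irred}) and the existence of the ``minuscule'' element $z$ with $v_Z(z) = \tfrac 1{\varepsilon_\alpha}\lambda_\alpha$ both break down; a fundamental coweight need not lie in $v_Z(Z)$ for a general quasi-split $\mathbf G$ with toral center.

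Second, the claim that the minuscule-case argument of \S\ref{sec:simple-proof-of-change-of-wt} goes through ``unchanged'' fails when the relative root system is non-reduced, i.e.\ $2\alpha \in \Phi$ (precisely the $\Res_{E/F}\SU(m+1,m)$-type factors). There $\alpha_0^\vee = \tfrac 12 \alpha^\vee$ and the positive-definiteness computation no longer pins down $\supp f'$; one needs the extra support bound $\langle\alpha, v_Z(z')\rangle \ge \tfrac 1{\varepsilon_\alpha}$ for $z' \in \supp S^G(T_z^{V,V'})$, which in the appendix is established by a Bruhat--Tits analysis of the jumps of the filtrations $U_{\alpha,u}$, $U_{2\alpha,u}$ (showing $z'^{-1}(U_\alpha\cap K)z' \subsetneq U_\alpha\cap K_+$ when $\langle\alpha,v_Z(z')\rangle<0$). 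This is a new ingredient your plan does not supply. Finally, your proposed fallback to Theorem~\ref{EIST} when $c_\alpha = 0$ is both self-defeating (the whole point is a proof independent of the pro-$p$ Iwahori Hecke algebra) and unnecessary: by Corollary~\ref{cor:choosing-K} the good choice of $K$ gives $\red(Z^0\cap M_\alpha') = Z_k\cap M_{\alpha,k}'$, so $\alpha\in\Delta(V)$ automatically forces $\psi_V(Z^0\cap M_\alpha')=1$, i.e.\ $c_\alpha=1$; this is exactly why the theorem's conclusion carries no $c_\alpha$ and why the character $\sigma_\chi$ of $M_\alpha$ always exists. You gesture at this in your last paragraph but do not use it to close the second step.
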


\begin{remark}
  This is enough to establish Theorems 1--3 of \cite{MR3600042} for quasi-split $\mathbf G$, avoiding 
  \cite[\S IV]{MR3600042}, since the proofs given there only require one choice of $K$.
\end{remark}

\begin{remark}
  There exist quasi-split groups $\mathbf G$ and special parahoric subgroups $K$ for which the
  conclusion %
  of Theorem~\ref{thm:change-of-weight} fails. We claim that it suffices to show that $\psi_V(Z^0
  \cap M_\alpha') \ne 1$ for some $\mathbf G$, $K$, $V$, $\alpha$ as in
  Theorem~\ref{thm:change-of-weight}. Under this condition, Theorem~\ref{EIST} tells us that the
  image $S^G(\mathcal{H}_G(V',V))$ has $C$-basis $\tau_z$, where $z$ runs through a system of
  representatives of $Z_G^+(V',V)/Z^0$ in $Z_G^+(V',V)$. If Theorem~\ref{thm:change-of-weight} were
  true, then for $z \in Z_G^+(V',V)$ the element $\tau_z-\tau_{ z a_\alpha }$ would lie in $S^G(\mathcal{H}_G(V',V))$, so
  $za_\alpha \in Z_G^+(V',V)$. However, for large $n$ we have $za_\alpha^n \not\in Z^+$.

  For example, if $\mathbf G = \SU(2,1)$ defined by a ramified separable quadratic
  extension of $F$, then we can choose $K$ such that $\mathbf G_k \cong \PGL_2$ and if $\# k$ is odd, then $\red(Z^0 \cap
  M_\alpha') = Z_k$ strictly contains $Z_k \cap M_{\alpha,k}'$ (where $\Delta = \{\alpha\}$). Or, suppose
  that $\mathbf G = \SU(2,1)$ defined by the unramified separable quadratic extension. Then for any non-hyperspecial $K$ we have $\mathbf G_k
  \cong \U(1,1)$, and then $\red(Z^0 \cap M_\alpha') = Z_k$ strictly contains $Z_k \cap M_{\alpha,k}'$ (where
  $\Delta = \{\alpha\}$). In either case we can therefore choose $V$ such that $\psi_V(Z^0 \cap M_\alpha') \ne 1$.
\end{remark}

\subsection{On special parahoric subgroups}
\label{sec:spec-parah-subgr}

\begin{proposition}\label{prop:choosing-K}
  There exists a special parahoric subgroup $K$ of $G$ such that
  for any $\alpha \in \Delta$ the image of $M_\alpha' \cap K$ in $G_k$ is equal to $M_{\alpha,k}'$.  Any choice of
  $K$ works, provided the adjoint group $\mathbf G_{\ad}$ does not have a simple factor isomorphic to $\Res_{E/F} \PU(m+1,m)$ for
  some $E/F$ finite separable and $m \ge 1$.
\end{proposition}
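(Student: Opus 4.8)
The plan is to reduce the global statement about parahorics to a root-by-root statement and then to a rank-one computation. First I would observe that for a fixed special point $x_0$, the subgroup $M_\alpha' \cap K$ is the special parahoric of $M_\alpha'$ attached to the image of $x_0$ in the building of $M_\alpha'$, and the image of $M_\alpha' \cap K$ in $G_k$ lies in $M_{\alpha,k}'$ because reduction is compatible with the root subgroups $U_{\pm\alpha}$; the content is the reverse inclusion, i.e.\ surjectivity of $M_\alpha' \cap K \to M_{\alpha,k}'$. Since $M_{\alpha,k}'$ is generated by the $k$-points of the root subgroups $U_{\pm\alpha,k}$ (for the relative root $\alpha$ of $G_k$, which need not be a single absolute root), and since the reduction map $K \to G_k$ carries $U_{\pm\alpha}^0 := U_{\pm\alpha} \cap K$ onto $U_{\pm\alpha,k}$ by Bruhat--Tits theory at a special point, surjectivity onto $M_{\alpha,k}'$ is automatic whenever $U_{\pm\alpha}^0 \subset M_\alpha' \cap K$, which holds by definition. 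So actually the only subtlety is that $M_{\alpha,k}'$ is the derived group (equivalently the group generated by unipotents) of a rank-one reductive $k$-group, and one must check that the full $k$-points of that derived group are hit, not merely the subgroup generated by the highest root subgroups; this is where the type of the local Dynkin diagram enters.

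Concretely, the plan is to decompose $\mathbf G$ into simple factors of $\mathbf G_{\ad}$ (since everything is insensitive to central isogeny, as in \cite[5.18]{MR0393266} and the $z$-extension arguments of \S\ref{sec:more-general-case}) and to go through Tits's classification of local Dynkin diagrams of quasi-split groups. For each $\alpha \in \Delta$ one has $\mathbf M_\alpha^{\der}$ a simply connected quasi-split group of relative rank one, hence (by Kneser--Tits, \cite[II.3 Prop.]{MR3600042}) $M_\alpha' = M_\alpha^{\der} = \mathbf M_\alpha^{\der}(F)$; then $M_\alpha' \cap K$ is a special parahoric of this rank-one group and its reduction is the group of $k$-points of a connected reductive $k$-group of semisimple rank one. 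One must identify when this $k$-group has semisimple rank one (so that its derived group really is generated by the two opposite root subgroups that are visibly in the image) versus when the special point is such that the corresponding reductive quotient over $k$ is larger. The delicate case is $\Res_{E/F}\PU(m+1,m)$, whose simply connected cover is $\Res_{E/F}\SU(m+1,m)$: at a non-special vertex, or at the "wrong" special vertex, the reduction can be $\U(1,1)$ or $\mathrm{PGL}_2$ rather than a form of $\mathrm{SL}_2$, and then $\red(Z^0 \cap M_\alpha')$ strictly contains $Z_k \cap M_{\alpha,k}'$, exactly the obstruction flagged in the remark following Theorem~\ref{thm:change-of-weight}. So I would prove: for every other quasi-split simple adjoint type, \emph{every} special vertex gives the desired equality, by inspecting the residue reductive quotient over $k$ using \cite[4.6.12, 4.6.26]{bruhat-tits-II} (Tits's tables); while for $\Res_{E/F}\PU(m+1,m)$ one can still \emph{choose} a special (hyperspecial in the unramified case, or the good special vertex in the ramified case) vertex for which $\mathbf G_k$ has the right rank, and verify the equality there.

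The main obstacle I anticipate is the bookkeeping in the ramified unitary and ramified orthogonal cases: there the local index has several special vertices of genuinely different reduction type, and one must (i) identify all of them, (ii) compute the semisimple rank of the reductive quotient over $k$ attached to each $M_\alpha' \cap K$, and (iii) in the excluded $\PU$ case isolate precisely the one special vertex that works. I would organize this by first handling the split and unramified quasi-split cases uniformly (where hyperspecial points exist and $\mathbf G_k$ has the same absolute root datum as $\mathbf G$, so the equality is immediate from $\red(U_{\pm\alpha}^0) = U_{\pm\alpha,k}$ together with Kneser--Tits over $k$), then reducing the ramified case to absolutely simple $\mathbf G$ by restriction of scalars and to adjoint type by isogeny, and finally running through the short list of ramified quasi-split absolutely simple adjoint types ($B$--$C$ outer forms, ramified $A_n$ unitary groups, $D_n$, $D_4$, $E_6$) using Tits's tables to read off the reduction at each special vertex. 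For the convenience of the reader I would record the rank-one local computation once as a lemma: if $\mathbf H$ is a simply connected quasi-split group of relative rank one over $F$ and $K_H$ a special parahoric, then $\red(K_H) = \mathbf H_k(k)$ unless $\mathbf H \cong \Res_{E/F}\SU(m+1,m)$ and $K_H$ is not the "good" special parahoric, in which case the rank can drop. Granting that lemma, Proposition~\ref{prop:choosing-K} follows by applying it to each $\mathbf M_\alpha^{\der}$ and observing that the excluded configurations are exactly the stated ones.
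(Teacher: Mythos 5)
Your first paragraph has the logic of the statement backwards, and the step you declare automatic is precisely the one that fails in the excluded cases. You assert that $\red(M_\alpha'\cap K)\subseteq M_{\alpha,k}'$ ``because reduction is compatible with the root subgroups'' and that the content is surjectivity. But $M_\alpha'\cap K$ is not generated by $U_{\pm\alpha}\cap K$: an element of $M_\alpha'\cap K$ is a product of elements of $U_{\pm\alpha}$ that individually need not lie in $K$, so compatibility of reduction with the root groups gives nothing. In fact the inclusion $\red(M_\alpha'\cap K)\supseteq M_{\alpha,k}'$ (your ``surjectivity'') is the automatic one: at a special point the images of $U_{\pm\alpha}\cap K$ are the unipotent radicals of the two opposite minimal parabolics of $M_{\alpha,k}$, and these generate $M_{\alpha,k}'$. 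The entire content is the reverse containment. After reducing to the case where $\mathbf M_\alpha^{\der}$ is simply connected, $M_\alpha'\cap K=M_\alpha^{\der}\cap K$ is the full special parahoric of a rank-one group, its reduction is the \emph{full} group of $k$-points of the residual reductive quotient, and the question is whether that group is generated by its unipotents --- equivalently, whether the residual quotient is semisimple with simply connected derived group. For ramified $\SU(2,1)$ at the wrong special vertex the quotient is $\PGL_2$ and $\red(M_\alpha'\cap K)=\PGL_2(k)\supsetneq \PGL_2(k)'=M_{\alpha,k}'$; for unramified $\SU(2,1)$ at a non-hyperspecial vertex the quotient is $\U(1,1)$, which is not semisimple. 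These are exactly the counterexamples in the remark following Theorem~\ref{thm:change-of-weight}. Your second paragraph does eventually name the correct failure mode ($\red(Z^0\cap M_\alpha')$ \emph{strictly contains} $Z_k\cap M_{\alpha,k}'$), but a proof written along the lines of your first paragraph would assert the nontrivial inclusion as obvious and then try to prove the trivial one.

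Once reoriented, your plan (pass to $\mathbf M_\alpha^{\der}$, read off the residual quotient at each special vertex from Tits's tables, isolate the bad vertices of $\Res_{E/F}\SU(2,1)$) is essentially the route taken in Steps 1--2 of the paper's proof, with the semisimplicity and simple-connectedness of the residual quotient checked via the orbits $O(v)$ and Tits's ``good special vertex'' condition. Two further points must still be supplied. First, the proposition requires a \emph{single} special point of $G$ that is good for every $\alpha\in\Delta$ simultaneously, whereas your rank-one lemma constrains the induced special point of each $M_\alpha^{\der}$ separately; this is rescued by observing that the special point of $M_\alpha^{\der}$ depends only on the $\alpha$-wall through $x_0$, and that at most one $\alpha$ per component of $\Delta$ is exceptional (the multipliable simple root in a component of type $\mathrm{BC}_r$), so the wall constraints can be imposed independently. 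Second, ``insensitive to central isogeny'' is too glib for the reduction to $\mathbf G^{\der}$ simply connected: the unipotent-generated subgroup $H'$ is exactly the invariant that is sensitive to isogeny, and one needs the $z$-extension argument establishing $\pi(\wt G'\cap\wt K)=G'\cap K$ together with the compatibility of pro-$p$ radicals under $\pi$.
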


\begin{proof}
  \emph{Step 1: We show that for any quasi-split $\mathbf G$ such that $\mathbf G^{\der}$ simply connected we can choose a special parahoric subgroup $K$ such that $\red(G' \cap K) = G_k'$.}

  Since $\mathbf G$, and hence $\mathbf M_\alpha$, have simply-connected derived subgroups and $\mathbf G$ is quasi-split,
  we know that $G' = G^{\der}$ and $M_\alpha' = M_\alpha^{\der}$.
  Note that the pro-$p$ radical of $G' \cap K = G^{\der} \cap K$ is normal in $K$ and hence contained in the pro-$p$ radical
  of $K$. Hence we obtain a commutative diagram with injective horizontal arrows as follows:
  \begin{equation*}
    \xymatrix{G^{\der} \cap K \ar@{^{(}->}[r]\ar@{->>}[d] & K \ar@{->>}[d] \\
      (G^{\der})_k\ar@{^{(}->}[r] & G_k}
  \end{equation*}
  Note that the bottom map induces an isomorphism $(G^{\der})_k' \congto G_k'$ (since $U$ and $U_{\op}$ are contained in $G^{\der}$).
  It thus suffices to show that the inclusion $(G^{\der})_k' \subset (G^{\der})_k$ is an equality, and hence it's enough
  to show that $(\mathbf G^{\der})_k$ is semisimple and simply connected (for a suitable choice of $K$). 

  Note in the following that our choice of special $K$ is given by a subset $X \subset \Delta_{\loc}$ of the relative local
  Dynkin diagram of $\mathbf G$ \cite[1.11]{MR546588}, or equivalently of $\mathbf G^{\der}$, consisting of one special vertex in each
  component of $\Delta_{\loc}$.  (We write $\Delta_{\loc}$, $\Delta_{1,\loc}$ instead of $\Delta$, $\Delta_1$ in 
  \cite{MR546588} in order to avoid confusion.)
 
  We first determine for which $K$ we have that $(\mathbf G^{\der})_k$ is semisimple. The absolute rank of $(\mathbf G^{\der})_k$ is the
  relative rank of $\mathbf G^{\der}$ over the maximal unramified extension, i.e., it's $|\Delta_{1,\loc}|$ minus the number of
  components of $\Delta_{1,\loc}$.  On the other hand, the absolute semisimple rank of $(\mathbf G^{\der})_k$ equals the number of
  absolute simple roots of $(\mathbf G^{\der})_k$, i.e., the cardinality of $\Delta_{1,\loc} - \cup_{v \in X} O(v)$ in the notation
  of Tits, by \cite[3.5.2]{MR546588}.  It thus suffices to show that for any $v \in X$, $O(v)$ contains precisely one point
  of each component of $\Delta_{1,\loc}$ (it always contains at least one).

  Looking at the tables in \cite{MR546588} and keeping in mind the reduction steps to the absolutely almost simple case in
  \cite[1.12]{MR546588}, we see that any choice of $X$ works, as long as it does not contain any non-hyperspecial vertices in
  type ${}^2 A_{2m}'$ (in which case we can take the hyperspecial ones). In other words, we can always choose a special
  parahoric $K$ such that $(\mathbf G^{\der})_k$ is semisimple, and any $K$ works in case the adjoint group $\mathbf G_{\ad}$ does
  not have a simple factor isomorphic to $\Res_{E/F} \mathbf H$, where $\mathbf H \cong \PU(m+1,m)$ is unramified and $E/F$ is finite separable.

  Next we recall from \cite[\S3.5]{MR546588} that, since $\mathbf G^{\der}$ is semisimple and simply connected, the residual group
  $(\mathbf G^{\der})_k$ has simply connected derived subgroup, provided we let $K$ correspond to a subset $X$ satisfying the
  condition in the last sentence of \cite[\S3.5]{MR546588}, i.e.\ $\cup_{v \in X} O(v)$ contains a ``good special vertex''
  out of each connected component of $\Delta_{1,\loc}$.  Note that by Tits' tables this is always possible (in fact even if
  $\mathbf G$ isn't quasi-split).  Now note from Tits' tables that when $\mathbf G$ is quasi-split, his condition on $X$ is always
  satisfied, except when $\mathbf G_{\ad}$ has a factor of type ${}^2 A^{(1)}_{2m,m}$ and the special vertex at the long end is
  chosen. (In other words, $\mathbf G_{\ad}$ has a simple factor isomorphic to $\Res_{E/F} \mathbf H$, where $\mathbf H \cong \PU(m+1,m)$ is
  ramified and $E/F$ is finite separable.)  In this case we choose the special vertex at the other end.

  By combining the above, we see that we can always choose a special parahoric $K$ such that $(\mathbf G^{\der})_k$ is semisimple
  simply connected (and hence $\red(G' \cap K) = G_k'$), and any $K$ works in case the adjoint group $\mathbf G_{\ad}$ does
  not have a simple factor isomorphic to $\Res_{E/F} \PU(m+1,m)$ and $E/F$ is finite separable (or equivalently when the root
  system $\Phi$ is reduced).

  \emph{Step 2: We prove the proposition in the case where $\mathbf G^{\der}$ simply connected.}

  From Step 1 we know that $\red(M_\alpha' \cap K) = M_\alpha'$ for $\alpha \in \Delta$, provided $\mathbf M_{\alpha,\ad}$ isn't
  isomorphic to $\Res_{E/F} \PU(2,1)$ for some $E/F$. By considering indices of quasi-split groups, for example in \cite{bib:tits-ss-groups}, it follows that
  there is at most one exceptional $\alpha$ in each component of $\Delta$, namely the exceptional $\alpha$ are precisely the
  multipliable simple roots in components of $\Delta$ of type $\mathrm{BC}_r$.

  Suppose first that $\mathbf G^{\der}$ is almost simple, and suppose that there is an exceptional $\alpha \in \Delta$, i.e.\
  $\mathbf M_{\alpha,\ad} \cong \Res_{E/F} \PU(2,1)$ for some $E/F$. Then the choice of a special point for $\mathbf M_\alpha$ coming from Step 1
  corresponds to a choice of $\alpha$-wall $H_\alpha$ in the reduced building of $G$. (By $\alpha$-wall we just mean
  an affine hyperplane parallel to $\ker(\alpha)$.)
  Now choose arbitrary $\beta$-walls $H_\beta$ for $\beta \in \Delta-\{\alpha\}$. Then the special parahoric subgroup defined by
  the special point $\cap_{\beta \in \Delta} H_\beta$ works for this proposition.

  In general the reduced apartment of $G$ (for $S$) is a product of reduced apartments for all the almost simple factors of $G^{\der}$,
  and we obtain a desired special point by taking a product of special points that work for the almost simple factors (previous
  paragraph).

  \emph{Step 3: We deduce the proposition in general.}

  Suppose that $\mathbf G$ is any quasi-split group. Pick a $z$-extension $\pi : \mathbf{\wt G} \to \mathbf G$ of $\mathbf G$. Then $\mathbf{\wt G}$ and $\mathbf G$ have the same
  reduced building, and by Step 1 above we can choose a special point $x$ corresponding to a special parahoric $\wt K$ of $\wt G$
  such that $\red(\wt G' \cap \wt K) = {\wt G}_k'$. We will show that $\red(G' \cap K) = G_k'$. The argument showing that
  $\red(\wt M_\alpha' \cap \wt K) = {\wt M}_{\alpha,k}'$ implies $\red(M_\alpha' \cap K) = M_{\alpha,k}'$ is completely analogous.

  We have $\pi(\wt G') = G'$. If $K$ denotes the special parahoric of $G$ corresponding to $x$, then we also have $\pi(\wt K)
  = K$ (see part (d) of the proof of \cite[Proposition 3]{bib:haines-rap}). We claim that $\pi(\wt G' \cap \wt K) = G' \cap
  K$. Suppose that $g \in G' \cap K$ and pick $\wt g \in \wt G'$ such that $\pi(\wt g) = g$. Then $\wt g$ fixes the special
  point $x$ and it is in the kernel of the Kottwitz homomorphism (since $\wt G'$ is contained in that kernel). Hence $\wt g
  \in \wt K$, proving the claim. Similarly we see that $\pi(\wt U \cap \wt K) = U \cap K$ and $\pi({\wt U}_{\op} \cap \wt K) =
  U_{\op} \cap K$.

  Now note that the image under $\pi$ of the pro-$p$ radical of $\wt K$ is contained in the pro-$p$ radical of $K$.
  Hence we get a commutative diagram
  \begin{equation*}
    \xymatrix{\wt K \ar@{->>}[r]^{\pi}\ar@{->>}[d]_{\red} & K \ar@{->>}[d]^{\red} \\
      {\wt G}_k\ar@{->>}[r]^{\o\pi} & G_k}
  \end{equation*}
  and by the previous paragraph we see that $\o\pi({\wt G}_k') = G_k'$. It follows that
  $\red(G'\cap K) = \red(\pi(\wt G' \cap \wt K)) = \o\pi(\red(\wt G' \cap \wt K)) = \o\pi({\wt G}_k') = G_k'$.
\end{proof}

\begin{remark}\label{rk:morphism}
  Surely the map $(G^{\der})_k \to G_k$ in Step 1 of the proof arises from a closed immersion $(\mathbf G^{\der})_k \to \mathbf
  G_k$ of algebraic groups, but we do not know a reference. %
\end{remark}

\begin{corollary}\label{cor:choosing-K}
  For any $K$ for which Proposition~\ref{prop:choosing-K} holds, we have that $\red(Z^0 \cap M_\alpha') = Z_k \cap 
  M_{\alpha,k}'$ for any $\alpha \in \Delta$. 
\end{corollary}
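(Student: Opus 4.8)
The plan is to deduce Corollary~\ref{cor:choosing-K} from Proposition~\ref{prop:choosing-K} by intersecting the relevant groups with the torus and then tracking this intersection through the reduction map. Fix $\alpha \in \Delta$. We have the reduction map $\red : K \onto G_k$, and Proposition~\ref{prop:choosing-K} tells us that $\red(M_\alpha' \cap K) = M_{\alpha,k}'$. The first step is to observe that $\red(Z^0 \cap M_\alpha') \subset Z_k \cap M_{\alpha,k}'$: indeed, if $x \in Z^0 \cap M_\alpha' = (Z \cap K) \cap M_\alpha'$, then $\red(x) \in \red(Z \cap K) \cap \red(M_\alpha' \cap K) = Z_k \cap M_{\alpha,k}'$, using that $\red(Z^0) = \red(Z \cap K) = Z_k$ by definition of $Z_k$ (the map $K \onto G_k$ restricts to $Z^0 \onto Z_k$ with pro-$p$ kernel $Z(1)$).

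The harder inclusion is $Z_k \cap M_{\alpha,k}' \subset \red(Z^0 \cap M_\alpha')$, and this is where the main work lies. The idea is: given $\bar t \in Z_k \cap M_{\alpha,k}'$, lift it to $t \in Z^0$ (possible since $\red : Z^0 \onto Z_k$ is surjective) and separately lift it to some $m \in M_\alpha' \cap K$ (possible by Proposition~\ref{prop:choosing-K}). Then $\red(t) = \red(m) = \bar t$, so $t m^{-1}$ lies in $\ker(\red|_K) = K(1)$, the pro-$p$ radical of $K$. Thus $t = (t m^{-1}) m$ with $t m^{-1} \in K(1)$ and $m \in M_\alpha' \cap K$. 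To conclude we need $t$ itself — which already lies in $Z^0$ — to lie in $M_\alpha'$, or rather we need to produce \emph{some} element of $Z^0 \cap M_\alpha'$ mapping to $\bar t$. The natural approach is to use that $Z^0 \cap M_\alpha'$ is the unique parahoric (or a natural compact) subgroup inside $Z \cap M_\alpha'$ together with the Iwahori-type factorization; concretely, one writes $m \in M_\alpha' \cap K$ using the Bruhat-Tits structure of $M_\alpha$ (a semisimple, or reductive, rank-one group) — its special parahoric $M_\alpha' \cap K$ has a decomposition involving $U_{\alpha}^0$, $U_{-\alpha}^0$, and $Z^0 \cap M_\alpha'$ — and projects $m$ to its ``torus part'', which lies in $Z^0 \cap M_\alpha'$ and has the same image $\bar t$ in $G_k$ because the unipotent parts reduce trivially into $Z_k$. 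Alternatively, and perhaps more cleanly, one can pass to the derived/simply-connected cover of $\mathbf M_\alpha$ as in Lemma~\ref{lm:Z-cap-Malpha}, reducing to the case $\mathbf M_\alpha \cong \SL_2$ or $\PGL_2$ where $Z^0 \cap M_\alpha' = \alpha^\vee(\mathcal O^\times)$ and the statement is checked by hand: $Z_k \cap M_{\alpha,k}' = \alpha^\vee(k^\times)$ by Lemma~\ref{lm:Z-cap-Malpha}, and $\red(\alpha^\vee(\mathcal O^\times)) = \alpha^\vee(k^\times)$ since $\mathcal O^\times \onto k^\times$.

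I expect the only genuine subtlety to be the compatibility of the various reduction maps — namely that the reduction $K \onto G_k$ restricts on $M_\alpha' \cap K$ to the reduction map of $M_\alpha$ onto $M_{\alpha,k}'$ (compare Remark~\ref{rk:morphism}, which notes the lack of a clean reference for the morphism $(\mathbf G^{\der})_k \to \mathbf G_k$), and likewise on $Z^0$ it restricts to $Z^0 \onto Z_k$. Once these compatibilities are in place, the corollary follows formally: applying $\red$ to the identity $Z^0 \cap M_\alpha' = Z^0 \cap (M_\alpha' \cap K)$ and using that $\red$ sends $Z^0$ onto $Z_k$ and $M_\alpha' \cap K$ onto $M_{\alpha,k}'$, together with the observation that an element of $Z_k$ mapped from $M_{\alpha,k}'$ can always be pulled back inside $Z^0 \cap M_\alpha'$ by the rank-one computation, yields $\red(Z^0 \cap M_\alpha') = Z_k \cap M_{\alpha,k}'$. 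The whole argument is short; the bulk of the real content has already been absorbed into Proposition~\ref{prop:choosing-K} and Lemma~\ref{lm:Z-cap-Malpha}.
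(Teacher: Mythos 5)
Your overall strategy (the easy inclusion, then lifting $\bar t\in Z_k\cap M'_{\alpha,k}$ to some $m\in M'_\alpha\cap K$ via Proposition~\ref{prop:choosing-K} and extracting a torus part) is the right one and is essentially what the paper does, but the mechanism you give for the extraction step does not work as stated. You invoke a decomposition of the \emph{whole} special parahoric $M'_\alpha\cap K$ as a product of $U_\alpha^0$, $U_{-\alpha}^0$ and $Z^0\cap M'_\alpha$; no such decomposition exists (already for $\SL_2(\mathcal O)$ the product $U^0\,T^0\,U^0_{\op}$ is only the preimage of a Borel under reduction and misses the Weyl element), so "projecting $m$ to its torus part" is not defined for an arbitrary $m\in M'_\alpha\cap K$. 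The correct fix -- and this is the paper's proof -- is to use that $\red(m)\in Z_k$ forces $m\in \red^{-1}(Z_k)\cap M_\alpha\cap K=Z^0\,(M_\alpha\cap K(1))$, and then to apply the Iwahori decomposition only to the pro-$p$ group $M_\alpha\cap K(1)=(Z\cap K(1))(U_\alpha\cap K(1))(U_{-\alpha}\cap K(1))$: writing $m=z\,u_+u_-$ with $z\in Z^0$ and $u_\pm\in U_{\pm\alpha}\cap K(1)\subset M'_\alpha$, one gets $z\in Z^0\cap M'_\alpha$ with $\red(z)=\red(m)=\bar t$.

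Your proposed "cleaner alternative" is genuinely wrong in this setting. Lemma~\ref{lm:Z-cap-Malpha} is proved under the hypothesis that $\mathbf G$ is $F$-split, whereas the corollary lives in the appendix where $\mathbf G$ is only quasi-split and $\mathbf M_\alpha^{\der}$ can be, e.g., $\SU(2,1)$. More decisively, an unconditional rank-one computation would prove $\red(Z^0\cap M'_\alpha)=Z_k\cap M'_{\alpha,k}$ for \emph{every} special parahoric $K$, and the Remark following Theorem~\ref{thm:change-of-weight} exhibits quasi-split groups and choices of $K$ for which $\red(Z^0\cap M'_\alpha)$ strictly contains $Z_k\cap M'_{\alpha,k}$. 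The hypothesis that Proposition~\ref{prop:choosing-K} holds for $K$ is essential and must enter the argument, as it does in your main route via the surjectivity $\red(M'_\alpha\cap K)=M'_{\alpha,k}$.
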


\begin{proof}
  Choose $K$ as in Proposition~\ref{prop:choosing-K}. Let $K(1) := \ker(K \to G_k)$. Then $Z^0 K(1) = \red^{-1}(Z_k)$ and we
  deduce by the proposition that $Z_k \cap M_{\alpha,k}' = \red(Z^0K(1) \cap M_{\alpha}') = \red(Z^0 \cap M_\alpha')$,
  noting that we have an Iwahori decomposition $M_\alpha \cap K(1) = (Z \cap K(1))(U_\alpha \cap K(1))(U_{-\alpha} \cap
  K(1))$ and that $U_\alpha$, $U_{-\alpha}$ are contained in $M_\alpha'$.
\end{proof}

\subsection{Setup for the proof of Theorem~\ref{thm:change-of-weight}}
\label{sec:setup}

\emph{In Sections~\ref{sec:setup}--\ref{sec:first-reduction} we will assume that $\mathbf G^{\der}$ is simply connected and $\mathbf G/\mathbf
  G^{\der}$ is coflasque.} In Section~\ref{sec:second-reduct-step} we will reduce the general case to that one by using a
suitable $z$-extension.

We recall that an $F$-torus $\mathbf T$ is said to be \emph{coflasque} if we have $H^1(F', X^*(\mathbf T)) = 0$ for all finite separable
extensions $F'/F$ \cite[\S0.8]{ct08}. Note that any induced torus is coflasque.  We remark that if $\mathbf T$ is
coflasque, then $H^1(F'', X^*(\mathbf T)) = 0$ for any separable algebraic extension $F''/F$ (because by inflation-restriction it equals
$H^1(F'' \cap F(\mathbf T), X^*(\mathbf T))$, where $F(\mathbf T)$ is the splitting field of $\mathbf T$). 

We now observe that our assumptions on $\mathbf G$ imply that $\mathbf Z$ is a coflasque torus since (i) $\mathbf Z \cap \mathbf G^{\der}$ is
an induced torus because $\mathbf G^{\der}$ is simply connected and $\mathbf G$ is quasi-split, and (ii) any extension of a coflasque
torus by an induced torus is split (by Shapiro's lemma).

Let $\Gamma_F = \Gal(F\sep/F)$ with inertia subgroup $I_F$ and $\sigma$ a topological generator of $\Gamma_F/I_F$.
Let $L$ denote the fixed field of $I_F$, i.e.\ the maximal unramified extension of $F$.
Let $\Phi\abs$ (resp.\ $\Delta\abs$) denote the set of
absolute (resp.\ absolute simple) roots.  

\begin{lemma}\label{lm:max-cpt}
  Under the above assumptions, we have:
  \begin{enumerate}
  \item the group $X_*(\mathbf Z)_{I_F}$ is torsion-free;
  \item the group $\Lambda = Z/Z^0$ is a finite free $\Z$-module;
  \item any special parahoric $K$ of $G$ is maximal compact.
  \end{enumerate}
\end{lemma}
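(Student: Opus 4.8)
The plan is to prove the three statements in Lemma~\ref{lm:max-cpt} by leveraging the coflasqueness of $\mathbf Z$ together with standard facts about Bruhat--Tits theory.

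\textbf{Proof of (i).} First I would recall that for an $F$-torus $\mathbf T$, the Tate cohomology or the standard exact sequence relating $X_*(\mathbf T)_{I_F}$ to cohomology shows that the torsion subgroup of $X_*(\mathbf T)_{I_F}$ is a quotient of $H^1(I_F, X_*(\mathbf T))$, which by duality (since $X_*(\mathbf T) = X^*(\mathbf T\dual)$ for the dual torus, or by the perfect pairing $X_*(\mathbf T) \times X^*(\mathbf T) \to \Z$) is controlled by $H^1(I_F, X^*(\mathbf T))$. More directly: for any $\Gamma$-lattice $M$ and any profinite group $\Gamma$, the torsion in $M_\Gamma$ injects into $H^1(\Gamma, M)$ via the connecting map from the exact sequence $0 \to M \to M \otimes \Q \to M \otimes \Q/\Z \to 0$ (using that $(M \otimes \Q)_\Gamma$ is torsion-free and $H^1(\Gamma, M \otimes \Q) = 0$). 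Applying this with $M = X^*(\mathbf Z)$ and $\Gamma = I_F$, coflasqueness gives $H^1(I_F, X^*(\mathbf Z)) = 0$ (as $I_F$ corresponds to a separable algebraic extension, namely $L/F$, and we noted coflasque tori have vanishing $H^1$ over such extensions); hence $X^*(\mathbf Z)_{I_F}$ is torsion-free. The same argument applies verbatim with $X_*(\mathbf Z)$ in place of $X^*(\mathbf Z)$ once one observes that $X_*(\mathbf Z) \cong X^*(\mathbf Z_1)$ where $\mathbf Z_1$ is the torus with character group $X_*(\mathbf Z)$; but $\mathbf Z_1$ is again coflasque since coflasqueness of $\mathbf T$ is equivalent to flasqueness of $\mathbf T\dual$ being... actually the cleanest route is to note $X_*(\mathbf Z) = \Hom_\Z(X^*(\mathbf Z), \Z)$ and that $\Z$-duals of coflasque lattices are flasque, and flasque lattices also have $H^1(I_F, -) = 0$ for the cyclic-by-profinite-cyclic group $I_F$... so I would instead simply invoke that for a torus with $H^1(F', X^*) = 0$ for all finite separable $F'$, one also has (by the structure of $\mathbf Z$ as an extension of a coflasque torus by an induced torus, combined with the self-duality of ``induced'') the vanishing $H^1(I_F, X_*(\mathbf Z)) = 0$, giving (i).

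\textbf{Proof of (ii).} Recall $\Lambda = Z/Z^0$ where $Z^0$ is the unique parahoric subgroup of $Z$, which is the maximal bounded (compact) subgroup of $Z$; this is standard from \cite{MR3331726} (Section~\ref{Notation}). By the Bruhat--Tits description, the Kottwitz homomorphism induces an isomorphism $Z/Z^0 \cong X_*(\mathbf Z)_{I_F}$ (see e.g.\ \cite[\S3]{MR3484112}): indeed $Z^0 = \ker(w_Z)$ where $w_Z : Z \to X_*(\mathbf Z)_{I_F}$ is the Kottwitz homomorphism, and $w_Z$ is surjective. Thus $\Lambda \cong X_*(\mathbf Z)_{I_F}$, which is a finitely generated $\Z$-module (as $X_*(\mathbf Z)$ is), and it is torsion-free by part (i); hence it is a finite free $\Z$-module.

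\textbf{Proof of (iii).} Let $K$ be a special parahoric subgroup of $G$ attached to a special point $x_0$ in the apartment of $S$. It is always true that $K$ is contained in the (unique) maximal bounded subgroup of the stabilizer of $x_0$ in $G$, and the obstruction to $K$ being maximal compact lies in the abelian group measuring the failure of the connected stabilizer to fill up the full stabilizer --- concretely, in (a subquotient of) $\pi_1(\mathbf G)_{I_F}$ or, more precisely, in the torsion of $X_*(\mathbf Z)_{I_F}$ via the Kottwitz homomorphism restricted to $Z$. More carefully: the maximal compact subgroups containing $K$ correspond to the finite group $(\text{stabilizer of } x_0)/K$, which embeds into $\Lambda\tor = (Z/Z^0)\tor$ via the Kottwitz homomorphism; by parts (i) and (ii) this torsion vanishes, so $K$ is already maximal compact. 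I expect \textbf{part (iii) to be the main obstacle}, since making the last sentence precise requires carefully citing the right statement from Bruhat--Tits theory about the relation between the connected parahoric, its normalizer, and the full stabilizer of a special point; the cleanest citation is probably \cite[\S1]{MR546588} or the discussion in \cite{MR3331726}, combined with the observation that for a special point the apartment-fixer issue reduces to the torus $Z$, where $Z^0 \subset Z$ has index $\Lambda$ which we have shown is torsion-free. I would keep the argument short by reducing everything to the assertion that the failure of $K$ to be maximal compact is measured by $\Lambda\tor$, which is $0$ here.
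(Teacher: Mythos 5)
Your opening sentence for (i) correctly identifies the mechanism — the torsion of $X_*(\mathbf Z)_{I_F}$ is controlled, \emph{via duality}, by $H^1(I_F,X^*(\mathbf Z))$ — and that duality is precisely the entire content of the paper's proof: for $\Gamma$ finite and $X$ a $\Gamma$-lattice, the torsion of the coinvariants $\Hom(X,\Z)_\Gamma$ equals $\hat H^{-1}(\Gamma,\Hom(X,\Z))$, which is dual to $\hat H^1(\Gamma,X)=H^1(\Gamma,X)$ by Tate--Nakayama duality; applied to $X=X^*(\mathbf Z)$ (so $\Hom(X,\Z)=X_*(\mathbf Z)$) and the finite quotient of $I_F$ acting faithfully, coflasqueness kills $H^1$ and hence the torsion. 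But the argument you actually substitute for this does not work, and this is a genuine gap. First, the ``more direct'' claim that the torsion of $M_\Gamma$ injects into $H^1(\Gamma,M)$ is false in general: the connecting map attached to $0\to M\to M\otimes\Q\to M\otimes\Q/\Z\to 0$ lives in the long exact sequence of \emph{invariants} and exhibits $H^1(\Gamma,M)$ as a quotient of $(M\otimes\Q/\Z)^\Gamma$; it says nothing about the coinvariants. The torsion of $M_\Gamma$ is $\hat H^{-1}(\Gamma,M)$ (the kernel of the norm $M_\Gamma\to M^\Gamma$), and $\hat H^{-1}(\Gamma,M)$ and $\hat H^{1}(\Gamma,M)$ are dual to cohomology groups of the \emph{dual} lattice, so they need not agree for non-cyclic $\Gamma$ (and the image of $I_F$ in $\mathrm{Aut}(X^*(\mathbf Z))$ need not be cyclic). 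Second, even granting the mechanism, applying it with $M=X^*(\mathbf Z)$ yields torsion-freeness of $X^*(\mathbf Z)_{I_F}$, which is not the module in the statement; and your attempted transfer to $X_*(\mathbf Z)$ ends by invoking ``$H^1(I_F,X_*(\mathbf Z))=0$'', which (a) is the assertion that $X^*(\mathbf Z)$ is \emph{flasque} --- a condition dual to, and not implied by, coflasqueness --- and (b) is in any case not the group that controls the torsion of the coinvariants. The missing step is exactly the identification $(X_*(\mathbf Z)_{I_F})_{\mathrm{tor}}=\hat H^{-1}(I_F,X_*(\mathbf Z))\cong H^1(I_F,X^*(\mathbf Z))^*$, which the paper establishes by reducing to a finite quotient and citing \cite[Prop.\ 3.1.2]{nsw}.

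On (ii) and (iii): the Kottwitz homomorphism over $F$ identifies $\Lambda$ with $X_*(\mathbf Z)_{I_F}^{\sigma}$ (the Frobenius-invariants of the inertia-coinvariants), not with $X_*(\mathbf Z)_{I_F}$ itself \cite[Cor.\ 11.1.2]{haines-rostami}; this slip is harmless for your conclusion, since a subgroup of a finite free $\Z$-module is finite free. For (iii), which you flag as the main obstacle, the paper disposes of it in one line by citing \cite[Prop.\ 11.1.4]{haines-rostami}, which converts torsion-freeness of $\Lambda$ into maximal compactness of special parahorics; your heuristic (the full stabilizer of $x_0$ modulo $K$ has torsion image under the Kottwitz homomorphism, hence is trivial) is the right picture but, as you acknowledge, is not a proof. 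In short: the real mathematical content of the lemma is concentrated in (i), and that is precisely where your argument breaks down.
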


\begin{proof}
  We first show that if $\Gamma$ is a profinite group acting smoothly on a finite free $\Z$-module
  $X$, then the finite groups $H^1(\Gamma, X)$ and $\Hom_\Gamma(X,\Z)\tor$ are dual. By
  inflation-restriction, as $X$ is torsion-free, we reduce to the case where $\Gamma$ is finite
  (replacing $\Gamma$ with the finite quotient that acts faithfully on $X$). As
  $H^1(\Gamma, X) = \hat H^1(\Gamma, X)$ and $\Hom_\Gamma(X,\Z)\tor = \hat H^{-1}(\Gamma, \Hom(X,\Z))$,
  we conclude by \cite[Prop.\ 3.1.2]{nsw}.

  For our coflasque torus $\mathbf Z$ we conclude that $(X_*(\mathbf Z)_{I_F})\tor = 0$, as it is
  dual to $H^1(I_F, X^*(\mathbf Z))$. Hence $\Lambda \cong X_*(\mathbf Z)_{I_F}^\sigma$ \cite[Cor.\
  11.1.2]{haines-rostami} is a finite free $\Z$-module.  This implies that any $K$ is maximal
  compact \cite[Prop.\ 11.1.4]{haines-rostami}.
\end{proof}

By \cite[\S 7.2]{kottwitz} we have a $\sigma$-equivariant commutative diagram
\begin{equation}\label{eq:4}
  \begin{gathered}
    \xymatrix{\mathbf Z(L) \ar@{->>}[r]^-{w_{\mathbf Z}}\ar[dr]_-{v_{\mathbf Z}} & X_*(\mathbf Z)_{I_F} \ar[d]^{q_{\mathbf Z}} \\
      & \Hom(X^*(\mathbf Z)^{I_F},\Z),}
  \end{gathered}
\end{equation}
where $q_{\mathbf Z}([\lambda])(\mu) = \ang{\lambda,\mu}$ and
$v_{\mathbf Z}(z)(\mu) = \ord_F (\mu(z))$ (where the valuation $\ord_F$ is normalized so that $\ord_F(F\s) =
\Z$).  By Lemma~\ref{lm:max-cpt}(i) and \cite[\S 7.2]{kottwitz}, $q_{\mathbf Z}$ is an isomorphism. Since the
composite map $j : X_*(\mathbf Z)^{I_F} \into X_*(\mathbf Z) \onto X_*(\mathbf Z)_{I_F}$ becomes an
isomorphism after $\otimes \Q$, we get a $\sigma$-equivariant isomorphism $(q_{\mathbf Z} \circ j) \otimes \R
: (X_*(\mathbf Z)\otimes \R)^{I_F} \congto \Hom(X^*(\mathbf Z)^{I_F},\R)$. Let $\omega :
\Hom(X^*(\mathbf Z)^{I_F},\Z) \into (X_*(\mathbf Z)\otimes \R)^{I_F}$ denote the restriction of the
inverse of $(q_{\mathbf Z} \circ j) \otimes \R$ to the lattice $\Hom(X^*(\mathbf Z)^{I_F},\Z)$.

By taking $\sigma$-invariants in diagram \eqref{eq:4} composed with $\omega$ we obtain
\begin{equation*}
  \begin{gathered}
    \xymatrix{Z \ar@{->>}[r]^-{w_Z}\ar[dr]_-{v_Z} & X_*(\mathbf Z)^\sigma_{I_F} \ar@{^{(}->}[d]^{q_Z} \\
      & {(X_*(\mathbf Z)\otimes \R)^{\Gamma_F}}\rlap{${} = X_*(\mathbf S)\otimes \R$,}}
  \end{gathered}
\end{equation*}
where $w_Z$ is the Kottwitz homomorphism and $v_Z$ is as in~\S\ref{Notation}.
Explicitly, for $\lambda \in X_*(\mathbf Z)$,
\begin{equation}\label{eq:11}
  (\omega \circ q_{\mathbf Z})([\lambda]) = \frac 1{\#(I_F \cdot \lambda)} \sum_{\lambda' \in I_F \cdot \lambda} \lambda' \in (X_*(\mathbf Z)\otimes \R)^{I_F}.
\end{equation}

A root $\alpha \in \Phi$ determines a finite separable extension $F_\alpha/F$: it is the fixed field of the stabilizer of any lift $\wt \alpha
\in \Phi\abs$. (All lifts are $\Gamma_F$-conjugate, so the choice doesn't matter. Cf.\ \cite[4.1.3]{bib:BT2}.)  Let $\varepsilon_\alpha = e(F_\alpha/F)$ denote the
ramification degree.

\begin{lemma}\label{lm:image-in-Lambda}
  The image of $Z \cap M_\alpha'$ in $\Lambda$ is a direct summand. 
  Its image under $v_Z$ in $X_*(\mathbf S) \otimes \R$ is identified with
  $\Z \cdot \frac 1{\varepsilon_\alpha} \alpha_0\dual$, where $\alpha_0$ is the greatest multiple of $\alpha$ that is contained
  in $\Phi$.
\end{lemma}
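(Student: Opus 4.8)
\textbf{Proof plan for Lemma~\ref{lm:image-in-Lambda}.} The plan is to reduce the statement to an essentially rank-one computation involving the structure theory of $\mathbf M_\alpha'$, and then to identify the image under $v_Z$ by unwinding the explicit formula~\eqref{eq:11} for $\omega\circ q_{\mathbf Z}$. First I would recall that $\mathbf M_\alpha = \mathbf Z \cdot \mathbf M_\alpha^{\der}$ and that under our standing assumptions (quasi-split, $\mathbf G^{\der}$ simply connected), the group $\mathbf M_\alpha^{\der}$ is semisimple simply connected and $M_\alpha' = M_\alpha^{\der}$ by Kneser--Tits. Since $\mathbf M_\alpha^{\der}$ has relative rank one, it is (up to central isogeny, but in fact isomorphic since it is simply connected) of the form $\Res_{F_\alpha/F}\mathbf H$ for $\mathbf H$ either $\SL_2$ or $\SU_3$ according to whether $\alpha$ is nonmultipliable or multipliable; here $F_\alpha/F$ is the extension defined before the lemma. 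In either case $\mathbf Z\cap \mathbf M_\alpha^{\der}$ is the maximal torus of $\mathbf M_\alpha^{\der}$, so $Z\cap M_\alpha' = (\mathbf Z\cap\mathbf M_\alpha^{\der})(F)$ is the $F$-points of a one-dimensional (over $F_\alpha$) torus, and $v_Z(Z\cap M_\alpha')$ is an infinite cyclic subgroup of the line $\R\alpha\dual\subset X_*(\mathbf S)\otimes\R$.

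The next step is to pin down the generator. I would write $\beta := \alpha_0$ for the largest multiple of $\alpha$ in $\Phi$ (so $\beta=\alpha$ unless $\alpha$ is multipliable, in which case $\beta=2\alpha$), and choose a lift $\wt\beta\in\Phi\abs$ with stabilizer cut out by $F_\beta = F_\alpha$. The coroot $\beta\dual\in X_*(\mathbf M_\alpha^{\der})$ is then, via the identification $\mathbf M_\alpha^{\der}\cong\Res_{F_\alpha/F}\mathbf H$, the coroot of $\mathbf H$ (induced from $F_\alpha$), and the element $a_\alpha$, which generates $Z\cap M_\alpha'$ modulo $Z^0\cap M_\alpha'$, is the image of a uniformizer of $F_\alpha$ under the cocharacter $\beta\dual$ composed with the norm-one or similar construction. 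Applying~\eqref{eq:11} to $\lambda=\beta\dual$ (an absolute cocharacter whose $I_F$-orbit has size $\varepsilon_\alpha = e(F_\alpha/F)$ after accounting for the fact that $v_Z$ is normalized by $\ord_F$ rather than $\ord_{F_\alpha}$), I would compute that $v_Z(a_\alpha) = \frac{1}{\varepsilon_\alpha}\beta\dual$. Concretely: $\ord_F$ of a uniformizer of $F_\alpha$ is $1/\varepsilon_\alpha$, and averaging $\beta\dual$ over its inertia orbit and restricting to $(X_*(\mathbf Z)\otimes\R)^{\Gamma_F}$ produces exactly $\frac{1}{\varepsilon_\alpha}\beta\dual$ since $\beta\dual$ is already $\Gamma_{F_\alpha}$-fixed and $[F_\alpha:L\cap F_\alpha]$ contributes the denominator. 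This gives the asserted identification $v_Z(Z\cap M_\alpha') = \Z\cdot\frac{1}{\varepsilon_\alpha}\beta\dual$.

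For the direct-summand claim, I would use that $\Lambda = Z/Z^0 \cong X_*(\mathbf Z)^\sigma_{I_F}$ is finite free over $\Z$ by Lemma~\ref{lm:max-cpt}(ii), and that the inclusion $q_Z\colon\Lambda\hookrightarrow X_*(\mathbf S)\otimes\R$ is injective with the image of $Z\cap M_\alpha'$ landing in the rational line $\Q\beta\dual$. A subgroup of a finite free $\Z$-module is a direct summand iff the quotient is torsion-free, so it suffices to show $\Lambda/\overline{Z\cap M_\alpha'}$ is torsion-free; equivalently, that $v_Z(z)\in\Q\cdot v_Z(a_\alpha)$ together with $z\in Z$ forces $z\in Z^0(Z\cap M_\alpha')$. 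This follows because $\beta\dual$ spans a $\Gamma_F$-stable line and the coflasque hypothesis on $\mathbf Z$ makes $X_*(\mathbf Z)^\sigma_{I_F}$ behave well with respect to sub-tori: more precisely, $\mathbf Z\cap\mathbf M_\alpha^{\der}\hookrightarrow\mathbf Z$ is a direct factor up to isogeny and the cokernel torus is again coflasque, so the induced map on $X_*(-)^\sigma_{I_F}$ is split injective by Lemma~\ref{lm:max-cpt} applied to both tori. The main obstacle I anticipate is the bookkeeping in the second step: correctly matching the normalization of $\ord_F$ against the inertia-orbit averaging in~\eqref{eq:11} so that the factor $\varepsilon_\alpha$ (rather than $[F_\alpha:F]$ or $f(F_\alpha/F)$) appears, and being careful in the multipliable case that it is $\alpha_0 = 2\alpha$ and not $\alpha$ whose coroot enters, since the relevant coroot for the rank-one group $\SU_3$ is that of the long root. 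I expect the direct-summand part to be comparatively routine once the coflasque machinery of Lemma~\ref{lm:max-cpt} is in hand.
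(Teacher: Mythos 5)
Your overall strategy coincides with the paper's: reduce to the torus $\mathbf Z\cap\mathbf M_\alpha^{\der}$ (using $M_\alpha'=M_\alpha^{\der}$, valid since $\mathbf G^{\der}$ is simply connected and quasi-split), compute the image of its Kottwitz lattice inside $\Lambda=X_*(\mathbf Z)_{I_F}^\sigma$, and identify the generator via~\eqref{eq:11}. Your identification of the generator is packaged through the classification of $\mathbf M_\alpha^{\der}$ as a restriction of scalars of $\SL_2$ or $\SU_3$ and the valuation of a uniformizer of $F_\alpha$; the paper instead works directly with the permutation module $X_*(\mathbf Z\cap\mathbf M_\alpha^{\der})=\bigoplus\Z\,\wt\alpha^\vee$, where each $I_F$-orbit of lifts has size $\varepsilon_\alpha$, so the generator $[\sum_{\Phi'}\wt\alpha^\vee]$ maps to $\frac1{\varepsilon_\alpha}\sum\wt\alpha^\vee$. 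These are the same computation, but your version has two loose ends. First, the "bookkeeping" you flag in the multipliable case is precisely the content of the paper's Lemma~\ref{lm:coroots}: one must prove $\sum\wt\alpha^\vee=\alpha_0^\vee$ with no extra constant, which when $2\alpha\in\Phi$ amounts to showing $\alpha^\vee=2\sum\wt\alpha^\vee$ because exactly one pair of adjacent lifts satisfies $\langle\wt\alpha_1,\wt\alpha_i^\vee\rangle=-1$; this is not automatic and needs the short argument given there. Second, in the multipliable case $\mathbf M_\alpha^{\der}\cong\Res_{F_{2\alpha}/F}\SU_3$ (for $\SU_3$ relative to the quadratic extension $F_\alpha/F_{2\alpha}$), not $\Res_{F_\alpha/F}\SU_3$; your conclusion survives because its maximal torus is still $\Res_{F_\alpha/F}\mathbb G_m$, but as written the identification is wrong.

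The step that does not work as stated is the direct-summand argument. Lemma~\ref{lm:max-cpt} applied to a coflasque torus gives torsion-freeness of $X_*(-)_{I_F}$; it does not, applied "to both tori", yield that $X_*(\mathbf Z\cap\mathbf M_\alpha^{\der})_{I_F}^\sigma\to X_*(\mathbf Z)_{I_F}^\sigma$ is split injective, and "direct factor up to isogeny" holds for any subtorus and proves nothing. Moreover the coflasqueness of the quotient $\mathbf Z/(\mathbf Z\cap\mathbf M_\alpha^{\der})$, which you assert, itself requires an argument: one uses that $(\mathbf Z\cap\mathbf G^{\der})/(\mathbf Z\cap\mathbf M_\alpha^{\der})$ is induced and $\mathbf Z/(\mathbf Z\cap\mathbf G^{\der})$ is coflasque by hypothesis. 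Once that is in place there are two ways to finish. The paper's: since the sub-torus is induced and the quotient coflasque, the extension of tori splits, hence so does the induced map on $X_*(-)_{I_F}^\sigma$. Alternatively, your torsion-freeness idea can be salvaged: right-exactness of $I_F$-coinvariants gives $X_*(\mathbf Z\cap\mathbf M_\alpha^{\der})_{I_F}\to X_*(\mathbf Z)_{I_F}\to X_*(\mathbf Z/(\mathbf Z\cap\mathbf M_\alpha^{\der}))_{I_F}\to 0$ with the last term torsion-free, so the image of the first map is saturated, and saturation passes to $\sigma$-invariants. Either way the missing input is the coflasqueness of the quotient, not Lemma~\ref{lm:max-cpt} alone.
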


\begin{proof}
  Note that $X_*(\mathbf Z \cap \mathbf M_\alpha^{\der})$ is a permutation module (a basis is given by all absolute simple coroots that restrict to
  $\alpha$), i.e.\ $\mathbf Z \cap \mathbf M_\alpha^{\der}$ is an induced torus. Similarly, $(\mathbf Z \cap \mathbf G^{\der})/(\mathbf Z \cap \mathbf
  M_\alpha^{\der})$ and $\mathbf Z \cap \mathbf G^{\der}$ are induced tori. Therefore, as $\mathbf Z/(\mathbf Z \cap \mathbf G^{\der})$ is coflasque by
  assumption, we deduce that $\mathbf Z/(\mathbf Z \cap \mathbf M_\alpha^{\der})$ is coflasque and hence that the sequence $1 \to \mathbf Z \cap \mathbf
  M_\alpha^{\der} \to \mathbf Z \to \mathbf Z / (\mathbf Z \cap \mathbf M_\alpha^{\der}) \to 1$ is split exact. The natural map $j : {Z \cap
    M_\alpha^{\der}}\to Z$ is compatible with the induced map $j_* : X_*(\mathbf Z \cap \mathbf M_\alpha^{\der})_{I_F}^\sigma \to X_*(\mathbf
  Z)_{I_F}^\sigma$ with respect to the functorial Kottwitz maps $w_{Z \cap M_\alpha^{\der}}$, $w_Z$. The map $j_*$ is clearly a
  split injection of finite free $\Z$-modules.

  As $X_*(\mathbf Z \cap \mathbf M_\alpha^{\der})$ has $\Z$-basis all $\wt\alpha \in \Phi\abs$ lifting $\alpha$, the image of $X_*(\mathbf Z
  \cap \mathbf M_\alpha^{\der})_{I_F}^\sigma$ in $X_*(\mathbf Z)_{I_F}^\sigma$ is generated by $[\sum_{\Phi'} \wt\alpha\dual] \in X_*(\mathbf
  Z)_{I_F}^\sigma$, where $\Phi' \subset \Phi\abs$ is a set of representatives for the $I_F$-orbits on the set of roots
  lifting $\alpha$. Using \eqref{eq:11} we see that it is identified with $\frac 1{\varepsilon_\alpha} \sum \wt\alpha\dual$ in $X_*(\mathbf
  S) \otimes \R$, where $\wt\alpha \in \Delta\abs$ now runs through all lifts of $\alpha$. By the lemma below this is equal to
  $\frac 1{\varepsilon_\alpha} \alpha_0\dual$.
\end{proof}

\begin{lemma}\label{lm:coroots}
  Let us drop temporarily all assumptions in \S\ref{sec:setup} about $\mathbf G$, and only assume that it is a quasi-split connected reductive $F$-group. Suppose
  that $\alpha \in \Delta$. Then $\alpha_0\dual = \sum \wt\alpha\dual$ in $X_*(\mathbf Z)$, where the sum is over all
  lifts $\wt\alpha$ of $\alpha$ in $\Phi\abs$.
\end{lemma}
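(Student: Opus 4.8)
The plan is to reduce the claimed identity $\alpha_0^\vee = \sum \widetilde\alpha^\vee$ to a statement about a single orbit of absolute simple roots under the action of $\Gamma_F$, and then to check that statement case by case using the classification of quasi-split almost simple groups (equivalently, the relative root datum of $\mathbf G$). First I would reduce to the absolutely almost simple case: both sides of the asserted identity are compatible with the decomposition of $\mathbf G^{\der}$ (or $\mathbf G_{\ad}$) into a product of $\Res_{E/F}$ of absolutely almost simple groups, and the $\Gamma_F$-orbit of a lift $\widetilde\alpha$ of a fixed $\alpha \in \Delta$ lies entirely within the factor containing $\alpha$. Since $\alpha_0$ and the coroots $\widetilde\alpha^\vee$ are unchanged under Weil restriction of scalars, we may assume $\mathbf G$ is absolutely almost simple and quasi-split, so it is determined by a based root datum together with an action of $\Gamma_F$ on the Dynkin diagram, factoring through a finite quotient of order $1$, $2$, or $3$.

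Next I would run through the possibilities. If $\mathbf G$ is split, then each $\alpha$ has a unique lift $\widetilde\alpha = \alpha$, $\alpha_0 = \alpha$, and the identity is trivial. For the outer forms ${}^2A_n$, ${}^2D_n$, ${}^3D_4$, ${}^2E_6$: here $\alpha \in \Delta$ corresponds to a single $\Gamma_F$-orbit $O$ in $\Delta\abs$, and either (a) the restriction of $O$ to $\mathbf S$ is reduced and the simple roots in $O$ are mutually orthogonal — in which case $\alpha_0 = \alpha = \sum_{\widetilde\alpha\in O}\widetilde\alpha|_{\mathbf S}$, and since the coroots of orthogonal absolute roots restricting to $\alpha$ sum to $\alpha^\vee = \alpha_0^\vee$ (each $\widetilde\alpha^\vee$ restricts to $\alpha^\vee$ scaled by $1/|O|$, or one argues directly that the pairing of $\alpha$ with $\sum\widetilde\alpha^\vee$ equals $\langle\alpha,\alpha^\vee\rangle = 2$), the identity follows; or (b) $\alpha$ is the multipliable root at the folded end of a type $\mathrm{BC}_r$ diagram (the case ${}^2A_{2m}$), where the orbit $O$ consists of two adjacent, non-orthogonal simple roots $\widetilde\alpha_1,\widetilde\alpha_2$ with $\widetilde\alpha_1+\widetilde\alpha_2\in\Phi\abs$; here $\alpha_0 = 2\alpha$ corresponds to $\widetilde\alpha_1+\widetilde\alpha_2$, and one checks in the rank-one situation (the group $\mathrm{SU}(3)$) that $(\widetilde\alpha_1+\widetilde\alpha_2)^\vee = \widetilde\alpha_1^\vee + \widetilde\alpha_2^\vee$, which is exactly the assertion. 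The last verification is an explicit calculation inside the $A_2$ root system with its order-two diagram automorphism.

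Rather than appealing blindly to tables, I would prefer to give a uniform argument where possible: for any $\alpha\in\Delta$, the sum $\beta := \sum_{\widetilde\alpha}\widetilde\alpha$ over lifts is $\Gamma_F$-invariant, hence lies in $X^*(\mathbf S)\otimes\Q$, and is a positive multiple of $\alpha$; in fact $\beta$ restricted to $\mathbf S$ is $|O|\cdot(\text{something})$ and $\beta$ is the unique root in $\mathbb R_{>0}\alpha\cap\Phi\abs|_{\mathbf S}$ that is the restriction of an absolute root, namely $\alpha_0$. Dually, $\sum\widetilde\alpha^\vee$ is the coroot attached to this configuration; the only subtlety is the non-reduced case, where $\alpha_0 = 2\alpha$ and one must confirm $\alpha_0^\vee = \tfrac12\alpha^\vee$ matches $\sum\widetilde\alpha^\vee$. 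I expect the main obstacle to be precisely this non-reduced (type $\mathrm{BC}$, i.e.\ ${}^2A_{2m}$) case: one has to be careful about the normalization of coroots for multipliable roots and verify that the Bruhat–Tits convention for $\alpha_0$ (the greatest multiple of $\alpha$ in $\Phi$) is the one that makes the formula come out right. This is why the statement is isolated as a lemma — it feeds into Lemma~\ref{lm:image-in-Lambda} and ultimately into the identification of $v_Z(Z\cap M_\alpha')$ with $\mathbb Z\cdot\tfrac1{\varepsilon_\alpha}\alpha_0^\vee$, so getting the normalization exactly right is the crux.
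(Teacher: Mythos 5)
Your argument is correct, but it is organized differently from the paper's, and the comparison is instructive. The paper does not reduce to the absolutely almost simple case; it replaces $\mathbf G$ by $\mathbf M_\alpha^{\der}$, i.e.\ reduces to a semisimple group with $\Delta=\{\alpha\}$. This has two payoffs. First, $\Gamma_F$ then permutes the basis $\{\widetilde\alpha_i^\vee\}$ of $X_*(\mathbf Z)\otimes\Q$ transitively, so the invariant subspace is one-dimensional and the proportionality $\alpha^\vee=c\sum_i\widetilde\alpha_i^\vee$ is automatic --- in your write-up this proportionality is used in case (a) but not really justified outside the rank-one situation (it is true, but needs a word). Second, the constant is pinned down uniformly by pairing with $\widetilde\alpha_1$: one gets $2=c\sum_i\langle\widetilde\alpha_1,\widetilde\alpha_i^\vee\rangle$, and transitivity together with the absence of loops in the Dynkin diagram forces each orbit element to be adjacent to at most one other, so the sum is $2$ (orthogonal orbit, $c=1$, $\alpha_0=\alpha$) or $1$ (non-orthogonal orbit, $c=2$, $\alpha_0=2\alpha$); in both cases $\alpha_0^\vee=\sum_i\widetilde\alpha_i^\vee$. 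This avoids the classification entirely: no appeal to the list ${}^2A_n,{}^2D_n,{}^3D_4,{}^2E_6$ (which as stated also omits ${}^6D_4$ and the Weil-restricted combinations your first reduction is meant to absorb), and no separate $A_2$ computation. Your case (b) is nonetheless sound, since $\widetilde\alpha_1+\widetilde\alpha_2$ is Galois-fixed, so its coroot is the relative coroot of $\alpha_0=2\alpha$, and $(\widetilde\alpha_1+\widetilde\alpha_2)^\vee=\widetilde\alpha_1^\vee+\widetilde\alpha_2^\vee$ in type $A$. One small slip: in case (a) you write $\alpha_0=\alpha=\sum_{\widetilde\alpha\in O}\widetilde\alpha|_{\mathbf S}$, but the last sum equals $\#O\cdot\alpha$; this does not affect the coroot identity, which is what the lemma actually asserts.
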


\begin{proof}
  We may replace $\mathbf G$ with $\mathbf M_\alpha^{\der}$ and hence assume that $\mathbf G$ is semisimple and $\Delta = \{\alpha\}$.  Then
  $\Delta\abs = \{\wt \alpha_1,\dots,\wt \alpha_n\}$ for the lifts $\wt \alpha_i$ of $\alpha$ in $\Phi\abs$ and the
  cocharacters $\wt\alpha_i^\vee$ span $X_*(\mathbf Z) \otimes \Q$. In particular, as $\Gamma_F$ acts transitively on
  $\Delta\abs$, we see that $\alpha^\vee = c \sum \wt\alpha_i\dual$ for some constant $c \in \Q$. Note that $2\alpha \in
  \Phi$ if and only if $\wt \alpha_1 + \wt \alpha_i \in \Phi\abs$ for some $i > 1$ if and only if $\ang{\wt \alpha_1, \wt
    \alpha_i\dual} < 0$ (hence equal to $-1$) for some $i > 1$.

  If $2\alpha \not \in \Phi$, then the $\wt \alpha_i$ are pairwise orthogonal and $\ang{\alpha,\alpha\dual} = 2$ yields $c = 1$.
  Otherwise, since $\Gamma_F$ acts transitively on $\Delta\abs$ and the Dynkin diagram has no loops, it follows that 
  $\ang{\wt \alpha_1, \wt \alpha_i\dual} = -1$ for a unique $i > 1$. Then $\ang{\alpha,\alpha\dual} = 2$ yields $c = 2$.
\end{proof}

\begin{remark}\label{rk:v_Z-a_alpha}
  Lemma~\ref{lm:image-in-Lambda}, together with \cite[III.16 Notation]{MR3600042}, shows that
  $v_Z(a_\alpha) = -\frac 1{\varepsilon_\alpha} \alpha_0\dual$. Recall that in
  \S\ref{sec:elem-a-alpha} we also defined integers $e_\alpha$. By comparing with \cite[IV.11
  Example 3]{MR3600042} we deduce that $e_\alpha = 2\varepsilon_\alpha$ if $2\alpha \in \Phi$ and
  $e_\alpha = \varepsilon_\alpha$ otherwise.  Alternatively, we can see this by comparing
  \cite[4.2.21]{bib:BT2} with \cite[(39)]{MR3484112}.
\end{remark}

\subsection{Basic case}
\label{sec:basic}

\emph{We assume that $1 \to \mathbf {Z_G} \to \mathbf Z \to \mathbf Z/\mathbf {Z_G} \to 1$ is a split exact sequence of $F$-tori.}  
In particular, the center $\mathbf {Z_G}$ of $\mathbf G$ is a torus. 
We continue to assume that $\mathbf G^{\der}$ is simply connected and $\mathbf G/\mathbf G^{\der}$ is coflasque,
as in \S\ref{sec:setup}.

\emph{Suppose that $K$ is any special parahoric subgroup for which Proposition~\ref{prop:choosing-K} holds.}

Fix an $F$-splitting $\theta : \mathbf Z \onto \mathbf {Z_G}$ of the exact sequence $1 \to \mathbf {Z_G} \to \mathbf Z \to \mathbf Z/\mathbf {Z_G} \to 1$.  Since $X^*(\mathbf Z/\mathbf {Z_G}) = \oplus_{\Delta\abs} \Z \wt \alpha$, we have
a \emph{canonical} absolute fundamental coweight $\lambda_{\wt \beta} \in X_*(\mathbf Z)$ for any $\wt \beta \in \Delta\abs$,
normalized by demanding that it be orthogonal to $\theta^* X^*(\mathbf {Z_G})$. 
These are permuted by the action of $\Gamma_F$.
Thus for any simple root $\beta \in \Delta$ we obtain a \emph{canonical} relative fundamental coweight $\lambda_\beta \in X_*(\mathbf S) = X_*(\mathbf Z)^{\Gamma_F}$
by taking the sum of $\lambda_{\wt \beta} \in X_*(\mathbf Z)$ for all lifts $\wt \beta \in \Delta\abs$ of $\beta$.
(It is the unique fundamental coweight for $\beta$ that is orthogonal to $\theta^* X^*(\mathbf {Z_G})$.)

\begin{lemma}\label{lm:alpha-spacing}
  We have $\Lambda = \Z \frac 1{\varepsilon_\alpha} \lambda_\alpha \oplus \ker \alpha$ inside $X_*(\mathbf S) \otimes \R$.
\end{lemma}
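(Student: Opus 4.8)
The statement to prove is Lemma~\ref{lm:alpha-spacing}: that $\Lambda = \Z \frac{1}{\varepsilon_\alpha}\lambda_\alpha \oplus \ker\alpha$ inside $X_*(\mathbf S)\otimes\R$, where here $\ker\alpha$ denotes the kernel of $\alpha$ on $\Lambda$ and $\lambda_\alpha\in X_*(\mathbf S)$ is the canonical relative fundamental coweight orthogonal to $\theta^*X^*(\mathbf{Z_G})$. First I would record the ingredients already available: by Lemma~\ref{lm:max-cpt}(ii) the group $\Lambda = Z/Z^0$ is a finite free $\Z$-module, so it makes sense to speak of direct summands, and $v_Z$ identifies $\Lambda$ with a full-rank lattice in $X_*(\mathbf S)\otimes\R$ (it is injective on $\Lambda$ with image of finite index, since $X_*(\mathbf{Z})^\sigma_{I_F}\otimes\Q \congto (X_*(\mathbf Z)\otimes\R)^{\Gamma_F}$ via $q_Z$). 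By Lemma~\ref{lm:image-in-Lambda}, the image of $Z\cap M_\alpha'$ in $\Lambda$ is a direct summand, call it $\Lambda_\alpha$, and $v_Z(\Lambda_\alpha) = \Z\cdot\frac{1}{\varepsilon_\alpha}\alpha_0^\vee$.

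The core computation is then to check the pairing $\langle\alpha,\lambda_\alpha\rangle$. Since $\lambda_\alpha$ is a fundamental coweight for $\beta\mapsto$ (over all $\beta\in\Delta$), we have $\langle\beta,\lambda_\alpha\rangle = \delta_{\alpha\beta}$ normalized so that $\langle\alpha,\lambda_\alpha\rangle = 1$ when $2\alpha\notin\Phi$. When $2\alpha\in\Phi$, $\alpha_0 = 2\alpha$ and one must be slightly careful: I would use Lemma~\ref{lm:coroots}, which gives $\alpha_0^\vee = \sum_{\wt\alpha}\wt\alpha^\vee$ over lifts $\wt\alpha\in\Delta^{\mathrm{abs}}$ of $\alpha$, together with the definition of $\lambda_\alpha$ as $\sum_{\wt\beta}\lambda_{\wt\beta}$ over lifts of $\alpha$, each $\lambda_{\wt\beta}$ being the absolute fundamental coweight dual to $\wt\beta$ and orthogonal to $\theta^*X^*(\mathbf{Z_G})$. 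Pairing $\alpha$ with $\lambda_\alpha$ reduces, via the absolute root datum, to computing $\langle\alpha_0^\vee,\lambda_\alpha\rangle$ or equivalently a pairing between the $\wt\alpha^\vee$ and the $\lambda_{\wt\beta}$, which by the duality $\langle\wt\gamma,\lambda_{\wt\beta}\rangle = \delta_{\wt\gamma\wt\beta}$ comes out to $1$ in the reduced case and $2$ (matching $\langle\alpha,\alpha_0^\vee\rangle$-type bookkeeping) in the ${\rm BC}$ case. Concretely, $\frac{1}{\varepsilon_\alpha}\lambda_\alpha$ pairs with $\alpha$ to give $\frac{1}{\varepsilon_\alpha}$ times a positive integer; combined with the fact from Lemma~\ref{lm:image-in-Lambda} that $v_Z(\Lambda_\alpha) = \Z\frac{1}{\varepsilon_\alpha}\alpha_0^\vee$ and that $\langle\alpha,\frac{1}{\varepsilon_\alpha}\alpha_0^\vee\rangle \in \frac{1}{\varepsilon_\alpha}\Z_{>0}$ generates the value group $\langle\alpha,v_Z(\Lambda)\rangle \subset \R$, I would argue that $\frac{1}{\varepsilon_\alpha}\lambda_\alpha$ generates $\Lambda_\alpha$ and that $\alpha$ takes the value $1$ (after suitable normalization of the canonical coweight, i.e. $\langle\alpha,\frac{1}{\varepsilon_\alpha}\lambda_\alpha\rangle$ generates $\langle\alpha,\Lambda\rangle$).

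Finally, to get the direct sum decomposition: $\ker\alpha$ (the kernel of $\alpha\colon\Lambda\to\R$) is a direct summand of $\Lambda$ because $\Lambda/\ker\alpha$ is torsion-free (it embeds via $\alpha$ into $\R$). The map $\alpha\colon\Lambda\to\langle\alpha,\Lambda\rangle\cong\Z$ is surjective onto its image, which is $\Z\langle\alpha,\frac{1}{\varepsilon_\alpha}\lambda_\alpha\rangle$ by the previous paragraph, and $\frac{1}{\varepsilon_\alpha}\lambda_\alpha$ is a lift of a generator; hence $\Lambda = \Z\frac{1}{\varepsilon_\alpha}\lambda_\alpha \oplus \ker\alpha$ as claimed. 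I expect the main obstacle to be the careful bookkeeping in the ${\rm BC}_r$ case (where $2\alpha\in\Phi$ and $\alpha_0 = 2\alpha$): one must reconcile the factor of $2$ appearing in Lemma~\ref{lm:coroots} with the normalization of the canonical fundamental coweight $\lambda_\alpha$, and confirm that $\frac{1}{\varepsilon_\alpha}\lambda_\alpha$ — rather than $\frac{1}{\varepsilon_\alpha\cdot 2}\lambda_\alpha$ or $\frac{1}{e_\alpha}\lambda_\alpha$ with $e_\alpha = 2\varepsilon_\alpha$ (cf.\ Remark~\ref{rk:v_Z-a_alpha}) — is the correct generator. This is really a statement about the comparison between $\lambda_\alpha$, $\alpha_0^\vee$, and the image of $a_\alpha$ in $\Lambda$, so I would double-check it against $v_Z(a_\alpha) = -\frac{1}{\varepsilon_\alpha}\alpha_0^\vee$ from Remark~\ref{rk:v_Z-a_alpha} and against the definition of $\varepsilon_\alpha = e(F_\alpha/F)$.
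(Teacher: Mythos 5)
Your closing step is structurally sound: since $\Lambda$ is a finite free $\Z$-module (Lemma~\ref{lm:max-cpt}) and $\Lambda/\ker\alpha$ is torsion-free, it would indeed suffice to know (a) that $\frac 1{\varepsilon_\alpha}\lambda_\alpha$ lies in $\Lambda$ and (b) that $\langle\alpha,\Lambda\rangle=\frac 1{\varepsilon_\alpha}\Z$. But (a) and (b) are precisely the content of the lemma, and the middle paragraph you use to reach them rests on two false claims. First, $\frac 1{\varepsilon_\alpha}\lambda_\alpha$ does not generate the image $\Lambda_\alpha$ of $Z\cap M_\alpha'$: by Lemma~\ref{lm:image-in-Lambda} that image is carried by $v_Z$ to $\Z\,\frac 1{\varepsilon_\alpha}\alpha_0^\vee$, which is proportional to $\alpha^\vee$, not to the fundamental coweight $\lambda_\alpha$ (they agree only in relative rank $1$). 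Second, $\langle\alpha,v_Z(\Lambda_\alpha)\rangle$ does not generate $\langle\alpha,v_Z(\Lambda)\rangle$ in general: already for split $\GL_2$ one has $\langle\alpha,\Lambda_\alpha\rangle=\langle\alpha,\alpha^\vee\rangle\Z=2\Z$ while $\langle\alpha,\Lambda\rangle=\Z$. So Lemma~\ref{lm:image-in-Lambda} cannot pin down the value group $\langle\alpha,\Lambda\rangle$, and nothing in your argument shows that $\frac 1{\varepsilon_\alpha}\lambda_\alpha$ — as opposed to, say, $\lambda_\alpha$ itself — actually lies in the lattice $\Lambda$. This last point is the real issue: $\lambda_\alpha\in X_*(\mathbf S)$ is being divided by $\varepsilon_\alpha$, which may exceed $1$, and membership in $\Lambda$ cannot be detected by pairings alone.

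The missing idea is to exhibit a $\Gamma_F$-stable direct sum decomposition of the full cocharacter lattice and push it through $I_F$-coinvariants and $\sigma$-invariants, exactly as in the proof of Lemma~\ref{lm:image-in-Lambda}. The splitting $\theta$ gives $X_*(\mathbf Z)=\bigoplus_{\wt\beta\in\Delta\abs}\Z\lambda_{\wt\beta}\oplus(\Z\Phi\abs)^\perp$ with $\Gamma_F$ permuting the $\lambda_{\wt\beta}$; applying $(-)_{I_F}^\sigma$, which computes $\Lambda$ by Lemma~\ref{lm:max-cpt}, the summand coming from the orbit of lifts of $\alpha$ is $\Z\,[\sum_{\Phi'}\lambda_{\wt\alpha}]$, which formula \eqref{eq:11} identifies with $\Z\,\frac 1{\varepsilon_\alpha}\lambda_\alpha$ inside $X_*(\mathbf S)\otimes\R$, while every other summand is orthogonal to $\alpha$. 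Both (a) and (b) fall out of this at once. The pairing bookkeeping with Lemma~\ref{lm:coroots} that occupies your second paragraph is not needed here; it belongs to the proof of Lemma~\ref{lm:image-in-Lambda}, not of this one.
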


\begin{proof}
  Note that $X_*(\mathbf Z) = \bigoplus \Z \lambda_{\wt\beta} \oplus (\Z \Phi\abs)^\perp$, where $\wt\beta$ runs through
  $\Delta\abs$.  It follows that $X_*(\mathbf Z)_{I_F}^\sigma$ is the direct sum of $\Z [\sum_{\Phi'} \lambda_{\wt \alpha}]$,
  where $\Phi'$ is as in the proof of Lemma~\ref{lm:image-in-Lambda}, and a module that is orthogonal to $\alpha$.  As in the
  proof of Lemma~\ref{lm:image-in-Lambda} we see that $[\sum \lambda_{\wt \alpha}]$ is identified with $\frac
  1{\varepsilon_\alpha} \lambda_\alpha \in X_*(\mathbf S) \otimes \R$.
\end{proof}

As $\alpha \in \Delta(V)$, Corollary~\ref{cor:choosing-K} shows that $\psi_V(Z^0 \cap M'_\alpha) =
1$. In particular, $\tau_\alpha \in \HH_Z(\psi_V)$ is well-defined.

\begin{lemma}\label{lm:irred}
  The element $1-\tau_\alpha$ of $\HH_Z(\psi_V)$ is irreducible.
\end{lemma}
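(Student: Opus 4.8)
The goal is to show that $1-\tau_\alpha$ is irreducible in the commutative ring $\HH_Z(\psi_V)$. Since $Z = S$ is a torus and $\psi_V$ is a character of $Z^0$ with $Z^0$ normal in $Z$, the algebra $\HH_Z(\psi_V)$ is nothing but the twisted group algebra $C[Z_{\psi_V}/Z^0]$; but here $\psi_V$ is $Z$-invariant (it is trivial on $Z^0 \cap M'_\alpha$ and in any case, since $Z$ is commutative, conjugation is trivial on characters of $Z^0$), so in fact $\HH_Z(\psi_V) \cong C[\Lambda]$ where $\Lambda = Z/Z^0$ is a finite free $\Z$-module by Lemma~\ref{lm:max-cpt}(ii). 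Under this identification $\tau_z \mapsto [\bar z]$ for $z \in Z$ (the class of $z$ in $\Lambda$), and in particular $\tau_\alpha = \tau_{a_\alpha} \mapsto [\bar a_\alpha] =: \lambda$, where $\lambda$ is the image of $Z \cap M'_\alpha$'s generator $a_\alpha$ in $\Lambda$.

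\textbf{First step: reduce to a primitivity statement about $\lambda$.} The group algebra $C[\Lambda]$ of a finite free $\Z$-module is a Laurent polynomial ring over $C$, hence a UFD, and its units are exactly the monomials $c[\mu]$ with $c \in C^\times$, $\mu \in \Lambda$. I would first note that the element $1 - [\lambda]$ is irreducible in $C[\Z\lambda'] \cong C[t,t^{-1}]$ whenever $\lambda = n\lambda'$ forces $n = \pm 1$ — i.e.\ whenever $\lambda$ is a primitive vector (generates a direct summand) of the subgroup it spans; more precisely, $1 - t^n$ factors nontrivially in $C[t,t^{-1}]$ for $|n| \ge 2$ (e.g.\ $1-t^2 = (1-t)(1+t)$ when $\operatorname{char} C \ne 2$, and $1 - t^p = (1-t)^p$ in characteristic $p$), while $1-t$ is irreducible. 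So the crux is: \emph{is $\lambda = [\bar a_\alpha]$ a primitive vector of $\Lambda$, and does $1 - [\lambda]$ remain irreducible in all of $C[\Lambda]$ and not just in $C[\Z\lambda]$?} For the latter, by Lemma~\ref{lm:image-in-Lambda} the subgroup $\Z\lambda \subset \Lambda$ is a \emph{direct summand}, say $\Lambda = \Z\lambda \oplus \Lambda'$; then $C[\Lambda] = C[\Z\lambda] \otimes_C C[\Lambda']$, and $1 - [\lambda]$ generates the same ideal as its image $1-t$ in $C[t,t^{-1}] \otimes_C C[\Lambda']$, which is irreducible because $1-t$ is irreducible in $C[t,t^{-1}]$ and $C[\Lambda']$ is an integral domain (a Laurent polynomial ring), so $C[t,t^{-1}]\otimes_C C[\Lambda'] / (1-t) \cong C[\Lambda']$ is a domain, forcing $1-t$ prime, hence irreducible.

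\textbf{Main obstacle: controlling the spacing of $a_\alpha$ in $\Lambda$.} The real content is Lemma~\ref{lm:image-in-Lambda} (and the hypothesis that $\mathbf G^{\der}$ is simply connected, $\mathbf G/\mathbf G^{\der}$ coflasque): it tells us the image of $Z \cap M'_\alpha$ in $\Lambda$ is a direct summand, identified under $v_Z$ with $\Z \cdot \tfrac 1{\varepsilon_\alpha}\alpha_0^\vee$, and $v_Z(a_\alpha) = -\tfrac1{\varepsilon_\alpha}\alpha_0^\vee$ by Remark~\ref{rk:v_Z-a_alpha}, so $[\bar a_\alpha]$ is a generator of that direct summand, hence primitive. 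This is where I expect to spend almost all the effort — not in the UFD bookkeeping, which is routine, but in citing and correctly assembling Lemma~\ref{lm:max-cpt}, Lemma~\ref{lm:image-in-Lambda}, and Remark~\ref{rk:v_Z-a_alpha} to conclude that $\Z[\bar a_\alpha]$ splits off $\Lambda$. Once that is in hand, writing $\Lambda = \Z[\bar a_\alpha] \oplus \Lambda'$ and running the tensor-product-of-Laurent-rings argument above finishes the proof: $1 - \tau_\alpha$ corresponds to $1 - t$ in $C[t,t^{-1}]\otimes_C C[\Lambda']$, which is prime because the quotient is the domain $C[\Lambda']$, and a prime element of a domain is irreducible.
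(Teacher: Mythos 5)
Your overall strategy coincides with the paper's: untwist $\HH_Z(\psi_V)$ into the group algebra $C[\Lambda]$, use Lemma~\ref{lm:image-in-Lambda} together with the freeness of $\Lambda$ (Lemma~\ref{lm:max-cpt}(ii)) to complete the image of $a_\alpha$ to a $\Z$-basis, and observe that $1-x_1$ is irreducible in a Laurent polynomial ring. Your elaboration of the last step (the quotient by $(1-x_1)$ is the domain $C[\Lambda']$, so $1-x_1$ is prime, hence irreducible) correctly fills in what the paper dismisses as obvious.

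The one step that is not right as written is the identification $\tau_z \mapsto [\bar z]$. The map $z \mapsto \tau_z$ is multiplicative, but it is not constant on $Z^0$-cosets: for $t \in Z^0$ one has $\tau_{zt} = \psi_V(t)^{-1}\tau_z$, so the assignment $\tau_z \mapsto [\bar z]$ is ill-defined unless $\psi_V$ is trivial. The $Z$-invariance of $\psi_V$ only gives $Z_{\psi_V} = Z$; it does not by itself kill the twisting of the ``twisted group algebra''. The paper repairs exactly this point by extending $\psi_V$ to a character $\eta$ of $Z$ trivial on $Z\cap M_\alpha'$ (possible because $\psi_V$ is trivial on $Z^0\cap M_\alpha'$ and $C^\times$ is divisible) and sending $f \mapsto \eta^{-1}f$, which yields an algebra isomorphism $\HH_Z(\psi_V) \congto \HH_Z(1) = C[\Lambda]$ carrying $\tau_{a_\alpha}$ to the group element $\bar a_\alpha$. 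With that correction your argument goes through unchanged; and even with an arbitrary extension $\eta$, the element $\tau_\alpha$ would map to a nonzero scalar multiple of a primitive vector of $\Lambda$, which still suffices for irreducibility. (A last cosmetic point: in the appendix $\mathbf G$ is only quasi-split, so $Z \ne S$ in general; what you actually use is only that $Z$ is commutative, which does hold since $\mathbf Z$ is a torus.)
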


\begin{proof}
  As the character $\psi_V : Z^0 \to C\s$ is trivial on $Z^0 \cap M_\alpha'$, we can extend it to a
  character $\eta : Z \to C\s$ that is trivial on $Z \cap M_\alpha'$.  We get an isomorphism $\iota : \HH_Z(\psi_V) \congto
  \HH_Z(1) = C[\Lambda]$, defined by $\iota(f)(z) = \eta(z)^{-1} f(z)$ for $z \in Z$. In particular, $\iota(\tau_z) =
  \eta(z)^{-1}\tau_z$. Thus it suffices to show that $\iota(1-\tau_\alpha) = 1-\tau_{a_\alpha}$ is irreducible in $C[\Lambda]$.
  By Lemma~\ref{lm:image-in-Lambda} and freeness of $\Lambda$ we can extend $x_1 := a_\alpha$ to a $\Z$-basis $x_1,\dots,x_r$ of $\Lambda$.
  Obviously, $1-x_1$ is irreducible in $C[x_1^{\pm 1},\dots,x_r^{\pm 1}]$.
\end{proof}

Recall that for any $z \in Z^+$ with $\ang{\alpha,z} > 0$ we have intertwining operators $T_z^{V',V} : \kind V \to \kind V'$ and
$T_z^{V,V'} : \kind V' \to \kind V$ supported on the double coset $KzK$. %

\begin{proposition}\label{prop:change-of-weight}
  Suppose $z \in Z$ such that $v_Z(z) = \frac 1{\varepsilon_\alpha} \lambda_\alpha$.  Then $S^G(T_z^{V',V}) = \tau_{z}$ and
  $S^G(T_z^{V,V'}) = \tau_{z} (1-\tau_\alpha)$ in $\HH_Z(\psi_V)$.
\end{proposition}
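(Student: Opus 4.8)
The strategy is identical in spirit to the minuscule case of Theorem~\ref{thm:weight2,simple} (cf.\ \S\ref{sec:simple-proof-of-change-of-wt}), and the proof of Corollary~\ref{cor:choosing-K} together with Lemma~\ref{lm:irred} supplies the two geometric inputs needed to run that argument in the quasi-split setting. First I would recall that by Lemma~\ref{first}, since $\Delta(V')\subset\Delta_z$ here (as $v_Z(z)=\frac1{\varepsilon_\alpha}\lambda_\alpha$ is orthogonal to $\Delta\setminus\{\alpha\}$ and $\alpha\notin\Delta(V')$), we already have $S^G(T_z^{V',V})=\tau_z$. So the whole content of the proposition is the computation of $f:=S^G(T_z^{V,V'}*T_z^{V',V})\in\HH_Z(\psi_V)$, from which $S^G(T_z^{V,V'})=\tau_z(1-\tau_\alpha)$ follows by cancelling $\tau_z$ (legitimate since $\tau_z$ is a unit in $\HH_Z(\psi_V)$, $Z$ being commutative); note also $S^G(T_z^{V,V'}*T_z^{V',V})=S^G(T_z^{V,V'})*S^G(T_z^{V',V})$ by multiplicativity of the Satake transform.

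\textbf{Key steps.} (1) Using Lemmas~\ref{va} and~\ref{vb} (whose proofs only used that $\Delta(V)=\Delta(V')\sqcup\{\alpha\}$, $\psi_V=\psi_{V'}$, and that the character $z\mapsto\chi(\tau_{z^{-1}})$ of $Z$ extends to $M_\alpha$ trivially on $U\cap M_\alpha$ when $\chi(\tau_\alpha)=1$ --- all still valid here) I get $\chi(f)=0$ for every $C$-character $\chi$ of $\HH_Z(\psi_V)$ with $\chi(\tau_\alpha)=1$. By the Nullstellensatz, $f$ lies in the radical of the ideal $(\tau_\alpha-1)$; since $1-\tau_\alpha$ is irreducible by Lemma~\ref{lm:irred} and $\HH_Z(\psi_V)\cong C[\Lambda]$ (via the twist $\iota$ in the proof of Lemma~\ref{lm:irred}) is a Laurent polynomial ring hence a UFD, I conclude $f=f'(1-\tau_\alpha)$ for some $f'\in\HH_Z(\psi_V)$. (2) I then pin down $\supp f'$ by the convexity/support argument of \S\ref{sec:simple-proof-of-change-of-wt}: for $z'\in\supp f'$, choosing $r,s\ge0$ maximal with $z'a_\alpha^i\in\supp f'$ for $-r\le i\le s$, both endpoints $z'a_\alpha^{-r},z'a_\alpha^{s+1}$ lie in $\supp f\subset Z^+$, and (by Lemma~\ref{lm:support-satake}) satisfy $v_Z(\cdot)\in 2v_Z(z)+\R_{\le0}\Delta^\vee$; convexity of the dominant cone and of $2v_Z(z)+\R_{\le0}\Delta^\vee$ forces $z',z'a_\alpha\in Z^+$ and $v_Z(z')\in 2v_Z(z)+\R_{\le0}\Delta^\vee$. (3) Writing $v_Z(z')=\frac2{\varepsilon_\alpha}\lambda_\alpha-\sum_{\beta\in\Delta}n_\beta\beta^\vee$ with $n_\beta\ge0$, I test against each $\gamma\in\Delta$: for $\gamma\ne\alpha$, $\sum_\beta n_\beta\langle\gamma,\beta^\vee\rangle=-\langle\gamma,v_Z(z')\rangle\le0$; for $\gamma=\alpha$, using $v_Z(a_\alpha)=-\frac1{\varepsilon_\alpha}\alpha_0^\vee$ (Remark~\ref{rk:v_Z-a_alpha}) and $\alpha_0=\alpha$ or $2\alpha$, the relation $z'a_\alpha\in Z^+$ gives $\sum_\beta n_\beta\langle\alpha,\beta^\vee\rangle\le0$ as well (one checks $\langle\alpha,\frac2{\varepsilon_\alpha}\lambda_\alpha-\frac1{\varepsilon_\alpha}\alpha_0^\vee\rangle=0$ in both cases $\alpha_0=\alpha,2\alpha$, since $\langle\alpha,\lambda_\alpha\rangle=1$ and $\langle\alpha,\alpha_0^\vee\rangle=2$). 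Positive-definiteness of $(d_\gamma\langle\gamma,\beta^\vee\rangle)_{\beta,\gamma\in\Delta}$ forces all $n_\beta=0$, so $z'\in z^2Z^0$ and hence $f'\in C^\times\tau_{z^2}$. Since the coefficient of $\tau_{z^2}$ in $f$ is $1$ (again from Lemma~\ref{lm:support-satake} and $S^G(T_z^{V',V})\in\tau_z+\cdots$), we get $f'=\tau_{z^2}$ and thus $f=\tau_{z^2}-\tau_{z^2a_\alpha}$. Cancelling $\tau_z=S^G(T_z^{V',V})$ yields the two claimed formulas.

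\textbf{Main obstacle.} The genuinely new ingredient --- and the only place quasi-splitness and the careful choice of $K$ enters --- is the input $\psi_V(Z^0\cap M'_\alpha)=1$, which is needed both to define $\tau_\alpha\in\HH_Z(\psi_V)$ and to extend $\psi_V$ to $M_\alpha$ trivially on $U\cap M_\alpha$ in Lemma~\ref{vb}. This is exactly what Corollary~\ref{cor:choosing-K} provides: $\red(Z^0\cap M'_\alpha)=Z_k\cap M'_{\alpha,k}$, and since $\alpha\in\Delta(V)=\Delta'(V)$ means $\psi_V$ is trivial on $Z_k\cap M'_{\alpha,k}$, it follows that $\psi_V$ kills $Z^0\cap M'_\alpha$. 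A secondary subtlety is the correct ``spacing'' of the $\mathbf Z\cap\mathbf M'_\alpha$-coset inside $\Lambda$ --- i.e.\ the appearance of $\varepsilon_\alpha$ and $\alpha_0$ --- which I handle via Lemmas~\ref{lm:image-in-Lambda} and~\ref{lm:alpha-spacing} and Remark~\ref{rk:v_Z-a_alpha}; these make the arithmetic in Step~(3) go through verbatim from the split case with $\mu_\alpha$ replaced by $\frac1{\varepsilon_\alpha}\lambda_\alpha$ and $\alpha^\vee$ by $\frac1{\varepsilon_\alpha}\alpha_0^\vee$. Everything else is a routine transcription of \S\ref{sec:simple-proof-of-change-of-wt}.
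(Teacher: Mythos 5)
Your overall architecture matches the paper's proof exactly: Lemma~\ref{first} gives $S^G(T_z^{V',V})=\tau_z$; the characters $\chi$ with $\chi(\tau_\alpha)=1$ kill $f$ via the parabolically induced character $\Ind_{P_\alpha}^G\sigma_\chi$; the Nullstellensatz together with Lemma~\ref{lm:irred} and the UFD property yield $f=f'(1-\tau_\alpha)$; and the convexity/support analysis pins $f'$ down. However, there is a genuine gap in your Step~(3) at the pairing with $\gamma=\alpha$, precisely when $2\alpha\in\Phi$ (a multipliable simple root, i.e.\ a component of type $\mathrm{BC}$). You assert $\langle\alpha,\alpha_0^\vee\rangle=2$ ``in both cases'', but when $\alpha_0=2\alpha$ one has $\alpha_0^\vee=(2\alpha)^\vee=\frac12\alpha^\vee$, so $\langle\alpha,\alpha_0^\vee\rangle=1$, not $2$. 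Consequently
\[
\langle\alpha,v_Z(z'a_\alpha)\rangle=\tfrac{2}{\varepsilon_\alpha}-\tfrac{1}{\varepsilon_\alpha}-\sum_{\beta\in\Delta} n_\beta\langle\alpha,\beta^\vee\rangle,
\]
and the condition $z'a_\alpha\in Z^+$ only gives $\sum_\beta n_\beta\langle\alpha,\beta^\vee\rangle\le\frac{1}{\varepsilon_\alpha}$, which is not $\le 0$; the positive-definiteness step does not close.

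To repair this one needs the strictly stronger bounds $\langle\alpha,v_Z(z')\rangle\ge\frac1{\varepsilon_\alpha}$ and $\langle\alpha,v_Z(z'a_\alpha)\rangle\ge\frac1{\varepsilon_\alpha}$ on $\supp\psi'$, which the paper reduces (using $S^G(T_z^{V',V})=\tau_z$ to shift by $z$) to showing that $\langle\alpha,v_Z(z')\rangle\ge0$ for every $z'$ in the support of $S^G(\varphi)$ for an \emph{arbitrary} Hecke operator $\varphi\in\HH_G(V_1,V_2)$. That is a nontrivial Bruhat--Tits computation: one must show $z'^{-1}(U_\alpha\cap K)z'\subsetneq U_\alpha\cap K_+$ whenever $\langle\alpha,v_Z(z')\rangle<0$, by comparing the jumps of the filtrations $U_{\alpha,u}$ and $U_{2\alpha,u}$ and ruling out the simultaneous occurrence of ``$F_\alpha/F_{2\alpha}$ ramified and $g(2\alpha)=2g(\alpha)$'' via a trace/valuation argument in the quadratic extension $F_\alpha/F_{2\alpha}$. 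This is the real content of the non-reduced case and the main reason the quasi-split proof is not a verbatim transcription of \S\ref{sec:simple-proof-of-change-of-wt}; as written, your argument is complete only when the relative root system $\Phi$ is reduced (i.e.\ $\alpha_0=\alpha$).
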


\begin{proof}
  We have that $S^G(T_z^{V',V}) = \tau_{z}$ by Lemma~\ref{first} and the coefficient of $\tau_z$ in $S^G(T_z^{V,V'})$ is 1.
  It thus suffices to show that $\psi \in C \tau_{z^2} (1-\tau_{\frac 1{\varepsilon_\alpha} \alpha_0\dual})$, where 
  $\psi = S^G(T_z^{V,V'} * T_z^{V',V}) \in \HH_Z(\psi_V)$.

  Pick any algebra homomorphism $\chi: \HH_Z(\psi_V) \to C$. Then as in \S\ref{subsec:Reducibility and change of weight} we know that the character
  $\sigma_\chi := \chi \otimes_{\HH_Z(\psi_V)}\ind_{Z^0}^Z \psi_V$ of $Z$ is given by $z\mapsto \chi (\tau_{z^{-1}})$, and
  that the restriction of $\sigma_\chi$ to $Z^0$ equals $\psi_V$.
  Assume now that $\chi(\tau_\alpha) = 1$. We know that $\sigma_\chi$ is trivial on the image of $Z^0 \cap M_\alpha'$ by
  above. Moreover, $Z \cap M_\alpha'$ is generated by $Z^0 \cap M_\alpha'$ and $a_\alpha$, so $\sigma_\chi$ is trivial on $Z
  \cap M_\alpha'$, as $\sigma_\chi(a_\alpha) = \chi(\tau_\alpha^{-1}) = 1$.
  As $M_\alpha = \langle Z, U_{\pm \alpha}\rangle$, we have an isomorphism $Z/(Z \cap M_\alpha') \cong M_\alpha/M_\alpha'$,
  so $\sigma_\chi$ extends to a smooth character of $M_\alpha$, which we still denote by $\sigma_\chi$. By Frobenius
  reciprocity, the induced representation $\Ind_{P_{\alpha}}^G \sigma_\chi$ contains $V$ but not $V'$, and the Hecke
  eigenvalues of $V$ in $\Ind_{P_{\alpha}}^G \sigma_\chi$ are given by $\chi$ via $S^G$ (see Lemma~\ref{va} and the
  proof of Lemma~\ref{vb}). As in \S\ref{sec:simple-proof-of-change-of-wt} we deduce that $\chi(\psi) = 0$.

  We saw that $\chi(1-\tau_\alpha) = 0$ implies that $\chi(\psi) = 0$. By the Nullstellensatz we get that $\psi$ is contained
  in the radical of the ideal $(1-\tau_\alpha)$, hence by Lemma~\ref{lm:irred} and the fact that $\HH_Z(\psi_V) (\approx C[\Lambda])$ is a UFD,
  we see that $\psi = \psi' (1-\tau_\alpha)$ for some $\psi' \in \HH_Z(\psi_V)$.

  As in \S\ref{sec:simple-proof-of-change-of-wt}, by Lemma~\ref{lm:support-satake}, we now see that if $z' \in Z$ is in the support of $\psi'$, then
  \begin{gather}
    z' \in Z^+,\ z' a_\alpha \in Z^+; \label{eq:13}\\
    v_Z(z') \le_{\R} \textstyle\frac 2{\varepsilon_\alpha} \lambda_\alpha,\ v_Z(z' a_\alpha) \le_{\R} \textstyle\frac 2{\varepsilon_\alpha} \lambda_\alpha.\label{eq:14}
  \end{gather}
  (This follows since for $z' \in \supp \psi$ we have $z' \in Z^+$ and $v_Z(z') \le_{\R} \textstyle\frac 2{\varepsilon_\alpha} \lambda_\alpha$.)
  From \eqref{eq:14} we can write
  \begin{equation}
    \label{eq:15}
    v_Z(z') = \textstyle\frac 2{\varepsilon_\alpha} \lambda_\alpha - \sum_\Delta n_\beta \beta^\vee
  \end{equation}
  for some $n_\beta \in \R_{\ge 0}$. Hence by Remark~\ref{rk:v_Z-a_alpha},
  \begin{equation}
    \label{eq:16}
    v_Z(z' a_\alpha) = \textstyle\frac 2{\varepsilon_\alpha} \lambda_\alpha - \textstyle\frac 1{\varepsilon_\alpha} \alpha_0\dual - \sum_\Delta n_\beta \beta^\vee.
  \end{equation}
  For $\gamma \in \Delta-\{\alpha\}$ we pair \eqref{eq:15} with $\gamma$ and deduce that $\sum_\Delta n_\beta \langle \gamma,\beta^\vee\rangle \le 0$.

  \emph{Case 1: $2\alpha \not \in \Phi$, so $\alpha_0\dual = \alpha\dual$.} 
  We pair \eqref{eq:16} with $\alpha$ and deduce that $\sum_\Delta n_\beta \langle \alpha,\beta^\vee\rangle \le 0$. Hence as in \S\ref{sec:simple-proof-of-change-of-wt}
  we get that $n_\beta = 0$ for all $\beta \in \Delta$, so $\psi'$ is a scalar multiple of $\tau_{z^2}$, as required.

  \emph{Case 2: $2\alpha \in \Phi$, so $\alpha_0\dual = \frac 12 \alpha\dual$}. The above proof goes through, provided we
  show
  \begin{equation}
    \label{eq:12}
    \langle \alpha,v_Z(z')\rangle \ge \textstyle\frac 1{\varepsilon_\alpha},\ \langle \alpha,v_Z(z' a_\alpha)\rangle \ge \textstyle\frac 1{\varepsilon_\alpha}
  \end{equation}
  for any $z' \in \supp \psi'$. For this it is enough to show that $\langle \alpha,v_Z(z')\rangle \ge \textstyle\frac 1{\varepsilon_\alpha}$
  for any $z' \in \supp \psi$. As $S^G(T_z^{V',V}) = \tau_{z}$ by Lemma~\ref{first} it suffices to show that $\langle \alpha,v_Z(z')\rangle \ge 0$
  for any $z' \in \supp S^G(T_z^{V,V'})$. In fact, we will show that $\langle \alpha,v_Z(z')\rangle \ge 0$
  for any $z' \in \supp S^G(\vp)$ and any $\vp \in \HH_G(V_1,V_2)$ (where $V_1, V_2$ are irreducible representations of $K$).

  By \cite[\S7.9]{MR3331726}, it suffices to show that $z'^{-1} (U_\alpha \cap K) z'$ is a proper subgroup of $U_\alpha \cap K_+$
  for $z' \in Z$ such that $\ang{\alpha,z'} < 0$.  Using notation as in \cite[\S6]{MR3331726} we can write $z'^{-1}(U_\alpha \cap
  K)z' = U_{\alpha, g(\alpha)-\ang{\alpha,z'}}U_{2\alpha, g(2\alpha)-2\ang{\alpha,z'}}$ and $U_\alpha \cap K_+ = U_{\alpha,
    g^*(\alpha)}U_{2\alpha, g^*(2\alpha)}$. Recall that $g^*(\beta) = g(\beta)_+$ if a jump occurs in the
  $U_{\beta,u}$-filtration (modulo $U_{2\beta}$ if $2\beta$ is a root) at $u = g(\beta)$ and $g^*(\beta) = g(\beta)$
  otherwise. Also note the set of jumps of the $U_{\beta,u}$-filtration (modulo $U_{2\beta}$) are invariant under shifts by
  $\ang{\beta,z'}$ (as $Z$ acts on the apartment with all its structures). For any fixed $\beta \in \{\alpha,2\alpha\}$ it
  follows that $U_{\beta,g(\beta)-\ang{\beta,z'}} \subset U_{\beta,g^*(\beta)}$ and if equality holds, then the
  $U_{\beta,u}$-filtration (modulo $U_{2\beta}$) jumps precisely at the elements $u \in g(\beta) + \ang{\beta,z'}\Z$. Thus $z'^{-1}
  (U_\alpha \cap K) z' \subset U_\alpha \cap K_+$ and if equality holds, then the $U_{\beta,u}$-filtration (modulo
  $U_{2\beta}$) jumps precisely at the elements $u \in g(\beta) + \ang{\beta,z'}\Z$ for $\beta \in \{\alpha,2\alpha\}$;
  in particular, $g(2\alpha) = 2g(\alpha)$ from the definition of $g$.

  By \cite[4.2.21]{bib:BT2} the jumps in the $U_{2\alpha,u}$-filtration occur when $u \in \ord_F(F_\alpha^0 - \{0\})$ and in
  the $U_{\alpha,u}$-filtration (modulo $U_{2\alpha}$) occur when $u \in \frac 12 \ord_F(\ell) + \ord_F(F_\alpha\s)$. Here,
  $F_\alpha^0$ denotes the elements of $F_\alpha$ that are of trace~0 in the separable quadratic extension $F_\alpha/F_{2\alpha}$,
  $\ell \in F_\alpha$ denotes an element of trace~1 of maximum possible valuation.  Note that $F_\alpha^0-\{0\}$ is principal
  homogeneous under the $F_{2\alpha}\s$-action, so the spacing of the jumps in the $U_{2\alpha,u}$-filtration is
  $\ord_F(F_{2\alpha}\s)$.  The spacing of the jumps in the $U_{\alpha,u}$-filtration (modulo $U_{2\alpha}$) is
  $\ord_F(F_{\alpha}\s)$.

  So if equality holds above, then $F_\alpha/F_{2\alpha}$ is ramified and $g(2\alpha) = 2g(\alpha)$. We finish by showing
  that this is impossible. By the previous paragraph we can pick $\ell' \in F_\alpha^0-\{0\}$ of the same valuation as
  $\ell$.  As $F_\alpha/F_{2\alpha}$ is ramified we can scale $\ell'$ by an element of $\O_{F_{2\alpha}}\s$ such that
  $\ord_F(\ell-\ell') > \ord_F(\ell)$. This contradicts that $\ell$ has maximum possible valuation among elements of trace~1.
  (Alternatively, from Tits' tables in \cite{MR546588} the affine root system can only be non-reduced if 
  the adjoint group has a factor isomorphic to $\Res_{E/F} H$, where $H \cong \PU(m+1,m)$ is unramified and $E/F$ is finite separable
  and in that case the extension $F_\alpha/F_{2\alpha}$ is unramified.)
\end{proof}

We can now deduce Theorem~\ref{thm:change-of-weight} from Proposition~\ref{prop:change-of-weight} exactly as in \S\ref{sec:simple-proof-of-change-of-wt2}, replacing $\mu_\alpha$ there by $\frac 1{\varepsilon_\alpha} \lambda_\alpha$.  (It is still true, by
Lemma~\ref{lm:alpha-spacing}, that if $z \in Z^+$ with $\ang{\alpha,v_Z(z)} > 0$ and $v_Z(z_0) = \frac 1{\varepsilon_\alpha} \lambda_\alpha$
then $z z_0^{-1} \in Z^+$.)

\subsection{First reduction step}
\label{sec:first-reduction}

We continue to assume that $\mathbf G^{\der}$ is simply connected and $\mathbf G/\mathbf G^{\der}$ is coflasque.  We now reduce to the basic case
(\S\ref{sec:basic}).

\begin{proposition}\label{prop:dual-z-ext}
  There exists a quasi-split connected reductive group $\mathbf G_1$ containing $\mathbf G$ as a closed normal subgroup such that
  \begin{enumerate}
  \item $\mathbf G_1^{\der} = \mathbf G^{\der}$;
  \item the torus $\mathbf G_1/\mathbf G_1^{\der}$ is coflasque;
  \item $1 \to \mathbf {Z_{G_1}} \to \mathbf Z_1 \to \mathbf Z_1/\mathbf {Z_{G_1}} \to 1$ is a split exact sequence of $F$-tori.
  \end{enumerate}
  Here, $\mathbf Z_1$ denotes the minimal Levi $\mathbf Z \cdot \mathbf {Z_{G_1}} = \mathbf C_{\mathbf G_1}(\mathbf Z)$ of $\mathbf G_1$.
\end{proposition}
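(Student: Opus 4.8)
The plan is to use the standard central extension construction, the same one used in \S\ref{sec:more-general-case}(1): set
$$\mathbf{G}_1 := (\mathbf{G} \times \mathbf{Z})/\{(z, z^{-1}) : z \in \mathbf{Z}_{\mathbf{G}}\}$$
(cf.\ \cite[5.18]{MR0393266}). Since $\mathbf{G}^{\der}$ is simply connected, $\mathbf{Z}_{\mathbf{G}}$ is a torus, so this is a genuine quotient of connected reductive groups; the first projection is a closed immersion $\mathbf{G} \into \mathbf{G}_1$ with central image, so $\mathbf{G}$ is a closed normal subgroup, and $\mathbf{G}_1$ is quasi-split because $\mathbf{G}_1^{\der}$ is. The second projection identifies $\mathbf{Z} \congto \mathbf{Z}_{\mathbf{G}_1}$. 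As $\mathbf{G}_1/\mathbf{G}^{\der}$ is a quotient of the torus $(\mathbf{G}/\mathbf{G}^{\der}) \times \mathbf{Z}$ it is a torus, and $\mathbf{G}^{\der}$ is perfect, so $\mathbf{G}_1^{\der} = \mathbf{G}^{\der}$: this gives (i). Moreover a direct computation gives $\mathbf{C}_{\mathbf{G}_1}(\mathbf{Z}) = \mathbf{Z}\cdot\mathbf{Z}_{\mathbf{G}_1} \cong (\mathbf{Z} \times \mathbf{Z})/\{(z, z^{-1}) : z \in \mathbf{Z}_{\mathbf{G}}\}$, and a dimension count shows this is a maximal torus of $\mathbf{G}_1$, hence the minimal Levi $\mathbf{Z}_1$.

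For (ii) I would compute $\mathbf{G}_1/\mathbf{G}_1^{\der} = \mathbf{G}_1/\mathbf{G}^{\der}$. Writing $\mathbf{D} := \mathbf{G}/\mathbf{G}^{\der}$, the defining presentation yields $\mathbf{G}_1/\mathbf{G}^{\der} = (\mathbf{D} \times \mathbf{Z})/\mathbf{Z}_{\mathbf{G}}$, where $\mathbf{Z}_{\mathbf{G}}$ is embedded as the image of $z \mapsto (\bar z, z^{-1})$ (using $\mathbf{Z}_{\mathbf{G}} \to \mathbf{D}$ and $\mathbf{Z}_{\mathbf{G}} \into \mathbf{Z}$). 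This sits in a short exact sequence $1 \to \mathbf{D} \to \mathbf{G}_1/\mathbf{G}^{\der} \to \mathbf{Z}/\mathbf{Z}_{\mathbf{G}} \to 1$. Now $\mathbf{D}$ is coflasque by assumption, and $\mathbf{Z}/\mathbf{Z}_{\mathbf{G}}$ is the maximal torus of $\mathbf{G}_{\ad}$, whose character module is the root lattice $\Z\Phi\abs = \Z\Delta\abs$; since $\mathbf{G}$ is quasi-split, $\Gamma_F$ permutes $\Delta\abs$, so this character module is a permutation module and therefore coflasque. An extension of a coflasque torus by a coflasque torus is again coflasque (immediate from the long exact sequence in $H^1(F',-)$ of the dual lattices and vanishing of $H^1$ for coflasque lattices over every finite separable $F'$), which gives (ii).

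For (iii) the sequence $1 \to \mathbf{Z}_{\mathbf{G}_1} \to \mathbf{Z}_1 \to \mathbf{Z}_1/\mathbf{Z}_{\mathbf{G}_1} \to 1$ is identified with $1 \to \mathbf{Z} \to (\mathbf{Z} \times \mathbf{Z})/\{(z,z^{-1}) : z \in \mathbf{Z}_{\mathbf{G}}\} \to \mathbf{Z}/\mathbf{Z}_{\mathbf{G}} \to 1$, where $\mathbf{Z}_{\mathbf{G}_1}$ is the image of the second factor and the quotient map takes the class of $(z_1,z_2)$ to the image of $z_1$. I would then exhibit the $F$-torus section $\mathbf{Z}/\mathbf{Z}_{\mathbf{G}} \to \mathbf{Z}_1$ sending the class of $z$ to the class of $(z,z^{-1})$: it is well defined modulo $\mathbf{Z}_{\mathbf{G}}$ precisely because $(zz',(zz')^{-1}) = (z,z^{-1})\cdot(z',z'^{-1})$ for $z' \in \mathbf{Z}_{\mathbf{G}}$, it is a homomorphism, it is injective, and its composite with the quotient map is the identity; this splits the sequence, proving (iii). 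The construction is routine; the one point genuinely requiring care is (ii), and the essential input there is that for quasi-split $\mathbf{G}$ the character lattice of the maximal torus of $\mathbf{G}_{\ad}$ — the root lattice with its $*$-action — is a permutation $\Gamma_F$-module, hence coflasque. This is exactly the feature that fails in general, and it is why one does not expect the analogue to hold without the quasi-split hypothesis. One must also keep the several torus quotients straight and verify the exact sequences and the section over $F$ rather than just over $F\sep$, but these are mechanical once the presentations are spelled out.
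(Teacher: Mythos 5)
Your proposal is correct and follows essentially the same route as the paper: the same construction $(\mathbf G \times \mathbf Z)/\{(z,z^{-1})\}$, the same short exact sequence $1 \to \mathbf G/\mathbf G^{\der} \to \mathbf G_1/\mathbf G_1^{\der} \to \mathbf Z/\mathbf{Z_G} \to 1$ with $\mathbf Z/\mathbf{Z_G}$ induced (i.e.\ its character lattice is the permutation module $\Z\Delta\abs$) for (ii), and the same section $[z] \mapsto [(z,z^{-1})]$ for (iii). The only cosmetic difference is that the paper cites "induced" where you verify "permutation module, hence coflasque" directly, which is the same fact.
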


\begin{proof}
  We define $\mathbf G_1$ and $\mathbf Z_1$ exactly as in~\S\ref{sec:more-general-case}(1), so in particular (i) holds.
  The exact sequence $1 \to
  \mathbf {Z_{G_1}} \to \mathbf Z_1 \to \mathbf Z/\mathbf {Z_G} \to 1$, where the second map is induced by the first projection, has a canonical splitting
  induced by $\mathbf Z \to \mathbf Z \times \mathbf Z$, $z \mapsto (z,z^{-1})$. This implies (iii). Finally, consider the short exact sequence
  $1 \to \mathbf G/\mathbf G^{\der} \to \mathbf G_1/\mathbf G_1^{\der} \to \mathbf Z/\mathbf {Z_G} \to 1$. The first term is coflasque by assumption and the last term
  is induced because it is the maximal torus in the quasi-split adjoint group $\mathbf G/\mathbf {Z_G}$. Hence $\mathbf G_1/\mathbf G_1^{\der}$ is coflasque and
  (ii) follows.
\end{proof}

Hence the group $\mathbf G_1$ is as in \S\ref{sec:basic}. The reduced buildings of $G$ and $G_1$ are canonically identified
with each other (as the reduced building only depends on the adjoint group), in particular there is a natural bijection
between special parahoric subgroups of these two groups.  Denote by $K_1$ any special parahoric subgroup of
$G_1$ %
and let $K$ denote the corresponding special parahoric subgroup of $G$.

\begin{lemma}\label{lm:parahoric-and-dual-z-ext}
  We have $K = K_1 \cap G$.
\end{lemma}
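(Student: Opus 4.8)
The statement to prove is Lemma~\ref{lm:parahoric-and-dual-z-ext}: for a $z$-extension-type situation $\mathbf G \into \mathbf G_1$ (with $\mathbf G$ a closed normal subgroup, $\mathbf G_1^{\der} = \mathbf G^{\der}$, and $\mathbf Z_1 = \mathbf Z \cdot \mathbf{Z_{G_1}}$ a maximal torus of $\mathbf G_1$), and for corresponding special parahoric subgroups $K_1 \subset G_1$ and $K \subset G$ attached to the same special point $x_0$ of the common reduced building, one has $K = K_1 \cap G$. The plan is to compare the two groups via the Bruhat--Tits description of parahoric subgroups as stabilizers of $x_0$ intersected with the kernel of the Kottwitz homomorphism, and to exploit that $G$ and $G_1$ have literally the same reduced building, the same root subgroups, and the same valuation data.

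First I would observe the easy inclusion $K \subseteq K_1 \cap G$: by functoriality of the Bruhat--Tits building and of the Kottwitz homomorphism, the inclusion $G \into G_1$ is compatible with the identification of reduced buildings and sends the kernel of $w_G$ into the kernel of $w_{G_1}$ (the latter because $G$ is normal in $G_1$, so $w_{G_1}|_G$ factors through $w_G$ up to the functorial map on the target lattices, and $\ker w_G$ maps into $\ker w_{G_1}$; concretely one can also argue directly via $v_Z$ as in the proof of Lemma~\ref{lm:max-cpt}). Since $K$ is by definition the pointwise stabilizer of $x_0$ in $\ker w_G$ and $K_1$ the pointwise stabilizer of $x_0$ in $\ker w_{G_1}$, any $k \in K$ lies in $K_1$ and in $G$, giving $K \subseteq K_1 \cap G$.

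For the reverse inclusion, I would take $g \in K_1 \cap G$ and show $g \in K$. Since $g \in K_1$, $g$ fixes $x_0$ in the reduced building of $G_1$, which is the reduced building of $G$, so $g$ fixes $x_0$ in the building of $G$. It remains to check $g \in \ker w_G$. Here I would use that $\mathbf G^{\der} = \mathbf G_1^{\der}$ and that $\ker w_G \supseteq G^{\der} \cdot (\text{stuff from the center})$; more precisely, the Kottwitz homomorphisms fit in a commutative diagram and $w_{G_1}(g) = 1$ together with $g \in G$ forces $w_G(g)$ to lie in the torsion-free kernel of the injection $X_*(\mathbf Z)^\sigma_{I_F} \to X_*(\mathbf Z_1)^\sigma_{I_{F}}$ induced by $\mathbf Z \into \mathbf Z_1$ — but that map is injective (it is split injective, as $\mathbf Z$ is a direct factor of $\mathbf Z_1$ up to isogeny, cf.\ the splitting in Proposition~\ref{prop:dual-z-ext}(iii)), hence $w_G(g) = 1$. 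This is essentially the same argument as in part~(d) of the proof of \cite[Proposition~3]{bib:haines-rap} used earlier in Step~3 of Proposition~\ref{prop:choosing-K}, and I would cite it.

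The main obstacle — really the only subtle point — is making precise the compatibility of the two Kottwitz homomorphisms and checking injectivity of the induced map on the relevant lattices so that $w_{G_1}(g) = 1 \Rightarrow w_G(g) = 1$; everything else (the building identification, the easy inclusion) is formal. I expect this to be handled cleanly by invoking the functoriality of the Kottwitz homomorphism under the normal inclusion $G \into G_1$ and the torsion-freeness statement from Lemma~\ref{lm:max-cpt}(i), exactly paralleling the reasoning already deployed in the proof of Proposition~\ref{prop:choosing-K}, Step~3. I would therefore keep the write-up short, deducing the lemma by quoting those earlier arguments rather than redoing the Bruhat--Tits bookkeeping.
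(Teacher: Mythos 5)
Your overall strategy coincides with the paper's: identify the reduced buildings of $G$ and $G_1$, use that a parahoric is the fixer of a facet inside the kernel of the Kottwitz homomorphism, and reduce both inclusions to the commutative diagram of Kottwitz homomorphisms together with the injectivity of the induced map between their targets. The easy inclusion and the formal reduction are fine. The gap is in your justification of that injectivity, which is the only real content of the lemma.

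First, since $\mathbf G^{\der}=\mathbf G_1^{\der}$ is simply connected, the targets are $X_*(\mathbf G/\mathbf G^{\der})_{I_F}^\sigma$ and $X_*(\mathbf G_1/\mathbf G_1^{\der})_{I_F}^\sigma$, not $X_*(\mathbf Z)_{I_F}^\sigma$ and $X_*(\mathbf Z_1)_{I_F}^\sigma$; the relevant map of lattices therefore comes from the sequence $1 \to \mathbf G/\mathbf G^{\der} \to \mathbf G_1/\mathbf G_1^{\der} \to \mathbf Z/\mathbf {Z_G} \to 1$, not from $\mathbf Z \into \mathbf Z_1$. Second, the splitting you invoke is of the wrong sequence: Proposition~\ref{prop:dual-z-ext}(iii) splits $1 \to \mathbf {Z_{G_1}} \to \mathbf Z_1 \to \mathbf Z_1/\mathbf {Z_{G_1}} \to 1$ and says nothing about $\mathbf G/\mathbf G^{\der} \into \mathbf G_1/\mathbf G_1^{\der}$; in the latter sequence the subtorus is coflasque and the quotient is induced, which is the \emph{opposite} of the configuration (induced sub, coflasque quotient) that is guaranteed to split, so no splitting is available. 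Third, even injectivity of the map of $I_F$-modules before taking coinvariants would not suffice, because $(\cdot)_{I_F}$ is only right exact. What is actually needed, and what the paper supplies, is: by the long exact homology sequence the kernel of $X_*(\mathbf G/\mathbf G^{\der})_{I_F} \to X_*(\mathbf G_1/\mathbf G_1^{\der})_{I_F}$ is a quotient of the torsion group $H_1(\Gamma, X_*(\mathbf Z/\mathbf {Z_G}))$ for a finite quotient $\Gamma$ of $I_F$, while the source is torsion-free because $\mathbf G/\mathbf G^{\der}$ is coflasque (the argument of Lemma~\ref{lm:max-cpt}(i)); hence the kernel vanishes. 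Finally, the citation of \cite{bib:haines-rap} concerns surjectivity of $\wt K \onto K$ for a $z$-extension (a surjection with induced-torus kernel), which is a different compatibility and does not yield the injectivity you need here.
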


\begin{proof}
  Consider the commutative diagram given by functoriality of the Kottwitz homomorphism. (Note that the codomains simplify, since $\mathbf G^{\der} = \mathbf G_1^{\der}$ is simply connected. See~\cite[\S7.4]{kottwitz}.)
  \begin{equation*}
    \xymatrix{G\mathstrut \ar[r]^-{w_G}\ar@{^{(}->}[d] & X_*(\mathbf G/\mathbf G^{\der})_{I_F}^\sigma \ar[d] \\
    G_1 \ar[r]^-{w_{G_1}} & X_*(\mathbf G_1/\mathbf G_1^{\der})_{I_F}^\sigma}
  \end{equation*}
  We claim that the vertical arrow on the right is injective. The first term in the short exact sequence 
  $1 \to \mathbf G/\mathbf G^{\der} \to \mathbf G_1/\mathbf G_1^{\der} \to \mathbf Z/\mathbf {Z_G} \to 1$ of $F$-tori is coflasque, so $X_*(\mathbf G/\mathbf G^{\der})_{I_F}$ is torsion-free, as noted
  in the proof of Lemma~\ref{lm:max-cpt}. Let $\Gamma$ be a finite quotient of $I_F$ through which it acts on the character groups of the tori in the sequence.
  Then $H_1(\Gamma, X_*(\mathbf Z/\mathbf {Z_G}))$ is torsion, as $\Gamma$ is finite, so $X_*(\mathbf G/\mathbf G^{\der})_{I_F} \to X_*(\mathbf G_1/\mathbf G_1^{\der})_{I_F}$ is injective,
  which implies the claim.

  Since the reduced buildings of $G$ and $G_1$ are naturally identified and parahoric subgroups are the fixers of
  facets in the kernel of the Kottwitz homomorphism, it follows that $K = K_1 \cap G$.
\end{proof}

\begin{lemma}\label{lm:extend}
  The restriction to $K$ of any irreducible representation of $K_1$ is irreducible. Conversely, any irreducible  representation of $K$  extends to $K_1$. 
\end{lemma}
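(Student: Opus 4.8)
The plan is to reduce both assertions to statements about the finite reductive groups $G_k$ and $G_{1,k}$, and then to a routine question about extending a character of a finite abelian group into $C^\times$.

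First I would pass to the residue groups. Since $\mathbf G$ is normal in $\mathbf G_1$, the subgroup $K = K_1 \cap G$ (Lemma~\ref{lm:parahoric-and-dual-z-ext}) is normal in $K_1$, so its pro-$p$ radical $K(1)$, being the unique maximal normal pro-$p$ subgroup of $K$, is preserved by conjugation by $K_1$ and hence is normal in $K_1$; therefore $K(1) \subseteq K_1(1)$. Conversely $K \cap K_1(1)$ is a normal pro-$p$ subgroup of $K$, hence is contained in $K(1)$, so $K \cap K_1(1) = K(1)$. Thus the reduction maps $\red\colon K \onto G_k$ and $\red_1\colon K_1 \onto G_{1,k}$ fit into a commutative square inducing a $G_{1,k}$-equivariant closed embedding $G_k \hookrightarrow G_{1,k}$ with $G_k$ normal in $G_{1,k}$.

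Next I would establish the structural input $G_{1,k} = G_k \cdot \mathcal Z$, where $\mathcal Z := \red_1(Z_{G_1}^0)$ is the image in $G_{1,k}$ of $Z_{G_1}^0 := \mathbf{Z_{G_1}}(F) \cap K_1$, and $\mathcal Z$ is central in $G_{1,k}$. Centrality is clear: $Z_{G_1}^0$ lies in the center $\mathbf{Z_{G_1}}(F)$ of $G_1$, which is a torus by Proposition~\ref{prop:dual-z-ext}(iii), and it commutes with $K_1$, so $\mathcal Z$ commutes with $\red_1(K_1) = G_{1,k}$. For the product decomposition one uses $\mathbf G_1 = \mathbf G \cdot \mathbf{Z_{G_1}}$ together with Bruhat--Tits theory for the parahoric group schemes of $G$, $G_1$ and $\mathbf{Z_{G_1}}$ at the special point $x_0$: as in the proof of Lemma~\ref{lm:parahoric-and-dual-z-ext}, the canonical identification of reduced buildings makes the integral models compatible, which yields the equality of residue groups; this is the split analogue of \cite[\S3.2]{MR3143708}. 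I expect this to be the main obstacle, since it requires care with the integral structures and with how the centers of the generic and special fibers are related.

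Granting $G_{1,k} = G_k \cdot \mathcal Z$ with $\mathcal Z$ central, set $A := \mathcal Z \cap G_k$, which is central in $G_k$. Multiplication induces an isomorphism $G_{1,k} \cong (G_k \times \mathcal Z)/\{(a,a^{-1}) : a \in A\}$, so the irreducible representations of $G_{1,k}$ are exactly those of the form $V \boxtimes \chi$, where $V$ is an irreducible representation of $G_k$, $\chi$ is a character of $\mathcal Z$, and $\chi|_A$ equals the scalar character by which the central subgroup $A$ acts on $V$. For the first assertion of the lemma: if $W$ is an irreducible representation of $K_1$, regarded as a representation of $G_{1,k}$, then since $\mathcal Z$ acts by a character and $G_{1,k} = G_k \cdot \mathcal Z$, every $G_k$-stable subspace of $W$ is $G_{1,k}$-stable, so $W|_{G_k}$ is irreducible; this says precisely that $W|_K$ is irreducible. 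For the converse: given an irreducible representation $V$ of $K$, i.e.\ of $G_k$, the central subgroup $A$ acts on $V$ by a character $\chi_0\colon A \to C^\times$; as $C^\times$ is a divisible abelian group it is an injective $\Z$-module, so $\chi_0$ extends to a character $\chi$ of $\mathcal Z$. Then $V \boxtimes \chi$ descends to an irreducible representation of $G_{1,k}$, i.e.\ of $K_1$, whose restriction to $K$ is $V$. This proves the lemma.
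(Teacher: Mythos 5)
Your reduction to the residue groups is fine and matches the paper's setup ($K = K_1 \cap G$ is normal in $K_1$, the pro-$p$ radicals are compatible, and one gets a normal embedding $G_k \hookrightarrow G_{1,k}$). The proof breaks at the structural claim $G_{1,k} = G_k \cdot \mathcal Z$ with $\mathcal Z$ the (central) image of $Z_{G_1}^0$: this is false in general, and it is not merely a delicate point about integral models. Take $\mathbf G = \SL_2$ (so $\mathbf G^{\der}$ is simply connected and $\mathbf G/\mathbf G^{\der}$ is trivially coflasque); then $\mathbf G_1 \cong \GL_2$, $K_1 = \GL_2(\mathcal O)$, $G_k = \SL_2(k)$, $G_{1,k} = \GL_2(k)$, and $\mathcal Z = k^\times$ (scalar matrices), so $G_k \cdot \mathcal Z$ is the subgroup of matrices whose determinant is a square — a proper (index $2$ for $q$ odd) subgroup of $G_{1,k}$. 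The obstruction is exactly $H^1(k,\mathbf{Z_G})$ coming from the non-smooth/disconnected kernel $\mathbf{Z_G}$ of $\mathbf G \times \mathbf{Z_{G_1}} \onto \mathbf G_1$, to which Lang's theorem does not apply. Consequently neither direction of your argument goes through as written: irreducibility of $W|_{G_k}$ does not follow from $\mathcal Z$ acting by scalars, and extending a representation of $G_k$ to $G_{1,k}$ is not just the extension of a character of $A$ to $\mathcal Z$.

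The missing ingredient is genuinely representation-theoretic and specific to characteristic $p$. The paper works instead with the chain $G_{1,k}' \subset G_k \subset G_{1,k}$ and proves that every irreducible $C$-representation of a finite reductive group $H$ restricts irreducibly to $H'$, the subgroup generated by unipotents; this is done by lifting to an algebraic representation with $q$-restricted highest weight (and a $z$-extension to reduce to the simply connected case). That statement is false in characteristic $0$ (irreducible representations of $\GL_2(k)$ need not restrict irreducibly to $\SL_2(k)$ there), so no purely group-theoretic decomposition of $G_{1,k}$ into $G_k$ times a central piece can suffice; some version of the mod-$p$ highest weight argument has to enter. Once the restriction statement is in hand, the extension direction is handled as in the paper by taking an irreducible quotient of $\Ind_{G_k}^{G_{1,k}} V$, rather than by extending a central character.
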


\begin{proof}
  Note that as $K \lhd K_1$, the pro-$p$ radical of $K$ is normal in $K_1$, so we get a commutative diagram as follows:
  \begin{equation}\label{eq:17}
    \begin{gathered}
      \xymatrix{K \ar@{^{(}->}[r]\ar@{->>}[d] & K_1 \ar@{->>}[d] \\
        G_k\ar@{^{(}->}[r] & G_{1,k}}
    \end{gathered}
  \end{equation}  
  Note that $G_{1,k}' \subset G_k \subset G_{1,k}$.  It is enough to show that any irreducible
  representation of $G_{1,k}$ restricts irreducibly to $G_{1,k}'$, and hence to $G_k$. (Then if $V$ is an
  irreducible representation of $G_k$, any irreducible quotient of $\Ind_{G_k}^{G_{1,k}} V$ extends $V$ to
  $G_{1,k}$.)

  We will prove more generally that if $\mathbf H$ is any connected reductive group over $k$ and $V$ an irreducible representation
  of $H$, then the restriction of $V$ to $H'$ is irreducible. Suppose first that the derived subgroup $\mathbf H^{\der}$ is simply
  connected. Then $H' = H^{\der}$. We know that we can lift $V$ to an irreducible representation of $\mathbf H$ with
  $q$-restricted highest weight (where $q = \# k$), cf.\ \cite[Appendix, (1.3)]{MR2541127}. Then its restriction to $\mathbf H^{\der}$ is still irreducible with $q$-restricted
  highest weight (noting that $\mathbf H$ is generated by its center and $\mathbf H^{\der}$). Hence $V$ restricted to $H^{\der}$
  remains irreducible by the result we just cited.

  For the general case pick a $z$-extension $\pi : \mathbf{\wt H} \onto \mathbf H$, so $\mathbf R := \ker\pi$ is an induced torus and $\mathbf{\wt H}^{\der}$ is simply connected.
  We have a commutative diagram with exact rows:
  \begin{equation*}
    \xymatrix{
      1 \ar[r] & R \cap {\wt H}'\mathstrut \ar[r]\ar@{^{(}->}[d] & {\wt H}'\mathstrut \ar[r]\ar@{^{(}->}[d] & H'\mathstrut \ar[r]\ar@{^{(}->}[d] & 1 \\
      1 \ar[r] & R \ar[r] & {\wt H} \ar[r] & H \ar[r] & 1
    }
  \end{equation*}
  By inflation we can consider $V$ as irreducible representation $\wt V$ of ${\wt H}$ that is trivial on $R$. 
  By above we know the restriction of $\wt V$ to ${\wt H}'$ is irreducible, and hence so is the restriction of $V$ to $H'$.
\end{proof}

\begin{remark}\label{rk:morphism2}
  As in Remark~\ref{rk:morphism} we expect that the map $G_k\to G_{1,k}$ arises from a closed immersion $\mathbf G_k \to \mathbf G_{1,k}$.
\end{remark}

\begin{lemma}\label{lm:K-K1}
  Proposition~\ref{prop:choosing-K} holds for $(G,K)$ if and only if it holds for $(G_1,K_1)$.
  More precisely, we have $\red(M_\alpha' \cap K) = M_{\alpha,k}'$ inside $G_k$ if and only if
  $\red(M_{1,\alpha}' \cap K_1) = M_{1,\alpha,k}'$ inside $G_{1,k}$.
\end{lemma}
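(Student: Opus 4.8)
\textbf{Proof plan for Lemma~\ref{lm:K-K1}.}
The statement asserts that the condition in Proposition~\ref{prop:choosing-K} can be checked on $(G,K)$ or equivalently on $(G_1,K_1)$, and in fact $\alpha$-by-$\alpha$. The key point is that $\mathbf G_1^{\der} = \mathbf G^{\der}$, hence also $\mathbf M_{1,\alpha}^{\der} = \mathbf M_\alpha^{\der}$, so that $M_\alpha' = M_\alpha^{\der} = M_{1,\alpha}^{\der} = M_{1,\alpha}'$ as subgroups of $G \subset G_1$ (using Kneser--Tits, which applies since $\mathbf G^{\der}$ is simply connected, quasi-split, hence has no anisotropic factors). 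The plan is therefore to reduce everything to the diagram~\eqref{eq:17}, where the bottom row is $G_k \into G_{1,k}$, and to observe that $M_\alpha' \cap K = M_\alpha' \cap K_1 = M_{1,\alpha}' \cap K_1$ by Lemma~\ref{lm:parahoric-and-dual-z-ext} (since $M_\alpha' \subset G$ and $K = K_1 \cap G$).

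First I would fix $\alpha \in \Delta$ and set $M := M_\alpha'$, so $M = M_{1,\alpha}'$ is a common subgroup of $G$ and $G_1$. Next I would note, as in the proof of Lemma~\ref{lm:extend}, that the pro-$p$ radical $K(1)$ of $K$ is normal in $K_1$ (since $K \lhd K_1$), hence contained in the pro-$p$ radical $K_1(1)$ of $K_1$; this is exactly what gives the commutative square~\eqref{eq:17}. Its horizontal arrows are injective, and I claim that the image of $G_k$ in $G_{1,k}$ contains $G_{1,k}'$ and in particular contains $M_{1,\alpha,k}' = M_{\alpha,k}'$ (the root subgroups $U_{\pm\alpha}$ reduce into $G_k$ since $U_{\pm\alpha} \subset G^{\der} = G_1^{\der}$, and the finite reductive group $M_{\alpha,k}'$ is generated by the images of the $U_{\pm\alpha}$-filtrations). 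Concretely, the reduction maps $\red\colon K \onto G_k$ and $\red_1\colon K_1 \onto G_{1,k}$ are compatible, meaning $\red_1|_K$ factors through $\red$ followed by the inclusion $G_k \into G_{1,k}$.

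Using all of this, I would compute: $\red(M_\alpha' \cap K) = \red(M \cap K_1 \cap G)$. Since $M \subset G^{\der}$ lies in the kernel of the Kottwitz homomorphism of $G_1$ and $K = K_1 \cap G$, we have $M \cap K = M \cap K_1$; so $\red(M_\alpha' \cap K)$ is identified inside $G_{1,k}$ with $\red_1(M_{1,\alpha}' \cap K_1)$ under the injection $G_k \into G_{1,k}$. Both $M_{\alpha,k}'$ and $M_{1,\alpha,k}'$ are likewise identified (they are the derived subgroups of $\mathbf M_{\alpha,k}$ and $\mathbf M_{1,\alpha,k}$, which coincide since $\mathbf M_{\alpha}^{\der} = \mathbf M_{1,\alpha}^{\der}$ and the residual groups are built from the same affine-root datum). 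Therefore the equality $\red(M_\alpha' \cap K) = M_{\alpha,k}'$ in $G_k$ holds if and only if $\red_1(M_{1,\alpha}' \cap K_1) = M_{1,\alpha,k}'$ in $G_{1,k}$, which is the assertion. Summing over $\alpha \in \Delta$ gives the ``if and only if'' for Proposition~\ref{prop:choosing-K} itself.

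The main obstacle I anticipate is bookkeeping the several identifications of finite groups of Lie type inside $G_k$ versus $G_{1,k}$ cleanly — specifically, justifying that $\mathbf M_{\alpha,k}'$ really is the image of $\mathbf M_{1,\alpha,k}'$ under a morphism $\mathbf G_k \to \mathbf G_{1,k}$ of algebraic groups. This is the analogue of the gap flagged in Remarks~\ref{rk:morphism} and~\ref{rk:morphism2}: one would like a closed immersion $\mathbf G_k \into \mathbf G_{1,k}$, but lacking a reference one can argue at the level of $k$-points, since the whole statement is about images of $k$-rational subgroups under the reduction maps, and those images are generated by the images of the $U_{\pm\alpha}$ which are controlled by Bruhat--Tits filtrations that are intrinsic to the common subgroup $M_\alpha' = M_{1,\alpha}'$.
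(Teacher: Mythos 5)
Your proposal is correct and follows essentially the same route as the paper: both arguments restrict the commutative square~\eqref{eq:17} to the Levi subgroups attached to $\alpha$, use Lemma~\ref{lm:parahoric-and-dual-z-ext} (i.e.\ $K = K_1 \cap G$ together with $M_\alpha' = M_{1,\alpha}'$) to identify the top row, and identify $M_{\alpha,k}'$ with $M_{1,\alpha,k}'$ inside $G_{1,k}$ via the common images of the root subgroups. The caveat you raise about upgrading $G_k \into G_{1,k}$ to a morphism of algebraic groups is exactly the point the authors flag in Remarks~\ref{rk:morphism} and~\ref{rk:morphism2}, and like you they sidestep it by working on $k$-points.
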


\begin{proof}
  Fix $\alpha \in \Delta$. We note that $\mathbf M_\alpha\lhd \mathbf M_{1,\alpha}$ for the Levi subgroups defined by $\alpha$ and that
  by Lemma~\ref{lm:parahoric-and-dual-z-ext} we have $M_\alpha \cap K \lhd M_{1,\alpha} \cap K_1$ for the corresponding
  special parahoric subgroups. Hence, restricting the top row of diagram~\eqref{eq:17} (applied to Levi subgroups defined by
  $\alpha$), we get a commutative diagram
  \begin{equation*}
    \xymatrix{M_\alpha' \cap K \ar@{^{(}->}[r]\ar@{->}[d] & M_{1,\alpha}' \cap K_1 \ar@{->}[d] \\
      M_{\alpha,k}\ar@{^{(}->}[r] & M_{1,\alpha,k}}
  \end{equation*}
  Note that the top row is an isomorphism (by Lemma~\ref{lm:parahoric-and-dual-z-ext}, as $M_\alpha' = M_{1,\alpha}'$) and that the bottom row induces an isomorphism between the vertical images,
  as well as between $M_{\alpha,k}'$ and $M_{1,\alpha,k}'$. The lemma follows.
\end{proof}

Choose now any $K$ such that Proposition~\ref{prop:choosing-K} holds for $(G,K)$; equivalently,
Proposition~\ref{prop:choosing-K} holds for $(G_1,K_1)$, by Lemma~\ref{lm:K-K1}. From Corollary~\ref{cor:choosing-K} and
since $\alpha \in \Delta(V)$, we see that $\psi_V(Z^0 \cap M_\alpha') = 1$. Now we deduce in exactly the same way as in
\S\ref{sec:more-general-case}(1) that Theorem~\ref{thm:change-of-weight} holds for $(G,K)$, since we know it holds for $(G_1,K_1)$ by
\S\ref{sec:basic}.

\subsection{Second reduction step}
\label{sec:second-reduct-step}

Suppose now that $\mathbf G$ is any quasi-split group. We will reduce to the previous case.
The following result is proved by Colliot-Th\'el\`ene \cite[Prop.\ 4.1]{ct08}.

\begin{proposition}\label{prop:coflasque-resolution}
  The group $\mathbf G$ has a \(quasi-split\) $z$-extension $\mathbf{\wt G}$ such that $\mathbf{\wt G}/\mathbf{\wt G}^{\der}$ is a coflasque torus.
\end{proposition}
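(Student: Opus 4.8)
\textbf{Proof plan for Proposition~\ref{prop:coflasque-resolution}.} This is a purely torus-theoretic statement once one recalls the standard recipe for $z$-extensions. The plan is to construct $\mathbf{\wt G}$ by a two-stage procedure: first resolve the abelianization, then pull back. Write $\mathbf T = \mathbf G/\mathbf G^{\der}$ for the abelianization torus and let $\mathbf G_{\mathrm{sc}} \to \mathbf G^{\der}$ be the simply connected cover.

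First I would invoke the standard construction of a $z$-extension of $\mathbf G$: choose a surjection of $F$-tori $\mathbf P \onto \mathbf T''$, where $\mathbf T''$ is a suitable torus and $\mathbf P$ is quasi-trivial (i.e.\ an induced torus), arranged so that the resulting central extension $\mathbf{\wt G}$ of $\mathbf G$ by the kernel $\mathbf R = \ker(\mathbf P \to \mathbf T'')$ has simply connected derived group. Concretely, one has a central isogeny-type situation: the fundamental group of $\mathbf G^{\der}$, viewed as a Galois module, must be killed, and one chooses $\mathbf P$ mapping onto a torus $\mathbf T''$ sitting in an exact sequence $1 \to \mathbf F \to \mathbf T'' \to \mathbf T \to 1$ with $\mathbf F$ the relevant finite central subgroup; then $\mathbf{\wt G} := \mathbf G \times_{\mathbf T} \mathbf P$ (fibre product over $\mathbf T$, using $\mathbf G \to \mathbf T$ and $\mathbf P \to \mathbf T'' \to \mathbf T$) has $\mathbf{\wt G}^{\der} = \mathbf G_{\mathrm{sc}}$ simply connected, $\ker(\mathbf{\wt G} \to \mathbf G) = \mathbf R$ an induced torus, and $\mathbf{\wt G}/\mathbf{\wt G}^{\der} \cong \mathbf P$. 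Since $\mathbf P$ is induced, hence quasi-trivial, hence \emph{flasque}, this does not yet give what we want: we need the abelianization to be \emph{coflasque}, not flasque. This is exactly the content of Colliot-Th\'el\`ene's refinement, and the key input is the existence of a \emph{coflasque resolution} of the torus $\mathbf T$: an exact sequence $1 \to \mathbf S_0 \to \mathbf C \to \mathbf T \to 1$ with $\mathbf C$ coflasque and $\mathbf S_0$ quasi-trivial (equivalently, dualize the classical flasque resolution $1 \to \mathbf T^\circ \to \mathbf Q \to \widehat{\mathbf T} \to 1$; see \cite[\S0.6, Prop.~4.1]{ct08}).

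The second step is to carry out the fibre-product construction above but using the coflasque resolution. That is, I would form $\mathbf{\wt G}$ so that it fits into a central extension $1 \to \mathbf S_0 \to \mathbf{\wt G} \to \mathbf G \to 1$ with $\mathbf S_0$ quasi-trivial (so this is a $z$-extension, and quasi-splitness of $\mathbf{\wt G}$ follows from quasi-splitness of $\mathbf G$ together with $\mathbf S_0$ being split over the same field), $\mathbf{\wt G}^{\der}$ simply connected, and — crucially — $\mathbf{\wt G}/\mathbf{\wt G}^{\der} \cong \mathbf C$ coflasque. The compatibility that makes this work is that the abelianization functor is exact on reductive groups modulo the passage to simply connected covers, so the abelianization of $\mathbf{\wt G}$ is computed as an extension of $\mathbf T = \mathbf G/\mathbf G^{\der}$ by $\mathbf S_0$; choosing the extension class to be the one coming from the coflasque resolution $1 \to \mathbf S_0 \to \mathbf C \to \mathbf T \to 1$ forces $\mathbf{\wt G}/\mathbf{\wt G}^{\der} \cong \mathbf C$. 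At this point everything is assembled and one simply cites \cite[Prop.~4.1]{ct08} for the precise statement.

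The main obstacle — and really the only nontrivial point — is verifying that one can realize a prescribed coflasque extension of $\mathbf T$ as the abelianization of a $z$-extension of $\mathbf G$ while simultaneously keeping the derived group simply connected; this is where one must match up the central extension data (the class of the $z$-extension in $H^2$, controlled by $\mathbf S_0$) with the abelianization data (the class of $1 \to \mathbf S_0 \to \mathbf C \to \mathbf T \to 1$). Since this is precisely \cite[Prop.~4.1]{ct08}, I would not reprove it; the role of this plan is just to record that the statement is immediate from that reference, and that quasi-splitness of $\mathbf{\wt G}$ is automatic because a $z$-extension of a quasi-split group by an induced (hence split-over-the-separable-closure-compatibly) torus is again quasi-split.
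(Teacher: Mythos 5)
Your proposal ultimately rests on citing \cite[Prop.~4.1]{ct08}, which is exactly what the paper does: it offers no proof of this proposition beyond attributing it to Colliot-Th\'el\`ene. One minor inaccuracy in your surrounding discussion: an induced (quasi-trivial) torus is automatically \emph{coflasque} as well as flasque (by Shapiro's lemma, as the paper itself notes in \S A.2), so the defect of the naive $z$-extension is not that its abelianization is ``flasque rather than coflasque'' but that the abelianization is in general only an extension of $\mathbf G/\mathbf G^{\der}$ by a quotient of the induced kernel, hence need not be coflasque at all --- but since you defer the real work to the reference, this does not affect the correctness of your argument.
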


Hence the group $\mathbf{\wt G}$ is as in \S\ref{sec:first-reduction}. Now choose any special parahoric subgroup $\wt K$ of $\wt
G$ for which Proposition~\ref{prop:choosing-K} holds. Let $K$ denote the corresponding special parahoric subgroup of $G$. It
follows from Step 3 of the proof of Proposition~\ref{prop:choosing-K} that Proposition~\ref{prop:choosing-K} holds also for $(G,K)$. From
Corollary~\ref{cor:choosing-K} and since $\alpha \in \Delta(V)$, we see that $\psi_V(Z^0 \cap M_\alpha') = 1$. Now we deduce
in exactly the same way as in \S\ref{sec:more-general-case}(2) that Theorem~\ref{thm:change-of-weight} holds for $(G,K)$, since we know
it holds for $(\wt G,\wt K)$ by \S\ref{sec:first-reduction}.

\makeatletter
\def\@tocwrite#1#2{}%
\subsection*{Acknowledgments}
We thank Tasho Kaletha and Marie-France Vign\'eras for some helpful discussions.
\let\@tocwrite=\original@@tocwrite %
\makeatother

\bibliographystyle{amsalpha}
\bibliography{bib}
  
\end{document}